\documentclass[11pt]{article}

\topmargin0.0cm
\headheight0.0cm
\headsep0.0cm
\oddsidemargin0.0cm
\textheight23.0cm
\textwidth16.5cm
\footskip1.0cm

\usepackage{csquotes} 
\usepackage{amsmath}%
\usepackage{amsfonts}%
\usepackage{amssymb}%
\usepackage{graphicx}
\usepackage{mathtools}
\usepackage{afterpage} 

\usepackage{mathrsfs}

\usepackage{subcaption}

\usepackage{xcolor}

\usepackage{amsthm} 
\usepackage[colorlinks]{hyperref} 
\AtBeginDocument{\hypersetup{
    citecolor=magenta,
    colorlinks=true,
    linkcolor=blue,
    filecolor=magenta,      
    urlcolor=cyan,    
    }}
\usepackage{enumerate}
\usepackage[all,cmtip]{xy}
\usepackage{doi}

\DeclareMathOperator{\id}{id}

\DeclareMathOperator{\sll}{sl}
\DeclareMathOperator{\Ad}{Ad}
\DeclareMathOperator{\ad}{ad}

\DeclareMathOperator{\tr}{tr}
\DeclareMathOperator{\diag}{diag}
\DeclareMathOperator{\GL}{GL}
\DeclareMathOperator{\SL}{SL}
\DeclareMathOperator{\gl}{gl}
\DeclareMathOperator{\rank}{rank}

\newcommand{\dif}{\left.\frac{d}{dt}\right|_{t=0}}

\newcommand{\gc}{\mathcal{GC}}

\newcommand{\bg}{\mathbf{\Gamma}}
\newcommand{\std}{\text{std}}

\newcommand{\op}{\mathrm{op}}
\newcommand{\togamma}{{\mathring{\phantom{\gamma}}\kern-6.3pt\tilde{\gamma}}}
\newcommand{\togammas}{\togamma^*}
\newcommand{\const}{\mathrm{const}}
\newcommand{\ogamma}{\mathring{\gamma}}
\newcommand{\ogammas}{\mathring{\gamma}^*}
\newcommand{\sgamma}{\check{\gamma}}
\newcommand{\sgammas}{\check{\gamma}^*}
\newcommand{\onu}{\mathring{\nu}}
\newcommand{\onus}{\mathring{\nu}^*}

\newcommand{\ggo}{\left[ \frac{\gamma^*}{1-\gamma^*}\otimes 1\right]}
\newcommand{\ogg}{\left[1\otimes \frac{\gamma^*}{1-\gamma^*}\right]}
\newcommand{\goo}{\left[ \frac{1}{1-\gamma^*}\otimes 1\right]}
\newcommand{\oog}{\left[1\otimes \frac{1}{1-\gamma^*}\right]}
\newcommand{\adir}{\Ad_{\rho(U)^{-1}}}

\newcommand{\sinv}[1]{\,\hspace*{-3pt}\left. #1 \hspace*{-4pt}\right.^{-1}  }
\newcommand{\rum}{\sinv{U_-}}

\newcommand{\dqdq}{[d_U^R\mathcal{Q}\otimes d_U^R\mathcal{Q}]}
\newtheorem*{theorem*}{Theorem}
\newtheorem*{proposition*}{Proposition}
\theoremstyle{definition}\newtheorem{definition}{Definition}[section]
\theoremstyle{plain}
\newtheorem{theorem}[definition]{Theorem}
\newtheorem{lemma}[definition]{Lemma}
\newtheorem{proposition}[definition]{Proposition}
\theoremstyle{definition}\newtheorem{remark}[definition]{Remark}
\newtheorem{corollary}[definition]{Corollary}
\numberwithin{equation}{section}

\captionsetup[figure]{labelfont=bf,labelsep=period,font=it}

\captionsetup[subfigure]{labelfont=bf, labelformat=parens, labelsep=space}

\begin{document}
\setcounter{tocdepth}{2} 

\SetBlockThreshold{1}
\title{Generalized cluster structures related to Poisson duals of $\mathrm{SL}_n$ }
\date{}
\author{Michael Gekhtman and Dmitriy Voloshyn}
\maketitle

\makeatletter
\def\blfootnote{\xdef\@thefnmark{}\@footnotetext}
\makeatother

\begin{abstract}
We study Poisson varieties $(\SL_n,\pi_{\bar{\bg}}^{\dagger})$ parameterized by Belavin--Drinfeld quadruples $\bar{\bg}:=(\bg,r_0)$ of type $A_{n-1}$ along with generalized cluster structures $\gc^{\dagger}(\bg)$ in $\mathbb{C}[\SL_n]$ compatible with $\pi_{\bar{\bg}}^{\dagger}$. The Poisson structure $\pi_{\bar{\bg}}^{\dagger}$ is a pushforward of the Poisson structure $\pi_{\bar{\bg}}^*$ of the Poisson dual $\SL_n^*$ of $(\SL_n,\pi_{\bar{\bg}})$. We prove that the generalized upper cluster algebra of $\gc^{\dagger}(\bg)$ is naturally isomorphic to $\mathbb{C}[\SL_n]$. Moreover, for any connected reductive complex group $G$ and a BD quadruple~$(\bg,r_0)$, we produce a Poisson birational map $\mathcal{Q}:(G,\pi_{(\bg_{\std},r_0)}^{\dagger})\dashrightarrow(G,\pi_{(\bg,r_0)}^{\dagger})$, and when $G \in \{\SL_n,\GL_n\}$, we show that $\mathcal{Q}$ is a birational quasi-isomorphism between $\gc^\dagger(\bg_{\std})$ and $\gc^\dagger(\bg)$. Lastly, for any pair of BD triples $\tilde{\bg} \prec \bg$ of type $A_{n-1}$ comparable in the natural order, we use the map $\mathcal{Q}$ to construct a birational quasi-isomorphism between $\gc^{\dagger}(\tilde{\bg})$ and $\gc^{\dagger}(\bg)$.
\end{abstract}

\blfootnote{\textit{2010 Mathematics Subject Classification.} 53D17, 13F60.}
\blfootnote{\textit{Key words and phrases.} Poisson--Lie group, cluster algebra, Belavin--Drinfeld triple.}

\tableofcontents

\newpage
\section{Introduction}
\subsection{Overview}\label{s:overview}
Cluster algebras were introduced by S. Fomin and A. Zelevinsky in~\cite{fathers} as an algebraic framework for studying dual canonical bases and total positivity. Fomin and Zelevinsky suggested that many interesting varieties in Lie theory carry the structure of an (upper) cluster algebra. Known examples include double Bruhat cells~\cite{upper_bounds}, Grassmannians~\cite{scott}, Bott--Samelson cells~\cite{bott_shen}, simply connected simple complex algebraic groups~\cite{ios-aequ-glocal,oya-note,qy-quant-bruhat}, and open Richardson varieties~\cite{open_rich}.

M. Gekhtman, M. Shapiro, and A. Vainshtein observed in~\cite{roots} that the problem of finding cluster structures on rational varieties can be approached via Poisson geometry. In earlier works~\cite{roots,conj,grass_poiss}, they showed that the cluster structures on double Bruhat cells, Grassmannians, and simply connected simple complex algebraic groups are compatible with certain Poisson brackets. For the groups, they conjectured in~\cite{conj} that there exist multiple cluster structures compatible with Poisson brackets from the Belavin--Drinfeld class. We refer to this conjecture as the \emph{GSV conjecture}.

Before we state the GSV conjecture in precise terms, let us briefly review relevant concepts from the theory of Poisson--Lie groups and cluster algebras (a more detailed account can be found in Section~\ref{s:background}). Given a set of simple roots $\Pi$ of $G$, a \emph{Belavin--Drinfeld triple} (or, for short, a \emph{BD triple}) is a triple $\bg:=(\Gamma_1,\Gamma_2,\gamma)$ where $\Gamma_1,\Gamma_2\subset \Pi$ and $\gamma:\Gamma_1 \rightarrow \Gamma_2$ is a nilpotent isometry. Let $\mathfrak{h}$ be the Cartan subalgebra of the Lie algebra $\mathfrak{g}$ of $G$ corresponding to $\Pi$. As shown by Belavin and Drinfeld in~\cite{bd,bd2}, with an additional continuous parameter $r_0 \in \mathfrak{h}\otimes \mathfrak{h}$, the \emph{BD quadruples} $\bar{\bg}:=(\Gamma_1,\Gamma_2,\gamma,r_0)$  parameterize the moduli space of factorizable quasi-triangular Poisson structures $\pi_{\bar{\bg}}$ on $G$. The Poisson structure $\pi_{\bar{\bg}}$ induces a Lie algebra structure on the dual space $\mathfrak{g}^*$, and upon integration, one obtains a \textit{dual Poisson--Lie group} $(G^*,\pi_{\bar{\bg}}^*)$. It is possible to choose a Poisson bivector $\pi_{\bar{\bg}}^{D}$ on $D(G):=G\times G$ and a dual group $(G^*,\pi_{\bar{\bg}}^*)$ in such a way that 1) $G$ is the diagonal subgroup of $D(G)$ and $G^*$ is an immersed Lie subgroup of $D(G)$; 2) $\pi_{\bar{\bg}}^D|_{G} = \pi_{\bar{\bg}}$ and $\pi^{D}_{\bar{\bg}}|_{G^*} = \pi^{*}_{\bar{\bg}}$. Consider the map
\begin{equation}\label{eq:themap}
D(G)\ni (g,h)\ \ \longmapsto\ \ g^{-1}h \in G,
\end{equation}
and define the Poisson bivector $\pi_{\bar{\bg}}^{\dagger}$ as the pushforward of $\pi^D_{\bar{\bg}}$ under~\eqref{eq:themap}. The image of $G^*$ under~\eqref{eq:themap} is open in $G$, so the Poisson manifold $(G,\pi^{\dagger}_{\bar{\bg}})$ locally models $(G^*,\pi_{\bar{\bg}}^*)$. The Poisson bivectors $\pi_{\bar{\bg}}$ and $\pi_{\bar{\bg}}^D$ can be generalized to Poisson-homogeneous bivectors
$\pi_{(\bar{\bg}^r,\bar{\bg}^c)}$ and $\pi_{(\bar{\bg}^r,\bar{\bg}^c)}^D$ that are parameterized by pairs $(\bar{\bg}^r,\bar{\bg}^c)$ of Belavin--Drinfeld quadruples (see~\cite{plethora, multdouble}). When $\bar{\bg}^r = \bar{\bg}^c = \bar{\bg}$, one recovers $\pi_{\bar{\bg}}$ and $\pi_{\bar{\bg}}^D$.

Now, let us turn to the theory of cluster algebras. In the context of the GSV conjecture, one starts with an \emph{initial extended cluster} $(x_1,x_2,\ldots,x_{N+M})$, which is a collection of rational functions satisfying $\mathbb{C}(G) = \mathbb{C}(x_1,x_2,\ldots,x_{N+M})$ where $N+M = \dim G$. The variables $x_1,\ldots,x_{N}$ are called \emph{cluster variables}, and $x_{N+1},\ldots,x_{N+M}$ are the \emph{frozen variables}. Given a skew-symmetrizable $N \times (N+M)$ matrix $B$ with integer entries, for each $i \in [1,N]$ one can obtain a new extended cluster from the initial one by replacing the cluster variable $x_i$ with a new cluster variable $x_i^{\prime}$ via the equation
\begin{equation}\label{eq:iordmut}
x_i x_i^{\prime} = \prod_{j:\,b_{ij}>0} x_j^{b_{ij}} + \prod_{j:\,b_{ij} < 0} x_{j}^{-b_{ij}}.
\end{equation}
The above transformation is called a \emph{mutation.} For each new extended cluster, the matrix $B$ also changes according to a specific rule. Iterating the above process, one obtains new extended clusters, and the collection $\mathcal{C}$ of those is called a \emph{cluster structure} on $G$. The \emph{cluster algebra} $\mathcal{A}_{\mathbb{C}}$ is defined as the polynomial algebra over $\mathbb{C}$ generated by all cluster and frozen variables. With each extended cluster $\mathbf{x}$, we associate a ring of Laurent polynomials $\mathcal{L}_{\mathbb{C}}(\mathbf{x}):=\mathbb{C}[x_1^{\pm 1},\ldots,x_{N}^{\pm 1},x_{N+1},\ldots,x_{N+M}]$, and define the upper cluster algebra $\bar{\mathcal{A}}_{\mathbb{C}}$ as the intersection of all $\mathcal{L}_{\mathbb{C}}(\mathbf{x})$ for $\mathbf{x}\in\mathcal{C}$ (possibly with an additional localization at some frozen variables). The celebrated Laurent phenomenon states that $\mathcal{A}_{\mathbb{C}} \subseteq \bar{\mathcal{A}}_{\mathbb{C}}$. We say that $\mathcal{C}$ is \emph{regular} if $\mathcal{A}_{\mathbb{C}} \subseteq \mathbb{C}[G]$, and $\mathcal{C}$ is \emph{complete} if $\bar{\mathcal{A}}_{\mathbb{C}} = \mathbb{C}[G]$ (in this case, we also say that $\bar{\mathcal{A}}_{\mathbb{C}}$ is \emph{naturally isomorphic} to $\mathbb{C}[G]$). An extended cluster $(x_1,\ldots,x_{N+M})$ is called \emph{log-canonical} if $\{x_i,x_j\} = \omega_{ij} x_i x_j$, where $\{\cdot,\cdot\}$ denotes the Poisson bracket corresponding to the chosen Poisson bivector, and $\omega_{ij} \in \mathbb{C}$. If every extended cluster is log-canonical, the cluster algebra (or structure) is said to be \emph{compatible} with the Poisson structure. A generalized cluster structure $\gc$ is defined similarly except~\eqref{eq:iordmut} may contain more than two monomials on the right-hand side along with some coefficients.

\paragraph{The GSV conjecture.} In its current form, the conjecture is stated as follows:
\blockquote{
\textit{Let $G$ be a simply connected simple complex algebraic group and $(\bar{\bg}^r,\bar{\bg}^c)$ be a pair of Belavin--Drinfeld quadruples defined relative to a root system of $G$. Then for each of the Poisson varieties $(G,\pi_{(\bar{\bg}^r,\bar{\bg}^c)})$, $(D(G),\pi_{(\bar{\bg}^r,\bar{\bg}^c)}^D)$ and\footnote{The reader might notice that the Poisson structure $\pi_{\bar{\bg}^c}^\dagger$ depends only on $\bar{\bg}^c$ and not on the pair $(\bar{\bg}^r,\bar{\bg}^c)$. The reason for that is, the pushforward of $\pi_{(\bar{\bg}^r,\bar{\bg}^c)}^{D}$ under the map $(X,Y) \mapsto X^{-1}Y$ "forgets" the BD quadruple $\bar{\bg}^r$.}  $(G,\pi_{\bar{\bg}^c}^{\dagger})$, there exists a complete and compatible generalized cluster structure.}
}
Although the Poisson bivectors in each case depend on the parameters $r_0^r,r_0^c \in \mathfrak{h}\otimes\mathfrak{h}$, the corresponding generalized cluster structures depend only on the pair of BD triples $(\bg^r,\bg^c)$.
Let us call a BD triple $\bg$ \emph{trivial} if $\bg = \bg_{\std} := (\Gamma_1,\Gamma_2,\gamma)$ with $\Gamma_1=\Gamma_2 = \emptyset$, and a BD quadruple $\bar{\bg}$ \emph{trivial} if $\bar{\bg} = (\bg_{\std},\frac{1}{2}\Omega_0)$, where $\Omega_0 \in \mathfrak{h}\otimes \mathfrak{h}$ is the $2$-tensor corresponding to the chosen form on $\mathfrak{h}$. A pair of BD triples $(\bg^r,\bg^c)$ (resp. BD quadruples $(\bar{\bg}^r,\bar{\bg}^c)$) is \emph{trivial} if both $\bg^r$ and $\bg^c$ (resp. $\bar{\bg}^r$ and $\bar{\bg}^c$) are trivial. A pair $(\bg^r,\bg^c)$ is called \emph{aperiodic} if the map $\gamma_r (-w_0) \gamma_c^{-1}(-w_0^{-1})$ is nilpotent, where $w_0$ is the longest Weyl group element. In type $A_{n-1}$, identifying the set of simple roots $\Pi$ with the interval of integers $[1,n-1]$, the pair $(\bg^r,\bg^c)$ is called \emph{oriented} if both $\gamma_r$ and $\gamma_c$ are increasing maps. The current state of the GSV conjecture is as follows.
\begin{enumerate}[1)]
\item The conjecture was verified for $(G,\pi_{\bar{\bg}_{\std}})$, for arbitrary $G$. Specifically, the cluster structure was constructed in~\cite{fz-dbl-bruhat} (the same as for the open double Bruhat cell $G^{w_0,w_0}$), and its compatibility with $\pi_{\bar{\bg}_{\std}}$ was verified in~\cite{conj}. The completeness was proved in~\cite{qy-quant-bruhat}, and in types other than $F_4$, it was strengthened to the equality $\mathbb{C}[G] = \mathcal{A}_{\mathbb{C}}(\mathcal{C})$ in~\cite{ios-aequ-glocal,oya-note}. 
\item For $(\SL_n(\mathbb{C}),\pi_{(\bar{\bg}^r,\bar{\bg}^c)})$, the conjecture was verified for aperiodic oriented BD pairs in~\cite{plethora}, and later improved to aperiodic BD pairs (not necessarily oriented) in~\cite{rho}.
\item For $(D(\SL_n(\mathbb{C})),\pi^{D}_{(\bar{\bg}^r,\bar{\bg}^c)})$, the conjecture was verified for the trivial BD pair in~\cite{double}, and later for aperiodic oriented BD pairs in~\cite{multdouble}.
\item For $(\SL_n(\mathbb{C}),\pi^{\dagger}_{\bar{\bg}})$, the conjecture was verified only for the trivial BD quadruple in~\cite{double}. The results of the current paper prove the conjecture for $(\SL_n(\mathbb{C}),\pi^{\dagger}_{\bar{\bg}})$ associated with arbitrary $\bar{\bg}$.
\end{enumerate}
All results for $\SL_n(\mathbb{C})$ can be upgraded to $\GL_n(\mathbb{C})$ in a natural way.

\paragraph{Connection to other problems.} We envision a connection between our work and some of the following known problems in mathematical physics: 
\begin{enumerate}[1)]
    \item There is a conjecture due to D. Gaiotto on the existence of cluster structures on  Coulomb branches in $3d$ $\mathcal{N}=4$ supersymmetric gauge theories. The conjecture was verified in the case of quiver gauge theories by A. Shapiro and G. Schrader in~\cite{sasha_gauge}. It would be interesting to see if $(\SL_n,\pi_{\bar{\bg}}^{\dagger})$ for a nontrivial $\bar{\bg}$ can be interpreted as a Coulomb branch (for the trivial BD quadruple, see \cite[Remark 3.9]{double}). 
    \item G. Schrader, A. Shapiro and I. Ip in~\cite{ivan_ip, sasha_group} proposed a quantum $\mathcal{X}$-cluster realization of the quantum group $U_g(\mathfrak{g})$ for simple complex Lie algebras $\mathfrak{g}$. As a byproduct, they derived a factorization into quantum dilogarithms of the quantum $R$-matrix associated with the trivial BD triple. An explicit classification of factorizable quasi-triangular quantum $R$-matrices was given in~\cite{etingofdyn}. We expect that there exists a connection between compatible generalized cluster structures on $(G,\pi_{(\bg,r_0)}^{\dagger})$ and quantum $\mathcal{A}$-cluster realizations of $U_{q}(\mathfrak{g})$.
\end{enumerate}
As we mentioned earlier, $(G,\pi_{\bar{\bg}}^{\dagger})$ serves as a local model for the dual Poisson--Lie group $(G^*,\pi_{\bar{\bg}}^{*})$. If $G$ is of adjoint type, there are other cluster realizations of $(G^*,\pi_{\bar{\bg}_{\std}}^*)$. In~\cite{dual_shen}, based on an earlier work~\cite{gonch-shen} with A. Goncharov, L. Shen proved that $\mathbb{C}[G^*]$ embeds into a quotient of a cluster Poisson algebra with a Weyl group action.

\paragraph{Current challenges in proving the conjecture.} For compatible generalized cluster structures on $(G,\pi_{(\bar{\bg}^r,\bar{\bg}^c)})$, there is a significant distinction between aperiodic and non-aperiodic BD pairs. In the case of $G \in \{ \SL_n,\GL_n\}$ and an aperiodic BD pair, the initial cluster variables are obtained as trailing minors of the so-called $\mathcal{L}$-matrices (see~\cite{plethora,rho}); however, some of these matrices become infinite when the pair is non-aperiodic. Moreover, at least for $(\SL_n,\pi_{(\bar{\bg}^r,\bar{\bg}^c)})$, one has to consider generalized mutations when $(\bar{\bg}^r,\bar{\bg}^c)$ is non-aperiodic (for an example, see~\cite{periodic}). However, in the case of $(\SL_n,\pi_{\bar{\bg}}^{\dagger})$, there is no distinction between aperiodic and non-aperiodic cases, and for any BD triple in our construction, there is always only one generalized mutation in each seed. Nevertheless, the case of $(G,\pi_{\bar{\bg}}^{\dagger})$ for other $G$ remains open due to the difficulty of finding a Lie-theoretic generalization of the so-called  $\varphi$-functions (see equation~\eqref{eq:s_def}), which form a part of the initial cluster. Difficulties in $(D(G),\pi_{(\bar{\bg}^r,\bar{\bg}^c)})$ combine the previous two cases.

\paragraph{A new approach.} Compared to earlier papers~\cite{plethora,exotic,multdouble} dealing with nontrivial BD triples, the proofs in the present work are significantly simpler. Our new approach, first outlined in~\cite{plethora,multdouble}, involves studying properties of certain Poisson birational quasi-isomorphisms, which appear to be the appropriate type of equivalence in the context of the GSV conjecture. These maps (almost) preserve all relevant structures on a given variety; hence, once the results are established for the trivial Belavin–Drinfeld data, the Poisson birational quasi-isomorphisms can be used to extend them to the nontrivial case.
\begin{enumerate}[1)]
\item Previously, the proofs of compatibility between the chosen Poisson bracket and the (generalized) cluster structure consisted in direct and involved computations~\cite{plethora,exotic,multdouble}. In this paper, given any connected reductive complex algebraic group $G$, any BD quadruple $(\bg,r_0)$ and a Poisson bivector $\pi_{(\bg,r_0)}^{\dagger}$ on $G$, we prove that there exists a Poisson birational map $\mathcal{Q}:(G,\pi_{(\bg_{\std},r_0)}^{\dagger}) \dashrightarrow (G,\pi_{(\bg,r_0)}^{\dagger})$. When $G \in \{\SL_n,\GL_n\}$, the map $\mathcal{Q}$ is a birational quasi-isomorphism between the associated generalized cluster structures. We expect that generalized cluster structures for nontrivial BD triples in other Lie types can be constructed by means of the map~$\mathcal{Q}$. The compatibility of $\pi_{(\bg,r_0)}^{\dagger}$ with the corresponding generalized cluster structure on $G$ is a consequence of the Poisson property of $\mathcal{Q}$.
\item To prove completeness, we mainly rely on the results of~\cite{zametka} and an inductive argument on the size of Belavin--Drinfeld triples, which was first suggested in~\cite{plethora}. The proof is reduced to verifying certain properties of the so-called marked variables in the case $|\Gamma_1| = 1$.
\item The existence of a global toric action of maximal rank follows from certain equivariance properties of $\mathcal{Q}$, and the proof is reduced to the verification of some properties of the marked variables in the case $|\Gamma_1| = 1$.
\end{enumerate}
In the case of $(G,\pi_{(\bar{\bg}^r,\bar{\bg}^c)})$, a similar approach was developed in~\cite{rho}. For every pair of BD quadruples $((\bg^r,r_0^r),(\bg^c,r_0^c))$, the authors  constructed a Poisson rational map 
\begin{equation*}
    \mathbf{h} : (G,\pi_{((\bg_{\std},r_0^r),(\bg_{\std},r_0^c))}) \dashrightarrow (G,\pi_{((\bg^r,r_0^r),(\bg^c,r_0^c))}).
\end{equation*} 
If the BD triple $(\bg^r,\bg^c)$ is aperiodic, the map $\mathbf{h}$ is birational, and can therefore be used to construct an initial extended cluster associated with $(\bg^r,\bg^c)$. 

\paragraph{Organization of the paper.} In Section~\ref{s:descr_h}, we describe the initial extended seed in the \emph{$h$-convention} (an alternative construction, referred to as the \emph{$g$-convention}, is given in Section~\ref{s:descr_g}). In Section~\ref{s:results}, we state the main results of the paper. In Section~\ref{s:background}, we present an expanded overview on Poisson geometry, generalized cluster theory, and the theory of birational quasi-isomorphisms. In Section~\ref{s:comp}, for any connected reductive complex algebraic group $G$ and any BD quadruple $(\bg,r_0)$, we construct the birational map $\mathcal{Q}:(G,\pi_{(\bg_{\std},r_0)}^{\dagger}) \dashrightarrow (G,\pi_{(\bg,r_0)}^{\dagger})$ and prove that the map $\mathcal{Q}$ is Poisson. In Section~\ref{s:birat}, we construct various birational quasi-isomorphisms for $G \in \{\SL_n,\GL_n\}$. In Section~\ref{s:complet}, we prove our main Theorems~\ref{thm:maingln}--\ref{thm:mainsln}. In Section~\ref{s:other}, we derive explicit formulas for the initial variables and show that the frozen variables are Poisson. Explicit examples of generalized cluster structures on $(\SL_n,\pi_{(\bg,r_0)}^{\dagger})$ are provided in Appendix~\ref{s:exs}. Supplementary material for this work is available at~\cite{github}.

\paragraph{Acknowledgements.} Michael Gekhtman was supported by the NSF grant DMS-2100785.
Dmitriy Voloshyn was supported by the grant IBS-R003-D1.

\subsection{\texorpdfstring{Construction of $\gc_h^{\dagger}(\bg)$}{Construction of the initial seed in the h-convention}}\label{s:descr_h}
Let $\Pi$ be a fixed set of simple roots of type $A_{n-1}$ identified with the interval $[1,n-1]$. In this subsection, given a BD triple $\bg :=(\Gamma_1,\Gamma_2,\gamma)$ and a BD quadruple $\bar{\bg}:=(\bg,r_0)$, we describe the initial extended cluster of $\gc_h^{\dagger}(\bg)$ for $(\GL_n,\pi_{\bar{\bg}}^{\dagger})$ and $(\SL_n,\pi_{\bar{\bg}}^{\dagger})$ in the \emph{$h$-convention}. For the trivial BD quadruple, the description was provided in~\cite{double}. In Section~\ref{s:descr_g}, we provide an alternative construction referred to as the \emph{$g$-convention}. The initial extended cluster comprises three types of functions: $c$-functions, $\varphi$-functions, and $h$-functions. Only the description of the $h$-functions depends on the choice of the BD triple. The initial quiver is described in Appendix~\ref{s:ainiquivh}, and selected examples are provided in Appendix~\ref{s:exs}.

\paragraph{Notation for $\gc$.} The notation $\gc^{\dagger}(\bg)$ stands for the generalized cluster structure on $(\SL_n,\pi_{\bar{\bg}}^{\dagger})$ or $(\GL_n,\pi_{\bar{\bg}}^{\dagger})$ in the $h$- or $g$-convention. The subscript $h$ or $g$ specifies the convention. When we need to specify the variety, we write $\gc^{\dagger}(\bg,\GL_n)$ or $\gc^{\dagger}(\bg,\SL_n)$. The same convention applies to the (upper) generalized cluster algebra, the ground ring, etc. The notation $(G,\gc^{\dagger}(\bg))$ (with or without the subscript $h$ or $g$) refers to a group $G \in \{\SL_n,\GL_n\}$ equipped with $\gc^{\dagger}(\bg,G)$. A choice of $r_0$ is irrelevant for the construction of the initial extended seed. The ground field is assumed throughout to be $\mathbb{C}$ (so that $\SL_n$ denotes $\SL_n(\mathbb{C})$).

\paragraph{Description of $\varphi$- and $c$-functions.} For an element $U \in \GL_n$, let us set
\begin{equation}\label{eq:big_phi_def_h}
\Phi_{kl}(U):=\begin{bmatrix}(U^0)^{[n-k+1,n]} & U^{[n-l+1,n]} & (U^2)^{\{n\}} & \cdots & (U^{n-k-l+1})^{\{n\}}\end{bmatrix}, \ \ k,l \geq 1, \ k+l \leq n;
\end{equation}
\begin{equation}\label{eq:s_def}
s_{kl}:=\begin{cases}
(-1)^{k(l+1)} \ &n \ \text{is even},\\
(-1)^{(n-1)/2 + k(k-1)/2 + l(l-1)/2} \ & n \ \text{is odd}.
\end{cases}
\end{equation}
Then, the $\varphi$-functions are given by
\begin{equation}\label{eq:phi_h_def}
\varphi_{kl}(U):=s_{kl} \det \Phi_{kl}(U).
\end{equation}
The $\varphi$-functions have the following invariance properties. Given a unipotent lower-triangular matrix $N_-$ and a nondegenerate diagonal matrix $T$,
\begin{equation}\label{eq:phi_invar}\begin{aligned}
\varphi_{kl}(N_- U \sinv{N_-}) &= \varphi_{kl}(U), \\ \varphi_{kl}(TUT^{-1}) &= \chi_{\varphi_{kl}}(T)\varphi_{kl}(U)
\end{aligned}
\end{equation}
where $\chi_{\varphi_{kl}}$ denotes a character. The $c$-functions are uniquely defined via
\begin{equation}\label{eq:c_def}
\det(I+\lambda U) = \sum_{i=0}^{n} \lambda^{i} s_i c_i(U)
\end{equation}
where $s_i := (-1)^{i(n-1)}$, and $I$ is the identity matrix. Note that $c_0(U) = 1$ and $c_n(U) = \det U$. 

\begin{remark}
For $(X,Y) \in \GL_n\times \GL_n$, set $U:= X^{-1}Y$. Then, the definitions of the $\varphi$- and $c$-functions are identical to those given in~\cite{double,multdouble} (up to a factor of $\det X$).
\end{remark}

\paragraph{Description of the $h$-functions.} For any generic $n \times n$ matrix $A$, let us denote by $A_{\oplus}$ and $A_-$ the upper-triangular and  unipotent lower-triangular matrices, respectively, such that $A = A_\oplus A_-$. For a given BD triple $\bg = (\Gamma_1,\Gamma_2,\gamma)$ of type $A_{n-1}$, the map $\gamma^*$ can be extended to a Lie algebra homomorphism $\gamma^*:\mathfrak{n}_-(\Gamma_2)\rightarrow \mathfrak{n}_-(\Gamma_1)$, where $\mathfrak{n}_-(\Gamma_i)$ are the Lie subalgebras of $\sll_n(\mathbb{C})$ of nilpotent lower-triangular matrices generated by $\Gamma_i$, $i \in \{1,2\}$. Then, $\gamma^*$ can be extended by zero to the map $\gamma^* : \mathfrak{n}_- \rightarrow \mathfrak{n}_-$, which remains a Lie algebra homomorphism; therefore, it can be integrated to a group homomorphism $\tilde{\gamma}^* : \mathcal{N}_- \rightarrow \mathcal{N}_-$, where $\mathcal{N}_-$ is the subgroup of $\SL_n$ consisting of unipotent lower-triangular matrices (see Sections~\ref{s:ba_poisson} and \ref{s:bd_map} for details). Define the following sequence of rational maps:
\begin{equation*}
\mathcal{F}_0(U):=U, \ \ \mathcal{F}_k(U):= \tilde{\gamma}^*([\mathcal{F}_{k-1}(U)]_-)\cdot U, \ \ U \in \GL_n, \ k \geq 1.
\end{equation*}
We study properties of $\{\mathcal{F}_{k}\}_{k \geq 0}$ in Section~\ref{s:map_f}. The sequence stabilizes due to the nilpotency of $\gamma$:
\begin{equation*}
\mathcal{F}_{\deg \gamma}(U) = \mathcal{F}_{\deg \gamma + 1}(U) = \mathcal{F}_{\deg \gamma+2} = \ldots
\end{equation*}
(for $\deg \gamma$, see Definition~\ref{d:degnilp}). Set
\begin{equation*}
\mathcal{F}(U):=\mathcal{F}_{\deg \gamma}(U),\ \ U \in \GL_n.
\end{equation*}

\begin{definition}\label{d:g_str}
For $\alpha_0 \in \Pi \setminus \Gamma_2$, set $\alpha_t:=\gamma(\alpha_{t-1})$, $t \geq 1$. The sequence $S^{\gamma}(\alpha_0):=\{\alpha_{t}\}_{t \geq 0}$ is called the \emph{$\gamma$-string} associated to $\alpha_0$.
\end{definition}
Note that $\gamma$-strings partition the set of simple roots $\Pi$. For each $\gamma$-string $S^{\gamma}(\alpha_0) = \{\alpha_0,\alpha_1,\ldots,\alpha_{m}\}$, for each $i \in [0,m]$ and $j \in [\alpha_i+1,n]$, set
\begin{equation}\label{eq:h_fun}
h_{\alpha_i+1,j}(U) := (-1)^{\varepsilon_{\alpha_i+1,j}}\det [\mathcal{F}(U)]^{[j,n]}_{[\alpha_i+1,n-j+\alpha_i+1]} \prod_{t \geq i+1}^m \det [\mathcal{F}(U)]^{[\alpha_t+1,n]}_{[\alpha_t+1,n]}
\end{equation}
where $\varepsilon_{ij}$ is defined as
\begin{equation}\label{eq:h_sign}
\varepsilon_{ij} := (j-i)(n-i), \ \ 1 \leq i \leq j \leq n.
\end{equation}
We refer to the functions $h_{ij}$, $2 \leq i \leq j \leq n$, together with $h_{11}(U):=\det U$, as the \emph{$h$-functions}. Note that the flag minors of $\mathcal{F}(U)$ can be expressed as
\begin{equation}\label{eq:f_flags}
(-1)^{\varepsilon_{ij}} \det [\mathcal{F}(U)]_{[i,n-j+i]}^{[j,n]} = \begin{cases}
\frac{h_{ij}(U)}{h_{\mu\mu}(U)} \ &\text{if} \ i-1\in \Gamma_1,\\
h_{ij}(U) \ &\text{otherwise}
\end{cases}
\end{equation}
where $\mu := \gamma(i-1)+1$ for $i-1 \in \Gamma_1$. When $\bg = \bg_{\std}$, we obtain that
\begin{equation*}
h_{ij}(U) = (-1)^{\varepsilon_{ij}} \det U^{[j,n]}_{[i,n-j+i]}, \ \ 2 \leq i \leq j \leq n,
\end{equation*}
which coincides with the earlier description given in~\cite{double}. We will show in Section~\ref{s:p_explicit} that the $h$-functions are regular as elements of $\mathbb{C}[\GL_n]$, and we will also derive an explicit formula in terms of minors of $U\in \GL_n$. Moreover, in Section~\ref{s:complet_final} we will show that the $h$-functions are irreducible and pairwise coprime with their mutations.

\paragraph{Frozen variables.} In the case of $\gc_h^{\dagger}(\bg,\GL_n)$, the frozen variables are given by the set
\begin{equation*}
\{c_1,c_2,\ldots,c_{n-1}\} \cup \{h_{i+1,i+1} \ | \ i \in \Pi \setminus \Gamma_2\} \cup \{h_{11}\}.
\end{equation*}
In the case of $\gc_h^{\dagger}(\bg,\SL_n)$, $h_{11}(U) = 1$, so this variable is absent. We will show in Section~\ref{s:p_frozen} that the zero loci of the frozen variables foliate into unions of symplectic leaves of the ambient Poisson varieties $(\GL_n,\pi_{\bar{\bg}}^{\dagger})$ or $(\SL_n,\pi_{\bar{\bg}}^{\dagger})$. In the same section, we will also show that the frozen $h$-variables do not vanish on the image of $\SL_n^*$ under the map~\eqref{eq:themap}.

\paragraph{Initial extended cluster.} The initial extended cluster $\Psi_0$ of $\gc_h^{\dagger}(\bg,\GL_n)$ is given by the set
\begin{equation*}
    \{h_{ij} \ | \ 2 \leq i \leq j \leq n\} \cup \{\varphi_{kl} \ | \ k,l \geq 1, \ k+l\leq n\} \cup \{c_1,\ldots,c_{n-1}\}\cup\{h_{11}\}.
\end{equation*}
The initial extended cluster of $\gc_h^{\dagger}(\bg,\SL_n)$ is obtained from $\Psi_0$ by removing $h_{11}$.

\paragraph{Ground rings.} The ground rings are defined as follows.
\begin{align*}
\hat{\mathbb{A}}_{\mathbb{C}}(\bg,\GL_n):=&\mathbb{C}[h_{11}^{\pm 1}, c_1,\ldots,c_{n-1}, h_{i+1,i+1} \ | \ i \in \Pi\setminus \Gamma_2];\\
\hat{\mathbb{A}}_{\mathbb{C}}(\bg,\SL_n):=&\mathbb{C}[c_1,\ldots,c_{n-1}, h_{i+1,i+1} \ | \ i \in \Pi\setminus \Gamma_2].
\end{align*}

\paragraph{A generalized cluster mutation.} In the initial extended cluster, only the variable $\varphi_{11}$ is equipped with a nontrivial \emph{string}, which is given by $(1,c_1,\ldots,c_{n-1},1)$ (see Definition~\ref{def:ext_seed}). The generalized mutation relation for $\varphi_{11}$ is given by
\begin{equation}\label{eq:p11mut}
\varphi_{11} \varphi_{11}^{\prime} = \sum_{r=0}^n c_r \varphi_{21}^{r} \varphi_{12}^{n-r}.
\end{equation}
Other mutations of the initial extended cluster follow the usual pattern from the theory of cluster algebras of geometric type.

\paragraph{Toric action.} Let $\mathfrak{h}$ be the Cartan subalgebra of $\sll_n(\mathbb{C})$. Set
\begin{equation}\label{eq:toralg}
\mathfrak{h}_{\bg}:=\{h \in \mathfrak{h} \ | \ \alpha(h) = \beta(h) \ \text{if} \ \gamma^j(\alpha) = \beta \ \text{for some} \ j\}.
\end{equation}
The dimension of $\mathfrak{h}_{\bg}$ is given by
\begin{equation}\label{eq:toralgdim}
k_{\bg}:= \dim {\mathfrak{h}_{\bg}} = |\Pi \setminus \Gamma_2|.
\end{equation}
Set $\mathcal{H}_{\bg}$ to be the connected subgroup of $\SL_n$ that corresponds to $\mathfrak{h}_{\bg}$. We let $\mathcal{H}_{\bg}$ to act upon $\GL_n$ and $\SL_n$ via
\begin{equation}\label{eq:toract}
(T,U) \mapsto TU T^{-1}, \ \ T \in \mathcal{H}_{\bg}, \ U \in \GL_n.
\end{equation}
In the case of $\GL_n$, we consider the action of $\mathcal{H}_{\bg} \times \mathbb{C}^*$, where $\mathbb{C}^*$ acts by scalar multiplication:
\begin{equation*}
(a,U) \mapsto a\cdot U, \ \ a \in \mathbb{C}^*, \ U \in \GL_n.
\end{equation*}
In Section~\ref{s:toric}, we will show that the action of $\mathcal{H}_{\bg}$ on $\SL_n$ and the action of $\mathcal{H}_{\bg}\times \mathbb{C}^*$ on $\GL_n$ induce global toric actions upon the corresponding generalized cluster structures.

\paragraph{The Poisson bracket.} A general formula for the Poisson bracket is given in~\eqref{eq:bdag}. In the cases of $\GL_n$ and $\SL_n$, the Poisson bracket corresponding to $\pi_{\bar{\bg}}^{\dagger}$ is given as follows. First, for a smooth function $f \in C^{\infty}(\GL_n)$, set 
\begin{equation}\label{eq:glngrad}
\nabla_U f := \left( \frac{\partial f}{\partial u_{ij}} \right)_{j,i=1}^n, \ \ U \in \GL_n;
\end{equation}
second, equip $\gl_n(\mathbb{C})$ and $\sll_n(\mathbb{C})$ with the trace form
\[
\langle A,B \rangle := \tr(AB), \ \ A, B \in \gl_n(\mathbb{C}).
\]
Let $R:=R_+:\gl_n(\mathbb{C}) \rightarrow \gl_n(\mathbb{C})$ be the $R$-matrix that corresponds to $\bar{\bg}$ (see formula~\eqref{eq:rplusm} below). The Poisson bracket on $(\GL_n,\pi_{\bar{\bg}}^{\dagger})$ is given by
\begin{equation}\label{eq:brackgln}
    \{f,g\}_{\bar{\bg}}^{\dagger} = \langle R([\nabla_U f,U]),[\nabla_U g, U]\rangle -\langle [\nabla_Uf,U],\nabla_U g \cdot U\rangle.
\end{equation}
In the case of $(\SL_n,\pi_{\bar{\bg}}^{\dagger})$, the formula is exactly the same, except one chooses $R$ as a linear endomorphism of $\sll_n(\mathbb{C})$. 

\paragraph{Casimirs.} For any\footnote{In our work on the Drinfeld double~\cite{multdouble}, the functions $c_1,\ldots,c_{n-1}$ defined on $(D(\GL_n),\pi_{\bg}^D)$ are Casimirs if and only if $R_0(I) 
 = (1/2)I$. However, since $[\nabla_U c_i, U] = 0$ for any $i$, the functions $c_1,\ldots,c_{n-1}$ are Casimirs for any choice of $R_0$ for $(\GL_n,\pi_{\bar{\bg}}^{\dagger})$.} choice of $r_0$ for $(\GL_n,\pi_{\bar{\bg}}^{\dagger})$ and $(\SL_n,\pi_{\bar{\bg}}^{\dagger})$, the functions $c_1,c_2,\ldots,c_{n-1}$, $h_{11}$ are Casimirs of $\pi_{\bar{\bg}}^{\dagger}$.

 \begin{remark}
 Let $(\bg^r,\bg^c)$ be an aperiodic oriented BD pair and let $\gc^D(\bg^r,\bg^c)$ be the generalized cluster structure on $D(\SL_n)$ (or on $D(\GL_n)$) compatible with $\pi_{(\bar{\bg}^r,\bar{\bg}^c)}^D$, which was constructed in our previous work~\cite{multdouble}. The initial seed of $\gc^{\dagger}_h(\bg_{\std})$ was identified as a subseed of some extended seed in $\gc^D(\bg_{\std},\bg_{\std})$. It is thus natural to ask whether the initial extended seed of $\gc_h^{\dagger}(\bg)$ can be obtained as a subseed of an extended seed from $\gc^D(\bg,\bg)$. We verified for $n=3$ and $n=4$, for all BD triples $\bg$, that this is indeed the case. The corresponding mutation sequences are listed in the supplementary note~\cite{github}.
 \end{remark}

\subsection{Main results}\label{s:results}
Let $G$ be a connected reductive complex algebraic group, $\mathfrak{g}$ be its Lie algebra with a fixed decomposition $\mathfrak{g}=\mathfrak{n}_-\oplus\mathfrak{h}\oplus\mathfrak{n}_+$ and a set of simple roots $\Pi$. Let $\mathcal{B}_+$ and $\mathcal{N}_-$ be the connected subgroups of $G$ corresponding to $\mathfrak{n}_+\oplus\mathfrak{h}$ and $\mathfrak{n}_-$. Decompose a generic element $U \in G$ as $U = U_{\oplus}U_-$, $U_\oplus \in \mathcal{B}_+$, $U_-\in \mathcal{N}_-$. Let $\bg:=(\Gamma_1,\Gamma_2,\gamma)$ be a BD triple and $\bar{\bg} :=(\bg,r_0)$ be a BD quadruple.  Define a rational map $\mathcal{Q}:G\dashrightarrow G$ as
\begin{equation}\label{eq:qdef0}
    \mathcal{Q}(U) := \rho(U)^{-1}U \rho(U), \ \ \rho(U) := \prod_{i=1}^{\rightarrow} (\tilde{\gamma}^*)^i(U_-), \ U \in G.
\end{equation}

\begin{proposition}\label{p:qpoiss}
The map $\mathcal{Q}:(G,\pi_{(\bg_{\std},r_0)}^{\dagger})\dashrightarrow (G,\pi_{(\bg,r_0)}^{\dagger})$ is Poisson.
\end{proposition}
Section~\ref{s:comp} contains the proof of Proposition~\ref{p:qpoiss}. In Section~\ref{s:qfc}, we restrict to $G\in\{\SL_n,\GL_n\}$ and show that $\mathcal{Q}:(G,\gc_h^{\dagger}(\bg_{\std}))\dashrightarrow(G,\gc_h^{\dagger}(\bg))$ is a birational quasi-isomorphism. In Section~\ref{s:g_birat}, we construct the map $\mathcal{Q}^{\op}:(G,\gc_g^{\dagger}(\bg_{\std})) \dashrightarrow (G,\gc_g^{\dagger}(\bg))$, which is a birational quasi-isomorphism in the $g$-convention. In Section~\ref{s:q_transp}, we prove a version of Proposition~\ref{p:qpoiss} for $\mathcal{Q}^{\op}$.

\begin{theorem}\label{thm:maingln}
    The generalized cluster structure $\gc^{\dagger}(\bg,\GL_n(\mathbb{C}))$ has the following properties:
    \begin{enumerate}[1)]
        \item The number of frozen variables is $k_{\bg}+n$, and the exchange matrix has full rank;\label{i:gln_rk}
        \item It is complete, and the initial extended cluster consists of irreducible elements of $\mathbb{C}[\GL_n]$ that are pairwise coprime with their mutations;\label{i:gln_ni}
        \item The global toric action of $(\mathbb{C}^*)^{k_{\bg}+1}$ upon $\gc^{\dagger}(\bg,\GL_n(\mathbb{C}))$ is induced by the action of $\mathcal{H}_{\bg} \times \mathbb{C}^*$ on $\GL_n(\mathbb{C})$;\label{i:gln_tor}
        \item It is compatible with the Poisson bivector $\pi^{\dagger}_{\bar{\bg}}$.\label{i:gln_comp}
    \end{enumerate}
\end{theorem}

A parallel statement is valid in the case of $\SL_n$:

\begin{theorem}\label{thm:mainsln}
    The generalized cluster structure $\gc^{\dagger}(\bg,\SL_n(\mathbb{C}))$ has the following properties:
    \begin{enumerate}[1)]
        \item The number of frozen variables is $k_{\bg}+(n-1)$, and the exchange matrix has full rank;
        \item It is complete, and the initial extended cluster consists of irreducible elements of $\mathbb{C}[\SL_n]$ that are pairwise coprime with their mutations;\label{i:sln_ni}
        \item The global toric action of $(\mathbb{C}^*)^{k_{\bg}}$ upon $\gc^{\dagger}(\bg,\SL_n(\mathbb{C}))$ is induced by the action of $\mathcal{H}_{\bg}$ on $\SL_n(\mathbb{C})$;\label{i:sln_tor}
        \item It is compatible with the Poisson bivector $\pi^{\dagger}_{\bar{\bg}}$.
    \end{enumerate}
\end{theorem}
Both theorems are proved in Section~\ref{s:complet} for $\gc_h^{\dagger}(\bg)$ and in Section~\ref{s:thm_pf_g} for $\gc_g^{\dagger}(\bg)$.

Given two BD triples $\bg = (\Gamma_1,\Gamma_2,\gamma)$ and $\tilde{\bg} = (\tilde{\Gamma}_1,\tilde{\Gamma}_2,\theta)$ of the same Lie type, let us write $\tilde{\bg} \prec \bg$ if $\tilde{\Gamma}_1 \subsetneq \Gamma_1$, $\tilde{\Gamma}_2 \subsetneq \Gamma_2$ and $\gamma|_{\tilde{\Gamma}_1} = \theta$.  If $\tilde{\bg}\prec \bg$ and $(\bg,r_0)$ is a BD quadruple for some $r_0$, then $(\tilde{\bg},r_0)$ is also a BD quadruple.

\begin{proposition}\label{p:birat}
    Let $\tilde{\bg}\prec \bg$ be two BD triples of type $A_{n-1}$, and let $G \in \{\SL_n(\mathbb{C}),\GL_n(\mathbb{C})\}$. Then there exists a birational quasi-isomorphism $\mathcal{G}:(G,\gc^{\dagger}(\tilde{\bg}))\dashrightarrow (G,\gc^{\dagger}(\bg))$. Moreover, $\mathcal{G}: (G,\pi_{(\tilde{\bg},r_0)}^{\dagger}) \dashrightarrow (G,\pi_{(\bg,r_0)}^{\dagger})$ is Poisson.
\end{proposition}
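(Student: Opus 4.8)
The plan is to realize $\mathcal{G}$ as a suitable composition of maps of the form $\mathcal{Q}$ built from Proposition~\ref{p:qpoiss}. The key observation is that the relation $\tilde{\bg} \prec \bg$ means $\bg$ is obtained from $\tilde{\bg}$ by enlarging $\Gamma_1,\Gamma_2$ compatibly with the isometry, so the construction of $\mathcal{Q}:(G,\pi_{\std}^{\dagger})\dashrightarrow(G,\pi_{\bg}^{\dagger})$ in~\eqref{eq:qdef0} should have a relative analogue. Concretely, since the nilpotent isometry $\gamma$ of $\bg$ restricts to the isometry $\theta$ of $\tilde\bg$, the map $\tilde\gamma^*$ factors through (or is compatible with) $\tilde\theta^*$, and one can form the "difference" map
\begin{equation}\label{eq:gdef}
\mathcal{G}(U) := \sigma(U)^{-1} U \sigma(U), \qquad \sigma(U) := \prod_{i=1}^{\rightarrow} (\tilde{\gamma}^*)^i(U_-)\cdot \Big(\prod_{i=1}^{\rightarrow}(\tilde{\theta}^*)^i(U_-)\Big)^{-1},
\end{equation}
or, more robustly, simply take $\mathcal{G} := \mathcal{Q}_{\bg}\circ \mathcal{Q}_{\tilde{\bg}}^{-1}$, where $\mathcal{Q}_{\bg}:(G,\gc^\dagger(\bg_{\std}))\dashrightarrow (G,\gc^\dagger(\bg))$ and $\mathcal{Q}_{\tilde\bg}:(G,\gc^\dagger(\bg_{\std}))\dashrightarrow (G,\gc^\dagger(\tilde\bg))$ are the maps from Proposition~\ref{p:qpoiss} (and, for $G\in\{\SL_n,\GL_n\}$, the birational quasi-isomorphisms announced in Section~\ref{s:qfc}). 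The first task is to check that $\mathcal{Q}_{\tilde\bg}$ is birational so that $\mathcal{Q}_{\tilde\bg}^{-1}$ is a legitimate rational map; this follows because $\rho(U)$ in~\eqref{eq:qdef0} depends only on $U_-$, which is unchanged by conjugation by a lower-unipotent element, so $\mathcal{Q}_{\tilde\bg}$ can be inverted by the explicit formula $U \mapsto \rho'(U)\,U\,\rho'(U)^{-1}$ for the appropriate $\rho'$ reconstructed from $U_-$ (the stabilization of the $\mathcal{F}_k$ / $\rho$ construction makes this a finite product).

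For the Poisson statement, I would argue as follows. By Proposition~\ref{p:qpoiss}, if the $r_0$ parts are chosen to agree then $\mathcal{Q}_{\bg}:(G,\pi^\dagger_{\std})\dashrightarrow(G,\pi^\dagger_{\bg})$ and $\mathcal{Q}_{\tilde\bg}:(G,\pi^\dagger_{\std})\dashrightarrow(G,\pi^\dagger_{\tilde\bg})$ are Poisson isomorphisms; as noted in the excerpt, when $\tilde\bg\prec\bg$ one may indeed pick a common $r_0$ for all three of $\pi^\dagger_{\std}$, $\pi^\dagger_{\tilde\bg}$, $\pi^\dagger_{\bg}$. Then $\mathcal{G} = \mathcal{Q}_{\bg}\circ\mathcal{Q}_{\tilde\bg}^{-1}$ is a composition of a Poisson isomorphism with the inverse of a Poisson isomorphism, hence a Poisson isomorphism $(G,\pi^\dagger_{\tilde\bg})\dashrightarrow(G,\pi^\dagger_{\bg})$. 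One subtlety to address is that the domains of definition of these rational maps must be compatible (dense open sets intersect suitably), which is routine since all maps in sight are conjugations by lower-unipotent factors and the relevant Gauss-type decompositions are generic.

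For the quasi-isomorphism statement with $G\in\{\SL_n,\GL_n\}$, I would invoke the results of Section~\ref{s:qfc} (resp. Section~\ref{s:g_birat}) that $\mathcal{Q}_{\bg}$ and $\mathcal{Q}_{\tilde\bg}$ are birational quasi-isomorphisms between $\gc^\dagger_h(\bg_{\std})$ and $\gc^\dagger_h(\bg)$ (resp. $\gc^\dagger_h(\tilde\bg)$), and the same in the $g$-convention. Birational quasi-isomorphisms are closed under composition and inversion — this should be recalled from the background Section~\ref{s:background} — so $\mathcal{G} = \mathcal{Q}_{\bg}\circ \mathcal{Q}_{\tilde\bg}^{-1}$ is again a birational quasi-isomorphism $(G,\gc^\dagger(\tilde\bg))\dashrightarrow (G,\gc^\dagger(\bg))$. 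The main obstacle, and the point that will need genuine care rather than formal nonsense, is precisely this closure-under-inversion property in the generalized setting: one must check that $\mathcal{Q}_{\tilde\bg}^{-1}$ sends the distinguished cluster and frozen variables of $\gc^\dagger(\tilde\bg)$ to Laurent monomials times variables in the mutation class of $\gc^\dagger(\bg_{\std})$ with the correct matching of exchange matrices and of the single generalized-mutation string $(1,c_1,\dots,c_{n-1},1)$ on $\varphi_{11}$ — in other words, that a birational quasi-isomorphism is invertible as such. Since the $\varphi$- and $c$-functions are genuinely invariant under conjugation by lower-unipotent matrices by~\eqref{eq:phi_invar}, they are untouched by $\mathcal{Q}_{\tilde\bg}$ and $\mathcal{Q}_\bg$, and the entire content is concentrated in how the $h$-variables transform; tracing this through, using the explicit comparison~\eqref{eq:f_flags} of flag minors of $\mathcal{F}(U)$ for the two triples, is where the real work lies, and it is essentially the same computation already carried out for $\mathcal{Q}$ in Section~\ref{s:qfc}, now applied relatively.
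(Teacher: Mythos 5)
Your overall strategy matches the paper: set $\mathcal{G} := \mathcal{Q}_{\bg}\circ\mathcal{Q}_{\tilde{\bg}}^{-1}$, and obtain the Poisson statement by composing Poisson isomorphisms after fixing a common $r_0$ (any $r_0$ valid for $\bg$ is automatically valid for $\tilde{\bg}\prec\bg$ and for $\bg_{\std}$). That part is correct and is essentially what Proposition~\ref{p:b_gen} does.

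The gap is in the birational quasi-isomorphism claim. You invoke ``Birational quasi-isomorphisms are closed under composition and inversion --- this should be recalled from the background Section~\ref{s:background},'' but no such closure lemma exists there, and in fact it \emph{cannot}: Definition~\ref{d:marker} requires $\tilde{M}-M>0$, so a quasi-isomorphism always goes from a structure with fewer frozen variables to one with strictly more, and the paper explicitly remarks after Definition~\ref{def:quasiiso} that ``we do not consider $\mathcal{Q}^{-1}$ as a quasi-isomorphism.'' You later flag this as ``the main obstacle'' and describe it as ``proving that a birational quasi-isomorphism is invertible as such'' --- but that is the wrong target, since $\mathcal{Q}_{\tilde\bg}^{-1}$ literally cannot satisfy the definition with the roles swapped. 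What must actually be verified is that the \emph{composed} map $\mathcal{G}:(G,\gc^\dagger(\tilde\bg))\dashrightarrow(G,\gc^\dagger(\bg))$, which does go from more to fewer frozen variables, satisfies the quasi-isomorphism criterion. The paper does this via Proposition~\ref{p:q_y_var}: one checks directly that $\mathcal{G}^*$ carries every nonmarked $y$-variable in $\gc^\dagger(\bg)$ to the corresponding $y$-variable in $\gc^\dagger(\tilde\bg)$, together with preservation of the hatted strings $\hat p_{ir}$ (which are the $c$-variables, invariant under both $\mathcal{Q}$ and $\tilde{\mathcal{Q}}$). This is reduced by a chain $\bg_1\prec\cdots\prec\bg_m$ with $|\Gamma_{1,i+1}\setminus\Gamma_{1,i}|=1$ to the single-root case (Proposition~\ref{p:quasi_iso_simple}), where the action of $\mathcal{G}^*$ on the $h$-variables is computed explicitly and the quasi-isomorphism property follows from either the $y$-variable check or the toric-action criterion of Proposition~\ref{p:quasi_toric}. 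You correctly guessed that the content is concentrated in the $h$-variables and in a computation parallel to Section~\ref{s:qfc}, but the framing through an abstract inversion lemma is not available and would, if taken literally, be false.

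A minor point: your tentative explicit formula $\sigma(U)=\rho_\bg(U)\cdot\rho_{\tilde\bg}(U)^{-1}$ in your~\eqref{eq:gdef} is not the correct composite. Since $\tilde{\mathcal{Q}}^{-1}$ changes $U_-$, the inner $\rho$ must be evaluated at $\tilde{\mathcal{Q}}^{-1}(U)$, not at $U$; working this out produces the recursive expression $G(U)=G_0(U)\prod_{i\geq 1}^{\rightarrow}(\tilde\gamma^*)^i(G_0(U))$ with $G_0(U)=\tilde\theta^*(\tilde{\mathcal{F}}(U)_-)^{-1}\tilde\gamma^*(\tilde{\mathcal{F}}(U)_-)$ of formula~\eqref{eq:genbirat}. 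Your fallback definition $\mathcal{G}=\mathcal{Q}_{\bg}\circ\mathcal{Q}_{\tilde\bg}^{-1}$ is of course fine, so this does not affect the logical correctness of your plan, only the concrete formula you wrote down.
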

If $\tilde{\bg} = \bg_{\std}$, then $\mathcal{G}=\mathcal{Q}$ in the $h$-convention and $\mathcal{G} = \mathcal{Q}^{\op}$ in the $g$-convention. In Sections~\ref{s:genbirat}-\ref{s:bsimpl}, we provide a more explicit formula for $\mathcal{G}$ and prove the proposition for the $h$-convention. In the $g$-convention, the proof and the formulas are provided in Section~\ref{s:g_birat}.

Let $\mathcal{P}_+(\Gamma_2)$ and $\mathcal{P}_-(\Gamma_1)$ be the parabolic subgroups of $\GL_n$ generated by the BD data $(\Gamma_1,\Gamma_2,\gamma)$. Define the subgroup $\mathcal{D} \subset \mathcal{P}_+(\Gamma_1) \times \mathcal{P}_-(\Gamma_2)$ via
\begin{equation}\label{eq:dgroup_semi}
\mathcal{D} := \{(g^\prime,g) \in \mathcal{P}_+(\Gamma_1) \times \mathcal{P}_-(\Gamma_2) \ | \ \togamma(\mathring{\Pi}_{\Gamma_1}(g^\prime)) = \mathring{\Pi}_{\Gamma_2}(g)\}
\end{equation}
where $\mathring{\Pi}_{\Gamma_1} : \mathcal{P}_+(\Gamma_1) \rightarrow \GL_n(\Gamma_1)$ and $\mathring{\Pi}_{\Gamma_2} : \mathcal{P}_-(\Gamma_2) \rightarrow \GL_n(\Gamma_2)$ are the group projections upon the Levi factors of $\mathcal{P}_+(\Gamma_1)$ and $\mathcal{P}_-(\Gamma_2)$, and the map $\togamma$ is a certain group lift of $\gamma$ (see Section~\ref{s:bd_map} for details). Define an action of $\mathcal{D}$ upon $\GL_n$ via
\begin{equation}\label{eq:daction}
((g^\prime, g),U) \mapsto g^\prime U g^{-1}, \ \ (g^\prime,g) \in \mathcal{D} ,\ U \in \GL_n.
\end{equation}
\begin{proposition}\label{p:frozvar}
Let $\psi$ be a frozen variable in $\gc^{\dagger}(\bg,G)$,  $G \in \{\SL_n(\mathbb{C}),\GL_n(\mathbb{C})\}$. Then the following statements hold:
\begin{enumerate}[1)]
\item The variable $\psi$ is semi-invariant with respect to the action~\eqref{eq:daction};
\item The variable $\psi$ is log-canonical with any $u_{ij}$;
\item The zero locus of $\psi$ foliates into a union of symplectic leaves of $(G,\pi_{\bar{\bg}}^{\dagger})$.
\end{enumerate}
\end{proposition}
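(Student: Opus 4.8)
The plan is to prove the three statements in order, with (1) doing most of the work and (2)--(3) following quickly from it together with the Poisson structure.

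First I would establish the semi-invariance (1). The frozen variables split into three families: the $c_i$, the diagonal $\varphi$-variables present among the $\varphi_{kl}$ (in fact all $\varphi_{kl}$ are candidates, but the frozens here are $c_1,\dots,c_{n-1}$, $h_{i+1,i+1}$ for $i\in\Pi\setminus\Gamma_2$, and $h_{11}$), and the frozen $h$-variables $h_{i+1,i+1}$ together with $h_{11}=\det U$. For the $c_i$, semi-invariance under~\eqref{eq:daction} is essentially the statement that the coefficients of $\det(I+\lambda U)$ transform by a character under $U\mapsto g'Ug^{-1}$ with $(g',g)\in\mathcal{D}$; since $\det(I+\lambda g'Ug^{-1}) = \det(I + \lambda g'g^{-1}\cdot gUg^{-1})$ this reduces to understanding $\det(I+\lambda V U')$ where $V$ encodes the mismatch $g'g^{-1}$, and one uses that the $\mathcal{D}$-condition $\togamma(\mathring\Pi_{\Gamma_1}(g')) = \mathring\Pi_{\Gamma_2}(g)$ forces the relevant principal structure so the characteristic polynomial is only rescaled. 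For $h_{11}=\det U$ this is immediate since $\det(g'Ug^{-1}) = \det(g'g^{-1})\det U$. The real content is the frozen $h$-variables $h_{i+1,i+1}$: here I would use the explicit formula~\eqref{eq:h_fun} together with the behavior of $\mathcal{F}(U)$ under the $\mathcal{D}$-action. The key is that $\mathcal{F}$ is built from iterated applications of $\tilde\gamma^*$ to the lower-unipotent part, and the defining condition on $\mathcal{D}$ is precisely engineered so that left multiplication by $g'\in\mathcal{P}_+(\Gamma_1)$ and right multiplication by $g\in\mathcal{P}_-(\Gamma_2)$ interact with the $\gamma$-twisting in a controlled, ``triangular'' way, sending $\mathcal{F}(g'Ug^{-1})$ to $g'_\ast \mathcal{F}(U) g^{-1}_\ast$ for suitable modified group elements, which then rescale the relevant minors by characters. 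This compatibility is, I expect, the main obstacle: it requires unwinding the definitions of $\togamma$, $\mathring\Pi_{\Gamma_i}$ and the recursion defining $\mathcal{F}$ from Sections~\ref{s:bd_map} and~\ref{s:map_f}, and verifying that the Levi-matching condition in~\eqref{eq:dgroup_semi} is exactly what makes the $\tilde\gamma^*$-twists cancel against the parabolic conjugation.

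Second, for log-canonicity of $\psi$ with every $u_{ij}$ (statement (2)): I would argue that a nonzero semi-invariant of a connected algebraic group action whose Lie-algebra generators act on $\mathcal{O}(G)$ as derivations of the Poisson algebra must Poisson-commute with coordinate functions up to a multiple of itself. Concretely, one shows that the infinitesimal generators of the $\mathcal{D}$-action (and the scaling $\mathbb{C}^\ast$, in the $\GL_n$ case) are Hamiltonian or at least locally Hamiltonian with respect to $\pi_{\bg}^\dagger$ — this is where one invokes that $\pi_{\bg}^\dagger$ is the pushforward of the Poisson structure on the double and that the left/right dressing-type actions are Poisson; then $\{\psi, u_{ij}\}$ is expressible through the action of such a generator on $\psi$, which by semi-invariance returns a scalar multiple of $\psi$, yielding $\{\psi,u_{ij}\} = \omega\,\psi\,u_{ij}$ for the appropriate $\omega$ once one checks the $u_{ij}$ themselves transform multiplicatively under the torus part. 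Alternatively, and perhaps more cleanly, since Proposition~\ref{p:qpoiss} (and its $\gc$-refinement) lets us transport everything to $\bg=\bg_{\std}$, and for the trivial BD triple the frozen variables are honest minors of $U$ whose log-canonicity with the $u_{ij}$ was established in~\cite{double}, one transfers the property along the Poisson birational quasi-isomorphism $\mathcal{Q}$: log-canonicity of a frozen variable with the cluster variables is preserved by birational quasi-isomorphisms, and a short computation promotes log-canonicity with cluster variables to log-canonicity with all $u_{ij}$.

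Third, statement (3) is a general principle: if $\psi$ is a semi-invariant of a Poisson action of a connected group $\mathcal{D}$ (here one also throws in the flows of the Hamiltonians of $c_1,\dots,c_{n-1},h_{11}$, which are Casimirs and hence trivial, so really just $\mathcal{D}$ plus the Poisson vector fields tangent to the foliation), then its zero locus is $\mathcal{D}$-invariant, hence a union of $\mathcal{D}$-orbits; combining $\mathcal{D}$-invariance with the fact that symplectic leaves of $\pi_{\bg}^\dagger$ are swept out by the Hamiltonian flows — and that these flows preserve $\{\psi = 0\}$ because $\{\psi, f\}$ vanishes on $\{\psi=0\}$ for every $f$ by log-canonicity from (2) — shows $\{\psi=0\}$ is a union of symplectic leaves. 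The cleanest route: from (2), $\{\psi,\cdot\}$ vanishes identically on the hypersurface $\psi=0$, so every Hamiltonian vector field is tangent to it, which is exactly the statement that $\{\psi=0\}$ foliates into symplectic leaves. Thus (3) is essentially a corollary of (2), and the whole proof concentrates in the $\mathcal{F}$-compatibility computation behind (1).
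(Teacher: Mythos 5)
Your overall strategy for Part 3 is correct and matches the paper: once Part 2 gives log-canonicity, $(\psi)$ is a Poisson ideal, and combined with the primality of $\psi$ (which you omit to mention, but which follows from Proposition~\ref{p:compl}), Proposition~\ref{p:yakim} gives the symplectic-leaf statement. Part 1 for the $c$-variables and $h_{11}$ is also fine. However, there are two substantive gaps.

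\textbf{Gap in Part 1.} For a frozen $h$-variable $h_{i+1,i+1}$, the $\mathcal{D}$-action has three parts after triangular decomposition: a lower-unipotent part ($g_-$, $g'_- = \togammas(g_-)$), a toral part, and an upper-unipotent part ($g'_+$, $g_+ = \togamma(g'_+)$). The invariance under the first two parts does come, as you say, from unwinding the $\mathcal{F}$-recursion (this is Corollary~\ref{c:hinvarn}, a consequence of Corollary~\ref{c:f_invar}). But the invariance under the upper-unipotent part --- that $h_{i+1,i+1}(PU\tilde\gamma(P)^{-1}) = h_{i+1,i+1}(U)$ for $P\in\mathcal{N}_+$ --- is \emph{not} accessible through the $\mathcal{F}$-recursion, because $\mathcal{F}$ is built by stripping off unipotent \emph{lower}-triangular data, and there is no analogous functional equation for $PU\tilde\gamma(P)^{-1}$. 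For non-frozen $h_{ij}$ this invariance simply fails. The paper obtains it only for frozen $h$-variables through a separate duality mechanism: it proves (Proposition~\ref{p:hfrozdual}) that $h_{\alpha_0+1,\alpha_0+1}$ in $\gc_h^\dagger(\bg)$ equals $\bar h_{\alpha_m+1,\alpha_m+1}(U^T)$ in $\gc_h^\dagger(\bg^{\op})$, by comparing the explicit minor expansions of both; the upper-unipotent invariance (Corollary~\ref{c:hposinvar}) then transfers from the lower-unipotent invariance for the opposite triple. Your sketch treats the parabolic conjugation as a single ``controlled, triangular'' computation, and this hides exactly where the argument needs a new idea.

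\textbf{Gap in Part 2.} Your route (b) --- transporting log-canonicity with $u_{ij}$ along $\mathcal{Q}$ from the $\bg_{\std}$ case --- does not go through as stated. The quasi-isomorphism $\mathcal{Q}$ controls what happens to cluster and frozen variables, but $\mathcal{Q}^*(u_{ij})$ is a genuinely complicated rational function of the entries of $U$, not a monomial in the $\tilde u_{ij}$'s times frozens; so log-canonicity of $\psi$ with the $u_{ij}$ is not something birational quasi-isomorphism preserves. Your route (a) --- that the generators of the $\mathcal{D}$-action are (locally) Hamiltonian --- is plausible in spirit but not established: the paper never proves the $\mathcal{D}$-action is Poisson, and in general a semi-invariant of a non-Poisson group action need not Poisson-commute with coordinates up to a multiple of itself. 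What the paper actually does is a direct computation: it fixes the ringed $R_0$, introduces the twisted gradient combinations $\onu(\psi) = \nabla_U\psi\cdot U - \ogamma(U\nabla_U\psi) \in \mathfrak{b}_-$ and $\onus(\psi) = U\nabla_U\psi - \ogammas(\nabla_U\psi\cdot U)\in\mathfrak{b}_+$ (the second inclusion again coming from the duality Corollary~\ref{l:hposdif}), shows their $\pi_0$-parts are constants, and then expands $R([\nabla_U\psi,U])$ and the bracket $\{\psi, u_{ij}\}_{\bg}^\dagger$ explicitly, using the ringed identities~\eqref{eq:ringedr0} to collapse everything to a scalar times $\psi u_{ij}$. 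The independence of $R_0$ is noted separately. This is a different and more concrete mechanism than either of your two proposals.

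In short, you have the architecture right (1 via invariance properties of $\mathcal{F}$, 3 as a corollary of 2 plus primality and Proposition~\ref{p:yakim}) but the two hard steps --- upper-unipotent semi-invariance and the explicit bracket computation --- are precisely the ones your sketch leaves as black boxes, and for Part 1 the missing ingredient (the $\bg\leftrightarrow\bg^{\op}$ duality of frozen $h$-variables under transposition) is genuinely needed and not a routine elaboration of what you wrote.
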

In the $h$-convention, we prove Proposition~\ref{p:frozvar} in Section~\ref{s:p_frozen}; in the $g$-convention, we prove it in Section~\ref{s:descr_gg}.
\section{Background}\label{s:background}
In this section, we provide a necessary background on Poisson geometry, generalized cluster structures and birational quasi-isomorphisms. We will also use some well-known determinantal identities, which are listed in Appendix~\ref{s:aidentminors}.

\subsection{Poisson geometry
}\label{s:ba_poisson}
In this section, we briefly recall relevant concepts from Poisson geometry. A more detailed account can be found in \cite{chari}, \cite{etingof}, and \cite{rs}.

\paragraph{Poisson--Lie groups.} A \emph{Poisson bracket $\{\cdot, \cdot\}$} on a commutative algebra is a Lie bracket that satisfies the Leibniz rule in each slot. Given a manifold $M$, a \emph{Poisson bivector} on $M$ is a section $\pi \in \Gamma(M,\bigwedge^2 TM)$ such that $\{f,g\}:=\pi(df\wedge dg)$ is a Poisson bracket on the space of smooth functions on $M$. The pair $(M,\pi)$ is called a \emph{Poisson manifold}.
\begin{definition}
   Given a Lie group $G$ and a Poisson bivector $\pi \in \Gamma(G,\Lambda^2TG)$, the pair $(G,\pi)$  is called a \emph{Poisson--Lie group} if 
\begin{equation*}
\pi_{gh} = [dL_g \otimes dL_g](\pi_h) + [dR_h \otimes dR_h](\pi_g), \ \ g, h \in G,
\end{equation*}
where $L_g$ and $R_h$ denote the left and right translations by $g$ and $h$, respectively. Equivalently, the multiplication map from $G\times G$ equipped with the direct product Poisson structure to $G$ is Poisson.
\end{definition}
Let $\mathfrak{g}$ be the Lie algebra of $G$ and $r \in \mathfrak g \otimes \mathfrak g$. If $G$ is a connected Lie group, then the bivector\footnote{If $G$ is semisimple and complex, then any bivector $\pi$ that yields the structure of a Poisson--Lie group on $G$ is of this form for some $r \in \mathfrak{g}\otimes\mathfrak{g}$.} \begin{equation}\label{eq:pigroup}
\pi_g := [dL_g\otimes dL_g] (r) - [dR_g \otimes dR_g](r), \ \ g \in G,
\end{equation}
defines the structure of a Poisson--Lie group on $G$ if and only if the following conditions are satisfied:
\begin{enumerate}[1)]
\item The symmetric part of $r$ is $\ad$-invariant;\label{i:rc1}
\item The $3$-tensor $[r,r]:=[r_{12},r_{13}] + [r_{12},r_{23}] + [r_{13},r_{23}]$ is $\ad$-invariant, where $(a\otimes b)_{12} = a \otimes b \otimes 1$, $(a \otimes b)_{13} = a \otimes 1 \otimes b$ and $(a\otimes b)_{23} = 1 \otimes a \otimes b$, $a,b \in \mathfrak{g}$. \label{i:rc2}
\end{enumerate}
The \emph{Classical Yang--Baxter equation (CYBE)} is the equation $[r,r] = 0$. For simple complex Lie algebras~$\mathfrak{g}$, Belavin and Drinfeld in~\cite{bd,bd2} classified solutions of the CYBE that have a nondegenerate $\ad$-invariant symmetric part. The classification was partially extended by Hodges in~\cite{hodges} to the case of reductive complex Lie algebras (however, Hodges required the symmetric part of $r$ to be a multiple of the Casimir element). A full classification of solutions of the CYBE with an arbitrary nondegenerate $\ad$-invariant symmetric part in the case of reductive complex Lie algebras was obtained by Delorme in~\cite{bdr}.

\paragraph{Belavin--Drinfeld data.} Let $\mathfrak{g}$ be a reductive complex Lie algebra with a fixed decomposition $\mathfrak{g} = \mathfrak{n}_- \oplus \mathfrak{h} \oplus \mathfrak{n}_+$, let $\Pi$ be a set of simple roots and $\langle\,,\,\rangle$ be a nondegenerate invariant symmetric  bilinear form on~$\mathfrak{g}$. 
\begin{definition}
A \emph{Belavin--Drinfeld triple} (for conciseness, a \emph{BD triple}) is a triple $(\Gamma_1, \Gamma_2, \gamma)$ with $\Gamma_1, \Gamma_2 \subset \Pi$ and $\gamma: \Gamma_1 \rightarrow \Gamma_2$ a nilpotent\footnote{The nilpotency condition means that for any $\alpha \in \Gamma_1$ there exists a number $j$ such that $\gamma^j(\alpha) \notin \Gamma_1$.} isometry. A \emph{Belavin--Drinfeld quadruple} (a \emph{BD quadruple}) is a quadruple $\bar{\bg}:=(\bg,r_0)$ where $r_0 \in \mathfrak{h}\otimes \mathfrak{h}$ is an element satisfying the system of equations
\begin{align}
    &r_0 + r_0^t = \Omega_0;\label{eq:r0} \\
    &[\gamma(\alpha)\otimes 1](r_0) + [1\otimes \alpha](r_0) = 0, \ \ \alpha \in \Gamma_1 \label{eq:ralg}
\end{align}
where $(a\otimes b)^{t} = b \otimes a$, $a,b\in \mathfrak{g}$.
\end{definition}
For every positive root $\alpha$, choose $e_{\alpha} \in \mathfrak{g}_\alpha$ and $e_{-\alpha} \in \mathfrak{g}_{-\alpha}$ such that $\langle e_\alpha,e_{-\alpha}\rangle = 1$, and set $h_{\alpha}:=[e_{\alpha},e_{-\alpha}]$. Let $\mathfrak{g}_{\Gamma_1}$ and $\mathfrak{g}_{\Gamma_2}$ be the semisimple Lie subalgebras of $\mathfrak{g}$ generated by $\Gamma_1$ and $\Gamma_2$. Extend $\gamma$ to an isomorphism $\mathbb{Z}\Gamma_1 \xrightarrow{\sim} \mathbb{Z}\Gamma_2$, and then define $\gamma : \mathfrak{g}_{\Gamma_1} \rightarrow \mathfrak{g}_{\Gamma_2}$ by $\gamma(e_{\alpha}) = e_{\gamma(\alpha)}$ and $\gamma(h_{\alpha}) = h_{\gamma(\alpha)}$. Let $\gamma^* : \mathfrak{g}_{\Gamma_2} \rightarrow \mathfrak{g}_{\Gamma_1}$ be the conjugate of $\gamma$ with respect to the form on $\mathfrak{g}$. Extend both $\gamma$ and $\gamma^*$ by zero to $\mathfrak{g}$ via setting $\gamma|_{\mathfrak{g}_{\Gamma_1}^{\perp}} = 0$ and $\gamma^*|_{\mathfrak{g}_{\Gamma_2}^{\perp}} = 0$. Note that the extensions $\gamma,\gamma^*:\mathfrak{g}\rightarrow\mathfrak{g}$ are still adjoint to one another.

\paragraph{The Belavin--Drinfeld classification.} Continuing in the setup of the previous paragraph, let $\Omega$ be the $2$-tensor corresponding to the form $\langle\,,\,\rangle$. Let $\pi_{>}:\mathfrak{g} \rightarrow \mathfrak{n}_+$, $\pi_{<}:\mathfrak{g}\rightarrow \mathfrak{n}_-$, and $\pi_0 : \mathfrak{g}\rightarrow \mathfrak{h}$ be the orthogonal projections, and let $\Omega_{>}:=[1\otimes\pi_{>}]\Omega$, $\Omega_{<}:=[1\otimes \pi_{<}]\Omega$, and $\Omega_0 :=[1\otimes\pi_0]\Omega$ be the $2$-tensors corresponding to the projections. 

\begin{theorem}[Belavin \& Drinfeld]\label{thm:bd} Under the above setup, for a given BD quadruple $(\bg,r_0)$, if
\begin{equation}\label{eq:rplus}
r = r_0 + \left[\frac{1}{1-\gamma^*} \otimes 1\right](\Omega_{>}) - \left[1 \otimes \frac{\gamma^*}{1-\gamma^*}\right](\Omega_{<})
\end{equation}
then $r$ satisfies CYBE. Moreover,
\begin{equation}\label{eq:plusmin}
r + r^t = \Omega.
\end{equation}
Conversely, if $r$ is a solution of the CYBE such that $r$ satisfies~\eqref{eq:plusmin}, then $r$ assumes the form~\eqref{eq:rplus} for a suitable decomposition of $\mathfrak{g}$, for some BD quadruple and some choice of the root vectors $e_{\alpha}$.
\end{theorem}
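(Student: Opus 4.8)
The plan is to treat the two directions of Theorem~\ref{thm:bd} separately.

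\textbf{The forward direction.} For the tensor $r$ of~\eqref{eq:rplus} one must check the symmetry relation~\eqref{eq:plusmin} and the CYBE. The relation $r+r^t=\Omega$ is pure bookkeeping: since $\Omega=\Omega_{>}+\Omega_{0}+\Omega_{<}$ with $\Omega_{>}^t=\Omega_{<}$, and $\left[\tfrac{1}{1-\gamma^*}\otimes 1\right](\Omega_{>})$ has transpose $\left[1\otimes\tfrac{1}{1-\gamma^*}\right](\Omega_{<})$ (and likewise for the remaining term), transposing~\eqref{eq:rplus} and adding yields
\[
r+r^t=(r_0+r_0^t)+\left[\left(\tfrac{1}{1-\gamma^*}-\tfrac{\gamma^*}{1-\gamma^*}\right)\otimes 1\right](\Omega_{>})+\left[1\otimes\left(\tfrac{1}{1-\gamma^*}-\tfrac{\gamma^*}{1-\gamma^*}\right)\right](\Omega_{<}),
\]
and the operator identity $\tfrac{1}{1-\gamma^*}-\tfrac{\gamma^*}{1-\gamma^*}=\id$ together with~\eqref{eq:r0} collapses this to $\Omega$; here $\tfrac{1}{1-\gamma^*}=\sum_{k\ge 0}(\gamma^*)^k$ is an honest finite sum because $\gamma$, hence $\gamma^*$, is nilpotent. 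For the CYBE $[r,r]=0$ I would invoke the standard fact that $[r,r]$ is automatically $\ad$-invariant once the symmetric part of $r$ is $\ad$-invariant — here the symmetric part is $\tfrac12\Omega$. Combined with the weight grading of $\mathfrak{g}^{\otimes 3}$, $\ad$-invariance pins $[r,r]$ down to a small space, and writing $r$ in the root basis reduces the vanishing of $[r,r]$ to a finite list of identities among the structure constants $N_{\alpha\beta}$ and the entries of $r_0$. Each such identity is the infinitesimal shadow of one hypothesis on the BD data: $\gamma$ transports the bracket on $\mathfrak{g}_{\Gamma_1}$ to that on $\mathfrak{g}_{\Gamma_2}$ (so the structure constants match up to sign), $\gamma$ is an isometry (which, with~\eqref{eq:ralg}, controls the mixed Cartan/root contributions), and $\gamma$ is nilpotent (so the geometric series terminate and no leftover boundary terms survive). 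A cleaner packaging of the same computation is induction on $|\Gamma_1|$: the base case $\Gamma_1=\Gamma_2=\emptyset$ is the Drinfeld--Jimbo $r$-matrix $r=r_0+\Omega_{>}$, whose CYBE is classical, and the inductive step appends a single $\gamma$-string, tracking the terms contributed by the next power of $\gamma^*$; nilpotency keeps the induction finite.

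\textbf{The converse.} Suppose $r$ solves the CYBE and $r+r^t=\Omega$. Write $r=\tfrac12\Omega+a$ with $a$ skew-symmetric. Since $\Omega$ is $\ad$-invariant, the cross terms in the expansion of $[r,r]$ cancel and the CYBE becomes a modified CYBE for $a$, with right-hand side a fixed multiple of the standard $\ad$-invariant $3$-tensor $[\Omega,\Omega]$. The classification of such $a$ is the substance of the converse, and I would proceed in three steps: (i) from $a$ build the associated operator on $\mathfrak{g}$ and analyze its spectrum; the operator form of the modified CYBE forces a pair of opposite Borel subalgebras, hence a Cartan subalgebra $\mathfrak{h}$ and a choice of positive roots — this is the ``suitable decomposition of $\mathfrak{g}$'' in the statement; (ii) in that polarization decompose $r=r_0+(\text{strictly upper})-(\text{strictly lower})$ and, by feeding pairs of root spaces into the modified CYBE, show that the upper and lower parts are intertwined by a linear map $\gamma$ carrying a subset $\Gamma_1$ of simple roots onto a subset $\Gamma_2$; the relation~\eqref{eq:plusmin} then forces $\gamma$ to preserve the form, i.e.\ to be an isometry; (iii) show $\gamma$ is nilpotent — otherwise $\tfrac{1}{1-\gamma^*}$ would fail to define a finite-rank operator, contradicting $r\in\mathfrak{g}\otimes\mathfrak{g}$. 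Normalizing the root vectors $e_\alpha$ so that $\gamma(e_\alpha)=e_{\gamma(\alpha)}$ then puts $r$ in the form~\eqref{eq:rplus}. In the reductive (rather than semisimple) setting one treats the center separately and reduces to the semisimple case, as in Delorme's argument~\cite{bdr}.

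\textbf{Main obstacle.} The forward direction is a careful but essentially mechanical computation once the $\ad$-invariance lemma is in hand. The real work is the converse: the polarization, the subsets $\Gamma_1,\Gamma_2$, the map $\gamma$, and the normalization of the root vectors must all be reconstructed from an a priori featureless solution of the modified CYBE, and proving that the extracted $\gamma$ is a well-defined nilpotent isometry — via the spectral analysis of the operator attached to $a$ — is precisely the delicate core of Belavin and Drinfeld's original argument~\cite{bd,bd2}.
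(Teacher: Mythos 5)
The paper does not prove Theorem~\ref{thm:bd}; it is stated as a classical result and credited to Belavin--Drinfeld~\cite{bd,bd2}, with the reductive extension attributed to Hodges~\cite{hodges} and Delorme~\cite{bdr}. There is therefore no argument in the paper for your proposal to be compared against.

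Taken on its own terms, your sketch correctly dispatches the easy half: the computation of $r+r^t=\Omega$ is right (transposition exchanges $\Omega_{>}$ and $\Omega_{<}$, the identity $\tfrac{1}{1-\gamma^*}-\tfrac{\gamma^*}{1-\gamma^*}=\id$ is an exact finite telescoping because $\gamma^*$ is nilpotent, and~\eqref{eq:r0} finishes). The rest is a plausible road map rather than a proof, and two of your steps would need genuine repair before they become arguments. First, ``$\ad$-invariance pins $[r,r]$ down to a small space'' is only immediate for simple $\mathfrak{g}$, where the invariant $3$-tensors form a line; for reductive $\mathfrak{g}$ the space is larger and one must handle the center separately (this is precisely the content of Delorme's extension), so your appeal to weight grading does not by itself close the gap. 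Second, in step (iii) of the converse, saying ``$\gamma$ is nilpotent, otherwise $\tfrac{1}{1-\gamma^*}$ would not be finite'' presupposes that $r$ has already been put in the form~\eqref{eq:rplus}; the honest argument has to first show that the strictly-triangular part of $r$, in the polarization extracted in step (i), is intertwined by a map whose powers appear with growing root heights, and only then does finite-dimensionality of $\mathfrak{g}$ force the series to terminate and hence $\gamma$ to be nilpotent. Your acknowledgement that the converse is ``the delicate core'' of the original argument is exactly right, and in its present form that core is named rather than carried out.
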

The element $r$ from the theorem is called a \emph{classical $r$-matrix}. If $\mathfrak{g}$ is simple, then Theorem~\ref{thm:bd} provides a full classification of solutions $r \in \mathfrak{g}\otimes \mathfrak{g}$ of the CYBE with nondegenerate $\ad$-invariant symmetric parts. Sometimes we will use the notation $r_{\bar{\bg}}$ along with $\pi_{\bar{\bg}}$ for the Poisson bivector on $G$ defined by $r_{\bar{\bg}}$ via the formula~\eqref{eq:pigroup}.

Let us define $R_+ : \mathfrak{g}\rightarrow \mathfrak{g}$ and $R_-:\mathfrak{g}\rightarrow \mathfrak{g}$ as the linear maps satisfying the equation 
\begin{equation}\label{eq:rp_rm}
    r = [1\otimes R_+]\Omega = -[R_-\otimes 1]\Omega.
\end{equation}
Then $R_+ - R_- = \id$ and
\begin{equation}\label{eq:rplusm}
    R_+ = \frac{1}{1-\gamma} \pi_{>} - \frac{\gamma^*}{1-\gamma^*}\pi_{<} + R_0 \pi_0.
\end{equation}
Defining the map $R_0 :\mathfrak{h}\rightarrow \mathfrak{h}$  by $r_0 = [1\otimes R_0]\Omega_0$, the system~\eqref{eq:r0}-\eqref{eq:ralg} can be rewritten 
\begin{align}
&R_0 + R_0^* = \id;\label{eq:r0m}\\
&R_0(h_{\alpha}-\gamma(h_{\alpha})) = h_{\alpha}, \ \ \alpha \in \Gamma_1\label{eq:ralgm}
\end{align}
where $R_0^*$ denotes the adjoint of $R_0$ in $\mathfrak{h}$.

\paragraph{The Drinfeld Double.} Continuing in the setup of the previous paragraphs, let $G$ be equipped with a  Poisson bivector $\pi_{\bar{\bg}}$ defined by a classical $r$-matrix $r_{\bar{\bg}}$ relative to a BD quadruple $\bar{\bg}$. Let $R_+$ and $R_-$ be given by $r_{\bar{\bg}}$ from the equation~\eqref{eq:rp_rm}, and set $\mathfrak{d} := \mathfrak g \oplus \mathfrak g$ to be the direct sum of Lie algebras. Define a nondegenerate symmetric invariant bilinear form on $\mathfrak{d}$ by
\begin{equation*}
\langle (x_1,y_1), (x_2,y_2) \rangle = \langle x_1, x_2 \rangle - \langle y_1, y_2 \rangle, \ \ x_1, \,x_2,\, y_1,\, y_2 \in \mathfrak{g}.
\end{equation*}
As a vector space, $\mathfrak{d}$ splits into the direct sum of the following isotropic Lie subalgebras:
\begin{equation*}
\mathfrak{g}^{\delta} := \{(x, x) \ | \ x \in \mathfrak{g}\}, \ \ \mathfrak{g}^* := \{(R_+(x), R_-(x)) \ | \ x \in \mathfrak{g}\}.
\end{equation*}
Set $R^{\mathfrak{d}}_+ := P_{\mathfrak{g}^*}$ where $P_{\mathfrak{g}^*}$ is the projection of $\mathfrak{d}$ onto $\mathfrak{g}^*$, and let $r^{\mathfrak{d}} \in \mathfrak{d}\otimes \mathfrak{d}$ be the $2$-tensor corresponding to $R^{\mathfrak{d}}_+$. Then $R_+^{\mathfrak{d}}$ yields the structure of a Poisson--Lie group on the Lie group $D(G):=G \times G$ via the Poisson bivector 
\begin{equation*}
(\pi^{D}_{\bar{\bg}})_{(g,h)} := [dL_{(g,h)}\otimes dL_{(g,h)}] (r^{\mathfrak{d}}) - [dR_{(g,h)}\otimes dR_{(g,h)}](r^{\mathfrak{d}}), \ \ {(g,h)} \in D(G).
\end{equation*}
\begin{definition}
    The Poisson--Lie group $(D(G),\pi_{\bar{\bg}}^{D})$ is called the \emph{Drinfeld double} of $(G,\pi_{\bar{\bg}})$.
\end{definition} 
The Poisson bracket on $D(G)$ can be written in the form
\begin{equation*}
\{f_1,f_2\}_{\bar{\bg}}^D = \langle R_+(E_L f_1), E_Lf_2 \rangle - \langle R_+(E_R f_1), E_Rf_2 \rangle + \langle \nabla_X^R f_1, \nabla^R_Y f_2\rangle - \langle \nabla_X^L f_1, \nabla_Y^L f_2\rangle
\end{equation*}
where $\nabla^L f_i = (\nabla_X^L f_i, -\nabla_Y^L f_i)$ and $\nabla^R f_i = (\nabla_X^R f_i, -\nabla_Y^R f_i)$ are the left and right gradients, respectively, while $E_Lf_i = \nabla^L_X f_i+ \nabla^L_Yf_i$ and $E_Rf_i = \nabla^R_Xf_i + \nabla^R_Yf_i$. We define the gradients on $G$ as\footnote{This convention is opposite to the one in~\cite{plethora} and~\cite{rs}, but this convention ensures that the left gradient is the gradient in the left trivialization, and the right gradient is the gradient in the right trivialization of the group.}
\begin{equation*}
\langle \nabla^L f|_g, x\rangle := \left. \frac{d}{dt}\right|_{t=0} f(g\exp (tx)), \ \ \ \langle \nabla^R f|_g, x\rangle := \left. \frac{d}{dt}\right|_{t=0} f(\exp(tx)g), \ \ g \in G, \ x \in \mathfrak{g}.
\end{equation*}
 The group $G$ can be identified with the diagonal subgroup $G^{\delta}$ of $D(G)$, which corresponds to the Lie subalgebra $\mathfrak{g}^\delta$. The Poisson bracket $\{\cdot, \cdot \}_{\bar{\bg}}$ on $G$ can be expressed as
\begin{equation*}
\{f_1,f_2\}_{\bar{\bg}} = \langle R_+(\nabla^L f_1), \nabla^L f_2 \rangle - \langle R_+(\nabla^R f_1), \nabla^R f_2 \rangle.
\end{equation*}
Additionally, the connected (immersed) Poisson--Lie subgroup $(G^*,\pi_{\bar{\bg}}^*)$ of $D(G)$ corresponding to $\mathfrak{g}^*$ is called the \emph{dual Poisson--Lie group of $G$}.

\paragraph{The Poisson dual $G^{\dagger}$.} In the setup of the previous paragraph, consider the Drinfeld double $(D(G),\pi^{D}_{\bar{\bg}})$ along with $(G^*,\pi^*_{\bar{\bg}})$ immersed into $D(G)$. Consider the map
\begin{equation}\label{eq:dagpush}
D(G) \ni (g,h) \ \longmapsto \ g^{-1}h \in G.
\end{equation}
The pushforward of $\pi^D_{\bar{\bg}}$ under the above map is denoted as $\pi^{\dagger}_{\bar{\bg}}$, and the image of $G^*$ is an open subset of $G$ denoted as $G^{\dagger}$. The Poisson bivector $\pi^{\dagger}_{\bar{\bg}}$ is given by
\begin{equation}\label{eq:pidag}\begin{split}
    (\pi^{\dagger}_{\bar{\bg}})_g = \left[d L_g \otimes d L_g-d R_g \right.&\left.\otimes d L_g-d L_g \otimes d R_g+d R_g \otimes d R_g\right](r) -\\- &\left[d L_g \otimes d L_g - d R_g \otimes d L_g\right](\Omega),  \ \ g \in G,
    \end{split}
\end{equation}
and the corresponding Poisson bracket is given by
\begin{equation}\label{eq:bdag}
\{f_1,f_2\}^{\dagger}_{\bar{\bg}} = \langle R_+(\nabla^Lf_1-\nabla^R f_1), \nabla^L f_2 - \nabla^R f_2\rangle - \langle \nabla^Lf_1-\nabla^R f_1, \nabla^L f_2\rangle.
\end{equation}

\paragraph{Symplectic foliation and Poisson submanifolds.} Let $(M,\pi)$ be a Poisson manifold. An immersed submanifold $S \subseteq M$ is called a \emph{Poisson submanifold} if $\pi|_S \in \Gamma(S,\bigwedge^2TS)$. Define a morphism of vector bundles $\pi^\# : TM^* \rightarrow TM$ via $\langle\pi^\#(\xi),\eta\rangle := \langle \pi,\xi\wedge \eta\rangle$, $\xi,\eta \in T_p^*M$, $p \in M$. 
The Poisson bivector $\pi$ is called \emph{nondegenerate} if $\pi^\#$ is an isomorphism of vector bundles. A \emph{symplectic leaf} is a maximal (by inclusion) connected Poisson submanifold $S$ of $M$ for which $\pi|_S$ is nondegenerate. It is a theorem that any Poisson manifold $M$ is a union of its symplectic leaves.

In this paper, we will be interested in symplectic leaves of the zero loci of frozen variables (see Proposition~\ref{p:frozvar}). Assume that $M$ is a complex affine variety. Recall that an ideal $J \subseteq \mathbb{C}[M]$ is a \emph{Poisson ideal} if $\{\mathbb{C}[M],J\} \subseteq J$. The following proposition is a restatement of \cite[Remark 2.4]{yakimov_det} in the setup of the current paper:

\begin{proposition}\label{p:yakim}
    Let $p \in \mathbb{C}[M]$ be a prime element. Then the following conditions are equivalent:
    \begin{enumerate}[(i)]
    \item The principal ideal $(p)$ is a Poisson ideal;\label{pc:ppois}
    \item\label{pc:zerl} The zero locus of $p$ is a union of symplectic leaves of $(M,\pi)$.
    \end{enumerate}
\end{proposition}
Proposition~\ref{p:yakim} can also be generalized to the case of not necessarily smooth affine varieties; see \cite[Proposition 2.3]{yakimov_locus}. A prime element $p \in \mathbb{C}[M]$ satisfying condition~\ref{pc:ppois} is also called a \emph{Poisson prime element}.

\subsection{Generalized cluster algebras}\label{s:ba_cluster}
In this subsection, we briefly review relevant definitions and propositions from the theory of generalized cluster structures. An expanded account with proofs can be found in~\cite{double}. The concept of a generalized cluster algebra is based on an earlier definition given in~\cite{earlier} (see~\cite[Remark 2.1]{double}). Throughout the subsection, we let $\mathcal{F}$ to be a field of rational functions in $N+M$ independent variables with coefficients in $\mathbb{Q}$. We refer to $\mathcal{F}$ as the \emph{ambient field}.

\begin{definition}[Seeds]\label{def:ext_seed}
    An \emph{extended seed} is a triple $\Sigma:=(\mathbf{x},B,\mathcal{P})$ that consists of the following data:
    \begin{enumerate}[1)]
        \item A collection of elements $\mathbf{x}:=(x_1,x_2,\ldots,x_{N+M})$ of $\mathcal{F}$ such that $\mathcal{F} = \mathbb{Q}(x_1,x_2,\ldots,x_{N+M})$ (an \emph{extended cluster}). The elements $x_1,x_2,\ldots,x_{N}$ are called \emph{cluster variables} and the elements $x_{N+1},\ldots,x_{N+M}$ are called \emph{frozen variables};
        \item An integer matrix $B:=(b_{ij} \ | \ i \in [1,N], \ j \in [1,N+M])$ such that the principal part $B^{[1,N]}$ is skew-symmetrizable\footnote{Recall that a square matrix $A$ of size $N\times N$ is \emph{skew-symmetrizable} if there exists a diagonal matrix $D$ with positive entries such that $DA$ is skew-symmetric.} (an \emph{exchange matrix});
        \item For each $i \in [1,N]$, set $d_i$ to be a factor of $\gcd(b_{ij} \ | \ j \in [1,N])$ (the \emph{multiplicity} of $x_i$). For each $i \in [1,N]$, define the $i$\emph{th string} $p_i:=(p_{ir})_{r \in [0,d_i]}$ to be a collection of monomials $p_{ir}$ in the frozen variables such that $p_{i0} = p_{id_i} = 1$. The collection of \emph{strings} $\mathcal{P}$ is given by $\mathcal{P}:=\{p_i\}_{i \in [1,N]}$.
    \end{enumerate}
\end{definition}
\begin{definition}[$\tau$-monomials.] Let $\Sigma:=(\mathbf{x},B,\mathcal{P})$ be an extended seed. For $k \in [1,N]$, the \emph{cluster $\tau$-monomials} $u_{k;>}$ and $u_{k;<}$ are monomials in the cluster variables of $\Sigma$ given by
\begin{equation*}
    u_{k;>} := \prod_{\substack{1 \leq i \leq N, \\ b_{ki} > 0}} x_{i}^{b_{ki}/d_k}, \ \ u_{k;<} := \prod_{\substack{1 \leq i \leq N, \\ b_{ki} < 0}} x_{i}^{-b_{ki}/d_k}.
\end{equation*}
    For $k \in [1,N]$ and $r \in [0,d_k]$, the \emph{stable $\tau$-monomials} $v_{k;>}^{[r]}$ and $v_{k;<}^{[r]}$ are monomials in the frozen variables given by
    \begin{equation*}
        v_{k;>}^{[r]} := \prod_{\substack{N+1\leq i \leq N+M,\\ b_{ki} > 0}} x_i^{\lfloor rb_{ki}/d_k \rfloor}, \ \ v_{k;<}^{[r]} := \prod_{\substack{N+1 \leq i \leq N+M,\\ b_{ki} < 0}} x_i^{\lfloor-rb_{ki}/d_k\rfloor},
    \end{equation*}
    where the product over an empty set by definition equals $1$ and $\lfloor m \rfloor$ denotes the floor of $m \in \mathbb{Z}$.
\end{definition}
\begin{definition}[Generalized cluster mutations]\label{d:gcm} Let $\Sigma:=(\mathbf{x},B,\mathcal{P})$ be an extended seed. For $k \in [1,N]$, a \emph{generalized mutation of $\Sigma$ in direction $k$} is an extended seed $\Sigma^\prime:=(\mathbf{x}^{\prime},B^\prime,\mathcal{P}^\prime)$ defined as follows.
\begin{enumerate}[1)]
    \item Let $x_k^\prime$ be an element given by the \emph{generalized exchange relation}:
    \begin{equation*}\label{eq:exchr}
    x_k x_k^\prime := \sum_{r=0}^{d_k} p_{kr} u_{k;>}^{r} v_{k;>}^{[r]} u_{k;<}^{d_k-r} v_{k;<}^{[d_k-r]}
    \end{equation*}
    where $u_{k;>}$ and $u_{k;<}$ are the cluster $\tau$-monomials and $v_{k;<}$ and $v_{k;<}$ are the stable $\tau$-monomials constructed relative $\Sigma$.
    The extended cluster $\mathbf{x}^\prime$ is given by $\mathbf{x}^\prime:=(\mathbf{x}\setminus \{x_k\})\cup \{x_k^\prime\}$.
    \item The exchange matrix $B^\prime:=(b_{ij}^\prime \ | \ i \in [1,N], \ j \in [1,N+M])$ is given by
    \begin{equation*}
        b_{ij}^\prime := \begin{cases}
    -b_{ij} \ \ &\text{if } i=k \text{ or }j=k;\\
    b_{ij} + \dfrac{|b_{ik}|b_{kj} + b_{ik} |b_{kj}|}{2} \ \ &\text{otherwise.}
    \end{cases}
    \end{equation*}
    \item Define $p_k^\prime:=(p_{kr})_{r \in [0,d_k]}$ as $p_{kr}:=p_{k,d_k-r}$, $r \in [0,d_k]$. The collection of strings $\mathcal{P}^\prime$ is given by $\mathcal{P}^\prime:=(\mathcal{P}\setminus \{p_k\}) \cup \{p_k^\prime\}$.\label{i:defstr}
\end{enumerate}
 The extended seeds $\Sigma$ and $\Sigma^\prime$ are also called \emph{adjacent}. 
 \end{definition}
 
Let us make a few remarks on the above definition:
\begin{enumerate}[1)]
\item A frozen variable $x_i$ is called \emph{isolated} if $b_{ji} = 0$ for all $j \in [1,N]$. The definition of $b_{ij}^\prime$ implies that if $x_i$ is isolated in $\Sigma$, it is isolated in $\Sigma^\prime$ as well;
\item Since $\gcd\{b_{ij} \ | \ 1 \leq j \leq N\} = \gcd\{b_{ij}^\prime \ | \ 1 \leq j \leq N\}$, the numbers $d_1,\ldots, d_N$ are the same in $\Sigma$ and $\Sigma^\prime$;
\item If the string $p_k$ is trivial, then the generalized exchange relation \eqref{eq:exchr} becomes the exchange relation from the theory of cluster algebras of geometric type:
\begin{equation}\label{eq:ordexchrel}
x_k x_k^\prime = \prod_{i:\, b_{ki>0}} x_i^{b_{ki}} + \prod_{i:\,b_{ki<0}} x_i^{-b_{ki}}.
\end{equation}
In fact, each extended seed of the generalized cluster structures studied in this paper has only one generalized mutation. The other mutations follow the rule~\eqref{eq:ordexchrel}.
\item For each $i\in[1,N]$, set $v_{i;>} := v_{i;>}^{[d_i]}$, $v_{i;<} := v_{i;<}^{[d_i]}$; define
\[
q_{ir} := \frac{v_{i;>}^r v_{i; < }^{d_i-r}}{(v_{i;>}^{[r]} v_{i;<}^{[d_i-r]})^{d_i}}, \ \ \hat{p}_{ir} := \frac{p_{ir}^{d_i}}{q_{ir}}, \ \ i \in [1,N],\ r\in [0,d_i].
\]
Then the relation~\eqref{eq:exchr} can be rewritten as
\begin{equation}
x_k x_k^\prime = \sum_{r=0}^{d_k} (\hat{p}_{kr} v_{k;>}^r v_{k;<}^{d_k-r})^{1/d_k} u_{k;>}^r u_{k;<}^{d_k-r}.
\end{equation}
The expression $(\hat{p}_{kr} v_{k;>}^r v_{k;<}^{d_k-r})^{1/d_k}$ is a monomial in the frozen variables. Note also that the mutation rule for $\hat{p}_{ir}$ is the same as for $p_{ir}$ (see Part~\ref{i:defstr} of Definition~\ref{d:gcm}).
\end{enumerate}

\begin{definition}[Generalized cluster structures] Two extended seeds $\Sigma$ and $\Sigma^\prime$ are called \emph{mutation equivalent} if there's a sequence of extended seeds $\Sigma_1, \ldots, \Sigma_m$ where $\Sigma_1 := \Sigma$ and $\Sigma_m := \Sigma^\prime$, such that $\Sigma_{i+1}$ and $\Sigma_i$ are adjacent for each~$i$. For a fixed extended seed $\Sigma$, the set of all extended seeds that are mutation equivalent to $\Sigma$ is called a \emph{generalized cluster structure} and is denoted as $\gc(\Sigma)$ or simply as $\gc$. The \emph{rank} of $\gc$ is the number $N$.
\end{definition}

In practice, we fix one extended seed $\Sigma_0:=(\mathbf{x}_0,B_0,\mathcal{P}_0)$ and provide its full description. In this case, we call $\Sigma_0$ the \emph{initial extended seed} (correspondingly, $\mathbf{x}_0$ is the \emph{initial extended cluster} and $B_0$ is the \emph{initial exchange matrix}). Also, when several generalized cluster structures are considered at once, we will denote by $\mathcal{F}(\gc)$ the ambient field of $\gc$.

\paragraph{Algebras associated to $\gc$.} Let $\gc$ be a generalized cluster structure and $x_{N+1},\ldots,x_{N+M}$ be its set of frozen variables. Define the rings $\mathbb{A} := \mathbb{Z}[x_{N+1},\ldots,x_{N+M}]$ and $\bar{\mathbb{A}} := \mathbb{Z}[x_{N+1}^{\pm 1},\ldots, x_{N+M}^{\pm 1}]$. 
\begin{definition}
    A \emph{ground ring} $\hat{\mathbb{A}}$ is a subring of $\bar{\mathbb{A}}$ that contains $\mathbb{A}$.
\end{definition}
\begin{definition}
The $\hat{\mathbb{A}}$-algebra $\mathcal{A}:=\mathcal{A}(\gc)$ generated by all the cluster variables in $\gc$ is called the \emph{generalized cluster algebra}. 
\end{definition}
For a fixed ground ring $\hat{\mathbb{A}}$ and an extended cluster $\mathbf{x}:=(x_1,x_2,\ldots,x_{N+M})$ in $\gc$, define the $\hat{\mathbb{A}}$-algebras
\begin{equation}\label{eq:laurring}
\mathcal{A}(\mathbf{x}) := \hat{\mathbb{A}}[x_1,\ldots,x_N], \ \ \  \mathcal{L}(\mathbf{x}) := \hat{\mathbb{A}}[x_1^{\pm 1},\ldots,x_N^{\pm 1}].
\end{equation}
For an extended seed $\Sigma$ that contains the extended cluster $\mathbf{x}$, we will also write $\mathcal{L}(\Sigma):=\mathcal{L}(\mathbf{x})$ and $\mathcal{A}(\Sigma):=\mathcal{A}(\mathbf{x})$.

\begin{definition}
The $\hat{\mathbb{A}}$-algebra given by
\begin{equation}
\bar{\mathcal{A}} :=\bar{\mathcal{A}}(\gc):=
\bigcap_{\Sigma \in \gc}\mathcal{L}(\Sigma)
\end{equation}
is called the \emph{generalized upper cluster algebra}.
\end{definition}
One of the central results of the theory is the \emph{generalized Laurent phenomenon}, which states that there is an inclusion $\mathcal{A} \subseteq \bar{\mathcal{A}}$. For cluster algebras, the result was first proved in \cite[Theorem~3.1]{fathers}; later, it was extended to generalized cluster algebras in \cite[Theorem~2.5]{earlier}.

\paragraph{Upper bounds.} We continue in the setup of the previous paragraph. Let $\mathbb{T}_N$ be a labeled $N$-regular tree. Associate with each vertex an extended seed so that adjacent extended seeds are adjacent in the tree, and if an extended seed $\Sigma^\prime$ is adjacent to $\Sigma$ in direction $k$, label the corresponding edge in the tree with number $k$. A \emph{nerve} $\mathcal{N}$ in $\mathbb{T}_N$ is a subtree on $N+1$ vertices such that all its edges have different labels (for instance, a star is a nerve). 
\begin{definition}An \emph{upper bound} $\bar{\mathcal{A}}(\mathcal{N})$ is the $\hat{\mathbb{A}}$-algebra defined as
\begin{equation*}
\bar{\mathcal{A}}(\mathcal{N}) := \bigcap_{\Sigma \in V(\mathcal{N})}\mathcal{L}(\Sigma)
\end{equation*}
where $V(\mathcal{N})$ stands for the vertex set of $\mathcal{N}$. 
\end{definition}
Upper bounds were first defined and studied in~\cite{upper_bounds}. Let $L$ be the number of isolated frozen variables in $\gc$. For the $i$th nontrivial string in $\mathcal{P}$, let ${B}(i)$ be a $(d_{i}-1) \times L$ matrix such that the $r$th row consists of the exponents of the isolated variables in $p_{ir}$. The following result was proved in \cite[Theorem 3.11]{double}:

\begin{proposition}\label{p:upperb}
Assume that the extended exchange matrix has full rank and let $\rank {B}(i) = d_i - 1$ for any nontrivial string in $\mathcal{P}$. Then for any upper bound $\mathcal{N}$ in $\gc$, $\bar{\mathcal{A}}(\mathcal{N}) = \bar{\mathcal{A}}$.
\end{proposition}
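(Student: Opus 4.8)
The plan is to follow the standard three-step strategy for upper bounds that originated in~\cite{upper_bounds} and was adapted to the generalized setting in~\cite{double}: first reduce an arbitrary nerve to a star, then analyze the star directly using a localization argument at the frozen variables, and finally control the difference between the algebra defined by the star and the full intersection over $\gc$. Concretely, I would first show that $\bar{\mathcal{A}}(\mathcal{N})$ does not depend on the choice of nerve $\mathcal{N}$. The key point is that for a generalized mutation in direction $k$, the Laurent ring $\mathcal{L}(\Sigma)$ together with $\mathcal{L}(\Sigma')$ already determines $\mathcal{L}(\Sigma'')$ for any third seed $\Sigma''$ adjacent to $\Sigma$ in a different direction, so that locally any two nerves sharing a central seed cut out the same subalgebra of $\mathcal{F}$. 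This is where the full-rank hypothesis on the extended exchange matrix and the condition $\rank B(i) = d_i - 1$ enter: they guarantee that the ``upper bound is invariant under nerve change'' lemma (the generalized analogue of~\cite[Proposition~1.8]{upper_bounds}) goes through, because one needs the exponent vectors appearing in the generalized exchange relation~\eqref{eq:exchr} to be rich enough to separate the Laurent monomials.

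Next, having fixed a star-shaped nerve $\mathcal{N}$ centered at the initial seed $\Sigma_0$ with clusters $\mathbf{x}_0, \mathbf{x}_1, \dots, \mathbf{x}_N$ (where $\mathbf{x}_j$ is obtained from $\mathbf{x}_0$ by mutating in direction $j$), I would compute $\bar{\mathcal{A}}(\mathcal{N}) = \bigcap_{j=0}^{N} \mathcal{L}(\mathbf{x}_j)$ explicitly. The argument is to pass to the localization $\mathcal{A}(\mathbf{x}_0)$ further inverted at a single cluster variable $x_k$ at a time, and to check that an element of $\mathcal{L}(\mathbf{x}_0) \cap \mathcal{L}(\mathbf{x}_k)$, when written as a Laurent polynomial in $\mathbf{x}_0$, must have its negative powers of $x_k$ killed by the generalized exchange relation $x_k x_k' = \sum_{r=0}^{d_k} p_{kr} u_{k;>}^r v_{k;>}^{[r]} u_{k;<}^{d_k-r} v_{k;<}^{[d_k-r]}$; here one uses that the right-hand side, as a polynomial in the remaining variables, is not divisible by any cluster variable (so that cancellation forces the Laurent-positivity). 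This reduces $\bar{\mathcal{A}}(\mathcal{N})$ to an intersection of explicit subrings of a Laurent ring, which one identifies with the "naive" upper bound; the generalized exchange polynomials being squarefree-coprime in the appropriate sense is what makes this intersection stable.

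Finally, one must show $\bar{\mathcal{A}}(\mathcal{N}) = \bar{\mathcal{A}} = \bigcap_{\Sigma \in \gc} \mathcal{L}(\Sigma)$; the inclusion $\bar{\mathcal{A}} \subseteq \bar{\mathcal{A}}(\mathcal{N})$ is trivial, so the content is the reverse inclusion, i.e.\ that imposing Laurent-ness on the $N+1$ clusters of the star already forces it on \emph{every} cluster in $\gc$. The standard mechanism is an induction on the distance in $\mathbb{T}_N$ from $\Sigma_0$: given $\Sigma$ at distance $d$ reached via $\Sigma'$ at distance $d-1$ by mutation in direction $k$, one builds a new nerve through $\Sigma'$ containing both $\Sigma'$ and $\Sigma$, invokes the nerve-independence from the first step to say $\bar{\mathcal{A}}(\mathcal{N}) = \bar{\mathcal{A}}(\mathcal{N}')$ for this relocated nerve, and concludes that any element of $\bar{\mathcal{A}}(\mathcal{N})$ already lies in $\mathcal{L}(\Sigma)$. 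I expect the main obstacle to be exactly the nerve-independence step in the generalized setting: unlike the ordinary binomial exchange relation, the relation~\eqref{eq:exchr} has $d_k+1$ terms with string coefficients $p_{kr}$, and one has to verify that the rank conditions ($\rank B(i) = d_i-1$ and full rank of the extended exchange matrix) are precisely what is needed for the mutated Laurent ring to still determine the others — this is the generalized analogue of the somewhat delicate computation in~\cite[§6]{upper_bounds}, and it is where~\cite[Theorem~3.11]{double} does the real work that we are allowed to cite.
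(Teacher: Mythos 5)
The paper does not prove this proposition: it is quoted verbatim from \cite[Theorem~3.11]{double}, and the text explicitly defers the proof there. So there is no in-paper proof to compare against; I can only assess your sketch on its own terms.

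As an outline of the Berenstein--Fomin--Zelevinsky ``starfish'' argument and its generalization, your plan is broadly the right one: (i) establish nerve-independence, (ii) describe $\mathcal{L}(\mathbf{x})\cap\mathcal{L}(\mathbf{x}_k)$ explicitly via a one-variable-at-a-time localization and use the exchange relation to kill negative powers, (iii) deduce $\bar{\mathcal{A}}(\mathcal{N})\subseteq\bar{\mathcal{A}}$ by relocating the nerve. You also correctly locate where the two hypotheses enter: the full-rank condition on the extended exchange matrix plays the classical coprimality role, and $\rank B(i)=d_i-1$ is the extra input needed to guarantee that the generalized exchange polynomials (with their string coefficients $p_{ir}$) satisfy the coprimality conditions that make the lemma go through. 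That diagnosis is sound.

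One sentence in step one is not a correct statement and would have to be rephrased before the argument could be made rigorous: ``$\mathcal{L}(\Sigma)$ together with $\mathcal{L}(\Sigma')$ already determines $\mathcal{L}(\Sigma'')$ for any third seed $\Sigma''$ adjacent to $\Sigma$ in a different direction.'' Two subrings of $\mathcal{F}$ do not determine a third one; what the argument actually uses is a description of the intersection $\bigcap_{j}\mathcal{L}(\Sigma_j)$ over the star (the analogue of BFZ Lemmas 4.1--4.3), from which invariance under one-step mutation of the central seed follows by coprimality. Your next clause (``any two nerves sharing a central seed cut out the same subalgebra'') is the statement you actually need, and it is correct; the preceding justification is not. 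Also worth noting: once nerve-independence is in hand, your steps (ii) and (iii) partially duplicate each other — (iii) is an immediate corollary of (i), and the explicit description of the star intersection in (ii) is a tool used \emph{inside} the proof of (i), not a separate stage. As a reconstruction of a cited result rather than an original proof, the gaps are acceptable, but the garbled implication in step one should be flagged as the place where the real work of \cite[Theorem~3.11]{double} lives.
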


\paragraph{Poisson structures in $\gc$.} Let $\{\cdot, \cdot\}$ be a Poisson bracket on $\mathcal{F}$ and let ${\mathbf{x}}$ be any extended cluster in $\gc$. We say that ${\mathbf{x}}$ is \emph{log-canonical} if for each $i,j \in [1,N+M]$, $\{x_i,x_j\} = \omega_{ij} x_i x_j$, where $\omega_{ij} \in \mathbb{Q}$. The matrix $\Omega:=(\omega_{ij})_{i,j=1}^{N+M}$ is called the \emph{coefficient matrix} of the Poisson bracket with respect to the extended cluster $\tilde{\mathbf{x}}$.
\begin{definition}
    The generalized cluster structure $\gc$ is called \emph{compatible} with a Poisson bracket $\{\cdot,\cdot\}$ if each extended cluster in $\gc$ is log-canonical.
\end{definition}

The following result is the key connection between cluster algebras and Poisson geometry. Its earliest version was proved in~\cite[Theorem 1.4]{roots}.

\begin{proposition}\label{p:compb}
Let $(\tilde{\mathbf{x}}, B, \mathcal{P}) \in \gc$ be an extended seed that satisfies the following properties:
\begin{enumerate}[1)]
\item The extended cluster ${\mathbf{x}}$ is log-canonical with respect to the Poisson bracket;\label{pr:c1}
\item For a diagonal matrix with positive entries $D$ such that $DB^{[1,N]}$ is skew-symmetric, there exists a diagonal $N \times N$ matrix $\Delta$ such that $B\Omega = \begin{bmatrix}\Delta & 0\end{bmatrix}$ and such that $D\Delta$ is a multiple of the identity matrix;\label{pr:c2}
\item The Laurent polynomials $\hat{p}_{ir}$ are Casimirs of the Poisson bracket.\label{pr:c3}
\end{enumerate}
Then any other extended seed in $\gc$ satisfies properties \ref{pr:c1}, \ref{pr:c2} (with the same $\Delta$) and \ref{pr:c3}. In particular, $\gc$ is compatible with $\{\cdot, \cdot \}$.
\end{proposition}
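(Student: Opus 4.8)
The plan is to imitate the strategy of \cite[Theorem~1.4]{roots}, adapted to the generalized exchange relation~\eqref{eq:exchr}, and to run an induction along mutation paths. Since $\gc$ is connected under generalized mutations, it suffices to show: if an extended seed $\Sigma=(\mathbf{x},B,\mathcal{P})$ satisfies \ref{pr:c1}--\ref{pr:c3} with associated data $D,\Delta,\Omega$, then so does every adjacent seed $\Sigma'=(\mathbf{x}',B',\mathcal{P}')$ obtained by a generalized mutation $\mu_k$ in a direction $k\in[1,N]$, with the \emph{same} $D$ and $\Delta$ and an explicitly described new coefficient matrix $\Omega'$.

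First I would set up the algebraic bookkeeping: for a log-canonical cluster $\mathbf{x}$ the assignment $L(x_i,x_j):=\{x_i,x_j\}/(x_ix_j)=\omega_{ij}$ extends $\mathbb{Q}$-bilinearly and antisymmetrically to (formal) Laurent monomials in the $x_i$, and multiplying a monomial by a Casimir does not change its $L$-pairing with anything. Writing~\eqref{eq:exchr} as $x_kx_k'=\sum_{r=0}^{d_k}M_r$ and using the last remark following Definition~\ref{d:gcm}, one computes $M_0=\prod_{b_{kj}<0}x_j^{-b_{kj}}$ (over $j\in[1,N+M]$) and $M_r=\hat p_{kr}^{1/d_k}\,M_0\,w^r$, where $w:=\prod_{j=1}^{N+M}x_j^{b_{kj}/d_k}$ is the (formal) $\tau$-monomial of direction $k$; here I use that the frozen parts combine as $v_{k;>}v_{k;<}^{-1}=\prod_{j>N}x_j^{b_{kj}}$ and $u_{k;>}u_{k;<}^{-1}=\prod_{j\le N}x_j^{b_{kj}/d_k}$. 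The crux is that, because $\Omega$ is skew-symmetric and $B\Omega=[\Delta\ 0]$, for every $i\neq k$
\[
L(x_i,w)=\tfrac1{d_k}\sum_j b_{kj}\omega_{ij}=-\tfrac1{d_k}\sum_j b_{kj}\omega_{ji}=-\tfrac1{d_k}(B\Omega)_{ki}=0,
\]
since the principal block of $B\Omega$ is the diagonal matrix $\Delta$ and its frozen block vanishes. Together with hypothesis~\ref{pr:c3} ($\hat p_{kr}$ Casimir) this gives $L(x_i,M_r)=L(x_i,M_0)$ for all $r$ and all $i\neq k$; hence $\sum_rM_r$ is log-canonical with each $x_i$, and therefore so is $x_k'=x_k^{-1}\sum_rM_r$. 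As $\mu_k$ leaves the frozen variables and the cluster variables $x_j$ ($j\neq k$) untouched, this proves~\ref{pr:c1} for $\Sigma'$, with $\omega'_{ij}=\omega_{ij}$ off the $k$-th row and column and $\omega'_{ik}=-\omega_{ik}-\sum_{b_{kj}<0}b_{kj}\omega_{ij}$ (equivalently $-\omega_{ik}+\sum_{b_{kj}>0}b_{kj}\omega_{ij}$, the two agreeing precisely because $L(x_i,w)=0$); this is the same $\Omega\mapsto\Omega'$ rule as in \cite{roots}.

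Next I would verify~\ref{pr:c2} for $\Sigma'$. The principal part $B'^{[1,N]}$ is skew-symmetrized by the same $D$ (a standard feature of matrix mutation), so it remains to check $B'\Omega'=[\Delta\ 0]$ with the \emph{same} $\Delta$, whence $D\Delta$ is still a multiple of the identity. This is the routine-but-central matrix identity: substituting the mutation rule for $B$ from Definition~\ref{d:gcm} and the formula for $\Omega'$ above into $B'\Omega'$, and using $B\Omega=[\Delta\ 0]$ with the skew-symmetry of $\Omega$, one checks entrywise that the off-diagonal entries of the principal block and the entire frozen block vanish while the principal diagonal is unchanged; this is exactly the verification of \cite[Theorem~1.4]{roots}, and the integrality ensuring $\Omega'$ is an honest rational matrix comes from $d_k\mid\gcd(b_{kj}:j\in[1,N])$. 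Finally, for~\ref{pr:c3}: under $\mu_k$ the string in direction $k$ is reversed and, as noted in the remarks following Definition~\ref{d:gcm}, the $\hat p$'s mutate by the same rule as the $p$'s, so $\hat p_{kr}'=\hat p_{k,d_k-r}$; for $i\neq k$ the Laurent monomial $\hat p_{ir}$ is literally unchanged, because $\hat p_{ir}$ depends on $B$ only through the residues $b_{ij}\bmod d_i$ on the frozen columns $j$ and the correction $(|b_{ik}|b_{kj}+b_{ik}|b_{kj}|)/2$ added to $b_{ij}$ is in every case divisible by $b_{ik}$, hence by $d_i$. Thus $\{\hat p_{ir}'\}$ is a reindexing of $\{\hat p_{ir}\}$ and consists of Casimirs. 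Running this inductive step along mutation paths from the given seed yields \ref{pr:c1}--\ref{pr:c3} for every seed of $\gc$; in particular~\ref{pr:c1} at every seed is the compatibility of $\gc$.

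I expect the main obstacle to be purely computational: the entrywise verification of $B'\Omega'=[\Delta\ 0]$ with $\Delta$ literally preserved, where one must track the mutation signs and the cluster/frozen block structure carefully — but no idea beyond \cite{roots} is needed there. The one genuinely new point relative to ordinary cluster algebras is the argument that the full sum $\sum_rM_r$ (rather than a two-term binomial) stays log-canonical, which is exactly where hypothesis~\ref{pr:c3} and the vanishing $L(x_i,w)=0$ are used, and which also forces the two formulas for $\omega'_{ik}$ to coincide.
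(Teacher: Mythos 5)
Your proof is correct and follows exactly the strategy the paper itself points to: Proposition~\ref{p:compb} is stated as background (the paper cites \cite[Theorem~1.4]{roots} for the earliest version and gives no proof of its own), and your argument is the natural generalization of that result to the multi-term exchange relation~\eqref{eq:exchr}. In particular, you have correctly isolated the single genuinely new ingredient: with $w=\prod_jx_j^{b_{kj}/d_k}$, the identity $L(x_i,w)=-\tfrac1{d_k}(B\Omega)_{ki}=0$ for $i\neq k$ (forced by $B\Omega=[\Delta\ 0]$ and the skew-symmetry of $\Omega$), combined with hypothesis~\ref{pr:c3} that each $\hat p_{kr}$ is a Casimir, makes all $d_k+1$ summands $M_r=\hat p_{kr}^{1/d_k}M_0w^r$ have the same $L$-pairing with every $x_i$, $i\neq k$, so $x_k'$ stays log-canonical; everything else ($B'\Omega'=[\Delta\ 0]$ with the same $\Delta$, invariance of $D$, the reindexing/invariance of $\hat p_{ir}$ under $\mu_k$) reduces to the computation already in \cite{roots} plus the easy divisibility check $d_i\mid b_{ik}$. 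One small slip in exposition: the two rewritings of $\omega'_{ik}$ are equivalent because $\sum_jb_{kj}\omega_{ij}=0$ (not merely $L(x_i,w)=0$ divided by $d_k$, though of course these are the same statement) — worth stating cleanly so the reader sees the cancellation. Otherwise this is a complete and correct proof.
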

 Condition~\ref{pr:c2} has the following interpretation, which is used in practice. For each $i \in [1,N]$, define the \emph{$y$-variable} of the cluster variable $x_i$ as $y_i:=y(x_i) := \prod_{j=1}^{N+M} x_j^{b_{ij}}$. Then Condition~\ref{pr:c2} is equivalent to $\{\log y_i, \log x_j\} = \delta_{ij}\Delta_{ii}$, where $\delta_{ij}$ is the Kronecker symbol. Note also that Condition~\ref{pr:c2} implies that $B$ has full rank.

 \paragraph{$\gc$ on varieties.} For an extended cluster $\mathbf{x}$ in $\gc$, set
 \begin{align*}
    &\mathcal{A}_{\mathbb{C}}(\mathbf{x}) := \mathcal{A}(\mathbf{x})\otimes\mathbb{C}, \ &\   &\mathcal{L}_{\mathbb{C}}(\mathbf{x}) := \mathcal{L}(\mathbf{x})\otimes\mathbb{C};\\
     &\mathcal{A}_{\mathbb{C}}(\gc):=\mathcal{A}(\gc)\otimes \mathbb{C}, \ &\ &\bar{\mathcal{A}}_{\mathbb{C}}(\gc):=\bar{\mathcal{A}}(\gc)\otimes \mathbb{C};
 \end{align*}
 For the ambient field $\mathcal{F}$ of $\gc$, we also set $\mathcal{F}_{\mathbb{C}}:=\mathcal{F}\otimes \mathbb{C}$. 

 Let $V$ be a complex rational quasi-affine variety, $\mathbb{C}[V]$ be its coordinate ring and $\mathbb{C}(V)$ be its field of rational functions. Consider an embedding $\imath : \mathcal{A}_{\mathbb{C}}(\mathbf{x}_0)\hookrightarrow \mathbb{C}(V)$.
\begin{definition}\label{d:clustring}
    The triple $(V,\gc,\imath)$ is a \emph{generalized cluster structure on the variety $V$}. A variable $x$ in $\gc$ is called \emph{regular} if $x \in \mathbb{C}[V]$; an extended cluster $\mathbf{x}$ is called \emph{regular} if for each $x \in \mathbf{x}$, $x$ is regular; the generalized cluster structure $(V,\gc)$ is called \emph{regular} if $\mathcal{A}_{\mathbb{C}}(\gc)\subseteq \mathbb{C}[V]$. Finally, $(V,\gc)$ is called \emph{complete} if $\bar{\mathcal{A}}_{\mathbb{C}}(\gc) = \mathbb{C}[V]$.
\end{definition}
Note that once an embedding $\imath$ induces an embedding of $\mathcal{F}_{\mathbb{C}}$ into $\mathbb{C}(V)$. Hence, every cluster variable is realized as a rational function on $V$. Abusing notation, we will omit the embedding $\imath$ from the triple and speak only of the pair $(V,\gc)$.

 \paragraph{Toric actions.} Let $\mathbf{x}:=(x_1,x_2,\ldots,x_{N+M})$ be an extended cluster in some generalized cluster structure $\gc$, as above. Given an integer matrix $W:=(w_{ij} \ | \ i \in [1,N+M], \ j \in [1,s])$, consider an action of $(\mathbb{C}^{*})^s$ upon $\mathcal{F}_{\mathbb{C}}$ by field automorphisms: for every $\mathbf{t}:=(t_1,t_2,\ldots,t_s) \in (\mathbb{C}^{*})^{s}$, the corresponding field automorphism $\sigma_{\mathbf{t}}$ is given by
\begin{equation}\label{eq:tor_loc}
    \sigma_{\mathbf{t}}(x_i) = \left(\prod_{j=1}^{s}t_j^{w_{ij}}\right) \cdot x_i, \ \ i \in [1,N+M].
\end{equation}
\begin{definition}
    A \emph{local toric action} of rank $s$ upon an extended cluster $\mathbf{x}$ is the action by field automorphisms given by~\eqref{eq:tor_loc} such that $W$ is of rank $s$. It is called a \emph{global toric action} if the action is equivariant\footnote{In other words, given any other extended cluster $\mathbf{x}^\prime$ mutation equivalent to $\mathbf{x}$, there exists a matrix $W^\prime$ such that the action is still of the form~\eqref{eq:tor_loc}.} with respect to mutations. 
\end{definition}

The first version of following proposition was proved in~\cite[Lemma 2.3]{roots} for cluster algebras of geometric type, and it was later extended to generalized cluster algebras in~\cite[Proposition 2.6]{double}.

\begin{proposition}\label{p:toric}
A local toric action with a weight matrix $W$ is a global toric action if $BW = 0$ and the Laurent polynomials $\hat{p}_{ir}$ are invariant with respect to the action.
\end{proposition}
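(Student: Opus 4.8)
The plan is to verify the defining property of a global toric action directly: that for \emph{every} extended cluster of $\gc$ the automorphism $\sigma_{\mathbf t}$ again acts in the form~\eqref{eq:tor_loc}. Since mutations connect all extended clusters, it suffices to propagate along the exchange graph the following stronger statement: call a seed $\Sigma=(\mathbf x,B,\mathcal P)\in\gc$ \emph{good} if $\sigma_{\mathbf t}$ acts on $\mathbf x$ as in~\eqref{eq:tor_loc} for some integer matrix $W$ with $\tilde BW=0$ and such that all the $\hat p_{ir}$ of $\Sigma$ are $\sigma_{\mathbf t}$-invariant. The hypotheses say the initial seed is good, and a good seed is in particular one at which the action has the form~\eqref{eq:tor_loc}; so by induction along the exchange graph it is enough to show that if $\Sigma$ is good, then so is $\mu_k(\Sigma)$ for every $k\in[1,N]$.

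First I would carry out the cluster-variable part for a single mutation. Fix a good $\Sigma$ and $k\in[1,N]$, and write the $r$-th term of the generalized exchange relation in the reduced form from the remarks after Definition~\ref{d:gcm} as
\[
\rho_{kr}=\bigl(\hat p_{kr}\,v_{k;>}^{\,r}v_{k;<}^{\,d_k-r}\bigr)^{1/d_k}u_{k;>}^{\,r}u_{k;<}^{\,d_k-r};
\]
since $\bigl(\hat p_{kr}v_{k;>}^{\,r}v_{k;<}^{\,d_k-r}\bigr)^{1/d_k}$ is a monomial in the frozen variables, each $\rho_{kr}$ — like each $\hat p_{ir}$ — is a (Laurent) monomial in $\mathbf x$. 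For a monomial $m=\prod_i x_i^{a_i}$ put $\mathrm{wt}_j(m):=\sum_i a_iw_{ij}$, so $\sigma_{\mathbf t}(m)=\bigl(\prod_j t_j^{\mathrm{wt}_j(m)}\bigr)m$. Unwinding the definitions of the $\tau$-monomials (and using $\hat p_{k0}=1$, so that $\rho_{k0}=\prod_{i\,:\,b_{ki}<0}x_i^{-b_{ki}}$) one gets the single identity
\[
\bigl(\rho_{kr}/\rho_{k0}\bigr)^{d_k}=\hat p_{kr}\,y_k^{\,r},\qquad y_k:=\prod_{j=1}^{N+M}x_j^{b_{kj}}.
\]
Now $\mathrm{wt}_j(\hat p_{kr})=0$ by the invariance hypothesis and $\mathrm{wt}_j(y_k)=\sum_i b_{ki}w_{ij}=(\tilde BW)_{kj}=0$ by $\tilde BW=0$, hence $d_k\,\mathrm{wt}_j(\rho_{kr}/\rho_{k0})=\mathrm{wt}_j(\hat p_{kr})+r\,\mathrm{wt}_j(y_k)=0$ for all $j$, and so $\mathrm{wt}(\rho_{kr})=\mathrm{wt}(\rho_{k0})$ for every $r$. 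Therefore $\sigma_{\mathbf t}$ rescales the entire sum $\sum_{r=0}^{d_k}\rho_{kr}=x_kx_k'$ by the one monomial $\prod_j t_j^{\mathrm{wt}_j(\rho_{k0})}$, and dividing by $\sigma_{\mathbf t}(x_k)=\bigl(\prod_j t_j^{w_{kj}}\bigr)x_k$ shows $\sigma_{\mathbf t}(x_k')=\bigl(\prod_j t_j^{w'_{kj}}\bigr)x_k'$, where $w'_{kj}:=\mathrm{wt}_j(\rho_{k0})-w_{kj}=-\sum_{i\,:\,b_{ki}<0}b_{ki}w_{ij}-w_{kj}\in\mathbb Z$ and $w'_{ij}:=w_{ij}$ for $i\ne k$. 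Thus the action on $\mu_k(\Sigma)$ has the form~\eqref{eq:tor_loc} with weight matrix $W'$.

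It then remains to check that $\mu_k(\Sigma)$ is again good, i.e. that $\tilde B'W'=0$ and that the $\hat p'_{ir}$ of $\mu_k(\Sigma)$ stay $\sigma_{\mathbf t}$-invariant. The equality $\tilde B'W'=0$ is a routine computation combining the matrix-mutation rule of Definition~\ref{d:gcm} with the rule for $W'$ just found, and reduces to the same manipulation as in \cite[Lemma~2.3]{roots}. For the second point, the string-mutation rule — together with the remark that $\hat p_{ir}$ mutates in the same way as $p_{ir}$ — identifies the family $\{\hat p'_{ir}\}$ with $\{\hat p_{ir}\}$ up to reversing the $k$-th string, so $\sigma_{\mathbf t}$-invariance is inherited. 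This would complete the induction, and would also show that the local weight matrix mutates by $w'_{kj}=\mathrm{wt}_j(\rho_{k0})-w_{kj}$, $w'_{ij}=w_{ij}$ for $i\ne k$. The only point I expect to require care is the bookkeeping in this last paragraph: verifying that the \emph{pair} of running hypotheses ($\tilde BW=0$ and invariance of \emph{all} the $\hat p_{ir}$, not only of the one in the mutated direction) genuinely reproduces itself under a single mutation. Everything in the middle paragraph is essentially forced once one isolates the identity $(\rho_{kr}/\rho_{k0})^{d_k}=\hat p_{kr}y_k^{\,r}$ together with the vanishing of $\mathrm{wt}(y_k)$.
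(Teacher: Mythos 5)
The paper does not give its own proof of this proposition; it is stated with a citation, noting that the cluster--algebra version appears in \cite[Lemma~2.3]{roots} and the generalized version in \cite[Proposition~2.6]{double}. Your argument is correct and is precisely the expected extension of the cluster case. The reduction to the single identity $(\rho_{kr}/\rho_{k0})^{d_k}=\hat p_{kr}\,y_k^{\,r}$ is the right move, and the two hypotheses feed in exactly where they should: $\tilde BW=0$ makes $y_k$ weight-zero, and $\sigma_{\mathbf t}$-invariance of the $\hat p_{ir}$ kills the coefficient factor, so all $d_k+1$ summands of $x_kx_k'$ rescale identically. The two points you flag in the last paragraph do go through: $\tilde B'W'=0$ is the same case split on the sign of $b_{ik}$ as in \cite[Lemma~2.3]{roots} (writing $\sum_{\ell}\tfrac{|b_{ik}|b_{k\ell}+b_{ik}|b_{k\ell}|}{2}w_{\ell j}$ as $b_{ik}\sum_{b_{k\ell}>0}b_{k\ell}w_{\ell j}$ when $b_{ik}>0$, resp.\ $-b_{ik}\sum_{b_{k\ell}<0}b_{k\ell}w_{\ell j}$ when $b_{ik}<0$, and cancelling against the other terms via $BW=0$); and invariance of the $\hat p'_{ir}$ is immediate from the paper's stated remark that $\hat p_{ir}$ mutates by the same rule as $p_{ir}$ (reverse the $k$-th string, fix the rest), since this merely permutes a family of $\sigma_{\mathbf t}$-invariant Laurent monomials.
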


When all the strings are trivial, the statement of Proposition~\ref{p:toric} is \emph{if and only if}.

\paragraph{Quivers.} A \emph{quiver} is a directed multigraph with no $1$- and $2$-cycles. Pick an extended seed $\Sigma:=({\mathbf{x}},{B},\mathcal{P})$ and let $D:=\diag(d_1^{-1},\ldots, d_N^{-1})$ be a diagonal matrix with $d_i$'s defined as above. Assume that $DB^{[1,N]}$ is skew-symmetric. Then the matrix 
\begin{equation*}
\hat{B}:= \begin{bmatrix}
DB^{[1,N]} & {B}^{[N+1,N+M]} \\
-({B}^{[N+1,N+M]})^T & 0
\end{bmatrix}
\end{equation*}
is the adjacency matrix of a quiver $Q$, in which each vertex $i$ corresponds to a variable $x_i \in {\mathbf{x}}$. The vertices that correspond to cluster variables are called \emph{mutable}, the vertices that correspond to frozen variables are called \emph{frozen}, and the vertices that correspond to isolated variables are called \emph{isolated}. For each $i$, the number $d_i$ is called the \emph{multiplicity} of the $i$th vertex. If one mutates the extended seed $\Sigma$, then the quiver of the resulting extended seed can be obtained from $Q$ via the following steps:
\begin{enumerate}[1)]
\item For each path $i \rightarrow k \rightarrow j$, add an arrow $i \rightarrow j$;
\item If there is a pair of arrows $i \rightarrow j$ and $j \rightarrow i$, remove both;
\item Flip the orientation of all arrows going in and out of the vertex $k$.
\end{enumerate} 
The above process is called a \emph{quiver mutation in direction $k$} (or \emph{at vertex $k$}). Instead of describing the exchange matrix ${B}$, we describe the corresponding quiver and multiplicities $d_i$'s.

\subsection{Birational quasi-isomorphisms}\label{s:ba_birat}
In this subsection, we state some of the results on birational quasi-isomorphisms from~\cite{zametka} that are used in the proofs of Theorems~\ref{thm:maingln} and~\ref{thm:mainsln}. The statements are adjusted for generalized cluster structures in the coordinate rings of complex rational quasi-affine varieties.
 We assume that for all generalized cluster structures, the ground rings are chosen so that $\hat{\mathbb{A}} = \mathbb{A}$ (in other words, we do not localize at the frozen variables).

\paragraph{Markers and related triples.} Let $\gc$ and $\widetilde{\gc}$ be generalized cluster structures of ranks $N$ and $\tilde{N}$ with $M$ and $\tilde{M}$ frozen variables, respectively. Fix initial extended seeds $\Sigma_0 := (\mathbf{x}_0,B_0,\mathcal{P}_0)$ and $\tilde{\Sigma}_0 := (\tilde{\mathbf{x}}_0,\tilde{B}_0,\tilde{\mathcal{P}}_0)$. For a permutation $\kappa$ of the indices $[1,N+M]$, set
\begin{align*}
    &\tilde{\mathcal{I}}(\kappa) :=[\tilde{N}+1,\tilde{N}+\tilde{M}] \cap \kappa([1,N]);\\
    &\mathcal{I}(\kappa) := \kappa^{-1}(\tilde{\mathcal{I}}(\kappa)) = [1,N] \cap \kappa^{-1}([\tilde{N}+1,\tilde{N}+\tilde{M}]).
\end{align*}
Denote by $d_i$ and $\tilde{d}_j$ the multiplicities of the variables with indices $i$ and $j$ in $\gc$ and $\widetilde{\gc}$, respectively.

\begin{definition}\label{d:marker}
A \emph{marker} $\kappa$ for the pair $(\gc,\widetilde{\gc})$ is a bijection $k:[1,N+M]\rightarrow[1,N+M]$ such that $\kappa([N+1,N+M]) \subset [\tilde{N}+1,\tilde{N}+\tilde{M}]$, and for any $i \in [1,N]\setminus\mathcal{I}(\kappa)$, $d_{i} = \tilde{d}_{\kappa(i)}$. For a given marker $\kappa$, the triple $(\gc,\widetilde{\gc},\kappa)$ is called a \emph{related} triple. The sets $\mathcal{I}(\kappa)$ and $\tilde{\mathcal{I}}(\kappa)$ are called the sets of \emph{marked indices} in $\gc$ and $\widetilde{\gc}$, respectively.
\end{definition}

 Assume that $(\gc,\widetilde{\gc},\kappa)$ is a related triple. Define
\begin{align*}
&\mathbf{x}_0(\kappa):=\{x_i \in \mathbf{x}_0 \ | \ i \in \mathcal{I}(\kappa)\},\\
&\tilde{\mathbf{x}}_0(\kappa):=\{\tilde{x}_i \in \tilde{\mathbf{x}}_0 \ | \ i \in \tilde{\mathcal{I}}(\kappa)\}.\label{eq:tildx0kap}
\end{align*}
\begin{definition}
The elements of $\mathbf{x}_0(\kappa)$ and $\tilde{\mathbf{x}}_0(\kappa)$ are called the \emph{marked variables} in $\gc$ and $\widetilde{\gc}$.
\end{definition}

\begin{definition}
An extended seed $\Sigma:=(\mathbf{x},B,\mathcal{P})$ in $\gc$ is called \emph{marked} if $\Sigma$ is obtained from $\Sigma_0$ via a sequence of mutations
\begin{equation}\label{eq:emarkcl}
\Sigma_{i_0}\rightarrow \Sigma_{i_1} \rightarrow \cdots \rightarrow \Sigma_{i_m}
\end{equation}
where $\Sigma_{i_0}:=\Sigma_0$, $\Sigma_{i_m}:=\Sigma$, and $\Sigma_{i_j}$ is a mutation of $\Sigma_{i_{j-1}}$ in direction $i_j \in [1,N] \setminus \mathcal{I}(\kappa)$, $j \in [1,m]$. In this case, the extended cluster $\mathbf{x}$ is called \emph{marked}.
\end{definition}
In particular, the initial extended seed $\Sigma_0$ is marked. A marker $\kappa$ induces a natural bijection between extended marked seeds in $\gc$ and all extended seeds in $\widetilde{\gc}$, as follows. If an extended seed $\Sigma$ is obtained from $\Sigma_0$ via the sequence~\eqref{eq:emarkcl}, consider the sequence of mutations in $\widetilde{\gc}$
\begin{equation*}
\tilde{\Sigma}_{\kappa(i_0)} \rightarrow \tilde{\Sigma}_{\kappa(i_1)} \rightarrow \cdots \rightarrow \tilde{\Sigma}_{\kappa(i_m)},
\end{equation*}
where $\tilde{\Sigma}_{\kappa(i_0)} = \tilde{\Sigma}_0$. Then we set $\tilde{\Sigma}:=\kappa(\Sigma) := \tilde{\Sigma}_{\kappa(i_m)}$.

\begin{definition}\label{d:relclust}
Extended seeds $\Sigma:=(\mathbf{x},B,\mathcal{P}) \in \mathcal{C}$ and $\tilde{\Sigma}:=(\tilde{\mathbf{x}},\tilde{B},\tilde{\mathcal{P}}) \in \tilde{\mathcal{C}}$ are called \emph{related} if $\tilde{\Sigma} = \kappa(\Sigma)$. In this case, the extended clusters $\mathbf{x}$ and $\tilde{\mathbf{x}}$ are called \emph{related}, and for any $i \in [1,N+M]$, the variables $x_i \in \mathbf{x}$ and $\tilde{x}_{\kappa(i)}\in \tilde{\mathbf{x}}$ are called \emph{related}.
\end{definition}

By default, if $x$ denotes a variable from an extended cluster in $\gc$, $\tilde{x}$ denotes the corresponding related variable in $\widetilde{\gc}$.

\paragraph{Quasi-isomorphisms} Here, we introduce the notion of a quasi-isomorphism suitable for our purposes\footnote{As we explained in~\cite{zametka}, the closest notion in literature to our version is the one defined by C. Fraser in~\cite{fraser}. The difference is, C. Fraser assumes $N = \tilde{N}$ and arbitrary $M$ and $\tilde{M}$, whereas we insist upon $N+M = \tilde{N}+\tilde{M}$ and $\tilde{M}-M > 0$.} along with some basic results. We let $(\gc,\widetilde{\gc},\kappa)$ to be a related triple of generalized cluster structures with fixed initial extended clusters.

\begin{definition}\label{def:quasiiso}
A \emph{quasi-isomorphism} $\mathcal{Q}:\gc \rightarrow \widetilde{\gc}$ is 
a field isomorphism $\mathcal{Q}:\mathcal{F}(\gc)\rightarrow \mathcal{F}(\widetilde{\gc})$ such that the following conditions hold:
\begin{enumerate}[1)]
    \item For any extended marked cluster $\mathbf{x}$, there exists an integer matrix
\begin{equation*}
\Lambda(\mathbf{x}):=\left(\lambda_{ij} \ | \ i\in[1,N+M], \ j \in {\mathcal{I}}(\kappa)\right)
\end{equation*} 
such that
\begin{equation}\label{eq:quasi_formula}
\mathcal{Q}(x_i) = \begin{cases}
    \displaystyle\tilde{x}_{\kappa(i)} \prod_{j \in {\mathcal{I}}(\kappa)} \tilde{x}_{\kappa(j)}^{\lambda_{ij}} & i \in [1,N+M]\setminus \mathcal{I}(\kappa); \\[10pt] \displaystyle\prod_{j \in {\mathcal{I}}(\kappa)} \tilde{x}_{\kappa(j)}^{\lambda_{ij}} & i \in \mathcal{I}(\kappa).
\end{cases}
\end{equation}
\item For every $i \in [1,N]\setminus \mathcal{I}(\kappa)$, every pair $(x_i,\tilde{x}_{\kappa(i)})$ of related variables and the corresponding hatted strings $\{\hat{p}_{ir}\}_{r=0}^{d_i}$ and $\{\tilde{\hat{p}}_{\kappa(i)r}\}_{r=0}^{d_{\kappa(i)}}$, $\mathcal{Q}(\hat{p}_{ir}) = \tilde{\hat{p}}_{\kappa(i)r}$, $r \in [0,d_i]$.\label{i:condquas}
\end{enumerate}
\end{definition}

For an exchange matrix $\tilde{B}$ in $\widetilde{\gc}$, let $P$ be a permutation matrix such that $P^{T} \tilde{B}^{[1,\tilde{N}]} P$ is block diagonal, with blocks given by $D_1, D_2,\ldots, D_m$. We say that an integer matrix $\tilde{B}^\prime$ of size $\tilde{N}\times (\tilde{N}+\tilde{M})$ is obtained from $\tilde{B}$ by a \emph{flip of orientations} if $P^{T} [\tilde{B}^{\prime}]^{[1,\tilde{N}]} P$ is block diagonal, with blocks given by $\epsilon_1 D_1, \epsilon_2 D_2,\ldots, \epsilon_n D_n$ for some choices $\epsilon_i \in \{-1,1\}$. 

\begin{proposition}\label{p:q_y_var}
    Let $\mathcal{Q}: \mathcal{F}(\gc) \rightarrow \mathcal{F}(\widetilde{\gc})$ be a field isomorphism given by~\eqref{eq:quasi_formula} in the initial extended seed $(\mathbf{x}_0,B,\mathcal{P})$, and satisfying Condition~\ref{i:condquas} of Definition~\ref{def:quasiiso}. If $\mathcal{Q}$ satisfies the condition
    \begin{enumerate}[1)]
    \setcounter{enumi}{2}
        \item For each $y$-variable $y_i$ in $\mathbf{x}_0$, $i \notin \mathcal{I}(\kappa)$, $\mathcal{Q}(y_i) = \tilde{y}_{\kappa(i)}$,\label{i:ycond}
    \end{enumerate}
    then $\mathcal{Q}$ is a quasi-isomorphism. Conversely, if $\mathcal{Q}:\mathcal{F}(\gc) \rightarrow \mathcal{F}(\widetilde{\gc})$ is a quasi-isomorphism, then there exists a flip of orientations such that $\mathcal{Q}$ satisfies Condition~\ref{i:ycond}.
\end{proposition}

Let us introduce the following notation: if $A$ is an $n_1\times n_2$ matrix, $n_1,n_2\leq N+M$, then $\kappa^{-1}(A)$ is an $n_1\times n_2$ matrix such that $(\kappa^{-1}(A))_{ij} = A_{\kappa(i),\kappa(j)}$. If $B$ is another matrix of size $n_2 \times n_3$, $n_3\leq N+M$, then $\kappa^{-1}(AB) = \kappa^{-1}(A)\kappa^{-1}(B)$. In addition, let us extend $\kappa$ to a field isomorphism $\kappa:\mathcal{F}(\gc) \rightarrow \mathcal{F}(\widetilde{\gc})$ via $\kappa(x_i) = \tilde{x}_{\kappa(i)}$ in the initial extended clusters. The next proposition establishes a connection between toric actions and quasi-isomorphisms.

\begin{proposition}\label{p:quasi_toric}
    Suppose that $\mathcal{I}(\kappa) = \{m\}$ for some $m \in [1,N]$ and that $\gc$ and $\widetilde{\gc}$ admit global toric actions of rank $1$. Let $\omega$ and $\tilde{\omega}$ be the weight vectors of the toric actions in the initial extended clusters. Assume the following conditions:
    \begin{enumerate}[1)]
        \item For every $i \in [1,\tilde{N}]$ and $r \in [0,d_i]$, the Laurent monomials $\hat{p}_{ir}$ and $\tilde{\hat{p}}_{ir}$ are invariant with respect to the toric actions and $\kappa(\hat{p}_{ir}) = \tilde{\hat{p}}_{\kappa(i)r}$;
        \item For each $i \in [1,N+M]$, $(\omega_i-\tilde{\omega}_{\kappa(i)})\tilde{\omega}_{\kappa(m)}^{-1} \in \mathbb{Z}$ and $w_{m} = \tilde{w}_{\kappa(m)}$;
        \item The following equality holds:
        \begin{equation*}
        [\kappa^{-1}(\tilde{B})]_{[1,N]\setminus\{m\}}^{[1,N+M]\setminus\{m\}} = B^{[1,N+M]\setminus\{m\}}_{[1,N]\setminus\{m\}}.
        \end{equation*}
    \end{enumerate}
    Set
    \begin{equation*}
            \Lambda_i(\mathbf{x}_0):=\Lambda_i:=\begin{cases}
            \dfrac{w_i-\tilde{w}_{\kappa(i)}}{\tilde{w}_{\kappa(m)}} \ &\text{if} \ i \neq m\\[3pt]
            1 \ &\text{if}\ i = m.
        \end{cases}
    \end{equation*}
    Then $\Lambda$ defines a quasi-isomorphism $\mathcal{Q}:\gc\rightarrow\widetilde{\gc}$. Moreover, if $(\mathbf{x},\tilde{\mathbf{x}})$ is a pair of related extended clusters and $w^\prime$ and $\tilde{w}^\prime$ are the corresponding weight vectors of the toric actions, then the corresponding matrix $\Lambda(\mathbf{x})$ is given by
    \begin{equation*}
            \Lambda_i(\mathbf{x}):=\begin{cases}
            \dfrac{w_i^\prime-\tilde{w}_{\kappa(i)}^\prime}{\tilde{w}_{\kappa(m)}} \ &\text{if} \ i \neq m\\[3pt]
            1 \ &\text{if}\ i = m.
        \end{cases}
    \end{equation*}
\end{proposition}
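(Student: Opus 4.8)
The plan is to define $\mathcal{Q}$ on the initial extended cluster $\mathbf{x}_0$ by~\eqref{eq:quasi_formula} with the given matrix $\Lambda$ — that is, $\mathcal{Q}(x_m):=\tilde{x}_{\kappa(m)}$ and $\mathcal{Q}(x_i):=\tilde{x}_{\kappa(i)}\,\tilde{x}_{\kappa(m)}^{\Lambda_i}$ for $i\neq m$ — and to prove it is a quasi-isomorphism by checking the hypotheses of Proposition~\ref{p:q_y_var}. Since $\Lambda_i\in\mathbb{Z}$ by the first half of hypothesis~2, this assignment extends to a field isomorphism $\mathcal{Q}:\mathcal{F}(\gc)\to\mathcal{F}(\widetilde{\gc})$, with inverse sending $\tilde{x}_{\kappa(m)}\mapsto x_m$ and $\tilde{x}_{\kappa(i)}\mapsto x_i x_m^{-\Lambda_i}$. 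Condition~1 of Proposition~\ref{p:q_y_var} holds by construction, with $\lambda_{mm}=\Lambda_m=1$, so it remains to verify condition~2 there (the $\hat{p}$-identity for $i\in[1,N]\setminus\{m\}$) and condition~3 ($\mathcal{Q}(y_i)=\tilde{y}_{\kappa(i)}$ for $i\notin\mathcal{I}(\kappa)$, with $\tilde{B}$ read up to~\eqref{eq:equivrelb}).

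The device that does most of the work is that $\mathcal{Q}$ \emph{intertwines} the two toric actions. Let $\sigma$ and $\tilde\sigma$ denote the rank-one global toric actions on $\gc$ and $\widetilde{\gc}$, with weight vectors $\omega=(w_i)$ and $\tilde\omega=(\tilde{w}_i)$. Comparing $\mathcal{Q}\circ\sigma_t$ with $\tilde\sigma_t\circ\mathcal{Q}$ on the generators $x_1,\dots,x_{N+M}$ of $\mathcal{F}(\gc)$: the two agree on $x_m$ precisely because $w_m=\tilde{w}_{\kappa(m)}$ (hypothesis~2), and on $x_i$, $i\neq m$, precisely because $w_i=\tilde{w}_{\kappa(i)}+\Lambda_i\tilde{w}_{\kappa(m)}$, which is the definition of $\Lambda_i$ (here $\tilde{w}_{\kappa(m)}\neq 0$, implicit in hypothesis~2). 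Hence $\mathcal{Q}\circ\sigma_t=\tilde\sigma_t\circ\mathcal{Q}$ for all $t$. I also record the indexing bookkeeping: since $\mathcal{I}(\kappa)=\{m\}$, also $\tilde{\mathcal{I}}(\kappa)=\{\kappa(m)\}$, so $\kappa(m)$ is frozen in $\widetilde{\gc}$; consequently $\kappa$ maps $\fr_{\gc}\cup\{m\}$ onto $\fr_{\widetilde{\gc}}$ and restricts to a bijection $[1,N]\setminus\{m\}\to[1,\tilde{N}]$.

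For condition~2 of Proposition~\ref{p:q_y_var}, fix $i\in[1,N]\setminus\{m\}$ and write $\hat{p}_{ir}=\prod_j x_j^{c_j}$, a Laurent monomial in the frozen variables, all of whose indices differ from $m$. Substituting $\mathcal{Q}(x_j)=\tilde{x}_{\kappa(j)}\tilde{x}_{\kappa(m)}^{\Lambda_j}$ gives $\mathcal{Q}(\hat{p}_{ir})=\kappa(\hat{p}_{ir})\cdot\tilde{x}_{\kappa(m)}^{\,e}$ with $e=\sum_j c_j\Lambda_j$, and $\kappa(\hat{p}_{ir})=\tilde{\hat{p}}_{\kappa(i)r}$ by hypothesis~1. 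Both $\mathcal{Q}(\hat{p}_{ir})$ (by the intertwining, since $\hat{p}_{ir}$ is $\sigma$-invariant) and $\tilde{\hat{p}}_{\kappa(i)r}$ (by hypothesis~1) are $\tilde\sigma$-invariant, so $\tilde{x}_{\kappa(m)}^{\,e}$ is $\tilde\sigma$-invariant; as $\tilde{w}_{\kappa(m)}\neq 0$ this forces $e=0$, i.e.\ $\mathcal{Q}(\hat{p}_{ir})=\tilde{\hat{p}}_{\kappa(i)r}$. For condition~3, first note that every $y_k$ in $\mathbf{x}_0$ is $\sigma$-invariant: equality of the $\sigma$-weights of the two endpoint monomials of the exchange relation~\eqref{eq:exchr} (whose string values are $1$ and which carry no floor corrections) forces $\sum_j b_{kj}w_j=0$ for all $k\in[1,N]$, and likewise $\sum_j\tilde{b}_{kj}\tilde{w}_j=0$ for all $k\in[1,\tilde{N}]$. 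Expanding $\mathcal{Q}(y_i)=\prod_j\mathcal{Q}(x_j)^{b_{ij}}$, the exponent of $\tilde{x}_l$ with $l\neq\kappa(m)$ comes only from $\mathcal{Q}(x_{\kappa^{-1}(l)})$ and equals $b_{i,\kappa^{-1}(l)}$, which by hypothesis~3 — applicable since $i\neq m$ and $\kappa^{-1}(l)\neq m$ — equals $(\kappa^{-1}(\tilde{B}))_{i,\kappa^{-1}(l)}=\tilde{b}_{\kappa(i),l}$. Hence $\mathcal{Q}(y_i)=\tilde{y}_{\kappa(i)}\cdot\tilde{x}_{\kappa(m)}^{\,e'}$ for some $e'\in\mathbb{Z}$; but $\mathcal{Q}(y_i)$ is $\tilde\sigma$-invariant (intertwining) and $\tilde{y}_{\kappa(i)}$ is $\tilde\sigma$-invariant ($\kappa(i)\in[1,\tilde{N}]$ and $\tilde{B}\tilde\omega=0$), so $e'\tilde{w}_{\kappa(m)}=0$, $e'=0$, and $\mathcal{Q}(y_i)=\tilde{y}_{\kappa(i)}$. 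By Proposition~\ref{p:q_y_var}, $\mathcal{Q}$ is a quasi-isomorphism.

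For the final assertion, let $(\mathbf{x},\tilde{\mathbf{x}})$ be any related pair with weight vectors $w'$, $\tilde{w}'$. By Definition~\ref{def:quasiiso} there is an integer matrix $\Lambda(\mathbf{x})$ with $\lambda_{mm}=1$ and $\mathcal{Q}(x_i)=\tilde{x}_{\kappa(i)}\tilde{x}_{\kappa(m)}^{\,\lambda_{im}}$ for $i\neq m$, where $x_i\in\mathbf{x}$, $\tilde{x}_{\kappa(i)}\in\tilde{\mathbf{x}}$, and $\tilde{x}_{\kappa(m)}$ — being frozen — is the same function as in $\tilde{\mathbf{x}}_0$, with weight $\tilde{w}_{\kappa(m)}$. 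Applying $\mathcal{Q}\circ\sigma_t=\tilde\sigma_t\circ\mathcal{Q}$ to this $x_i$ and comparing $\tilde\sigma$-weights of the two sides ($t^{w_i'}$ on the left, $t^{\tilde{w}_{\kappa(i)}'+\lambda_{im}\tilde{w}_{\kappa(m)}}$ on the right) yields $\lambda_{im}=(w_i'-\tilde{w}_{\kappa(i)}')/\tilde{w}_{\kappa(m)}$, as claimed. The step I expect to demand the most care is condition~3: hypothesis~3 pins down $B$ and $\kappa^{-1}(\tilde{B})$ only away from the index $m$, and it is exactly the $\sigma$- and $\tilde\sigma$-invariance of the $y$-monomials (equivalently $B\omega=0$, $\tilde{B}\tilde\omega=0$, together with $w_m=\tilde{w}_{\kappa(m)}$) that supplies the missing $m$-th column; setting up cleanly which indices become frozen under $\kappa$ is a prerequisite for that argument to run.
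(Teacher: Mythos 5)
The paper states Proposition~\ref{p:quasi_toric} as a citation from~\cite{zametka} and does not reproduce a proof, so a direct comparison with the paper's own argument is not possible; I assess your argument on its own merits. It is correct, and the organizing device is exactly the right one: hypothesis~2 together with your definition of $\Lambda$ makes $\mathcal{Q}$ intertwine the two toric actions, $\mathcal{Q}\circ\sigma_t = \tilde\sigma_t\circ\mathcal{Q}$, and this converts every verification demanded by Proposition~\ref{p:q_y_var} into a weight comparison. You correctly observe that hypothesis~3 pins down every exponent of $\mathcal{Q}(y_i)$ except the one on $\tilde{x}_{\kappa(m)}$, and that the $\tilde\sigma$-invariance of both $\mathcal{Q}(y_i)$ and $\tilde{y}_{\kappa(i)}$ — which follows from $B\omega=0$ and $\tilde{B}\tilde\omega=0$, both forced by globality of the actions via the eigenvector requirement on the mutated variable in the exchange relation — kills that remaining exponent once $\tilde{w}_{\kappa(m)}\neq 0$ is used. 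The same mechanism disposes of the $\hat{p}$-identity and the formula for $\Lambda(\mathbf{x})$ in an arbitrary related cluster, where you rightly use that the frozen $\tilde{x}_{\kappa(m)}$ is a single fixed function with a fixed weight across all extended clusters.

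One small point to make explicit: condition~2 of Proposition~\ref{p:q_y_var}, as worded, requires $\mathcal{Q}(\hat{p}_{ir})=\tilde{\hat{p}}_{\kappa(i)r}$ for \emph{every} pair of related variables, not only in the initial seed. Your weight argument establishes the identity in $(\mathbf{x}_0,\tilde{\mathbf{x}}_0)$. To close the remaining distance, note that under a mutation at $k$ only the $k$th string changes, by the involutive reversal $\hat{p}_{kr}\mapsto\hat{p}_{k,d_k-r}$, and that $\kappa$ carries a mutation at $k$ in $\gc$ to a mutation at $\kappa(k)$ in $\widetilde{\gc}$; since marked mutations are excluded, the identity propagates to every marked cluster by a one-line induction. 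With that sentence inserted, the proof is complete.
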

An example of a quasi-isomorphism that arises from toric actions is given in Section~\ref{s:bsimpl}.  For other examples, see \cite[Proposition 5.3]{multdouble}.

\paragraph{Birational quasi-isomorphisms.}  In addition to the setup of the previous paragraph, we now assume that both $\gc$ and $\widetilde{\gc}$ are generalized cluster structures on some complex rational quasi-affine varieties $V$ and $\widetilde{V}$, respectively.

\begin{definition}\label{d:birat}
A birational map $\mathcal{Q}:\widetilde{V}\dashrightarrow V$ is called a \emph{birational quasi-isomorphism} if the following conditions hold:
\begin{enumerate}[1)]
    \item The marked variables in $\gc$ and $\widetilde{\gc}$ are regular;
    \item The map $\mathcal{Q}$ induces an isomorphism of the $\mathbb{C}$-algebras:\footnote{By $\mathbf{x}_0(\kappa)^{\pm 1}$ we mean the localization at each $x_i \in \mathbf{x}_0(\kappa)$.}
    \begin{equation*}
        \mathcal{Q}^*:\mathbb{C}[V][\mathbf{x}_0(\kappa)^{\pm 1}]\xrightarrow{\sim}\mathbb{C}[\widetilde{V}][\tilde{\mathbf{x}}_0(\kappa)^{\pm 1}];
    \end{equation*}
    \item The field isomorphism $\mathcal{Q}^*:\mathbb{C}(V)\xrightarrow{\sim}\mathbb{C}(\widetilde{V})$ restricts to a quasi-isomorphism 
    \[
    \mathcal{Q}^*:\mathcal{F}_{\mathbb{C}}(\gc) \rightarrow \mathcal{F}_{\mathbb{C}}(\widetilde{\gc}).
    \]
\end{enumerate}
\end{definition} 
We typically write $\mathcal{Q}:(\widetilde{V},\widetilde{\gc}) \dashrightarrow (V,\gc)$ to express the fact that $\mathcal{Q}$ is a birational quasi-isomorphism. 

\begin{definition}\label{d:iadmissible}
    Let $A$ be a subset of initial variables. We say that $A$ is \emph{i-admissible} if it satisfies the following conditions:
    \begin{enumerate}[1)]
        \item All variables in $A$ are regular and irreducible;
        \item For each cluster variable $x_i \in A$, $x_i^\prime$ is regular and coprime with $x_i$.
    \end{enumerate}
\end{definition}
Likewise, we define an i-admissible set of initial variables in $\widetilde{\gc}$.

\begin{proposition}\label{p:birat_fsingle}
    Le $\mathbb{C}[V]$ and $\mathbb{C}[\widetilde{V}]$ be factorial\footnote{We mean that, as a commutative ring, it is a UFD.} $\mathbb{C}$-algebras such that $\mathbb{C}[V]^* = \mathbb{C}[\widetilde{V}]^* = \mathbb{C}^*$, and let $\gc$ and $\widetilde{\gc}$ be generalized cluster structures on $V$ and $\widetilde{V}$, respectively, with fixed initial extended seeds $(\mathbf{x}_0,B_0,\mathcal{P}_0)$ and $(\tilde{\mathbf{x}}_0,\tilde{B}_0,\tilde{\mathcal{P}}_0)$. Assume the following:
     \begin{enumerate}[1)]
         \item There is a birational quasi-isomorphism $\mathcal{Q}:(\widetilde{V},\widetilde{\gc})\dashrightarrow (V,\gc)$;\label{i:fbirsing1}
         \item The initial extended cluster $\tilde{\mathbf{x}}_0$ is i-admissible;\label{i:fbirsing2}
         \item The set of marked variables $\mathbf{x}_0(\kappa)$ is i-admissible;\label{i:fbirsing3}
         \item The initial extended cluster $\mathbf{x}_0$ is regular;\label{i:fbirsing4}
        \item Each nonmarked cluster variable in $\mathbf{x}_0$ is coprime with each marked variable.\label{i:fbirsing5}
    \end{enumerate}
    Then $\mathbf{x}_0$ is i-admissible, as well as $\bar{\mathcal{A}}_{\mathbb{C}}(\widetilde{\gc}) \subseteq \mathbb{C}[\widetilde{V}]$ and $\bar{\mathcal{A}}_{\mathbb{C}}(\gc) \subseteq \mathbb{C}[V]$.
\end{proposition}

\begin{proposition}\label{p:upper_cont1}
    Le $\mathbb{C}[V]$ and $\mathbb{C}[\widetilde{V}]$ be factorial $\mathbb{C}$-algebras such that $\mathbb{C}[V]^* = \mathbb{C}[\widetilde{V}]^* = \mathbb{C}^*$, and let $\gc$ and $\widetilde{\gc}$ be generalized cluster structures on $V$ and $\widetilde{V}$, respectively, with fixed initial extended seeds $(\mathbf{x}_0,B_0,\mathcal{P}_0)$ and $(\tilde{\mathbf{x}}_0,\tilde{B}_0,\tilde{\mathcal{P}}_0)$. Assume the following:
    \begin{enumerate}[1)]
        \item There is a birational quasi-isomorphism $\mathcal{Q}:(\widetilde{V},\widetilde{\gc})\dashrightarrow (V,\gc)$;
        \item The matrices $B_0$ and $\tilde{B}_0$ are of full rank;
        \item The reverse inclusion $\bar{\mathcal{A}}_{\mathbb{C}} (\widetilde{\gc})\supseteq \mathbb{C}[\widetilde{V}]$ holds;
        \item For each $i \in \mathcal{I}(\kappa)$, $\mathcal{L}(\mathbf{x}_i^\prime)\supseteq \mathbb{C}[V]$.
    \end{enumerate}
    Then $\bar{\mathcal{A}}_{\mathbb{C}}(\gc)\supseteq \mathbb{C}[V]$.
\end{proposition}

Let $(\widehat{V},\widehat{\gc})$ be another pair of a complex rational quasi-affine variety~$\widehat{V}$ equipped with a generalized cluster structure $\widehat{\gc}$. 

\begin{definition}\label{d:birat_compl}
    Assume that $(\gc,\widetilde{\gc},\kappa)$ and $(\gc,\widehat{\gc},\kappa^\prime)$ are related triples and that there exist birational quasi-isomorphisms $\mathcal{Q}:(\widetilde{V},\widetilde{\gc}) \dashrightarrow (V,{\gc})$ and $\mathcal{Q}^{\prime}:(\widehat{V},\widehat{\gc}) \dashrightarrow (V,{\gc})$ such that $\mathcal{I}(\kappa) \cap \mathcal{I}(\kappa^\prime) = \emptyset$. Then $\mathcal{Q}$ and $\mathcal{Q}^{\prime}$ are called \emph{complementary}.
\end{definition}

\begin{proposition}\label{p:birat_double_ufd}
    Let $\mathbb{C}[V]$, $\mathbb{C}[\widetilde{V}]$, and $\mathbb{C}[\widehat{V}]$ be factorial $\mathbb{C}$-algebras such that $\mathbb{C}[V]^* = \mathbb{C}[\widetilde{V}]^* = \mathbb{C}[\widehat{V}]^* = \mathbb{C}^*$, and let $\gc$, $\widetilde{\gc}$, and $\widehat{\gc}$ be generalized cluster structures on $V$, $\widetilde{V}$, and $\widehat{V}$, respectively, with fixed initial extended seeds $(\mathbf{x}_0,B_0,\mathcal{P}_0)$, $(\tilde{\mathbf{x}}_0,\tilde{B}_0,\tilde{\mathcal{P}}_0)$, and $(\hat{\mathbf{x}}_0,\hat{B}_0,\hat{\mathcal{P}}_0)$. Assume the following:
    \begin{enumerate}[1)]
        \item There is a pair of complementary birational quasi-isomorphisms $\mathcal{Q}:(\widetilde{V},\widetilde{\gc}) \dashrightarrow (V,{\gc})$ and $\mathcal{Q}^{\prime}:(\widehat{V},\widehat{\gc}) \dashrightarrow (V,{\gc})$;
        \item The initial extended clusters $\tilde{\mathbf{x}}_0$ and $\hat{\mathbf{x}}_0$ are i-admissible;\label{ii:fbirat_ufd2}
        \item The initial extended cluster $\mathbf{x}_0$ is regular.
    \end{enumerate}
    Then $\mathbf{x}_0$ is i-admissible, as well as $\bar{\mathcal{A}}_{\mathbb{C}}(\widehat{\gc}) \subseteq \mathbb{C}[\widehat{V}]$, $\bar{\mathcal{A}}_{\mathbb{C}}(\widetilde{\gc}) \subseteq \mathbb{C}[\widetilde{V}]$ and $\bar{\mathcal{A}}_{\mathbb{C}}(\gc) \subseteq \mathbb{C}[V]$. 
\end{proposition}

\begin{proposition}\label{p:upper_cont2}
    Let $\mathbb{C}[V]$, $\mathbb{C}[\widetilde{V}]$, and $\mathbb{C}[\widehat{V}]$ be factorial $\mathbb{C}$-algebras such that $\mathbb{C}[V]^* = \mathbb{C}[\widetilde{V}]^* = \mathbb{C}[\widehat{V}]^* = \mathbb{C}^*$, and let $\gc$, $\widetilde{\gc}$, and $\widehat{\gc}$ be generalized cluster structures on $V$, $\widetilde{V}$, and $\widehat{V}$, respectively, with fixed initial extended seeds $(\mathbf{x}_0,B_0,\mathcal{P}_0)$, $(\tilde{\mathbf{x}}_0,\tilde{B}_0,\tilde{\mathcal{P}}_0)$, and $(\hat{\mathbf{x}}_0,\hat{B}_0,\hat{\mathcal{P}}_0)$. Assume the following:
        \begin{enumerate}[1)]
            \item There is a pair of complementary birational quasi-isomorphisms $\mathcal{Q}:(\widetilde{V},\widetilde{\gc}) \dashrightarrow (V,{\gc})$ and $\mathcal{Q}^{\prime}:(\widehat{V},\widehat{\gc}) \dashrightarrow (V,{\gc})$;
            \item The matrices $\tilde{B}_0$, $\hat{B}_0$ and $B_0$ are of full rank;
            \item The inclusions $\bar{\mathcal{A}}_{\mathbb{C}}(\widetilde{\gc}) \supseteq \mathbb{C}[\widetilde{V}]$ and $\bar{\mathcal{A}}_{\mathbb{C}}(\hat{\gc}) \supseteq \mathbb{C}[\widehat{V}]$ hold.
        \end{enumerate}
        Then $\bar{\mathcal{A}}_{\mathbb{C}}(\gc) \supseteq \mathbb{C}[V]$.
\end{proposition}
\section{\texorpdfstring{Poisson properties of the map  $\mathcal{Q}$}{Poisson properties of the map Q}}\label{s:comp}
Let $G$ be a connected complex reductive algebraic group and $\mathfrak{g}$ be its Lie algebra with a fixed set of simple roots $\Pi$ and a nondegenerate invariant symmetric bilinear form $\langle\cdot,\cdot\rangle$. We decompose $\mathfrak{g}$ as $\mathfrak{g}:=\mathfrak{n}_- \oplus \mathfrak{h} \oplus \mathfrak{n}_+$, where $\mathfrak{n}_{\pm}$ are nilpotent subalgebras and $\mathfrak{h}$ is a Cartan subalgebra. We set $\mathfrak{b}_{\pm}:=\mathfrak{n}_{\pm} \oplus \mathfrak{h}$, and we define $\mathcal{B}_{\pm}$, $\mathcal{H}$, and $\mathcal{N}_{\pm}$ to be the connected subgroups of $G$ that correspond to $\mathfrak{b}_{\pm}$, $\mathfrak{h}$, and $\mathfrak{n}_{\pm}$, respectively.  For a generic element $U \in G$, we set $U_{\oplus} \in \mathcal{B}_+$ and $U_- \in \mathcal{N}_-$ to be the elements such that $U = U_{\oplus}U_-$. Let $\bg:=(\Gamma_1,\Gamma_2,\gamma)$ be a BD triple and $\bar{\bg}:=(\bg,r_0)$ be a BD quadruple. Recall that $\gamma : \Gamma_1 \rightarrow \Gamma_2$ defines a Lie algebra homomorphism $\gamma^*:\mathfrak{n}_-\rightarrow\mathfrak{n}_-$, which in turn integrates to the group homomorphism $\tilde{\gamma}^* : \mathcal{N}_- \rightarrow \mathcal{N}_-$. We set $\pi_{(\bg,r_0)}^{\dagger}$ and $\pi_{(\bg_{\std},r_0)}^{\dagger}$ to be the Poisson bivectors~\eqref{eq:pidag} associated with the BD quadruples $(\bg,r_0)$ and $(\bg_{\std},r_0)$, respectively. We define the rational map $\mathcal{Q}:(G,\pi_{(\bg_{\std},r_0)}^{\dagger}) \dashrightarrow (G,\pi_{(\bg,r_0)}^{\dagger})$ via
\begin{equation}\label{eq:qdef}
    \mathcal{Q}(U) := \rho(U)^{-1}U \rho(U), \ \ \rho(U) := \prod_{i=1}^{\rightarrow} \tilde{\gamma}^*(U_-), \ U \in G.
\end{equation}
Note that since $\gamma^*$ is nilpotent, the above product is finite. The objective of this section is to prove that the map $\mathcal{Q}$ is a Poisson map; that is, $\mathcal{Q}_*(\pi_{(\bg_{\std},r_0)}^{\dagger}) = \pi_{(\bg,r_0)}^{\dagger}$. In Section~\ref{s:birat}, we will show that $\mathcal{Q}$ is a birational quasi-isomorphism when $G \in\{\SL_n,\GL_n\}$.\footnote{We expect that for a simply connected simple complex algebraic group $G$, there exists $\gc^{\dagger}(\bg,G)$ compatible with $\pi_{(\bg,r_0)}^{\dagger}$ and such that $\mathcal{Q}$ is a birational quasi-isomorphism as well.} In Section~\ref{s:complet}, we will apply the obtained result in order to conclude that $\gc_h^{\dagger}(\bg)$ is compatible with~$\pi_{(\bg,r_0)}^{\dagger}$.

\paragraph{The choice of trivialization.} In this section, all computations are performed in the right trivialization of $G$. In particular, given a smooth map $f : G \rightarrow H$ between Lie groups $G$ and $H$, its differential $d^R f$ in the right trivialization is given by
\begin{equation*}
    d_U^R f(x) := \dif f(\exp(tx)U)f(U)^{-1}, \ \ U \in G, \ x \in \mathfrak{g},
\end{equation*}
where $I$ is the identity element.
The following properties of $d^R$ will be used below: if $G,H,K$ are Lie groups and  $g:G \rightarrow H$ and $h:H \rightarrow K$ are smooth maps, then the product rule for $d^R$ reads $d_U^R[f\cdot g](x) = d_U^Rf(x) + \Ad_{f(U)}d_U^R g(x)$, and the chain rule $d_U^R[h \circ f](x) = d_{f(U)}^Rh\circ d_U^Rf(x)$, $x \in \mathfrak{g}$. We also set
\begin{equation*}
    \Ad_U^{\circ} := \Ad_U - \id, \ \ U \in G.
\end{equation*}
In the right trivialization, the Poisson bivector $\pi_{(\bg,r_0)}^{\dagger}$ is given by
\begin{equation}\label{eq:pigr}
    (\pi_{(\bg,r_0)}^{\dagger,R})_U = [\Ad_{U}^{\circ} \otimes \Ad_U^{\circ}](r_{(\bg,r_0)}) - [\Ad_U^\circ \otimes \Ad_U](\Omega).
\end{equation}

\subsection{Preliminary identities}
In this subsection, we list various auxiliary identities that are referenced in the proofs below. We let $\pi_{>}$, $\pi_0$, $\pi_{<}$, $\pi_{\geq}$, $\pi_{\leq}$ to be the orthogonal projections upon $\mathfrak{n}_+$, $\mathfrak{h}$, $\mathfrak{n}_-$, $\mathfrak{b}_+$ and $\mathfrak{b}_-$, respectively, and we let $\Omega_{>}$, $\Omega_0$, $\Omega_{<}$, $\Omega_{\geq}$ and $\Omega_{\leq}$ to be the $2$-tensors corresponding to the projections with respect to the chosen form on $\mathfrak{g}$ (for instance, $\Omega_{>} = [1\otimes \pi_{>}]\Omega$).

\begin{lemma}
For any $N \in \mathcal{N}_-$, the following identities hold:
\begin{align}\label{eq:adpd}
&[(\pi_0 \circ \Ad_{N}) \otimes 1](\Omega_{<}) = [1 \otimes (\pi_{<} \circ \Ad_{N^{-1}})](\Omega_0);
\\
\label{eq:admp}
 &[(\pi_{\geq} \circ \Ad_{N}) \otimes \Ad_{N}](\Omega_{<}) + [1\otimes (\pi_{<} \circ \Ad_{N})](\Omega_0) = \Omega_{<};
 \\ \label{eq:admn}
 &[\Ad_N \otimes \Ad_N](\Omega_{>}) + [(\pi_{<} \circ \Ad_N) \otimes \Ad_N](\Omega_{\leq}) = \Omega_{>}.
\end{align}
\end{lemma}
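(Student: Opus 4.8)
The plan is to convert each of the three tensor identities into an identity of linear endomorphisms of $\mathfrak{g}$ and then read it off from the triangular behaviour of $\Ad_N$ for $N\in\mathcal{N}_-$. Only two structural facts are needed. First, $\Ad_N$ is an isometry of the invariant form, so $(\Ad_N)^{*}=\Ad_{N^{-1}}$ and $[\Ad_N\otimes\Ad_N]\Omega=\Omega$. Second, since $N\in\mathcal{N}_-\subset\mathcal{B}_-$, both $\Ad_N$ and $\Ad_{N^{-1}}$ preserve $\mathfrak{n}_-$ and $\mathfrak{b}_-$ and act as the identity on the quotient $\mathfrak{b}_-/\mathfrak{n}_-\cong\mathfrak{h}$; equivalently $(\Ad_N^{\pm1}-\id)(\mathfrak{b}_-)\subseteq\mathfrak{n}_-$, so that $\pi_{>}\Ad_N^{\pm1}\pi_{\leq}=0$ and $\pi_0\Ad_N^{\pm1}\pi_{\leq}=\pi_0$.

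The bridge between tensors and operators is the standard dictionary: writing $\Omega_T$ for the $2$-tensor determined by $\langle\Omega_T,x\otimes y\rangle=\langle Tx,y\rangle$, one has $[A\otimes B]\Omega_T=\Omega_{BTA^{*}}$ for all linear maps $A,B$, with $\Omega_{<},\Omega_{>},\Omega_0,\Omega_{\leq}$ equal to $\Omega_{\pi_{<}},\Omega_{\pi_{>}},\Omega_{\pi_0},\Omega_{\pi_{\leq}}$; recall also that $\pi_{<}^{*}=\pi_{>}$, $\pi_0^{*}=\pi_0$ and $\pi_{\geq}^{*}=\pi_{\leq}$. Feeding this into \eqref{eq:adpd}, \eqref{eq:admp} and \eqref{eq:admn} reduces them, respectively, to the operator identities
\begin{align*}
&\pi_{<}\Ad_{N^{-1}}\pi_0=\pi_{<}\Ad_{N^{-1}}\pi_0,\\
&\Ad_N\,\pi_{<}\Ad_{N^{-1}}\pi_{\leq}+\pi_{<}\Ad_N\pi_0=\pi_{<},\\
&\Ad_N\pi_{>}\Ad_{N^{-1}}+\Ad_N\pi_{\leq}\Ad_{N^{-1}}\pi_{>}=\pi_{>}
\end{align*}
on $\mathfrak{g}$. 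The first is a tautology, so \eqref{eq:adpd} requires nothing further.

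For \eqref{eq:admn} I would subtract from the claimed identity the trivial relation $\Ad_N(\pi_{>}+\pi_{\leq})\Ad_{N^{-1}}\pi_{>}=\pi_{>}$ and simplify, reducing it to $\Ad_N\pi_{>}\Ad_{N^{-1}}\pi_{\leq}=0$; this holds because $\Ad_{N^{-1}}$ maps $\mathfrak{b}_-$ into $\mathfrak{b}_-$ while $\pi_{>}$ vanishes on $\mathfrak{b}_-$. For \eqref{eq:admp} I would use the second structural fact to write $\pi_{<}\Ad_{N^{-1}}\pi_{\leq}=\Ad_{N^{-1}}\pi_{\leq}-\pi_0$ and $\pi_{<}\Ad_N\pi_0=\Ad_N\pi_0-\pi_0$; applying $\Ad_N$ to the first relation gives $\Ad_N\pi_{<}\Ad_{N^{-1}}\pi_{\leq}=\pi_{\leq}-\Ad_N\pi_0$, and adding the second makes the left-hand side collapse to $\pi_{\leq}-\pi_0=\pi_{<}$.

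A dictionary-free variant runs the same argument directly on tensors: begin with $[\Ad_N\otimes1]\Omega=[1\otimes\Ad_{N^{-1}}]\Omega$ for \eqref{eq:adpd} and with $[\Ad_N\otimes\Ad_N]\Omega=\Omega$ for \eqref{eq:admp}--\eqref{eq:admn}, substitute $\Omega=\Omega_{>}+\Omega_0+\Omega_{<}$ (with $\Omega_{>}\in\mathfrak{n}_-\otimes\mathfrak{n}_+$, $\Omega_0\in\mathfrak{h}\otimes\mathfrak{h}$, $\Omega_{<}\in\mathfrak{n}_+\otimes\mathfrak{n}_-$), and project onto the relevant bigraded summand $\mathfrak{p}\otimes\mathfrak{q}$, $\mathfrak{p},\mathfrak{q}\in\{\mathfrak{n}_+,\mathfrak{h},\mathfrak{n}_-\}$; the triangularity of $\Ad_N$ then kills every term except the wanted ones. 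I do not expect any genuine obstacle: the one point needing care is the bookkeeping of the transpose in $[A\otimes B]\Omega_T=\Omega_{BTA^{*}}$ and of the adjoints of the triangular projections, which I would fix once and for all by a direct check in $\sll_2$ (where $\Omega_{<}=e_\alpha\otimes e_{-\alpha}$, and one verifies that this indeed corresponds to $\pi_{<}$ and not to its transpose).
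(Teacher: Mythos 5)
Your proof is correct, and it takes a modestly different route from the paper's. You systematically translate the three tensor identities into endomorphism identities via the dictionary $[A\otimes B]\Omega_T=\Omega_{BTA^{*}}$ (together with $\Ad_N^*=\Ad_{N^{-1}}$, $\pi_0^*=\pi_0$, $\pi_{\geq}^*=\pi_{\leq}$, $\pi_<^*=\pi_>$), reducing everything to short operator computations built on the single structural fact that $\Ad_N^{\pm1}$ preserves $\mathfrak{b}_-$ and is unipotent on it. The paper instead manipulates the tensors directly: it dismisses \eqref{eq:adpd} as immediate, proves \eqref{eq:admp} by first applying $[1\otimes\Ad_N]$ to \eqref{eq:adpd} to obtain $[(\pi_0\circ\Ad_N)\otimes\Ad_N](\Omega_<)=\Omega_0-[1\otimes\Ad_N](\Omega_0)$ and then combining with $[(\pi_>\circ\Ad_N)\otimes\Ad_N](\Omega_<)=\Omega_<$, and proves \eqref{eq:admn} by writing $\Omega_{\leq}=\Omega-\Omega_>$ and using $\Omega_>\in\mathfrak{n}_-\otimes\mathfrak{n}_+$ together with $[(\pi_<\circ\Ad_N)\otimes\Ad_N](\Omega)=\Omega_>$. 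The underlying ingredients (triangularity of $\Ad_N$, the form pairing $\mathfrak{n}_\pm$ with $\mathfrak{n}_\mp$) are the same in both; what your operator reformulation buys is uniformity and transparency — each identity is verified independently by a two-line calculation — whereas the paper's proof is a bit shorter but chains \eqref{eq:admp} through \eqref{eq:adpd} in a less modular way. Your "dictionary-free variant" at the end is, in effect, what the paper actually does.
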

\begin{proof}
Formula \eqref{eq:adpd} is straightforward. To verify~\eqref{eq:admp}, we first use~\eqref{eq:adpd} to derive
\[\begin{split}
[(\pi_0 \circ \Ad_N) \otimes \Ad_N](\Omega_{<}) = [1\otimes \Ad_{N}] \circ [1 \otimes (\pi_{<} \circ \Ad_{N^{-1}})](\Omega_0) =\Omega_0 - [1\otimes \Ad_{N}](\Omega_0);
\end{split}
\]
now, using the latter relation together with $[(\pi_{>} \circ \Ad_{N}) \otimes \Ad_{N}](\Omega_{<}) = \Omega_{<}$, we see that
\[\begin{split}
[(\pi_{\geq} \circ \Ad_{N}) \otimes \Ad_{N}](\Omega_{<}) + [1\otimes (\pi_{<}& \circ \Ad_{N})](\Omega_0) = \\ = &\Omega_{<}  + \Omega_0 - [1\otimes \Ad_{N}](\Omega_0) + [1\otimes (\pi_{<} \circ \Ad_{N})](\Omega_0).
\end{split}
\]
The last three terms cancel each other out, and hence~\eqref{eq:admp} holds. Formula~\eqref{eq:admn} follows after rewriting $\Omega_{\leq}$ as $\Omega_{\leq} = \Omega - \Omega_{>}$ and noticing that $\Omega_{>} \in \mathfrak{n}_- \otimes \mathfrak{n}_+$ and $[(\pi_{<} \circ \Ad_N) \otimes \Ad_N](\Omega) = \Omega_{>}$.
\end{proof}

Let us define rational maps
\begin{align*}
\mathcal{B}: G \dashrightarrow G,& \ \ \ \  \mathcal{B}(U) = U_\oplus;\\
\mathcal{N}: G \dashrightarrow G,& \ \ \ \ \mathcal{N}(U) = U_-.
\end{align*}
\begin{lemma}\label{l:dum}
The following identities hold:
\begin{align}
    &d_U^R\mathcal{N}\Ad_U = \Ad_{U_-}\pi_{<} + \pi_{<}\Ad_{U_-}\pi_{>} + \Ad^{\circ}_{U_-}\pi_0;\label{eq:dnl}\\
    &d_U^R \mathcal{N}\pi_{>} = 0.\label{eq:dnr}
\end{align}
\end{lemma}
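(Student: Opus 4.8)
The identities \eqref{eq:dnl}--\eqref{eq:dnr} describe the right-trivialized differential of the factorization map $\mathcal{N}:U\mapsto U_-$, so the natural starting point is to differentiate the defining relation $U = \mathcal{B}(U)\,\mathcal{N}(U)$ (equivalently $U = U_\oplus U_-$). Applying $d_U^R$ and the product rule quoted above, one gets
\[
\id = d_U^R\mathcal{B} + \Ad_{U_\oplus}\, d_U^R\mathcal{N},
\]
as endomorphisms of $\mathfrak g$. The key structural input is that $d_U^R\mathcal{B}$ takes values in $\mathfrak b_+$ and $d_U^R\mathcal{N}$ takes values in $\mathfrak n_-$ (because $\mathcal{B}$ and $\mathcal{N}$ are $\mathcal{B}_+$- and $\mathcal{N}_-$-valued), while $\Ad_{U_\oplus}$ preserves $\mathfrak b_+$. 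Conjugating the displayed identity by $\Ad_{U_\oplus}^{-1} = \Ad_{U_\oplus^{-1}}$ and using $U_\oplus^{-1}U = U_-$, one obtains
\[
\Ad_{U_\oplus^{-1}} = \Ad_{U_\oplus^{-1}}d_U^R\mathcal{B} + d_U^R\mathcal{N},
\]
so that $d_U^R\mathcal{N} = \Ad_{U_\oplus^{-1}} - \Ad_{U_\oplus^{-1}}\,d_U^R\mathcal{B}$, and since $\Ad_{U_\oplus^{-1}}d_U^R\mathcal{B}$ lands in $\mathfrak b_+$ while $d_U^R\mathcal{N}$ lands in $\mathfrak n_-$, we can read off $d_U^R\mathcal{N} = \pi_{<}\circ \Ad_{U_\oplus^{-1}}$, which is the cleanest form to manipulate.

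From here I would precompose with $\Ad_U$ and with $\pi_{>}$. For \eqref{eq:dnr}, note $\Ad_U\pi_{>}$ has image in $\Ad_U\mathfrak n_+ = \Ad_{U_\oplus U_-}\mathfrak n_+$; but $\Ad_{U_-}$ fixes $\mathfrak n_+$ modulo nothing useful — instead it is cleaner to use $d_U^R\mathcal{N}\,\pi_{>} = \pi_{<}\Ad_{U_\oplus^{-1}}\pi_{>}$ and observe $\Ad_{U_\oplus^{-1}}$ preserves $\mathfrak b_+ = \mathfrak n_+\oplus\mathfrak h$, so $\Ad_{U_\oplus^{-1}}\pi_{>}$ has image in $\mathfrak b_+$, hence $\pi_{<}$ kills it; that gives \eqref{eq:dnr}. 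For \eqref{eq:dnl}, I write $\Ad_U = \Ad_{U_\oplus}\Ad_{U_-}$, so $d_U^R\mathcal{N}\,\Ad_U = \pi_{<}\Ad_{U_\oplus^{-1}}\Ad_{U_\oplus}\Ad_{U_-} = \pi_{<}\Ad_{U_-}$. It remains to check that $\pi_{<}\Ad_{U_-}$ equals $\Ad_{U_-}\pi_{<} + \pi_{<}\Ad_{U_-}\pi_{>} + \Ad^{\circ}_{U_-}\pi_0$. Decomposing $\id = \pi_{<} + \pi_0 + \pi_{>}$ on the right, $\pi_{<}\Ad_{U_-} = \pi_{<}\Ad_{U_-}\pi_{<} + \pi_{<}\Ad_{U_-}\pi_0 + \pi_{<}\Ad_{U_-}\pi_{>}$; since $U_-\in\mathcal{N}_-$, $\Ad_{U_-}$ preserves $\mathfrak b_-=\mathfrak n_-\oplus\mathfrak h$ and acts as $\id$ modulo $\mathfrak n_-$, so $\pi_{<}\Ad_{U_-}\pi_{<} = \Ad_{U_-}\pi_{<}$ (on $\mathfrak n_-$, $\Ad_{U_-}$ already lands in $\mathfrak n_-$) and $\pi_{<}\Ad_{U_-}\pi_0 = \pi_{<}(\Ad_{U_-}-\id)\pi_0 = \Ad^{\circ}_{U_-}\pi_0$ (as $\Ad_{U_-}\pi_0$ differs from $\pi_0$ by an element of $\mathfrak n_-$). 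Collecting the three terms yields \eqref{eq:dnl}.

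The only mild subtlety — and the step I expect to require the most care — is the bookkeeping of which subalgebras are preserved and on which summand $\Ad_{U_-}$ or $\Ad_{U_\oplus^{-1}}$ acts trivially up to lower terms; in particular verifying $\pi_{<}\Ad_{U_-}\pi_{<}=\Ad_{U_-}\pi_{<}$ and that the $\mathfrak h$-contribution collapses to $\Ad^\circ_{U_-}\pi_0$ rather than $\pi_{<}\Ad_{U_-}\pi_0$ with a spurious diagonal piece. Everything else is a direct consequence of differentiating the Gauss factorization and using the product rule for $d^R_U$ recalled just above the lemma. I do not anticipate needing any property of $\bg$ here; the lemma is purely about the LU-type decomposition and holds for the ambient reductive group $G$.
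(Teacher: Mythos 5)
Your proof is correct and takes essentially the same route as the paper: both derive the key formula $d_U^R\mathcal{N} = \pi_{<}\circ\Ad_{U_\oplus^{-1}}$ from the Gauss factorization (you differentiate $U = \mathcal{B}(U)\mathcal{N}(U)$, the paper differentiates $\mathcal{N}(U)=\mathcal{B}(U)^{-1}U$, giving the same intermediate identity), and then read off both claims by direct computation with the projections. The bookkeeping of which subalgebras $\Ad_{U_\oplus^{-1}}$ and $\Ad_{U_-}$ preserve, and the collapse of $\pi_{<}\Ad_{U_-}\pi_0$ to $\Ad^\circ_{U_-}\pi_0$, are all handled correctly.
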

\begin{proof}
Differentiating $\mathcal{N}(U) = [\mathcal{B}(U)]^{-1}\cdot U$, we see that $d_U^R \mathcal{N}(\cdot) = \pi_{<}[\Ad_{\sinv{U_{\oplus}}}(\cdot)]$. The identities follow from the latter formula.
\end{proof}

\subsection{First computations}
We continue in the setup of the section. Here, we compute some of the first formulas for the proof that $\mathcal{Q}$ is a Poisson map. 

\paragraph{The Poisson bivectors.} We will work with the Poisson bivectors in the right trivialization of $G$ in the following form:

\begin{equation}\label{eq:pidag1}\begin{split}
(\pi^{\dagger,R}_{(\bg,r_0)})_U = &[\Ad_{U}^{\circ} \otimes \Ad_U](r_0) - [\Ad_{U}^{\circ} \otimes \Ad_U](\Omega_0) + \\ + &[\Ad_{U}^{\circ} \otimes \Ad_U]\left[ \frac{\gamma^*}{1-\gamma^*}\otimes 1\right](\Omega_{>}) - [\Ad_{U}^{\circ} \otimes 1]\left[\frac{1}{1-\gamma^*}\otimes 1\right](\Omega_{>}) - \\ - &[\Ad_{U}^{\circ} \otimes \Ad_U]\left[1\otimes \frac{1}{1-\gamma^*}\right](\Omega_{<}) + [\Ad_{U}^{\circ} \otimes 1]\left[1\otimes \frac{\gamma^*}{1-\gamma^*}\right](\Omega_{<});
\end{split}
\end{equation}
\begin{equation}\label{eq:pidagstd1}
\begin{split}
(\pi^{\dagger,R}_{(\bg_{\std},r_0)})_U =  [\Ad_{U}^{\circ} \otimes \Ad_{U}^{\circ}](r_0) - [\Ad_{U}^{\circ}& \otimes \Ad_U](\Omega_0) - \\ & - [\Ad_U^{\circ} \otimes \Ad_U](\Omega_{<}) - [\Ad_U^{\circ} \otimes 1](\Omega_{>}).
\end{split}
\end{equation}
The objective is to prove that $\mathcal{Q}_*\left((\pi_{(\bg_{\std},r_0)}^{\dagger,R})_U\right) = (\pi^{\dagger,R}_{(\bg,r_0)})_{\mathcal{Q}(U)}$.

\begin{lemma}
    The following identities hold:
    \begin{equation}\label{eq:qswap}
    \begin{aligned}
        \adir \Ad_U &= \Ad_{\mathcal{Q}(U)}\adir;\\
        \adir \Ad_U^\circ &= \Ad_{\mathcal{Q}(U)}^{\circ} \adir.
    \end{aligned}
    \end{equation}
\end{lemma}
\begin{proof}
    The identities follow from the definition of $\mathcal{Q}$ (see formula~\eqref{eq:qdef}).
\end{proof}

\begin{lemma}\label{l:rhodif}
The differential of the map $\rho(U) = \prod_{i=1}^{\rightarrow} (\tilde{\gamma}^*)^{i}(U_-)$ is given by
\begin{equation}\label{eq:rhodif}
\adir d_U^R \rho(\cdot)  = \frac{1}{1-\gamma^*} \Ad_{\rho(U)^{-1}} \gamma^*(d_U^R\mathcal{N}(\cdot)).
\end{equation}
\end{lemma}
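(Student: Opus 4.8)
The plan is to avoid differentiating the finite product defining $\rho$ factor by factor and instead work with a closed recursion. Since $\tilde{\gamma}^{*}$ is a group homomorphism and $\gamma^{*}$ is nilpotent (so the product defining $\rho$ is finite and $(\tilde{\gamma}^{*})^{k}$ is trivial for $k$ large), applying $\tilde{\gamma}^{*}$ to $\rho(U)=\prod_{i=1}^{\rightarrow}(\tilde{\gamma}^{*})^{i}(U_-)$ shifts every exponent up by one, which yields the identity
\begin{equation*}
\rho(U)=\tilde{\gamma}^{*}(U_-)\cdot\tilde{\gamma}^{*}(\rho(U)),\qquad\text{i.e.}\qquad \rho=(\tilde{\gamma}^{*}\circ\mathcal{N})\cdot(\tilde{\gamma}^{*}\circ\rho)
\end{equation*}
as rational maps $G\dashrightarrow\mathcal{N}_-$. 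Everything will be deduced from this single identity.

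Next I would record two elementary properties of $\tilde{\gamma}^{*}$. First, because $\tilde{\gamma}^{*}$ integrates the Lie algebra homomorphism $\gamma^{*}$, its right-trivialized differential is constant, $d_{N}^{R}\tilde{\gamma}^{*}=\gamma^{*}$ for every $N\in\mathcal{N}_-$; hence by the chain rule $d_U^{R}[\tilde{\gamma}^{*}\circ f](x)=\gamma^{*}(d_U^{R}f(x))$ for any smooth $f$ valued in $\mathcal{N}_-$. Second, differentiating $\tilde{\gamma}^{*}(M\exp(ty)M^{-1})=\tilde{\gamma}^{*}(M)\exp(t\gamma^{*}(y))\tilde{\gamma}^{*}(M)^{-1}$ in $t$ gives the intertwining relation $\gamma^{*}\circ\Ad_M=\Ad_{\tilde{\gamma}^{*}(M)}\circ\gamma^{*}$ on $\mathfrak{n}_-$, valid for every $M\in\mathcal{N}_-$. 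Applying the product rule and the chain rule for $d_U^{R}$ to the recursion, using $(\tilde{\gamma}^{*}\circ\mathcal{N})(U)=\tilde{\gamma}^{*}(U_-)$, produces
\begin{equation*}
d_U^{R}\rho(x)=\gamma^{*}\!\left(d_U^{R}\mathcal{N}(x)\right)+\Ad_{\tilde{\gamma}^{*}(U_-)}\,\gamma^{*}\!\left(d_U^{R}\rho(x)\right).
\end{equation*}

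To finish, apply $\Ad_{\rho(U)^{-1}}$ to both sides. Using $\rho(U)^{-1}=\tilde{\gamma}^{*}(\rho(U))^{-1}\tilde{\gamma}^{*}(U_-)^{-1}$, the composite $\Ad_{\rho(U)^{-1}}\Ad_{\tilde{\gamma}^{*}(U_-)}$ collapses to $\Ad_{\tilde{\gamma}^{*}(\rho(U))^{-1}}$, and then the intertwining relation with $M=\rho(U)^{-1}$ turns $\Ad_{\tilde{\gamma}^{*}(\rho(U))^{-1}}\gamma^{*}(d_U^{R}\rho(x))$ into $\gamma^{*}\!\left(\Ad_{\rho(U)^{-1}}d_U^{R}\rho(x)\right)$. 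Writing $Z:=\Ad_{\rho(U)^{-1}}d_U^{R}\rho(x)$, one is left with the linear equation $(1-\gamma^{*})(Z)=\Ad_{\rho(U)^{-1}}\gamma^{*}(d_U^{R}\mathcal{N}(x))$; since $1-\gamma^{*}$ is invertible by nilpotency of $\gamma^{*}$, solving for $Z$ gives precisely~\eqref{eq:rhodif}. All intermediate vectors lie in $\mathfrak{n}_-$ because $\mathcal{N}$, $\rho$ and $\tilde{\gamma}^{*}$ take values in $\mathcal{N}_-$, so each operator acts where expected. The one point requiring care — and the reason the recursion approach pays off — is the commutation bookkeeping in this last step: it is exactly the two-factor splitting of $\rho(U)^{-1}$ combined with the intertwining identity that lets the equation close on $Z$ rather than on $d_U^{R}\rho(x)$ itself. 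There is no analytic obstacle, as everything in sight is a finite composition of rational maps.
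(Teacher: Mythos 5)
Your proof is correct and follows essentially the same route as the paper: differentiate the functional equation $\rho(U)=\tilde{\gamma}^{*}(U_-)\,\tilde{\gamma}^{*}(\rho(U))$ with the product and chain rules, apply $\Ad_{\rho(U)^{-1}}$, use the intertwining $\gamma^{*}\circ\Ad_M=\Ad_{\tilde{\gamma}^{*}(M)}\circ\gamma^{*}$ to close the equation on $Z=\Ad_{\rho(U)^{-1}}d_U^R\rho$, and invert $1-\gamma^{*}$. Your write-up is a bit more careful at the collapse step — the paper's intermediate display has a missing inverse (it writes $\Ad_{\tilde{\gamma}^{*}(\rho(U))}$ where $\Ad_{\tilde{\gamma}^{*}(\rho(U))^{-1}}$ is meant, consistent with $\rho(U)^{-1}\tilde{\gamma}^{*}(U_-)=\tilde{\gamma}^{*}(\rho(U))^{-1}$), which you get right.
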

\begin{proof}
Recall that $\rho(U)$ satisfies the relation
\begin{equation}\label{eq:rhorel}
\rho(U) = \tilde{\gamma}^*(U_-)\tilde{\gamma}^*(\rho(U)).
\end{equation}
Differentiating~\eqref{eq:rhorel} and applying $\adir$ to both sides, we see that
\begin{equation}\label{eq:rhofir}
\adir d_U^R \rho = \adir \gamma^* d_U^R \mathcal{N} + \Ad_{\rho(U)^{-1}\tilde{\gamma}^*(U_-)}\gamma^* d_U^R \rho.
\end{equation}
The last term can be rewritten as
\begin{equation}\label{eq:adrho}
    \Ad_{\rho(U)^{-1}\tilde{\gamma}^*(U_-)}\gamma^* d_U^R \rho = \Ad_{\tilde{\gamma}^*(\rho(U))}\gamma^* d_U^R\rho = \gamma^*\adir d_U^R \rho.
\end{equation}
Plugging~\eqref{eq:adrho} back into~\eqref{eq:rhofir} and solving for $d_U^R\rho$, the result follows.
\end{proof}

\begin{lemma}
The differential of $\mathcal{Q}$ is given by
\begin{equation}\label{eq:dq}
d_U^R \mathcal{Q} = \Ad_{\rho(U)^{-1}} + \Ad_{\mathcal{Q}(U)}^{\circ} \frac{1}{1-\gamma^*}\left[ \Ad_{\rho(U)^{-1}} \gamma^* d_U^R \mathcal{N}(\cdot)  \right].
\end{equation}
\end{lemma}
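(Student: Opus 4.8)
The plan is to expand $\mathcal{Q}(U) = \rho(U)^{-1}\cdot U\cdot\rho(U)$ as a product of three maps, differentiate using the right-trivialized product and inverse rules recorded in the paragraph ``The choice of trivialization,'' and then substitute the formula for $d_U^R\rho$ from Lemma~\ref{l:rhodif}.

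First I collect the pieces. For the identity map $U\mapsto U$ one has $d_U^R\id = \id_{\mathfrak g}$. For the map $U\mapsto\rho(U)^{-1}$ the inverse rule gives $d_U^R[\inv\circ\rho] = -\adir\,d_U^R\rho$; indeed, writing $\rho(\exp(tx)U)\approx\exp\bigl(t\,d_U^R\rho(x)\bigr)\rho(U)$ one gets $\rho(\exp(tx)U)^{-1}\rho(U) = \exp\bigl(-t\,\adir d_U^R\rho(x)\bigr)+o(t)$. Now apply the product rule $d_U^R[f\cdot g] = d_U^R f + \Ad_{f(U)}d_U^R g$ first to $\rho(U)^{-1}\cdot U$ and then to $\bigl(\rho(U)^{-1}U\bigr)\cdot\rho(U)$:
\begin{equation*}
d_U^R\mathcal{Q} = -\adir\,d_U^R\rho + \adir + \Ad_{\rho(U)^{-1}U}\,d_U^R\rho.
\end{equation*}

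Next, use the bookkeeping identity $\rho(U)^{-1}U = \mathcal{Q}(U)\rho(U)^{-1}$, which yields $\Ad_{\rho(U)^{-1}U} = \Ad_{\mathcal{Q}(U)}\adir$; the last two terms then collapse into $\bigl(\Ad_{\mathcal{Q}(U)}-\id\bigr)\adir\,d_U^R\rho = \Ad_{\mathcal{Q}(U)}^{\circ}\,\adir\,d_U^R\rho$. Finally, substitute
\[
\adir\,d_U^R\rho(\cdot) = \frac{1}{1-\gamma^*}\bigl[\adir\,\gamma^*(d_U^R\mathcal{N}(\cdot))\bigr]
\]
from Lemma~\ref{l:rhodif} to obtain~\eqref{eq:dq}.

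Since every ingredient is already available, there is no genuine obstacle; the only points that require care are the signs coming from the inverse rule, the fact that all computations are carried out in the right (rather than left) trivialization, and the identity $\rho(U)^{-1}U\rho(U)\cdot\rho(U)^{-1} = \mathcal{Q}(U)\rho(U)^{-1}$ that is responsible for the appearance of the operator $\Ad_{\mathcal{Q}(U)}^{\circ}$. Alternatively, one may substitute Lemma~\ref{l:rhodif} first and then apply the identity $\Ad_{\mathcal{Q}(U)}^{\circ}\adir = \adir\,\Ad_U^{\circ}$ from~\eqref{eq:qswap}, which rewrites the answer in the equivalent form $d_U^R\mathcal{Q} = \adir\bigl(\id + \Ad_U^{\circ}d_U^R\rho\bigr)$; but the form~\eqref{eq:dq} is the one used in the subsequent computations.
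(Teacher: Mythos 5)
Your proof is correct and mirrors the paper's argument: differentiate the triple product $\rho(U)^{-1}\cdot U\cdot\rho(U)$ in the right trivialization, rewrite $\Ad_{\rho(U)^{-1}U}$ as $\Ad_{\mathcal{Q}(U)}\adir$ via the identity $\rho(U)^{-1}U = \mathcal{Q}(U)\rho(U)^{-1}$, combine the two $d_U^R\rho$ contributions into $\Ad_{\mathcal{Q}(U)}^{\circ}\adir\,d_U^R\rho$, and substitute Lemma~\ref{l:rhodif}. One small wording slip: you say ``the last two terms collapse,'' but it is the first and third terms (the two containing $d_U^R\rho$) that combine to give $\Ad_{\mathcal{Q}(U)}^{\circ}\adir\,d_U^R\rho$; the displayed algebra is correct regardless.
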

\begin{proof}
Differentiating the product $\mathcal{Q}(U) = \rho(U)^{-1}U\rho(U)$, we see that
\[
d_U^R \mathcal{Q} = -\adir d_U^R \rho + \Ad_{\rho(U)^{-1}} + \adir \Ad_U d_U^R \rho.
\]
Rewriting the third term as $\adir \Ad_U d_U^R \rho = \Ad_{\mathcal{Q}(U)}\adir d_U^R \rho$ and applying Lemma~\ref{l:rhodif} to the first and the third terms, we obtain~\eqref{eq:dq}.
\end{proof}
\begin{lemma}\label{l:adgon}
    The following identities hold:
    \begin{align}\label{eq:adgon}
        \frac{1}{1-\gamma^*} \Ad_{\rho(U)^{-1}} \gamma^* d_U^R \mathcal{N}(\Ad_U \pi_{<}(\cdot))  &= \frac{\gamma^*}{1-\gamma^*}\Ad_{\rho(U)^{-1}}\pi_{<}(\cdot);\\
        \frac{1}{1-\gamma^*} \Ad_{\rho(U)^{-1}} \gamma^* d_U^R \mathcal{N}(\Ad_U \pi_{0}(\cdot))  &=\frac{1}{1-\gamma^*} \pi_{<} [\gamma^*,\adir]\pi_0(\cdot)\label{eq:adgonz}
    \end{align}
    where $[\gamma^*,\adir] = \gamma^*\adir  - \adir\gamma^*$.
\end{lemma}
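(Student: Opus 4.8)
The plan is to peel the two identities off one at a time using only three ingredients: the formula~\eqref{eq:dnl} for $d_U^R\mathcal{N}\Ad_U$ from Lemma~\ref{l:dum}, the fixed-point relation~\eqref{eq:rhorel} for $\rho(U)$, and the single structural fact that $\gamma^*$ intertwines the adjoint action, namely $\gamma^*\circ\Ad_N=\Ad_{\tilde{\gamma}^*(N)}\circ\gamma^*$ for every $N\in\mathcal{N}_-$. I would justify this last relation by exponentiating $\gamma^*\circ\ad_X=\ad_{\gamma^*(X)}\circ\gamma^*$ (valid for all $X\in\mathfrak{g}$, since the extension $\gamma^*:\mathfrak{g}\to\mathfrak{g}$ is a Lie algebra homomorphism) with $X=\log N\in\mathfrak{n}_-$, together with $\exp(\gamma^*(\log N))=\tilde{\gamma}^*(N)$. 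I would also record the trivial identities $\pi_{>}\pi_{<}=\pi_0\pi_{<}=\pi_{<}\pi_0=\pi_{>}\pi_0=0$, the fact that $\gamma^*$ preserves the triangular decomposition (so that $\gamma^*$ has image in $\mathfrak{n}_-$ on $\mathfrak{n}_-$), and that $\gamma^*$ commutes with $\tfrac{1}{1-\gamma^*}$.

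For~\eqref{eq:adgon}: precomposing the right-hand side of~\eqref{eq:dnl} with $\pi_{<}$ kills the summands $\pi_{<}\Ad_{U_-}\pi_{>}$ and $\Ad^{\circ}_{U_-}\pi_0$, leaving $d_U^R\mathcal{N}\Ad_U\pi_{<}=\Ad_{U_-}\pi_{<}$. I would then apply $\gamma^*$ and $\adir$ in turn, use the intertwining relation first on $\Ad_{U_-}$ to get $\Ad_{\rho(U)^{-1}\tilde{\gamma}^*(U_-)}\gamma^*\pi_{<}$, rewrite $\rho(U)^{-1}\tilde{\gamma}^*(U_-)=\tilde{\gamma}^*(\rho(U)^{-1})$ using~\eqref{eq:rhorel}, and apply the intertwining relation a second time to arrive at $\gamma^*\adir\pi_{<}$. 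Multiplying by $\tfrac{1}{1-\gamma^*}$ and absorbing $\gamma^*$ into the geometric series yields $\tfrac{\gamma^*}{1-\gamma^*}\adir\pi_{<}$, which is the right-hand side of~\eqref{eq:adgon}.

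For~\eqref{eq:adgonz}: the same opening move, precomposing~\eqref{eq:dnl} with $\pi_0$, leaves $d_U^R\mathcal{N}\Ad_U\pi_0=\Ad^{\circ}_{U_-}\pi_0=(\Ad_{U_-}-\id)\pi_0$. Running the identical chain of manipulations on each summand (apply $\gamma^*$, then $\adir$; use the intertwining relation on $\Ad_{U_-}$, then~\eqref{eq:rhorel}, then the intertwining relation on $\Ad_{\rho(U)^{-1}}$) collapses the two terms into $\gamma^*\adir\pi_0-\adir\gamma^*\pi_0=[\gamma^*,\adir]\pi_0$. Since the left-hand side is manifestly $\mathfrak{n}_-$-valued (a composition of maps each with image in $\mathfrak{n}_-$), prepending $\pi_{<}$ changes nothing, so the expression equals $\pi_{<}[\gamma^*,\adir]\pi_0$; applying $\tfrac{1}{1-\gamma^*}$ gives~\eqref{eq:adgonz}.

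The only point I expect to require genuine care is the $\pi_0$ case: there $\gamma^*$ is applied to $\Ad^{\circ}_{U_-}\pi_0(\cdot)$, whose interior $\pi_0(\cdot)$ lies in $\mathfrak{h}$ rather than in $\mathfrak{n}_-$, so one must invoke the intertwining relation for the full extension $\gamma^*:\mathfrak{g}\to\mathfrak{g}$ and not merely for $\gamma^*|_{\mathfrak{n}_-}$, and then check that the two resulting $\mathfrak{h}$-level contributions reorganize precisely into the commutator $[\gamma^*,\adir]$, with the surviving expression automatically lower triangular so that the $\pi_{<}$ appearing in~\eqref{eq:adgonz} is cosmetic. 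Every other step is a direct substitution.
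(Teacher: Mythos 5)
Your proof is correct and follows the paper's argument exactly: substitute the appropriate case of~\eqref{eq:dnl}, apply the intertwining relation $\gamma^*\circ\Ad_N=\Ad_{\tilde{\gamma}^*(N)}\circ\gamma^*$ together with the fixed-point identity~\eqref{eq:rhorel}, and for~\eqref{eq:adgonz} observe that the result already lies in $\mathfrak{n}_-$ so the leading $\pi_{<}$ is cosmetic. One caveat on the justification of the intertwining: the full extension $\gamma^*:\mathfrak{g}\to\mathfrak{g}$ is \emph{not} a Lie algebra homomorphism --- the paper only claims this for $\gamma^*:\mathfrak{n}_-\to\mathfrak{n}_-$, and indeed $\gamma^*([e_{-\alpha},e_\alpha])=-\gamma^*(h_\alpha)\neq 0=[\gamma^*(e_{-\alpha}),\gamma^*(e_\alpha)]$ when $\alpha\notin\Gamma_2$ is adjacent to $\Gamma_2$ in the Dynkin diagram --- so $\gamma^*\ad_X=\ad_{\gamma^*(X)}\gamma^*$ is not valid for arbitrary $X$ and arbitrary inputs; what \emph{is} true, and all that the lemma uses, is that this relation holds for $X\in\mathfrak{n}_-$ acting on $\mathfrak{b}_-$ (the $\mathfrak{n}_-$ case by the homomorphism property, the $\mathfrak{h}$ case because $\gamma$ is an isometry on simple roots), so the argument is unaffected.
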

\begin{proof}
The first identity follows from formula~\eqref{eq:dnl} and formula~\eqref{eq:rhorel}. As for the second formula, an application of~\eqref{eq:dnl} yields
    \[\begin{split}
    \adir \gamma^* d_U^R \mathcal{N}\Ad_U \pi_0 = &\adir \gamma^* \Ad_{U_-}^{\circ} \pi_0 = \adir \left( \Ad_{\tilde{\gamma}^*(U_-)}\gamma^* - \gamma^*\right) \pi_0 = \\ = &\left( \Ad_{\tilde{\gamma}^*(\rho(U)^{-1})}\gamma^* - \adir \gamma^*\right)\pi_0 = [\gamma^*,\adir]\pi_0.\qedhere
    \end{split}
    \]
\end{proof}

\begin{lemma}
The following formula holds:
\begin{equation}\label{eq:dqrr}\begin{split}
-\left[d_U^R \mathcal{Q}\otimes d_U^R \mathcal{Q}\right]&[\Ad_U^{\circ}\otimes \Ad_U](\Omega_{<}) = -D_0 - D_1-\\ = -&\left[\Ad_{\mathcal{Q}(U)}^{\circ}\otimes \Ad_{\mathcal{Q}(U)}\right]\left[\adir\otimes \adir\right](\Omega_{<}) - \\ - &\left[\Ad_{\mathcal{Q}(U)}^{\circ} \otimes \Ad_{\mathcal{Q}(U)}^{\circ}\right]\left[\adir \otimes \frac{\gamma^*}{1-\gamma^*}\adir\right](\Omega_{<})
\end{split}
\end{equation}
where
\begin{align}\label{eq:d0}
&D_0 := \left[\Ad_{\mathcal{Q}(U)}^{\circ}\otimes \Ad_{\mathcal{Q}(U)}\right]\left[\frac{1}{1-\gamma^*}\adir \gamma^*\pi_{<}\Ad_{U_-}\otimes \adir\right](\Omega_{<}) ;\\
&D_1 := \left[\Ad_{\mathcal{Q}(U)}^{\circ}\otimes \Ad_{\mathcal{Q}(U)}^\circ\right]\left[\frac{1}{1-\gamma^*}\otimes\frac{\gamma^*}{1-\gamma^*}\right][\adir\gamma^*\pi_{<}\Ad_{U_-}\otimes \adir ](\Omega_{<}).\label{eq:d1}
\end{align}
\end{lemma}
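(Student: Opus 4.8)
The plan is to substitute the formula~\eqref{eq:dq} for $d_U^R\mathcal{Q}$ directly into the left-hand side and expand the tensor square into four cross-terms. Write $d_U^R\mathcal{Q} = \mathcal{A} + \mathcal{B}$ with $\mathcal{A} := \adir$ and $\mathcal{B} := \Ad_{\mathcal{Q}(U)}^{\circ}\tfrac{1}{1-\gamma^*}\adir\gamma^* d_U^R\mathcal{N}$, so that $[d_U^R\mathcal{Q}\otimes d_U^R\mathcal{Q}][\Ad_U^{\circ}\otimes\Ad_U](\Omega_{<})$ splits as the sum of the four terms $[\mathcal{X}\Ad_U^{\circ}\otimes\mathcal{Y}\Ad_U](\Omega_{<})$, $\mathcal{X},\mathcal{Y}\in\{\mathcal{A},\mathcal{B}\}$. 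Since $\Omega_{<}\in\mathfrak{n}_+\otimes\mathfrak{n}_-$, one may insert $\pi_{>}$ into the first slot and $\pi_{<}$ into the second at will, and this is precisely what makes Lemma~\ref{l:dum} and Lemma~\ref{l:adgon} applicable leg by leg. I would then match each of the four cross-terms with one of the four summands on the right-hand side.

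The two ``diagonal'' cases are immediate. For $\mathcal{A}\otimes\mathcal{A}$, the swap identities~\eqref{eq:qswap} give $\adir\Ad_U^{\circ} = \Ad_{\mathcal{Q}(U)}^{\circ}\adir$ and $\adir\Ad_U = \Ad_{\mathcal{Q}(U)}\adir$, so this term is $[\Ad_{\mathcal{Q}(U)}^{\circ}\otimes\Ad_{\mathcal{Q}(U)}][\adir\otimes\adir](\Omega_{<})$, i.e.\ the third summand. For $\mathcal{A}\otimes\mathcal{B}$, the first leg is again handled by~\eqref{eq:qswap}, while on the second leg one encounters $\tfrac{1}{1-\gamma^*}\adir\gamma^* d_U^R\mathcal{N}(\Ad_U\pi_{<}(\cdot))$, which by~\eqref{eq:adgon} collapses to $\tfrac{\gamma^*}{1-\gamma^*}\adir\pi_{<}(\cdot)$; this gives $[\Ad_{\mathcal{Q}(U)}^{\circ}\otimes\Ad_{\mathcal{Q}(U)}^{\circ}][\adir\otimes\tfrac{\gamma^*}{1-\gamma^*}\adir](\Omega_{<})$, the fourth summand.

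For $\mathcal{B}\otimes\mathcal{A}$ and $\mathcal{B}\otimes\mathcal{B}$ I would first treat the common first leg, namely $\mathcal{B}\Ad_U^{\circ}\pi_{>}$. Writing $\Ad_U^{\circ} = \Ad_U-\id$ and using $d_U^R\mathcal{N}\pi_{>} = 0$ from~\eqref{eq:dnr} kills the $-\id$ contribution, reducing this leg to $\Ad_{\mathcal{Q}(U)}^{\circ}\tfrac{1}{1-\gamma^*}\adir\gamma^* d_U^R\mathcal{N}\Ad_U\pi_{>}$; by~\eqref{eq:dnl}, only the middle summand $\pi_{<}\Ad_{U_-}\pi_{>}$ of $d_U^R\mathcal{N}\Ad_U$ survives on $\mathfrak{n}_+$, so the leg equals $\Ad_{\mathcal{Q}(U)}^{\circ}\tfrac{1}{1-\gamma^*}\adir\gamma^*\pi_{<}\Ad_{U_-}\pi_{>}$. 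Combining this with the second leg $\Ad_{\mathcal{Q}(U)}\adir$ (again~\eqref{eq:qswap}) produces precisely $D_0$, and combining it with the second leg $\Ad_{\mathcal{Q}(U)}^{\circ}\tfrac{\gamma^*}{1-\gamma^*}\adir$ (again via~\eqref{eq:adgon}) produces precisely $D_1$. Summing the four contributions and multiplying by $-1$ yields the asserted identity.

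I do not expect a genuine obstacle: the whole content is the $2\times 2$ bookkeeping of~\eqref{eq:dq}, all routine algebra being covered by the already-established identities. The two points that need care are (i) keeping straight that the first slot of $\Omega_{<}$ lies in $\mathfrak{n}_+$ and the second in $\mathfrak{n}_-$, so that~\eqref{eq:dnr} is available to remove the $-\id$ part of $\Ad_U^{\circ}$ on the first leg (had it lived in $\mathfrak{n}_-$, a non-simplifying $d_U^R\mathcal{N}\pi_{<}$ term would intrude), and (ii) recognizing the second-leg composite $d_U^R\mathcal{N}\circ\Ad_U\circ\pi_{<}$ as exactly the one resolved by~\eqref{eq:adgon}.
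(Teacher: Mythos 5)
Your proposal is correct and follows essentially the same path as the paper, whose proof is a one-sentence pointer to the same chain of identities ($d_U^R\mathcal{Q}$ from~\eqref{eq:dq}, then~\eqref{eq:dnl}, \eqref{eq:dnr}, \eqref{eq:adgon}, \eqref{eq:qswap}); your $2\times 2$ cross-term expansion, with $\mathcal{A}\otimes\mathcal{A}$ giving the third summand, $\mathcal{A}\otimes\mathcal{B}$ the fourth, $\mathcal{B}\otimes\mathcal{A}$ giving $D_0$, and $\mathcal{B}\otimes\mathcal{B}$ giving $D_1$, is the faithful elaboration of that sketch.
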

\begin{proof}
Recall that $\Omega_{<} \in \mathfrak{n}_+ \otimes \mathfrak{n}_-$. One applies formula~\eqref{eq:dq} (the differential of $\mathcal{Q}$) to $\Ad_U^{\circ}\otimes \Ad_U$, and then one invokes formulas~\eqref{eq:dnl} and \eqref{eq:dnr} (the differential of $\mathcal{N}$) with \eqref{eq:adgon} and \eqref{eq:qswap}.
\end{proof}
\begin{lemma}
    The following formula holds:
    \begin{equation}\label{eq:qqadu01}\begin{split}
         -[d_U^R\mathcal{Q}\otimes d_U^R \mathcal{Q}]&[\Ad_U^\circ \otimes 1](\Omega_{>})  = D_0-[\Ad_{\mathcal{Q}(U)}^{\circ}\otimes 1]\left[\frac{1}{1-\gamma^*}\adir \otimes \adir\right](\Omega_{>}) +\\&+[\Ad_{\mathcal{Q}(U)}^{\circ}\otimes \Ad_{\mathcal{Q}(U)}]\left[\frac{\gamma^*}{1-\gamma^*}\adir \otimes \adir\right](\Omega_{>}) + \\ &+[\Ad_{\mathcal{Q}(U)}^{\circ}\otimes\Ad_{\mathcal{Q}(U)}]\left[\frac{1}{1-\gamma^*}\pi_{<}[\gamma^*,\adir]\otimes \adir \right](\Omega_0),
    \end{split}
    \end{equation}
    where $D_0$ is defined in~\eqref{eq:d0}.
\end{lemma}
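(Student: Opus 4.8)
The plan is to compute $-\dqdq[\Ad_U^\circ \otimes 1](\Omega_{>})$ by the same mechanism as in the proof of~\eqref{eq:dqrr}, now interchanging the roles of $\Omega_{<}\in\mathfrak{n}_+\otimes\mathfrak{n}_-$ and $\Omega_{>}\in\mathfrak{n}_-\otimes\mathfrak{n}_+$. First I would dispose of the second tensor factor: by~\eqref{eq:dq} together with the vanishing $d_U^R\mathcal{N}\pi_{>}=0$ from~\eqref{eq:dnr}, the operator $d_U^R\mathcal{Q}$ restricts to $\adir$ on $\mathfrak{n}_+$, so the second slot of $\Omega_{>}$ contributes $\adir$ uniformly and the expression becomes $-\left[(d_U^R\mathcal{Q}\circ\Ad_U^\circ)\otimes\adir\right](\Omega_{>})$. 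It then remains to analyze $d_U^R\mathcal{Q}\circ\Ad_U^\circ$ on $\mathfrak{n}_-$.

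Next I would substitute~\eqref{eq:dq} and use~\eqref{eq:qswap} to write, on $\mathfrak{n}_-$,
\[
d_U^R\mathcal{Q}\,\Ad_U^\circ\,\pi_{<}\;=\;\Ad_{\mathcal{Q}(U)}^\circ\,\adir\,\pi_{<}\;+\;\Ad_{\mathcal{Q}(U)}^\circ\,\frac{1}{1-\gamma^*}\,\adir\,\gamma^*\,d_U^R\mathcal{N}\,\Ad_U^\circ\,\pi_{<},
\]
and then split $\Ad_U^\circ\pi_{<}=\Ad_U\pi_{<}-\pi_{<}$. On the $\Ad_U\pi_{<}$-summand only $\Ad_{U_-}\pi_{<}$ of~\eqref{eq:dnl} survives (the $\pi_{>}$- and $\pi_0$-summands vanish against $\pi_{<}$), and~\eqref{eq:adgon} collapses $\frac{1}{1-\gamma^*}\adir\gamma^*d_U^R\mathcal{N}\Ad_U\pi_{<}$ to $\frac{\gamma^*}{1-\gamma^*}\adir\pi_{<}$. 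Using $1+\frac{\gamma^*}{1-\gamma^*}=\frac{1}{1-\gamma^*}$, the leading term $\Ad_{\mathcal{Q}(U)}^\circ\adir\pi_{<}$ and this $\Ad_U\pi_{<}$-contribution assemble, after contraction with $\Omega_{>}$, into $-[\Ad_{\mathcal{Q}(U)}^\circ\otimes 1]\bigl[\frac{1}{1-\gamma^*}\adir\otimes\adir\bigr](\Omega_{>})$, which is the second term on the right-hand side of~\eqref{eq:qqadu01}.

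The remaining contribution, namely $+\bigl[(\Ad_{\mathcal{Q}(U)}^\circ\tfrac{1}{1-\gamma^*}\adir\gamma^*\,d_U^R\mathcal{N}\pi_{<})\otimes\adir\bigr](\Omega_{>})$ coming from the $-\pi_{<}$-summand, is where the real work lies. Here I would expand $d_U^R\mathcal{N}$ using~\eqref{eq:dnl} (in the equivalent form $d_U^R\mathcal{N}\Ad_U=\pi_{<}\Ad_{U_-}$), contract against $\Omega_{>}$, and transport operators between the two tensor factors by means of the Casimir-component identities~\eqref{eq:adpd}--\eqref{eq:admn}, which rewrite $[\Ad_{U_-}\otimes\Ad_{U_-}](\Omega_{>})$ and its relatives in terms of $\Omega_{>}$, $\Omega_{<}$ and $\Omega_0$. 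The $\Omega_{<}$-part is exactly $D_0$, after recognizing $\pi_{<}\Ad_{U_-}$ as the operator $d_U^R\mathcal{N}\Ad_U$ that enters the definition~\eqref{eq:d0}; the $\Omega_0$-part is evaluated with~\eqref{eq:adgonz} (which computes $\frac{1}{1-\gamma^*}\adir\gamma^*d_U^R\mathcal{N}\Ad_U\pi_0=\frac{1}{1-\gamma^*}\pi_{<}[\gamma^*,\adir]\pi_0$) and yields the last term of~\eqref{eq:qqadu01}; the leftover $\Omega_{>}$-part supplies $[\Ad_{\mathcal{Q}(U)}^\circ\otimes\Ad_{\mathcal{Q}(U)}]\bigl[\frac{\gamma^*}{1-\gamma^*}\adir\otimes\adir\bigr](\Omega_{>})$. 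Summing the four contributions gives~\eqref{eq:qqadu01}.

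I expect this last piece --- the $d_U^R\mathcal{N}\pi_{<}$ term --- to be the main obstacle: it is the one place where the answer cannot be read off directly from~\eqref{eq:dnl} and~\eqref{eq:adgon}, and producing $\Omega_{<}$ and $\Omega_0$ out of a contraction against $\Omega_{>}$ with the correct signs and with the correct $\Ad_{\mathcal{Q}(U)}$ versus $\Ad_{\mathcal{Q}(U)}^\circ$ prefactors requires careful use of~\eqref{eq:adpd}--\eqref{eq:admn} together with the relation $\tilde{\gamma}^*(U_-)=\rho(U)\tilde{\gamma}^*(\rho(U))^{-1}$ implicit in~\eqref{eq:rhorel} (already exploited in the proofs of~\eqref{eq:adgon}--\eqref{eq:adgonz}).
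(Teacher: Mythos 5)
Your opening moves are correct and coincide with the paper's: $d_U^R\mathcal{Q}$ reduces to $\adir$ on the second factor by~\eqref{eq:dnr}; on the first factor you apply~\eqref{eq:dq} and~\eqref{eq:qswap}, split $\Ad_U^\circ\pi_< = \Ad_U\pi_< - \pi_<$, and use~\eqref{eq:adgon} on the $\Ad_U\pi_<$ summand so that the leading terms telescope via $1 + \tfrac{\gamma^*}{1-\gamma^*} = \tfrac{1}{1-\gamma^*}$ into $-[\Ad_{\mathcal{Q}(U)}^\circ\otimes 1][\tfrac{1}{1-\gamma^*}\adir\otimes\adir](\Omega_>)$. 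This is exactly how the paper arrives at its intermediate formula~\eqref{eq:qqadu010}. You also name the correct three output terms ($D_0$, the $\Omega_0$-piece, the $\Omega_>$-piece) and the right auxiliaries~\eqref{eq:adgon},~\eqref{eq:adgonz}, so the destination is right.

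The gap is in how you claim to get there. Identities~\eqref{eq:adpd}--\eqref{eq:admn} cannot be the transport mechanism: they are symmetric statements about $[\Ad_N\otimes\Ad_N]$ for \emph{unipotent lower triangular} $N$ and play no role in this lemma (the paper uses them only in the final combination step of Proposition~\ref{p:qpoiss}). They cannot convert your remaining operator $d_U^R\mathcal{N} = \pi_<\Ad_{\sinv{U_\oplus}}$ into the $\pi_<\Ad_{U_-}$ you need, nor can they produce the $\Ad_{\mathcal{Q}(U)}$ prefactor on the second tensor slot. The actual tool is the $\Ad$-invariance of the Casimir. Concretely: since $d_U^R\mathcal{N} = d_U^R\mathcal{N}\,\pi_<$ annihilates $\Omega_{\leq}$, one may replace $\Omega_>$ by $\Omega$ in your remaining term; then insert $\Omega = [\Ad_U\otimes\Ad_U](\Omega)$. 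This simultaneously turns $d_U^R\mathcal{N}$ into $d_U^R\mathcal{N}\Ad_U = \pi_<\Ad_{U_-}$ on the first slot and $\adir$ into $\adir\Ad_U = \Ad_{\mathcal{Q}(U)}\adir$ on the second slot — giving precisely the $\Ad_{\mathcal{Q}(U)}$ prefactor that appears in $D_0$ and the last two terms of~\eqref{eq:qqadu01}. Splitting $\Omega = \Omega_< + \Omega_0 + \Omega_>$ and applying~\eqref{eq:dnl},~\eqref{eq:adgon},~\eqref{eq:adgonz} then yields the three terms you listed. Replace the appeal to~\eqref{eq:adpd}--\eqref{eq:admn} by this Ad-invariance insertion and the argument is complete.
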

\begin{proof}
    Applying formula~\eqref{eq:dq} to $-[\Ad_U^\circ \otimes 1]$ and using formula~\eqref{eq:dnr}, we see that
    \begin{equation}\label{eq:qqadu010}
    \begin{split}
        -[d_U^R\mathcal{Q}\otimes d_U^R \mathcal{Q}][\Ad_U^\circ \otimes 1](\Omega_{>}) =&-[\Ad_{\mathcal{Q}(U)}^{\circ}\otimes 1]\left[\frac{1}{1-\gamma^*}\adir \otimes \adir\right](\Omega_{>}) + \\ &+[\Ad_{\mathcal{Q}(U)}^{\circ}\otimes 1]\left[\frac{1}{1-\gamma^*}\adir \gamma^*d_U^R\mathcal{N}\otimes \adir\right](\Omega).
    \end{split}
    \end{equation}
    Since $\Omega = [\Ad_U \otimes \Ad_U](\Omega)$, the last term can be expanded as
    \begin{equation}\label{eq:qqadu012}
    \begin{split}
        [\Ad_{\mathcal{Q}(U)}^{\circ}\otimes 1]&\left[\frac{1}{1-\gamma^*}\adir \gamma^*d_U^R\mathcal{N}\otimes \adir\right](\Omega) = D_0 + \\ + &[\Ad_{\mathcal{Q}(U)}^{\circ}\otimes \Ad_{\mathcal{Q}(U)}]\left[\frac{1}{1-\gamma^*}\adir \gamma^*\Ad_{U_-}\otimes \adir\right](\Omega_{>})  +\\+&[\Ad_{\mathcal{Q}(U)}^{\circ}\otimes \Ad_{\mathcal{Q}(U)}]\left[\frac{1}{1-\gamma^*}\adir \gamma^*\Ad^\circ_{U_-}\otimes \adir\right](\Omega_{0}).
    \end{split}
    \end{equation}
    Now one can apply~\eqref{eq:adgon} and~\eqref{eq:adgonz} to~\eqref{eq:qqadu012} and combine the result with formula~\eqref{eq:qqadu010}.
\end{proof}

\subsection{\texorpdfstring{Pushforward of the diagonal part of $\pi^{\dagger}$}{Pushforward of the diagonal part of pi dagger}}
The objective of this subsection is to compute the term
\begin{equation*}
\dqdq\left([\Ad_U^\circ\otimes \Ad_U^\circ](r_0)-[\Ad_U^\circ \otimes \Ad_U](\Omega_0)  \right).
\end{equation*}
Let us define the orthogonal projections $\pi_{\Gamma_1}:\mathfrak{g}\rightarrow\mathfrak{g}_{\Gamma_1}$ and $\pi_{\hat{\Gamma}_1}:\mathfrak{g}\rightarrow\mathfrak{g}_{\Gamma_1}^{\perp}$.
\begin{lemma}
The following identities hold:
\begin{align}\label{eq:ogr0}
    &[1 \otimes (1-\gamma^*)](r_0) = [1\otimes \pi_{\hat{\Gamma}_1}](r_0) - [1\otimes \gamma^*](\Omega_0);
\\ \label{eq:gor0}
    &[(1-\gamma^*)\otimes 1](r_0) = [\pi_{\Gamma_1}\otimes \pi_{\Gamma_1}](\Omega_0) + [\pi_{\hat{\Gamma}_1}\otimes 1](r_0);
\\ \label{eq:ggr0}
    &[(1-\gamma^*)\otimes(1-\gamma^*)](r_0) = [\pi_{\hat{\Gamma}_1}\otimes \pi_{\hat{\Gamma}_1}](r_0) + [\pi_{\Gamma_1}\otimes\pi_{\Gamma_1}](\Omega_0) - [1\otimes \gamma^*](\Omega_0).
\end{align}
\end{lemma}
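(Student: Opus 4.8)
The plan is to translate the three identities into operator identities on $\mathfrak{h}$ and then deduce them from the defining relations \eqref{eq:r0m}--\eqref{eq:ralgm} of $r_0$. Identify $\mathfrak{h}\otimes\mathfrak{h}$ with $\operatorname{End}(\mathfrak{h})$ by sending a tensor $\sum_i a_i\otimes b_i$ to the operator $x\mapsto\sum_i\langle a_i,x\rangle b_i$; under this dictionary $r_0$ corresponds to $R_0$, the tensor $\Omega_0$ corresponds to $\id$, and $[A\otimes B](\,\cdot\,)$ becomes $B\circ(\,\cdot\,)\circ A^{*}$. Here the restrictions to $\mathfrak{h}$ of $\pi_{\Gamma_1}$ and $\pi_{\hat{\Gamma}_1}$ are the orthogonal projections onto $\mathfrak{h}_{\Gamma_1}:=\mathfrak{h}\cap\mathfrak{g}_{\Gamma_1}$ and onto its orthocomplement $\mathfrak{h}_{\Gamma_1}^{\perp}$, while $\gamma|_{\mathfrak{h}}$ is the map determined by $h_{\alpha}\mapsto h_{\gamma(\alpha)}$ for $\alpha\in\Gamma_1$ and by $\gamma|_{\mathfrak{h}_{\Gamma_1}^{\perp}}=0$, and $\gamma^{*}|_{\mathfrak{h}}$ is its adjoint. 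With these conventions \eqref{eq:ogr0}, \eqref{eq:gor0} and \eqref{eq:ggr0} become, respectively,
\[
(1-\gamma^{*})R_0=\pi_{\hat{\Gamma}_1}R_0-\gamma^{*},\qquad R_0(1-\gamma)=\pi_{\Gamma_1}+R_0\pi_{\hat{\Gamma}_1},\qquad (1-\gamma^{*})R_0(1-\gamma)=\pi_{\hat{\Gamma}_1}R_0\pi_{\hat{\Gamma}_1}+\pi_{\Gamma_1}-\gamma^{*}.
\]

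The key input is the single relation $R_0\gamma=-R_0^{*}\pi_{\Gamma_1}$ on $\mathfrak{h}$. To see it, note that both sides annihilate $\mathfrak{h}_{\Gamma_1}^{\perp}$ (the left because $\gamma$ does, the right because $\pi_{\Gamma_1}$ does), so it suffices to check equality on the spanning set $\{h_{\alpha}\}_{\alpha\in\Gamma_1}$ of $\mathfrak{h}_{\Gamma_1}$; there it reads $R_0(h_{\gamma(\alpha)})=-R_0^{*}(h_{\alpha})$, which follows from \eqref{eq:ralgm} together with $R_0+R_0^{*}=\id$: indeed $R_0\gamma(h_\alpha)=R_0 h_\alpha-h_\alpha=R_0 h_\alpha-(R_0+R_0^{*})h_\alpha=-R_0^{*}h_\alpha$. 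Taking adjoints of $R_0\gamma=-R_0^{*}\pi_{\Gamma_1}$ (and using that $\pi_{\Gamma_1}$ is self-adjoint) gives the companion relation $\gamma^{*}R_0^{*}=-\pi_{\Gamma_1}R_0$.

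It then remains to do the bookkeeping. Using $R_0^{*}=\id-R_0$ and $\gamma^{*}R_0^{*}=-\pi_{\Gamma_1}R_0$ we get $\pi_{\Gamma_1}R_0=-\gamma^{*}(\id-R_0)=\gamma^{*}R_0-\gamma^{*}$, hence $\pi_{\hat{\Gamma}_1}R_0=R_0-\pi_{\Gamma_1}R_0=(1-\gamma^{*})R_0+\gamma^{*}$, which is \eqref{eq:ogr0}. Likewise $R_0\gamma=-(\id-R_0)\pi_{\Gamma_1}=R_0\pi_{\Gamma_1}-\pi_{\Gamma_1}$, so $R_0(1-\gamma)=R_0-R_0\pi_{\Gamma_1}+\pi_{\Gamma_1}=R_0\pi_{\hat{\Gamma}_1}+\pi_{\Gamma_1}$, which is \eqref{eq:gor0}. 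Finally, substituting \eqref{eq:gor0} and then \eqref{eq:ogr0},
\[
(1-\gamma^{*})R_0(1-\gamma)=(1-\gamma^{*})\bigl(\pi_{\Gamma_1}+R_0\pi_{\hat{\Gamma}_1}\bigr)=(1-\gamma^{*})\pi_{\Gamma_1}+\bigl(\pi_{\hat{\Gamma}_1}R_0-\gamma^{*}\bigr)\pi_{\hat{\Gamma}_1}=\pi_{\hat{\Gamma}_1}R_0\pi_{\hat{\Gamma}_1}+\pi_{\Gamma_1}-\gamma^{*},
\]
where the last equality uses $\pi_{\Gamma_1}+\pi_{\hat{\Gamma}_1}=\id$; this is \eqref{eq:ggr0}. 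The only point requiring genuine care is the reduction used to establish $R_0\gamma=-R_0^{*}\pi_{\Gamma_1}$ — namely pinning down the restrictions of $\pi_{\Gamma_1},\gamma,\gamma^{*}$ to $\mathfrak{h}$ and observing that $R_0\gamma$ and $R_0^{*}\pi_{\Gamma_1}$ both kill $\mathfrak{h}_{\Gamma_1}^{\perp}$; everything else is formal manipulation. As an alternative that avoids the $\operatorname{End}(\mathfrak{h})$ dictionary, one can prove each identity by pairing it with $x\otimes y$ for $x,y\in\mathfrak{h}$ and reducing directly to the scalar relations \eqref{eq:r0}--\eqref{eq:ralg}.
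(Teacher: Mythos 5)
Your proof is correct and follows the same route as the paper's one-line justification ("The formulas follow from~\eqref{eq:ralg}"), merely spelling out the details in the operator language of \eqref{eq:r0m}--\eqref{eq:ralgm}. The dictionary $\mathfrak h\otimes\mathfrak h\cong\operatorname{End}(\mathfrak h)$ and the intermediate relation $R_0\gamma=-R_0^*\pi_{\Gamma_1}$ (with its adjoint $\gamma^*R_0^*=-\pi_{\Gamma_1}R_0$) are exactly the repackaging of \eqref{eq:r0}--\eqref{eq:ralg} that makes the bookkeeping transparent, and your verification that both sides kill $\mathfrak h_{\Gamma_1}^\perp$ and agree on the coroots $h_\alpha$, $\alpha\in\Gamma_1$, is the right reduction; note that you implicitly use \eqref{eq:r0} as well as \eqref{eq:ralg}, which the paper's terse citation leaves tacit.
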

\begin{proof}
    The formulas follow from~\eqref{eq:ralg}.
\end{proof}
\begin{lemma}
    The following identities hold:
    \begin{align}\label{eq:dqad0}
    &d_U^R\mathcal{Q} \Ad_U^ \circ \pi_0 = \Ad_{\mathcal{Q}(U)}^{\circ} \pi_0 + \Ad_{\mathcal{Q}(U)}^{\circ} \frac{1}{1-\gamma^*}\pi_{<} \adir (1-\gamma^*)\pi_0;\\ \label{eq:dql0}
    &d_U^R\mathcal{Q}\Ad_U \pi_0 = \Ad_{\rho(U)^{-1}}\pi_0 + d_U\mathcal{Q} \Ad_U^\circ \pi_0.
    \end{align}
\end{lemma}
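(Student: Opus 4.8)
The plan is to obtain both identities directly from the differential formula~\eqref{eq:dq}, the conjugation identities~\eqref{eq:qswap}, and Lemma~\ref{l:adgon}. The first ingredient I would record is that $d_U^R\mathcal{N}\,\pi_0=0$: by the computation in the proof of Lemma~\ref{l:dum} one has $d_U^R\mathcal{N}(x)=\pi_{<}\big(\Ad_{U_\oplus^{-1}}x\big)$, and since $U_\oplus\in\mathcal{B}_+$ the operator $\Ad_{U_\oplus^{-1}}$ preserves $\mathfrak{b}_+=\mathfrak{h}\oplus\mathfrak{n}_+\subseteq\ker\pi_{<}$, which contains the image $\mathfrak{h}$ of $\pi_0$ (equivalently, $d_U^R\mathcal{N}\,\pi_{\geq}=0$, which together with~\eqref{eq:dnr} gives the same). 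Substituting $\pi_0$ into~\eqref{eq:dq} then kills the second summand, so $d_U^R\mathcal{Q}\,\pi_0=\adir\pi_0$; since $\Ad_U\pi_0=\Ad_U^\circ\pi_0+\pi_0$ and $d_U^R\mathcal{Q}$ is linear, this is exactly~\eqref{eq:dql0}.

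To prove~\eqref{eq:dqad0} I would write $\Ad_U^\circ\pi_0=\Ad_U\pi_0-\pi_0$ and evaluate $d_U^R\mathcal{Q}\,\Ad_U\pi_0$ from~\eqref{eq:dq}: the first line of~\eqref{eq:qswap} turns the leading term into $\adir\Ad_U\pi_0=\Ad_{\mathcal{Q}(U)}\adir\pi_0$, and~\eqref{eq:adgonz} identifies the other term as $\Ad_{\mathcal{Q}(U)}^\circ\tfrac{1}{1-\gamma^*}\pi_{<}[\gamma^*,\adir]\pi_0$. Subtracting $d_U^R\mathcal{Q}\,\pi_0=\adir\pi_0$ and using $\Ad_{\mathcal{Q}(U)}\adir\pi_0-\adir\pi_0=\Ad_{\mathcal{Q}(U)}^\circ\adir\pi_0$, one gets
\[
d_U^R\mathcal{Q}\,\Ad_U^\circ\pi_0=\Ad_{\mathcal{Q}(U)}^\circ\Big(\adir\pi_0+\tfrac{1}{1-\gamma^*}\pi_{<}[\gamma^*,\adir]\pi_0\Big),
\]
so it remains to check that the operator in parentheses equals $\pi_0+\tfrac{1}{1-\gamma^*}\pi_{<}\,\adir(1-\gamma^*)\pi_0$. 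Using that $\gamma^*$ preserves the decomposition $\mathfrak{g}=\mathfrak{n}_-\oplus\mathfrak{h}\oplus\mathfrak{n}_+$ (so $\pi_{<}\gamma^*=\gamma^*\pi_{<}$), I would expand $\pi_{<}[\gamma^*,\adir]\pi_0=\gamma^*\pi_{<}\adir\pi_0-\pi_{<}\adir\gamma^*\pi_0$ and $\pi_{<}\adir(1-\gamma^*)\pi_0=\pi_{<}\adir\pi_0-\pi_{<}\adir\gamma^*\pi_0$; the difference of the two candidate operators then collapses to $\adir^\circ\pi_0-\tfrac{1}{1-\gamma^*}(1-\gamma^*)\pi_{<}\adir\pi_0$.

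Finally I would observe that $\pi_{<}\adir\pi_0=\adir^\circ\pi_0$: for $h\in\mathfrak{h}$ we have $\adir h-h\in\mathfrak{n}_-$ because $\rho(U)^{-1}\in\mathcal{N}_-$, so the $\mathfrak{n}_-$-component of $\adir h$ is precisely $\adir h-h=\adir^\circ h$. In particular $\pi_{<}\adir\pi_0$ is $\mathfrak{n}_-$-valued, and on $\mathfrak{n}_-$ the operator $\tfrac{1}{1-\gamma^*}(1-\gamma^*)$ is the identity (the inverse existing by nilpotency of $\gamma^*$ on $\mathfrak{n}_-$), hence $\tfrac{1}{1-\gamma^*}(1-\gamma^*)\pi_{<}\adir\pi_0=\pi_{<}\adir\pi_0=\adir^\circ\pi_0$; so the displayed difference vanishes and~\eqref{eq:dqad0} follows. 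The argument is entirely mechanical once~\eqref{eq:dq} and Lemma~\ref{l:adgon} are available; the only step needing a moment's care is this last one — keeping the projections $\pi_{<},\pi_0$ and the graded, $\mathfrak{n}_-$-nilpotent operator $\gamma^*$ straight, together with the unipotency of $\rho(U)^{-1}$ — but it is not a genuine obstacle.
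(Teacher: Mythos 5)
Your proof is correct and follows essentially the same route as the paper: it uses the same key ingredients (formula \eqref{eq:dq}, the conjugation identities \eqref{eq:qswap}, Lemma~\ref{l:adgon}, the observation $d_U^R\mathcal{N}\pi_0=0$, and $\pi_{<}\Ad_{\rho(U)^{-1}}\pi_0=\Ad_{\rho(U)^{-1}}\pi_0-\pi_0$), with only minor organizational differences --- you establish \eqref{eq:dql0} first and obtain \eqref{eq:dqad0} by subtraction, then verify the residual operator identity by showing a difference vanishes, whereas the paper substitutes $\Ad_U^\circ\pi_0$ into \eqref{eq:dq} directly and recombines terms via $1+\tfrac{\gamma^*}{1-\gamma^*}=\tfrac{1}{1-\gamma^*}$.
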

\begin{proof}
    It follows from Lemma~\ref{l:dum} that $d^R_U \mathcal{N}  \Ad_U^\circ \pi_0 = d^R_U \mathcal{N} \Ad_U \pi_0$; combining this with formulas~\eqref{eq:adgonz}, 
 \eqref{eq:dq} and \eqref{eq:qswap}, we see that
    \begin{equation*}\begin{split}
d_U^R\mathcal{Q} \Ad_U^ \circ \pi_0 = \Ad_{\mathcal{Q}(U)}^{\circ}\adir & \pi_0 + \Ad_{\mathcal{Q}(U)}^{\circ} \frac{\gamma^*}{1-\gamma^*} \pi_{<} \adir \pi_0 -\\- &\Ad_{\mathcal{Q}(U)}^{\circ} \frac{1}{1-\gamma^*}\pi_{<} \adir \gamma^* \pi_0.     
\end{split}
    \end{equation*}
    Now, formula~\eqref{eq:dqad0} follows after noticing that
    \begin{equation*}
    \Ad_{\mathcal{Q}(U)}^{\circ}\adir \pi_0 = \Ad_{\mathcal{Q}(U)}^{\circ}\pi_{<}\adir \pi_0 + \Ad_{\mathcal{Q}(U)}^{\circ}\pi_0
    \end{equation*}
    and combining the terms. As for formula~\eqref{eq:dql0}, it follows from the fact that $d_U^R\mathcal{N}\pi_0 = 0$ and formula~\eqref{eq:dq}.
\end{proof}

\begin{lemma}
    The following identity holds:
    \begin{equation}\label{eq:dqdqr0}
    \begin{split}
    [d_U^R&\mathcal{Q} \otimes d^R_U \mathcal{Q}][\Ad_U^\circ \otimes \Ad_U^\circ](r_0) = [\Ad_{\mathcal{Q}(U)}^{\circ}\otimes\Ad_{\mathcal{Q}(U)}^{\circ}](r_0) -\\- &[\Ad_{\mathcal{Q}(U)}^{\circ}\otimes\Ad_{\mathcal{Q}(U)}^{\circ}]\oog[1\otimes \pi_{<}\adir][1\otimes \gamma^*](\Omega_0) + \\ + &[\Ad_{\mathcal{Q}(U)}^{\circ}\otimes\Ad_{\mathcal{Q}(U)}^{\circ}]\goo[\pi_{<}\adir \otimes 1][\pi_{\Gamma_1}\otimes \pi_{\Gamma_1}](\Omega_0) - \\ - &[\Ad_{\mathcal{Q}(U)}^{\circ}\otimes\Ad_{\mathcal{Q}(U)}^{\circ}] \left[\frac{1}{1-\gamma^*}\otimes\frac{1}{1-\gamma^*} \right][\pi_{<}\adir \otimes \pi_{<}\adir ][(1-\gamma^*)\otimes \gamma^*](\Omega_0).
    \end{split}
    \end{equation}
\end{lemma}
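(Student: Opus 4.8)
The plan is to feed the formula \eqref{eq:dqad0} for $d_U^R\mathcal{Q}\,\Ad_U^\circ\pi_0$ into each of the two tensor legs. Since $r_0\in\mathfrak{h}\otimes\mathfrak{h}$, the projection $\pi_0$ acts as the identity on each leg, so that
\[
[d_U^R\mathcal{Q}\otimes d_U^R\mathcal{Q}][\Ad_U^\circ\otimes\Ad_U^\circ](r_0) = [(A+B)\otimes(A+B)](r_0),
\]
where, following \eqref{eq:dqad0}, I write $A:=\Ad_{\mathcal{Q}(U)}^{\circ}\pi_0$ and $B:=\Ad_{\mathcal{Q}(U)}^{\circ}\frac{1}{1-\gamma^*}\pi_{<}\adir(1-\gamma^*)\pi_0$. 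Expanding by bilinearity gives four contributions. The term $[A\otimes A](r_0)$ is exactly the leading term $[\Ad_{\mathcal{Q}(U)}^{\circ}\otimes\Ad_{\mathcal{Q}(U)}^{\circ}](r_0)$ on the right-hand side of \eqref{eq:dqdqr0}, and I would then match $[A\otimes B](r_0)$, $[B\otimes A](r_0)$ and $[B\otimes B](r_0)$ with the remaining three terms.

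For each of the latter three, I would first transfer the tail $(1-\gamma^*)\pi_0$ of each $B$-factor directly onto the corresponding leg of $r_0$, turning $[A\otimes B](r_0)$, $[B\otimes A](r_0)$, $[B\otimes B](r_0)$ into $[1\otimes(1-\gamma^*)](r_0)$, $[(1-\gamma^*)\otimes 1](r_0)$ and $[(1-\gamma^*)\otimes(1-\gamma^*)](r_0)$ acted on by the remaining operators $\frac{1}{1-\gamma^*}$, $\pi_{<}$, $\adir$, $\Ad_{\mathcal{Q}(U)}^{\circ}$. Rewriting these three tensors via the preliminary identities \eqref{eq:ogr0}, \eqref{eq:gor0} and \eqref{eq:ggr0} splits each term into an $\Omega_0$-part and a $\pi_{\hat{\Gamma}_1}$-part. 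The $\Omega_0$-parts, after reorganizing the operators $\tfrac{1}{1-\gamma^*}\otimes 1$, $1\otimes\tfrac{1}{1-\gamma^*}$ etc.\ into the notation $\goo$, $\oog$, are precisely the three $\Omega_0$-terms on the right of \eqref{eq:dqdqr0}; this is careful but routine bookkeeping.

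The crux of the argument is that every $\pi_{\hat{\Gamma}_1}$-part vanishes. Here I would use that $\rho(U)=\prod_{i\ge 1}(\tilde{\gamma}^*)^i(U_-)$ lies in the connected subgroup $\mathcal{N}_-(\Gamma_1)\subseteq\mathcal{N}_-$ with Lie algebra $\mathfrak{n}_-(\Gamma_1)$, since $\gamma^*$ has image in $\mathfrak{g}_{\Gamma_1}$; consequently $\adir=\Ad_{\rho(U)^{-1}}$ fixes $\mathfrak{h}\cap\mathfrak{g}_{\Gamma_1}^{\perp}$ pointwise, because $\alpha(h)=\langle h_\alpha,h\rangle=0$ for every $h\in\mathfrak{h}\cap\mathfrak{g}_{\Gamma_1}^{\perp}$ and every root $\alpha$ of $\mathfrak{g}_{\Gamma_1}$. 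As $\pi_{\hat{\Gamma}_1}$ maps $\mathfrak{h}$ into $\mathfrak{h}\cap\mathfrak{g}_{\Gamma_1}^{\perp}$, it follows that $\pi_{<}\adir\pi_{\hat{\Gamma}_1}\pi_0=\pi_{<}\pi_{\hat{\Gamma}_1}\pi_0=0$, which annihilates the $[1\otimes\pi_{\hat{\Gamma}_1}](r_0)$ term from \eqref{eq:ogr0}, the $[\pi_{\hat{\Gamma}_1}\otimes 1](r_0)$ term from \eqref{eq:gor0}, and the $[\pi_{\hat{\Gamma}_1}\otimes\pi_{\hat{\Gamma}_1}](r_0)$ term from \eqref{eq:ggr0}. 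Finally, to bring the surviving piece of \eqref{eq:ggr0} into the shape of the last term of \eqref{eq:dqdqr0}, I would replace $[\pi_{\Gamma_1}\otimes\pi_{\Gamma_1}](\Omega_0)-[1\otimes\gamma^*](\Omega_0)$ by $-[(1-\gamma^*)\otimes\gamma^*](\Omega_0)$, which amounts to the identity $[\gamma^*\otimes\gamma^*](\Omega_0)=[\pi_{\Gamma_1}\otimes\pi_{\Gamma_1}](\Omega_0)$: indeed $\gamma^*$ restricts to an isometry of $\mathfrak{h}\cap\mathfrak{g}_{\Gamma_2}$ onto $\mathfrak{h}\cap\mathfrak{g}_{\Gamma_1}$ (and vanishes on the orthogonal complement), so $\gamma^*\otimes\gamma^*$ sends the Casimir of $\mathfrak{h}\cap\mathfrak{g}_{\Gamma_2}$ to that of $\mathfrak{h}\cap\mathfrak{g}_{\Gamma_1}$. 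Assembling the four contributions then yields \eqref{eq:dqdqr0}. I expect the only genuinely delicate point to be the vanishing of the $\pi_{\hat{\Gamma}_1}$-terms; the rest is a disciplined propagation of the operators $\tfrac{1}{1-\gamma^*}$, $\pi_{<}$, $\adir$ and $\Ad_{\mathcal{Q}(U)}^{\circ}$ through the identities already at hand.
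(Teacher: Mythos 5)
Your proposal follows the paper's proof essentially step for step: apply \eqref{eq:dqad0} to each tensor leg, expand into the four cross-terms, transfer the $(1-\gamma^*)\pi_0$ factors onto $r_0$, invoke \eqref{eq:ogr0}--\eqref{eq:ggr0}, and kill the $\pi_{\hat{\Gamma}_1}$-pieces via $\pi_{<}\adir\pi_{\hat{\Gamma}_1}\pi_0=0$. The one place you go beyond the paper's terse ``combining the terms'' is in making explicit the identity $[\gamma^*\otimes\gamma^*](\Omega_0)=[\pi_{\Gamma_1}\otimes\pi_{\Gamma_1}](\Omega_0)$ (and the reason $\rho(U)\in\mathcal{N}_-(\Gamma_1)$ fixes $\mathfrak{h}\cap\mathfrak{g}_{\Gamma_1}^{\perp}$), both of which are correct and genuinely needed; otherwise the argument is the same.
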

\begin{proof}
It follows from formula~\eqref{eq:dqad0} that 
\begin{equation}\label{eq:dqdqr0_1}
    \begin{split}
        [d_U^R\mathcal{Q}&\otimes d_U^R\mathcal{Q}] [\Ad_U^\circ \otimes \Ad_U^\circ](r_0) = [\Ad_{\mathcal{Q}(U)}^{\circ}\otimes\Ad_{\mathcal{Q}(U)}^{\circ}](r_0) + \\ &+[\Ad_{\mathcal{Q}(U)}^{\circ}\otimes\Ad_{\mathcal{Q}(U)}^{\circ}] \oog [1\otimes \pi_{<} \adir][1 \otimes (1-\gamma^*)](r_0) + \\ &+ [\Ad_{\mathcal{Q}(U)}^{\circ}\otimes\Ad_{\mathcal{Q}(U)}^{\circ}] \goo [\pi_{<}\adir \otimes 1][(1-\gamma^*)\otimes 1](r_0) + \\ &+ [\Ad_{\mathcal{Q}(U)}^{\circ}\otimes\Ad_{\mathcal{Q}(U)}^{\circ}] \left[\frac{1}{1-\gamma^*}\otimes \frac{1}{1-\gamma^*}\right][\pi_{<}\adir \otimes \pi_{<} \adir]\circ\\ &\hspace{3in}\circ [(1-\gamma^*)\otimes (1-\gamma^*)](r_0).
    \end{split}
\end{equation}
Now formula~\eqref{eq:dqdqr0} follows from applying the identities~\eqref{eq:ogr0}, \eqref{eq:gor0} and \eqref{eq:ggr0} to formula~\eqref{eq:dqdqr0_1} and combining the terms (in the process, note also that $\pi_{<}\adir \pi_{\hat{\Gamma}_1}\pi_0 = 0$).
\end{proof}

\begin{lemma}
    The following identity holds:
    \begin{equation}\label{eq:dqdqdiag}\begin{split}
        \dqdq&\left([\Ad_U^\circ\otimes \Ad_U^\circ](r_0)-[\Ad_U^\circ \otimes \Ad_U](\Omega_0)\right) = \\ = &[\Ad_{\mathcal{Q}(U)}^{\circ}\otimes\Ad_{\mathcal{Q}(U)}^{\circ}] (r_0) - [\Ad_{\mathcal{Q}(U)}^{\circ}\otimes\Ad_{\mathcal{Q}(U)}](\Omega_0) - D_2- \\-&[\Ad_{\mathcal{Q}(U)}^{\circ}\otimes\Ad_{\mathcal{Q}(U)}]\oog [1\otimes \pi_{<}\adir](\Omega_0) + \\ + &[\Ad_{\mathcal{Q}(U)}^{\circ}\otimes1] \ogg [1\otimes \pi_{<}\adir](\Omega_0) + \\ +&[\Ad_{\mathcal{Q}(U)}^{\circ}\otimes\Ad_{\mathcal{Q}(U)}^{\circ}] \goo [\pi_{<}\adir \otimes 1][\gamma^*\otimes 1](\Omega_0) - \\ - &[\Ad_{\mathcal{Q}(U)}^{\circ}\otimes1]\goo [\pi_{<}\adir \otimes \adir][(1-\gamma^*)\otimes 1](\Omega_0)
        \end{split}
    \end{equation}
    where
    \begin{equation}\label{eq:d2}
    D_2 := [\Ad_{\mathcal{Q}(U)}^{\circ}\otimes\Ad_{\mathcal{Q}(U)}^{\circ}] \left[\frac{1}{1-\gamma^*}\otimes\frac{1}{1-\gamma^*} \right][\pi_{<}\adir \otimes \pi_{<}\adir ][(1-\gamma^*)\otimes 1](\Omega_0).
    \end{equation}
\end{lemma}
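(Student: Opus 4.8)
The plan is a direct computation that combines the two formulas already available: the pushforward \eqref{eq:dqdqr0} of the $r_0$-part and the differentials \eqref{eq:dqad0}, \eqref{eq:dql0} of $\mathcal{Q}$ along $\Ad_U^\circ\pi_0$ and $\Ad_U\pi_0$. Since $\Omega_0 \in \mathfrak{h}\otimes\mathfrak{h}$, we have $[\Ad_U^\circ\otimes\Ad_U](\Omega_0) = [\Ad_U^\circ\pi_0 \otimes \Ad_U\pi_0](\Omega_0)$, so $\dqdq[\Ad_U^\circ\otimes\Ad_U](\Omega_0) = [(d_U^R\mathcal{Q}\,\Ad_U^\circ\pi_0)\otimes (d_U^R\mathcal{Q}\,\Ad_U\pi_0)](\Omega_0)$. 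First I would substitute \eqref{eq:dqad0} into the first tensor slot and \eqref{eq:dql0} (itself expanded through \eqref{eq:dqad0}) into the second, which produces a sum of a handful of terms of the form $[\Ad_{\mathcal{Q}(U)}^\circ P \otimes Q](\Omega_0)$, with $P$ equal to $\pi_0$ or to $\frac{1}{1-\gamma^*}\pi_{<}\adir(1-\gamma^*)\pi_0$, and $Q$ one of $\adir\pi_0$, $\Ad_{\mathcal{Q}(U)}^\circ\pi_0$, $\Ad_{\mathcal{Q}(U)}^\circ\frac{1}{1-\gamma^*}\pi_{<}\adir(1-\gamma^*)\pi_0$.

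Next I would bring these terms to a normal form. Because $\rho(U)\in\mathcal{N}_-$, conjugation by $\rho(U)^{-1}$ acts on $\mathfrak{h}$ with $\adir\pi_0 = \pi_0 + \pi_{<}\adir\pi_0$, so every factor $\adir\pi_0$ splits off a trivial piece; together with $\Ad_{\mathcal{Q}(U)}^\circ = \Ad_{\mathcal{Q}(U)} - \id$ and, where needed, the commutation rules \eqref{eq:qswap}, the contributions carrying a bare $\pi_0$ on the second slot reassemble --- up to terms that still carry $\pi_{<}\adir$ --- into the summand $-[\Ad_{\mathcal{Q}(U)}^\circ \otimes \Ad_{\mathcal{Q}(U)}](\Omega_0)$ of \eqref{eq:dqdqdiag}. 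All remaining terms carry a $\pi_{<}\adir$ on at least one leg; I would rearrange their factors $\frac{1}{1-\gamma^*}$, $\frac{\gamma^*}{1-\gamma^*}$, $\pi_{<}\adir$, $\gamma^*$ using elementary identities for the rational functions of $\gamma^*$ (and \eqref{eq:adpd} if an $\adir$ has to be moved between the two tensor legs of $\Omega_0$), so that each term is displayed exactly as in the right-hand side of \eqref{eq:dqdqdiag}.

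Then I would add the right-hand side of \eqref{eq:dqdqr0}. Its leading summand $[\Ad_{\mathcal{Q}(U)}^\circ\otimes\Ad_{\mathcal{Q}(U)}^\circ](r_0)$ passes through unchanged, while in each of its other three $r_0$-summands I would rewrite the image of $(1-\gamma^*)$, $\gamma^*$ or $\pi_{\Gamma_1}$ acting on $r_0$ in terms of $\Omega_0$, $[\pi_{\hat{\Gamma}_1}\otimes 1](r_0)$, $[\pi_{\Gamma_1}\otimes\pi_{\Gamma_1}](\Omega_0)$, etc., using \eqref{eq:ogr0}--\eqref{eq:ggr0}. The $[\pi_{\hat{\Gamma}_1}\otimes\cdots](r_0)$ contributions die because $\pi_{<}\adir\pi_{\hat{\Gamma}_1}\pi_0 = 0$ (the observation already used in the proof of \eqref{eq:dqdqr0}), so that after this substitution the $r_0$-summands of \eqref{eq:dqdqr0} become $\Omega_0$-terms of precisely the same shape as those produced in the previous paragraph. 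Collecting like terms, and reading off $D_2$ from the summand $\left[\frac{1}{1-\gamma^*}\otimes\frac{1}{1-\gamma^*}\right][\pi_{<}\adir\otimes\pi_{<}\adir][(1-\gamma^*)\otimes 1](\Omega_0)$, yields the seven-term right-hand side of \eqref{eq:dqdqdiag}.

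The main obstacle is entirely organizational: there are on the order of a dozen intermediate terms, and the identity holds only after several cancellations --- most delicately between the three re-expressed $r_0$-summands of \eqref{eq:dqdqr0} and the $\Omega_0$-terms coming out of \eqref{eq:dqad0}--\eqref{eq:dql0}. The device that keeps this under control is to fix one normal form for $\Omega_0$-terms (always pushing $\pi_{<}\adir$ and the rational functions of $\gamma^*$ to the outer slots) before attempting any matching, and to track carefully which of $\mathfrak{n}_-$, $\mathfrak{h}$, $\mathfrak{n}_+$ each projection in a chain lands in, since that controls when $\pi_{<}$, $\pi_0$ or $\pi_{>}$ acts as the identity or as zero.
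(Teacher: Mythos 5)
Your proposal follows essentially the same route as the paper: expand the $\Omega_0$-part via \eqref{eq:dqad0} and \eqref{eq:dql0}, use the observation $\adir\pi_0 = \pi_0 + \pi_{<}\adir\pi_0$ (which is exactly the content of the paper's intermediate identity \eqref{eq:o0not}), and then combine with the already-computed pushforward \eqref{eq:dqdqr0} of the $r_0$-part, matching terms. One small mislabeling in your third paragraph: the non-leading summands of \eqref{eq:dqdqr0} already act on $\Omega_0$, not on $r_0$ --- the identities \eqref{eq:ogr0}--\eqref{eq:ggr0} together with $\pi_{<}\adir\pi_{\hat{\Gamma}_1}\pi_0 = 0$ were applied in the derivation of \eqref{eq:dqdqr0} itself (to pass from \eqref{eq:dqdqr0_1} to \eqref{eq:dqdqr0}), so there is nothing further to substitute at the point you describe; you would simply find those terms already in the form you were about to produce. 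This does not affect the validity of the argument.
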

\begin{proof}
    Notice that
    \begin{equation}\label{eq:o0not}\begin{split}
        [\Ad_{\mathcal{Q}(U)}^{\circ}\otimes\Ad_{\mathcal{Q}(U)}^{\circ}](\Omega_0&) + [\Ad_{\mathcal{Q}(U)}^{\circ}\otimes1] [1\otimes \adir](\Omega_0)=\\ = &[\Ad_{\mathcal{Q}(U)}^{\circ}\otimes\Ad_{\mathcal{Q}(U)}](\Omega_0) +[\Ad_{\mathcal{Q}(U)}^{\circ}\otimes1][1\otimes \pi_{<}\adir](\Omega_0).
        \end{split}
    \end{equation}
    Let us expand $\dqdq[\Ad_U^\circ\otimes \Ad_U](\Omega_0)$ via~\eqref{eq:dqad0} and~\eqref{eq:dql0}, and apply~\eqref{eq:o0not} to the resulting expression. We obtain 
    
    \begin{equation}\label{eq:dqdqo0}
    \begin{split}
        -\dqdq &[\Ad_U^\circ \otimes \Ad_U](\Omega_0) =-[\Ad_{\mathcal{Q}(U)}^{\circ}\otimes\Ad_{\mathcal{Q}(U)}](\Omega_0) -\\ - &[\Ad_{\mathcal{Q}(U)}^{\circ}\otimes1][1\otimes \pi_{<}\adir](\Omega_0) - \\ - &[\Ad_{\mathcal{Q}(U)}^{\circ}\otimes\Ad_{\mathcal{Q}(U)}^{\circ}] \oog [1\otimes \pi_{<}\adir][1\otimes (1-\gamma^*)](\Omega_0) -\\-&[\Ad_{\mathcal{Q}(U)}^{\circ}\otimes\Ad_{\mathcal{Q}(U)}^{\circ}] \goo [\pi_{<}\adir \otimes 1][(1-\gamma^*)\otimes 1](\Omega_0) - \\ -&[\Ad_{\mathcal{Q}(U)}^{\circ}\otimes1] \goo [\pi_{<}\adir \otimes \adir][(1-\gamma^*)\otimes 1](\Omega_0)- \\ - &[\Ad_{\mathcal{Q}(U)}^{\circ}\otimes\Ad_{\mathcal{Q}(U)}^{\circ}] \left[\frac{1}{1-\gamma^*}\otimes \frac{1}{1-\gamma^*}\right][\pi_{<}\adir \otimes \pi_{<} \adir]\circ\\ & \hspace{2.4in}\circ [(1-\gamma^*)\otimes(1-\gamma^*)](\Omega_0).
        \end{split}
    \end{equation}
    Now, let us indicate how the terms in \eqref{eq:dqdqr0} and in~\eqref{eq:dqdqo0} combine into the terms of~\eqref{eq:dqdqdiag}. The term $-D_2$ in~\eqref{eq:dqdqdiag} is obtained via combining the last term in~\eqref{eq:dqdqr0} with the last term in~\eqref{eq:dqdqo0}. The fourth and the fifth terms in~\eqref{eq:dqdqdiag} are obtained via combining the second term in~\eqref{eq:dqdqr0} with the second and the third terms in~\eqref{eq:dqdqo0}. To obtain the sixth term in~\eqref{eq:dqdqdiag}, one combines the third term in~\eqref{eq:dqdqr0} with the fourth term in~\eqref{eq:dqdqo0}, together with the fact that $\pi_{<}\adir \pi_{\hat{\Gamma}_1}\pi_0 = 0$. The last term in~\eqref{eq:dqdqdiag} is the penultimate term in~\eqref{eq:dqdqo0}.
\end{proof}

\begin{lemma}
    The following identity holds:
    \begin{equation}\label{eq:specialterm}
    \begin{split}
        -D_1-D_2=&[\Ad_{\mathcal{Q}(U)}^{\circ}\otimes\Ad_{\mathcal{Q}(U)}^{\circ}][\pi_{<}\adir\otimes\adir](\Omega_{<}) +\\+&[\Ad_{\mathcal{Q}(U)}^{\circ}\otimes\Ad_{\mathcal{Q}(U)}^{\circ}] \ogg [\pi_{<}\adir \otimes \adir](\Omega_{<}) + \\ + &[\Ad_{\mathcal{Q}(U)}^{\circ}\otimes\Ad_{\mathcal{Q}(U)}^{\circ}] \ggo [\pi_{<}\adir \otimes \adir](\Omega_{<}) + \\ +&[\Ad_{\mathcal{Q}(U)}^{\circ}\otimes\Ad_{\mathcal{Q}(U)}^{\circ}]\goo [\pi_{<}\adir \otimes \pi_{<}\adir][\gamma^*\otimes 1](\Omega_0).
    \end{split}
    \end{equation}
    where $D_1$ is given by~\eqref{eq:d1} and $D_2$ is given by~\eqref{eq:d2}.
\end{lemma}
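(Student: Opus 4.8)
The approach is purely computational: the goal is an identity of $U$-dependent $2$-tensors. Since the prefactor $[\Ad_{\mathcal{Q}(U)}^{\circ}\otimes\Ad_{\mathcal{Q}(U)}^{\circ}]$ occurs in $D_1$, in $D_2$ and in every term on the right-hand side of~\eqref{eq:specialterm}, the plan is to strip it off and reduce to the corresponding identity for the inner tensors. The organizing principle of the whole calculation is the elementary splitting $\frac{1}{1-\gamma^*} = 1 + \frac{\gamma^*}{1-\gamma^*}$, which is precisely what produces the several terms on the right.

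First I would expand $D_2$. Applying the splitting to each leg of $\left[\frac{1}{1-\gamma^*}\otimes\frac{1}{1-\gamma^*}\right]$ breaks $D_2$ into the four pieces
\[
[1\otimes 1],\quad \left[\tfrac{\gamma^*}{1-\gamma^*}\otimes 1\right],\quad \left[1\otimes\tfrac{\gamma^*}{1-\gamma^*}\right],\quad \left[\tfrac{\gamma^*}{1-\gamma^*}\otimes\tfrac{\gamma^*}{1-\gamma^*}\right],
\]
each applied to $[\pi_{<}\adir\otimes\pi_{<}\adir][(1-\gamma^*)\otimes 1](\Omega_0)$. On the second leg I would then trade the trailing $\Omega_0$ for an $\Omega_{<}$-contraction via~\eqref{eq:adpd} (which is available since $\rho(U)\in\mathcal{N}_-$); this converts three of these four pieces into the second, third and fourth terms of~\eqref{eq:specialterm}, while the cross-terms containing $\pi_{<}\adir\pi_{\hat{\Gamma}_1}\pi_0$ vanish, exactly as in the proof of~\eqref{eq:dqdqdiag}.

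Next I would simplify $D_1$ so that it accounts for the remaining piece. The essential step is to eliminate the mixed operator $\adir\gamma^*\pi_{<}\Ad_{U_-}$ on the first leg. Since $\tilde{\gamma}^*$ preserves $\mathcal{N}_-$, $\mathcal{H}$ and $\mathcal{N}_+$, the map $\gamma^*$ commutes with each of $\pi_{<},\pi_0,\pi_{>}$; and combining the recursion $\rho(U) = \tilde{\gamma}^*(U_-)\tilde{\gamma}^*(\rho(U))$ of~\eqref{eq:rhorel} with the intertwining $\Ad_{\tilde{\gamma}^*(g)}\gamma^* = \gamma^*\Ad_g$ used to obtain~\eqref{eq:adrho} yields $\gamma^*\Ad_{U_-} = \Ad_{\rho(U)}\gamma^*\adir$, hence $\adir\gamma^*\Ad_{U_-} = \gamma^*\adir$. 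Feeding this through the restriction $d_U^R\mathcal{N}\Ad_U\pi_{<} = \Ad_{U_-}\pi_{<}$ of~\eqref{eq:dnl} — in the spirit of~\eqref{eq:adgon} — collapses the first leg of $D_1$ to an expression in $\pi_{<}\adir$, $\gamma^*$ and $\frac{\gamma^*}{1-\gamma^*}$ alone, after which a final use of~\eqref{eq:adpd}--\eqref{eq:admp} converts the residual $\Omega_{<}$ back into an $\Omega_0$-term. Matching this against the $[1\otimes 1]$-piece of $D_2$ together with the $\gamma^*$-surplus of its $\left[\frac{\gamma^*}{1-\gamma^*}\otimes\frac{\gamma^*}{1-\gamma^*}\right]$-piece produces the first term $[\pi_{<}\adir\otimes\adir](\Omega_{<})$ of~\eqref{eq:specialterm}, and all other contributions cancel.

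I expect the main obstacle to be the noncommutativity bookkeeping in this last step: the operators $\pi_{<}$, $\adir$, $\gamma^*$ and $\Ad_{U_-}$ do not commute in general, so one must be scrupulous about the order in which the $\mathfrak{n}_{\pm}$- and $\mathfrak{h}$-components are extracted, and in particular about applying~\eqref{eq:adpd}--\eqref{eq:admp} in the correct direction when passing between $\Omega_0$- and $\Omega_{<}$-type tensors. A subsidiary point of care is that $D_2$ arises from the diagonal ($r_0$, $\Omega_0$) part of $\pi^{\dagger}$ while $D_1$ arises from its $\Omega_{<}$ part, so the cancellation between them becomes visible only once both have been normalized into the same ``$\Omega_{<}$ plus residual $\Omega_0$'' shape.
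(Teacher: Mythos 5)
Your high‐level frame — strip the common $[\Ad_{\mathcal{Q}(U)}^{\circ}\otimes\Ad_{\mathcal{Q}(U)}^{\circ}]$ prefactor, exploit the splitting $\tfrac{1}{1-\gamma^*}=1+\tfrac{\gamma^*}{1-\gamma^*}$, and match the resulting pieces — is the right shape of argument, and the intertwining relation $\gamma^*\Ad_{U_-}=\Ad_{\rho(U)}\gamma^*\adir$ that you derive from the recursion~\eqref{eq:rhorel} is correct. However, your claimed assignment of pieces to the four terms of~\eqref{eq:specialterm} does not hold up, and this is not a small matter of bookkeeping.

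First, you cannot obtain the second, third and fourth terms of~\eqref{eq:specialterm} from a two‐sided split of $D_2$. Splitting both legs of $\left[\tfrac{1}{1-\gamma^*}\otimes\tfrac{1}{1-\gamma^*}\right]$ yields four pieces whose prefactors lie in $\{1,\tfrac{\gamma^*}{1-\gamma^*}\}\otimes\{1,\tfrac{\gamma^*}{1-\gamma^*}\}$; none of them has the prefactor $\goo=\left[\tfrac{1}{1-\gamma^*}\otimes 1\right]$ that appears in the fourth term of~\eqref{eq:specialterm}. Moreover, the first three terms of~\eqref{eq:specialterm} all carry $\Omega_{<}$ with a bare $\adir$ on the second leg, while every piece of $D_2$ carries $\Omega_0$ with $\pi_{<}\adir$ on both legs. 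Your proposal to ``trade $\Omega_0$ for $\Omega_{<}$'' via~\eqref{eq:adpd} does not do what you want: the identity reads $[1\otimes\pi_{<}\adir](\Omega_0)=[\pi_0\Ad_{\rho(U)}\otimes 1](\Omega_{<})$, so applying it to the second leg of $D_2$ produces $\pi_{<}\adir(1-\gamma^*)\pi_0\Ad_{\rho(U)}$ on the \emph{first} leg acting on $\Omega_{<}$, not the clean $\pi_{<}\adir$ of the target. In the paper's proof, the trailing $(1-\gamma^*)$ in $D_2$ is simply expanded as $1-\gamma^*$, and only the \emph{second} leg is then split; one of the resulting pieces is precisely the fourth term of~\eqref{eq:specialterm}, and the other two survive only to be cancelled against $\Omega_0$‐pieces coming out of $D_1$.

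Second, you do not address the actual source of the $\Omega_{<}$‐terms. In the paper these come from $D_1$: its first leg $\adir\gamma^*\pi_{<}\Ad_{U_-}$ is decomposed (after inserting a $\pi_{<}$ in front, using that $\adir\gamma^*\pi_{<}$ lands in $\mathfrak{n}_-$) according to $\Ad_{U_-}=\pi_{<}\Ad_{U_-}+\pi_0\Ad_{U_-}+\pi_{>}\Ad_{U_-}$, and each of the three summands is handled by a separate identity: the full‐$\Ad_{U_-}$ piece via~\eqref{eq:ggrhos}, the $\pi_0$ piece via~\eqref{eq:adpd}, and the $\pi_{>}$ piece via $[\pi_{>}\Ad_{U_-}\otimes 1](\Omega_{<})=[1\otimes\Ad_{\sinv{U_-}}](\Omega_{<})$ combined with~\eqref{eq:ggrhos}. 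The $\pi_{>}$ piece produces a term carrying $[\gamma^*\otimes\gamma^*](\Omega_{\leq})$, which must be split as $\Omega_{\leq}=\Omega_{<}+\Omega_0$; the $\Omega_0$ half cancels against the $[1\otimes 1]$ piece of $D_2$, while the $\Omega_{<}$ half, together with the full‐$\Ad_{U_-}$ piece and the operator identity $\left[\tfrac{1}{1-\gamma^*}\otimes\tfrac{1}{1-\gamma^*}\right][\pi_{\Gamma_1}\otimes\pi_{\Gamma_1}]-\left[\tfrac{\gamma^*}{1-\gamma^*}\otimes\tfrac{\gamma^*}{1-\gamma^*}\right]=1\otimes 1+\ggo+\ogg+\left[\tfrac{1}{1-\gamma^*}\otimes\tfrac{1}{1-\gamma^*}\right][\pi_{\hat{\Gamma}_1}\otimes\pi_{\hat{\Gamma}_1}]$ (where the last piece is killed because $\pi_{<}\adir\pi_{\hat{\Gamma}_1}\pi_{>}=0$), yields precisely terms one through three of~\eqref{eq:specialterm}. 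None of the $\pi_0/\pi_{>}$ decomposition, the appearance and splitting of $\Omega_{\leq}$, or the final operator identity figures in your proposal, and without them the computation does not close.
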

\begin{proof}
    For convenience, let us drop the operator $[\Ad_{\mathcal{Q}(U)}^{\circ}\otimes\Ad_{\mathcal{Q}(U)}^{\circ}]$ from the computations. A part of $D_1$ can be rewritten in the following way:
    \begin{equation}\label{eq:d1d2_1}
    \begin{split}
    \left[\frac{1}{1-\gamma^*}\right.\left.\otimes \frac{\gamma^*}{1-\gamma^*}\right]&[\adir \gamma^* \pi_{<} \Ad_{U_-} \otimes \adir](\Omega_{<}) = \\ = &\left[\frac{1}{1-\gamma^*}\otimes \frac{\gamma^*}{1-\gamma^*}\right][\pi_{<}\adir \gamma^*  \Ad_{U_-} \otimes \adir](\Omega_{<}) - \\ -& \left[\frac{1}{1-\gamma^*}\otimes \frac{\gamma^*}{1-\gamma^*}\right][\pi_{<}\adir \gamma^* \pi_{0} \Ad_{U_-} \otimes \adir](\Omega_{<}) - \\ -& \left[\frac{1}{1-\gamma^*}\otimes \frac{\gamma^*}{1-\gamma^*}\right][\pi_{<}\adir \gamma^* \pi_{>} \Ad_{U_-} \otimes \adir](\Omega_{<}).
    \end{split}
    \end{equation}    
    Now let us work with the three terms in the right-hand side of~\eqref{eq:d1d2_1}. To rewrite the first term, we apply formula~\eqref{eq:dnl} and~\eqref{eq:adgon}:
    \begin{equation}\label{eq:d1d2_11}\begin{split}
        \left[\frac{1}{1-\gamma^*}\otimes \frac{\gamma^*}{1-\gamma^*}\right]&[\pi_{<}\adir \gamma^*  \Ad_{U_-} \otimes \adir](\Omega_{<}) = \\= &\left[\frac{\gamma^*}{1-\gamma^*}\otimes \frac{\gamma^*}{1-\gamma^*}\right][\pi_{<}\adir \otimes \adir](\Omega_{<}).
        \end{split}
    \end{equation}
    Next, let us notice the following formula that follows from~\eqref{eq:rhorel}:
    \begin{equation}\label{eq:ggrhos}
        \frac{\gamma^*}{1-\gamma^*}\Ad_{\rho(U)^{-1}\rum }\pi_{<} = \frac{1}{1-\gamma^*}\adir \gamma^* \pi_{<}.
    \end{equation}
    For the second term in the right-hand side of~\eqref{eq:d1d2_1}, we apply formula~\eqref{eq:adpd}:
    \begin{equation}\label{eq:d1d2_12}\begin{split}
        - \left[\frac{1}{1-\gamma^*}\otimes \frac{\gamma^*}{1-\gamma^*}\right]&[\pi_{<}\adir \gamma^* \pi_{0} \Ad_{U_-} \otimes \adir](\Omega_{<}) = \\ =&-\left[\frac{1}{1-\gamma^*}\otimes \frac{\gamma^*}{1-\gamma^*}\right][\pi_{<}\adir\gamma^* \otimes \adir \pi_{<} \Ad_{\rum}](\Omega_0);
    \end{split} 
    \end{equation}
    then, we write 
    \[
    -\adir \pi_{<} \Ad_{\rum} \pi_0 = \pi_{<}\adir \pi_0  - \pi_{<} \Ad_{\rho(U)^{-1}\rum} \pi_0
    \]
    and apply formula~\eqref{eq:ggrhos}:
    \begin{equation}\label{eq:d1d2_122}\begin{split}
        - \left[\frac{1}{1-\gamma^*}\otimes \frac{\gamma^*}{1-\gamma^*}\right]&[\pi_{<}\adir \gamma^* \pi_{0} \Ad_{U_-} \otimes \adir](\Omega_{<}) = \\ =& \left[\frac{1}{1-\gamma^*} \otimes \frac{\gamma^*}{1-\gamma^*} \right][\pi_{<}\adir \otimes \pi_{<}\adir ][\gamma^*\otimes 1](\Omega_0) - \\ &-\left[\frac{1}{1-\gamma^*}\otimes \frac{1}{1-\gamma^*}\right][\pi_{<}\adir \otimes \pi_{<}\adir][\gamma^*\otimes \gamma^*](\Omega_0).
    \end{split} 
    \end{equation}
    To rewrite the third term in the right-hand side of~\eqref{eq:d1d2_1}, we apply the identity \[[\pi_{>}\Ad_{U_-} \otimes 1](\Omega_{<}) = [1\otimes \Ad_{\rum}](\Omega_{<})\] together with formula~\eqref{eq:ggrhos}:
    \begin{equation}\label{eq:d1d2_3}\begin{split}
        - \left[\frac{1}{1-\gamma^*}\otimes \frac{\gamma^*}{1-\gamma^*}\right]&[\pi_{<}\adir \gamma^* \pi_{>} \Ad_{U_-} \otimes \adir](\Omega_{<}) = \\ = &-\left[\frac{1}{1-\gamma^*}\otimes \frac{\gamma^*}{1-\gamma^*}\right][\pi_{<}\adir\gamma^*\otimes \Ad_{\rho(U)^{-1}\rum}](\Omega_{<}) = \\ =&-\left[\frac{1}{1-\gamma^*}\otimes \frac{1}{1-\gamma^*}\right][\pi_{<}\adir\otimes \Ad_{\rho(U)^{-1}}][\gamma^*\otimes\gamma^*](\Omega_{<}).
    \end{split}
    \end{equation}
    Combining formulas~\eqref{eq:d1d2_11}, \eqref{eq:d1d2_12} and \eqref{eq:d1d2_3}, we obtain
    \begin{equation}\begin{split}\label{eq:expd1}
         \left[\frac{1}{1-\gamma^*}\right.\left.\otimes \frac{\gamma^*}{1-\gamma^*}\right]&[\adir \gamma^* \pi_{<} \Ad_{U_-} \otimes \adir](\Omega_{<}) = \\ = &\left[\frac{\gamma^*}{1-\gamma^*}\otimes \frac{\gamma^*}{1-\gamma^*} \right][\pi_{<}\adir \otimes \adir](\Omega_<) + \\ + &\left[\frac{1}{1-\gamma^*}\otimes \frac{\gamma^*}{1-\gamma^*}\right][\pi_{<}\adir \otimes \pi_{<} \adir][\gamma^*\otimes 1](\Omega_0) - \\ -&\left[ \frac{1}{1-\gamma^*}\otimes \frac{1}{1-\gamma^*}\right][\pi_{<}\adir \otimes \pi_{<}\adir][\gamma^*\otimes \gamma^*](\Omega_{\leq}).
         \end{split}
    \end{equation}
    Now, let us turn to $D_2$:
    \begin{equation}\label{eq:d2first}\begin{split}
        -\left[\frac{1}{1-\gamma^*}\otimes \frac{1}{1-\gamma^*}\right]&[\pi_{<}\adir \otimes \pi_{<}\adir][(1-\gamma^*)\otimes 1](\Omega_0) = \\ = &-\left[\frac{1}{1-\gamma^*}\otimes \frac{1}{1-\gamma^*}\right][\pi_{<}\adir \otimes \pi_{<}\adir](\Omega_0) + \\ &+\left[\frac{1}{1-\gamma^*}\otimes \frac{\gamma^*}{1-\gamma^*}\right][\pi_{<}\adir \otimes \pi_{<}\adir][\gamma^*\otimes 1](\Omega_0) + \\ &+ \goo [\pi_{<}\adir \otimes \pi_{<}\adir][\gamma^*\otimes 1](\Omega_0).
        \end{split}
    \end{equation}
    The negative of~\eqref{eq:expd1} together with~\eqref{eq:d2first} combine into
    \begin{equation}\label{eq:d1d2comb}\begin{aligned}
        &-\left[\frac{1}{1-\gamma^*}\right.\left.\otimes \frac{\gamma^*}{1-\gamma^*}\right][\adir \gamma^* \pi_{<} \Ad_{U_-} \otimes \adir](\Omega_{<})  -\\ &- \left[\frac{1}{1-\gamma^*}\otimes \frac{1}{1-\gamma^*}\right][\pi_{<}\adir \otimes \pi_{<}\adir][(1-\gamma^*)\otimes 1](\Omega_0) = \\&\hspace{1.1in}=-\left[\frac{\gamma^*}{1-\gamma^*}\otimes \frac{\gamma^*}{1-\gamma^*}\right][\pi_{<}\adir \otimes \adir](\Omega_{<}) + \\  &\hspace{1.14in}\hphantom{=}+\left[\frac{1}{1-\gamma^*}\otimes \frac{1}{1-\gamma^*}\right][\pi_{<}\adir \otimes \pi_{<}\adir][\gamma^*\otimes \gamma^*](\Omega_{<}) + \\  &\hspace{1.14in}\hphantom{=}+ \goo [\pi_{<}\adir \otimes \pi_{<}\adir][\gamma^*\otimes 1](\Omega_0).
        \end{aligned}
    \end{equation}
    Let us write
    \begin{equation*}\begin{split}
        \left[\frac{1}{1-\gamma^*}\otimes\frac{1}{1-\gamma^*}\right]&[\pi_{\Gamma_1}\otimes\pi_{\Gamma_1}] - \left[\frac{\gamma^*}{1-\gamma^*}\otimes\frac{\gamma^*}{1-\gamma^*}\right] = \\ = &\left[\frac{1}{1-\gamma^*}\otimes\frac{1}{1-\gamma^*}\right][\pi_{\hat{\Gamma}_1}\otimes\pi_{\hat{\Gamma}_1}] + 1\otimes 1 + \ggo + \ogg,
        \end{split}
    \end{equation*}
    and let us observe that $\pi_{<}\adir \pi_{\hat{\Gamma}_1} \pi_{>} = 0$, so~\eqref{eq:d1d2comb} can be updated as
    \begin{equation}\label{eq:d1d2final}
    \begin{split}
        &-\left[\frac{1}{1-\gamma^*}\right.\left.\otimes \frac{\gamma^*}{1-\gamma^*}\right][\adir \gamma^* \pi_{<} \Ad_{U_-} \otimes \adir](\Omega_{<})  -\\ &- \left[\frac{1}{1-\gamma^*}\otimes \frac{1}{1-\gamma^*}\right][\pi_{<}\adir \otimes \pi_{<}\adir][(1-\gamma^*)\otimes 1](\Omega_0) = \\&\hspace{1.1in}= [\pi_{<}\adir\otimes\adir](\Omega_{<}) +\\&\hspace{1.14in}\hphantom{=}+ \ogg [\pi_{<}\adir \otimes \adir](\Omega_{<}) + \\  &\hspace{1.14in}\hphantom{=}+\ggo [\pi_{<}\adir \otimes \adir](\Omega_{<}) + \\ &\hspace{1.14in}\hphantom{=}+\goo [\pi_{<}\adir \otimes \pi_{<}\adir][\gamma^*\otimes 1](\Omega_0).
    \end{split}
    \end{equation}
    Applying $[\Ad_{\mathcal{Q}(U)}^{\circ}\otimes\Ad_{\mathcal{Q}(U)}^{\circ}]$, we finally obtain~\eqref{eq:specialterm}.
\end{proof}

\subsection{Proof of Proposition~\ref{p:qpoiss}}
Let us restate Proposition~\ref{p:qpoiss} for convenience:

\begin{proposition*}
For any BD quadruple $(\bg,r_0)$, the map $\mathcal{Q}:(G,\pi_{(\bg_{\std},r_0)}^{\dagger})\dashrightarrow (G,\pi_{(\bg,r_0)}^{\dagger})$ is Poisson.
\end{proposition*}
\begin{proof}
    We apply the pushforward $\mathcal{Q}_*$ to each term of~\eqref{eq:pidagstd1} and  combine the output in the form~\eqref{eq:pidag1}. The strategy is to collect the terms \eqref{eq:dqrr}, \eqref{eq:qqadu01}, \eqref{eq:dqdqdiag} and~\eqref{eq:specialterm}, and then apply either~\eqref{eq:admp} or~\eqref{eq:admn} to cancel out $\adir$ from the resulting expressions. Let us notice first that all the terms that end with $[\gamma^*\otimes 1](\Omega_0)$ cancel each other out (this includes a part of the last term in~\eqref{eq:qqadu01}). As for the rest, let us indicate the correspondence between the terms in the formulas~\eqref{eq:dqrr}, \eqref{eq:qqadu01}, \eqref{eq:dqdqdiag} and~\eqref{eq:specialterm}, and the terms in~\eqref{eq:pidag1}.
    \begin{enumerate}[1)]
        \item To obtain the term $[\Ad_{\mathcal{Q}(U)}^\circ\otimes\Ad_{\mathcal{Q}(U)}]\ggo(\Omega_{>})$, one combines the last two terms of \eqref{eq:qqadu01} with the $[\Ad_{\mathcal{Q}(U)}^{\circ}\otimes \Ad_{\mathcal{Q}(U)}]$ part of the penultimate term in~\eqref{eq:specialterm}, and then one applies~\eqref{eq:admn};
        \item To obtain the term $-[\Ad_{\mathcal{Q}(U)}^\circ\otimes 1]\goo(\Omega_{>})$, one combines the first term from~\eqref{eq:qqadu01},  the last term from~\eqref{eq:dqdqdiag} and the $[\Ad^{\circ}_{\mathcal{Q}(U)}\otimes 1]$ parts from the first and the penultimate terms of~\eqref{eq:specialterm}; then, one applies~\eqref{eq:admn};
        \item To obtain the term $-[\Ad_{\mathcal{Q}(U)}^\circ\otimes\Ad_{\mathcal{Q}(U)}]\oog(\Omega_{<})$, one combines the $[\Ad_{\mathcal{Q}(U)}^\circ\otimes\Ad_{\mathcal{Q}(U)}]$ parts of the first two terms in~\eqref{eq:dqrr}, the first two terms in~\eqref{eq:specialterm} and the fourth term in~\eqref{eq:dqdqdiag}; then, one applies~\eqref{eq:admp};
        \item To obtain the term $[\Ad_{\mathcal{Q}(U)}^\circ\otimes 1]\ogg(\Omega_{<})$, one combines the fifth term from \eqref{eq:dqdqdiag} together with the $\Ad_{\mathcal{Q}(U)}^\circ\otimes 1$ parts of the second term of~\eqref{eq:dqrr} and the second term of~\eqref{eq:specialterm}; then, one applies~\eqref{eq:admp}.
    \end{enumerate}
    Thus the proposition is proved.
\end{proof}

\subsection{\texorpdfstring{An alternative to $\mathcal{Q}$}{An alternative to Q}}\label{s:q_transp}
In this subsection, we define an alternative version of $\mathcal{Q}$ for the \emph{$g$-convention} (see Section~\ref{s:descr_g}). Continuing in the setup of the current section, let us decompose a generic element $U \in G$ into the product $U = U_+ U_{\ominus}$ where $U_+ \in \mathcal{N}_+$ and $U_{\ominus} \in \mathcal{B}_-$. Let $\bg^{\op}:=(\Gamma_2,\Gamma_1,\gamma^*)$ be the opposite BD triple. Note that $\bar{\bg}^{\op}:=(\bg^{\op},r_0^t)$ is a BD quadruple. Define the rational map $\mathcal{Q}^{\op}:(G,\pi_{(\bg_\std,r_0)}^{\dagger})\dashrightarrow (G,\pi_{(\bg,r_0)}^{\dagger})$ via
\begin{equation}\label{eq:qopdef}
    \mathcal{Q}^{\op}(U) := \rho^{\op}(U)U \rho^{\op}(U)^{-1}, \ \ \rho^{\op}(U) := \prod_{i=1}^{\leftarrow} \tilde{\gamma}(U_+), \ U \in G.
\end{equation}
Define a map $T_*: \mathfrak{g}\rightarrow\mathfrak{g}$ by setting $T_*|_{\mathfrak{h}} = \id|_{\mathfrak{h}}$, $T_*(e_{-\alpha}) = e_{\alpha}$, $T_*(e_{\alpha}) = e_{-\alpha}$, where $\langle e_{\alpha},e_{-\alpha}\rangle = 1$, as in Section~\ref{s:ba_birat}. Then $T_*$ integrates to the \emph{transposition map} $T : G \rightarrow G$.
\begin{lemma}\label{l:transpoiss}
The transposition map $T:(G,\pi_{\bar{\bg}}^{\dagger}) \rightarrow (G,-\pi_{\bar{\bg}^{\op}}^{\dagger})$ is a Poisson isomorphism.
\end{lemma}
\begin{proof}
It follows from formula~\eqref{eq:rplus} that the classical $r$-matrix $r_{\bar{\bg}}$ can be expressed as
\begin{equation*}
r_{\bar{\bg}} = r_0 + \left[1\otimes\frac{1}{1-\gamma}\right](\Omega_{>}) - \left[\frac{\gamma}{1-\gamma}\otimes 1\right](\Omega_{<}).
\end{equation*}
We see that
\begin{equation*}
[T_*\otimes T_*](r_{\bar{\bg}}) = r_0 + \left[1\otimes\frac{1}{1-\gamma}\right](\Omega_{<}) - \left[\frac{\gamma}{1-\gamma}\otimes 1\right](\Omega_{>}) = (r_{\bar{\bg}^{\op}})^{t}.
\end{equation*}
Also, $d_U^R T(\Ad_U x) = T_*(x)$ and $d_U^R T(\Ad_U^\circ x) = -\Ad_{T(U)}^\circ T_*(x)$, $x \in \mathfrak{g}$. Now, applying $[d_U^R T \otimes d_U^R T]$ to formula~\eqref{eq:pigr}, we conclude that $[d_U^R T\otimes d_U^RT]((\pi_{\bar{\bg}}^{\dagger,R})_U) = -(\pi_{\bar{\bg}^{\op}}^{\dagger,R})_{T(U)}$.
\end{proof}
\begin{proposition}\label{p:qop_poiss}
For any BD quadruple $(\bg,r_0)$, the map $\mathcal{Q}^{\op}:(G,\pi_{(\bg_{\std},r_0)}^{\dagger}) \dashrightarrow (G,\pi_{(\bg,r_0)}^{\dagger})$ is Poisson.
\end{proposition}
We emphasize that the target Poisson bivector is $\pi_{\bar{\bg}}^{\dagger}$ and not $\pi_{\bar{\bg}^{\op}}^{\dagger}$.
\begin{proof}
Indeed, let $\mathcal{Q}:(G,\pi_{(\bg_\std,r_0)}^{\dagger}) \dashrightarrow (G,\pi_{(\bg^{\op},r_0^t)}^{\dagger})$ be defined relative to $\bg^{\op}$. Note that the  diagram
\begin{equation}\label{eq:qqopt}
\xymatrix{
(G,\pi_{(\bg_{\std},r_0)}^{\dagger})\ar@{-->}[r]^{\mathcal{Q}^{\op}}\ar[d]^{T} & (G,\pi_{(\bg,r_0)}^{\dagger})\ar[d]^{T} \\
(G,-\pi_{(\bg_{\std},r_0)}^{\dagger})\ar@{-->}[r]^{\mathcal{Q}} & (G,-\pi_{(\bg^{\op},r_0^t)}^{\dagger})
}
\end{equation}
is commutative. Thus $\mathcal{Q}^{\op}$ is a Poisson map.
\end{proof}
\section{Birational quasi-isomorphisms}\label{s:birat}
In this section, we describe various birational quasi-isomorphisms between the generalized cluster structures in the $h$-convention (see the description in Section~\ref{s:descr_h}). More specifically, given a BD triple $\bg:=(\Gamma_1,\Gamma_2,\gamma)$, a BD quadruple $\bar{\bg}:=(\bg,r_0)$, and $G \in \{\SL_n,\GL_n\}$, we describe the following maps:

\begin{itemize}
\item The birational map $\mathcal{F}:(G,\pi_{(\bg,r_0)}^{\dagger}) \dashrightarrow (G,\pi_{(\bg_{\std},r_0)}^{\dagger})$, which is a basic ingredient for defining the $h$-variables and birational quasi-isomorphisms;
\item The birational quasi-isomorphism $\mathcal{Q}:(G,\gc_h^{\dagger}(\bg_{\std})) \dashrightarrow (G,\gc_h^{\dagger}(\bg))$ and its inverse $\mathcal{F}^c:=\mathcal{Q}^{-1}$. By  results of Section~\ref{s:comp}, $\mathcal{Q}:(G,\pi_{(\bg_{\std},r_0)}^{\dagger}) \dashrightarrow (G,\pi_{(\bg,r_0)}^{\dagger})$ is a Poisson map;
\item For any $\tilde{\bg}\prec \bg$, we construct a birational quasi-isomorphism $\mathcal{G}:(G,\gc_h^{\dagger}(\tilde{\bg}))\dashrightarrow (G,\gc_h^{\dagger}(\bg))$. It is Poisson as a map $\mathcal{G}:(G,\pi_{(\tilde{\bg},r_0)}^{\dagger})\dashrightarrow (G,\pi_{(\bg,r_0)}^{\dagger})$. If $\tilde{\bg}$ is obtained from $\bg$ by removing a pair of roots, we provide an even more explicit description of $\mathcal{G}$ in Section~\ref{s:bsimpl}.
\end{itemize}

If $\tilde{\bg}\prec \bg$ are BD triples of type $A_{n-1}$, we will decorate the $h$-variables in $\gc^{\dagger}_h(\tilde{\bg})$ with a tilde, to distinguish them from the $h$-variables in $\gc^{\dagger}_h(\bg)$. The marker (see Definition~\ref{d:marker}) will always be chosen in such a way that each $\varphi_{ij}$ and $c_i$ are related to themselves, and each $h_{ij}$ is related to $\tilde{h}_{ij}$. The marked variables in $\gc_h^{\dagger}(\bg)$ relative to the pair $(\gc_h^{\dagger}(\bg),\gc_h^{\dagger}(\tilde{\bg}))$ are given by 
\begin{equation}
\{h_{i+1,i+1}\ | \ i\in \Gamma_2 \setminus \tilde{\Gamma}_2\}.
\end{equation}

\subsection{\texorpdfstring{Belavin--Drinfeld data for $\gl_n(\mathbb{C})$}{Belavin--Drinfeld data for gln(C)}}\label{s:bd_map}
In this section, we introduce various notions and formulas pertaining to Belavin--Drinfeld triples. 

\begin{definition}\label{d:degnilp}
Let $(\Gamma_1,\Gamma_2,\gamma)$ be a Belavin--Drinfeld triple. The \emph{degree of nilpotency} of $\gamma$ is defined as the number
\begin{equation*}
\deg \gamma := \max_{\alpha\in\Gamma_1}\{m \in \mathbb{Z}_+ \ | \ \gamma^{m}(\alpha) \notin \Gamma_1\}.
\end{equation*}
\end{definition}

\begin{definition}\label{d:bdremov}
    Let $\bg := (\Gamma_1,\Gamma_2,\gamma)$ and $\tilde{\bg}:=(\tilde{\Gamma}_1,\tilde{\Gamma}_2,\theta)$ be two Belavin--Drinfeld triples. We write $\tilde{\bg} \prec \bg$ if $\tilde{\Gamma}_1 \subset \Gamma_1$, $\tilde{\Gamma}_2 \subset \Gamma_2$ and $\gamma|_{\tilde{\Gamma}_1} = \theta$, and we say that \textit{$\tilde{\bg}$ is obtained from $\bg$ by removing the roots} $\Gamma_1\setminus \tilde{\Gamma}_1$ from $\Gamma_1$ and $\Gamma_2 \setminus \tilde{\Gamma}_2$ from $\Gamma_2$.
\end{definition}

\paragraph{Runs and orientability of $\gamma$.} Let us fix a set of simple roots $\Pi$ of type $A_{n-1}$ and identify it with the interval $[1,n-1]$. Let $\bg := (\Gamma_1,\Gamma_2,\gamma)$ be a BD triple of type $A_{n-1}$. For an arbitrary $i \in \Pi$, set
\begin{equation*}
i_- := \max\{ j \in [0,n] \setminus \Gamma_1 \ | \ j < i\}, \ \ i_+ := \min \{j \in [1,n] \setminus \Gamma_1 \ | \ j \geq i \}.
\end{equation*}
An \emph{$X$-run} of $i$ is the interval $\Delta(i) := [i_-+1, i_+]$. Replacing $\Gamma_1$ with $\Gamma_2$ in the above formulas, we obtain the definition of a \emph{$Y$-run} $\bar{\Delta}(i)$ of $i$. The $X$-runs partition the set $[1,n]$, and likewise the $Y$-runs. A run is called \emph{trivial} if it consists of a single element. Evidently, the map $\gamma$ can be viewed as a bijection between the set of nontrivial $X$-runs and the set of nontrivial $Y$-runs. We say that a $Y$-run $\bar{\Delta}$ \emph{corresponds} to an $X$-run $\Delta$ if $\gamma(i) = j$ for some $i \in \Delta \cap \Gamma_1$ and some $j \in \bar{\Delta}\cap \Gamma_2$. The notion of runs was introduced in \cite{plethora}.

\begin{definition}
    Let $\bg = (\Gamma_1,\Gamma_2,\gamma)$ be a BD triple of type $A_{n-1}$, and let $\Delta$ be a nontrivial $X$-run. We say that $\gamma$ is \emph{positively oriented on $\Delta$} if $\gamma(i+1) = \gamma(i)+1$ for each $i,i+1 \in \Delta \cap \Gamma_1$. 
    Likewise, if $\gamma(i+1) = \gamma(i)-1$ for each $i,i+1 \in \Delta \cap \Gamma_1$, then $\gamma$ is called \emph{negatively oriented on $\Delta$}. The former circumstance will be denoted as $\gamma|_{\Delta \cap \Gamma_1} > 0$, while the latter as\footnote{If an $X$-run $\Delta$ consists of $2$ elements, we will view $\gamma$ as being positively oriented on $\Delta$. However, the distinction is not necessary in this case.} $\gamma|_{\Delta \cap \Gamma_1} < 0$.
\end{definition}

\begin{remark}
    In~\cite{rho,plethora}, a BD triple was called \emph{oriented} if $\gamma|_{\Delta\cap \Gamma_1} > 0$ for every nontrivial $X$-run $\Delta$; it was called \emph{nonoriented} if there exists a nontrivial $X$-run~$\Delta$ such that $\gamma_{\Delta\cap\Gamma_1} < 0$.
\end{remark}

\paragraph{The map of indices $\bar{\gamma}$.} Let us set $\bar{\Gamma}_1$ to be the union of all the nontrivial $X$-runs\footnote{Alternatively, $\bar{\Gamma}_1$ is the union of $\Gamma_1$ and all the right endpoints of all the nontrivial $X$-runs. Similarly for $\bar{\Gamma}_2$. } and $\bar{\Gamma}_2$ to be the union of all the nontrivial $Y$-runs. We define a map $\bar{\gamma} : \bar{\Gamma}_1 \rightarrow \bar{\Gamma}_2$ in the following way: for each nontrivial $X$-run $\Delta$ and the corresponding $Y$-run $\bar{\Delta}$,
\begin{enumerate}[a)]
    \item If $\gamma|_{\Delta \cap \Gamma_1} > 0$, then the restriction $\bar{\gamma}|_{\Delta} : \Delta \rightarrow \bar{\Delta}$ is set to be the unique increasing bijection;
    \item If $\gamma|_{\Delta \cap \Gamma_1} < 0$, then the restriction $\bar{\gamma}|_{\Delta} : \Delta \rightarrow \bar{\Delta}$ is set to be the unique decreasing bijection\footnote{Note that if $\gamma|_{\Delta\cap \Gamma_1} > 0$, then $\bar{\gamma}|_{\Delta \cap \Gamma_1} = \gamma|_{\Delta \cap \Gamma_1}$; however, if $\gamma|_{\Delta\cap \Gamma_1} < 0$, then $\gamma(i)+1 = \bar{\gamma}(i)$, $i \in \Delta \cap \Gamma_1$.}.
\end{enumerate}
A similar recipe applies to $\bar{\gamma}^*$.

\paragraph{The map of subsets $\check{\gamma}$.} Let $\Delta$ be any nontrivial $X$-run and $\bar{\Delta}$ be the corresponding $Y$-run. We define the map $\check{\gamma}:2^{\Delta} \rightarrow 2^{\Delta}$ in the following way. For any $A = \{a_{i}\}_{i=1}^k \subseteq \Delta$, 
\begin{enumerate}[a)]
\item If $\gamma|_{\Delta \cap \Gamma_1} > 0$, then $\check{\gamma}(A) := \{\bar{\gamma}(a_{i})\}_{i=1}^k$;
\item If $\gamma|_{\Delta \cap \Gamma_1} < 0$, then $\check{\gamma}(A) := \bar{\Delta} \setminus \{\bar{\gamma}(a_{i})\}_{i=1}^k$.
\end{enumerate}

\paragraph{The extension $\mathring{\gamma}:\gl_n(\mathbb{C}) \rightarrow \gl_n(\mathbb{C})$.} As was shown in~\cite{plethora}, in the case of $\GL_n$, it is expedient to extend the BD map $\gamma : \Gamma_1 \rightarrow \Gamma_2$ in a different way\footnote{In our previous work~\cite{multdouble}, we denoted the resulting map by $\gamma$ as well, for the main focus of the paper was the case of $\GL_n$. In this paper, we keep the original notation introduced in~\cite{plethora}.}. Let $\Delta_1,\ldots,\Delta_k$ be the list of all nontrivial $X$-runs, and set $\bar{\Delta}_1,\ldots,\bar{\Delta}_k$ to be the list of the corresponding $Y$-runs (in other words, $\gamma(\Delta_i\cap \Gamma_1) = \bar{\Delta}_i\cap \Gamma_2$, $i \in [1,k]$). Set $\gl(\Delta_i)$ to be a subalgebra of $\gl_n(\mathbb{C})$ of the matrices that are zero outside of the block $\Delta_i \times \Delta_i$ (and similarly for $\gl(\bar{\Delta}_i)$). For each $i \in [1,k]$ define a map $\mathring{\gamma}_i:\gl(\Delta_i) \rightarrow \gl(\bar{\Delta}_i)$ in the following way: first, canonically identify $\gl(\bar{\Delta}_i)$ and $\gl(\Delta_i)$ with $\gl_{|\Delta_i|}(\mathbb{C})$; second,  
\begin{enumerate}[a)]
    \item If $\gamma|_{\Delta_i\cap \Gamma_1} > 0$, then $\mathring{\gamma}_i:\gl(\Delta_i) \rightarrow \gl(\bar{\Delta}_i)$ acts as the identity map under the identifications $\gl(\Delta_i) \cong \gl_{|\Delta_i|}(\mathbb{C}) $ and $\gl(\bar{\Delta}_i)\cong \gl_{|\Delta_i|}(\mathbb{C})$; in other words, $\mathring{\gamma}_i(e_{ks}) = e_{\bar{\gamma}(k)\bar{\gamma}(s)}$, $k,s \in \Delta_i$;
    \item If $\gamma|_{\Delta_i\cap \Gamma_1} < 0$, then let us set $w_0 := \sum_{j=1}^{|\Delta_i|}(-1)^{j-1} e_{|\Delta_i|-j+1,j}$; under the identifications, if $A \in \gl_{|\Delta_i|}(\mathbb{C})$, then $\mathring{\gamma}_i:\gl(\Delta_i) \rightarrow \gl(\bar{\Delta}_i)$ acts as
    \begin{equation*}
    \mathring{\gamma}_i(A) = w_0 (-A)^T w_0^{-1}
    \end{equation*}
    where $A^T$ is the transpose of $A$. In other words, $\mathring{\gamma}_i(e_{ks}) = (-1)^{s+k-1} e_{\bar{\gamma}(s)\bar{\gamma}(k)}$, $k, s \in \Delta_i$.
\end{enumerate}
The map $\mathring{\gamma}: \gl_n(\mathbb{C}) \rightarrow \gl_n(\mathbb{C})$ is defined as the direct sum $\mathring{\gamma}:=\bigoplus_{i=1}^k\mathring{\gamma}_i$ extended by zero to $\gl_n(\mathbb{C})$. Similarly, one defines the pieces $\mathring{\gamma}_i^* : \gl_n(\bar{\Delta}_i)\rightarrow \gl_n(\Delta_i)$, and then the map $\mathring{\gamma}^* : \gl_n(\mathbb{C}) \rightarrow \gl_n(\mathbb{C})$ is obtained as the direct sum $\mathring{\gamma}^*:=\bigoplus_{i=1}^k\mathring{\gamma}_i^*$ extended by zero to $\gl_n(\mathbb{C})$. 

\paragraph{The group homomorphism $\togamma$.} In the case of $\GL_n$, the restrictions of $\ogamma$ to the Borel subalgebras $\mathfrak{b}_+$ and $\mathfrak{b}_-$ of $\gl_n(\mathbb{C})$ are Lie algebra homomorphisms. Therefore, one can integrate $\ogamma$ in each case to a group homomorphism $\togamma: \mathcal{B}_\pm \rightarrow \mathcal{B}_\pm$, where $\mathcal{B}_+$ and $\mathcal{B}_-$ are the subgroups of upper and lower triangular matrices. Likewise, one integrates $\ogammas : \mathfrak{b}_\pm \rightarrow \mathfrak{b}_\pm$ to a group homomorphism $\togammas:\mathcal{B}_\pm \rightarrow \mathcal{B}_\pm$. Note: since $\gamma$ and $\ogamma$ coincide on $\mathfrak{n}_+$ and $\mathfrak{n}_-$, $\tilde{\gamma}(N_\pm) = \togamma(N_{\pm})$, $N_{\pm} \in \mathcal{N}_{\pm}$. We  prefer the notation $\tilde{\gamma}$ over $\togamma$ if the argument is a unipotent lower or upper triangular matrix. Next, set
 \begin{equation*}
\gl_n({\Gamma_1}) := \bigoplus_{|\Delta|>1} \gl_n(\Delta), \ \  \gl_n({\Gamma_2}):=\bigoplus_{|\bar{\Delta}|>1} \gl_n(\bar{\Delta}),
\end{equation*} 
where $\gl_n(\Delta)$ and $\gl_n(\bar{\Delta})$ are defined above. Let $\GL_n(\Gamma_i)$ be the connected subgroup of $\GL_n$ that corresponds to $\gl_n(\Gamma_i)$, $i \in \{1,2\}$. The restriction $\ogamma|_{\gl_n(\Gamma_1)}:\gl_n(\Gamma_1) \rightarrow \gl_n(\Gamma_2)$ can be integrated to the group homomorphism $\togamma : \GL_n(\Gamma_1) \rightarrow \GL_n(\Gamma_2)$; likewise, $\ogammas:\gl_n(\Gamma_2) \rightarrow \gl_n(\Gamma_1)$ integrates to $\togammas : \GL_n(\Gamma_2) \rightarrow \GL_n(\Gamma_1)$.

\begin{remark}
    Note that $\mathring{\gamma}|_{\sll_n(\mathbb{C})}$ coincides with the map $\gamma:\sll_{n}(\mathbb{C}) \rightarrow \sll_n(\mathbb{C})$ introduced in Section~\ref{s:ba_poisson} only on the nilpotent subalgebras $\mathfrak{n}_-$ and $\mathfrak{n}_+$: the maps are different on the Cartan subalgebra of $\sll_n(\mathbb{C})$. Moreover, the version of $\gamma$ from Section~\ref{s:ba_poisson} does not integrate to a homomorphism of Borel subgroups.
\end{remark}

\paragraph{Projections.} We equip $\gl_n(\mathbb{C})$ with the trace form and define the orthogonal projections $\mathring{\pi}_{\Gamma_i} : \gl_n(\mathbb{C}) \rightarrow \gl_n(\Gamma_i)$ and $\mathring{\pi}_{\hat{\Gamma}_i}:=\gl_n(\mathbb{C}) \rightarrow \gl_n(\Gamma_i)^{\perp}$, $i \in \{1,2\}$. Given the parabolic subalgebras $\mathfrak{p}_{\pm}(\Gamma_i)$ of $\gl_n(\mathbb{C})$ and the corresponding parabolic subgroups $\mathcal{P}_{\pm}(\Gamma_i)$, we integrate the restrictions $(\mathring{\pi}_{\Gamma_i})|_{\mathfrak{p}_{\pm}(\Gamma_i)}$ to the group projections $\mathring{\Pi}_{\Gamma_i} :\mathcal{P}_{\pm}(\Gamma_i) \rightarrow \GL_n(\Gamma_i)$, $i \in \{1,2\}$. Let $\mathcal{H}$ be the Cartan subgroup of $\GL_n$, $\mathfrak{h}(\Gamma_i)$ the Cartan subalgebra of $\gl_n(\Gamma_i)$, $\mathcal{H}(\Gamma_i)$ and $\mathcal{H}(\hat{\Gamma}_i)$ be the connected subgroups of $\GL_n$ that correspond to $\mathfrak{h}(\Gamma_i)$ and $\mathfrak{h}(\Gamma_i)^{\perp}$, $i \in \{1,2\}$. We set $\mathring{\Pi}_{\hat{\Gamma}_i}:\mathcal{H}\rightarrow\mathcal{H}(\hat{\Gamma}_i)$ to be the group homomorphism that corresponds to $\mathring{\pi}_{\hat{\Gamma}_i}|_{\mathfrak{h}}$.

\paragraph{The Gauss decomposition and $\tilde{\gamma}^*$.} Let $\bg = (\Gamma_1,\Gamma_2,\gamma)$ be a BD triple of type $A_{n-1}$, and let $U = U_\oplus U_-$ be the decomposition of a generic element $U \in \GL_n$ into an upper triangular matrix $U_\oplus$ and a unipotent lower triangular matrix $U_-$. Let $\Delta$ be a nontrivial $X$-run and $i,j \in \Delta$, $i > j$. We will need the following formulas:
\begin{enumerate}[a)]
    \item If $\gamma|_{\Delta\cap \Gamma_1}> 0$, then
    \begin{equation*}
    [\tilde{\gamma}^*(U_-)]_{ij} = \frac{\det U^{\{\bar{\gamma}(j)\}\cup [\bar{\gamma}(i)+1,n]}_{[\bar{\gamma}(i),n]}}{\det U^{[\bar{\gamma}(i),n]}_{[\bar{\gamma}(i),n]}};
    \end{equation*}
    \item If $\gamma|_{\Delta \cap \Gamma_1}<0$, then 
    \begin{equation*}
    [\tilde{\gamma}^*(U_-)]_{ij} = \frac{\det U^{[\bar{\gamma}(i),n]\setminus \{\bar{\gamma}(j)\}}_{[\bar{\gamma}(i)+1,n]} }{\det U^{[\bar{\gamma}(i)+1,n]}_{[\bar{\gamma}(i)+1,n]}}.
    \end{equation*}
\end{enumerate}
The above formulas follow from Lemma~\ref{l:min_ump} and Lemma~\ref{l:min_umn}. 
Furthermore, if $N$ is a unipotent lower-triangular matrix and $i,j \in \Delta$ such that $\gamma|_{\Delta \cap \Gamma_1} < 0$, then
\begin{equation}\label{eq:gammann}
[\tilde{\gamma}^*(N)]_{ij} = (-1)^{i+j} [N^{-1}]_{\bar{\gamma}(j)\bar{\gamma}(i)} = \det N^{[1,n]\setminus \{\bar{\gamma}(j)\}}_{[1,n]\setminus \{\bar{\gamma}(i)\}}.
\end{equation}
Lastly, observe that the denominators of $\tilde{\gamma}^*(U_-)$ form the set
\begin{equation}\label{eq:Qdenoms}
    \{\det U_{[i+1,n]}^{[i+1,n]} \ | \ i \in \Gamma_2\}.
\end{equation}

 \paragraph{Ringed version of $R_0$ for $\GL_n$.} As in our previous work~\cite{multdouble}, it is expedient to choose $R_0$ with special properties in the case of $\GL_n$. Any such special solution will be denoted as $\mathring{R}_0$ and called a \emph{ringed} $R_0$ element. The ringed element $\mathring{R}_0$ is a solution of~\eqref{eq:r0m}-\eqref{eq:ralgm} that in addition satisfies any of the following equivalent identities:
 \begin{equation}\label{eq:ringedr0}
 \begin{aligned}
     \mathring{R}_0(1-\ogamma) &= \mathring{\pi}_{\Gamma_1} + \mathring{R}_0 \mathring{\pi}_{\hat{\Gamma}_1} \ \ & \ \ \mathring{R}_0(1-\ogammas) &= -\ogammas + \mathring{R}_0\mathring{\pi}_{\hat{\Gamma}_2} \\ \mathring{R}_0^*(1-\ogamma) &= -\ogamma + \mathring{R}_0^* \mathring{\pi}_{\hat{\Gamma}_1}\ \  &\ \ \mathring{R}_0^*(1-\ogammas) &= \mathring{\pi}_{\Gamma_2} + \mathring{R}_0^* \mathring{\pi}_{\hat{\Gamma}_2}
 \end{aligned}
 \end{equation}
 where all the maps are assumed to be restricted to the diagonal matrices.

\subsection{\texorpdfstring{The map $\mathcal{F}$}{The map F}}\label{s:map_f}
Fix a set of simple roots $\Pi$ of type $A_{n-1}$ and a BD triple $\bg:=(\Gamma_1,\Gamma_2,\gamma)$ of type $A_{n-1}$. Let $\mathcal{B}_{\pm}$ and $\mathcal{N}_\pm$ be the Borel subgroups and their unipotent radicals, and set $\mathcal{H}$ to be the Cartan subgroup. We decompose a generic element $U \in \GL_n$ as $U = U_\oplus U_-$, where $U_- \in \mathcal{N}_-$ and $U_{\oplus} \in \mathcal{B}_+$. Define a sequence $\mathcal{F}_k:\GL_n\dashrightarrow \GL_n$ via
\begin{equation}\label{eq:fseq}
\mathcal{F}_0(U):=U, \ \ \mathcal{F}_k(U) = \tilde{\gamma}^*([\mathcal{F}_{k-1}(U)]_-)U, \ \ U \in \GL_n, \ k \geq 1.
\end{equation}
The rational map $\mathcal{F}:G \dashrightarrow G$ is defined as the limit
\begin{equation}\label{eq:fdef}
\mathcal{F}(U):=\lim_{k\rightarrow\infty}\mathcal{F}_k(U), \ \ U \in \GL_n.
\end{equation}

\begin{lemma}\label{l:fzerocond}
Let $N \in \mathcal{N}_-$ and $k \in [0,\deg\gamma+1]$. Let $\{\hat{\mathcal{F}}_i : \GL_n \dashrightarrow \GL_n\}_{i \geq 0}$ be a sequence of rational maps defined via
\begin{equation*}
\hat{\mathcal{F}}_0(U) := (\tilde{\gamma}^*)^k(N)U, \ \ \hat{\mathcal{F}}_{i}(U):=\tilde{\gamma}^*([\hat{\mathcal F}_{i-1}(U)]_-)U, \ \ U \in G, \ i \geq 1.
\end{equation*}
Then $\hat{\mathcal{F}}_{\deg \gamma + 1 - k}(U) = \mathcal{F}_{\deg \gamma + 1 -k}(U)$.
\end{lemma}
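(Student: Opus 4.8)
The plan is to follow the extra left-hand factor $(\tilde{\gamma}^*)^k(N)$ through the recursion and to show that the nilpotency of $\tilde{\gamma}^*$ annihilates it after exactly $\deg\gamma+1-k$ steps. Write $d:=\deg\gamma$. Two structural facts about $\tilde{\gamma}^*$ are used. First, a description of the images of its iterates: since $\gamma^*$ restricts to an isometric embedding $\mathfrak g_{\Gamma_2}\hookrightarrow\mathfrak g_{\Gamma_1}$ (extended by zero), one gets $(\gamma^*)^j(\mathfrak n_-)=\mathfrak n_-(\mathbf{S}_j)$ for a nested chain of subsets $\Pi=\mathbf{S}_0\supseteq\mathbf{S}_1\supseteq\cdots$ of $\Pi$, and $(\gamma^*)^{d+1}|_{\mathfrak n_-}=0$, i.e. $\mathbf{S}_{d+1}=\emptyset$, because a root subspace $\mathfrak g_{-\beta}\subset\mathfrak n_-$ can survive no more than $\deg\gamma$ successive applications of $\gamma^*$ (this is the run combinatorics of Section~\ref{s:bd_map}). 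Integrating, $(\tilde{\gamma}^*)^j(\mathcal N_-)=\mathcal N_-(\mathbf{S}_j)$, the connected subgroup with Lie algebra $\mathfrak n_-(\mathbf{S}_j)$, with $(\tilde{\gamma}^*)^{d+1}(\mathcal N_-)=\{I\}$. Second, the Gauss decomposition respects these subgroups: if $L\in\mathcal N_-(\mathbf{S})$ for a subset $\mathbf{S}\subseteq\Pi$ and $B\in\mathcal B_+$ is generic, then $[LB]_-\in\mathcal N_-(\mathbf{S})$; consequently, for generic $M\in\GL_n$ one has $[LM]_-=[LM_\oplus]_-\,[M]_-$ with $[LM_\oplus]_-\in\mathcal N_-(\mathbf{S})$. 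Call this property $(\star)$.

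To prove $(\star)$ I would use the standard parabolic $\mathcal P=\mathcal L\ltimes\mathcal R$ with Levi factor $\mathcal L=\mathcal L_{\mathbf{S}}$ and unipotent radical $\mathcal R$, so that $\mathcal B_+=(\mathcal B_+\cap\mathcal L)\,\mathcal R$, $\mathcal N_-(\mathbf{S})=\mathcal N_-\cap\mathcal L\subseteq\mathcal L$, and $\mathcal L$ normalizes $\mathcal R$. Write $B=B_0B_1$ with $B_0\in\mathcal B_+\cap\mathcal L$ and $B_1\in\mathcal R$. Inside $\mathcal L$, Gauss-decompose $LB_0=C_0L_0$ with $C_0\in\mathcal B_+\cap\mathcal L$ and $L_0\in\mathcal N_-\cap\mathcal L=\mathcal N_-(\mathbf{S})$ (valid for generic $B_0$, since $B_0=I$ gives $LB_0=L$, which already lies in the big cell of $\mathcal L$). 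Then $LB=C_0\,(L_0B_1L_0^{-1})\,L_0$ with $C_0(L_0B_1L_0^{-1})\in(\mathcal B_+\cap\mathcal L)\,\mathcal R=\mathcal B_+$ and $L_0\in\mathcal N_-$, so by uniqueness of the Gauss decomposition $[LB]_-=L_0\in\mathcal N_-(\mathbf{S})$.

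Given these, the lemma follows by induction on $i\in[0,d+1-k]$ with the invariant $\hat{\mathcal F}_i(U)=L_i\,\mathcal F_i(U)$, where $L_i\in(\tilde{\gamma}^*)^{k+i}(\mathcal N_-)=\mathcal N_-(\mathbf{S}_{k+i})$. The base case is $L_0=(\tilde{\gamma}^*)^k(N)$. For the inductive step, apply $(\star)$ with $\mathbf{S}=\mathbf{S}_{k+i}$ to $L_i\,[\mathcal F_i(U)]_\oplus$ to obtain $[L_i\mathcal F_i(U)]_-=L^{**}\,[\mathcal F_i(U)]_-$ with $L^{**}\in\mathcal N_-(\mathbf{S}_{k+i})$; then
\[
\hat{\mathcal F}_{i+1}(U)=\tilde{\gamma}^*\bigl([\hat{\mathcal F}_i(U)]_-\bigr)U=\tilde{\gamma}^*(L^{**})\,\tilde{\gamma}^*\bigl([\mathcal F_i(U)]_-\bigr)U=\tilde{\gamma}^*(L^{**})\,\mathcal F_{i+1}(U),
\]
and $L_{i+1}:=\tilde{\gamma}^*(L^{**})\in\tilde{\gamma}^*\bigl(\mathcal N_-(\mathbf{S}_{k+i})\bigr)=\mathcal N_-(\mathbf{S}_{k+i+1})$. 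At $i=d+1-k$ the invariant forces $L_{d+1-k}\in\mathcal N_-(\mathbf{S}_{d+1})=\{I\}$, hence $\hat{\mathcal F}_{d+1-k}(U)=\mathcal F_{d+1-k}(U)$. All of the above are identities of rational maps in $(U,N)$, valid on the dense open locus where the Gauss and parabolic decompositions invoked are defined; this locus is nonempty since it contains a neighbourhood of $(I,I)$. The main obstacle is $(\star)$ — verifying that the (upper)$\times$(unipotent-lower) decomposition never pushes the unipotent-lower factor out of the Levi subgroup $\mathcal N_-(\mathbf{S})$ — together with the bookkeeping of the nested chain $\mathcal N_-(\mathbf{S}_j)$ and its relation to $\deg\gamma$; the remaining induction is short and formal.
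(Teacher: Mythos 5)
Your proof is correct and proceeds by essentially the same mechanism as the paper's (terse) argument: both track how the extra factor coming from $N$ propagates through the recursion and invoke the nilpotency of $\gamma^*$ on $\mathfrak n_-$ to show it dies after $\deg\gamma+1-k$ steps. The paper phrases this at the level of root-vector entries of $N$ (which roots remain ``visible'' in $\mathcal S^{\ell}(N)$), whereas you promote it to a clean group-level invariant $\hat{\mathcal F}_i(U)=L_i\,\mathcal F_i(U)$ with $L_i\in(\tilde\gamma^*)^{k+i}(\mathcal N_-)$, and you supply the Levi-decomposition argument for $(\star)$ that the paper leaves implicit; this is a welcome sharpening of the same idea.
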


\begin{proof} Define $\mathcal{S}:\mathcal{N}_-\rightarrow\mathcal{N}_-$ by setting $\mathcal{S}(N):=\tilde{\gamma}^*([NU]_-)$, and let 
\begin{equation*}
    S_\ell := \{\beta \in \Phi_+ \ | \ \beta\succ (\gamma^*)^\ell(\alpha) \ \text{for some} \ \alpha\in A\cap \Gamma_1\}
\end{equation*}
where $\Phi_+$ is the set of positive roots and where we imply the usual order on the roots. The statement of the lemma is an observation that the entries of $N$ in the matrix $\mathcal{S}^{\ell}(N)$ are visible only at matrix entries that correspond to the root subspaces $(\gl_n)_{-\beta}$ for $\beta \in S_{\ell}$. 
\end{proof}

\begin{proposition}\label{p:fq0i}
    The sequence $\mathcal{F}_k$ stabilizes at $k = \deg \gamma$ and $\mathcal{F}(U) = \mathcal{F}_{\deg \gamma}(U)$. Moreover, the rational map $\mathcal{F}:\GL_n \dashrightarrow\GL_n$ satisfies the relation
    \begin{equation}\label{eq:frel}
    \mathcal{F}(U) = \tilde{\gamma}^*([\mathcal{F}(U)]_-) U,
    \end{equation}
    and in particular, $\mathcal{F}$ is birational, with the inverse given by
    \begin{equation}\label{eq:finv}
    \mathcal{F}^{-1}(U) = \tilde{\gamma}^*(U_-)^{-1}U.
    \end{equation}
\end{proposition}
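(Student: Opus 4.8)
The plan is to reduce everything to the stabilization claim, which is precisely what Lemma~\ref{l:fzerocond} is designed to deliver; once stabilization is known, the fixed-point relation~\eqref{eq:frel} is immediate and the birationality statement together with~\eqref{eq:finv} is formal. Throughout, $U$ will denote a generic element of $\GL_n$, and I would first dispose of the case $\Gamma_1=\emptyset$ as trivial, since then $\tilde{\gamma}^*$ is trivial and $\mathcal{F}=\id$; so assume $\Gamma_1\neq\emptyset$, hence $\deg\gamma\geq 1$.

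For stabilization I would apply Lemma~\ref{l:fzerocond} with $k=1$ and $N:=U_-\in\mathcal{N}_-$. Then $\hat{\mathcal{F}}_0(U)=\tilde{\gamma}^*(U_-)U=\mathcal{F}_1(U)$, and since $\hat{\mathcal{F}}$ and $\mathcal{F}$ satisfy the same recursion $X_{i+1}=\tilde{\gamma}^*([X_i]_-)U$, a one-line induction gives $\hat{\mathcal{F}}_i(U)=\mathcal{F}_{i+1}(U)$ for all $i\geq 0$. The lemma then yields $\mathcal{F}_{\deg\gamma+1}(U)=\hat{\mathcal{F}}_{\deg\gamma}(U)=\mathcal{F}_{\deg\gamma}(U)$, and feeding this equality back into $\mathcal{F}_{k+1}(U)=\tilde{\gamma}^*([\mathcal{F}_k(U)]_-)U$ propagates it to $\mathcal{F}_{\deg\gamma+j}(U)=\mathcal{F}_{\deg\gamma}(U)$ for every $j\geq 0$. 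Hence the limit in~\eqref{eq:fdef} exists and $\mathcal{F}(U)=\mathcal{F}_{\deg\gamma}(U)$. The relation~\eqref{eq:frel} is then automatic: $\mathcal{F}(U)=\mathcal{F}_{\deg\gamma+1}(U)=\tilde{\gamma}^*([\mathcal{F}_{\deg\gamma}(U)]_-)U=\tilde{\gamma}^*([\mathcal{F}(U)]_-)U$.

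For birationality I would introduce the rational map $\Phi\colon\GL_n\dashrightarrow\GL_n$, $\Phi(V):=\tilde{\gamma}^*(V_-)^{-1}V$. Setting $V:=\mathcal{F}(U)$ in~\eqref{eq:frel} gives $V=\tilde{\gamma}^*(V_-)U$, hence $U=\tilde{\gamma}^*(V_-)^{-1}V=\Phi(V)$; thus $\Phi\circ\mathcal{F}=\id$ as rational maps, so $\mathcal{F}$ is generically injective. It is also dominant: if $\overline{\mathcal{F}(\GL_n)}$ were a proper closed subvariety $Z$, then $\overline{(\Phi\circ\mathcal{F})(\GL_n)}\subseteq\overline{\Phi(Z)}$ would have dimension strictly less than $\dim\GL_n$, contradicting $\Phi\circ\mathcal{F}=\id$. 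Since $\mathbb{C}$ has characteristic zero, a dominant generically injective rational self-map of $\GL_n$ is birational, whence $\mathcal{F}$ is birational and $\mathcal{F}^{-1}=\Phi$, which is~\eqref{eq:finv}.

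The only substantive step is stabilization, and its real content has been externalized to Lemma~\ref{l:fzerocond}: an extra left factor $(\tilde{\gamma}^*)^k(N)$ is pushed out of the iteration after $\deg\gamma+1-k$ further applications of $\tilde{\gamma}^*$, because $(\gamma^*)^{\deg\gamma+1}=0$. Everything downstream of that is purely formal, so the main thing to get right is the index bookkeeping when invoking the lemma (the choice $k=1$, which turns the $\deg\gamma+1-k$ iterates of $\hat{\mathcal F}$ into iterates $2,\dots,\deg\gamma+1$ of $\mathcal F$).
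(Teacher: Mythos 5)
Your proof is correct and follows essentially the same route as the paper. The paper's own argument is a single sentence deferring entirely to Lemma~\ref{l:fzerocond} (with the hint $N := [\mathcal{F}_i(U)]_-$, $i\geq 1$); your choice $k=1$, $N:=U_-$ yields $\mathcal{F}_{\deg\gamma+1}=\mathcal{F}_{\deg\gamma}$ and then propagates, which is the same mechanism and leads to the same conclusion. You spell out the birationality step (left inverse from~\eqref{eq:frel}, plus a dominance argument to upgrade it to a two-sided inverse) that the paper leaves implicit; this is correct and a reasonable amount of extra care, though the appeal to ``generically injective plus dominant implies birational in characteristic zero'' can be replaced by the simpler observation that $\Phi\circ\mathcal{F}=\id$ forces $\mathcal{F}^*$ to be surjective on function fields, and dominance makes $\mathcal{F}^*$ a well-defined (hence automatically injective) field map.
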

\begin{proof}
The proposition follows from Lemma~\ref{l:fzerocond} via setting $N := [\mathcal{F}_i(U)]_-$ for $i\geq 1$.
\end{proof}

\begin{proposition}\label{p:invfk}
Let $N\in\mathcal{N}_-$ and $T\in \mathcal{H}$. Set $N_0:=N$ and $N_k := (\tilde{\gamma}^*)^k(N)$, $k \in [1,\deg\gamma]$. For any $k \in [0,\deg\gamma]$, the following formulas hold:
\begin{align*}
&\mathcal{F}_{\deg\gamma-k}(N_{k+1} U N_{k}^{-1}) = \mathcal{F}_{\deg\gamma-k}(U)N_{k}^{-1};\\ 
&\mathcal{F}_{\deg \gamma - k}(\tilde{\gamma}^*(T)UT^{-1}) = \tilde{\gamma}^*(T)\mathcal{F}_{\deg \gamma-k}(U)T^{-1};\\ 
&\mathcal{F}_{\deg \gamma-k}(T^{-1}U\tilde{\gamma}(T)) = T^{-1}\mathcal{F}_{\deg \gamma-k}(U) \tilde{\gamma}(T).
\end{align*}
\end{proposition}
\begin{proof}
Let us set
\[
\hat{\mathcal{F}}_i(U):= \mathcal{F}_i(N_{k+1}UN_{k}^{-1})N_k, \ \ i \geq 0.
\]
Observe that $\hat{\mathcal{F}}_0(U) = N_{k+1}U$ and that $\tilde{\mathcal{F}}_i(U)$ satisfies the recurrence relation $\hat{\mathcal{F}}_i(U) = \tilde{\gamma}^*([\hat{\mathcal{F}}_{i-1}(U)]_-)U$, $i \geq 0$. Indeed, from the recurrence relation for $\mathcal{F}_i(U)$, we see that
\[
\hat{\mathcal{F}}_{i+1}(U) = \mathcal{F}_i(N_{k+1}UN_{k}^{-1})N_k = \tilde{\gamma}^*([\mathcal{F}_{i-1}(N_{k+1}UN_{k}^{-1})]_-)N_{k+1}UN_k^{-1}N_k = \tilde{\gamma}^*([\hat{\mathcal{F}}_{i}(U)]_-)U.
\]
By Lemma~\ref{l:fzerocond}, we conclude that $\hat{\mathcal{F}}_{\deg\gamma-k}(U) = \mathcal{F}_{\deg\gamma-k}(U)$, which yields the first formula of the proposition.

The second formula is obvious for $k = \deg \gamma$; the other cases follow easily from induction:
\[\begin{split}
\mathcal{F}_{\deg \gamma-k+1}(\togammas(T)UT^{-1}) = &\togammas( [\mathcal{F}_{\deg \gamma-k}(\togammas(T)UT^{-1})]_-)\togammas(T)UT^{-1} = \\ = &\togammas( [\togammas(T)\mathcal{F}_{\deg \gamma-k}(U)T^{-1}]_-)\togammas(T)UT^{-1} = \\ = &\togammas(T[\mathcal{F}_{\deg \gamma-k}(U)]_- T^{-1})\togammas(T)UT^{-1} = \\ = & \togammas(T) \mathcal{F}_{\deg \gamma -k+1}(U)T^{-1}.
\end{split}
\]

The last formula is also evident for $k = \deg \gamma$. Assuming it holds for some $k$, we see that
\[\begin{split}
\mathcal{F}_{\deg\gamma-k+1}(T^{-1}U\togamma(T)) = &\togammas\left( [\mathcal{F}_{\deg\gamma-k}(T^{-1}U\togamma(T))]_- \right) T^{-1}U \togamma(T) = \\ = &\togammas\left(\togamma(T^{-1}) [\mathcal{F}_{\deg\gamma-k}(U)]_-\togamma(T) \right) T^{-1}U \togamma(T) = \\ = &\mathring{\Pi}_{\Gamma_1}(T^{-1}) \togammas\left([\mathcal{F}_{\deg\gamma-k}(U)]_-  \right)\mathring{\Pi}_{\hat{\Gamma}_1}(T^{-1})U \togamma(T) = \\ = &\mathring{\Pi}_{\Gamma_1}(T^{-1})\mathring{\Pi}_{\hat{\Gamma}_1}(T^{-1}) \togammas\left([\mathcal{F}_{\deg\gamma-k}(U)]_-  \right)U \togamma(T) = \\ = &T^{-1}\mathcal{F}_{\deg\gamma-k+1}(U)\togamma(T). 
\end{split}
\]
Thus the identities hold.
\end{proof}
\begin{corollary}\label{c:f_invar}
The following identities hold :
\begin{align}\label{eq:invar1}
&\mathcal{F}(\tilde{\gamma}^*(N)UN^{-1}) = \mathcal{F}(U)N^{-1};
\\ 
&\mathcal{F}({\togamma}^*(T)UT^{-1})) = {\togamma}^*(T) \mathcal{F}(U)T^{-1};\notag \\
&\mathcal{F}(T^{-1}U{\togamma}(T)) = T^{-1}\mathcal{F}(U) {\togamma}(T).\notag
\end{align}
\end{corollary}
\begin{proof}
Set $k = 0$ in Proposition~\ref{p:invfk} and invoke Proposition~\ref{p:fq0i}.
\end{proof}
\begin{corollary}\label{c:hinvarn}
    Let $h_{ij}$ be any $h$-variable from the initial extended cluster of $\gc_h^{\dagger}(\bg,\GL_n)$. Then there exist characters $\chi$ and $\chi^\prime$ such that the following identities hold:
    \begin{align}
    &h_{ij}(\tilde{\gamma}^*(N)U N^{-1}) = h_{ij}(U);\label{eq:hninv}\\
   &h_{ij}(\togammas(T)UT^{-1}) = \chi(T) h_{ij}(U);\notag \\ 
  &h_{ij}(T^{-1}U\togamma(T)) = \chi^\prime(T) h_{ij}(U).\notag
    \end{align}
\end{corollary}
\begin{corollary}\label{c:h_inf}
    Let $h_{ij}$ be any $h$-variable from the initial extended cluster of $\gc^{\dagger}_h(\bg,\GL_n)$. Then the following identities hold:
    \begin{align*}
        &\nabla_U h_{ij} \cdot U - \ogamma\left(U \cdot \nabla_U h_{ij}\right) \in \mathfrak{b}_-;\\
        &\pi_0\left(\nabla_U \log h_{ij} \cdot U - \ogamma\left(U \cdot \nabla_U \log h_{ij}\right)\right) = \const;\\ 
        &\pi_0\left(U \cdot \nabla_U \log h_{ij} - \ogammas\left(\nabla_U \log h_{ij} \cdot U\right)\right) = \const.
    \end{align*}
\end{corollary}
\begin{proof}
    The formulas follow from differentiating the identities in Corollary~\ref{c:hinvarn}.
\end{proof}
\begin{remark}
    The birational map $\mathcal{F}:(\GL_n,\pi_{(\bg,r_0)}^{\dagger})\dashrightarrow (\GL_n,\pi_{(\bg_\std,r_0)}^{\dagger})$ is not a Poisson map. Nevertheless, we believe that the pushforward Poisson bivector $\mathcal{F}_*(\pi_{(\bg,r_0)}^{\dagger})$, as well as the induced compatible generalized cluster structure on $\GL_n$ might be of interest. We also believe that there might exist generalized cluster structures on some interesting subvarieties of $\GL_n$ whose variables are constructed as the flag minors of $\mathcal{F}_m$ (akin to the formula~\eqref{eq:f_flags}), and these generalized cluster structures might be compatible with some other Poisson structures. For a discussion of these problems, as well as for supporting evidence the reader might refer to the supplementary file~\cite{github}.
\end{remark}

\subsection{\texorpdfstring{The maps $\mathcal{Q}$ and $\mathcal{F}^c$}{The maps Q and Fc}}\label{s:qfc}
Recall from Section~\ref{s:comp} that the rational map $\mathcal{Q}: (\GL_n,\pi_{(\bg_\std,r_0)}^{\dagger}) \dashrightarrow (\GL_n,\pi_{(\bg,r_0)}^{\dagger})$ is given by
\begin{equation*}
\mathcal{Q}(U):=\rho(U)^{-1}U\rho(U), \ \ \rho(U):=\prod_{i\geq 1}^{\rightarrow}(\tilde{\gamma}^*)^i(U_-).
\end{equation*}
Let us define the rational map $\mathcal{F}^c : (\GL_n,\pi_{(\bg,r_0)}^{\dagger}) \dashrightarrow (\GL_n,\pi_{(\bg_\std,r_0)}^{\dagger})$ via
\begin{equation*}
\mathcal{F}^c(U) := \mathcal{F}(U)U[\mathcal{F}(U)]^{-1}.
\end{equation*}
\begin{proposition}\label{p:fc_q_inv}
The rational maps $\mathcal{F}^c$ and $\mathcal{Q}$ are inverse to each other.
\end{proposition}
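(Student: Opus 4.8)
The plan is to show directly that $\mathcal{F}^c\circ\mathcal{Q} = \id$ and $\mathcal{Q}\circ\mathcal{F}^c = \id$ as rational maps, by relating the two auxiliary maps $\rho$ and $\mathcal{F}$ through the recursion they satisfy. The key observation is that $\mathcal{F}$ and the conjugating factor $\rho$ are built out of the same ingredient $\tilde{\gamma}^*$ applied to unipotent lower-triangular parts, and the defining relations~\eqref{eq:frel}, i.e. $\mathcal{F}(U) = \tilde{\gamma}^*([\mathcal{F}(U)]_-)\,U$, and $\rho(U) = \tilde{\gamma}^*(U_-)\,\tilde{\gamma}^*(\rho(U))$ from~\eqref{eq:rhorel} should match up after we compute how $\mathcal{F}$ and $\rho$ behave under the conjugation $U\mapsto \rho(U)^{-1}U\rho(U)$.

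First I would record the elementary fact that since $\rho(U)\in\mathcal{N}_-$ and $\mathcal{Q}(U) = \rho(U)^{-1}U\rho(U)$, the Gauss factors transform compatibly: writing $V:=\mathcal{Q}(U)$, one has $V_- = \rho(U)^{-1}U_-\,\rho(U)$ up to the appropriate unipotent-lower-triangular bookkeeping, because conjugation by an element of $\mathcal{N}_-$ preserves $\mathcal{N}_-$ and $\mathcal{B}_+$ and hence commutes with $\mathcal{N}(\cdot)$ in the relevant sense. Then I would compute $\mathcal{F}(V)$: using the stabilization and the recursion~\eqref{eq:fseq}, together with Proposition~\ref{p:invfk} and Corollary~\ref{c:f_invar} (the invariance $\mathcal{F}(\tilde{\gamma}^*(N)UN^{-1}) = \mathcal{F}(U)N^{-1}$), the repeated applications of $\tilde{\gamma}^*$ to $V_-$ telescope against the product defining $\rho$. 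Concretely, since $\rho(U) = \prod_{i\geq 1}^{\rightarrow}(\tilde{\gamma}^*)^i(U_-)$ satisfies $\rho(U) = \tilde{\gamma}^*(U_-)\tilde{\gamma}^*(\rho(U))$, one expects to identify $\mathcal{F}(\mathcal{Q}(U)) = \rho(U)^{-1}\,\tilde{\gamma}^*(U_-)^{-1}\,?$ — more precisely, that $\mathcal{F}(\mathcal{Q}(U)) = [\text{something involving }\rho(U)]$ so that substituting into $\mathcal{F}^c(V) = \mathcal{F}(V)\,V\,\mathcal{F}(V)^{-1}$ yields $\mathcal{F}^c(\mathcal{Q}(U)) = U$.

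The cleanest route is probably to show the single identity
\begin{equation}\label{eq:fq_identity}
\mathcal{F}(\mathcal{Q}(U)) = \rho(U)^{-1},
\end{equation}
from which $\mathcal{F}^c(\mathcal{Q}(U)) = \mathcal{F}(\mathcal{Q}(U))\,\mathcal{Q}(U)\,\mathcal{F}(\mathcal{Q}(U))^{-1} = \rho(U)^{-1}\,\rho(U)^{-1}U\rho(U)\,\rho(U) = U$ would follow only if the factors commuted, so~\eqref{eq:fq_identity} is likely not quite right as stated and the correct statement should read $\mathcal{F}(\mathcal{Q}(U)) = \rho(U)^{-1}$ \emph{is} in fact what is needed once one checks $[\mathcal{F}(\mathcal{Q}(U))]_- = \rho(U)^{-1}$ and then $\mathcal{F}^c(\mathcal{Q}(U)) = \mathcal{F}(\mathcal{Q}(U))\mathcal{Q}(U)\mathcal{F}(\mathcal{Q}(U))^{-1}$. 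Let me restate the plan: I would first prove that $[\mathcal{F}(\mathcal{Q}(U))]_- = \rho(U)^{-1}$ by induction on the stabilization index, using that $\mathcal{Q}(U)_- = \rho(U)^{-1}U_-\rho(U)$ and the recursion $\mathcal{F}_k(W) = \tilde{\gamma}^*([\mathcal{F}_{k-1}(W)]_-)W$; the product structure of $\rho$ and the homomorphism property of $\tilde{\gamma}^*$ make the induction go through. Granting this, $\mathcal{F}(\mathcal{Q}(U)) = \tilde{\gamma}^*(\rho(U)^{-1})\,\mathcal{Q}(U)$ by~\eqref{eq:frel}, and then $\mathcal{F}^c(\mathcal{Q}(U)) = \tilde{\gamma}^*(\rho(U)^{-1})\,\mathcal{Q}(U)\,\tilde{\gamma}^*(\rho(U))$; using $\tilde{\gamma}^*(\rho(U)) = \rho(U)\tilde{\gamma}^*(U_-)^{-1}$ (which is~\eqref{eq:rhorel} rearranged) and $\mathcal{Q}(U) = \rho(U)^{-1}U\rho(U)$, everything telescopes to $U$. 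Finally, since $\mathcal{F}$ is birational with explicit inverse~\eqref{eq:finv} and $\mathcal{Q}$ is dominant, the one-sided identity $\mathcal{F}^c\circ\mathcal{Q} = \id$ on a dense open set forces $\mathcal{Q}\circ\mathcal{F}^c = \id$ as well, or alternatively one runs the symmetric computation starting from $\mathcal{F}^c(U) = \mathcal{F}(U)U\mathcal{F}(U)^{-1}$ and verifying $\rho(\mathcal{F}^c(U)) = \mathcal{F}(U)$ directly. I expect the main obstacle to be the bookkeeping in the inductive step identifying $[\mathcal{F}(\mathcal{Q}(U))]_-$: one must carefully track how conjugation by $\rho(U)$ interacts with the Gauss decomposition at each stage, since $\mathcal{Q}(U)$ is only conjugate to $U$ by an element of $\mathcal{N}_-$ and the upper-triangular parts get modified in a controlled but nontrivial way. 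The invariance identities in Corollary~\ref{c:f_invar} and Proposition~\ref{p:invfk} are precisely designed to handle this, so the argument should be short once those are invoked correctly.
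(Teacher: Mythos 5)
The strategy you describe—compute $\mathcal{F}(\mathcal{Q}(U))$ using the invariance identity~\eqref{eq:invar1}, the inversion formula~\eqref{eq:finv}, and the defining relation~\eqref{eq:rhorel} for $\rho$—is indeed the route the paper takes. However, your specific guesses for the intermediate identities are wrong, and as written the argument does not close.

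The key identity is not $\mathcal{F}(\mathcal{Q}(U)) = \rho(U)^{-1}$ (you correctly sense this fails), but it is also not $[\mathcal{F}(\mathcal{Q}(U))]_- = \rho(U)^{-1}$; the correct statement is
\begin{equation*}
\mathcal{F}(\mathcal{Q}(U)) = U\,\rho(U), \qquad\text{equivalently}\qquad [\mathcal{F}(\mathcal{Q}(U))]_- = U_-\,\rho(U).
\end{equation*}
With this, $\mathcal{F}^c(\mathcal{Q}(U)) = [U\rho(U)]\,[\rho(U)^{-1}U\rho(U)]\,[\rho(U)^{-1}U^{-1}] = U$ immediately, with no commutativity issues. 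Your proposed substitute $[\mathcal{F}(\mathcal{Q}(U))]_- = \rho(U)^{-1}$ would, via~\eqref{eq:frel}, give $\mathcal{F}(\mathcal{Q}(U)) = \tilde{\gamma}^*(\rho(U))^{-1}\mathcal{Q}(U)$ and hence $\mathcal{F}^c(\mathcal{Q}(U)) = [\rho(U)\tilde{\gamma}^*(\rho(U))]^{-1}\,U\,[\rho(U)\tilde{\gamma}^*(\rho(U))]$, which does not collapse to $U$. Moreover, the auxiliary claim $\mathcal{Q}(U)_- = \rho(U)^{-1}U_-\rho(U)$ is false: writing $\mathcal{Q}(U) = \rho(U)^{-1}U_\oplus U_-\rho(U)$, the factor $\rho(U)^{-1}U_\oplus$ is lower-times-upper, so conjugation by $\rho(U)\in\mathcal{N}_-$ does not simply commute with the Gauss decomposition. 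Finally, your rearrangement of~\eqref{eq:rhorel} is in the wrong order: $\rho(U) = \tilde{\gamma}^*(U_-)\tilde{\gamma}^*(\rho(U))$ gives $\tilde{\gamma}^*(\rho(U)) = \tilde{\gamma}^*(U_-)^{-1}\rho(U)$, not $\rho(U)\tilde{\gamma}^*(U_-)^{-1}$, and these two differ since $\mathcal{N}_-$ is nonabelian.

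The clean path, which you essentially had in front of you, is this: rewrite $\mathcal{Q}(U) = \rho(U)^{-1}U\rho(U) = \tilde{\gamma}^*(\rho(U))^{-1}\bigl[\tilde{\gamma}^*(U_-)^{-1}U\bigr]\rho(U)$ using~\eqref{eq:rhorel}, recognize $\tilde{\gamma}^*(U_-)^{-1}U = \mathcal{F}^{-1}(U)$ by~\eqref{eq:finv}, and apply~\eqref{eq:invar1} with $N = \rho(U)^{-1}$ to pull the conjugating factors out: $\mathcal{F}(\mathcal{Q}(U)) = \mathcal{F}(\mathcal{F}^{-1}(U))\cdot\rho(U) = U\rho(U)$. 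No induction on the stabilization index and no analysis of Gauss factors of $\mathcal{Q}(U)$ are needed.
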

\begin{proof}
Recall that $\rho(U)$ satisfies the relation
\[
\tilde{\gamma}^*(U_-)\tilde{\gamma}^*(\rho(U)) = \rho(U).
\]
Applying formulas~\eqref{eq:finv} and~\eqref{eq:invar1}, we see that
\[
\mathcal{F}(\mathcal{Q}(U)) = \mathcal{F}\left(\tilde{\gamma}^*(\rho(U))^{-1}\left[\tilde{\gamma}^*(U_-)^{-1}U\right]\rho(U)\right) = U \cdot \rho(U);
\]
therefore,
\[
\mathcal{F}^c(\mathcal{Q}(U)) = \left[U\cdot \rho(U)\right] \left[\rho(U)^{-1} U\rho(U)\right]\left[\rho(U)^{-1}U^{-1}\right] = U.\qedhere
\]
\end{proof}

\begin{remark}
The map $\mathcal{F}^c$ can also be written as
\begin{equation}\label{eq:fc_alt}
\mathcal{F}^c(U) = \mathcal{F}(U)\tilde{\gamma}^*(\mathcal{F}(U)_-)^{-1} = \tilde{\gamma}^*([\mathcal{F}(U)]_-) U \tilde{\gamma}^*([\mathcal{F}(U)]_-)^{-1}.
\end{equation}
The identities follow from~\eqref{eq:frel}.
\end{remark}

\begin{proposition}\label{p:q_quasih}
Let $\psi$ be any variable from the initial extended cluster of $\gc^\dagger_h(\bg)$. Then the following identities hold:
\begin{enumerate}[1)]
\item If $\psi$ is a $\varphi$- or $c$-function, then $\mathcal{Q}^*(\psi(U)) = \psi(U)$;\label{i:bqlvc}
\item\label{i:bqplh} If $\psi = h_{ij}$ for some $h$-function $h_{ij}(U)$, let $\{\alpha_t\}_{t=0}^m$ be the $\gamma$-string such that $i-1 = \alpha_k$ for some $k$; then,
\begin{equation}\label{eq:q_upon_h}
\mathcal{Q}^*(h_{ij}(U)) = \tilde{h}_{ij}(U)\prod_{t \geq k+1}\tilde{h}_{\alpha_t+1,\alpha_t+1}(U).
\end{equation}
\end{enumerate}
Moreover, $\mathcal{Q}:(G,\gc_h^{\dagger}(\bg_{\std}))\dashrightarrow (G,\gc_h^{\dagger}(\bg))$ is a birational quasi-isomorphism for $G \in \{\SL_n,\GL_n\}$.
\end{proposition}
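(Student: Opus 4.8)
\textbf{Proof proposal for Proposition~\ref{p:q_quasih}.}

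The plan is to establish the two explicit formulas for $\mathcal{Q}^*$ on the initial cluster, and then feed them into the criterion of Proposition~\ref{p:q_y_var} to conclude that $\mathcal{Q}$ is a quasi-isomorphism; the birational part follows from Proposition~\ref{p:fc_q_inv}, the explicit formula for $\mathcal{F}^c = \mathcal{Q}^{-1}$, and the shape of the $h$-variables. First I would treat Part~\ref{i:bqlvc}. The $\varphi$- and $c$-functions are built (up to sign normalization) from flag minors and coefficients of the characteristic polynomial of $U$. Since $\mathcal{Q}(U) = \rho(U)^{-1}U\rho(U)$ is a conjugation, the $c$-functions are obviously invariant, as $\det(I+\lambda\mathcal{Q}(U)) = \det(I+\lambda U)$. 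For the $\varphi$-functions, recall from~\eqref{eq:phi_invar} that $\varphi_{kl}$ is invariant under conjugation by unipotent lower triangular matrices; since $\rho(U) = \prod_{i\geq 1}^{\rightarrow}(\tilde\gamma^*)^i(U_-) \in \mathcal{N}_-$ (a product of unipotent lower triangular matrices), the claimed invariance $\mathcal{Q}^*(\varphi_{kl}) = \varphi_{kl}$ is immediate — this is really the key structural reason the construction works.

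Next I would prove Part~\ref{i:bqplh}, which is the heart of the argument. The $h$-variables are defined in~\eqref{eq:h_fun} and~\eqref{eq:f_flags} in terms of flag minors of $\mathcal{F}(U)$. So I must compute $\mathcal{F}(\mathcal{Q}(U))$. Using Proposition~\ref{p:fc_q_inv} (specifically the computation inside its proof, $\mathcal{F}(\mathcal{Q}(U)) = U\cdot\rho(U)$), together with the recursion $\rho(U) = \tilde\gamma^*(U_-)\,\tilde\gamma^*(\rho(U))$, I get $\mathcal{F}(\mathcal{Q}(U)) = U\rho(U)$, and since $\rho(U)\in\mathcal{N}_-$, the upper-triangular part of $\mathcal{F}(\mathcal{Q}(U))$ relates to that of $\mathcal{F}(U)$ by a right multiplication by a unipotent lower triangular matrix. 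The subtlety is that flag minors $\det[\cdot]^{[j,n]}_{[i,n-j+i]}$ are \emph{not} invariant under right multiplication by arbitrary elements of $\mathcal{N}_-$ — only the trailing (principal trailing) minors are, and general flag minors pick up contributions. I would compute these contributions explicitly using the Desnanot–Jacobi / Plücker-type determinantal identities listed in the appendix (Appendix~\ref{s:aidentminors}), tracking how $\rho(U)$ acts column-by-column; the extra factors $\prod_{t\geq k+1}\tilde h_{\alpha_t+1,\alpha_t+1}(U)$ will emerge from the $\gamma$-string structure of the nonzero entries of $\rho(U) = \prod(\tilde\gamma^*)^i(U_-)$, exactly matching the products over $t$ appearing in~\eqref{eq:h_fun}. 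A cleaner route may be to observe that $\tilde h_{ij}(U) = (-1)^{\varepsilon_{ij}}\det U^{[j,n]}_{[i,n-j+i]}$ for the trivial triple, and that by definition $\mathcal{F}(U)$ is precisely what turns $h_{ij}(\mathcal{Q}(U))$ into these trivial-triple minors of $\mathcal{F}(\mathcal{Q}(U)) = U\rho(U)$; then one expands $\det(U\rho(U))^{[j,n]}_{[i,n-j+i]}$ by the Cauchy–Binet formula and shows only the terms indexed by the $\gamma$-string survive, giving~\eqref{eq:q_upon_h}.

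Finally, to conclude that $\mathcal{Q}$ is a birational quasi-isomorphism for $G\in\{\SL_n,\GL_n\}$: the map $\mathcal{Q}$ is birational with rational inverse $\mathcal{F}^c$ given by~\eqref{eq:fc_alt}, and both $\mathcal{Q},\mathcal{F}^c$ are morphisms after localizing at the marked variables $\{h_{i+1,i+1}\mid i\in\Gamma_2\setminus\tilde\Gamma_2\} = \{h_{i+1,i+1}\mid i\in\Gamma_2\}$ (here $\tilde{\bg} = \bg_{\std}$), since the formulas~\eqref{eq:q_upon_h} only introduce those $h$'s in the denominators; this verifies the first condition in Definition~\ref{d:birat}. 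For the second condition, with the marker $\kappa$ the identity on $\varphi$'s and $c$'s and matching $h_{ij}\leftrightarrow\tilde h_{ij}$, formulas~\ref{i:bqlvc} and~\eqref{eq:q_upon_h} are exactly of the monomial form~\eqref{eq:quasi_formula} with $\lambda_{ii} = 1$; and since the only nontrivial string is on $\varphi_{11}$ with $\hat p_{1r}$ a monomial in the Casimirs $c_1,\dots,c_{n-1}$ (unchanged by $\mathcal{Q}$), condition~2 of Definition~\ref{def:quasiiso} holds. It then remains to check the $y$-variable condition (condition~3 of Proposition~\ref{p:q_y_var}) up to the equivalence~\eqref{eq:equivrelb}; this reduces to a combinatorial comparison of the initial quivers of $\gc_h^\dagger(\bg_{\std})$ and $\gc_h^\dagger(\bg)$ (described in Appendix~\ref{s:ainiquivh}) using the weight bookkeeping encoded in~\eqref{eq:q_upon_h}. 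The main obstacle I anticipate is the determinantal bookkeeping in Part~\ref{i:bqplh}: correctly identifying which minors of $U\rho(U)$ survive and assembling the signs $\varepsilon_{ij}$ and $(-1)$ factors so that the product over the $\gamma$-string tail comes out exactly as written; everything else is either structural (conjugation invariance) or a mechanical application of the earlier propositions.
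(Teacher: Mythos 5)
The proof of Part~\ref{i:bqlvc} and the outline for the birational quasi-isomorphism statement are sound and match the paper's approach, but there is a genuine error in your treatment of Part~\ref{i:bqplh} that makes the core of the argument not go through as stated.

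You write that ``flag minors $\det[\cdot]^{[j,n]}_{[i,n-j+i]}$ are \emph{not} invariant under right multiplication by arbitrary elements of $\mathcal{N}_-$ — only the trailing (principal trailing) minors are, and general flag minors pick up contributions,'' and you then propose a Cauchy–Binet expansion of $\det(U\rho(U))^{[j,n]}_{[i,n-j+i]}$ in which ``only the terms indexed by the $\gamma$-string survive.'' This is incorrect: for $\rho \in \mathcal{N}_-$, any row set $I$ with $|I| = n-j+1$ satisfies
$(U\rho)^{[j,n]}_{I} = U^{[j,n]}_{I}\cdot \rho^{[j,n]}_{[j,n]}$,
because the columns of $\rho$ with indices in $[j,n]$ have no nonzero entries above row $j$; and $\rho^{[j,n]}_{[j,n]}$ is unipotent lower triangular, so
$\det (U\rho)^{[j,n]}_{I} = \det U^{[j,n]}_{I}$
for \emph{every} $I$, not only for the trailing principal one. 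Hence, once you have $\mathcal{F}(\mathcal{Q}(U)) = U\rho(U)$ from Proposition~\ref{p:fc_q_inv}, every flag minor appearing in the definition~\eqref{eq:h_fun} of $h_{\alpha_i+1,j}$ is pulled back to the corresponding flag minor of $U$ with no correction terms. The product $\prod_{t\geq k+1}\tilde{h}_{\alpha_t+1,\alpha_t+1}(U)$ does not ``emerge'' from a determinantal expansion — it is literally the product $\prod_{t\geq i+1}^m \det[\mathcal{F}(\cdot)]^{[\alpha_t+1,n]}_{[\alpha_t+1,n]}$ already present in~\eqref{eq:h_fun}, now evaluated at $\mathcal{F}(\mathcal{Q}(U)) = U\rho(U)$ and identified with the trivial-triple frozen variables $\tilde{h}_{\alpha_t+1,\alpha_t+1}$. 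In other words, the ``bookkeeping obstacle'' you anticipate in Part~\ref{i:bqplh} does not exist; the paper's proof of this part is a one-line consequence of the observation $\mathcal{Q}^*\bigl(\det \mathcal{F}(U)^{[j,n]}_{I}\bigr) = \det U^{[j,n]}_{I}$. If you attempted the Cauchy–Binet route you sketch, you would find the sum collapses to a single term by exactly the reasoning above, so it is worth internalizing why no expansion is needed at all. The rest of your sketch — verifying condition~3 of Proposition~\ref{p:q_y_var} by comparing $y$-variables via the quiver neighborhoods, and checking that $\mathcal{Q}^*$ induces an isomorphism of the localized coordinate rings — is in line with the paper's proof.
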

Note that the regularity of the $h$-functions is established independently of the above statement in Proposition~\ref{p:h_expansion} below.
\begin{proof}
The statement in Part~\ref{i:bqlvc} follows from the invariance properties of $\varphi$- and $c$-functions (see~\eqref{eq:phi_invar} and~\eqref{eq:c_def}). For Part~\ref{i:bqplh}, notice that for any $1 \leq j \leq n$ and any $I \subseteq [1,n]$ of size $|I| = n-j+1$,
\[
\mathcal{Q}^*\left(\det\mathcal{F}(U)^{[j,n]}_{I}\right) = \det U^{[j,n]}_{I},
\]
and then invoke the definition of the $h$-variables~\eqref{eq:h_fun}. The last statement can be verified via Proposition~\ref{p:q_y_var}. The hatted strings $\hat{p}_{ir}$ are given by the $c$-variables, which in turn are invariant with respect to $\mathcal{Q}$ by Part~\ref{i:bqlvc}. To verify that $\mathcal{Q}^*(y_i) = \tilde{y}_{\kappa(i)}$ for each nonmarked index $i$, we consider the neighborhoods of the variables as given in Appendix~\ref{s:ainiquivh}. For instance, from Figure~\ref{f:nbd_h} we read that $y(h_{ij})$ for $2 \leq i\leq j \leq n-1$ is given by 
\begin{equation*}
    y(h_{ij}) = \frac{h_{i+1,j}h_{i-1,j-1}h_{i,j+1}}{h_{i-1,j}h_{i,j-1},h_{i+1,j+1}}.
\end{equation*}
It follows from~\eqref{eq:q_upon_h} that for any $2 \leq j \leq n-1$ and any $2 \leq s,k\leq j$,
\begin{equation*}
    \mathcal{Q}^*\left(\frac{h_{kj}}{h_{sj}}\right) = \frac{\tilde{h}_{kj}}{\tilde{h}_{sj}}.
\end{equation*}
Therefore, $\mathcal{Q}^*(y(h_{ij})) = y(\tilde{h}_{ij})$. A similar reasoning applies to all the other neighborhoods. By Proposition~\ref{p:q_y_var}, we conclude that $\mathcal{Q}$ is a  quasi-isomorphism. To verify that $\mathcal{Q}$ is a birational quasi-isomorphism (see   Definition~\ref{d:birat}), we need to verify that $\mathcal{Q}^*$ induces an isomorphism 
\begin{equation}\label{eq:q_birat_iso}
\mathcal{Q}^*:\mathbb{C}[\mathrm{Mat}_n][h_{i+1,i+1}^{\pm 1}(U) \ | \ i \in \Gamma_2] \xrightarrow{\sim}\mathbb{C}[\mathrm{Mat}_n][\tilde{h}_{i+1,i+1}^{\pm 1}(U) \ | \ i \in \Gamma_2]
\end{equation} 
where $\mathrm{Mat}_n\supseteq \GL_n$ denotes the space of $n\times n$ matrices with complex entries. Observe that the denominators of $\mathcal{Q}(U)$ are the denominators of $\tilde{\gamma}^*(U_-)$. The latter are given by equation~\eqref{eq:Qdenoms}, and therefore,
\begin{equation*}
    \mathcal{Q}^*(\mathbb{C}[\mathrm{Mat}_n]) \subseteq \mathbb{C}[\mathrm{Mat}_n][\tilde{h}_{i+1,i+1}^{\pm 1}(U) \ | \ i \in \Gamma_2].
\end{equation*}
Likewise, the denominators of $\mathcal{Q}^{-1}(U)$ are given by the denominators of $\tilde{\gamma}^*([\mathcal{F}(U)]_-)$, which in turn are given by $h_{i+1,i+1}^{\pm 1}$ for $i \in \Gamma_2$. By equation~\eqref{eq:q_upon_h}, $\mathcal{Q}$ is an isomorphism between the rings of Laurent monomials in $h_{i+1,i+1}(U)$ and $\tilde{h}_{i+1,i+1}(U)$, $i \in \Gamma_2$. Thus equation~\eqref{eq:q_birat_iso} holds.
\end{proof}

\paragraph{Equivariance property of $\mathcal{Q}$.} Let $\mathfrak{h}^s$ be the Cartan subalgebra of $\sll_n(\mathbb{C})$.
Recall that the Lie subalgebra $\mathfrak{h}_{\bg}\subseteq \mathfrak{h}^s$ is given by
\begin{equation*}
\mathfrak{h}_{\bg}=\{h \in \mathfrak{h}^s \ | \ \alpha(h) = \beta(h) \ \text{if} \ \gamma^j(\alpha) = \beta \ \text{for some} \ j\}.
\end{equation*}
Let $\mathcal{H}_{\bg}$ be the connected subgroup of $\SL_n$ that corresponds to $\mathfrak{h}_{\bg}$.
\begin{proposition}\label{p:q_equiv}
For any $T \in \mathcal{H}_{\bg}$ and any $U \in \GL_n$, $\mathcal{Q}(TUT^{-1}) = T\mathcal{Q}(U)T^{-1}$.
\end{proposition}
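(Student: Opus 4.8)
## Proof proposal for Proposition~\ref{p:q_equiv}

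The plan is to exploit the explicit form of $\mathcal{Q}$ in \eqref{eq:qdef0} together with the equivariance properties of the Gauss decomposition with respect to conjugation by elements of $\mathcal{H}_{\bg}$, and the compatibility of $\tilde{\gamma}^*$ with such conjugations. First I would record the key observation about the torus $\mathcal{H}_{\bg}$: by definition \eqref{eq:toralg}, for $T \in \mathcal{H}_{\bg}$ the character of $T$ is constant along $\gamma$-strings, which means precisely that $\tilde{\gamma}^*(T) = T'$ where $T'$ is again a diagonal element whose values on roots in $\Gamma_1$ are copied from the corresponding values on $\Gamma_2$; more to the point, because $\alpha(h)=\gamma(\alpha)(h)$ on $\mathfrak{h}_{\bg}$, conjugation by $T$ commutes with $\tilde\gamma^*$ in the sense that $\tilde{\gamma}^*(TNT^{-1}) = T\,\tilde{\gamma}^*(N)\,T^{-1}$ for $N \in \mathcal{N}_-$. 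This is the analogue for $\mathcal{H}_{\bg}$ of the intertwining identity $\tilde\gamma^*(TNT^{-1}) = \togammas(T)\,\tilde\gamma^*(N)\,T^{-1}$ used implicitly in Section~\ref{s:map_f} (cf.\ the proof of Proposition~\ref{p:invfk}), except that on $\mathcal{H}_{\bg}$ one has $\togammas(T) = T$, which is exactly the content of the defining condition \eqref{eq:toralg}.

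Granting this, the argument is short. For $U \in \GL_n$ and $T \in \mathcal{H}_{\bg}$, the Gauss decomposition satisfies $(TUT^{-1})_- = T\,U_-\,T^{-1}$ and $(TUT^{-1})_\oplus = T\,U_\oplus\,T^{-1}$, since conjugation by a diagonal matrix preserves $\mathcal{B}_+$ and $\mathcal{N}_-$ and preserves unipotency. Therefore, iterating the intertwining identity above, $(\tilde{\gamma}^*)^i\big((TUT^{-1})_-\big) = T\,(\tilde{\gamma}^*)^i(U_-)\,T^{-1}$ for each $i$, and hence
\begin{equation}
\rho(TUT^{-1}) = \prod_{i=1}^{\rightarrow} (\tilde{\gamma}^*)^i\big((TUT^{-1})_-\big) = T \left(\prod_{i=1}^{\rightarrow} (\tilde{\gamma}^*)^i(U_-)\right) T^{-1} = T\,\rho(U)\,T^{-1},
\end{equation}
where the middle equality uses that the conjugating factors $T$, $T^{-1}$ telescope across the ordered product. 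Substituting into the definition of $\mathcal{Q}$,
\begin{equation}
\mathcal{Q}(TUT^{-1}) = \rho(TUT^{-1})^{-1}\,(TUT^{-1})\,\rho(TUT^{-1}) = T\rho(U)^{-1}T^{-1}\cdot TUT^{-1}\cdot T\rho(U)T^{-1} = T\,\mathcal{Q}(U)\,T^{-1},
\end{equation}
which is the claim.

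The only genuine point requiring care — the "main obstacle," though it is mild — is verifying the intertwining identity $\tilde{\gamma}^*(TNT^{-1}) = T\,\tilde{\gamma}^*(N)\,T^{-1}$ for $T\in\mathcal{H}_{\bg}$, $N\in\mathcal{N}_-$. I would do this at the Lie algebra level: it suffices to check $\gamma^*(\ad_h\, x) = \ad_h\,\gamma^*(x)$ for all $h \in \mathfrak{h}_{\bg}$ and $x \in \mathfrak{n}_-$, and then integrate. Since $\gamma^*$ sends the root vector $e_{-\beta}$ (for $\beta$ a root supported on $\Gamma_2$) to $e_{-\gamma^*(\beta)}$ up to scalar, and kills everything orthogonal to $\mathfrak{g}_{\Gamma_2}$, the identity reduces to the equality $\beta(h) = \gamma^*(\beta)(h)$ for $h \in \mathfrak{h}_{\bg}$ and $\beta \in \mathbb{Z}\Gamma_2$ in the image, which is immediate from \eqref{eq:toralg} (the condition is stated for $\gamma$ but $\gamma^*$ is its adjoint, so the string condition transfers). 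On vectors annihilated by $\gamma^*$ both sides vanish because $\mathfrak{h}_{\bg}$ normalizes the relevant root spaces. One should also note that $\mathcal{H}_{\bg}\subseteq\SL_n$ rather than $\GL_n$ plays no role here; the scalar freedom is irrelevant since scalars are central. This completes the proof.
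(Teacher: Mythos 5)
Your proof is correct and follows essentially the same strategy as the paper: reduce the claim to the fact that conjugation by $T\in\mathcal{H}_{\bg}$ commutes with $\tilde{\gamma}^*$ on $\mathcal{N}_-$, then propagate this through the ordered product $\rho$ and into $\mathcal{Q}$. The load-bearing step is your Lie-algebra verification in the final paragraph, and it is sound: $\gamma^*(\ad_h x)=\ad_h\gamma^*(x)$ on $\mathfrak{n}_-$ reduces to $\beta(h)=\gamma^*(\beta)(h)$ for $\beta\in\mathbb{Z}\Gamma_2$, which holds because $\gamma^*(\beta)=\gamma^{-1}(\beta)$ on roots and $\mathfrak{h}_{\bg}$ is defined precisely by $\alpha(h)=\gamma(\alpha)(h)$.

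One aside, however, is incorrect and should be removed: it is not true that $\togammas(T)=T$ for $T\in\mathcal{H}_{\bg}$. The homomorphism $\togammas$ integrates $\ogammas$, which vanishes on $\gl_n(\Gamma_2)^{\perp}$; consequently $\togammas(T)$ has trivial diagonal entries outside the nontrivial $X$-runs, whereas a generic $T\in\mathcal{H}_{\bg}$ does not (for example, when $|\Gamma_1|=1$ and $n$ is large, $\mathcal{H}_{\bg}$ is a codimension-one subtorus, most of whose coordinates $\togammas$ sends to $1$). The cited general intertwining is also slightly off: it should read $\tilde{\gamma}^*(TNT^{-1})=\togammas(T)\,\tilde{\gamma}^*(N)\,\togammas(T)^{-1}$ rather than ending in $T^{-1}$. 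The paper avoids this pitfall: it does not claim $\togammas(T)=T$, but instead shows that the correction factor $T^{-1}\togammas(T)$ commutes with $\togammas(U_-)$, via the decomposition through $\mathring{\Pi}_{\Gamma_1}$ and $\mathring{\Pi}_{\hat{\Gamma}_1}$, with the $\hat{\Gamma}_1$-part commuting for support reasons and the $\Gamma_1$-part being scalar on each nontrivial $X$-run. Your direct Lie-algebra computation bypasses this decomposition entirely, so the conclusion is unaffected; but the heuristic paragraph should be corrected so as not to mislead.
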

\begin{proof}
We see that
\begin{equation*}
T^{-1}\rho(TUT^{-1}) = \prod_{i\geq 1}(\togammas)^{i-1}(T^{-1}\togammas(T))(\togammas)^i(U_-).
\end{equation*}
It suffices to show that $T^{-1}\togammas(T)$ commutes with $\togammas(U_-)$. Indeed, we can decompose $T^{-1}\togammas(T)$ as
\begin{equation*}
T^{-1}\togammas(T) = \mathring{\Pi}_{\Gamma_1}(T^{-1}\togammas(T))\mathring{\Pi}_{\hat{\Gamma}_1}(T^{-1}\togammas(T)).
\end{equation*}
Evidently, the second factor commutes with $\togammas(U_-)$; as for the first factor, we see that
\begin{equation*}
\mathring{\Pi}_{\Gamma_1}(T^{-1}\togammas(T)) = \exp\left( \sum_{\Delta}I_{\Delta}t_{\Delta}\right)
\end{equation*}
where the sum is over nontrivial $X$-runs $\Delta$, $I_{\Delta}:=\sum_{i \in \Delta}e_{ii}$ and $t_{\Delta} \in \mathbb{C}$ are some scalars. Clearly, the last expression also commutes with ${\togamma}^*(U_-)$. Thus the statement holds.
\end{proof}

\subsection{\texorpdfstring{Birational quasi-isomorphisms $\mathcal{G}$ for $\tilde{\bg}\prec\bg$}{Birational quasi-isomorphisms G for tilde Gamma < Gamma}}\label{s:genbirat}
Let $\tilde{\bg}:=(\tilde{\Gamma}_1,\tilde{\Gamma}_2,\theta)$ and $\bg:=(\Gamma_1,\Gamma_2,\gamma)$ be a pair of BD triples (of type $A_{n-1}$), and let $\tilde{r}_0$ (resp. $r_0$) be an element such that $\bar{\bg}:=(\bg,r_0)$ (resp. $\tilde{\bar{\bg}}:=(\tilde{\bg},\tilde{r}_0)$) is a BD quadruple. In this subsection, we construct a birational map $\mathcal{G} : (\GL_n,\pi_{\tilde{\bar{\bg}}}^{\dagger}) \dashrightarrow (\GL_n,\pi_{\bar{\bg}}^{\dagger})$ from the birational maps $\mathcal{Q}:(\GL_n,\pi_{(\bg_{\std},r_0)}^{\dagger}) \dashrightarrow (\GL_n,\pi_{\bar{\bg}}^{\dagger})$ and $\tilde{\mathcal{Q}}:(\GL_n,\pi_{(\bg_\std,\tilde{r}_0)}^{\dagger}) \dashrightarrow (\GL_n,\pi_{\tilde{\bar{\bg}}}^{\dagger})$. Proposition~\ref{p:b_gen} below is a more detailed version of Proposition~\ref{p:birat} for $\gc_h^{\dagger}(\bg)$. Let us set
\begin{equation}\label{eq:geng0}
    G_0(U) := \tilde{\theta}^*(\tilde{\mathcal{F}}(U)_-)^{-1}\tilde{\gamma}^*(\tilde{\mathcal{F}}(U)_-), \ \ U \in \GL_n,
\end{equation}
and let $\tilde{\mathcal{F}}$ and $\mathcal{F}$ be the maps given by~\eqref{eq:fdef} and constructed relative to $\tilde{\bg}$ and $\bg$, respectively.

\begin{proposition}\label{p:b_gen}
    In the setup of the subsection, set $\mathcal{G}:=\mathcal{Q}\circ \tilde{\mathcal{Q}}^{-1}$ and $G \in \{\SL_n,\GL_n\}$. Then $\mathcal{G}$ is a birational map given by the following formula:
    \begin{equation}\label{eq:genbirat}
        \mathcal{G}(U) = G(U)^{-1} \cdot U \cdot G(U), \ \ G(U) := G_0(U)\prod_{i \geq 1}^{\rightarrow} (\tilde{\gamma}^*)^i(G_0(U)).
    \end{equation}
    Moreover, if $\tilde{\bg} \prec \bg$, then $\mathcal{G}:(G,\gc_h^{{\dagger}}(\tilde{\bg}))\dashrightarrow (G,\gc_h^{\dagger}(\bg))$ is a birational quasi-isomorphism, and if in addition $\tilde{r}_0 = r_0$, then $\mathcal{G}:(G,\pi_{\tilde{\bar{\bg}}}^{\dagger}) \dashrightarrow (G,\pi_{\bar{\bg}}^{\dagger})$ is a Poisson map.
\end{proposition}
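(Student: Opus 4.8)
The backbone is the explicit formula~\eqref{eq:genbirat}; everything else follows from it together with the tools already developed. I would organize the argument in three steps.

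\emph{Step 1: establishing formula~\eqref{eq:genbirat}.} By Proposition~\ref{p:fc_q_inv} applied to $\tilde{\bg}$, $\tilde{\mathcal{Q}}^{-1} = \tilde{\mathcal{F}}^c$, and combining~\eqref{eq:deffc} with the relation~\eqref{eq:frel} for $\tilde{\bg}$ one gets the compact form
\[
V:=\tilde{\mathcal{Q}}^{-1}(U) = \tilde{\mathcal{F}}(U)\,\tilde{\theta}^*(\tilde{\mathcal{F}}(U)_-)^{-1},
\]
so $V_\oplus = \tilde{\mathcal{F}}(U)_\oplus$ and $V_- = \tilde{\mathcal{F}}(U)_-\,\tilde{\theta}^*(\tilde{\mathcal{F}}(U)_-)^{-1}$, both factors lying in $\mathcal{N}_-$. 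Since $\tilde{\mathcal{Q}}(V)=U$ gives $V = \tilde{\rho}(V)\,U\,\tilde{\rho}(V)^{-1}$, we obtain $\mathcal{G}(U)=\mathcal{Q}(V)=\rho(V)^{-1}V\rho(V) = G(U)^{-1}U\,G(U)$ with $G(U):=\tilde{\rho}(V)^{-1}\rho(V)$. Two short computations finish the step. First, $\tilde{\rho}(V)=\prod_{i\geq 1}^{\rightarrow}(\tilde{\theta}^*)^i(V_-)$ telescopes, because $\tilde{\theta}^*$ is nilpotent, to $\tilde{\theta}^*(\tilde{\mathcal{F}}(U)_-)$. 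Second, with $G_0(U)=\tilde{\theta}^*(\tilde{\mathcal{F}}(U)_-)^{-1}\tilde{\gamma}^*(\tilde{\mathcal{F}}(U)_-)$ one checks $G(U) = G_0(U)\,\tilde{\gamma}^*(G(U))$ — after cancelling $\tilde{\theta}^*(\tilde{\mathcal{F}}(U)_-)$ this is exactly the defining relation $\rho(V)=\tilde{\gamma}^*(V_-)\tilde{\gamma}^*(\rho(V))$ — so $G(U)$ equals the unique solution $G_0(U)\prod_{i\geq 1}^{\rightarrow}(\tilde{\gamma}^*)^i(G_0(U))$ of that fixed-point equation. This proves~\eqref{eq:genbirat}, in particular that $\mathcal{G}$ is birational and is conjugation by $G(U)\in\mathcal{N}_-$.

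\emph{Step 2: the flag-minor identity and the action of $\mathcal{G}^*$ on the initial cluster.} From the computation in the proof of Proposition~\ref{p:fc_q_inv} one has $\mathcal{F}(\mathcal{Q}(W))=W\rho(W)$; taking $W=V$ and using that $\rho(V)\in\mathcal{N}_-$ and $\tilde{\theta}^*(\tilde{\mathcal{F}}(U)_-)^{-1}\in\mathcal{N}_-$, and that right multiplication by an element of $\mathcal{N}_-$ leaves every flag minor $\det(\cdot)^{[j,n]}_I$ unchanged, I get
\[
\det[\mathcal{F}(\mathcal{G}(U))]^{[j,n]}_I = \det[\tilde{\mathcal{F}}(U)]^{[j,n]}_I\qquad\text{for all }j\text{ and all }I\text{ with }|I|=n-j+1 .
\]
Feeding this into~\eqref{eq:f_flags} for both $\bg$ and $\tilde{\bg}$, using $\theta=\gamma|_{\tilde{\Gamma}_1}$ and the definition~\eqref{eq:h_fun}, a short induction along $\gamma$-strings gives, for every $h$-variable of the initial cluster of $\gc_h^{\dagger}(\bg)$,
\[
\mathcal{G}^*(h_{ij}) = \tilde{h}_{ij}\prod_{s}\tilde{h}_{m_s+1,\,m_s+1},
\]
where the $m_s$ run over those elements of $\Gamma_2\setminus\tilde{\Gamma}_2$ that lie strictly beyond $i-1$ in the $\gamma$-string through $i-1$; these are precisely marked variables, and the coefficient of $\tilde{h}_{ij}$ is $1$. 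Together with $\mathcal{G}^*(\varphi_{kl})=\varphi_{kl}$ and $\mathcal{G}^*(c_i)=c_i$ (valid because $\mathcal{G}$ is conjugation by $G(U)\in\mathcal{N}_-$ and the $\varphi$- and $c$-functions are invariant under such conjugations by~\eqref{eq:phi_invar} and~\eqref{eq:c_def}), this verifies condition~(1) of Definition~\ref{def:quasiiso} on $\mathbf{x}_0$ and shows that $\mathcal{G}^*$ preserves the hatted strings, which consist of $c$-variables.

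\emph{Step 3: $y$-variables, localized coordinate rings, and the Poisson claim.} To conclude via Proposition~\ref{p:q_y_var} that $\mathcal{G}^*$ is a quasi-isomorphism it remains to check $\mathcal{G}^*(y(\psi))=y(\tilde{\psi})$ for each non-marked cluster variable $\psi$. Since $\mathcal{G}^*=(\tilde{\mathcal{Q}}^{-1})^*\circ\mathcal{Q}^*$, applying $\tilde{\mathcal{Q}}^*$ reduces this to the identity $\mathcal{Q}^*(y(\psi))=\tilde{\mathcal{Q}}^*(y(\tilde{\psi}))$ in $\mathcal{F}(\gc_h^{\dagger}(\bg_{\std}))$, which follows by reading off the neighborhoods of $\psi$ from Appendix~\ref{s:ainiquivh} (as in the proof of Proposition~\ref{p:q_quasih}) and invoking the known formulas for $\mathcal{Q}^*$ and $\tilde{\mathcal{Q}}^*$ on $h$-variables: within each local $y$-variable the $h$-functions occur in numerator/denominator pairs whose $\gamma$- (respectively $\theta$-)string tails coincide, so the extra diagonal factors cancel. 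Finally, as in Proposition~\ref{p:q_quasih}, the fact that both $\mathcal{G}$ and $\mathcal{G}^{-1}=\tilde{\mathcal{Q}}\circ\mathcal{Q}^{-1}$ are conjugations by elements of $\mathcal{N}_-$ with entries rational in the minors of $U$ shows that $\mathcal{G}^*$ restricts to an isomorphism of the localized coordinate rings $\mathcal{O}(\GL_n)[\mathbf{x}_0(\kappa)^{\pm 1}]\xrightarrow{\sim}\mathcal{O}(\GL_n)[\tilde{\mathbf{x}}_0(\kappa)^{\pm 1}]$ (and likewise for $\SL_n$), so $\mathcal{G}$ is a birational quasi-isomorphism. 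The Poisson statement is formal: when the $r_0$ parts of $\pi_{\tilde{\bg}}^{\dagger}$ and $\pi_{\bg}^{\dagger}$ agree one may take the same $r_0$ for $\pi_{\std}^{\dagger}$ as well, so Proposition~\ref{p:qpoiss} makes $\mathcal{Q}$ and $\tilde{\mathcal{Q}}$ Poisson isomorphisms, hence so is $\mathcal{G}=\mathcal{Q}\circ\tilde{\mathcal{Q}}^{-1}$.

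\textbf{Expected main difficulty.} Steps 1 and the Poisson claim are formal. The real work is the combinatorial bookkeeping in Steps 2–3: matching, $\gamma$-string by $\gamma$-string, the monomial of marked variables produced by $\mathcal{G}^*$ with the one prescribed by the marker $\kappa$, and then confirming that these monomials cancel in every $y$-variable indexed by a non-marked vertex — i.e., checking the $\gamma$-/$\theta$-string combinatorics uniformly across all quiver neighborhoods.
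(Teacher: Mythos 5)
Your Step 1 reproduces the paper's derivation of formula~\eqref{eq:genbirat} essentially verbatim (the telescoping of $\tilde\rho(V)$ and the fixed-point relation $G(U)=G_0(U)\tilde\gamma^*(G(U))$ are exactly what the paper computes), so I will only comment on Steps 2--3 and the organization.

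Where you genuinely diverge from the paper is in how you prove the birational quasi-isomorphism claim. The paper factors $\mathcal{G}$ along a maximal chain $\tilde\bg = \bg_1 \prec \bg_2 \prec \cdots \prec \bg_m = \bg$ with $|\Gamma_{1,i+1}\setminus\Gamma_{1,i}|=1$, proves in Proposition~\ref{p:quasi_iso_simple} that each single-removal $\mathcal{G}_i$ is a birational quasi-isomorphism (using the toric-action criterion of Proposition~\ref{p:quasi_toric}, which turns the $y$-variable check into a one-line computation of polynomial degrees via Corollary~\ref{c:degh}), and then composes. You instead derive the flag-minor identity $\det[\mathcal{F}(\mathcal{G}(U))]^{[j,n]}_I = \det[\tilde{\mathcal{F}}(U)]^{[j,n]}_I$ directly for arbitrary $\tilde\bg\prec\bg$ — a nice shortcut that uses $\mathcal{F}(\mathcal{Q}(V))=V\rho(V)$ and the fact that right-multiplication by $\mathcal{N}_-$ preserves flag minors — and then verify the $y$-variable condition by reducing it, through $\mathcal{G}^* = (\tilde{\mathcal{Q}}^*)^{-1}\circ\mathcal{Q}^*$, to the identity $\mathcal{Q}^*(y(\psi))=\tilde{\mathcal{Q}}^*(y(\tilde\psi))$ inside $\mathcal{F}(\gc_h^\dagger(\bg_{\std}))$. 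Both routes are legitimate. The paper's route buys an essentially free $y$-variable check (only one marked variable per step, and the toric machinery absorbs the bookkeeping) at the cost of an auxiliary chain; your route avoids the chain and the toric machinery but pushes the full combinatorial cancellation of diagonal factors across all quiver neighborhoods — including those of $h_{i+1,i+1}$ with $i\in\tilde\Gamma_2$, for which Proposition~\ref{p:q_quasih} is not directly applicable since these are marked relative to $\bg_{\std}$ — into a sketch. That cancellation is true but is precisely the bookkeeping the paper engineers to avoid by working one root at a time; if you want your version to be fully self-contained you would need to carry it out neighborhood-by-neighborhood (Figures~\ref{f:inbd_hjj}--\ref{f:inbd_h11}). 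The Poisson statement is formal and identical in both accounts.
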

In Section~\ref{s:bsimpl}, we will further refine the construction in the case $\tilde{\bg}$ is obtained from $\bg$ by removing a pair of roots. There, we will give a more explicit description of the matrix $G_0(U)$.
\begin{proof}
    Let us start with the definition $\mathcal{G}= \mathcal{Q}\circ \tilde{\mathcal{Q}}^{-1}$ and derive formula~\eqref{eq:genbirat}. Recall from~\eqref{eq:fc_alt} that 
		\[
		\tilde{\mathcal{Q}}^{-1}(U) = \tilde{\mathcal{F}}(U)\tilde{\theta}^*(\tilde{\mathcal{F}}(U)_-)^{-1} =\tilde{\theta}^*(\tilde{\mathcal{F}}(U)_-)U \tilde{\theta}^*(\tilde{\mathcal{F}}(U)_-)^{-1}.
		\]
		We see that
    \begin{equation*}
        \mathcal{G}(U) = \rho\left(\tilde{\mathcal{Q}}^{-1}(U)\right)^{-1} \tilde{\theta}^*(\tilde{\mathcal{F}}(U)_-) \cdot U \cdot \tilde{\theta}^*(\tilde{\mathcal{F}}(U)_-)^{-1} \rho\left(\tilde{\mathcal{Q}}^{-1}(U)\right).
    \end{equation*}
    Set $G(U) := \tilde{\theta}^*(\tilde{\mathcal{F}}(U)_-)^{-1} \rho\left(\tilde{\mathcal{Q}}^{-1}(U)\right)$. Since $\tilde{\gamma}^*(\rho(U)) = \tilde{\gamma}^*(U_-)^{-1}\rho(U)$, we see that
    \begin{equation*}\begin{split}
        \tilde{\gamma}^*(G(U)) = &\tilde{\gamma}^*\left(\tilde{\theta}^*(\tilde{\mathcal{F}}(U)_-)^{-1}\right) \tilde{\gamma}^*\left(\left[\tilde{\mathcal{F}}(U)\tilde{\theta}^*(\tilde{\mathcal{F}}(U)_-)^{-1}\right]_-\right)^{-1}\rho\left(\tilde{\mathcal{Q}}^{-1}(U)\right) = \\ =& \tilde{\gamma}^*([\tilde{\mathcal{F}}(U)_-])^{-1}\rho(\tilde{\mathcal{Q}}^{-1}(U)) = G_0(U)^{-1}G(U).
        \end{split}
    \end{equation*}
    Now, formula~\eqref{eq:genbirat} for $G(U)$ follows from the identity $G_0(U)\tilde{\gamma}^*(G(U)) = G(U)$.

    To show that $\mathcal{G}$ is a birational quasi-isomorphism, let us consider an increasing chain $\bg_1 \prec \bg_2 \prec \cdots \prec \bg_m$ of BD triples $\bg_i:=(\Gamma_{1,i},\Gamma_{2,i},\gamma_i)$ such that $\bg_1 := \tilde{\bg}$, $\bg_m := \bg$ and $|\Gamma_{1,i+1}\setminus \Gamma_{1,i}| = 1$, $i \in [1,m-1]$. Let $\mathcal{G}_i : (\GL_n,\pi_{(\bg_i,r_0)}^{\dagger})\dashrightarrow (\GL_n,\pi_{(\bg_{i+1},r_0)}^{\dagger})$ be defined relative $\bg_i$ and $\bg_{i+1}$, as above. In Proposition~\ref{p:quasi_iso_simple} below we show that each $\mathcal{G}_i:(\GL_n,\gc_h^{\dagger}(\bg_i)) \dashrightarrow (\GL_n,\gc_h^{\dagger}(\bg_{i+1}))$ is a birational quasi-isomorphism. In particular, by Proposition~\ref{p:q_y_var}, each $\mathcal{G}_i^*$ sends a nonmarked $y$-variable in the initial extended cluster of $\gc_h^{\dagger}(\bg_i)$ to a $y$-variable in the initial extended cluster of $\gc_h^{\dagger}(\bg_{i+1})$. Therefore, the same applies for $\mathcal{G}^*$, and by Proposition~\ref{p:q_y_var}, it is a birational quasi-isomorphism.
\end{proof}

\begin{remark}\label{r:birinv} 
Note that since we did not assume $\tilde{\bg}\prec \bg$ in the derivation of formula~\eqref{eq:genbirat}, the formula covers both $\mathcal{G}$ and its inverse $\mathcal{G}^{-1}$. Explicitly, since $\mathcal{G}^{-1} = \tilde{\mathcal{Q}}\circ \mathcal{Q}^{-1}$, we set
\begin{equation*}
    \tilde{G}_0(U):=\tilde{\theta}^*(\mathcal{F}(U)_-)^{-1}\tilde{\gamma}^*(\mathcal{F}(U)_-), \ \ U \in \GL_n;
\end{equation*}
then, $\mathcal{G}^{-1}$ is given by
\begin{equation*}
    \mathcal{G}^{-1}(U) = \tilde{G}(U) U \tilde{G}(U)^{-1}, \ \ \tilde{G}(U) =\left[\prod_{i \geq 1}^{\leftarrow}(\tilde{\theta}^*)^{i}(\tilde{G}_0(U)) \right] \cdot \tilde{G}_0(U).
\end{equation*}
If $\tilde{\bg}$ is the trivial BD triple, one recovers $\mathcal{G}^{-1} = \mathcal{F}^c$.
\end{remark}

\subsection{\texorpdfstring{Birational quasi-isomorphisms $\mathcal{G}$ for $|\Gamma_1\setminus\tilde{\Gamma}_1| = 1$}{Birational quasi-isomorphisms G for |Gamma 1 minus tilde Gamma 1| = 1}}\label{s:bsimpl}
Let $\bg = (\Gamma_1,\Gamma_2,\gamma)$ be a BD triple of type $A_{n-1}$. Fix any pair of roots $p \in \Gamma_1$ and $q:=\gamma(p) \in \Gamma_2$, and let $\tilde{\bg}$ be obtained from $\bg$ by removing the roots $p$ and $q$ (see Definition~\ref{d:bdremov}). Let $\mathcal{F}$ and $\tilde{\mathcal{F}}$ be the birational maps defined by~\eqref{eq:fdef} relative to the BD triples $\bg$ and $\tilde{\bg}$, respectively. Let $\mathcal{G}:=\mathcal{Q}\circ \tilde{\mathcal{Q}}^{-1}$ be the birational map constructed in Section~\ref{s:genbirat}. The objective of this subsection is to further refine formulas~\eqref{eq:geng0} and~\eqref{eq:genbirat} (the result is formula~\eqref{eq:g0hyp} below).

Let $\Delta$ and $\bar{\Delta}$ be the $X$- and $Y$-runs that contain $p$ and $q$, respectively. Set
\begin{equation*}
\tilde{\Delta}_1 := [p_-(\Gamma_1)+1,p], \ \ \ \tilde{\Delta}_2 := [p+1,p_+(\Gamma_1)].
\end{equation*}
 For $1 \leq i \leq |\tilde{\Delta}_2|$ and $1 \leq j \leq |\tilde{\Delta}_1|$, define the following sets of indices:
\begin{equation*}
J_{ij} := \begin{cases}
\{\bar{\gamma}(p_-(\Gamma_1)+j)\}\cup\left([q+1,n]\setminus\{\bar{\gamma}(p+i)\}\right) \ &\text{if} \ \gamma|_{\Delta} >0;\\
\{\bar{\gamma}(p+i)\}\cup\left([q+1,n]\setminus\{\bar{\gamma}(p_-(\Gamma_1)+j)\}\right) \ &\text{if} \ \gamma|_{\Delta}<0.\\
\end{cases}
\end{equation*}
Formula~\eqref{eq:geng0} for the matrix $G_0(U)$ can be refined as follows.

\begin{proposition}\label{p:g0expl}
Under the setup of the current subsection, let us set
\begin{equation*}
\alpha_{ij}(U) := (-1)^{i+1} \frac{\det [\tilde{\mathcal{F}}(U)]^{J_{ij}}_{[q+1,n]}}{\det [\tilde{\mathcal{F}}(U)]_{[q+1,n]}^{[q+1,n]}}.
\end{equation*}
Then
\begin{equation}\label{eq:g0hyp}
G_0(U) = I + \sum_{i=1}^{|\tilde{\Delta}_2|}\sum_{j=1}^{|\tilde{\Delta}_1|}\alpha_{ij}(U)e_{p+i,p_-(\Gamma_1)+j}.
\end{equation}
\end{proposition}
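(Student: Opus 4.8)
The plan is to start from the identity $G_0(U)=\tilde{\theta}^*\big([\tilde{\mathcal{F}}(U)]_-\big)^{-1}\,\tilde{\gamma}^*\big([\tilde{\mathcal{F}}(U)]_-\big)$ furnished by Proposition~\ref{p:b_gen} and~\eqref{eq:geng0} (here $\theta=\gamma|_{\tilde{\Gamma}_1}$), and to reduce the whole computation to elementary determinantal algebra. Throughout I would write $W:=\tilde{\mathcal{F}}(U)$, $N:=W_-\in\mathcal{N}_-$, and $W=W_\oplus N$ for the Gauss decomposition. The argument falls into two halves: (i) reading off the \emph{shape} of $G_0$ from the blockwise structure of $\mathring{\gamma}^*$ and $\mathring{\theta}^*$, and (ii) evaluating its entries by Cramer's rule, a block-triangularity observation, and the standard minor formula for the Gauss decomposition.

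For part (i): by the construction in Section~\ref{s:bd_map}, $\mathring{\gamma}^*$ and $\mathring{\theta}^*$ are direct sums of blockwise maps indexed by the nontrivial $X$-runs, so both $\tilde{\gamma}^*(N)$ and $\tilde{\theta}^*(N)$ are block-diagonal for the run partition of $[1,n]$. Removing the pair $(p,q)$ affects only the $X$-run $\Delta$ of $p$ and the corresponding $Y$-run $\bar{\Delta}$ of $q$; on every other block the two maps agree, while on the block $\gl_n(\Delta)$ the map $\tilde{\gamma}^*$ applies the $\bar{\gamma}$-relabeling to the entire matrix $N|_{\bar{\Delta}}$, whereas $\tilde{\theta}^*$ relabels only the two diagonal sub-blocks coming from the splitting $\bar{\Delta}=\bar{\gamma}(\tilde{\Delta}_1)\sqcup\bar{\gamma}(\tilde{\Delta}_2)$ and kills the off-diagonal one. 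Writing $N|_{\bar{\Delta}}$ in $2\times2$ block form along this splitting and dividing, I expect to conclude that $G_0$ is the identity outside $\Delta\times\Delta$ and that $G_0|_{\Delta\times\Delta}$ is unipotent lower triangular with a single nonzero off-diagonal block located in rows $\tilde{\Delta}_2$, columns $\tilde{\Delta}_1$ — precisely the shape~\eqref{eq:g0hyp}. In the positively oriented case the relabelings preserve order, and this block equals $n_{22}^{-1}n_{21}$ with $n_{21}=N|_{\bar{\gamma}(\tilde{\Delta}_2)\times\bar{\gamma}(\tilde{\Delta}_1)}$ and $n_{22}=N|_{\bar{\gamma}(\tilde{\Delta}_2)\times\bar{\gamma}(\tilde{\Delta}_2)}$, so $\alpha_{ij}$ is the entry of $n_{22}^{-1}n_{21}$ in row $\bar{\gamma}(p+i)=q+i$ and column $\bar{\gamma}(p_-(\Gamma_1)+j)$.

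For part (ii): solving $n_{22}X=n_{21}$ by Cramer's rule and using $\det n_{22}=1$ expresses $\alpha_{ij}$ as $(-1)^{i+1}$ times the minor of $N=W_-$ with row set $\bar{\gamma}(\tilde{\Delta}_2)=[q+1,q_+(\Gamma_2)]$ and column set $[q+1,q_+(\Gamma_2)]$ in which $q+i$ is replaced by $\bar{\gamma}(p_-(\Gamma_1)+j)$; the sign $(-1)^{i+1}$ is the cost of moving that smaller index into sorted position. Since $q+i\le q_+(\Gamma_2)$, I can enlarge both index sets to $[q+1,n]$ without changing the minor, because the added rows and columns $[q_+(\Gamma_2)+1,n]$ contribute a unipotent lower-triangular trailing block while the complementary off-diagonal block vanishes. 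Finally, as $[q+1,n]$ is a trailing interval of rows and $W=W_\oplus N$ with $W_\oplus$ upper triangular, one has $\det N^{J}_{[q+1,n]}=\det W^{J}_{[q+1,n]}/\det W^{[q+1,n]}_{[q+1,n]}$ for any column set $J$ of size $n-q$ (cf.~\eqref{eq:min_ident}). Chaining these three identities produces~\eqref{eq:g0hyp} with $J_{ij}$ given by the first branch of~\eqref{eq:jij}.

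The negatively oriented case will follow the same script, except that $\mathring{\gamma}^*$ on $\gl_n(\bar{\Delta})$ now carries the transpose-and-$\omega_0$-conjugation twist: this interchanges the two sub-blocks of $N|_{\bar{\Delta}}$, reverses the relabelings, and inserts signs, so one instead starts from the block of $N$ indexed by $\bar{\gamma}(\tilde{\Delta}_1)=[q+1,q_+(\Gamma_2)]$ (again a $(q+1)$-based interval) and lands on the second branch of~\eqref{eq:jij}. The main obstacle is not any deep identity — all the determinantal input above is elementary — but the bookkeeping: keeping the run-splitting bijections $\bar{\gamma}$, $\bar{\gamma}^{-1}$ and $\check{\gamma}$ straight across the two orientations, and verifying that the Cramer reordering sign, the $\omega_0$-signs built into $\mathring{\gamma}^*_i$, and the sign conventions relating minors of $N$ to minors of $W$ combine to exactly $(-1)^{i+1}$. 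Since the formula is used nowhere else in the paper, it will suffice to carry out this verification in full in the positively oriented case and to indicate the modifications needed for the negatively oriented one.
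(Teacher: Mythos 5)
Your argument is essentially the same as the paper's, just run in the opposite direction: you start from $G_0=\tilde{\theta}^*(N)^{-1}\tilde{\gamma}^*(N)$ and move via Cramer's rule and block-extension to the minor of $\tilde{\mathcal{F}}(U)$, whereas the paper starts from $\alpha_{ij}$, passes to a minor of $N$ via the Gauss-decomposition formula, cofactor-expands, and recognizes the matrix product $\tilde{\theta}^*(N)^{-1}\tilde{\gamma}^*(N)$ entrywise. The extra block-shape preamble is a useful addition (the paper only states the shape of $G_0$ without justification), and the positively oriented case is carried out correctly; for the negatively oriented case you correctly flag the $\omega_0$-twist and that $\tilde{\gamma}^*(N)$ then involves $N^{-1}$ rather than $N$, but this part is only sketched — the paper's proof works through the cofactor expansion of $\det N^{-1}$ via~\eqref{eq:gammann} in full, and that computation is not a trivial relabel of the positive case, so it should be done explicitly rather than deferred.
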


\begin{proof}
Due to formula~\eqref{eq:geng0}, the only nontrivial part of $G_0(U)$ is given by the submatrix $[G_0(U)]_{\tilde{\Delta}_2}^{\tilde{\Delta}_1}$. If $\gamma|_{\Delta \cap \Gamma_1} > 0$, we see that
\[\begin{split}
\alpha_{ij}(U) = &(-1)^{i+1}\det \left[\tilde{\mathcal{F}}(U)_-\right]_{[q+1,n]}^{\{q_-(\Gamma_2)+j\}\cup ([q+1,n]\setminus\{q+i\})} =\\= &\sum_{k \in [1,i]} (-1)^{i+k} \left[\tilde{\mathcal{F}}(U)_-\right]_{q+k,q_-(\Gamma_2)+j} \cdot \det \left[\tilde{\mathcal{F}}(U)_-\right]_{[q+1,n]\setminus \{q+k\}}^{[q+1,n]\setminus \{q+i\}} = \\ =& \sum_{k \in [1,i]}\left[\tilde{\gamma}^*\left(\tilde{\mathcal{F}}(U)_-\right)\right]_{p+k,p_-(\Gamma_1)+j} \left[\tilde{\theta}^*\left(\tilde{\mathcal{F}}(U)_-\right)^{-1}\right]_{p+i,p+k} = \\ =&G_0(U)_{p+i,p_-(\Gamma_1)+j}.
\end{split}
\] 
In the case of $\gamma|_{\Delta \cap \Gamma_1} < 0$, let us recall that $\bar{\gamma}$ is defined in such a way that $\bar{\gamma}(p) = q+1$ (see Section~\ref{s:bd_map}). Set $\tau:=q_+(\Gamma_2)-q+j$. We see that
\[\begin{split}
\alpha_{ij}(U) = &(-1)^{i+1} \det \left[\tilde{\mathcal{F}}(U)_-\right]_{[q+1,n]}^{\{q-i+1\}\cup \left([q+1,n]\setminus \{q_+(\Gamma_2)-j+1\}\right)} =\\ = &(-1)^{\tau+1} \det \left[\sinv{\tilde{\mathcal{F}}(U)_-}\right]_{[q-i+2,q]\cup\{q_+(\Gamma_2)-j+1\}}^{[q-i+1,q]} = \\ = &(-1)^{\tau+1} \sum_{k=1}^{j} (-1)^{k-1} \det \left[\sinv{\tilde{\mathcal{F}}(U)_-}\right]_{[q-i+1,q]\setminus\{q-i+1\}}^{[q-i+1,q]\setminus\{q-k+1\}} \left[\sinv{\tilde{\mathcal{F}}(U)_-}\right]_{q_+(\Gamma_2)-j+1,q-k+1} = \\ = &\sum_{k=1}^i \left[\tilde{\theta}^*\left(\sinv{\tilde{\mathcal{F}}(U)_-}\right)\right]_{p+i,p+k} \left[\tilde{\gamma}^*\left( \tilde{\mathcal{F}}(U)_-\right)  \right]_{p+k,p_-(\Gamma_1)+j} = G_0(U)_{p+i,p_-(\Gamma_1)+j}.
\end{split}
\]
where we also used formula~\eqref{eq:gammann}. The result follows.
\end{proof}

We will use the following lemma in order to prove that $\mathcal{G}$ is a quasi-isomorphism induced by toric actions (in the sense of Proposition~\ref{p:quasi_toric}), as well as to prove Part~\ref{i:gln_tor} of Theorem~\ref{thm:maingln} in Section~\ref{s:toric}.
\begin{lemma}\label{l:toric_glob_scalar}
    The action of $\mathbb{C}^*$ upon $\GL_n$ given by $(a,U) \mapsto a\cdot U$, $a \in \mathbb{C}^*$, $U \in \GL_n$, induces a global toric action of rank $1$ upon $\gc_h^{\dagger}(\bg,\GL_n)$.
\end{lemma}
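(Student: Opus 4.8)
The plan is to apply Proposition~\ref{p:toric}, which says that a local toric action with weight matrix $W$ is a global toric action provided $\tilde{B}W = 0$ and the hatted strings $\hat{p}_{ir}$ are invariant with respect to the action. So first I would describe the action explicitly on the initial extended cluster $\Psi_0$ of $\gc_h^{\dagger}(\bg,\GL_n)$ given by~\eqref{eq:iniext}. Under $U \mapsto aU$, each $\varphi$-function $\varphi_{kl}$ is the determinant of the $n\times n$ matrix $\Phi_{kl}(U)$ from~\eqref{eq:big_phi_def_h}, whose columns are built from $U^0$ (one column block of size $n-k+1$, unaffected), $U$ (one column block of size $n-l+1$, scaling by $a$), and $U^2,\dots,U^{n-k-l+1}$ (single columns, scaling by $a^2,\dots,a^{n-k-l+1}$); hence $\varphi_{kl}(aU) = a^{w(\varphi_{kl})}\varphi_{kl}(U)$ with $w(\varphi_{kl}) = (n-l+1) + (2 + 3 + \cdots + (n-k-l+1))$, a well-defined integer. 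Similarly $c_i(aU) = a^i c_i(U)$ directly from~\eqref{eq:c_def}, and for the $h$-functions, since $\mathcal{F}(aU) = a\mathcal{F}(U)$ (the relation~\eqref{eq:frel} is preserved because $\tilde\gamma^*$ fixes the scalar part: $[\mathcal{F}(aU)]_- = [\mathcal{F}(U)]_-$ is unipotent and $\tilde\gamma^*$ applied to it is unchanged, so $\mathcal{F}(aU) = \tilde\gamma^*([\mathcal{F}(U)]_-)\cdot aU = a\mathcal{F}(U)$), each determinant of a $k\times k$ flag minor of $\mathcal{F}(U)$ in~\eqref{eq:h_fun} scales by $a^k$, giving $h_{ij}(aU) = a^{w(h_{ij})}h_{ij}(U)$ with $w(h_{ij})$ a sum of the sizes of the minor blocks appearing, again an integer. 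Thus the action is of the form~\eqref{eq:tor_loc} with a single column weight vector $\omega := W$.

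Next I would check the two hypotheses of Proposition~\ref{p:toric}. For the strings: the only nontrivial string in $\gc_h^{\dagger}(\bg)$ is the one attached to $\varphi_{11}$, with generalized exchange relation~\eqref{eq:p11mut}, so the hatted monomials $\hat{p}_{ir}$ (equivalently, the $c_r$ appearing in~\eqref{eq:p11mut} together with the stable $\tau$-monomial corrections) are, up to roots, monomials in the frozen $c$-variables. But invariance of $\hat p_{1r}$ with respect to the scalar action would require $\hat p_{1r}$ to have weight $0$, which it does not, since $c_r$ has weight $r$. Here I must be careful: what Proposition~\ref{p:toric} really requires is that the \emph{full} mutation relation be equivariant, i.e.\ that the weight of $\hat p_{kr} v_{k;>}^r v_{k;<}^{d_k-r}$ combine correctly with the weights of $u_{k;>}^r u_{k;<}^{d_k-r}$; the condition $\tilde B \omega = 0$ handles the monomial part, and the remaining condition is that the coefficient monomials $\hat p_{kr}$ have weight $0$ after the appropriate correction. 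In our case, the cleanest route is: because $c_1,\dots,c_{n-1}$ are isolated (they are Casimirs and, as one checks from the quiver in Appendix~\ref{s:ainiquivh}, have $b_{ji}=0$ for mutable $j$), the relation~\eqref{eq:p11mut} is already written with all coefficients among isolated frozen variables, and a local toric action that is trivial on the isolated variables is automatically compatible with the string; so I would instead \emph{arrange} the weight vector $\omega$ so that $\omega_{c_i} = 0$ for all $i$. Concretely, I replace the naive scalar weight by its composition with the scaling on $\SL_n$: the action $(a,U)\mapsto aU$ on $\GL_n$ together with $\det(aU) = a^n\det U$ shows $c_n = h_{11}$ has weight $n$, and one can subtract a suitable multiple of the weight vector coming from the already-established action of $\mathcal{H}_{\bg}$ to kill the $c_i$ weights — but a simpler observation is that $c_i(aU) = a^i c_i(U)$ means the $c_i$ are \emph{not} invariant, so the relevant statement must be that after passing to the $\hat p$ normalization the string monomials are invariant, which is exactly condition~\ref{pr:c3}-type bookkeeping. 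I would verify $\tilde B\omega = 0$ by noting that each $y$-variable $y_i = \prod_j x_j^{b_{ij}}$ must have weight $0$ under the action, equivalently $\sum_j b_{ij}\omega_j = 0$; this is the statement that $y$-variables are weight-homogeneous of degree $0$, which follows because each $y_i$ is a ratio of products of cluster/frozen variables arranged so that numerator and denominator have equal total weight — this is visible from the explicit $y$-variable formulas in Appendix~\ref{s:ainiquivh} (e.g.\ $y(h_{ij}) = h_{i+1,j}h_{i-1,j-1}h_{i,j+1}/(h_{i-1,j}h_{i,j-1}h_{i+1,j+1})$, whose weight is $w(h_{i+1,j}) + w(h_{i-1,j-1}) + w(h_{i,j+1}) - w(h_{i-1,j}) - w(h_{i,j-1}) - w(h_{i+1,j+1})$, and since $w(h_{ij})$ depends affinely on the block sizes one checks this vanishes), and analogously for the $y$-variables of the $\varphi$'s and the single nontrivial $\varphi_{11}$ string.

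The rank condition is immediate: the action is nontrivial (e.g.\ $w(c_1) = 1 \neq 0$, or $w(h_{11}) = n \neq 0$ on $\GL_n$), so $W = \omega$ has rank $1$. Finally, for equivariance with respect to mutation — which is the content of \emph{global} — I would invoke Proposition~\ref{p:toric} directly once $\tilde B\omega = 0$ and the string-invariance are checked, since that proposition packages the inductive mutation-equivariance argument. The main obstacle I anticipate is the string condition: one must confirm that the single nontrivial string attached to $\varphi_{11}$, i.e.\ the $\hat p_{1r}$ built from $c_r$ in~\eqref{eq:p11mut} together with stable $\tau$-monomial factors, is invariant under the scalar action. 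This is not obvious because the $c_r$ themselves scale by $a^r$; the resolution is that in the $\hat p$ normalization the factors $v_{1;>}^{[r]}, v_{1;<}^{[r]}$ contribute compensating weights, so that $\hat p_{1r} v_{1;>}^r v_{1;<}^{d_1 - r}$ scales consistently with $u_{1;>}^r u_{1;<}^{d_1-r}$, i.e.\ the \emph{whole} right-hand side of~\eqref{eq:p11mut} is weight-homogeneous of the same degree as $\varphi_{11}\varphi_{11}'$ — and this is precisely what $\tilde B\omega = 0$ guarantees for the $y$-variable $y(\varphi_{11})$. So the proof reduces to two weight computations: (i) each $y$-variable in $\Psi_0$ has weight $0$, and (ii) the coefficients in~\eqref{eq:p11mut} are weight-homogeneous — both of which are finite determinant-degree bookkeeping exercises using the scaling laws $\varphi_{kl}(aU) = a^{w(\varphi_{kl})}\varphi_{kl}(U)$, $c_i(aU) = a^i c_i(U)$, $h_{ij}(aU) = a^{w(h_{ij})}h_{ij}(U)$ established above and the explicit neighborhoods in Appendix~\ref{s:ainiquivh}.
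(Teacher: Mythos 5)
Your approach is the same as the paper's: apply Proposition~\ref{p:toric}, take the weight vector to be the polynomial degrees of the variables under $U\mapsto aU$, and verify $\tilde B\omega = 0$ by checking that the $y$-variables attached to each neighborhood in Appendix~\ref{s:ainiquivh} have degree zero. However, your treatment of the Casimirs $c_1,\dots,c_{n-1}$ is a genuine weak point, and it is precisely the one subtlety the paper resolves cleanly. You correctly identify early on that the right move is to \emph{set} $\omega_{c_i}=0$ — since the $c_i$ are isolated, this change is invisible to $\tilde B\omega$, and it makes the hatted string monomials $\hat p_{1r}$ (which are monomials in the $c_i$ alone) manifestly invariant, so both hypotheses of Proposition~\ref{p:toric} hold. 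But then you immediately backtrack and float two other explanations (subtracting a multiple of the $\mathcal{H}_{\bg}$ weight vector; invoking ``compensating weights'' from the $\hat p$-normalization factors $v_{1;>}^{[r]}$, $v_{1;<}^{[r]}$), neither of which works: the $\mathcal{H}_{\bg}$ action fixes the $c_i$, so subtracting its weights cannot kill their scalar weights, and the $\hat p_{1r}$ are still non-invariant under the natural scalar weights. Your final paragraph conflates the two separate hypotheses of Proposition~\ref{p:toric} by asserting that $\tilde B\omega = 0$ alone ``guarantees'' the string condition; it does not — they are independent requirements, and the point of zeroing out $\omega_{c_i}$ is exactly to satisfy the second one without disturbing the first. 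You need to commit to that choice, delete the vacillation, and then your witness for rank $1$ must also change: you cite $w(c_1)=1\neq 0$, but with $\omega_{c_i}=0$ that component vanishes; use instead any $\varphi$ or $h$ variable of positive degree (e.g.\ $\omega_{h_{11}}=n$).

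Two smaller errors worth flagging. The column block of $U$ in $\Phi_{kl}(U)$ is $U^{[n-l+1,n]}$, which is $l$ columns (and $(U^0)^{[n-k+1,n]}$ is $k$ columns), not $n-l+1$ and $n-k+1$ as you wrote; the degree formula you should recover is $\deg\varphi_{kl} = l-1 + \tfrac{1}{2}(n-k-l+1)(n-k-l+2)$, matching~\eqref{eq:degh}'s companion. And the assertion that ``$w(h_{ij})$ depends affinely on the block sizes, one checks this vanishes'' is too compressed; the paper makes this explicit via the closed-form degree~\eqref{eq:degh2}, established in Section~\ref{s:p_explicit}, and then works a representative neighborhood. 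Your observation $\mathcal{F}(aU)=a\mathcal{F}(U)$ is correct and a nice shortcut to the fact that each $h_{ij}$ is a homogeneous polynomial of integer degree.
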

\begin{proof}
    To prove that the local toric action is global, we appeal to Proposition~\ref{p:toric}. Notice that all the variables in the initial extended cluster $\Psi_0$ of $\gc_h^{\dagger}(\bg)$ are homogeneous polynomials. The weight vector $\omega$ of the induced local toric action upon $\Psi_0$ is given by the polynomial degrees of the variables; however, we assign the value $0$ to the components of $\omega$ that correspond to the $c$-variables (this ensures that the coefficients $\hat{p}_{ir}$ from the proposition are invariant with respect to the toric action). Showing that $B\omega = 0$ for the initial extended exchange matrix $B$ amounts to showing that the $y$-variables in $\Psi_0$ have polynomial degree $0$. From~\eqref{eq:big_phi_def_h}, we see that
    \begin{equation*}
        \deg \varphi_{kl} = l-1+\frac{1}{2}(n-k-l+1)(n-k-l+2).
    \end{equation*}
    In Section~\ref{s:p_explicit}, we will also show that for any $\gamma$-string $\{\alpha_i\}_{i=0}^m$ and each $j \in [\alpha_i+1,n]$, the polynomial degree of $h_{\alpha_i+1,j}$ is given by
    \begin{equation}\label{eq:degh2}
        \deg h_{\alpha_i+1,j} = n-j+1 + \sum_{\ell=1}^{m-i} (n-\alpha_{i+\ell}).
    \end{equation}
    Now, to show that the $y$-variables in $\Psi_0$ have polynomial degree $0$, we analyze each of the neighborhoods from Section~\ref{s:quiv_h_triv} and Section~\ref{s:h_quiv_expl}. For instance, consider Figure~\ref{f:inbd_hjj_3}, which is the neighborhood of $h_{jj}$ for $2 \leq j \leq n-1$, $j-1,j \in \Gamma_2$, $i:=\gamma^*(j-1)+1$ and $\gamma|_{\Delta(i)\cap \Gamma_1} < 0$. We see that
    \begin{equation*}\begin{split}
        \deg y(h_{jj}) = &(\deg h_{j,j+1}-\deg h_{in}) - (\deg h_{j-1,j} - \deg h_{j-1,j-1}) - \\ &- (\deg h_{j+1,j+1} - \deg h_{i-1,n}) = 2 - 1 - 1 = 0.
        \end{split}
    \end{equation*}
    Thus the action $(a,U) \mapsto a \cdot U$, $a \in \mathbb{C}^*$, $U \in \GL_n$ induces a global toric action of rank $1$ upon $\gc_h^{\dagger}(\bg,\GL_n)$.
\end{proof}

\begin{proposition}\label{p:quasi_iso_simple}
Let $\psi$ be any variable from the initial extended cluster of $\gc^\dagger_h(\bg)$. Then the following identities hold:
\begin{enumerate}[1)]
\item If $\psi$ is a $\varphi$- or $c$-function, then $\mathcal{G}^*(\psi(U)) = \psi(U)$;\label{i:bsimplvc}
\item\label{i:bsimplh} If $\psi = h_{ij}$ for some $h$-function $h_{ij}(U)$, then 
\begin{equation*}
\mathcal{G}^*(h_{ij}(U)) = \begin{cases}
\tilde{h}_{ij}(U)\tilde{h}_{q+1,q+1}(U) \ &\text{if} \ p = \gamma^{\nu}(i-1) \ \text{for some}\ \nu > 0;\\
\tilde{h}_{ij}(U) \ &\text{otherwise.}
\end{cases}
\end{equation*}
\end{enumerate}
Moreover, $\mathcal{G}$ is a birational quasi-isomorphism.
\end{proposition}
\begin{proof}
Due to the invariance properties of $\varphi$- and $c$-functions (see~\eqref{eq:phi_invar} and~\eqref{eq:c_def}) and the fact that $G(U)$ is a unipotent lower-triangular matrix, we see that Part~\ref{i:bsimplvc} holds. For the second part, we see that for any $j \in [1,n]$ and $I \subseteq [1,n]$, $|I| = n-j+1$, 
\[
\det[\mathcal{F}(\mathcal{G}(U))]_{I}^{[j,n]} = \det[\tilde{\mathcal{F}}(U)]_{I}^{[j,n]}.
\]
We also notice that if $S^{\gamma}(\alpha_0)$ is a $\gamma$-string such that $q \notin S^{\gamma}(\alpha_0)$, then $S^{\gamma}(\alpha_0) = S^{\theta}(\alpha_0)$; otherwise, if $q \in S^{\gamma}(\alpha_0)$, then $S^{\gamma}(\alpha_0) = S^{\theta}(\alpha_0)\cup S^{\theta}(q)$. Now Part~\ref{i:bsimplh} follows from an application of $\mathcal{G}^*$ to the definition of the $h$-functions given in~\eqref{eq:h_fun}. 

The last statement can be proved either via Proposition~\ref{p:q_y_var} or Proposition~\ref{p:quasi_toric}. Let us use the latter proposition, for it also shows that, as a quasi-isomorphism, $\mathcal{G}^*$  is generated by toric actions when $|\Gamma_1\setminus\tilde{\Gamma}_1| = 1$. First of all, by Lemma~\ref{l:toric}, there are global toric actions of rank $1$ acting upon $\gc_h^{\dagger}(\bg,\GL_n)$ and $\gc_h^{\dagger}(\tilde{\bg},\GL_n)$, whose weights are given by the polynomial degrees of the variables. From formula~\eqref{eq:degh2}, we also see that
\begin{equation*}
    \deg h_{ij} - \deg \tilde{h}_{ij} = \begin{cases}
        \deg \tilde{h}_{q+1,q+1} \ &\text{if} \ p = \gamma^{\nu}(i-1) \ \text{for some} \ \nu > 0;\\
        0 \ &\text{otherwise.}
    \end{cases}
\end{equation*}
It follows from Proposition~\ref{p:quasi_toric} that $\mathcal{G}^*$ is a quasi-isomorphism. Since $\mathcal{G}^*$ induces an isomorphism between $\mathbb{C}[\GL_n][h_{q+1,q+1}^{\pm 1}]$ and $\mathbb{C}[\GL_n][\tilde{h}_{q+1,q+1}^{\pm 1}]$, we conclude that it is a birational quasi-isomorphism.
\end{proof}
\section{\texorpdfstring{Proofs of Theorems~\ref{thm:maingln} and~\ref{thm:mainsln} for $\gc_h^{\dagger}(\bg)$}{Proofs of Theorems~\ref{thm:maingln} and~\ref{thm:mainsln} for for the h-convention}}\label{s:complet}
In this section, we prove Theorem~\ref{thm:maingln} and Theorem~\ref{thm:mainsln} for $\gc_h^{\dagger}(\bg)$. Each subsection corresponds to one part of a theorem (note that Part~\ref{i:gln_rk} follows from Part~\ref{i:gln_comp} due to Part~\ref{pr:c2} of Proposition~\ref{p:compb}).  The main idea is to use an inductive argument on the size $|\Gamma_1|$ along with the birational quasi-isomorphisms constructed in Section~\ref{s:birat}. The statements for $|\Gamma_1| = 0$ follow from the earlier work~\cite{double}, the statements for $|\Gamma_1|=1$ require a manual verification only for the marked variables, and the statements for $|\Gamma_1|>1$ follow from the existence of complementary birational quasi-isomorphisms.

\paragraph{Conventions.} Throughout the section, when the indices of the $h$-variables are out of range, the following conventions are used:
\begin{align}\label{eq:hconv}
&h_{1j}(U) := \varphi_{j-1,n-j+1}(U) = (-1)^{\varepsilon_{1j}}\det U^{[j,n]}_{[1,n-j+1]}, \ &j \in [2,n];\\
&h_{i,n+1}(U) := \begin{cases}
    h_{jj}(U) \ &\text{if} \ i-1 \in \Gamma_2\\
    1 \ &\text{otherwise}
\end{cases} \ &i \in [1,n]
\notag
\end{align}
where $j := \gamma^*(i-1)+1$ if $i-1 \in \Gamma_2$.

\subsection{Completeness}\label{s:completeness}
In this subsection, we prove that $\gc_h^{\dagger}(\bg,\GL_n)$ and $\gc_h^{\dagger}(\bg,\SL_n)$ are complete. Along the way, we prove that all the variables in the initial extended cluster are irreducible (as elements of $\mathbb{C}[\GL_n]$) and (for the initial cluster variables) coprime with their mutations. The final result of the subsection is Proposition~\ref{p:compl}, which corresponds to Part~\ref{i:gln_ni} of Theorem~\ref{thm:maingln} and Part~\ref{i:sln_ni} of Theorem~\ref{thm:mainsln}.

\subsubsection{The case of $|\Gamma_1|=|\Gamma_2| = 1$ (shifted cluster).}
Let $\bg = (\Gamma_1,\Gamma_2,\gamma)$ be a BD triple of type $A_{n-1}$ such that $\Gamma_1:=\{p\}$ and $\Gamma_2:=\{q\}$ for some simple roots $p$ and $q$.  We decorate the $h$-variables in $\gc_h^{\dagger}(\bg_{\std})$ with a tilde, to distinguish them from the $h$-variables in $\gc_h^{\dagger}(\bg)$. Recall that we choose a marker (see Definition~\ref{d:marker}) between $\gc_{h}^{\dagger}(\bg_{\std})$ and $\gc_h^{\dagger}(\bg)$ in such a way that each $\varphi$- and $c$- variable are related to themselves, and each $h_{ij}$ is related to $\tilde{h}_{ij}$. The only marked variable is given by $\psi_{\square}:=h_{q+1,q+1}$. The map $\mathcal{F}^c$ assumes the form
\begin{equation*}
\mathcal{F}^c(U) = \tilde{\gamma}^*(U_-) U \tilde{\gamma}^*(U_-)^{-1}, \ \ \tilde{\gamma}^*(U_-) = I + \alpha(U)e_{p+1,p}, \ \ \alpha(U) = \frac{1}{\psi_{\square}(U)}\det U^{\{q\}\cup[q+2,n]}_{[q+1,n]}.
\end{equation*}

\paragraph{Shifting the initial cluster.} We will prove $\bar{\mathcal{A}}_{\mathbb{C}}(\gc_h^{\dagger}(\bg,\GL_n)) = \mathbb{C}[\GL_n]$ starting not in the initial extended cluster, but in a more convenient one. Let us set $\Psi$ to be the extended cluster in $\gc_h^{\dagger}(\bg)$ obtained by mutating the initial extended cluster $\Psi_0$ along the following mutation sequence:\footnote{If $p = n-1$, the mutation sequence is empty, so  we set $\Psi$ to be the initial extended cluster.}
\begin{equation*}
{h}_{p+1,n} \rightarrow {h}_{p+1,n-1} \rightarrow \cdots \rightarrow {h}_{p+1,p+2}.
\end{equation*}
We set $\tilde{\Psi}$ to be the extended cluster in $\gc_h^{\dagger}(\bg_{\std})$ related to $\Psi$ (see Definition~\ref{d:relclust}) and $\Psi^{\prime}$ to be the extended cluster in $\gc_h^{\dagger}(\bg)$ obtained by mutating $\Psi$ in the direction of $\psi_{\square}$. 

\begin{lemma}\label{l:relpsi}
    The extended cluster $\Psi$ contains the variables
    \begin{equation}\label{eq:hpps}
    {h}^{\prime}_{p+1,n-s}(U) = -(-1)^{s(n-(p+1))} \det U_{\{p\}\cup[p+2,s+p+2]}^{[n-s-1,n]}, \ \ s \in [0,n-p-2].
    \end{equation}
    Moreover, for any $\psi \in \Psi\setminus\{h_{p+1,p+1}\}$ and the related variable $\tilde{\psi} \in \tilde{\Psi}\setminus\{\tilde{h}_{p+1,p+1}\}$, \mbox{$\tilde{\psi}(U) = \psi(U)$}.
\end{lemma}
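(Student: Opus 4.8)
The claim is that the shifted extended cluster $\Psi$ in $\gc_h^{\dagger}(\bg)$ consists of explicitly computable minors of $U$ (with $h'_{p+1,n-s}$ given by~\eqref{eq:hpps}), and that every variable in $\Psi$ except the marked one $h_{p+1,p+1}$ agrees with the related variable in $\tilde{\Psi}\subseteq\gc_h^{\dagger}(\bg_{\std})$. My plan is to prove this by tracking the mutation sequence~\eqref{eq:mut_seq_psico} starting from the initial extended cluster $\Psi_0$, using the already-established identification of $h$-functions with flag minors of $\mathcal{F}(U)$. Since $|\Gamma_1|=1$ here, the map $\mathcal{F}$ and the sequence $\{\mathcal{F}_k\}$ stabilize after a single step (Proposition~\ref{p:fq0i}), so $\mathcal{F}(U) = \tilde{\gamma}^*(U_-)\,U$ with $\tilde{\gamma}^*(U_-) = I + \alpha(U)e_{p+1,p}$ as in~\eqref{eq:fcbase}; this makes the flag minors of $\mathcal{F}(U)$ extremely tractable — only rows $p$ and $p+1$ interact, via a single elementary row operation.

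First I would record, from~\eqref{eq:h_fun} and~\eqref{eq:f_flags}, the explicit form of the $h$-variables appearing in the initial cluster $\Psi_0$ that are involved in the sequence~\eqref{eq:mut_seq_psico}, namely $h_{p+1,j}$ for $j\in[p+2,n]$, together with their neighbors $h_{p,j}$, $h_{p+2,j}$, $h_{p+1,j\pm1}$, etc., reading the quiver from Appendix~\ref{s:ainiquivh}. Because $p\in\Gamma_1$, formula~\eqref{eq:f_flags} gives $h_{p+1,j}(U) = h_{p+1,p+1}(U)\cdot(-1)^{\varepsilon_{p+1,j}}\det[\mathcal{F}(U)]^{[j,n]}_{[p+1,n-j+p+1]}$, and the flag minor of $\mathcal{F}(U) = (I+\alpha e_{p+1,p})U$ on rows $[p+1,n]$ equals the corresponding minor of $U$ on those same rows (the row operation touches only row $p+1$, adding a multiple of row $p$ which lies outside the index set — wait, row $p+1$ \emph{is} in $[p+1,n]$; adding $\alpha\cdot(\text{row }p)$ to row $p+1$ changes the minor only if row $p$ is \emph{not} already in the set, which it is not). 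So these minors are actually $\det U^{[j,n]}_{[p+1,n-j+p+1]}$ plus a correction term involving row $p$. I would then carry out the mutation exchange relation~\eqref{eq:ordexchrel} at each step of~\eqref{eq:mut_seq_psico} in order, computing $h'_{p+1,n}$, then $h'_{p+1,n-1}$, and so on. Each exchange relation is of the form $h_{p+1,n-s}\cdot h'_{p+1,n-s} = (\text{monomial}_1) + (\text{monomial}_2)$ in already-known variables, and the key identity needed is a short-Plücker / Desnanot–Jacobi–type determinantal relation (from Appendix~\ref{s:aidentminors}) that lets one solve for $h'_{p+1,n-s}$ as the single minor $-(-1)^{s(n-p-1)}\det U^{[n-s-1,n]}_{\{p\}\cup[p+2,s+p+2]}$. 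Inductively, the factor $\{p\}\cup[p+2,\ldots]$ in the row/column index set is exactly what is produced by the row operation $e_{p+1,p}$ being "undone" in the mutated variable.

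For the second assertion, once the explicit formula~\eqref{eq:hpps} is in hand, I would observe that each $h'_{p+1,n-s}$ is a pure minor of $U$ itself, with no dependence on $\alpha(U)$ or on $h_{p+1,p+1}(U)$. The same computation run with $\bg = \bg_{\std}$ (so $\alpha = 0$, $\mathcal{F} = \mathrm{id}$) gives, for the related cluster $\tilde{\Psi}$, exactly $\tilde h'_{p+1,n-s}(U) = -(-1)^{s(n-p-1)}\det U^{[n-s-1,n]}_{\{p\}\cup[p+2,s+p+2]}$ — wait, in the trivial case the mutation sequence produces a minor with a different index set, since there is no row operation to undo; rather, the point is that the \emph{defining} formula~\eqref{eq:h_fun} for $\tilde h_{ij}$ in the trivial case is $(-1)^{\varepsilon_{ij}}\det U^{[j,n]}_{[i,n-j+i]}$, and the mutation sequence~\eqref{eq:mut_seq_psico} applied in $\gc_h^\dagger(\bg_{\std})$ yields by the \emph{same} determinantal identities the minors with index set $\{p\}\cup[p+2,\ldots]$. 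For the remaining variables of $\Psi$ (all $\varphi$'s, all $c$'s, and all $h$-variables not of the form $h_{p+1,\ast}$), these are fixed by the mutation sequence~\eqref{eq:mut_seq_psico} (which only touches the $h_{p+1,\ast}$ mutable vertices and, via quiver mutation, possibly a few arrows, but not the variables themselves unless they are mutated), and the quasi-isomorphism $\mathcal{Q} = (\mathcal{F}^c)^{-1}$ from Proposition~\ref{p:q_quasih} identifies $h_{ij}$ with $\tilde h_{ij}$ up to the marked variable $h_{q+1,q+1} = h_{p+1,p+1}$ — but actually Proposition~\ref{p:q_quasih} shows $\mathcal{Q}^*(h_{ij})$ involves a \emph{product} of $\tilde h$'s, so the correct statement is that $\psi(U) = \tilde\psi(U)$ as \emph{functions on $\GL_n$} once one mutates appropriately; I would verify this directly from~\eqref{eq:hpps} and the analogous trivial-case formula rather than invoking the quasi-isomorphism.

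\textbf{Main obstacle.} The hard part will be the bookkeeping in the inductive step of the mutation sequence~\eqref{eq:mut_seq_psico}: correctly identifying, at each stage, which two monomials appear on the right-hand side of the exchange relation (this requires knowing the quiver neighborhood of $h'_{p+1,n-s}$ in the \emph{partially mutated} seed, not the initial one, so the quiver changes must be tracked), and then recognizing the relevant three-term determinantal identity that collapses the right-hand side into a single minor with the shifted index set $\{p\}\cup[p+2,\ldots]$. The signs — the $\varepsilon_{ij}$ prefactors in~\eqref{eq:h_sign}, the $(-1)^{s(n-p-1)}$ in~\eqref{eq:hpps}, and the signs coming from reordering columns in the determinantal identities — will need careful attention, and verifying that they all conspire correctly is where most of the genuine work lies. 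A secondary subtlety is the edge case $p = n-1$, where the mutation sequence is empty and $\Psi = \Psi_0$; one must check~\eqref{eq:hpps} reduces correctly (it is vacuous, as $s$ ranges over an empty set) and that the statement still holds trivially.
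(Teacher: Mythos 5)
Your plan diverges from the paper in a fundamental way, and there are a couple of concrete problems with it as written.

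\textbf{Approach comparison.} The paper does not re-derive the explicit minors by tracking the mutation sequence through the quiver. Instead it (1) quotes a known result (\cite[Lemma~5.3]{multdouble}) which already identifies the mutated variables $\tilde h'_{p+1,n-s}$ in the \emph{trivial-BD} cluster $\tilde\Psi$ with the right-hand side of~\eqref{eq:hpps}; (2) observes that the minor $\det U^{[n-s-1,n]}_{\{p\}\cup[p+2,s+p+2]}$ contains neither row $p+1$ nor column $p$, so conjugation by $I+\alpha e_{p+1,p}$ leaves it unchanged, i.e.\ $(\mathcal{F}^c)^*$ fixes it; and (3) appeals to the quasi-isomorphism property of $\mathcal{Q}$ (Proposition~\ref{p:q_quasih}) to transfer the formula from $\tilde\Psi$ to $\Psi$. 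Your plan is to re-derive both~\eqref{eq:hpps} and the trivial-case analogue from scratch by chasing Desnanot--Jacobi identities through the length-$(n-p-1)$ mutation sequence, tracking the changing quiver at each step. That would work in principle, but you acknowledge it is the "genuine work" and do not carry out a single step of it — so as it stands the proposal is a plan, not a proof, and the difficulty it defers is exactly what the paper's cite-and-transfer argument is designed to avoid.

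\textbf{Concrete errors.} You write $h_{p+1,j}(U) = h_{p+1,p+1}(U)\cdot(-1)^{\varepsilon_{p+1,j}}\det[\mathcal{F}(U)]^{[j,n]}_{[p+1,n-j+p+1]}$, but formula~\eqref{eq:f_flags} has $\mu = \gamma(p)+1 = q+1$, not $p+1$; the correct factor is $h_{q+1,q+1}$, not $h_{p+1,p+1}$. Later you write "the marked variable $h_{q+1,q+1}=h_{p+1,p+1}$", which is false for $p\neq q$; $h_{q+1,q+1}$ is the (cluster, marked) variable, while $h_{p+1,p+1}$ is a \emph{frozen} variable with a two-term expansion~\eqref{eq:hp1p1}, and they are distinct functions. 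Since the whole point of the second assertion of the lemma is that $\Psi$ and $\tilde\Psi$ differ \emph{only} in the frozen variable $h_{p+1,p+1}$ (while $h_{q+1,q+1}=\tilde h_{q+1,q+1}$ as functions), conflating the two is not a cosmetic slip — it obscures exactly what needs to be shown.

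\textbf{A missing justification.} In your argument for the second assertion you observe "each $h'_{p+1,n-s}$ is a pure minor of $U$ itself, with no dependence on $\alpha(U)$." But you never establish this; it is the conclusion~\eqref{eq:hpps}, not something that can be asserted before the mutation computation is done. The initial $h_{p+1,j}$ are two-term polynomials that genuinely involve $\alpha(U)\cdot h_{q+1,q+1}(U)=\det U^{\{q\}\cup[q+2,n]}_{[q+1,n]}$, so the fact that the mutated variables collapse to pure minors with the shifted index set $\{p\}\cup[p+2,\ldots]$ is a nontrivial cancellation — precisely the content of~\eqref{eq:hpps}, and the reason the paper imports it from~\cite{multdouble} rather than re-deriving it. Your plan is circular at this point: you want to use the pure-minor form to conclude the cancellation occurs, but the cancellation is what produces the pure-minor form.

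\textbf{Edge case.} Your observation about $p=n-1$ is correct and matches the footnote in the paper.

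In summary: the direct-computation route is viable but incomplete as proposed, carries two genuine sign/index confusions between $p$ and $q$, and would need the trivial-case formula anyway — which is already available in~\cite{multdouble}. The paper's argument, once one notices that $(\mathcal{F}^c)^*$ fixes the relevant minor, is a few lines.
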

\begin{proof}
It is well-known (see, for instance, \cite[Lemma 5.3]{multdouble}, with adjustments of signs) that the related extended cluster $\tilde{\Psi}$ contains the variables
    \begin{equation}\label{eq:thpps}
    {h}^{\prime}_{p+1,n-s}(U) = -(-1)^{s(n-(p+1))} \det U_{\{p\}\cup[p+2,s+p+2]}^{[n-s-1,n]}, \ \ s \in [0,n-p-2].
    \end{equation}
Applying $(\mathcal{F}^c)^*$ to~\eqref{eq:thpps}, we see that
\begin{equation*}
(\mathcal{F}^c)^*[\tilde{h}^{\prime}_{p+1,n-s}(U)] = -(-1)^{s(n-(p+1))} \det U_{\{p\}\cup[p+2,s+p+2]}^{[n-s-1,n]}.
\end{equation*}
Since $\mathcal{Q}$ is a quasi-isomorphism, we conclude that $\Psi$ contains the variables~\eqref{eq:hpps} as well. In particular, $\Psi$ and $\tilde{\Psi}$ differ only in the variable $h_{p+1,p+1}$.
\end{proof}

For the next lemma, let us note that the frozen variable $h_{p+1,p+1}$ in $\gc_h^{\dagger}(\bg)$ is given by 
\begin{equation*}
h_{p+1,p+1}(U) = \det U^{[p+1,n]}_{[p+1,n]} \det U^{[q+1,n]}_{[q+1,n]} + \det U^{[p+1,n]}_{\{p\}\cup [p+2,n]} \det U^{\{q\}\cup[q+2,n]}_{[q+1,n]}.
\end{equation*} 

\begin{lemma}
The mutation of $\Psi$ at the marked variable $\psi_{\square}$ produces the variable
\begin{equation}\label{eq:psisqm}
\psi_{\square}^{\prime}(U) = (-1)^{\varepsilon_{\square}} \left[\det U^{[p+1,n]}_{[p+1,n]} \det U_{I_{\square}}^{[q+1,n]} + \det U^{[p+1,n]}_{\{p\}\cup [p+2,n]} \det U^{\{q\}\cup[q+2,n]}_{I_{\square}}\right]
\end{equation}
where 
\begin{equation}\label{eq:intsq}
\varepsilon_{\square} := \begin{cases} n-q \ &\text{if}\ q-p\neq 1,\\ n-q+1 \ &\text{if} \ q-p = 1; \end{cases} \ \ \ \ I_{\square} := \begin{cases}
[q,n-1] \ &\text{if} \ q-p \neq 1,\\
\{q-1\}\cup[q+1,n-1] \ &\text{if} \ q-p=1.
\end{cases}
\end{equation}
\end{lemma}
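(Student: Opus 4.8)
The lemma is a direct computation of the exchange relation at $\psi_{\square}=h_{q+1,q+1}$ inside the shifted extended cluster $\Psi$. Since $\psi_{\square}$ carries a trivial string, the relation is the ordinary one, $\psi_{\square}\,\psi_{\square}' = \mathcal{M}_{+}+\mathcal{M}_{-}$, where $\mathcal{M}_{+}$ and $\mathcal{M}_{-}$ are the numerator and the denominator of the $y$-variable $y(\psi_{\square})$ read off from the quiver. So the plan is: first determine the quiver neighborhood of $h_{q+1,q+1}$ in $\Psi$; then substitute the explicit determinantal formulas for all variables entering $\mathcal{M}_{\pm}$; and finally match the outcome with~\eqref{eq:psisqm} by means of a short Plücker relation from Appendix~\ref{s:aidentminors}.

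For the first step I would start from the description of the initial quiver of $\gc_h^{\dagger}(\bg)$ in Appendix~\ref{s:ainiquivh} and apply the quiver-mutation rule along the sequence~\eqref{eq:mut_seq_psico}, which only touches the $(p+1)$-st row. This is exactly where the dichotomy of~\eqref{eq:intsq} originates: when $q-p\neq 1$ the vertices $h_{p+1,n},\dots,h_{p+1,p+2}$ mutated in~\eqref{eq:mut_seq_psico} are not adjacent to $h_{q+1,q+1}$, so the neighborhood of $\psi_{\square}$ is the generic one (the off-diagonal and diagonal $h$-variables near the position $(q+1,q+1)$, the frozen variable $h_{p+1,p+1}$, and, in the boundary direction, one further variable resolved through the conventions~\eqref{eq:hconv}--\eqref{eq:hconv2}); when $q=p+1$ the vertex $h_{p+1,p+2}=h_{q,q+1}$ has already been replaced by $h'_{p+1,p+2}$ of~\eqref{eq:hpps}, and the local quiver around $\psi_{\square}$ is modified accordingly, which is what forces the shifted index set $I_{\square}=\{q-1\}\cup[q+1,n-1]$ and the extra sign.

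For the second step the key identities to plug in are $\psi_{\square}(U)=\det U^{[q+1,n]}_{[q+1,n]}$ and, as in~\eqref{eq:hp1p1}, $h_{p+1,p+1}(U)=\psi_{\square}(U)\det[\mathcal{F}(U)]^{[p+1,n]}_{[p+1,n]}$, where for this BD triple $\mathcal{F}(U)=\tilde{\gamma}^{*}(U_-)\,U=(I+\alpha(U)e_{p+1,p})U$ with $\alpha(U)=\det U^{\{q\}\cup[q+2,n]}_{[q+1,n]}/\psi_{\square}(U)$ (this uses $\deg\gamma=1$, Proposition~\ref{p:fq0i} and~\eqref{eq:f_flags}). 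By Lemma~\ref{l:relpsi} all remaining neighbors of $\psi_{\square}$ in $\Psi$ are plain flag minors of $U$ (given by~\eqref{eq:hpps} in the case $q=p+1$). After clearing the denominators contributed by $\alpha(U)$, the exchange relation becomes a polynomial identity among minors of $U$: explicitly, $\det U^{[q+1,n]}_{[q+1,n]}\cdot\psi_{\square}'(U)=\mathcal{M}_{+}+\mathcal{M}_{-}$, in which $\mathcal{M}_{+}$ already splits into two minor products because $h_{p+1,p+1}$ does. The third step is then to check that $\det U^{[q+1,n]}_{[q+1,n]}$ times the right-hand side of~\eqref{eq:psisqm} equals $\mathcal{M}_{+}+\mathcal{M}_{-}$; this is a Desnanot--Jacobi / three-term Plücker relation on the minors of $U$ with column windows $[q+1,n]$, $\{q\}\cup[q+2,n]$ and row windows $[q+1,n]$, $I_{\square}$, i.e. one of the standard identities of Appendix~\ref{s:aidentminors}. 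Solving for $\psi_{\square}'$ and collecting the normalization signs $(-1)^{\varepsilon_{ij}}$ coming from~\eqref{eq:h_fun} and~\eqref{eq:hpps} produces~\eqref{eq:psisqm} with the stated $\varepsilon_{\square}$ and $I_{\square}$.

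I expect the main obstacle to be the bookkeeping rather than any conceptual point: one has to propagate the signs $(-1)^{\varepsilon_{ij}}$ through the exchange relation and through the Plücker identity, and to redo the neighborhood computation and the index matching in the degenerate configuration $q=p+1$, where the $X$-run $[p,p+1]$ of $p$ overlaps the $Y$-run $[q,q+1]$ of $q$. A convenient sanity check I would use to fix the signs is that in the generic case $\varepsilon_{\square}=n-q=\varepsilon_{q,q+1}$, so $(-1)^{\varepsilon_{\square}}\det U^{[q+1,n]}_{[q,n-1]}=h_{q,q+1}(U)$ and the first summand of~\eqref{eq:psisqm} is $h_{q,q+1}(U)\det U^{[p+1,n]}_{[p+1,n]}$, which must coincide with the corresponding monomial of the exchange relation; the analogous identification in the case $q=p+1$ pins down the remaining signs.
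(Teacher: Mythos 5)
Your high-level plan matches the paper's proof: read the exchange relation at $\psi_{\square}$ off the quiver of the shifted cluster $\Psi$, substitute the explicit determinantal expressions for the neighbors (including $h_{p+1,p+1}$ via~\eqref{eq:hp1p1}, which you correctly reconstruct from $\alpha(U)$ and $\mathcal{F}$), and close the computation with a Desnanot--Jacobi relation. That is exactly how the paper argues.

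However, there is a genuine gap in your analysis of the neighborhood in the case $q-p\neq 1$. You claim that the vertices $h_{p+1,n},\dots,h_{p+1,p+2}$ touched by the mutation sequence~\eqref{eq:mut_seq_psico} are not adjacent to $\psi_{\square}=h_{q+1,q+1}$, so that the neighborhood of $\psi_{\square}$ in $\Psi$ is ``the generic one.'' This is false. Already in $\Psi_0$ the exchange relation reads $\psi_{\square}\psi_{\square}^{\prime}=h_{q,q+1}h_{p+1,n}+h_{qq}h_{pn}h_{q+1,q+2}$ (Lemma~\ref{l:inimarked}), so $h_{p+1,n}$ is a neighbor of $\psi_{\square}$; the very first step of~\eqref{eq:mut_seq_psico} therefore modifies the local quiver around $\psi_{\square}$. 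After the full sequence, the exchange relation in $\Psi$ for $q-p\neq 1$ is $\psi_{\square}\psi_{\square}^{\prime}=h_{p+1,p+2}^{\prime}\,h_{qq}\,h_{q+1,q+2}+h_{q,q+1}\,h_{p+1,p+1}$, which contains the mutated variable $h_{p+1,p+2}^{\prime}$. Your ``generic'' description (diagonal and off-diagonal $h$-variables near $(q+1,q+1)$ plus $h_{p+1,p+1}$) omits $h_{p+1,p+2}^{\prime}$, and if you computed with that neighborhood you would obtain the wrong monomials $\mathcal{M}_{\pm}$ and hence fail to reduce to~\eqref{eq:psisqm} by any Pl\"ucker relation. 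The dichotomy in~\eqref{eq:intsq} is not driven by whether the sequence touches the neighborhood (it always does), but by the coincidence $h_{p+1,p+2}=h_{q,q+1}$ when $q=p+1$. A further unremarked subtlety: when $q-p=1$ the paper splits into two sub-cases ($q\leq n-2$ and $q=n-1$) because at $q=n-1$ the relation degenerates to a quadratic $\psi_{\square}\psi_{\square}^{\prime}=h_{n-2,n}h_{n-1,n-1}+(h_{n-1,n}^{\prime})^{2}$; your account does not anticipate this boundary configuration.
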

\begin{proof}
\emph{Case $q-p = 1$, $q \leq n-2$.} The mutation relation for $\psi_{\square}$ reads
\begin{equation*}
\psi_{\square}\psi_{\square}^{\prime} = (h_{p+1,p+2}^\prime)^2 h_{q+1,q+2} + h_{p+1,p+3}^\prime h_{p+1,p+1}.
\end{equation*}
Expanding the right-hand side, we see that
\begin{equation}\label{eq:complcso}
\begin{split}
(-1)^{n-p} &\psi_{\square}(U)\psi_{\square}^{\prime}(U) = \left[\det U^{[p+1,n]}_{\{p\}\cup[p+2,n]}\right]^{2} \det U^{[p+3,n]}_{[p+2,n-1]} +\\ +  &\det U^{[p+2,n]}_{\{p\}\cup [p+2,n-1]} \left[ \det U^{[p+1,n]}_{[p+1,n]} \det U^{[q+1,n]}_{[q+1,n]} + \det U^{\{q\}\cup [q+2,n]}_{[q+1,n]} \det U^{[p+1,n]}_{\{p\}\cup [p+2,n]} \right].
\end{split}
\end{equation}
Applying the Desnanot--Jacobi identity (see Proposition~\ref{p:dj22}) to the matrix $A:= U^{[p+1,n]}_{\{p\}\cup[p+2,n]}$, we see that
\begin{equation}\label{eq:compldj}\begin{split}
\det U^{[p+1,n]}_{\{p\}\cup[p+2,n]} \det U^{[p+3,n]}_{[p+2,n-1]} + \det& U^{[p+2,n]}_{\{p\}\cup[p+2,n-1]} \det U^{\{p+1\}\cup[p+3,n]}_{[p+2,n]}=\\ = &\det U^{[p+2,n]}_{[p+2,n]} \det U^{\{p+1\}\cup[p+3,n]}_{\{p\}\cup[p+2,n-1]}.
\end{split}
\end{equation}
Now, combining the first and the third terms in~\eqref{eq:complcso} via~\eqref{eq:compldj}, and then dividing both sides by $(-1)^{n-p} \det U^{[q+1,n]}_{[q+1,n]}$, formula \eqref{eq:psisqm} follows.

\emph{Case $q-p = 1$, $q = n -1$.} In this case, the mutation at $\psi_{\square}$ reads
\begin{equation*}
\psi_{\square}\psi_{\square}^{\prime} = h_{n-2,n}h_{n-1,n-1} + (h_{n-1,n}^{\prime})^{2}.
\end{equation*}
Expanding the latter relation, we see that
\begin{equation*}\begin{split}
\psi_{\square}(U) \psi_{\square}^{\prime}(U) = &u_{n-2,n} \left[\det U^{[n-1,n]}_{[n-1,n]} u_{nn} + \det U^{[n-1,n]}_{\{n-2\}\cup\{n\}}u_{n-1,n}\right] + \left[ \det U^{[n-1,n]}_{\{n-2\}\cup\{n\}}\right]^{2} = \\ = &u_{nn} \left[u_{n-2,n-1} \det U^{[n-1,n]}_{\{n-2\}\cup\{n\}} + u_{n-2,n} \det U^{[n-1,n]}_{[n-1,n]} \right].
\end{split}
\end{equation*}
Dividing both sides by $\psi_{\square}(U) = u_{nn}$, we obtain \eqref{eq:psisqm}.

\emph{Case $q-p \neq 1$.} In this case, the mutation at $\psi_{\square}$ reads
\begin{equation*}
\psi_{\square}\psi_{\square}^{\prime} = h_{p+1,p+2}^{\prime} h_{qq} h_{q+1,q+2} + h_{q,q+1} h_{p+1,p+1}.
\end{equation*}
Expanding the right-hand side, we see that
\begin{equation}\label{eq:kakoet}
\begin{split}
(-1)^{n-q}\psi_{\square}(U)&\psi_{\square}^{\prime}(U) = \det U^{[p+1,n]}_{\{p\}\cup[p+2,n]} \det U^{[q,n]}_{[q,n]} \det U^{[q+2,n]}_{[q+1,n-1]} + \\ + & \det U^{[q+1,n]}_{[q,n-1]} \left[ \det U^{[p+1,n]}_{[p+1,n]} \det U^{[q+1,n]}_{[q+1,n]} + \det U^{[p+1,n]}_{\{p\}\cup[p+2,n]} \det U^{\{q\}\cup[q+2,n]}_{[q+1,n]}\right].
\end{split}
\end{equation}
Applying the Desnanot--Jacobi identity (see Proposition~\ref{p:dj22}) to the matrix $A:= U^{[q,n]}_{[q,n]}$, we see that
\begin{equation}\label{eq:kakoetj}
\det U^{[q,n]}_{[q,n]} \det U^{[q+2,n]}_{[q+1,n-1]} + \det U^{[q+1,n]}_{[q,n-1]} \det U^{\{q\}\cup [q+2,n]}_{[q+1,n]} = \det U^{[q+1,n]}_{[q+1,n]} \det U^{\{q\}\cup[q+2,n]}_{[q,n-1]}.
\end{equation}
Combining the first and the third terms in~\eqref{eq:kakoet} with~\eqref{eq:kakoetj} and dividing both sides by $(-1)^{n-q} \psi_{\square}$, we obtain formula~\eqref{eq:psisqm}.
\end{proof}

\begin{lemma}\label{l:basecoprim}
    The regular functions $\psi_{\square}$ and $\psi^\prime_{\square}$ are coprime.
\end{lemma}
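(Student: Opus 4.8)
The idea is to recognize $\psi_\square$ as a prime element of the UFD $\mathcal{O}(\GL_n)$ and then rule out $\psi_\square \mid \psi_\square'$ by a specialization argument that exploits the fact that $\psi_\square$ involves only a small ``trailing'' block of matrix entries.

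First I would compute $\psi_\square$ explicitly. In the present situation $\deg\gamma = 1$, so $\mathcal{F}(U) = \tilde{\gamma}^*(U_-)U$ with $\tilde{\gamma}^*(U_-) = I + \alpha(U)e_{p+1,p}$ as in \eqref{eq:fcbase}; thus left multiplication by $\tilde{\gamma}^*(U_-)$ is the single elementary row operation adding a multiple of row $p$ to row $p+1$. Since $p \ne q$, rows $p$ and $p+1$ are either both in $[q+1,n]$ or both outside it, so this operation does not change the minor on rows and columns $[q+1,n]$; combined with $i-1 = q \notin \Gamma_1$ and \eqref{eq:f_flags}, this gives $\psi_\square(U) = h_{q+1,q+1}(U) = \det U^{[q+1,n]}_{[q+1,n]}$, the trailing principal minor of the generic matrix. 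Any minor of a generic matrix is irreducible, and this one is coprime to $\det U$, so $\psi_\square$ is prime in $\mathcal{O}(\GL_n)$. Consequently $\Cop(\psi_\square,\psi_\square')$ is equivalent to $\psi_\square \nmid \psi_\square'$, and since $\psi_\square$ is coprime to $\det U$ it suffices to prove non-divisibility already in $\mathbb{C}[u_{ij}]$.

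To establish $\psi_\square \nmid \psi_\square'$ I would use formula \eqref{eq:psisqm} together with the observation that $\psi_\square$ involves only the ``block'' variables $u_{ij}$ with $i,j \in [q+1,n]$; hence setting to zero any collection of the remaining entries preserves divisibility by $\psi_\square$. Suppose $\psi_\square \mid \psi_\square'$. When $q - p \ne 1$ and $p < q$, I would zero out row $p$ in columns $[p+1,n]$: this kills $\det U^{[p+1,n]}_{\{p\}\cup[p+2,n]}$ (one of its rows becomes zero), specializing $\psi_\square'$ to $\pm\det U^{[p+1,n]}_{[p+1,n]}\det U^{[q+1,n]}_{[q,n-1]}$. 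When $q - p \ne 1$ and $p > q$, I would instead zero out row $q$ in columns $[q+1,n]$, which kills $\det U^{[q+1,n]}_{[q,n-1]}$ and collapses (by cofactor expansion along the now nearly-zero row $q$) $\det U^{\{q\}\cup[q+2,n]}_{[q,n-1]}$ to $\pm u_{qq}\det U^{[q+2,n]}_{[q+1,n-1]}$. In either case $\psi_\square$, being prime, would have to divide one of two or three explicit factors: an irreducible minor with the same columns but different rows from $\psi_\square$ (hence a non-associate of the same degree, e.g. $\det U^{[q+1,n]}_{[q,n-1]}$, which contains $u_{q,q+1}$ whereas $\psi_\square$ does not), an irreducible minor of strictly smaller degree than $\psi_\square$, or a single variable — none of which is possible, a contradiction.

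The main obstacle is the case $q - p = 1$, where $I_\square = \{q-1\}\cup[q+1,n-1]$ is entangled with column $p = q-1$: zeroing the line indexed by $p$ would put \emph{both} products of \eqref{eq:psisqm} into the ideal $(\psi_\square)$ and yield no information. Here I would again zero out row $q$ in columns $[q+1,n]$; this leaves the second product $\det U^{[q,n]}_{\{q-1\}\cup[q+1,n]}\,\det U^{\{q\}\cup[q+2,n]}_{\{q-1\}\cup[q+1,n-1]}$ completely unchanged, while it forces the first product into $(\psi_\square)$. Thus $\psi_\square \mid \psi_\square'$ would force $\psi_\square$ to divide the product of the two irreducible minors $\det U^{[q,n]}_{\{q-1\}\cup[q+1,n]}$ (of degree $n-q+1$: dividing it would express an irreducible minor as a product of two non-constant polynomials, impossible since $n-q\ge 1$) and $\det U^{\{q\}\cup[q+2,n]}_{\{q-1\}\cup[q+1,n-1]}$ (of degree $n-q$ but involving no variable $u_{i,q+1}$, hence not an associate of $\psi_\square$) — impossible again. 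Beyond these case distinctions, the only content is the elementary verification that none of the residual factors is divisible by $\psi_\square$, which is a routine comparison of total degrees and of the sets of indeterminates occurring in each minor.
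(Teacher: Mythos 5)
Your proof is correct, and it takes a genuinely different (and arguably more transparent) route than the paper's, even though both are specialization arguments. The paper first chooses a specialization that \emph{kills} $\psi_\square$ — for $p<q$ it sets $u_{n,q+1}=\cdots=u_{nn}=0$, and for $p>q$ it imposes $u_{i,q+1}=u_{i,q+2}$ for $i\in[q+1,n]$ — and then checks that $\psi_\square'$ remains generically nonzero on the resulting subvariety of $\{\psi_\square=0\}$; since the $p<q$ branch also absorbs $q-p=1$, this genericity claim in the paper is simply asserted in the harder case rather than verified. Your approach instead keeps $\psi_\square=\det U^{[q+1,n]}_{[q+1,n]}$ untouched (by zeroing a row disjoint from $[q+1,n]$), collapses the displayed formula \eqref{eq:psisqm} to an explicit product of irreducible minors (or, when $q=p+1$, to such a product modulo a visible multiple of $\psi_\square$), and rules out divisibility by comparing degrees and the sets of indeterminates that occur — an entirely algebraic, genericity-free argument, at the cost of one extra case split for $q-p=1$. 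Two small points worth polishing: (i) your opening reduction observes $\psi_\square(U)=\det U^{[q+1,n]}_{[q+1,n]}$ via a single elementary row operation, which the paper takes as ``evident''; this step is fine but deserves the one-line justification you give that $p,p+1$ are either both inside or both outside $[q+1,n]$; (ii) in the $p>q$ branch the factor $u_{qq}$ has degree $1$, which is indeed strictly smaller than $\deg\psi_\square=n-q$ because $p>q$ forces $q\le n-2$ — this inequality is used implicitly and is worth stating, since it is exactly what prevents the degenerate $q=n-1$ subcase from slipping into that branch.
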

\begin{proof}
    Evidently, $\psi_{\square}$ is irreducible, so it suffices to show that $\psi^{\prime}_{\square}(U)$ is not divisible by $\psi_{\square}(U)$. If $p+1 < q+1$, then $\psi_{\square}^{\prime}$ is generically nonzero on the variety given by $u_{n,q+1} = u_{n,q+2} = \cdots = u_{nn} = 0$, whereas $\psi_{\square}$ vanishes. If $p+1 > q+1$, then let $V$ be the variety given by the equations 
    \begin{equation*}
    u_{i,q+1} - u_{i,q+2} = 0, \ \ i \in [q+1,n].
    \end{equation*}
    We see that $\psi_{\square}(U)|_{V} = 0$; on the other hand,
    \begin{equation*}
    \psi^{\prime}_{\square}(U)|_{V} = (-1)^{\varepsilon_{\square}} \det U^{[p+1,n]}_{\{p\}\cup[p+2,n]} \det U^{\{q\}\cup [q+2,n]}_{[q,n-1]},
    \end{equation*}
    which is generically nonzero on $V$. Thus $\psi_{\square}$ and $\psi_{\square}^{\prime}$ are coprime.
\end{proof}
\begin{lemma}\label{l:transc}
Let $\mathcal{F}$ be a field of characteristic zero, and let $\alpha$ and $\beta$ be distinct transcendental elements over $\mathcal{F}$ such that $\mathcal{F}(\alpha) = \mathcal{F}(\beta)$. If there is a relation
\begin{equation*}
    \sum_{k=1}^m (\alpha^k-\beta^k)p_k = 0
\end{equation*}
for $p_k \in \mathcal{F}$, then all $p_k = 0$.
\end{lemma}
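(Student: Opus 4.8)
The plan is to use the hypothesis $\mathcal{F}(\alpha)=\mathcal{F}(\beta)$ to pin down how $\beta$ depends on $\alpha$, convert the putative relation into an identity of rational functions, and then rule it out by elementary arguments about polynomials and the transformation involved. Since $\alpha$ is transcendental over $\mathcal{F}$, the field $\mathcal{F}(\alpha)$ is a rational function field of transcendence degree one over $\mathcal{F}$; $\beta$ generates it over $\mathcal{F}$, so by the classical classification of generators of $\mathcal{F}(t)/\mathcal{F}$ one has $\beta=(a\alpha+b)/(c\alpha+d)$ for suitable $a,b,c,d\in\mathcal{F}$ with $ad-bc\neq0$. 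Now suppose, for contradiction, that $\sum_{k=1}^m(\alpha^k-\beta^k)p_k=0$ with the $p_k$ not all zero, and put $P(x):=\sum_{k=1}^m p_kx^k\in\mathcal{F}[x]$. Then $P$ is nonzero, $P(0)=0$, and $d_0:=\deg P\geq1$; the relation is exactly $P(\alpha)=P(\beta)$, which, $\alpha$ being transcendental, becomes the identity of rational functions $P(x)=P\bigl((ax+b)/(cx+d)\bigr)$.

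The next step is to simplify the transformation. If $c\neq0$, the right-hand side has a pole of order $d_0\geq1$ at $x=-d/c$ (where $(ax+b)/(cx+d)$ has a simple pole, its numerator being nonzero there since $ad-bc\neq0$), whereas $P(x)$ is a polynomial; hence $c=0$. So $\beta=\lambda\alpha+\mu$ with $\lambda:=a/d\neq0$, $\mu:=b/d$, and $(\lambda,\mu)\neq(1,0)$ because $\alpha\neq\beta$. If $\lambda=1$ then $\mu\neq0$, and $P(x)=P(x+\mu)$ is impossible for a nonzero polynomial: $P(x+\mu)-P(x)$ has degree $d_0-1$ with leading coefficient $p_{d_0}d_0\mu\neq0$, the only place characteristic zero is needed. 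If $\lambda\neq1$, translate to the fixed point $t_0:=\mu/(1-\lambda)\in\mathcal{F}$ of $x\mapsto\lambda x+\mu$: with $s:=x-t_0$ and $\widetilde P(s):=P(s+t_0)$ one computes $P(\lambda x+\mu)=\widetilde P(\lambda s)$, so $\widetilde P(s)=\widetilde P(\lambda s)$, and comparing the coefficient of each $s^j$ gives $\widetilde p_j(1-\lambda^j)=0$; thus $\widetilde p_j=0$ for every $j$ with $\lambda^j\neq1$.

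I expect the main obstacle to be the remaining case $\lambda\neq1$ with $\lambda$ a root of unity: if $\lambda$ has order $n\geq2$ then the argument above only forces $\widetilde P$ to be a polynomial in $s^n$, which can be nonconstant, so the relation is not immediately excluded. This is precisely the point where the assumptions on $\alpha$ and $\beta$ beyond bare distinctness must enter — equivalently, one needs that the element of $\mathrm{PGL}_2(\mathcal{F})$ carrying $\alpha$ to $\beta$ has infinite order, so that $\lambda^j\neq1$ for all $j\geq1$ (note the translation case $\lambda=1$, $\mu\neq0$, which already has infinite order, is covered above). Granting this, $\widetilde p_j=0$ for all $j\geq1$, whence $\widetilde P$, and therefore $P$, is constant; combined with $P(0)=0$ this gives $P=0$, contradicting $P\neq0$. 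Hence all $p_k$ vanish, which is the claim. Everything outside the root-of-unity case is routine polynomial bookkeeping; verifying that that case does not arise in the situation at hand is the real content.
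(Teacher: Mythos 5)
Your argument is sound and more informative than the paper itself, whose proof consists solely of a citation to \cite[Lemma~5.15]{multdouble}. Writing $P(x)=\sum_{k=1}^m p_kx^k$, you turn the relation into the functional identity $P(x)=P\bigl((ax+b)/(cx+d)\bigr)$ using the classification of field generators of $\mathcal{F}(\alpha)/\mathcal{F}$; the pole count at $x=-d/c$ forces $c=0$ (this uses $P(0)=0$, so $P\ne 0$ implies $\deg P\ge 1$); the translation case $\lambda=1$, $\mu\ne 0$ is killed by the leading-coefficient computation $p_{d_0}d_0\mu\ne 0$, which is where characteristic zero enters; and in the dilation case, centering at the fixed point gives $\widetilde p_j(1-\lambda^j)=0$. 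Each of these steps is correct.

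What you flag as the ``main obstacle'' is in fact a counterexample to the lemma as stated. With $\mathcal{F}=\mathbb{Q}$, $\alpha$ transcendental, $\beta=-\alpha$, $m=2$, $p_1=0$, $p_2=1$, every stated hypothesis is satisfied and $(\alpha-\beta)p_1+(\alpha^2-\beta^2)p_2=0$ holds, yet $p_2\neq 0$. So the conditions ``distinct, transcendental, $\mathcal{F}(\alpha)=\mathcal{F}(\beta)$, characteristic zero'' are not sufficient; one must additionally know that the fractional-linear map carrying $\alpha$ to $\beta$ has infinite order in $\mathrm{PGL}_2(\mathcal{F})$, i.e., after your reduction, that $\lambda$ is not a nontrivial root of unity. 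In the only place the paper invokes the lemma, namely Lemma~\ref{l:fixptincl}, the relevant $\alpha=\tilde h_{p+1,p+1}$ and $\beta=h_{p+1,p+1}/\psi_{\square}$ differ by an element of the coefficient field (compare~\eqref{eq:hp1p1}), so the translation branch of your argument covers the actual use. But the lemma as a free-standing statement needs the extra hypothesis you identify, and your proof is exactly the right argument once it is added.
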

\begin{proof}
    See \cite[Lemma 5.15]{multdouble}.
\end{proof}

\begin{lemma}\label{l:fixptincl}
    For any $f \in \mathbb{C}[\GL_n]$, if $\mathcal{Q}^*(f) = f$, then $f \in \mathcal{L}_{\mathbb{C}}(\Psi^{\prime})$.
\end{lemma}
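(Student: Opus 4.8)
Recall that $\mathcal{Q} = (\mathcal{F}^c)^{-1}$, so $\mathcal{Q}^*(f) = f$ is equivalent to $(\mathcal{F}^c)^*(f) = f$. Here $\mathcal{F}^c(U) = \tilde{\gamma}^*(U_-)\,U\,\tilde{\gamma}^*(U_-)^{-1}$ with $\tilde{\gamma}^*(U_-) = I + \alpha(U)\,e_{p+1,p}$ and $\alpha(U) = \psi_{\square}(U)^{-1}\det U^{\{q\}\cup[q+2,n]}_{[q+1,n]}$; in particular $\alpha$ is a rational function with denominator exactly $\psi_{\square}$. The Laurent ring $\mathcal{L}_{\mathbb{C}}(\Psi')$ is the localization of $\mathbb{C}[\GL_n]$ (after adjoining the ground-ring inverses) at the cluster variables of $\Psi'$, which includes $\psi_{\square}'$; since $\Psi$ and $\Psi'$ share all variables except that $\psi_{\square}$ is swapped for $\psi_{\square}'$, membership in $\mathcal{L}_{\mathbb{C}}(\Psi')$ amounts to: after clearing the frozen/ground-ring factors, $f$ has poles only along $\{\psi_{\square}' = 0\}$ and along the other cluster divisors of $\Psi$ (not along $\psi_{\square} = 0$).

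**Plan.** The idea is to first show $\mathcal{Q}^*$ acts on $\mathbb{C}(\GL_n)$ in a very controlled way relative to the transcendence-degree-one extension $\mathbb{C}(\GL_n)$ over the subfield fixing all entries except those in the $(p+1,p)$-block. Write $\mathbb{C}(\GL_n) = K(\alpha)$ where $K$ is the subfield generated by all flag-type coordinates unaffected by conjugation by $\tilde{\gamma}^*(U_-)$; concretely, $K$ should be generated by the matrix entries of $\mathcal{F}^c(U)$ other than the single entry that genuinely varies with $\alpha$, together with $\psi_{\square}$. The key structural fact to prove is that $(\mathcal{F}^c)^*$ fixes $K$ pointwise and sends $\alpha \mapsto \beta$ for a second transcendental element $\beta$ with $K(\alpha) = K(\beta)$, $\alpha \neq \beta$ — this is exactly the hypothesis of Lemma~\ref{l:transc}. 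Then, writing any $f \in \mathcal{O}(\GL_n)$ with $\mathcal{Q}^*(f) = f$ as a rational function in $\alpha$ over $K$ and comparing against the same expansion in $\beta$, Lemma~\ref{l:transc} forces the "non-constant-in-$\alpha$" part to vanish in a precise sense, pinning down where the denominators of $f$ can sit.

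**Main steps, in order.** (1) Identify $K$ and the elements $\alpha, \beta$ explicitly: take $\beta := \alpha\circ\mathcal{F}^c = (\mathcal{F}^c)^*(\alpha)$, and verify using the formulas~\eqref{eq:fcbase}, \eqref{eq:hpps} and the minor identities from Appendix~\ref{s:aidentminors} that $(\mathcal{F}^c)^*$ fixes the generators of $K$; the fixed generators are precisely the variables of $\Psi$ other than $\psi_{\square}$ (by Lemma~\ref{l:relpsi}, these agree with $\tilde\Psi$-variables and are $\mathcal{Q}^*$-invariant), plus $\psi_{\square}$ itself (check $\mathcal{Q}^*(\psi_{\square}) = \psi_{\square}$ directly, or note $\psi_\square = h_{q+1,q+1}$ is a frozen variable fixed up to the marker which here fixes it). (2) Since $f \in \mathcal{O}(\GL_n) \subset K(\alpha)$, write $f = \sum_{k=-M}^{M} c_k \alpha^k$ with $c_k \in K$ — actually better, since $f$ is a polynomial in matrix entries, $f$ is a polynomial in $\alpha$ over the polynomial-type subring, say $f = \sum_{k=0}^m c_k \alpha^k$, $c_k \in K$, where the $c_k$ are regular away from $\psi_\square$. (3) Apply $(\mathcal{F}^c)^*$: invariance gives $\sum_k c_k(\alpha^k - \beta^k) = 0$, whence by Lemma~\ref{l:transc} all $c_k$ with $k \geq 1$ vanish — so $f = c_0 \in K$. (4) Finally argue that $K \cap \mathcal{O}(\GL_n)[\psi_\square^{-1}\text{-localizations of ground ring}] \subseteq \mathcal{L}_{\mathbb{C}}(\Psi')$: the elements of $K$ regular on the locus where $\Psi$-variables other than $\psi_\square$ are invertible are, by the Laurent phenomenon applied to $\gc_h^\dagger(\bg)$ at the seed $\Psi$ and its neighbor $\Psi'$, exactly Laurent in those variables together with $\psi_\square'$ — one uses that $f$, being $\mathcal{Q}^*$-invariant and equal to $c_0 \in K$, has no $\psi_\square$ in its denominator, and $\mathcal{L}(\Psi) \cap \mathcal{L}(\Psi')$ consists precisely of such elements.

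**Expected main obstacle.** The delicate point is step (1): showing cleanly that $(\mathcal{F}^c)^*$ fixes a subfield $K$ with $\mathbb{C}(\GL_n) = K(\alpha) = K(\beta)$ and $\alpha \neq \beta$ transcendental over $K$. The conjugation $U \mapsto \tilde{\gamma}^*(U_-) U \tilde{\gamma}^*(U_-)^{-1}$ changes many matrix entries, so pinning down the right $K$ requires choosing generators invariantly — the natural choice is the coordinates $\det U^{[j,n]}_I$ appearing in the $h$- and $\varphi$-functions, since Proposition~\ref{p:q_quasih} and the identity $\mathcal{Q}^*(\det\mathcal{F}(U)^{[j,n]}_I) = \det U^{[j,n]}_I$ already tell us how trailing flag minors transform. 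One must check these generate a field over which $\mathbb{C}(\GL_n)$ has degree one and that $\alpha$ is genuinely transcendental (not in $K$), using that $\psi_\square \in K$ but $\alpha = (\text{minor})/\psi_\square$ with the numerator minor not in $K$. Once that field-theoretic picture is set up correctly, Lemma~\ref{l:transc} does the rest mechanically, and the descent to $\mathcal{L}_{\mathbb{C}}(\Psi')$ in step (4) is a formal consequence of the Laurent phenomenon plus the observation that $\psi_\square$ divides no denominator of an $\mathcal{Q}^*$-fixed regular function.
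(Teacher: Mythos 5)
The overall architecture of your proposal — expand a $(\mathcal{F}^c)^*$-fixed regular function against a transcendence-degree-one extension, apply Lemma~\ref{l:transc}, and then descend to $\mathcal{L}_{\mathbb{C}}(\Psi')$ — is the right one and matches the paper's strategy. However, the crucial choice of the transcendental element $\alpha$ is wrong, and as a result the central step does not produce any information.

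You take $\alpha$ to be the entry $[\tilde\gamma^*(U_-)]_{p+1,p} = \psi_\square^{-1}\det U^{\{q\}\cup[q+2,n]}_{[q+1,n]}$ and set $\beta := (\mathcal{F}^c)^*(\alpha)$. But this $\alpha$ is \emph{itself} fixed by $(\mathcal{F}^c)^*$, so $\beta = \alpha$ and Lemma~\ref{l:transc} applies vacuously to the identity $\sum_k c_k(\alpha^k-\beta^k)=0$. To see the invariance concretely: conjugation by $\tilde\gamma^*(U_-)=I+\alpha e_{p+1,p}$ amounts to adding $\alpha$ times row $p$ to row $p+1$ and subtracting $\alpha$ times column $p+1$ from column $p$; a minor $\det U^J_I$ changes under these operations only if $p+1\in I$, $p\notin I$, or if $p\in J$, $p+1\notin J$. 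For both $\det U^{\{q\}\cup[q+2,n]}_{[q+1,n]}$ and $\det U^{[q+1,n]}_{[q+1,n]}$ neither condition holds (the row condition would force $p=q$, which is excluded, and the column condition fails because $p\in J$ forces $p\geq q+2$, which in turn forces $p+1\in J$). Hence $(\mathcal{F}^c)^*(\alpha)=\alpha$, and your step (3) yields nothing.

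The element that is genuinely moved by $(\mathcal{F}^c)^*$ is the flag minor $\tilde h_{p+1,p+1}(U) = (-1)^{\varepsilon}\det U^{[p+1,n]}_{[p+1,n]}$. Here the row condition does apply ($p+1\in[p+1,n]$ but $p\notin[p+1,n]$), and one computes
\begin{equation*}
(\mathcal{F}^c)^*\bigl(\tilde h_{p+1,p+1}\bigr) = \frac{h_{p+1,p+1}}{\psi_\square} \neq \tilde h_{p+1,p+1},
\end{equation*}
as one also sees from the quasi-isomorphism relation $\mathcal{Q}^*(h_{p+1,p+1}) = \tilde h_{p+1,p+1}\,\tilde h_{q+1,q+1}$ together with $\mathcal{Q}^*(h_{q+1,q+1})=\tilde h_{q+1,q+1}$. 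Using $\mathcal{O}(\GL_n)\subseteq\mathcal{L}_{\mathbb{C}}(\tilde\Psi)$, one writes $f=\sum_{k\geq 0} f_k\,\tilde h_{p+1,p+1}^{\,k}$ with $f_k$ Laurent in the variables of $\tilde\Psi\setminus\{\tilde h_{p+1,p+1}\}$ (the sum is one-sided because $\tilde h_{p+1,p+1}$ is frozen in $\gc_h^\dagger(\bg_{\std})$); by Lemma~\ref{l:relpsi} these variables coincide as functions with those of $\Psi\setminus\{h_{p+1,p+1}\}$ and are all $(\mathcal{F}^c)^*$-invariant, so applying $(\mathcal{F}^c)^*$ and subtracting gives $\sum_{k\geq 1}f_k\bigl(\tilde h_{p+1,p+1}^{\,k}-(h_{p+1,p+1}/\psi_\square)^k\bigr)=0$, whence Lemma~\ref{l:transc} forces $f=f_0$. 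The final descent is also sharper than your step (4) suggests: since $\tilde\psi_\square=\tilde h_{q+1,q+1}$ is frozen in $\gc_h^\dagger(\bg_{\std})$, it enters $f_0$ only with nonnegative exponents; but $\tilde\psi_\square=\psi_\square$ as a function, so one substitutes $\psi_\square = (\text{exchange sum})/\psi_\square'$ to land directly in $\mathcal{L}_{\mathbb{C}}(\Psi')$. Your claim that $K$ is "generated by the matrix entries of $\mathcal{F}^c(U)$ other than the single entry that genuinely varies" also does not hold up — the conjugation alters an entire row and column, not a single entry, and the entries $[\mathcal{F}^c(U)]_{ij}$ viewed as functions of $U$ are the images $(\mathcal{F}^c)^*(u_{ij})$ of the coordinate functions, which regenerate the full function field rather than a codimension-one subfield.
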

\begin{proof}
Since $\mathbb{C}[\GL_n] \subseteq \mathcal{L}_{\mathbb{C}}(\tilde{\Psi})$, one can find a sequence of elements $f_k \in \mathcal{L}_{\mathbb{C}}(\tilde{\Psi})$ that do not involve the variable $h_{p+1,p+1}$, such that
\begin{equation}\label{eq:ffkhp}
f(U) = \sum_{k \geq 0} f_{k}(U) [\tilde{h}_{p+1,p+1}(U)]^k.
\end{equation}
Note that $\tilde{h}_{p+1,p+1}$ is frozen in $\gc_h^{\dagger}(\bg_{\std})$, so the expression is polynomial in $\tilde{h}_{p+1,p+1}$. Applying $(\mathcal{F}^c)^*$ to~\eqref{eq:ffkhp} and using Lemma~\ref{l:relpsi} together with the assumption $(\mathcal{F}^c)^*(f) = f$ yields
\begin{equation}\label{eq:fffk}
f(U) = \sum_{k \geq 0} f_{k}(U) \left[\frac{h_{p+1,p+1}(U)}{\psi_{\square}(U)}\right]^k.
\end{equation}
Subtracting~\eqref{eq:ffkhp} from~\eqref{eq:fffk} and invoking Lemma~\ref{l:transc}, we conclude that $f(U) = f_0(U)$. Since $\tilde{\psi}_{\square}(U)$ is frozen in $\gc_h^{\dagger}(\tilde{\bg})$, it enters $f_0(U)$ only with nonnegative exponents, so one can rewrite it in terms of a fraction with $\psi_{\square}^{\prime}$ in the denominator. Thus $f \in \mathcal{L}_{\mathbb{C}}(\Psi^{\prime})$.
\end{proof}

\begin{proposition}\label{p:baseshif}
Under the setup of the current subsection, the following statements hold:
\begin{enumerate}[1)]
    \item All the variables in $\Psi$ are regular and irreducible;
    \item For any cluster variable $\psi \in \Psi$, its mutation $\psi^\prime$ is regular and coprime with $\psi$;
    \item The generalized cluster structures $\gc_h^{\dagger}(\bg,\GL_n)$ and $\gc_h^{\dagger}(\bg,\SL_n)$ are complete.
\end{enumerate}
\end{proposition}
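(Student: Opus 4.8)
\textbf{Proof plan for Proposition~\ref{p:baseshif}.}
The three statements have to be proved in the order they are listed, since each relies on its predecessor. For Part~1, all variables in $\Psi$ other than $\psi_\square = h_{p+1,p+1}$ are related (via the birational quasi-isomorphism $\mathcal{Q}$, equivalently its inverse $\mathcal{F}^c$) to variables in the extended cluster $\tilde\Psi$ of $\gc_h^{\dagger}(\bg_{\std})$, and by Lemma~\ref{l:relpsi} they are in fact \emph{equal} to the corresponding tilded variables (as regular functions on $\GL_n$). Since all variables in every extended cluster of $\gc_h^{\dagger}(\bg_{\std})$ are known to be irreducible regular functions by the results of~\cite{double}, the same holds for these variables in $\Psi$. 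It remains to treat $\psi_\square$ itself, which is visibly regular and irreducible (it is either a single matrix entry, as in the case $q=n-1$, $q-p=1$, or one sees from formula~\eqref{eq:hp1p1} that it is a sum of two products of complementary minors that does not factor — this is exactly the kind of statement already handled in~\cite{double} for the trivial triple). So Part~1 is essentially a transport-of-structure statement plus one explicit irreducibility check.

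For Part~2, the cluster variables $\psi \in \Psi$ again split into two groups. For $\psi \neq \psi_\square$: by the relation $[\kappa^{-1}(\tilde B)]\sim B$ on the nonmarked part and the fact that $\mathcal{Q}^*$ sends nonmarked $y$-variables to $y$-variables (Proposition~\ref{p:q_quasih}, Proposition~\ref{p:q_y_var}), the mutation $\psi'$ is related to $\tilde\psi'$ and equals it up to a Laurent monomial in $\psi_\square$; regularity and coprimality with $\psi$ then follow from the corresponding facts in $\gc_h^{\dagger}(\bg_{\std})$ together with Part~\ref{i:fsingle2} of Proposition~\ref{p:birat_fsingle} (or directly: coprimality is preserved because $\psi_\square$ is irreducible and distinct from $\psi,\psi'$). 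For $\psi = \psi_\square$, the explicit mutant $\psi_\square'$ is computed in the preceding lemma (formula~\eqref{eq:psisqm}, by the three Desnanot--Jacobi computations), and Lemma~\ref{l:basecoprim} establishes $\Cop(\psi_\square,\psi_\square')$; regularity of $\psi_\square'$ is manifest from~\eqref{eq:psisqm}. This assembles into the hypotheses of Parts~\ref{i:fsingle1}--\ref{i:fsingle3} of Proposition~\ref{p:birat_fsingle} with $\widetilde{V} = \GL_n$ and $\gc = \gc_h^{\dagger}(\bg)$, $\widetilde{\gc} = \gc_h^{\dagger}(\bg_{\std})$, giving in particular $\bar{\mathcal{A}}_{\mathbb{C}}(\gc_h^{\dagger}(\bg,\GL_n)) \subseteq \mathcal{O}(\GL_n)$.

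For Part~3 (completeness), I must show the reverse inclusion $\mathcal{O}(\GL_n)\subseteq \bar{\mathcal{A}}_{\mathbb{C}}(\gc_h^{\dagger}(\bg))$; by Proposition~\ref{p:upperb} it suffices to verify this over a single nerve, and in fact it is enough to show $\mathcal{O}(\GL_n)\subseteq \mathcal{L}_{\mathbb{C}}(\Psi)\cap\mathcal{L}_{\mathbb{C}}(\Psi')$ together with the already-known $\mathcal{O}(\GL_n)\subseteq \mathcal{L}_{\mathbb{C}}(\mathbf{x})$ for the clusters obtained from $\Psi$ by mutating at the \emph{nonmarked} directions (the latter follows from the $\bg_{\std}$ case via the related-cluster dictionary and Lemma~\ref{l:relpsi}, since away from $h_{p+1,p+1}$ the Laurent rings coincide). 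The genuinely new input is the marked direction: given $f\in\mathcal{O}(\GL_n)$, I expand it as a polynomial in $\tilde h_{p+1,p+1}$ over $\mathcal{L}_{\mathbb{C}}(\tilde\Psi)$, and I want to conclude $f\in\mathcal{L}_{\mathbb{C}}(\Psi)\cap\mathcal{L}_{\mathbb{C}}(\Psi')$. Elements of $\mathcal{O}(\GL_n)$ that are \emph{fixed} by $\mathcal{Q}^*$ land in $\mathcal{L}_{\mathbb{C}}(\Psi')$ by Lemma~\ref{l:fixptincl}; the general element is not fixed, so the argument must be run using that $\mathcal{Q}^*$ multiplies $\tilde h_{p+1,p+1}$ by $1/\psi_\square$ and fixes everything else in $\tilde\Psi\setminus\{\tilde h_{p+1,p+1}\}$, then invoking Lemma~\ref{l:transc} to kill the mismatched powers. \textbf{This last point — extracting membership in both $\mathcal{L}_{\mathbb{C}}(\Psi)$ and $\mathcal{L}_{\mathbb{C}}(\Psi')$ from a single regular function by playing off the action of $\mathcal{Q}^*$ on the one marked variable against Lemma~\ref{l:transc} — is the main obstacle}, as it is the only place where the nontrivial geometry of the marked variable enters; everything else is a faithful transfer of the $\bg_{\std}$ results through the birational quasi-isomorphism. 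Once $\mathcal{O}(\GL_n)\subseteq \bar{\mathcal{A}}_{\mathbb{C}}$ is in hand, completeness for $\GL_n$ follows, and the $\SL_n$ statement is obtained by specializing $h_{11}\equiv 1$ and localizing the ground ring exactly as described after~\eqref{eq:gringslnd}.
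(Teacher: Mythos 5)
Your Parts~1 and~2 are in order and agree with the paper's route: irreducibility of the nonmarked variables transports via Lemma~\ref{l:relpsi}, regularity and irreducibility of $\psi_\square$ and the coprimality $\Cop(\psi_\square,\psi_\square')$ are handled by Lemma~\ref{l:basecoprim}, and Parts~\ref{i:fsingle1}--\ref{i:fsingle3} of Proposition~\ref{p:birat_fsingle} then take care of everything else on the $\bg$ side, so the first two claims follow.

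For Part~3 there is a genuine gap, and you have in fact located it yourself. Your plan is to take an arbitrary $f\in\mathcal{O}(\GL_n)$, expand it in powers of $\tilde h_{p+1,p+1}$ over $\mathcal{L}_{\mathbb{C}}(\tilde\Psi)$, and then try to run the Lemma~\ref{l:transc} cancellation; but as you note, that argument (which is exactly the proof of Lemma~\ref{l:fixptincl}) only goes through when $\mathcal{Q}^*(f)=f$, and a general $f\in\mathcal{O}(\GL_n)$ is not $\mathcal{Q}^*$-fixed. The missing idea is that one should not attack arbitrary $f$ at all. The paper's observation is that after Lemma~\ref{l:relpsi} the Laurent rings $\mathcal{L}_{\mathbb{C}}(\tilde\Psi)$ and $\mathcal{L}_{\mathbb{C}}(\Psi')$ differ in exactly one generator: $\tilde\Psi$ carries the frozen variable $\tilde h_{p+1,p+1}(U)=\det U^{[p+1,n]}_{[p+1,n]}$, while $\Psi'$ carries the mutated cluster variable $\psi_\square'$, with every other variable literally identical. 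Since $\mathcal{O}(\GL_n)\subseteq\mathcal{L}_{\mathbb{C}}(\tilde\Psi)$ is already known from~\cite{double}, it suffices to show the single function $\det U^{[p+1,n]}_{[p+1,n]}$ lies in $\mathcal{L}_{\mathbb{C}}(\Psi')$. This is then done by \emph{solving} the explicit formula~\eqref{eq:psisqm} for $\det U^{[p+1,n]}_{[p+1,n]}$: the term $\psi_\square'$ is a cluster variable of $\Psi'$; $\det U^{[p+1,n]}_{\{p\}\cup[p+2,n]}$ is (up to sign) the cluster variable $h'_{p+1,p+2}$; $\det U^{[q+1,n]}_{I_\square}$ is a cluster variable of $\Psi'$ and hence a unit in $\mathcal{L}_{\mathbb{C}}(\Psi')$; and the one genuinely new quantity, $\det U^{\{q\}\cup[q+2,n]}_{I_\square}$, happens to be $\mathcal{Q}^*$-fixed, which is precisely where Lemma~\ref{l:fixptincl} is applied --- to this one specific minor, not to an arbitrary $f$. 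Once you have that, the membership $\mathcal{O}(\GL_n)\subseteq\mathcal{L}_{\mathbb{C}}(\Psi')$ drops out, and Part~\ref{i:fsingle4} of Proposition~\ref{p:birat_fsingle} (rather than a hand-rolled nerve argument via Proposition~\ref{p:upperb}, which would require you to separately control the Laurent rings of the nonmarked mutations of $\Psi$) immediately gives $\bar{\mathcal{A}}_{\mathbb{C}}(\gc_h^{\dagger}(\bg,\GL_n))=\mathcal{O}(\GL_n)$.

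A second, smaller slip: your claim that $\mathcal{O}(\GL_n)\subseteq\mathcal{L}_{\mathbb{C}}(\mathbf{x})$ for the nonmarked mutations of $\Psi$ ``follows \ldots since away from $h_{p+1,p+1}$ the Laurent rings coincide'' is not immediate. Lemma~\ref{l:relpsi} gives equality of variables in $\Psi$ and $\tilde\Psi$; after a nonmarked mutation the new variable $\psi_i'$ is only related to $\tilde\psi_{\kappa(i)}'$ by a Laurent monomial in the marked variable (formula~\eqref{eq:quasi_formula}), and you would still have to track what that monomial does to the Laurent ring. This is exactly the bookkeeping that Proposition~\ref{p:birat_fsingle}, Part~\ref{i:fsingle4}, is designed to absorb, which is why the paper reduces the entire completeness question to the one inclusion $\mathcal{L}_{\mathbb{C}}(\Psi')\supseteq\mathcal{O}(\GL_n)$ and does not run a nerve argument by hand.
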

\begin{proof}
The marked variable $\psi_{\square}$ is regular and irreducible, and by Lemma~\ref{l:basecoprim}, its mutation $\psi_{\square}^{\prime}$ is regular and coprime with $\psi_{\square}$. By Proposition~\ref{p:birat_fsingle}, $\bar{\mathcal{A}}_{\mathbb{C}}(\gc^{\dagger}_h(\Gamma,\GL_n))\subseteq \mathbb{C}[\GL_n]$. 

By Proposition~\ref{p:upper_cont1}, it remains to show that $\mathbb{C}[\GL_n] \subseteq \mathcal{L}_{\mathbb{C}}(\Psi^{\prime})$. Since $\mathbb{C}[\GL_n] \subseteq \mathcal{L}_{\mathbb{C}}(\tilde{\Psi})$, it suffices to show that $\mathcal{L}_{\mathbb{C}}(\tilde{\Psi})\subseteq \mathcal{L}_{\mathbb{C}}(\Psi^{\prime})$; in turn, due to Lemma~\ref{l:relpsi}, proving the latter inclusion reduces to showing that $\det U^{[p+1,n]}_{[p+1,n]} \in \mathcal{L}_{\mathbb{C}}(\Psi^\prime)$. First, let us notice that
\begin{equation*}
(\mathcal{F}^c)^*\left( \det U^{\{q\}\cup [q+2,n]}_{I_{\square}}\right) = \det U^{\{q\}\cup [q+2,n]}_{I_{\square}},
\end{equation*} 
where $I_{\square}$ is given by~\eqref{eq:intsq}. By Lemma~\ref{l:fixptincl},  $\det U^{\{q\}\cup[q+2,n]}_{I_{\square}} \in \mathcal{L}_{\mathbb{C}}(\Psi^{\prime})$. Second, notice that $\det U^{[q+1,n]}_{I_{\square}}$ is a cluster variable in $\Psi^{\prime}$ (hence an invertible element of $\mathcal{L}_{\mathbb{C}}(\Psi^{\prime})$), for
\begin{equation*}
\det U^{[q+1,n]}_{I_{\square}} = \begin{cases}
    (-1)^{n-q} h_{q,q+1}(U) \ &\text{if} \ q-p \neq 1;\\
    (-1)^{n-p} h^{\prime}_{p+1,p+3}(U) \ &\text{if}\ q-p = 1, \ p \leq n-3;\\
    (-1)^{n-p} h_{pn}(U) \ &\text{if} \ q-p = 1, \ p = n-2.
\end{cases}
\end{equation*}
It follows that one can solve the equation~\eqref{eq:psisqm} for $\det U^{[p+1,n]}_{[p+1,n]}$ in the ring $\mathcal{L}_{\mathbb{C}}(\Psi^{\prime})$. Thus $\gc^{\dagger}_h(\bg,\GL_n)$ is complete.
\end{proof}

\subsubsection{The case of $|\Gamma_1|=|\Gamma_2| = 1$ (initial cluster).}
Although in the previous subsection we proved  $\bar{\mathcal{A}}(\gc_h^{\dagger}(\bg,\GL_n)) = \mathbb{C}[\GL_n]$ for $\bg = (\Gamma_1,\Gamma_2,\gamma)$ with $|\Gamma_1| = 1$, we proved regularity, irreducibility and coprimality with mutations not in the initial extended cluster $\Psi_0$, but in a certain shifted extended cluster $\Psi$. According to Proposition~\ref{p:birat_fsingle}, in order to claim the same properties for $\Psi_0$, we need to verify them for the marked variable in $\Psi_0$. As above, let us assume that $\Gamma_1 = \{p\}$ and $\Gamma_2 = \{q\}$, and set $\psi_{\square} := h_{q+1,q+1}$.

\begin{lemma}\label{l:inimarked}
The mutation of $\Psi_0$ at the marked variable $\psi_{\square}$ produces the variable
\[
\psi_{\square}^\prime(U) = \begin{cases} 
(-1)^{p+q+1} \left[ u_{p+1,n}\det U^{[q+1,n]}_{[q,n-1]}  + u_{pn}\det U^{\{q\}\cup[q+2,n]}_{[q,n-1]}  \right] \  &q\neq p+1;\\[10pt]
\det U^{[q+1,n]}_{[q+1,n]}\left[ u_{p+1,n}\det U^{[q+1,n]}_{[q,n-1]}  + u_{pn}\det U^{\{q\}\cup[q+2,n]}_{[q,n-1]}  \right] + \\[7pt] + \det U^{\{q\}\cup[q+2,n]}_{[q+1,n]}\left[u_{p+1,n}\det U^{[q+1,n]}_{\{q-1\}\cup[q+1,n-1]} + u_{pn} \det U^{\{q\}\cup[q+2,n]}_{\{q-1\}\cup[q+1,n-1]}  \right] \ &q = p+1.
\end{cases}
\]
\end{lemma}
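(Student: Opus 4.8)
Since $q\in\Gamma_2$, the variable $\psi_\square=h_{q+1,q+1}$ is a \emph{mutable} variable of $\gc_h^{\dagger}(\bg,\GL_n)$, and (the only nontrivial string belonging to $\varphi_{11}$) it carries a trivial string, so its mutation in $\Psi_0$ is governed by the two–term exchange relation \eqref{eq:ordexchrel}, whose two monomials are read off from the initial quiver of Appendix~\ref{s:ainiquivh}. The plan is to write this relation out explicitly, replace each neighboring $h$-variable by its minor expression, eliminate the unipotent matrix introduced by $\mathcal F$, and finish with one application of the Desnanot-Jacobi identity, in close parallel with the proof of \eqref{eq:psisqm}. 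The two cases in the statement correspond to whether $q\ne p+1$ or $q=p+1$; in the latter situation the $X$-run $[p,p+1]$ and the $Y$-run $[p+1,p+2]$ overlap, which enlarges the quiver neighborhood of $h_{q+1,q+1}$ and accounts for the more complicated four-term answer.

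First I would assemble the determinantal bookkeeping. Since $\deg\gamma=1$, here $\mathcal F(U)=\tilde\gamma^*(U_-)\,U$ with $\tilde\gamma^*(U_-)=I+\alpha(U)e_{p+1,p}$ and $\alpha(U)=\det U^{\{q\}\cup[q+2,n]}_{[q+1,n]}/\psi_\square(U)$ as in \eqref{eq:fcbase}, i.e.\ $\mathcal F(U)$ is obtained from $U$ by adding $\alpha(U)$ times row $p$ to row $p+1$; moreover the only nontrivial $\gamma$-string is $\{p,q\}$. Feeding this into \eqref{eq:h_fun}, \eqref{eq:f_flags} and the conventions \eqref{eq:hconv}--\eqref{eq:hconv2} rewrites every neighbor of $h_{q+1,q+1}$ as $(-1)^\varepsilon$ times a minor of $\mathcal F(U)$, where for neighbors of the form $h_{p+1,\bullet}$ this expression additionally carries the factor $\psi_\square(U)$, since the string $\{p,q\}$ contributes the trailing minor $\det[\mathcal F(U)]^{[q+1,n]}_{[q+1,n]}=\psi_\square(U)$. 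In each minor the effect of $\mathcal F$ is either nil (row $p+1$ not selected), or an elementary row operation among the selected rows (rows $p$ and $p+1$ both selected), or a cofactor split of the minor of $\mathcal F(U)$ into a minor of $U$ plus $\alpha(U)$ times another minor of $U$. Multiplying out, the powers of $\psi_\square(U)=\det U^{[q+1,n]}_{[q+1,n]}$ coming from the string contributions and from the denominators hidden in the $\alpha$'s (and in $h_{p+1,p+1}$, cf.\ \eqref{eq:hp1p1}) balance the single factor $\psi_\square(U)$ on the left, so that dividing through by $\psi_\square(U)$ leaves a polynomial identity in the entries $u_{ij}$.

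The last step is to recognize the resulting identity. For $q\ne p+1$ the two surviving monomials, after the signs are collected via $\varepsilon_{ij}$ (the total prefactor working out to $(-1)^{p+q+1}$), are already $u_{p+1,n}\det U^{[q+1,n]}_{[q,n-1]}$ and $u_{pn}\det U^{\{q\}\cup[q+2,n]}_{[q,n-1]}$. For $q=p+1$ the larger neighborhood of $h_{q+1,q+1}$ contributes extra cross terms, which are combined into the four-term expression of the statement by one application of the $2\times2$ Desnanot-Jacobi identity (Proposition~\ref{p:dj22}) to a submatrix of $U$ bordered along $\{p,\ldots\}$, exactly as \eqref{eq:compldj} and \eqref{eq:kakoetj} were used before, with small modifications to absorb the boundary subcases $q=n-1$ and $p=n-2$. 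I expect the main obstacle to be entirely organizational: pinning down the initial-quiver neighborhood of $h_{q+1,q+1}$ in each subcase and keeping all the signs $\varepsilon_{ij}$ and $s_{kl}$ straight; the actual algebra amounts to a single Desnanot-Jacobi step together with elementary row manipulations, as in the previous subsubsection.
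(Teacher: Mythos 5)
Your proposal takes essentially the same route as the paper's proof: read off the two-term exchange relation for $\psi_{\square}$ from the initial quiver, replace each neighboring $h$-variable by minors of $U$ using $\mathcal F(U)=(I+\alpha(U)e_{p+1,p})U$ together with the string factor from \eqref{eq:h_fun}, and collapse the result with Desnanot--Jacobi before dividing by $\psi_{\square}$. The only wrinkle is that in the $q=p+1$ case the paper applies Desnanot--Jacobi twice rather than once (once to the coefficient of $u_{pn}$ in \eqref{eq:mutq1e} and once to the auxiliary term $A$ of \eqref{eq:mutae}); otherwise your plan matches the paper's computation.
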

\begin{proof}
The exchange relation is given by
\begin{equation*}
\psi_{\square}\psi_{\square}^\prime = h_{q,q+1} h_{p+1,n} + h_{qq} h_{pn} h_{q+1,q+2}.
\end{equation*}
Let us study two cases:

\emph{Case $q \neq p+1$.} Writing out all the functions, the relation becomes
\begin{equation}\label{eq:abca}
\begin{split}
(-1)^{p+q+1}\psi_{\square}(U) \psi_{\square}^\prime(U) = \det & U^{[q+1,n]}_{[q,n-1]} u_{p+1,n} \det U^{[q+1,n]}_{[q+1,n]} + \\ + &u_{pn} \left(\det U^{[q+1,n]}_{[q,n-1]} \det U^{\{q\}\cup [q+2,n]}_{[q+1,n]} + \det U^{[q,n]}_{[q,n]} \det U^{[q+2,n]}_{[q+1,n-1]} \right).
\end{split}
\end{equation}
Applying the Desnanot--Jacobi relation (see Proposition~\ref{p:dj22}) to the last term, we see that 
\[
\det U^{[q+1,n]}_{[q,n-1]} \det U^{\{q\}\cup [q+2,n]}_{[q+1,n]} + \det U^{[q,n]}_{[q,n]} \det U^{[q+2,n]}_{[q+1,n-1]} = \det U^{[q+1,n]}_{[q+1,n]} \det U^{\{q\}\cup [q+2,n]}_{[q,n-1]};
\]
hence, after dividing both sides by $\det U^{[q+1,n]}_{[q+1,n]}$ in~\eqref{eq:abca}, the result follows.

\emph{Case $q = p+1$.} Writing out all functions and rearranging the terms, the relation becomes
\begin{equation}\label{eq:mutq1e}
\begin{split}
\psi_{\square}^\prime(U) = u_{p+1,n}&\left[ \det U^{[q+1,n]}_{[q+1,n]} \det U^{[q+1,n]}_{[q,n-1]} + \det U^{[q+1,n]}_{\{q-1\}\cup[q+1,n-1]} \det U^{\{q\}\cup[q+2,n]}_{[q+1,n]} \right] + \\ + u_{pn} &\left[\det U^{[q+1,n]}_{[q,n-1]} \det U^{\{q\}\cup[q+2,n]}_{[q+1,n]} + \det U^{[q,n]}_{[q,n]} \det U^{[q+2,n]}_{[q+1,n-1]} \right] +\\+&  \frac{u_{pn} \det U^{\{q\}\cup[q+2,n]}_{[q+1,n]}}{\det U^{[q+1,n]}_{[q+1,n]}} \cdot A
\end{split}
\end{equation}
where $A$ is given by
\begin{equation}\label{eq:mutae}
A =  \det U^{[q+1,n]}_{\{q-1\}\cup[q+1,n-1]} \det U^{\{q\}\cup[q+2,n]}_{[q+1,n]} + \det U^{[q,n]}_{\{q-1\}\cup [q+1,n]} \det U^{[q+2,n]}_{[q+1,n-1]}.
\end{equation}
The second term in~\eqref{eq:mutq1e} can be simplified via the Desnanot--Jacobi identity as 
\begin{equation*}
\det U^{[q+1,n]}_{[q,n-1]} \det U^{\{q\}\cup[q+2,n]}_{[q+1,n]} + \det U^{[q,n]}_{[q,n]} \det U^{[q+2,n]}_{[q+1,n-1]} = \det U^{[q+1,n]}_{[q+1,n]} \det U^{\{q\}\cup[q+2,n]}_{[q,n-1]};
\end{equation*}
likewise, formula~\eqref{eq:mutae} can be simplified as
\begin{equation*}
\begin{split}\det U^{[q+1,n]}_{\{q-1\}\cup[q+1,n-1]}\det U^{\{q\}\cup[q+2,n]}_{[q+1,n]} +  \det& U^{[q,n]}_{\{q-1\}\cup [q+1,n]} \det U^{[q+2,n]}_{[q+1,n-1]} =\\= &\det U^{[q+1,n]}_{[q+1,n]} \det U^{\{q\}\cup[q+2,n]}_{\{q-1\}\cup[q+1,n-1]}.
\end{split}
\end{equation*}
Combining the above terms, the formula for $\psi_{\square}^\prime$ follows.
\end{proof}

\begin{proposition}\label{p:baseini}
Under the setup of the current subsection, the following statements hold:
    \begin{enumerate}[1)]
    \item All the variables in $\Psi_0$ are regular and irreducible;
    \item For any cluster variable $\psi \in \Psi_0$, its mutation $\psi^\prime$ is regular and coprime with $\psi$.
    \end{enumerate}
\end{proposition}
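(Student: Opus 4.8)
## Proof proposal for Proposition~\ref{p:baseini}

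The plan is to reduce the statement to Proposition~\ref{p:baseshif}, which was already established for the shifted extended cluster $\Psi$, by leveraging the fact that $\Psi$ is obtained from $\Psi_0$ by an explicit sequence of mutations~\eqref{eq:mut_seq_psico} that does not involve the marked variable $\psi_{\square} = h_{q+1,q+1}$. Concretely, every variable in $\Psi_0$ other than $\psi_{\square}$ and the $h$-variables along the mutation path either belongs to $\Psi$ as well, or is an $h$-variable $h_{p+1,n-s}$ that appears as a cluster variable in both $\Psi_0$ and in an intermediate cluster; since Proposition~\ref{p:baseshif} already gives regularity and irreducibility for all variables of $\Psi$, and regularity/irreducibility are properties that transfer back along the mutation sequence~\eqref{eq:mut_seq_psico} (each mutation along the path is an ordinary cluster mutation, and the relevant Laurent-phenomenon plus factoriality of $\mathcal{O}(\GL_n)$ arguments apply verbatim), the only genuinely new verification is for the marked variable $\psi_{\square}$ and its mutation $\psi_{\square}^{\prime}$ \emph{in the initial cluster} $\Psi_0$. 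That is exactly the content of Lemma~\ref{l:inimarked}, which we now use.

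First I would record that $\psi_{\square} = h_{q+1,q+1}$ is regular by construction (it is a polynomial in the $u_{ij}$, see~\eqref{eq:hp1p1}) and irreducible: its expression~\eqref{eq:hp1p1} as a sum of two products of complementary minors is a classical irreducible polynomial (one can check irreducibility by the same specialization argument as in Lemma~\ref{l:basecoprim}, e.g.\ restricting to a subvariety on which one of the two terms vanishes identically while the other stays generically nonzero, forcing any factor to be a unit). Next, from the explicit formulas in Lemma~\ref{l:inimarked} one reads off that $\psi_{\square}^{\prime}(U)$ is a polynomial in the $u_{ij}$, hence regular on $\GL_n$. It then remains to prove $\Cop(\psi_{\square},\psi_{\square}^{\prime})$, i.e.\ that $\psi_{\square}^{\prime}$ is not divisible by the irreducible $\psi_{\square}$. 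I would argue by specialization exactly as in Lemma~\ref{l:basecoprim}: in the case $q \neq p+1$ one restricts to the locus where $\psi_{\square} = \det U^{[q+1,n]}_{[q+1,n]}$ vanishes (e.g.\ a suitable vanishing of a row of that trailing block) and checks, using the explicit formula for $\psi_{\square}^{\prime}$, that $\psi_{\square}^{\prime}$ remains generically nonzero there; in the case $q = p+1$ one uses the analogous two-term structure of $\psi_{\square}^{\prime}$ and a locus tailored so that exactly one summand survives. Finally, for all \emph{other} cluster variables $\psi \in \Psi_0$ (which are $\varphi$-functions, $h$-functions outside the marked index, and do not include $c$-functions since those are frozen), coprimality $\Cop(\psi,\psi^{\prime})$ with their mutations follows from Part~\ref{i:fsingle2} of Proposition~\ref{p:birat_fsingle} applied to the birational quasi-isomorphism $\mathcal{Q}$ together with the fact, established in the trivial case in~\cite{double}, that $\Cop(\tilde\psi,\tilde\psi^{\prime})$ holds in $\gc_h^{\dagger}(\bg_{\std})$; the marked-variable pair $(\psi_{\square},\psi_{\square}^{\prime})$ we just handled by hand, as required by that proposition.

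The main obstacle I expect is purely computational: verifying $\Cop(\psi_{\square},\psi_{\square}^{\prime})$ in the $q = p+1$ case, where $\psi_{\square}^{\prime}$ is a sum of two products each of which is itself a sum of two products of minors (the nested structure coming from the Desnanot--Jacobi simplifications in the proof of Lemma~\ref{l:inimarked}). One must choose the specialization locus carefully so that it kills $\psi_{\square}$ but isolates a single nonvanishing monomial of $\psi_{\square}^{\prime}$; a natural candidate is a locus of the form $u_{i,q+1} - u_{i,q+2} = 0$ (as in Lemma~\ref{l:basecoprim}) possibly combined with a vanishing that annihilates one of the two outer summands, and then a short minor computation shows the survivor is generically nonzero. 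Everything else is bookkeeping: transferring regularity and irreducibility of the non-marked variables back from $\Psi$ to $\Psi_0$ along~\eqref{eq:mut_seq_psico}, and invoking Proposition~\ref{p:birat_fsingle} for the coprimality of the non-marked cluster variables with their mutations.
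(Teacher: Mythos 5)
Your proposal follows essentially the same strategy as the paper: invoke Proposition~\ref{p:birat_fsingle} to reduce everything to the single marked variable $\psi_\square$, read off regularity of $\psi_\square'$ from the explicit formula in Lemma~\ref{l:inimarked}, and establish coprimality of $\psi_\square$ with $\psi_\square'$ by a specialization argument. That is exactly what the paper does.

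Two points worth flagging. First, you identify the marked variable $\psi_\square = h_{q+1,q+1}$ with the two-term expression in \eqref{eq:hp1p1}; but \eqref{eq:hp1p1} is the formula for $h_{p+1,p+1}$, not $h_{q+1,q+1}$. The actual marked variable is simply the trailing principal minor $\psi_\square(U) = \det U^{[q+1,n]}_{[q+1,n]}$ (this is what the $\gamma$-string and formula~\eqref{eq:h_fun}, or equivalently formula~\eqref{eq:h_alpham}, give when $\alpha_m = q$). Its irreducibility is therefore immediate and requires no specialization argument. Second, for $\Cop(\psi_\square,\psi_\square')$ the paper uses the single locus $u_{n,q+1}=\cdots=u_{nn}=0$ uniformly in both cases $q\neq p+1$ and $q=p+1$: since the minors appearing in the explicit formulas for $\psi_\square'$ (and, in the $q=p+1$ case, in the piece \eqref{eq:irrpiecec}) have row index sets contained in $[1,n-1]$, they are unaffected by setting the last row of the trailing block to zero, whereas $\psi_\square$ vanishes identically there. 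Your candidate locus $u_{i,q+1}-u_{i,q+2}=0$, borrowed from Lemma~\ref{l:basecoprim}, would also work but is more complicated than needed here. Finally, the opening discussion about transferring properties from $\Psi$ back to $\Psi_0$ along the mutation sequence~\eqref{eq:mut_seq_psico} is not how the reduction is actually made (and would require justification of its own); as you yourself note at the end, the reduction comes directly from Proposition~\ref{p:birat_fsingle} applied to the birational quasi-isomorphism $\mathcal{Q}$ from the trivial Belavin–Drinfeld case, so that framing can be dropped.
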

\begin{proof}
    According to Proposition~\ref{p:birat_fsingle}, it is enough to verify the above statements only for $\psi_{\square}$. Evidently, $\psi_{\square}$ is irreducible and $\psi_{\square}^{\prime}$ is regular (see Lemma~\ref{l:inimarked}). If $q \neq p+1$, we see that $\psi_{\square}$ vanishes on the variety given by $u_{n,q+1} = u_{n,q+2} = \cdots = u_{nn} = 0$, whereas $\psi_{\square}^\prime(U)$ is generically nonzero. If $q = p+1$, then it suffices to check that
    \begin{equation*}
        u_{p+1,n}\det U^{[q+1,n]}_{\{q-1\}\cup[q+1,n-1]} + u_{pn} \det U^{\{q\}\cup[q+2,n]}_{\{q-1\}\cup[q+1,n-1]}  
    \end{equation*}
    is not divisible by $\psi_{\square}(U)$, which immediately follows from the same argument.
\end{proof}

\subsubsection{Completeness for any BD triple}\label{s:complet_final}
In this subsection, we finalize the proofs of Part~\ref{i:gln_ni} of Theorem~\ref{thm:maingln} and Part~\ref{i:sln_ni} of Theorem~\ref{thm:mainsln}.

\begin{proposition}\label{p:compl}
    Let $\bg := (\Gamma_1,\Gamma_2,\gamma)$ be a BD triple of type $A_{n-1}$ and $G \in \{\SL_n,\GL_n\}$. Then the following statements hold:
    \begin{enumerate}[1)]
    \item The generalized cluster structure $\gc_h^{\dagger}(\bg,G)$ is regular;
    \item All the variables in the initial extended cluster of $\gc_h^{\dagger}(\bg,G)$ are irreducible (as elements of $\mathbb{C}[G]$) and (for cluster variables) coprime with their mutations;
    \item The generalized cluster structure $\gc_h^{\dagger}(\bg,G)$ is complete. 
    \end{enumerate}
\end{proposition}
\begin{proof}
The regularity of the initial extended cluster follows from Proposition~\ref{p:h_expansion}. As in the previous papers~\cite{plethora,multdouble}, we run an induction on the size $|\Gamma_1|$ for a fixed $n$. For $|\Gamma_1| = 0$, the statements were proved in~\cite[Theorem 3.8, Theorem 3.10]{double}; for $|\Gamma_1| = 1$, the statements are contained in Proposition~\ref{p:baseshif} and Proposition~\ref{p:baseini}. For $|\Gamma_1| > 1$, the statements follow from the existence of complementary birational quasi-isomorphisms $\mathcal{G}$ and Propositions~\ref{p:birat_double_ufd}-\ref{p:upper_cont2}.
\end{proof}
\subsection{Toric action}\label{s:toric}
In this subsection, we prove Part~\ref{i:gln_tor} of Theorems~\ref{thm:maingln} and \ref{thm:mainsln}. Given a BD triple $\bg:=(\Gamma_1,\Gamma_2,\gamma)$ of type $A_{n-1}$, let us recall that
\begin{equation*}
\mathfrak{h}_{\bg}=\{h \in \mathfrak{h}^s \ | \ \alpha(h) = \beta(h) \ \text{if} \ \gamma^j(\alpha) = \beta \ \text{for some} \ j\}.
\end{equation*}
where $\mathfrak{h}^s$ is the Cartan subalgebra of $\sll_n(\mathbb{C})$. The dimension of $\mathfrak{h}_{\bg}$ is equal to $k_{\bg}:=|\Pi\setminus\Gamma_1|$. Let $\mathcal{H}_{\bg}$ be the connected subgroup of $\SL_n$ with Lie algebra $\mathfrak{h}_{\bg}$. We let the group $\mathcal{H}_{\bg}$ act upon $\SL_n$ by conjugation. 

\begin{lemma}\label{l:toric}
Let $\Gamma_1 = \{p\}$ and $\Gamma_2 = \{q\}$ for some simple roots $p$ and $q$. Let $\psi_{\square}:=h_{q+1,q+1}$ be the marked variable. Then the $y$-variable $y(\psi_{\square})$ is invariant with respect to the action by $\mathcal{H}_{\bg}$.
\end{lemma}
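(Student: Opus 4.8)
The plan is to write down $y(\psi_{\square})$ explicitly, transport the question across the birational quasi-isomorphism $\mathcal{Q}$ to the trivial Belavin--Drinfeld triple — where the $h$-variables are honest flag minors — and then observe that the resulting character of $\mathcal{H}_{\bg}$ is trivial for the sole reason that $\alpha_p=\alpha_q$ on $\mathcal{H}_{\bg}$. Concretely, from the exchange relation for $\psi_{\square}=h_{q+1,q+1}$ in the initial extended cluster (Lemma~\ref{l:inimarked}) one reads off
\[
y(\psi_{\square}) = \frac{h_{q,q+1}\,h_{p+1,n}}{h_{qq}\,h_{pn}\,h_{q+1,q+2}},
\]
with out-of-range indices interpreted through the conventions~\eqref{eq:hconv}--\eqref{eq:hconv2}; the two monomials share no common factor, so this ratio is exactly $y(\psi_{\square})=\prod_j x_j^{b_{\square j}}$.

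Since $\gamma(p)=q$ and $\Gamma_1=\{p\}$, the only nontrivial $\gamma$-string is $\{p,q\}$, all others being singletons, so the formula of Proposition~\ref{p:q_quasih} for the action of $\mathcal{Q}^{*}$ on $h$-variables specializes uniformly (including the degenerate cases $q-p=\pm1$) to
\[
\mathcal{Q}^{*}\bigl(y(\psi_{\square})\bigr)=\frac{\tilde{h}_{q,q+1}\,\tilde{h}_{p+1,n}\,\tilde{h}_{q+1,q+1}}{\tilde{h}_{qq}\,\tilde{h}_{pn}\,\tilde{h}_{q+1,q+2}},
\]
the extra copies of $\tilde{h}_{q+1,q+1}$ contributed by the string $\{p,q\}$ cancelling between numerator and denominator. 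In $\gc_h^{\dagger}(\bg_{\std})$ every $\tilde{h}_{ij}$ equals $\pm\det U^{[j,n]}_{[i,n-j+i]}$, so under the conjugation $U\mapsto TUT^{-1}$ it is multiplied by the character $T\mapsto\bigl(\prod_{a=i}^{n-j+i}t_a\bigr)\bigl(\prod_{b=j}^{n}t_b\bigr)^{-1}$ (the $\varphi$- and $c$-factors possibly produced by~\eqref{eq:hconv}--\eqref{eq:hconv2} are likewise semi-invariant by~\eqref{eq:phi_invar}). Multiplying the six characters, almost everything telescopes and the product equals the character $T\mapsto t_q t_{p+1}/(t_p t_{q+1})$, that is $\alpha_q\alpha_p^{-1}$, which is trivial on $\mathcal{H}_{\bg}$ by the very definition of $\mathfrak{h}_{\bg}$. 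Finally, $\mathcal{Q}$ is $\mathcal{H}_{\bg}$-equivariant (Proposition~\ref{p:q_equiv}), hence so is $\mathcal{F}^{c}=\mathcal{Q}^{-1}$ (Proposition~\ref{p:fc_q_inv}); applying $(\mathcal{F}^{c})^{*}$ to the displayed identity yields $y(\psi_{\square})(TUT^{-1})=y(\psi_{\square})(U)$ for all $T\in\mathcal{H}_{\bg}$.

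An equivalent self-contained route avoids $\mathcal{Q}$ and proves directly that $\mathcal{F}(TUT^{-1})=T\mathcal{F}(U)T^{-1}$ for $T\in\mathcal{H}_{\bg}$: since $\mathcal{F}$ is characterized by the fixed-point identity~\eqref{eq:frel} (Proposition~\ref{p:fq0i}) and $\tilde{\gamma}^{*}$ restricts to a group homomorphism on $\mathcal{B}_-$, the claim reduces to the commutation of $\togammas(T)^{-1}T$ with $\tilde{\gamma}^{*}([\mathcal{F}(U)]_-)$; because $\Gamma_2=\{q\}$ forces $\tilde{\gamma}^{*}(\mathcal{N}_-)\subseteq I+\mathbb{C}\,e_{p+1,p}$ while $\togammas(T)^{-1}T$ is diagonal with $p$-entry $t_p/t_q$ and $(p+1)$-entry $t_{p+1}/t_{q+1}$, this commutation is precisely $\alpha_p(T)=\alpha_q(T)$, i.e.\ $T\in\mathcal{H}_{\bg}$, and the characters of $h_{q,q+1},\dots,h_{q+1,q+2}$ are then read off from their minor descriptions just as above. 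In either approach the only genuine difficulty is the combinatorial bookkeeping — deciding which $\gamma$-string each relevant row/column index sits in, tracking the exponents of the $t_a$, and disposing of the boundary cases $p\in\{1,n-1\}$ and $q-p=\pm1$ through~\eqref{eq:hconv}--\eqref{eq:hconv2}; conceptually the statement is nothing more than the single identity $\alpha_p=\alpha_q$ on $\mathcal{H}_{\bg}$.
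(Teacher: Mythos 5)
Your proof is correct and takes essentially the same route as the paper: both pass through the $\mathcal{H}_{\bg}$-equivariance of $\mathcal{Q}$ (Proposition~\ref{p:q_equiv}), apply $\mathcal{Q}^*$ to $y(\psi_{\square})$ using the quasi-isomorphism formula~\eqref{eq:q_upon_h}, read the characters of the flag minors $\tilde{h}_{ij}$ under conjugation, and observe that everything reduces to $\alpha_p=\alpha_q$ on $\mathcal{H}_{\bg}$; your sign convention for $y(\psi_\square)$ is the reciprocal of the paper's, which is immaterial. One small imprecision: in the generic case $q\neq p+1$ there is exactly one factor of $\tilde{h}_{q+1,q+1}$ and nothing "cancels" — the cancellation you mention only happens in the degenerate case $q=p+1$, where three factors reduce to one; either way, the surviving $\tilde{h}_{q+1,q+1}$ contributes a trivial character, so the conclusion stands.
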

\begin{proof}
Let us recall from Proposition~\ref{p:q_equiv} that $\mathcal{Q}$ is equivariant with respect to the action by $\mathcal{H}_{\bg}$. In the setup of the lemma, $\mathcal{H}_{\bg}$ is given by
\begin{equation*}
\mathcal{H}_{\bg} = \{T = \diag(t_1,t_2,\ldots,t_{n}) \ | \ \det T = 1, \ t_p t_{p+1}^{-1}t_{q+1}t_q^{-1} = 1\}.
\end{equation*}
Applying $\mathcal{Q}^*$ to $y(\psi_{\square})$, we see that
\begin{equation*}
\mathcal{Q}^*(y(\psi_{\square})) = \frac{\tilde{h}_{qq}}{\tilde{h}_{q,q+1}} \frac{\tilde{h}_{pn}}{\tilde{h}_{p+1,n}}\frac{\tilde{h}_{q+1,q+2}}{\tilde{h}_{q+1,q+1}}.
\end{equation*}
Now, it is a simple observation that $\tilde{h}_{q+1,q+1}(U)$ and $\tilde{h}_{qq}(U)$ are invariant with respect to $\mathcal{H}_{\bg}$, as well as
\begin{equation*}\begin{aligned}
&\tilde{h}_{pn}(TUT^{-1}) =t_{p}t_{n}^{-1}\tilde{h}_{pn}(U), \ & \ &\tilde{h}_{q+1,q+2}(TUT^{-1}) = t_{q+1}t_{n}^{-1}\tilde{h}_{q+1,q+2}(U), \\
&\tilde{h}_{q,q+1}(TUT^{-1}) = t_qt_n^{-1}\tilde{h}(U), \ & \ &\tilde{h}_{p+1,n}(TUT^{-1}) =t_{p+1}t_{n}^{-1}\tilde{h}_{p+1,n}(U).
\end{aligned}
\end{equation*}
Therefore,
\begin{equation*}
\mathcal{Q}^*\left(y(\psi_{\square})(TUT^{-1})\right) = \mathcal{Q}^*\left(y(\psi_{\square})(U)\right) \frac{t_pt_n^{-1}t_{q+1}t_{n}^{-1}}{t_qt_n^{-1}t_{p+1}t_n^{-1}} = \mathcal{Q}^*\left(y(\psi_{\square})(U)\right).
\end{equation*}
It follows that $y(\psi_{\square})$ is invariant with respect to the action by $\mathcal{H}_{\bg}$.
\end{proof}

\begin{proposition}\label{p:toric_h}
Let $\bg$ be a BD triple of type $A_{n-1}$. The following statements hold:
\begin{enumerate}[1)]
    \item The action of $\mathcal{H}_{\bg}$ upon $\SL_n$ induces a global toric action upon $\gc_h^{\dagger}(\bg,\SL_n)$ of rank $k_{\bg}$;
    \item The action of $\mathcal{H}_{\bg}\times \mathbb{C}^*$ upon $\GL_n$ induces a global toric action upon $\gc_h^{\dagger}(\bg,\GL_n)$ of rank $k_{\bg}+1$.
\end{enumerate}
\end{proposition}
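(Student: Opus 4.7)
The plan is to apply Proposition~\ref{p:toric}, which reduces globalness of a local toric action to checking (a) invariance of the hatted coefficients $\hat{p}_{ir}$ and (b) that the weight matrix $W$ satisfies $\tilde{B}W = 0$; the latter is equivalent to $\mathcal{H}_{\bg}$-invariance of all $y$-variables in the initial extended cluster. Thus the proof splits into three tasks: verifying that $\mathcal{H}_{\bg}$-conjugation (together with the $\mathbb{C}^*$-scaling, in the $\GL_n$ case) yields a well-defined \emph{local} toric action of the correct rank on the initial extended cluster; verifying invariance of $\hat{p}_{ir}$; and verifying invariance of every $y$-variable.

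For the local toric action, semi-invariance of the $c$-variables follows from~\eqref{eq:c_def} (they are $\Ad$-invariant), and that of the $\varphi$-variables from~\eqref{eq:phi_invar}. For the $h$-variables, I would combine the identity $\mathcal{Q}^*(h_{ij}) = \tilde{h}_{ij} \prod_{t>k} \tilde{h}_{\alpha_t+1,\alpha_t+1}$ from Proposition~\ref{p:q_quasih} with the $\mathcal{H}_{\bg}$-equivariance of $\mathcal{Q}$ from Proposition~\ref{p:q_equiv}: each $\tilde{h}_{kl}$ is (up to sign) a flag minor and hence a semi-invariant under conjugation by the full diagonal torus, so for $T \in \mathcal{H}_{\bg}$
\[
h_{ij}(T\mathcal{Q}(U)T^{-1}) = (\mathcal{Q}^* h_{ij})(TUT^{-1}) = \chi_{ij}(T)\,(\mathcal{Q}^* h_{ij})(U),
\]
from which $h_{ij}(TVT^{-1}) = \chi_{ij}(T) h_{ij}(V)$ follows for generic $V$ by birationality of $\mathcal{Q}$. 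The rank of the resulting weight matrix equals $\dim \mathcal{H}_{\bg} = k_{\bg}$: under $\mathcal{Q}^*$ these weights are the restriction to $\mathcal{H}_{\bg}$ of the rank-$(n-1)$ global toric action on $\gc_h^{\dagger}(\bg_{\std})$ from~\cite{double}, and this restriction is faithful because $\mathcal{H}_{\bg}\hookrightarrow \mathcal{H}$ is a closed embedding of tori. For $\GL_n$, Lemma~\ref{l:toric_glob_scalar} provides the additional independent $\mathbb{C}^*$ weight, giving rank $k_{\bg}+1$. Invariance of $\hat{p}_{ir}$ is immediate: the only nontrivial string on the initial extended cluster consists of the $c$-variables (see~\eqref{eq:p11mut}), which are $\Ad$-invariant.

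The crux is $\mathcal{H}_{\bg}$-invariance of the $y$-variables in the initial extended cluster, which fall into two groups: the non-marked ones, corresponding to indices $i \notin \mathcal{I}(\kappa)$ relative to $\gc_h^{\dagger}(\bg_{\std})$, and the marked ones attached to $h_{q+1,q+1}$ for $q \in \Gamma_2$. For non-marked $y_i$, Proposition~\ref{p:q_y_var} gives $\mathcal{Q}^*(y_i) = \tilde{y}_{\kappa(i)}$; since $\tilde{y}_{\kappa(i)}$ is $\mathcal{H}$-invariant by globalness of the standard toric action on $\gc_h^{\dagger}(\bg_{\std})$ and $\mathcal{Q}$ is $\mathcal{H}_{\bg}$-equivariant, $y_i$ is $\mathcal{H}_{\bg}$-invariant. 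For a marked $y(h_{q+1,q+1})$, set $p := \gamma^{-1}(q)$ and $\bg^{(1)} := (\{p\},\{q\},\gamma|_{\{p\}})$, so that $\bg^{(1)} \prec \bg$ and $\mathcal{H}_{\bg} \subseteq \mathcal{H}_{\bg^{(1)}}$. Consider the birational quasi-isomorphism $\mathcal{G}:\gc_h^{\dagger}(\bg^{(1)}) \dashrightarrow \gc_h^{\dagger}(\bg)$ from Proposition~\ref{p:b_gen}. In the related triple $(\gc_h^{\dagger}(\bg), \gc_h^{\dagger}(\bg^{(1)}), \kappa)$ the marked variables in $\gc_h^{\dagger}(\bg)$ are precisely $\{h_{i+1,i+1} : i \in \Gamma_2 \setminus \{q\}\}$, so $h_{q+1,q+1}$ is non-marked here; Proposition~\ref{p:q_y_var} then gives $\mathcal{G}^*(y(h_{q+1,q+1})) = y(\tilde h_{q+1,q+1})$ in $\gc_h^{\dagger}(\bg^{(1)})$, which is $\mathcal{H}_{\bg^{(1)}}$-invariant by Lemma~\ref{l:toric}, hence $\mathcal{H}_{\bg}$-invariant. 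Combined with the $\mathcal{H}_{\bg}$-equivariance of $\mathcal{G}$ (inherited from that of $\mathcal{Q}$ and $\tilde{\mathcal{Q}}$, since both are $\mathcal{H}_{\bg}$-equivariant by Proposition~\ref{p:q_equiv}), this transfers invariance back to $\gc_h^{\dagger}(\bg)$.

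The main obstacle is Lemma~\ref{l:toric}, which handles the single-root base case: it demands an explicit character computation confirming that the product of characters on the constituent $\tilde{h}$-variables in $y(\psi_{\square})$ cancels exactly by virtue of the defining relation $\alpha(h) = \gamma(\alpha)(h)$ of $\mathfrak{h}_{\bg}$ (translated in the $|\Gamma_1|=1$ case to the single identity $t_p t_{p+1}^{-1} = t_q t_{q+1}^{-1}$). Once this base case is established, all remaining invariance statements propagate through the equivariant birational quasi-isomorphisms $\mathcal{Q}$ and $\mathcal{G}$, and Proposition~\ref{p:toric} yields the desired global toric actions of the stated ranks.
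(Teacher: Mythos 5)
Your proposal is correct and rests on the same key ingredients as the paper's proof: Proposition~\ref{p:toric}, the $\mathcal{H}_{\bg}$-equivariance of $\mathcal{Q}$ and $\mathcal{G}$ (Proposition~\ref{p:q_equiv}), Lemma~\ref{l:toric_glob_scalar} for the $\mathbb{C}^*$ factor, Proposition~\ref{p:q_y_var} to transport $y$-variables, and Lemma~\ref{l:toric} as the explicit base computation. The one organizational difference is worth noting: the paper runs a genuine induction on $|\Gamma_1|$, handling marked $y$-variables at a given level via pairs of complementary birational quasi-isomorphisms $\mathcal{G}$ (each targeting a $\tilde{\bg}$ with $|\Gamma_1|-1$ roots), whereas you bypass induction entirely by choosing, for each marked variable $h_{q+1,q+1}$, the single quasi-isomorphism $\mathcal{G}:\gc_h^{\dagger}(\bg^{(1)})\dashrightarrow\gc_h^{\dagger}(\bg)$ associated to the one-root subtriple $\bg^{(1)} := (\{\gamma^{-1}(q)\},\{q\},\gamma|)$, under which $h_{q+1,q+1}$ is non-marked; this lands you directly on Lemma~\ref{l:toric} applied to $\bg^{(1)}$. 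That reduction is valid (Proposition~\ref{p:b_gen} supplies $\mathcal{G}$ for any $\tilde{\bg}\prec\bg$, and $\mathcal{H}_{\bg}\subseteq\mathcal{H}_{\bg^{(1)}}$ guarantees the needed equivariance) and is arguably a cleaner route than the layered induction. Your rank argument via surjectivity of $X^*(\mathcal{H})\to X^*(\mathcal{H}_{\bg})$ is also sound; the paper instead invokes the standard ``a trivially-acting subtorus would act trivially on all of $\mathcal{O}(G)\subseteq\mathcal{L}_{\mathbb{C}}(\Psi_0)$'' argument, which is essentially the same observation dressed differently.
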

\begin{proof}
The statement for $\mathbb{C}^*$ was already proved in Lemma~\ref{l:toric_glob_scalar}, so let us focus on $\mathcal{H}_{\bg}$; the proof is based on Proposition~\ref{p:toric}. Let us notice that due to Proposition~\ref{p:q_equiv}, all birational quasi-isomorphisms $\mathcal{G}$ that were introduced in Sections~\ref{s:genbirat} and \ref{s:bsimpl} are equivariant with respect to $\mathcal{H}_{\bg}$. This implies that $\mathcal{H}_{\bg}$ induces a local toric action upon the initial extended cluster of $\gc_h^{\dagger}(\bg)$. The fact that the action is of maximal rank follows from a standard argument\footnote{Let us briefly recall the argument. Assume that the toric action is of rank less than maximal. This means that there exists a $1$-dimensional subgroup of $\mathcal{H}_{\bg}$ that leaves all cluster and frozen variables from the initial extended cluster $\Psi_0$ invariant. However, since $\mathbb{C}[\GL_n]\subseteq \mathcal{L}_{\mathbb{C}}(\Psi_0)$, we see that the action leaves \emph{all} regular functions invariant. In other words, it is trivial.} that was utilized in \cite{plethora,multdouble}. The invariance of the Casimirs $\hat{p}_{ir}$ (which are the $c$-variables in our case) was proved before in~\cite[Theorem 6.1]{double}; therefore, it remains to show that all the $y$-variables in the initial extended cluster $\Psi_0$ of $\gc_h^{\dagger}(\bg)$ are invariant with respect to the action by $\mathcal{H}_{\bg}$.

We proceed by induction on the size $|\Gamma_1|$. If $|\Gamma_1| = 0$, then the statement was already proved in~\cite[Theorem 6.1]{double}. Let $\bg$ be a BD triple of type $A_{n-1}$ and $\tilde{\bg}$ be obtained from $\bg$ via removal of a pair of roots. Let $\psi_{\square}$ be the corresponding marked variable. Notice that $\mathfrak{h}_{\tilde{\bg}}\supset \mathfrak{h}_{\bg}$. If $|\Gamma_1| = 1$ and $y(\psi)$ is a $y$-variable of $\psi\neq \psi_{\square}$, we see that
\begin{equation*}
\mathcal{G}^*\left(y(\psi)(TUT^{-1})\right) = y(\psi)(T\mathcal{G}(U)T^{-1}) = y(\psi)(\mathcal{G}(TUT^{-1})) = y(\tilde{\psi})(U).
\end{equation*}
If $\psi = \psi_{\square}$, then the invariance follows from Lemma~\ref{l:toric}. The case of $|\Gamma_1| > 1$ follows from the existence of complementary birational quasi-isomorphisms $\mathcal{G}$ (see Section~\ref{s:genbirat}).
\end{proof}
\subsection{Compatibility}\label{s:compatibility}
In this subsection, we use the Poisson properties of the birational quasi-isomorphism $\mathcal{Q}$ in order to prove Part~\ref{i:gln_comp} of Theorem~\ref{thm:maingln} and Theorem~\ref{thm:mainsln}; that is, the generalized cluster structure $\gc_h^{\dagger}(\bg)$ is compatible with $\pi^{\dagger}_{\bar{\bg}}$ for any BD quadruple $\bar{\bg}$ of type~$A_{n-1}$.

\begin{lemma}\label{l:compmarked}
    Let $\bg:=(\Gamma_1,\Gamma_2,\gamma)$ be a BD triple such that $\Gamma_1 = \{p\}$, $\Gamma_2 = \{q\}$ for some simple roots $p$ and $q$, and let $\bar{\bg}:=(\bg,r_0)$ be a BD quadruple. Then for any cluster or frozen variable $\psi$ from the initial extended cluster of $\gc^{\dagger}_h(\bg)$,
    \begin{equation*}
    \{\log y(\psi_{\square}), \log \psi\}_{\bar{\bg}}^{\dagger} = \begin{cases}
        1 \ &\text{if} \ \psi = h_{q+1,q+1},\\
        0 \ &\text{otherwise}
    \end{cases}
    \end{equation*}
    where $\psi_{\square}:=h_{q+1,q+1}$ is the marked variable.
\end{lemma}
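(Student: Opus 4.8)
The plan is to deduce the identity from the corresponding statement for the trivial triple $\bg_{\std}$, transported by the Poisson birational quasi-isomorphism $\mathcal{Q}$. First I would fix $r_0$ to be the same for $\pi_{\std}^{\dagger}$ and $\pi_{\bg}^{\dagger}$; then $\mathcal{Q}\colon(G,\pi_{\std}^{\dagger})\dashrightarrow(G,\pi_{\bg}^{\dagger})$ is a Poisson isomorphism by Proposition~\ref{p:qpoiss}, and since $\mathcal{Q}^{*}$ is an injective field homomorphism fixing constants,
\begin{equation*}
\{\log y(\psi_{\square}),\log\psi\}^{\dagger}_{\bg}=c \iff \{\log\mathcal{Q}^{*}(y(\psi_{\square})),\,\log\mathcal{Q}^{*}(\psi)\}^{\dagger}_{\std}=c .
\end{equation*}
The right-hand side is automatically a constant, because $\mathcal{Q}^{*}(\psi)$ and the factors of $y(\psi_{\square})$ are Laurent monomials in the initial extended cluster of $\gc_h^{\dagger}(\bg_{\std})$, which is log-canonical for $\{\cdot,\cdot\}_{\std}^{\dagger}$; in particular this already shows that the initial extended cluster of $\gc_h^{\dagger}(\bg)$ is log-canonical. (That the resulting constant is independent of the choice of $r_0$ is not an issue here: $\{\log y(\psi_{\square}),\log\psi\}^{\dagger}_{\bg}$ is an entry of the product $B\Omega$ of the exchange matrix with the coefficient matrix, and $B$ annihilates the weight matrix of the global toric action of Section~\ref{s:toric}, whereas changing $r_0$ perturbs $\Omega$ only within the span of those weights.) So everything is reduced to a computation inside $\gc_h^{\dagger}(\bg_{\std})$.

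Next I would make the pullbacks explicit. Since $|\Gamma_1|=1$, the unique nontrivial $\gamma$-string is $\{p,q\}$ with $q=\gamma(p)$, so Proposition~\ref{p:q_quasih} gives $\mathcal{Q}^{*}(\varphi_{kl})=\varphi_{kl}$, $\mathcal{Q}^{*}(c_{i})=c_{i}$, and
\begin{equation*}
\mathcal{Q}^{*}(h_{ij})=\begin{cases}\tilde{h}_{p+1,j}\,\tilde{h}_{q+1,q+1}, & i=p+1,\\[2pt]\tilde{h}_{ij}, & i\neq p+1,\end{cases}
\end{equation*}
whence $\mathcal{Q}^{*}(\psi_{\square})=\tilde{h}_{q+1,q+1}=:\tilde\psi_{\square}$. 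Reading the quiver neighbourhood of $\psi_{\square}=h_{q+1,q+1}$ off the exchange relation in Lemma~\ref{l:inimarked} — which forces the two cases $q\neq p+1$ and $q=p+1$, exactly as there — and substituting the above, I obtain $\mathcal{Q}^{*}(y(\psi_{\square}))$ as an explicit Laurent monomial in the $\tilde h$-, $\varphi$- and $c$-variables; for $q\neq p+1$ this reproduces the computation already carried out in the proof of Lemma~\ref{l:toric}, namely $\mathcal{Q}^{*}(y(\psi_{\square}))=\tilde h_{qq}\,\tilde h_{pn}\,\tilde h_{q+1,q+2}\big/\bigl(\tilde h_{q,q+1}\,\tilde h_{p+1,n}\,\tilde h_{q+1,q+1}\bigr)$, and for $q=p+1$ one gets the analogous monomial coming from the longer exchange relation of Lemma~\ref{l:inimarked}.

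Finally I would evaluate $\{\log\mathcal{Q}^{*}(y(\psi_{\square})),\log\mathcal{Q}^{*}(\psi)\}^{\dagger}_{\std}$ for every initial variable $\psi$ of $\gc_h^{\dagger}(\bg)$. Here I would invoke that the initial extended cluster of $\gc_h^{\dagger}(\bg_{\std})$ satisfies Condition~\ref{pr:c2} of Proposition~\ref{p:compb} — this is part of the compatibility of $\gc_h^{\dagger}(\bg_{\std})$ with $\pi^{\dagger}_{\std}$ established in~\cite{double} — so that $\{\log\tilde y_{i},\log\tilde x_{j}\}^{\dagger}_{\std}=\delta_{ij}\tilde\Delta_{ii}$ (in particular $\tilde y$-variables commute log-canonically with trivial coefficient against every frozen variable), together with the explicitly known log-canonical brackets among the frozen variables of $\gc_h^{\dagger}(\bg_{\std})$. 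Writing $\mathcal{Q}^{*}(\psi)=\tilde\psi\cdot\tilde h_{q+1,q+1}^{\,\epsilon}$ with $\epsilon\in\{0,1\}$ and splitting $\mathcal{Q}^{*}(y(\psi_{\square}))$ into a monomial in the frozen variables $\tilde h_{jj},c_i$ of $\gc_h^{\dagger}(\bg_{\std})$ times a monomial in its cluster variables (the latter expressed, as far as possible, through $\tilde y$-variables), the pairing collapses to a short sum of known constants; alternatively one may simply substitute the explicit coefficient matrix of $\{\cdot,\cdot\}^{\dagger}_{\std}$ from~\cite{double} into the bilinear pairing of the two exponent vectors. The one genuinely computational step — and the main obstacle — is this final accounting: verifying, in both cases $q\neq p+1$ and $q=p+1$, that all contributions cancel except a single $+1$, which arises precisely when $\psi=\psi_{\square}$ and is produced by the frozen factor $\tilde h_{qq}\tilde h_{q+1,q+1}^{-1}$ of $\mathcal{Q}^{*}(y(\psi_{\square}))$ paired against $\mathcal{Q}^{*}(\psi_{\square})=\tilde h_{q+1,q+1}$.
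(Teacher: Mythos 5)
Your reduction step — pushing everything through $\mathcal{Q}$ with a common $r_0$ and using Proposition~\ref{p:qpoiss} together with Proposition~\ref{p:q_quasih} — is exactly what the paper does, and the formula you obtain for $\mathcal{Q}^*(y(\psi_\square))$ as the monomial $\tilde{h}_{qq}\tilde{h}_{pn}\tilde{h}_{q+1,q+2}\,/\,(\tilde{h}_{q,q+1}\tilde{h}_{p+1,n}\tilde{h}_{q+1,q+1})$ is the one the paper uses. The aside about $r_0$-independence is superfluous: the paper simply fixes an $r_0$ satisfying~\eqref{eq:r0m}--\eqref{eq:ralgm} and observes that the BD relation $R_0(1-\gamma)(e_{pp}-e_{p+1,p+1}) = -e_{pp}+e_{p+1,p+1}$ (a special case of~\eqref{eq:ralgm}) handles the diagonal part.

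Where you diverge, and where the gap is, is the final accounting. You propose either (a) decomposing $\mathcal{Q}^*(y(\psi_\square))$ into a product of $\tilde y$-variables of $\gc^\dagger_h(\bg_{\std})$ times a monomial in its frozen variables, or (b) substituting the coefficient matrix of $\{\cdot,\cdot\}_{\std}^\dagger$ from~\cite{double}. Both are plausible in principle, but you explicitly flag the verification as "the main obstacle" and never carry it out, whereas this arithmetic \emph{is} the content of the lemma. Moreover, route (a) is not obviously available: since $\mathcal{Q}^*(\psi_\square)=\tilde h_{q+1,q+1}$ is \emph{frozen} in $\gc_h^\dagger(\bg_{\std})$, the monomial $\mathcal{Q}^*(y(\psi_\square))$ is not itself a $\tilde y$-variable, and one would have to check that the exponent vector of its cluster-variable factors lies in the row space of the principal part $\tilde B^{[1,N]}$ — a nontrivial linear-algebra claim you assume implicitly. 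The paper avoids both pitfalls by computing directly with the right-gradient formula for $\{\cdot,\cdot\}_{\std}^\dagger$ restricted to flag-minor functions, using Lemma~\ref{l:grrhijks} for the $\pi_>$-part and Lemma~\ref{l:hijlrd} for the diagonal part; this is a short, self-contained Lie-theoretic calculation that neither depends on the explicit coefficient matrix of~\cite{double} nor requires any decomposition of $\mathcal{Q}^*(y(\psi_\square))$. So: correct strategy, but the decisive computational step is outsourced rather than done, and one of your two proposed ways of doing it needs justification you do not give.
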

\begin{proof}
Let us apply the birational quasi-isomorphism $\mathcal{Q}$ and prove an equivalent statement for the trivial BD triple. We see that
\begin{equation*}
    \mathcal{Q}^* (y(\psi_{\square})) = \frac{\tilde{h}_{qq}}{\tilde{h}_{q,q+1}} \frac{\tilde{h}_{pn}}{\tilde{h}_{p+1,n}}\frac{\tilde{h}_{q+1,q+2}}{\tilde{h}_{q+1,q+1}}.
\end{equation*}
Let us first consider the case when $\psi$ is an $h$-variable. If $h_1$ and $h_2$ are two functions on $\GL_n$ such that $h_i(UN_-) = h_i(U)$, where $N_-$ is a unipotent lower-triangular matrix and $i \in \{1,2\}$, then the Poisson bracket $\{\cdot,\cdot\}_{(\bg_\std,r_0)}^{\dagger}$ between them is given by
\begin{equation*}
\{h_1,h_2\}_{(\bg_\std,r_0)}^{\dagger} = \langle \pi_{>} \nabla_U^R h_1,\nabla_U^R h_2\rangle + \langle R_0[\nabla_U h_1,U],[\nabla_U h_2,U]\rangle - \langle \pi_0[\nabla_U h_1,U],\nabla_U h_2 U\rangle.
\end{equation*}
It follows from Lemma~\ref{l:grrhijks} that 
\begin{equation*}
\langle \pi_{>}\left( \nabla_U^R \log \frac{\tilde{h}_{pn}}{\tilde{h}_{p+1,n}}\right),\nabla_U^R \log \tilde{h}_{ks} \rangle = -\chi_{n-s=p-k}
\end{equation*}
\begin{equation*}
\langle \pi_{>}\left(\nabla_U^R \log \frac{\tilde{h}_{qq}}{\tilde{h}_{q,q+1}}\frac{\tilde{h}_{q+1,q+2}}{\tilde{h}_{q+1,q+1}}  \right), \nabla_U^R \log \tilde{h}_{ks}\rangle = -\chi_{k \leq q}\cdot \chi_{s = q+1}
\end{equation*}
where $\chi_{(\cdot)} \in \{0,1\}$ is an indicator that returns true or false depending on whether the corresponding condition is satisfied. It follows from Lemma~\ref{l:hijlrd} that
\begin{equation}\label{eq:gradqyps}
\begin{aligned}
&\pi_0\left( \nabla_U^L \log \mathcal{Q}^* y(\psi_{\square}) \right) = e_{qq}-e_{q+1,q+1};\\
&\pi_0\left( \nabla_U^R \log \mathcal{Q}^* y(\psi_{\square}) \right) = e_{pp}-e_{p+1,p+1};
\end{aligned}
\end{equation}
hence,
\begin{equation}\label{eq:r0qypsi}\begin{split}
R_0[\nabla_U \log \mathcal{Q}^*& y(\psi_{\square}),U] = R_0(e_{qq}-e_{q+1,q+1} - (e_{pp}-e_{p+1,p+1})) = \\=&-R_0(1-\gamma)(e_{pp}-e_{p+1,p+1}) = -e_{pp}+e_{p+1,p+1}.
\end{split}
\end{equation}
Let us set $\Delta(i,j) := \sum_{k = i}^{j} e_{kk}$. We see that
\begin{equation}\label{eq:lastbraq}
\begin{split}
\{\log \mathcal{Q}^*y(\psi_{\square}), \log &\hat{h}_{ks}\}_{(\bg_\std,r_0)}^{\dagger} = -\chi_{n-s=p-k} - \chi_{k \leq q}\cdot \chi_{s = q+1} +\\+& \langle e_{pp}-e_{p+1,p+1},\Delta(k,n-s+k) \rangle - \langle e_{qq}-e_{q+1,q+1}, \Delta(s,n) \rangle = \\ = &-\chi_{k = p+1}+ (1-\chi_{k\leq q})\chi_{s=q+1} = -\chi_{k = p+1}+\chi_{k=q+1}\chi_{s=q+1}.
\end{split}
\end{equation}
Now let us recall that $\mathcal{Q}^*({h_{ks}}) = \tilde{h}_{ks}$ if $k \neq p+1$ and $\mathcal{Q}^*(h_{p+1,s}) = \tilde{h}_{p+1,s}\tilde{h}_{q+1,q+1}$. Together with formula~\eqref{eq:lastbraq}, the result follows in the case $\psi$ is an $h$-function. If $\psi = \varphi_{kl}$ for some $\varphi$-function $\varphi_{kl}$, then for any regular function $h$ such that $h(UN_-) = h(U)$,
\begin{equation*}
\{\log h,\log \varphi_{kl}\}_{(\bg_\std,r_0)}^{\dagger} = \langle R_0\pi_0[\nabla_U \log h,U],[\nabla_U\log \varphi_{kl},U]\rangle - \langle \pi_0 \nabla_U^R \log h,[\nabla_U\log \varphi_{kl},U]\rangle.
\end{equation*}
Setting $h:=\log \mathcal{Q}^*y(\psi_{\square})$ and applying $\pi_0[\nabla_U\log \varphi_{kl},U] = \const$ together with \eqref{eq:gradqyps} and \eqref{eq:r0qypsi}, we obtain that 
\begin{equation*}
\{\log \mathcal{Q}^*y(\psi_{\square}),\log \varphi_{kl}\}_{(\bg_\std,r_0)}^{\dagger} = 0. 
\end{equation*}
Thus the result follows.
\end{proof}

\begin{proposition}\label{p:compatibl}
For any BD quadruple $(\bg,r_0)$, the generalized cluster structure $\gc_h^{\dagger}(\bg)$ is compatible with $\pi_{(\bg,r_0)}^{\dagger}$.
\end{proposition}
\begin{proof}
Let us fix $n$ and run an induction on the size $|\Gamma_1|$. For $|\Gamma_1| = 0$, the result was proved in \cite[Theorem 3.8]{double}. Let $\tilde{\bg}$ be the BD triple obtained from $\bg$ by removing $p \in \Gamma_1$ and $q \in \Gamma_2$. Let $\mathcal{G}:(G,\pi_{(\tilde{\bg},r_0)}^{\dagger}) \dashrightarrow (G,\pi_{(\bg,r_0)}^{\dagger})$ be the birational quasi-isomorphism constructed in Section~\ref{s:bsimpl}, where $G \in \{\SL_n,\GL_n\}$. Let $\tilde{\Psi}_0$ and $\Psi_0$ be the initial extended clusters of $\gc_h^{\dagger}(\tilde{\bg},G)$ and $\gc_h^{\dagger}(\bg,G)$, and let $\psi_{\square} \in \Psi_0$ be the marked variable. Since $\tilde{\Psi}_0$ is assumed to be log-canonical and $\mathcal{G}$ is a Poisson isomorphism, $\Psi_0$ is log-canonical as well. Let $\psi_1 \in \Psi_0\setminus \{\psi_{\square}\}$ be a cluster variable and $\psi_2 \in \Psi_0$ be any cluster or frozen variable. Since
\begin{equation*}
\{\log y(\tilde{\psi}_1),\log \tilde{\psi}_2\}_{(\tilde{\bg},r_0)} = \chi_{\tilde{\psi}_1 = \tilde{\psi}_2},
\end{equation*}
an application of $(\mathcal{G}^*)^{-1}$ yields
\begin{equation}\label{eq:ypsir}
\{\log y(\psi_1),\log \psi_2\}_{(\bg,r_0)} = \chi_{\psi_1 = \psi_2}.
\end{equation}
If $|\Gamma_1| = 1$, it follows from Lemma~\ref{l:compmarked} (which treats the case of $\psi_1 = \psi_{\square}$) and Proposition~\ref{p:compb} that $\gc_h^{\dagger}(\bg,G)$ is compatible with $\pi_{(\bg,r_0)}^{\dagger}$. For $|\Gamma_1| > 1$, to verify~\eqref{eq:ypsir} for the marked variables, one can use another birational quasi-isomorphism $\mathcal{G}$ whose marked variables are disjoint from the former one. Thus the statement of the proposition holds.
\end{proof}

\section{\texorpdfstring{Further properties of $h$-variables}{Further properties of h-variables}}\label{s:other}
In this section, we provide several additional formulas for the $h$-variables, including their explicit expansion in terms of minors of an element $U \in \GL_n$.  Also, we study the Poisson properties of the frozen $h$-variables and prove Proposition~\ref{p:frozvar}.

\subsection{Preliminaries}\label{s:p_prelim}
In this subsection, we derive preliminary formulas that we will need in order to compute the $h$-variables explicitly in terms of minors of $U$.

\paragraph{Notation.} Let $A \subseteq [1,n]$, $i \in [1,n]$. Set $A^{\leq}(i) := A \cap [1,i]$, $A^{\geq}(i) := A \cap [i,n]$, and for a number $t \in [1,|A|]$ denote by ${A \choose t}$ the set of all subsets of $A$ of size $t$.

\begin{lemma}\label{l:h_frst_expn}
    Let $\bg := (\Gamma_1,\Gamma_2,\gamma)$ be a BD triple of type $A_{n-1}$. Let $i \in [1,n]$, $r \in [i,n]$, $k \geq 0$ and $J \subseteq [1,n]$, $|J| = r-i+1$. Let $\Delta$ be the $X$-run that contains $i$. For $i-1 \in \Gamma_1$, set
    \begin{equation}\label{eq:lir}
        L_{i,r} := \begin{cases}
            [\bar{\gamma}(i)_+(\Gamma_2)+1,n] \ &\text{if} \ r \notin \Delta \ \text{or} \ \gamma|_{\Delta\cap \Gamma_2}< 0;\\
            [\bar{\gamma}(r)+1,n] \ &\text{if} \ r \in \Delta \ \text{and} \ \gamma|_{\Delta \cap \Gamma_1} > 0;
        \end{cases}
    \end{equation}
    \begin{equation}\label{eq:dii}
    D_{i,I}(U) := \begin{cases}
        \dfrac{\det U^{\sgamma(I)\cup L_{i,r}}_{[\bar{\gamma}(i),n]}}{\det U^{[\bar{\gamma}(i),n]}_{[\bar{\gamma}(i),n]}} \ &\text{if} \ \gamma|_{\Delta \cap \Gamma_1} > 0;\\[20pt]
        \dfrac{\det U^{\sgamma(I)\cup L_{i,r}}_{[\bar{\gamma}(i)+1,n]}}{\det U^{[\bar{\gamma}(i)+1,n]}_{[\bar{\gamma}(i)+1,n]}} \ &\text{if} \ \gamma|_{\Delta \cap \Gamma_1} < 0.
    \end{cases}
    \end{equation}
    Then for any $k \geq 0$, \begin{equation}\label{eq:h_frst_expn}
        \det [\mathcal{F}_{k}(U)]_{[i,r]}^{J} = \begin{cases}\sum_{I \in {\Delta^{\leq}(r) \choose |\Delta \cap [i,r]|}} \mathcal{D}_{i,I}(\mathcal{F}_{k-1}(U) ) \cdot \det U^{J}_{I \cup [i_+(\Gamma_1)+ 1, r]} \ &\text{if} \ i-1 \in \Gamma_1,\\[12pt]
        \det U_{[i,r]}^{J} \ &\text{otherwise.}
        \end{cases}
    \end{equation}
    
    \end{lemma}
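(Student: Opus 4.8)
\textbf{Proof plan for Lemma~\ref{l:h_frst_expn}.}
The statement unwinds the recursive definition $\mathcal{F}_k(U) = \tilde{\gamma}^*([\mathcal{F}_{k-1}(U)]_-)\cdot U$ at the level of a single flag-type minor. The plan is to fix $k\geq 1$ (the case $k=0$ being the trivial ``otherwise'' branch, since $\mathcal{F}_0(U)=U$ and no $X$-run correction occurs), write $V:=\mathcal{F}_{k-1}(U)$ and $N:=[V]_-$, so that $\mathcal{F}_k(U)=\tilde{\gamma}^*(N)\cdot U$, and then expand $\det[\tilde{\gamma}^*(N)U]^J_{[i,r]}$ by the Cauchy--Binet formula over the row set:
\[
\det[\tilde{\gamma}^*(N)U]^J_{[i,r]} = \sum_{\substack{K\subseteq[1,n]\\ |K|=r-i+1}} \det[\tilde{\gamma}^*(N)]^K_{[i,r]}\cdot \det U^J_K.
\]
Since $\tilde{\gamma}^*(N)$ is unipotent lower triangular, $\det[\tilde{\gamma}^*(N)]^K_{[i,r]}$ vanishes unless $K$ is obtained from $[i,r]$ by moving some of the row indices strictly downward, and the nonzero contributions are controlled by the structure of $\tilde{\gamma}^*$, which only acts nontrivially inside the block attached to the $X$-run $\Delta$ containing $i$. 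In particular, when $i-1\notin\Gamma_1$ the index $i$ is a left endpoint of its $X$-run, the relevant rows of $\tilde{\gamma}^*(N)$ are already in ``triangular position'' and the only surviving term is $K=[i,r]$, giving $\det U^J_{[i,r]}$; this disposes of the ``otherwise'' case.

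The substantive case is $i-1\in\Gamma_1$. Here I would split the row interval $[i,r]$ into the part lying in $\Delta$ and the part lying to the right of $i_+(\Gamma_1)$, noting that $[i_+(\Gamma_1)+1,r]$ consists of rows outside $\Gamma_1$-influence and hence contributes a fixed block; the admissible down-shifts of the rows in $\Delta\cap[i,r]$ are parameterized exactly by choosing a subset $I\in\binom{\Delta^{\leq}(r)}{|\Delta\cap[i,r]|}$, which is why the sum in~\eqref{eq:h_frst_expn} runs over that index set, and $K = I\cup[i_+(\Gamma_1)+1,r]$. The coefficient $\det[\tilde{\gamma}^*(N)]^K_{[i,r]}$ then has to be identified with $\mathcal{D}_{i,I}(V)$. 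This is where the explicit formulas for the entries of $\tilde{\gamma}^*(U_-)$ from Section~\ref{s:bd_map} come in: formulas~\eqref{eq:gump} and~\eqref{eq:gumn} (according to the sign of $\gamma$ on $\Delta$), together with~\eqref{eq:gammann} in the negatively oriented case, express each entry of $\tilde{\gamma}^*(N)$ as a ratio of minors of $V$; assembling the minor $\det[\tilde{\gamma}^*(N)]^K_{[i,r]}$ from these entries and applying a Desnanot--Jacobi / Sylvester-type identity (from Appendix~\ref{s:aidentminors}) to collapse the sum-of-products into a single quotient of minors yields precisely $\mathcal{D}_{i,I}(V)$, with the two cases of~\eqref{eq:dii} corresponding to $\gamma|_{\Delta\cap\Gamma_1}>0$ versus $<0$, and with $L_{i,r}$ accounting for whether $r$ stays inside $\Delta$ (so that $\check\gamma$ genuinely permutes/complements within $\bar\Delta$) or escapes it.

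The main obstacle will be the bookkeeping in this last identification step: keeping track of which columns of $\tilde{\gamma}^*(N)$ are hit (the set $\check\gamma(I)\cup L_{i,r}$), getting the signs right — both the $\pm1$'s built into $\mathring\gamma$ on negatively oriented runs and the Laplace-expansion signs — and verifying that the combinatorial collapse of the Cauchy--Binet-type sum over the internal block genuinely reduces to the single ratio in~\eqref{eq:dii} rather than a longer alternating sum. Concretely, I expect to first prove the identity for a single nontrivial $X$-run (reducing to a computation inside $\GL_{|\Delta|}$, where $\tilde{\gamma}^*$ is either the identity or $A\mapsto \omega_0(-A)^T\omega_0^{-1}$), then transport it to the general situation by the direct-sum structure of $\mathring\gamma^*$ and the fact that rows outside $\Delta$ are untouched. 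The base case $k=1$ already contains all the difficulty; for $k>1$ the formula is literally the same recursion applied to $V=\mathcal{F}_{k-1}(U)$, so no separate induction argument is needed beyond observing that~\eqref{eq:h_frst_expn} is stated relative to $\mathcal{F}_{k-1}(U)$ on the right-hand side.
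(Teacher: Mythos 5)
Your skeleton matches the paper's proof exactly: a Cauchy--Binet expansion of $\det\bigl[\tilde{\gamma}^*(N)U\bigr]^J_{[i,r]}$ over column sets $K$, the block-structure observation (recorded in the paper as the display~\eqref{eq:tg_shape}) that for $i-1\in\Gamma_1$ forces $K=I\cup[i_+(\Gamma_1)+1,r]$ with $I\in\binom{\Delta^{\leq}(r)}{|\Delta\cap[i,r]|}$, the resulting reduction $\det[\tilde{\gamma}^*(N)]^K_{[i,r]}=\det[\tilde{\gamma}^*(N)]^I_{\Delta\cap[i,r]}$, and finally identifying that determinant with $\mathcal{D}_{i,I}(\mathcal{F}_{k-1}(U))$; the case $i-1\notin\Gamma_1$, where only $K=[i,r]$ survives with coefficient $1$, is also handled the same way. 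The one place you genuinely diverge is the final identification. You propose to expand $\det[\tilde{\gamma}^*(N)]^I_{\Delta\cap[i,r]}$ entry-by-entry via formulas~\eqref{eq:gump}--\eqref{eq:gumn} (and~\eqref{eq:gammann} in the negatively oriented case) and then collapse the resulting sum of products of minor-ratios by a Desnanot--Jacobi argument. The paper instead recognizes this minor, using the direct-sum structure of $\mathring{\gamma}^*$, as a single minor of $N$ (respectively of the transformed matrix $W_0 N^{-T}W_0^{-1}$ when $\gamma$ is negatively oriented on $\Delta$) inside the block indexed by $\bar{\Delta}$, and then applies the minor-level Lemmas~\ref{l:min_ump} and~\ref{l:min_umn}, which directly express such a minor of $U_-$ as a single ratio of two minors of $U$ by one Cauchy--Binet step on $U=U_\oplus U_-$, with all the orientation signs isolated once in the proof of Lemma~\ref{l:min_umn}. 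Your route is workable --- a Desnanot--Jacobi collapse is essentially how one would prove a statement like~\eqref{eq:min_ump} from scratch --- but it amounts to re-deriving those lemmas in the middle of this argument; the paper's version buys cleaner sign control, avoids an iterated DJ collapse whose term count grows with $|\Delta|$, and confirms your own diagnosis that the bookkeeping in this last identification step is where the real work lies.
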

    \begin{proof}
        By formula~\eqref{eq:fseq} (the definition of $\mathcal{F}_k(U)$) and the Cauchy--Binet formula,
        \begin{equation*}
        \det [\mathcal{F}_k(U)]^{K}_{[i,r]} = \sum_{K, \ |K|=|i,r|} \det [\tilde{\gamma}^*(\mathcal{F}_{k-1}(U)_-)]_{[i,r]}^{K} \det U^{J}_{K}.
        \end{equation*}
       Let us notice that the shape of the matrix $[\tilde{\gamma}^*(U_-)]_{[i,n]}^{[1,n]}$ is given by
       \begin{equation*}
       [\tilde{\gamma}^*(U_-)]_{[i,n]}^{[1,n]} = \begin{bmatrix}
        0 & [\tilde{\gamma}^*(U_-)]_{[i,i_+(\Gamma_1)]}^{\Delta} & 0\\
        0 & 0 & [\tilde{\gamma}^*(U_-)]_{[i_+(\Gamma_1) + 1,n]}^{[i_+(\Gamma_1)+ 1,n]}
       \end{bmatrix};
       \end{equation*}
       therefore, $\det [\tilde{\gamma}^*(U_-)]_{[i,r]}^{K}$ is nontrivial for a generic $U\in \GL_n$ if and only if $K = I \cup [i_+(\Gamma_1)+1,r]$ where $I \in {\Delta^{\leq}(r)\choose |\Delta \cap [i,r]|}$. Hence $\det [\tilde{\gamma}^*(U_-)]^{K}_{[i,r]} = \det [\tilde{\gamma}^*(U_-)]^{I}_{\Delta \cap [i,r]}$. Now, formula~\eqref{eq:dii} follows from formulas~\eqref{eq:min_ump}--\eqref{eq:min_umn} and the definitions of $\bar{\gamma}$ and $\sgamma$ given in Section~\ref{s:bd_map}.
    \end{proof}

    \begin{lemma}\label{l:feqf} Let $\bg = (\Gamma_1,\Gamma_2,\gamma)$ be a BD triple of type $A_{n-1}$ and $S^{\gamma}(\alpha_0) := \{\alpha_i\}_{i \geq 0}^m$ be a $\gamma$-string. Then for any $i \in [0,m]$, $r \geq \alpha_{m-i}+1$, $J \subseteq [1,n]$, $|J| = |[\alpha_{m-i}+1,r]|$, and any $\ell \geq 0$,
    \begin{equation*}
        \det [\mathcal{F}(U)]_{[\alpha_{m-i}+1,r]}^J = \det [\mathcal{F}_{i + \ell}(U)]_{[\alpha_{m-i}+1,r]}^{J}.
    \end{equation*}
    \end{lemma}
    \begin{proof}
        Let us prove the statement by induction on $i$. For $i = 0$, the statement is  contained in the second line of formula~\eqref{eq:h_frst_expn}, so let us assume that the statement holds for $i \geq 0$ and prove it for $i+1$. For each $t \in [0,m]$, denote by $\Delta_t$ the $X$-run that contains $\alpha_t$. Notice that
        \begin{equation*}\begin{aligned}
        \bar{\gamma}(\alpha_t+1) &= \alpha_{t+1} + 1 \ &\text{if} \ \gamma|_{\Delta_i \cap \Gamma_1} > 0,\\
        \bar{\gamma}(\alpha_t+1) + 1 &= \alpha_{t+1} + 1 \ &\text{if} \ \gamma|_{\Delta_i \cap \Gamma_1} < 0.
        \end{aligned}
        \end{equation*}
        For $I \in {\Delta_{m-i-1}^{\leq}(r) \choose |\Delta_i \cap [\alpha_{m-i-1},r]}$, the latter formulas together with formula~\eqref{eq:dii} imply that
        \begin{equation}\label{eq:diia}
        D_{\alpha_{m-i-1}+1,I}(U) = \frac{\det U^{\sgamma(I)\cup L_{\alpha_{m-i-1}+1,r}}_{[\alpha_{m-i}+1,n]}}{\det U^{[\alpha_{m-i}+1,n]}_{[\alpha_{m-i}+1,n]}}
        \end{equation}
        regardless of the orientation of $\gamma$ on $\Delta_{m-i-1}$. Now, it follows from formula~\eqref{eq:h_frst_expn}, the assumption of the induction and formula~\eqref{eq:diia} that
        \[\begin{split}
        \det [\mathcal{F}_{i+1+\ell}(U)]^J_{[\alpha_{m-i-1}+1,r]} = &\sum_{I \in {\Delta_{m-i-1}^{\leq}(r) \choose |\Delta_{m-i-1} \cap [\alpha_{m-i-1}+1,r]}}\hspace{-10pt} D_{\alpha_{m-i-1}+1,I}(\mathcal{F}_{i+\ell}(U)) \det U^{J}_{I \cup [(\alpha_{m-i-1})_+(\Gamma_1)+1,r]} = \\ =&\sum_{I \in {\Delta_{m-i-1}^{\leq}(r) \choose |\Delta_{m-i-1} \cap [\alpha_{m-i-1}+1,r]}}\hspace{-10pt} D_{\alpha_{m-i-1}+1,I}(\mathcal{F}(U)) \det U^{J}_{I \cup [(\alpha_{m-i-1})_+(\Gamma_1)+1,r]} = \\ = &\det [\mathcal{F}(U)]_{[\alpha_{m-i-1}+1,r]}^{J}.
        \end{split}
        \]
        Thus the statement holds.
    \end{proof}

    As a consequence of Lemma~\ref{l:feqf}, the defining formula for the $h$-variables~\eqref{eq:h_fun} can be recast in the following way:
    
    \begin{proposition}\label{p:h_fun_enh}
        Let $\bg := (\Gamma_1,\Gamma_2,\gamma)$ be a BD triple of type $A_{n-1}$ and $S^{\gamma}(\alpha_0) := \{\alpha_i\}_{i=0}^m$ be a $\gamma$-string. For any $i \in [0,m]$ and $j \in [\alpha_i+1,n]$, the $h$-variable $h_{\alpha_i+1,j}(U)$ can be written as
        \begin{equation*}
            h_{\alpha_i+1,j}(U) = (-1)^{\varepsilon_{\alpha_i+1,j}}\det [\mathcal{F}_{m-i}(U)]^{[j,n]}_{[\alpha_i+1,n-j+\alpha_i+1]} \prod_{t \geq i+1}^m \det [\mathcal{F}_{m-t}(U)]^{[\alpha_t+1,n]}_{[\alpha_t+1,n]}
        \end{equation*}
        where $\varepsilon_{\alpha_i+1,j} \in \{0,1\}$ is given by formula~\eqref{eq:h_sign}.
    \end{proposition}
    \begin{proof}
        The statement is a straightforward consequence of Lemma~\ref{l:feqf}.
    \end{proof}

\subsection{\texorpdfstring{Explicit expansion of the $h$-variables}{Explicit expansion of the h-variables}}\label{s:p_explicit}
In this subsection, we derive an explicit formula for the $h$-variables in terms of minors of $U \in \GL_n$. Let us fix a BD triple $\bg := (\Gamma_1,\Gamma_2,\gamma)$ and a $\gamma$-string $S^{\gamma}(\alpha_0) := \{\alpha_i\}_{i=0}^m$. Define the following data:
\begin{equation}\label{eq:hdata}
\begin{aligned}
\Delta_{i}&:= \Delta(\alpha_i) = \text{the }X\text{-run that contains }\alpha_i, & &0 \leq i \leq m;\\
s_{ij} &:= n-j+\alpha_i + 1, & &i \in [0,m], \ j \in [\alpha_i+1,n];\\
K_i &:= [(\alpha_i)_+(\Gamma_1)+1,n],  & &i \in [0,m-1];\\
L_i &:= [(\alpha_i)_+(\Gamma_2)+1,n], & &i \in [2,m];\\
L^\prime_{i}(j) &:= \begin{cases}[\max\sgamma(\Delta_{i-1}^{\leq}(s_{i-1,j}))+1,n] \ &\text{if }\gamma|_{\Delta_i\cap \Gamma_1} > 0;\\
[(\alpha_i)_+(\Gamma_2)+1,n]  \ &\text{if }\gamma|_{\Delta_i\cap \Gamma_1} < 0.
\end{cases} & &i \in [1,m], \ j \in [\alpha_i+1,n];\\
\mathcal{N}_{i}(j) &:= {\Delta_{i}^{\leq}(s_{ij}) \choose |\Delta_{i} \cap [\alpha_{i}+1,s_{ij}]|}, \ \ \ \mathcal{N}_{i} := {\Delta_{i} \choose |\Delta_{i}^{\geq}(\alpha_i+1)|}, & & i \in [0,m-1], \ j \in [\alpha_i+1,n].
\end{aligned}
\end{equation}

\begin{proposition}\label{p:h_expansion}
For each $i \in [0,m]$ and each $j \in [\alpha_i+1,n]$, the $h$-variable $h_{\alpha_i+1,j}$ is given by:
\begin{enumerate}
\item If $i = m$, then
\begin{equation*}
h_{\alpha_m+1,j}(U) = (-1)^{\varepsilon_{\alpha_m+1,j}}\det U^{[j,n]}_{[\alpha_m+1,s_{mj}]};
\end{equation*}
\item If $i = m-1$, then
\begin{equation}\label{eq:halphao}
h_{\alpha_{m-1}+1,j}(U) = (-1)^{\varepsilon_{\alpha_{m-1}+1,j}}\hspace{-30pt}\sum_{I_{m-1}: \, I_{m-1} \in \mathcal{N}_{m-1}(j)} \det U^{[j,n]}_{I_{m-1} \cup K_{m-1}^{\leq}(s_{m-1,j})}  \det U^{\sgamma(I_{m-1})\cup L_m^\prime(j)}_{[\alpha_m+1,n]};
\end{equation}
\item If $i \in [0,m-2]$, then
\begin{equation*}\label{eq:h_alphai}
\begin{aligned}
h_{\alpha_i+1,j}&(U) = &\\ &=  (-1)^{\varepsilon_{\alpha_i+1,j}} \hspace{-20pt}\sum_{\substack{(I_{i},I_{i+1},\ldots,I_{m-1}) :\\ I_{i} \in \mathcal{N}_i(j), \ I_{t} \in \mathcal{N}_t,\\ \ t \in [i+1,m-1]}}
 &\det U^{[j,n]}_{I_i \cup K_{i}^{\leq}(s_{ij})} \det  U^{\sgamma(I_i) \cup L_{i+1}^\prime(j)}_{I_{i+1} \cup K_{i+1}} \det U^{\sgamma(I_{i+1}) \cup L_{i+2}}_{I_{i+2} \cup K_{i+2}}\cdots \\ & &\cdots\det U^{\sgamma(I_{m-2}) \cup L_{m-1}}_{I_{m-1} \cup K_{m-1}} \det U^{\sgamma(I_{m-1})\cup L_m}_{[\alpha_m+1,n]}.
\end{aligned}
\end{equation*}
\end{enumerate}
\end{proposition}

\begin{proof}
    The case $i=m$ is already contained in Lemma~\ref{l:h_frst_expn}. Comparing~\eqref{eq:lir} with $L_{i}^\prime(j)$ from~\eqref{eq:hdata}, we see that $L_{\alpha_i+1,s_{ij}} = L_{i+1}^\prime(j)$. Rewriting~\eqref{eq:h_frst_expn} in terms of~\eqref{eq:hdata}, we see that
    \begin{equation}\label{eq:frrstfmi}
    \begin{split}
        \det [\mathcal{F}_{m-i}(U)]_{[\alpha_i+1,s_{ij}]}^{[j,n]} \cdot &\det [\mathcal{F}_{m-i-1}(U)]_{[\alpha_{i+1}+1,n]}^{[\alpha_{i+1}+1,n]} = \\ = &\sum_{I_i \in \mathcal{N}_i(j)}\det U^{[j,n]}_{I_i \cup K_i^{\leq}(s_{ij})}\det [\mathcal{F}_{m-i-1}(U)]_{[\alpha_{i+1}+1,n]}^{\sgamma(I_i) \cup L^\prime_{i+1}(j)} .
        \end{split}
    \end{equation}
    Setting $i = m-1$ in formula~\eqref{eq:frrstfmi}, we obtain (up to a sign) formula~\eqref{eq:halphao}. If $m-i-1\geq 1$, let us expand $\det [\mathcal{F}_{m-i-1}(U)]_{[\alpha_{i+1}+1,n]}^{\sgamma(I_i)\cup L_{i+1}^{\prime}(j)}$ via formula~\eqref{eq:h_frst_expn}; in the notation of~\eqref{eq:hdata}, we obtain
\begin{equation*}
    \begin{split}
        \det [\mathcal{F}_{m-i}(U)&]_{[\alpha_i+1,s_{ij}]}^{[j,n]} \det [\mathcal{F}_{m-i-1}(U)]_{[\alpha_{i+1}+1,n]}^{[\alpha_{i+1}+1,n]} \det [\mathcal{F}_{m-i-2}(U)]_{[\alpha_{i+2}+1,n]}^{[\alpha_{i+2}+1,n]} = \\ = &\sum_{\substack{I_i \in \mathcal{N}_i(j),\\ I_{i+1}\in \mathcal{N}_{i+1}}}\det U^{[j,n]}_{I_i \cup K_i^{\leq}(s_{ij})}\det U_{[\alpha_{i+1}+1,n]}^{\sgamma(I_i)\cup L^\prime_{i+1}(j)}\det [\mathcal{F}_{m-i-2}(U)]_{[\alpha_{i+2}+1,n]}^{\sgamma(I_{i+1})\cup L_{i+2}} .
        \end{split}
    \end{equation*}
    Now, if $m-i-2 \geq 1$, we apply  formula~\eqref{eq:h_frst_expn} again to the term $\det [\mathcal{F}_{m-i-2}(U)]_{[\alpha_{i+2}+1,n]}^{\sgamma(I_{i+1})\cup L_{i+2}}$, and so with $\det [\mathcal{F}_{m-i-3}(U)]_{[\alpha_{i+3}+1,n]}^{\sgamma(I_{i+2})\cup L_{i+3}}$ in the resulting formula if $m-i-3 \geq 1$. Continuing in this fashion, we will finally reach the terms $\det [\mathcal{F}_0(U)]_{[\alpha_m+1,n]}^{\sgamma(I_{m-1})\cup L_{m}}$, which are equal to $\det U_{[\alpha_m+1,n]}^{\sgamma(I_{m-1})\cup L_{m}} $. By Proposition~\ref{p:h_fun_enh}, the right-hand side will be equal (up to a sign) to $h_{\alpha_i+1,j}(U)$, and the left-hand side will be the desired expansion in terms of minors of $U$.
\end{proof}
\begin{corollary}\label{c:degh}
    The polynomial degree of $h_{\alpha_i+1,j}$ is given by
    \begin{equation*}
        \deg h_{\alpha_i+1,j} = n-j+1 + \sum_{\ell = 1}^{m-i}(n-\alpha_{i+\ell}).
    \end{equation*}
\end{corollary}

\subsection{\texorpdfstring{Duality of the frozen $h$-variables}{Duality of the frozen h-variables}}\label{s:p_fr_dual}
Let $\bg := (\Gamma_1,\Gamma_2,\gamma)$ be a BD triple and $\bg^{\op}:=(\Gamma_2,\Gamma_1,\gamma^*)$ be the opposite BD triple. In this subsection, we  prove a certain duality property between the frozen $h$-variables in $\gc_h^{\dagger}(\bg)$ and $\gc_h^{\dagger}(\bg^{\op})$. We will subsequently use this property in two instances: 1) to prove Proposition~\ref{p:frozvar} (semi-invariance of the frozen $h$-variables); 2) to show that the frozen variables in $\gc_h^\dagger(\bg)$ coincide with the frozen variables in $\gc_g^\dagger(\bg)$ (Proposition~\ref{p:samefroz}).

\begin{proposition}\label{p:hfrozdual}
    Let $S^{\gamma}(\alpha_0) := \{\alpha_i\}_{i = 0}^{m}$ and $S^{\gamma^*}(\beta_0):= \{\beta_i\}_{i=0}^m$ be a $\gamma$- and a $\gamma^*$-string that start at $\alpha_0 \in \Pi\setminus \Gamma_2$ and $\beta_0\in \Pi\setminus\Gamma_1$, and such that $\beta_0 = \alpha_m$. Let $h_{\alpha_0+1,\alpha_0+1}$ be the frozen $h$-variable in $\gc_h^{\dagger}(\bg)$ associated with $\alpha_0$ and $\bar{h}_{\beta_0+1,\beta_0+1}$ be the frozen $h$-variable in $\gc_h^{\dagger}(\bg^{\op})$ associated with~$\beta_0$. Then
    \begin{equation}\label{eq:hfrozdual}
        \bar{h}_{\beta_0+1,\beta_0+1}(U^{T}) = h_{\alpha_0+1,\alpha_0+1}(U).
    \end{equation}
\end{proposition}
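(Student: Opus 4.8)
The plan is to reduce the identity \eqref{eq:hfrozdual} to a statement about the maps $\mathcal{F}$ (for $\bg$) and $\bar{\mathcal{F}}$ (for $\bg^{\op}$), and then to the explicit expansion in Proposition~\ref{p:h_expansion}, exploiting the fact that taking the transpose $U \mapsto U^T$ swaps the upper and lower triangular parts, swaps $\gamma^*$-strings with $\gamma$-strings, and sends a minor $\det U^{A}_{B}$ to $\det (U^T)^{B}_{A}$. First I would write down the frozen variables concretely: by \eqref{eq:h_fun}, for the $\gamma$-string $\{\alpha_i\}_{i=0}^m$ one has
\[
h_{\alpha_0+1,\alpha_0+1}(U) = (-1)^{\varepsilon_{\alpha_0+1,\alpha_0+1}} \prod_{t = 0}^{m} \det [\mathcal{F}(U)]^{[\alpha_t+1,n]}_{[\alpha_t+1,n]},
\]
(the first factor is the $j = \alpha_0+1$ case of the flag-minor factor, which is itself a trailing principal minor), and symmetrically $\bar{h}_{\beta_0+1,\beta_0+1}(V) = (-1)^{\bar\varepsilon}\prod_{s=0}^{m}\det[\bar{\mathcal{F}}(V)]^{[\beta_s+1,n]}_{[\beta_s+1,n]}$ for the $\gamma^*$-string $\{\beta_s\}_{s=0}^m$. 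Since $\beta_0 = \alpha_m$ and, more generally, the $\gamma^*$-string through $\alpha_m$ runs backwards through the same simple roots, i.e.\ $\beta_s = \alpha_{m-s}$, the index sets $\{[\alpha_t+1,n]\}_{t=0}^m$ and $\{[\beta_s+1,n]\}_{s=0}^m$ coincide as \emph{sets}. So the content of \eqref{eq:hfrozdual} is the pair of claims: (a) the sign $\varepsilon$ matches $\bar\varepsilon$ (routine from \eqref{eq:h_sign}, as $\varepsilon_{ii} = 0$ so the only signs are the string signs, which are symmetric under reversal), and (b) for each $t$,
\[
\det [\bar{\mathcal{F}}(U^T)]^{[\alpha_t+1,n]}_{[\alpha_t+1,n]} = \det [\mathcal{F}(U)]^{[\alpha_t+1,n]}_{[\alpha_t+1,n]}.
\]

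The main step is (b), and here I would use the transpose map $T$ from Section~\ref{s:q_transp}. The key structural fact is that $T$ intertwines the $\bg$-data with the $\bg^{\op}$-data: $T$ conjugates $\mathcal{N}_-$ to $\mathcal{N}_+$ and vice versa, and under the identification $e_{\alpha}\leftrightarrow e_{-\alpha}$ the homomorphism $\tilde{\gamma}^*:\mathcal{N}_-\to\mathcal{N}_-$ (built from $\gamma^*$) corresponds to the homomorphism $\tilde{\theta}:\mathcal{N}_+\to\mathcal{N}_+$ with $\theta = \gamma^*$ viewed as the ``$\gamma$-part'' of $\bg^{\op}$. Concretely, for the opposite triple the relevant Gauss component is the \emph{upper} unipotent part $U_+$, and $\bar{\mathcal{F}}$ is defined (mutatis mutandis, using the $U = U_+ U_\ominus$ decomposition as in Section~\ref{s:q_transp}) by the recursion $\bar{\mathcal{F}}_k(V) = \widetilde{(\gamma^*)^*}(\bar{\mathcal{F}}_{k-1}(V)_+)\cdot V$, and $(U^T)_+ = ((U_-)^T)$, $(U^T)_\ominus = (U_\oplus)^T$. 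A straightforward induction on $k$ using $(AB)^T = B^T A^T$ and the compatibility of $T$ with $\tilde{\gamma}^*$ and $\tilde{\gamma}$ then shows
\[
\bar{\mathcal{F}}_k(U^T) = \big(\mathcal{F}_k(U)\big)^{*T}
\]
for a suitable "reversal" operation (essentially $\mathcal{F}_k(U)^T$ possibly conjugated by the permutation that reverses the index set within each run); passing to the limit $k\to\infty$ via Proposition~\ref{p:fq0i} gives the same for $\bar{\mathcal{F}}$ and $\mathcal{F}$. Applying $\det(\cdot)^{[\alpha_t+1,n]}_{[\alpha_t+1,n]}$ to both sides, and using that a trailing principal minor is invariant under transpose ($\det A^T_{B,B} = \det A_{B,B}$), yields (b). An alternative route, which I would actually prefer to minimize bookkeeping, is to bypass the map-level identity and compare the two \emph{explicit} expansions from Proposition~\ref{p:h_expansion}: write both $h_{\alpha_0+1,\alpha_0+1}(U)$ and $\bar{h}_{\beta_0+1,\beta_0+1}(U^T)$ as sums over tuples of subsets $(I_0,\dots,I_{m-1})$ of products of minors of $U$ resp.\ $U^T$, and exhibit a bijection on the index tuples — reversing the string order $i \leftrightarrow m-i$ and replacing each $I_t$ by $\check\gamma$-related subsets and each minor $\det U^{A}_{B}$ by $\det(U^T)^{B}_{A} = \det U^{A}_{B}$ — that matches the two sums term by term, including signs.

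The hard part will be the sign and index-set bookkeeping, especially tracking how the $\check\gamma$ maps and the sets $L_i, L_i', K_i$ in \eqref{eq:hdata} transform under $\gamma \leftrightarrow \gamma^*$ and string reversal: for instance, a positively oriented run for $\gamma$ corresponds to a positively oriented run for $\gamma^*$ but read in the opposite direction, so the "increasing bijection'' in the definition of $\bar\gamma$ becomes a decreasing one after reversal, and one must check this does not introduce a spurious sign, using that $\det U^A_B = \det (U^T)^B_A$ exactly (no sign). I expect that the cleanest presentation is: first establish the clean case $\Gamma_1 = \{p\}$, $\Gamma_2 = \{q\}$ (where the strings are trivial or length one and the frozen variable is given by the closed form \eqref{eq:hp1p1}, for which the transpose identity is an immediate inspection), then deduce the general case either by the inductive/birational-quasi-isomorphism machinery of Section~\ref{s:complet} combined with Lemma~\ref{l:transpoiss}, or directly from the map-level identity $\bar{\mathcal{F}}(U^T) \sim \mathcal{F}(U)^T$ as above. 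Given that the whole paper's philosophy is to push such computations onto the transpose map and the maps $\mathcal{F}, \mathcal{Q}$, I would go with the map-level argument as the main line, relegating the combinatorial sign check to the observation that $\varepsilon_{ij}$ and the string signs $s_{kl}$-type factors are manifestly invariant under the reversal $i \mapsto m-i$ of a string.
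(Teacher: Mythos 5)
Your ``alternative route'' --- comparing the two explicit expansions of $h_{\alpha_0+1,\alpha_0+1}(U)$ and $\bar h_{\beta_0+1,\beta_0+1}(U^T)$ via Proposition~\ref{p:h_expansion}, noting $\beta_s=\alpha_{m-s}$, and matching terms by reversing the string order and substituting $\bar I_s\leftrightarrow\sgamma(I_{m-s-1})$ together with $\det U^A_B=\det(U^T)^B_A$ --- is precisely the paper's proof, and your sketch of it (including the observation that $\varepsilon_{ii}=0$ kills the signs) is correct. You should just promote that to the main argument.

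The ``main line'' you say you would actually go with, however, has a genuine gap. You assert that the $h$-convention map $\bar{\mathcal{F}}$ for $\gc_h^{\dagger}(\bg^{\op})$ is built from the decomposition $V=V_+V_\ominus$ (upper unipotent times lower Borel). That is not so: $\bar h$-variables live in the $h$-convention, hence are built from $\bar{\mathcal{F}}_k(V)=\tilde\gamma([\bar{\mathcal{F}}_{k-1}(V)]_-)V$ using the \emph{lower} unipotent factor $V_-$; the map built from $V_+$ is the $g$-convention map $\mathcal{F}^{\op}$. Since $(U^T)_-$ (taken from the $\mathcal{B}_+\mathcal{N}_-$ Gauss factorization of $U^T$) is not the transpose of any Gauss factor of $U$, there is no elementary identity $\bar{\mathcal{F}}_k(U^T)\sim\mathcal{F}_k(U)^T$ for a ``suitable reversal.'' What transposition does give (Proposition~\ref{p:ffg_comm}) is $\bar{\mathcal{F}}_k(U^T)^T=\mathcal{F}_k^{\op}(U)$, intertwining the $h$-convention map for $\bg^{\op}$ with the $g$-convention map for $\bg$. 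Plugging this in reduces \eqref{eq:hfrozdual} to the assertion that the frozen $g$- and $h$-variables for the \emph{same} $\bg$ coincide --- which is Proposition~\ref{p:samefroz}, and in the paper that proposition is \emph{deduced from} Proposition~\ref{p:hfrozdual}, not proved independently. So the map-level route merely trades the problem for an equivalent one, and you would be forced back to the explicit-expansion matching anyway.
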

\begin{proof}
    We will compare the explicit expansions of the variables in terms of minors of $U$. Consider the data~\eqref{eq:hdata} relative to the BD triple $\bg^{\op}$ for the $\gamma^*$-string $S^{\gamma^*}(\beta_0)$; to distinguish it from the data for $S^{\gamma}(\alpha_0)$, let us decorate the former with a bar; for instance, we set $\bar{K}_i: = [(\beta_i)_+(\Gamma_2)+1,n]$. Now, if $m = 0$,  the statement is trivial; for $m > 0$, it follows from Proposition~\ref{p:h_expansion} that both variables can be expanded as
    \begin{equation}\label{eq:hbb}
        \bar{h}_{\beta_0+1,\beta_0+1}(U) = \sum_{\substack{(\bar{I}_0,\bar{I}_1,\ldots,\bar{I}_{m-1}):\\ \bar{I}_i \in \bar{\mathcal{N}}_i, \, 0 \leq i \leq m-1 }} \det U_{\bar{I}_0 \cup \bar{K}_0}^{[\beta_0,n]} \left[\prod_{i=1}^{m-1}\det U_{\bar{I}_i\cup\bar{K}_i}^{\sgammas(\bar{I}_{i-1})\cup \bar{L}_i} \right]\det U^{\sgammas(\bar{I}_{m-1})\cup\bar{L}_{m}}_{[\beta_m,n]};
    \end{equation}
    \begin{equation}
        h_{\alpha_0+1,\alpha_0+1}(U) = \sum_{\substack{(I_0,I_1,\ldots,I_{m-1}):\\ I_i \in \mathcal{N}_i,\, 0 \leq i \leq m-1 }} \det U^{[\alpha_0,n]}_{I_0\cup K_0}\left[\prod_{i=1}^{m-1}\det U^{\sgamma(I_{i-1})\cup L_i}_{I_i \cup K_i}  \right] \det U_{[\alpha_m,n]}^{\sgamma(I_{m-1})\cup L_{m}}.
    \end{equation}
    Let us establish a connection between the data associated with the two variables. We see that for each $i \in [0,m]$, $\beta_i = \alpha_{m-i}$, $\bar{K}_i = L_{m-i}$, $\bar{L}_i = K_{m-i}$; as for $\mathcal{N}_i$, we see that for each $i \in [0,m-1]$,
    $\sgamma|_{\mathcal{N}_i} : \mathcal{N}_i \rightarrow \bar{\mathcal{N}}_{m-i-1}$ 
    is a bijection; therefore, in formula~\eqref{eq:hbb} we can replace each $\bar{I}_i$ with $\sgamma(I_{m-i-1})$ and each $\sgammas(\bar{I}_{i-1})$ with $I_{m-i}$. Now, combining the latter with the substitution $U \mapsto U^T$, formula~\eqref{eq:hfrozdual} follows.
\end{proof}
\begin{corollary}\label{c:hposinvar}
   Let $h_{ii}$ be a frozen $h$-variable in $\gc_h^{\dagger}(\bg)$, and let $P$ be a unipotent upper-triangular matrix. Then the following identity holds:
   \begin{equation}\label{eq:hposinvar}
   h_{ii}(PU\tilde{\gamma}(P)^{-1}) = h_{ii}(U).
   \end{equation}
\end{corollary}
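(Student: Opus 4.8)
The statement is a corollary of Proposition~\ref{p:hfrozdual}, and the natural route is to transpose. First I would apply the transposition $U \mapsto U^T$ to the claimed identity~\eqref{eq:hposinvar}. Since $(PU\tilde{\gamma}(P)^{-1})^T = (\tilde{\gamma}(P)^{-1})^T U^T P^T = (\tilde{\gamma}(P)^T)^{-1} U^T P^T$, and $P^T$ is a unipotent \emph{lower} triangular matrix, the statement~\eqref{eq:hposinvar} is equivalent (after renaming $N_- := P^T \in \mathcal{N}_-$) to a statement about how a frozen $h$-variable of $\gc_h^\dagger(\bg^{\op})$ behaves under $U \mapsto (\tilde{\gamma}^*(N_-))^{-1} U N_-$, where I use that $\tilde{\gamma}(P)^T = \tilde{\gamma}^*(P^T)$: the transpose map $T$ intertwines $\tilde\gamma$ on $\mathcal{N}_+$ with $\tilde\gamma^*$ on $\mathcal{N}_-$ (this is exactly the relationship between $\bg$ and $\bg^{\op}$, and it follows from the definition of $T_*$ in Section~\ref{s:q_transp} together with $\gamma^* = (\gamma)^*$).

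Concretely, the plan is: by Proposition~\ref{p:hfrozdual}, for the frozen $h$-variable $h_{ii}$ of $\gc_h^\dagger(\bg)$ there is a matching frozen $h$-variable $\bar h_{jj}$ of $\gc_h^\dagger(\bg^{\op})$ with $h_{ii}(U) = \bar h_{jj}(U^T)$, where $i - 1 = \alpha_0$ is the head of a $\gamma$-string and $j - 1 = \beta_0 = \alpha_m$ is the head of the corresponding $\gamma^*$-string. Then
\[
h_{ii}(PU\tilde{\gamma}(P)^{-1}) = \bar h_{jj}\big((PU\tilde{\gamma}(P)^{-1})^T\big) = \bar h_{jj}\big(\tilde{\gamma}^*(N_-)^{-1} U^T N_-\big),
\]
with $N_- := P^T$. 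Now the point is that $\bar h_{jj}$ is a frozen $h$-variable — in particular an $h$-variable — in $\gc_h^\dagger(\bg^{\op})$, so Corollary~\ref{c:hinvarn}, formula~\eqref{eq:hninv}, applies verbatim (with $\bg$ replaced by $\bg^{\op}$, so $\tilde\gamma^*$ there plays the role of the $\tilde\theta^*$ of that triple): $\bar h_{jj}(\tilde{\gamma}^*(N_-)^{-1} U^T N_-) = \bar h_{jj}(\tilde{\gamma}^*(N_-)^{-1} U^T \, (N_-^{-1})^{-1})$. Writing $M := N_-^{-1} \in \mathcal{N}_-$, this is $\bar h_{jj}(\tilde{\gamma}^*(M) U^T M^{-1})$, wait — I need the arguments to line up with~\eqref{eq:hninv}; rather, I would directly set things up so that the substitution is of the exact form $\tilde\gamma^*(\tilde N) \cdot U^T \cdot \tilde N^{-1}$ by choosing $\tilde N = N_-^{-1}$, and then~\eqref{eq:hninv} gives $\bar h_{jj}(\tilde\gamma^*(\tilde N) U^T \tilde N^{-1}) = \bar h_{jj}(U^T) = h_{ii}(U)$, as desired.

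The one genuine subtlety — and the step I expect to need the most care — is verifying that $T$ intertwines $\tilde\gamma$ (on unipotent upper triangular matrices, for the triple $\bg$) with $\tilde\gamma^*$ (on unipotent lower triangular matrices, for the triple $\bg^{\op} = (\Gamma_2,\Gamma_1,\gamma^*)$), i.e.\ that $\tilde\gamma(P)^T = \widetilde{(\gamma^*)}{}^{*}(P^T)$ in the notation of $\bg^{\op}$; but $(\gamma^*)^* = \gamma$ only on $\mathfrak n_\pm$, and one must be careful which of the two extensions of the BD map is meant. For the \emph{unipotent} parts this is fine: $\tilde\gamma$ and $\togamma$ agree on $\mathcal N_\pm$ by the remark in Section~\ref{s:bd_map} (``$\tilde\gamma(N_\pm) = \togamma(N_\pm)$''), and on the nilpotent subalgebras $T_*$ exchanges $e_\alpha \leftrightarrow e_{-\alpha}$, which is precisely the passage $\gamma \leftrightarrow \gamma^*$ at the Lie-algebra level; integrating gives the required group-level identity. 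Once that is in place, the corollary follows by the chain of equalities above. I would write this out in two or three lines, citing Proposition~\ref{p:hfrozdual}, the remark on $\tilde\gamma|_{\mathcal N_\pm} = \togamma|_{\mathcal N_\pm}$, the definition of $T_*$ from Section~\ref{s:q_transp}, and Corollary~\ref{c:hinvarn}~\eqref{eq:hninv}.
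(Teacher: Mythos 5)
Your overall architecture matches the paper's proof: pass to $\bg^{\op}$ via Proposition~\ref{p:hfrozdual}, identify the transformed substitution with the one in formula~\eqref{eq:hninv} applied to the $h$-variables of $\gc_h^{\dagger}(\bg^{\op})$, and transpose back. However, the explanation you give for the decisive intertwining identity is wrong as stated. You claim that $T_*$ ``exchanges $e_\alpha \leftrightarrow e_{-\alpha}$, which is precisely the passage $\gamma \leftrightarrow \gamma^*$ at the Lie-algebra level'', and in the body of the argument you write $\tilde\gamma(P)^T = \tilde\gamma^*(P^T)$, reading $\tilde\gamma^*$ as the map of Section~\ref{s:bd_map} for the triple $\bg$. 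Both of these are false. Since $\gamma(e_\alpha) = e_{\gamma(\alpha)}$ and, by the homomorphism property, also $\gamma(e_{-\alpha}) = e_{-\gamma(\alpha)}$, the conjugation $T_*\gamma T_*^{-1}$ acts on each $e_{\pm\alpha}$ exactly as $\gamma$ does; that is, transposition \emph{commutes} with $\gamma$ (and likewise with $\gamma^*$), rather than conjugating one into the other. Concretely: for $n=3$, $\gamma:1\mapsto 2$ and $P = I + a\,e_{12}$, one has $\tilde\gamma(P)^T = I + a\,e_{32}$, whereas $\tilde\gamma^*(P^T) = I$ because $\gamma^*$ vanishes on $e_{21}\in\mathfrak{g}_{\Gamma_2}^{\perp}$. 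What the argument actually needs — and what you do state correctly further down once you unfold $\widetilde{(\gamma^*)^*} = \tilde\gamma$ — is the commutation $\tilde\gamma(P)^T = \tilde\gamma(P^T)$, which then matches the map $\widetilde{(\gamma^*)^*} = \tilde\gamma$ that plays the role of ``$\tilde\gamma^*$'' in~\eqref{eq:hninv} specialized to $\bg^{\op}$. With the reasoning repaired to ``$T_*$ commutes with $\gamma$'' in place of ``$T_*$ intertwines $\gamma$ with $\gamma^*$'', your chain of equalities closes and agrees with the paper's two-line proof.
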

\begin{proof}
    By Proposition~\ref{p:hfrozdual}, there exists a frozen $h$-variable $\bar{h}_{jj}$ in $\gc_h^{\dagger}(\bg^{\op})$ such that $h_{ii}(U) = \bar{h}_{jj}(U^{T})$. Since $P^T$ is a unipotent lower-triangular matrix, applying formula~\eqref{eq:hninv}, we see that
    \begin{equation*}
    h_{ii}(U) = \bar{h}_{jj}(U^T) = \bar{h}_{jj}(\tilde{\gamma}(P^{-T})U^T P^T) = h_{ii}(PU \tilde{\gamma}(P)^{-1}).
    \end{equation*}
    Thus formula~\eqref{eq:hposinvar} holds. \end{proof}

    \begin{corollary}\label{l:hposdif}
        Let $h_{ii}$ be a frozen $h$-variable in $\gc_h^{\dagger}(\bg)$. Then for any $U \in \GL_n$,
        \begin{equation*}
            U\nabla_U h_{ii}- \ogammas\left(  \nabla_U h_{ii}\cdot U\right) \in \mathfrak{b}_+.
        \end{equation*}
    \end{corollary}
    \begin{proof}
        The statement follows from differentiating formula~\eqref{eq:hposinvar}.
    \end{proof}

\subsection{\texorpdfstring{Semi-invariance of the frozen $h$-variables}{Semi-invariance of the frozen h-variables}}\label{s:p_frozen}
In this subsection, we prove Proposition~\ref{p:frozvar} for $\gc_h^{\dagger}(\bg)$, and we also show that the frozen $h$-variables do not vanish on $\SL_n^{\dagger}$. Let $\bg:=(\Gamma_1,\Gamma_2,\gamma)$ be a BD triple and $\bar{\bg}:=(\bg,r_0)$ be a BD quadruple. We set $\mathcal{P}_+(\Gamma_2)$ and $\mathcal{P}_-(\Gamma_1)$ to be the parabolic subgroups of $\GL_n$ generated by $\bg$. The subgroup $\mathcal{D} \subset \mathcal{P}_+(\Gamma_1) \times \mathcal{P}_-(\Gamma_2)$ is defined as
\begin{equation*}
\mathcal{D} := \{(g^\prime,g) \in \mathcal{P}_+(\Gamma_1) \times \mathcal{P}_-(\Gamma_2) \ | \ \togamma(\mathring{\Pi}_{\Gamma_1}(g^\prime)) = \mathring{\Pi}_{\Gamma_2}(g)\}.
\end{equation*}
The group $\mathcal{D}$ acts upon $\GL_n$ via
\begin{equation}\label{eq:dactiont}
(g^\prime,g).U = g^\prime U g^{-1}, \ \ (g^\prime,g) \in \mathcal{D}, \ U \in \GL_n.
\end{equation}
\begin{proposition*}
Let $\psi$ be a frozen variable in $\gc^{\dagger}_h(\bg,G)$, $G \in \{\SL_n,\GL_n\}$. Then the following statements hold:
\begin{enumerate}[1)]
\item The variable $\psi$ is semi-invariant with respect to the action~\eqref{eq:dactiont};
\item The variable $\psi$ is log-canonical with any $u_{ij}$;\label{pc:logcan}
\item The zero locus of $\psi$ foliates into a union of symplectic leaves of $(G,\pi_{\bar{\bg}}^{\dagger})$.\label{pc:sympl}
\end{enumerate}
\end{proposition*}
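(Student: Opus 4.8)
The plan is to split the frozen variables of $\gc^{\dagger}_h(\bg,G)$ into three types and treat each separately. The $c$-functions $c_1,\dots,c_{n-1}$ and (for $G=\GL_n$) the determinant $h_{11}$ are handled formally: as noted above, these are Casimirs of $\pi_{\bg}^{\dagger}$ for \emph{every} choice of $r_0$, so $\{\psi,u_{kl}\}_{\bg}^{\dagger}=0$, which is part~2) with $\omega_{kl}=0$, and the vanishing locus of a Casimir is automatically a union of symplectic leaves, since a Casimir is constant along every leaf, which is part~3); for $\GL_n$ the function $h_{11}$ is a unit, so only the $c_k$ are relevant for part~3). For part~1), $h_{11}(g'Ug^{-1})=\det(g')\det(g)^{-1}h_{11}(U)$ is a genuine character of $\mathcal{D}$, and for the $c_k$ I would use $c_k(g'Ug^{-1})=c_k(Ug^{-1}g')$ (conjugation invariance of $\det(I+\lambda\,\cdot\,)$) together with the parabolic block structure of $g$ and $g'$. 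The substance of the proposition lies with the frozen $h$-variables $\psi=h_{ii}$, which I discuss next.

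For part~1) applied to a frozen $h$-variable $\psi=h_{ii}$, the idea is to realize $\mathcal{D}$ as generated by one-parameter subgroups on which the semi-invariance of $\psi$ has already been recorded. Concretely, I would check that the subgroups
\[
\{(\tilde{\gamma}^{*}(N),N)\mid N\in\mathcal{N}_-\},\quad \{(P,\tilde{\gamma}(P))\mid P\in\mathcal{N}_+\},\quad \{(\togammas(T),T)\mid T\in\mathcal{H}\},\quad \{(T^{-1},\togamma(T)^{-1})\mid T\in\mathcal{H}\}
\]
all lie in $\mathcal{D}$ --- which holds because $\tilde{\gamma}^{*}$, $\tilde{\gamma}$, $\togammas$ and $\togamma$ are defined precisely so that the Levi-projection constraint in~\eqref{eq:dgroup_semit} is met (Section~\ref{s:bd_map}) --- and that $\psi$ transforms by a character on each of them, by~\eqref{eq:hninv}, \eqref{eq:hposinvar}, \eqref{eq:htsinv} and~\eqref{eq:htinv} respectively. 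Together with the central torus these generate all of $\mathcal{D}$: this I would justify from the Levi decomposition of $\mathcal{P}_+(\Gamma_1)$ and $\mathcal{P}_-(\Gamma_2)$ into unipotent radical and Levi factor, the Gauss decomposition of the semisimple part of each Levi factor, and the observation that~\eqref{eq:dgroup_semit} couples the Levi factor of $g'$ with that of $g$ through $\togamma$ --- exactly the coupling built into the four families. Since $\psi$ transforms by a character under every generator, it does so under $\mathcal{D}$.

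Part~2) for $\psi=h_{ii}$ I would prove by the infinitesimal route. Corollary~\ref{c:h_inf} gives $\nabla_U h_{ii}\cdot U-\ogamma\!\left(U\cdot\nabla_U h_{ii}\right)\in\mathfrak{b}_-$ together with the $\pi_0$-refinements~\eqref{eq:h_infit_t}--\eqref{eq:h_infit_tt}, and Corollary~\ref{l:hposdif} gives $U\cdot\nabla_U h_{ii}-\ogammas\!\left(\nabla_U h_{ii}\cdot U\right)\in\mathfrak{b}_+$; these two triangularity constraints pin down both gradients of $\log h_{ii}$ up to a fixed diagonal (character) term. Substituting $f=h_{ii}$, $g=u_{kl}$ into the bracket~\eqref{eq:brackgln}, expanding $R_+$ as in~\eqref{eq:rplusm}, and feeding in these constraints along with the projection identities~\eqref{eq:adpd}--\eqref{eq:admn}, the expression $\{h_{ii},u_{kl}\}_{\bg}^{\dagger}$ collapses to a constant multiple $\omega_{kl}h_{ii}u_{kl}$, with $\omega_{kl}$ read off from the diagonal term. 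Part~3) is then immediate from Proposition~\ref{p:yakim}: $h_{ii}$ is prime, being irreducible in the factorial ring $\mathcal{O}(G)$ by Proposition~\ref{p:compl}, and the principal ideal $(h_{ii})$ is a Poisson ideal, since $\{u_{kl},h_{ii}\}_{\bg}^{\dagger}=-\omega_{kl}u_{kl}h_{ii}\in(h_{ii})$ for all generators $u_{kl}$ of $\mathcal{O}(G)$, whence $\{\mathcal{O}(G),(h_{ii})\}_{\bg}^{\dagger}\subseteq(h_{ii})$ by Leibniz.

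The main obstacle is part~1) for the frozen $h$-variables, specifically the verification that the four explicit families together with the central torus exhaust $\mathcal{D}$: this is a bookkeeping problem of matching the Levi and Gauss decompositions of the two parabolics through $\togamma$, made delicate by the discrepancy between the ``ringed'' maps $\ogamma,\ogammas$ and the maps $\gamma,\gamma^{*}$ on the Cartan subalgebra (Section~\ref{s:bd_map}). Part~2) is conceptually routine but is a lengthy $R$-matrix computation in the same spirit as the compatibility arguments of Section~\ref{s:compatibility}.
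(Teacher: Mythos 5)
Your proof follows the same overall skeleton as the paper's: dispose of the Casimir frozens $c_k$ (and $h_{11}$), reduce the frozen $h$-variables to the invariance corollaries and an infinitesimal $R$-matrix computation, and get part~3) from Proposition~\ref{p:yakim}. Part~3) is word-for-word the paper's argument. However, in the two places where you locate the work, you diverge from the paper in ways that leave real gaps.

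For part~1) the paper does not attempt to generate $\mathcal{D}$. It takes an arbitrary pair $(g',g)\in\mathcal{D}$, Gauss-decomposes $g'=g'_+g'_0g'_-$ and $g=g_+g_0g_-$, and observes that the constraint $\togamma(\mathring{\Pi}_{\Gamma_1}(g'))=\mathring{\Pi}_{\Gamma_2}(g)$, read as a matching of Gauss factors inside $\GL_n(\Gamma_2)$, immediately gives $\togammas(g_-)=g'_-$ and $\togamma(g'_+)=g_+$, and couples $g'_0$ with $g_0$ on the Levi torus. One then writes $\psi(g'Ug^{-1})=\psi\bigl(g'_+g'_0\,\togammas(g_-)\,U\,g_-^{-1}\,g_0^{-1}\,\togamma(g'_+)^{-1}\bigr)$ and peels off one factor at a time via~\eqref{eq:hninv}, \eqref{eq:htsinv}--\eqref{eq:htinv}, and \eqref{eq:hposinvar}. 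These are exactly the four families of operations you list, but the paper applies them by \emph{decomposing} a generic element, so the "does this list generate $\mathcal{D}$" question you correctly flag as the main obstacle never arises. As it stands your plan has a hole precisely where you say it does, and the decomposition route is the way to close it.

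For part~2) your conceptual plan (use the triangularity constraints from Corollaries~\ref{c:h_inf} and~\ref{l:hposdif}, feed them into~\eqref{eq:brackgln}) is right, but the auxiliary identities you cite, \eqref{eq:adpd}--\eqref{eq:admn}, belong to the pushforward computation of Section~\ref{s:comp} (they concern $\Ad_N$ for $N\in\mathcal{N}_-$) and play no role here. What the paper actually uses, and what you omit, is (i) the choice of a ringed $R_0$ via~\eqref{eq:ringedr0}, justified by the $R_0$-independence of log-canonicity, and (ii) the rewrites
\begin{equation*}
\mathring{\nu}(\psi)=[\nabla_U\psi,U]+(1-\ogamma)(U\nabla_U\psi),\qquad
\mathring{\nu}^*(\psi)=-[\nabla_U\psi,U]+(1-\ogammas)(\nabla_U\psi\cdot U),
\end{equation*}
which convert $\mathring{\nu}(\psi)\in\mathfrak{b}_-$ and $\mathring{\nu}^*(\psi)\in\mathfrak{b}_+$ into closed formulas for $\frac{1}{1-\ogamma}\pi_>[\nabla_U\psi,U]$ and $\frac{\ogammas}{1-\ogammas}\pi_<[\nabla_U\psi,U]$. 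Without (i) the $R_0$-contribution does not reduce to the $\mathring{R}_0$-identities; without (ii) there is nothing to feed into the expansion of $R_+$. Your "lengthy computation" would stall at exactly these two points.
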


\begin{proof}
The statements are evident for the $c$-variables, which are Casimirs of the Poisson bracket. Part~\ref{pc:sympl} for the $c$-variables follows from Proposition~\ref{p:yakim} and the fact that the $c$-variables are irreducible as elements of $\mathbb{C}[\GL_n]$ (see \cite[Theorem 3.10]{double}). From now on, we assume that $\psi$ is a frozen $h$-variable.
\begin{enumerate}[1)]
\item Pick a pair $(g^\prime,g) \in \mathcal{D}$ and decompose it as $g^\prime = g^\prime_+ g^\prime_0 g^\prime_-$ and $g =g_+ g_0 g_-$, where the subscript indicates if the matrix is unipotent upper, diagonal or unipotent lower, respectively. The defining condition of $\mathcal{D}$ implies that $\togammas(g_-) = g^\prime_-$ and $\togamma(g^\prime_+) = g_+$. It follows from Corollary~\ref{c:hinvarn} and Corollary~\ref{c:hposinvar} that
\begin{equation*}
\psi(g^\prime U g^{-1}) = \psi\left(g^\prime_+ g^\prime_0 \togammas(g_-) U \sinv{g_-}\sinv{g_0}\togamma(g^\prime_+)^{-1} \right) = \chi_{\psi}(g^\prime,g) \psi(U)
\end{equation*}
where $\chi_{\psi}$ is a character on $\mathcal{D}$. Thus $\psi$ is semi-invariant with respect to the action of $\mathcal{D}$.
\item For convenience\footnote{As we showed in Section 8.2 in \cite{multdouble}, log-canonicity does not depend on the choice of $R_0$.}, assume that $r_0$ is ringed, and denote the corresponding $R_0$ as $\mathring{R}_0$ (see formula~\eqref{eq:ringedr0}). Let us set 
\begin{align*}\onu(\psi) &:= \nabla_U \psi \cdot U - \ogamma(U\nabla_U\psi);\\
\onus(\psi) &:= U\nabla_U \psi-\ogammas(\nabla_U\psi\cdot U).
\end{align*}
It follows from Corollary~\ref{c:h_inf} and Corollary~\ref{l:hposdif} that $\onu(\psi) \in \mathfrak{b}_-$, $\onus(\psi) \in \mathfrak{b}_+$, $\pi_0(\onu(\log \psi)) = \const$ and $\pi_0(\onus(\log \psi)) = \const$. Moreover, one can write 
\begin{align*}
\onu(\psi) &= [\nabla_U \psi, U] + (1-\ogamma)(U\nabla_U \psi);\\
\onus(\psi) &= -[\nabla_U\psi, U] + (1-\ogammas)(\nabla_U \psi \cdot U).
\end{align*}
It follows that
\begin{align*}
\frac{1}{1-\ogamma}\pi_{>}[\nabla_U \psi, U] &= -\pi_{>}(U\nabla_U \psi);\\
\frac{\ogammas}{1-\ogammas}\pi_{<} [\nabla_U h, U] &= \pi_{<}(U\nabla_U \psi).
\end{align*}
Therefore,
\begin{equation}\label{eq:r0froz1}
R([\nabla_U \psi, U]) = -\pi_{>}(U\nabla_U \psi) - \pi_{<}(U\nabla_U \psi) +\mathring{R}_0 \pi_0 [\nabla_U \psi, U].
\end{equation}
Rewriting the last piece, we see that
\begin{equation*}\begin{split}
\mathring{R}_0 \pi_0 [\nabla_U \psi, U] = &\mathring{R}_0 \pi_0 \onu(\psi) - \mathring{R}_0(1-\ogamma)(U\nabla_U \psi) = \\ = &\mathring{R}_0 \pi_0\onu(\psi) - \pi_0\mathring{\pi}_{\Gamma_1}(U\nabla_U \psi) - \mathring{R}_0 \pi_0 \mathring{\pi}_{\hat{\Gamma}_1}(U\nabla_U \psi) = \\ =&\mathring{R}_0 \pi_0\onu(\psi) - \pi_0(U\nabla_U \psi) + \mathring{R}_0^* \pi_0 \mathring{\pi}_{\hat{\Gamma}_1}(U\nabla_U \psi);
\end{split}
\end{equation*}
hence, formula~\eqref{eq:r0froz1} can be updated as
\begin{equation*}
R([\nabla_U\psi,U]) = -U\nabla_U \psi + \mathring{R}_0 \pi_0 \onu (\psi) + \mathring{R}_0^*\pi_0 \mathring{\pi}_{\hat{\Gamma}_1}(U\nabla_U \psi).
\end{equation*}
Now, the Poisson bracket between $\psi$ and $u_{ij}$ is given by
\begin{equation*}\begin{split}
\{\psi,u_{ij}\}_{\bar{\bg}}^{\dagger} = &\langle R([\nabla_U \psi, U]), [\nabla_U u_{ij},U]\rangle - \langle [\nabla_U \psi, U],\nabla_U u_{ij} U\rangle = \\ = &-\langle U\nabla_U \psi, [\nabla_U u_{ij},U]\rangle - \langle [\nabla_U \psi, U],\nabla_U u_{ij} U\rangle + \\ + &\langle \mathring{R}_0 \pi_0 \onu(\psi)+  \mathring{R}_0^* \pi_0 \mathring{\pi}_{\hat{\Gamma}_1}(U\nabla_U \psi), \pi_0[\nabla_U u_{ij},U]\rangle. 
\end{split}
\end{equation*}
The first two terms in the last identity cancel each other out. As for the last term, notice that $\pi_0[\nabla_U u_{ij},U] = u_{ij}(e_{jj}-e_{ii})$, and since $\onu(\log\psi) = \const$ and $\onus(\log\psi) = \const$, we finally conclude that $\{\psi,u_{ij}\}_{\bar{\bg}}^{\dagger}$ is proportional to the product $\psi \cdot u_{ij}$.
\item It is a consequence of Proposition~\ref{p:compl} that $\psi$ is a prime element of $\mathbb{C}[G]$, and it follows from Part~\ref{pc:logcan} of the current proposition that the principal ideal $(\psi)$ is Poisson. Now the statement follows from Proposition~\ref{p:yakim}.\qedhere
\end{enumerate}
\end{proof}

\begin{remark} Let $\bg$ be an aperiodic oriented BD triple and let $\gc^D(\bg,\bg)$ be the generalized cluster structure on $D(\GL_n)$ constructed in~\cite{multdouble}. Then any frozen $g$- or $h$-variable in $\gc^D(\bg,\bg)$ is semi-invariant with respect to both the right and the left action of $\mathcal{D}$ upon $D(\GL_n)$.
Therefore, the frozen variables in $\gc^D(\bg,\bg)$ and in $\gc^{\dagger}_h(\bg,\GL_n)$ are semi-invariant with respect to the same group.
\end{remark}

\begin{proposition}\label{p:hnonzero}
The following statements hold:
\begin{enumerate}[1)]
\item For any choice of $r_0$, the frozen $h$-variables in $\gc^{\dagger}_h(\bg,\SL_n)$ do not vanish on $\SL_n^{\dagger}$;
\item If $r_0$ is ringed, then the frozen $h$-variables in $\gc^{\dagger}(\bg,\GL_n)$ do not vanish on $\GL_n^{\dagger}$.
\end{enumerate}
\end{proposition}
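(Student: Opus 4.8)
The plan is to reduce the statement, for an arbitrary Belavin--Drinfeld triple, to the value of the frozen $h$-variables at the identity, together with the semi-invariance established in Proposition~\ref{p:frozvar}. First I would record that $\mathcal{F}(I) = I$: indeed $\mathcal{F}_0(I) = I$ and $[\mathcal{F}_k(I)]_- = I$, so $\mathcal{F}_{k+1}(I) = \tilde{\gamma}^*(I)\cdot I = I$ for all $k$. Consequently, by the defining formula~\eqref{eq:h_fun}, each frozen $h$-variable $h_{\alpha_0+1,\alpha_0+1}$ (where $\alpha_0 = i$ runs over $\Pi\setminus\Gamma_2$ and heads a $\gamma$-string $\{\alpha_0,\dots,\alpha_m\}$) equals, at $U = I$, the product $(-1)^{\varepsilon}\prod_{t=0}^m\det I^{[\alpha_t+1,n]}_{[\alpha_t+1,n]} = \pm 1$; and $h_{11}(I) = \det I = 1$.

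Next I would show that for $G\in\{\SL_n,\GL_n\}$ the open subset $G^{\dagger}$ is contained in the orbit of $I$ under the group $\mathcal{D}$ acting by~\eqref{eq:daction}. Recall that $G^{\dagger}$ is the image of the dual group $G^*$ under $(b_+,b_-)\mapsto b_+^{-1}b_-$, and that $G^*$ is the integral of $\{(R_+(x),R_-(x)):x\in\mathfrak{g}\}\subset\mathfrak{d}$. The point is that $R_+(x)\in\mathfrak{p}_+(\Gamma_1)$ and $R_-(x)\in\mathfrak{p}_-(\Gamma_2)$, with the $\mathfrak{g}_{\Gamma_1}$-component of $R_+(x)$ carried by $\gamma$ to the $\mathfrak{g}_{\Gamma_2}$-component of $R_-(x)$: on the nilpotent directions this is immediate from~\eqref{eq:rplusm}, and on the toral directions it is precisely~\eqref{eq:ralgm} (for $\GL_n$ one additionally needs the ringed identities~\eqref{eq:ringedr0} to pin down the block-trace directions, which is exactly why that case is stated only for ringed $r_0$; for $\SL_n$ any $r_0$ satisfying~\eqref{eq:r0m}--\eqref{eq:ralgm} suffices). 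Exponentiating, every $(b_+,b_-)\in G^*$ satisfies $\togamma(\mathring{\Pi}_{\Gamma_1}(b_+)) = \mathring{\Pi}_{\Gamma_2}(b_-)$, hence lies in $\mathcal{D}$, so $b_+^{-1}b_- = (b_+,b_-)^{-1}.I\in\mathcal{D}.I$.

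Granting this, the proposition follows quickly: for $U\in G^{\dagger}$ write $U = \delta^{-1}.I$ with $\delta\in\mathcal{D}$; by Part~1) of Proposition~\ref{p:frozvar} every frozen $h$-variable $\psi$ is semi-invariant under $\mathcal{D}$, so $\psi(U)$ is a nonzero scalar multiple of $\psi(I)$, which is $\pm1$ by the first step. I expect the main obstacle to be the inclusion $G^{\dagger}\subseteq\mathcal{D}.I$ -- that is, reconciling the abstract dual group $G^*\subset\mathfrak{d}$ with the concrete Levi-matched parabolic group $\mathcal{D}$: one must verify the parabolic containments, the $\togamma$-matching of the Levi factors (including the transpose/sign twist of $\mathring{\gamma}$ on negatively oriented runs), and, most delicately, that the $r_0$-dependent constraint relating the tori of $b_+$ and $b_-$ is subsumed by the defining relation of $\mathcal{D}$. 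Should this prove awkward to carry out directly, an alternative is to use that $\mathcal{Q}$ (which fixes $I$, since $\rho(I)=I$) intertwines the $\mathcal{D}_{\std} = \mathcal{B}_+\times\mathcal{B}_-$-action on $G$ with the $\mathcal{D}$-action, thereby reducing the inclusion to the trivial case $G^{\dagger}_{\std} = \mathcal{D}_{\std}.I$, which is the big cell and was treated in~\cite{double}.
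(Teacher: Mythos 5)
Your proposal is correct and is essentially the paper's own argument: both reduce to $\psi(I)=\pm1$ via $\mathcal F(I)=I$, both use the semi-invariance of Proposition~\ref{p:frozvar}, and both hinge on showing that $G^*$ (embedded in the double) lies in $\mathcal D$ -- the paper phrases this as the relation $\ogamma R_+ = \mathring\pi_{\Gamma_2}R_-$ forced by the choice of $r_0$, which is precisely your Lie-algebra-level check on $R_\pm(x)$ followed by exponentiation. The two presentations differ only cosmetically (the paper lifts $\psi$ to a function $\tilde\psi$ on $D(\GL_n)$, you stay in $G$ and speak of the $\mathcal D$-orbit of $I$); both are handling the same inclusion.
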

\begin{proof}
Both statements can be proved in the same way, so let us focus on the case of $\GL_n$. Let $r_0$ be chosen as a ringed element (see formula~\eqref{eq:ringedr0}), and let $\psi$ be a frozen $h$-variable in $\gc_h^{\dagger}(\bg,\GL_n)$. We set $\tilde{\psi} \in \mathbb{C}[D(\GL_n)]$ to be a lift of $\psi$ given by
\begin{equation*}
\tilde{\psi}(X,Y) := \psi(X^{-1}Y), \ \ (X,Y) \in D(\GL_n).
\end{equation*}
The choice of $r_0$ implies that $\ogamma R_+ = \mathring{\pi}_{\Gamma_2}R_-$, hence $\GL_n^* \subseteq \mathcal{D}$, where $\GL_n^*$ is viewed as an immersed Lie subgroup of $D(\GL_n)$. We see that
\begin{equation*}
\tilde{\psi}(g^\prime,g) = \psi((g^\prime)^{-1}g) = \chi_{\psi}(g^\prime,g)\psi(I)
\end{equation*}
where $\chi_{\psi}$ is a character on $\mathcal{D}$ from Proposition~\ref{p:frozvar}. Since $\mathcal{F}(I) = I$, it follows from the defining formula for the $h$-functions~\eqref{eq:h_fun} that $\psi(I) \neq 0$. Hence $\tilde{\psi}|_{\mathcal{D}}$ does not vanish at any point of $\mathcal{D}$, and thus $\psi$ is nonzero on $\GL_n^{\dagger}$ and $\SL_n^{\dagger}$.
\end{proof}

\section{\texorpdfstring{The $g$-convention}{The g-convention}}\label{s:descr_g}
In this section, for any Belavin--Drinfeld triple $\bg$ of type $A_{n-1}$, we describe the generalized cluster structure $\gc^{\dagger}_g(\bg)$ on $\GL_n$ and $\SL_n$ in the \emph{$g$-convention}. 

\subsection{\texorpdfstring{The map $\mathcal{F}^{\op}$}{The map F opposite}}
In this subsection, we recast the map $\mathcal{F}$ from Section~\ref{s:map_f} in the $g$-convention. For a fixed set of simple roots $\Pi$ of type $A_{n-1}$, let $\bg:=(\Gamma_1,\Gamma_2,\gamma)$ be a BD triple, $\mathcal{B}_{\pm}$, $\mathcal{N}_{\pm}$ be the Borel subgroups and their unipotent radicals, $\mathcal{H}$ be the corresponding Cartan subgroup of $\GL_n$. We decompose a generic element $U \in \GL_n$ as $U = U_{\oplus}U_- = U_+ U_{\ominus}$, where $U_{\ominus} \in \mathcal{B}_-$, $U_{\oplus} \in \mathcal{B}_+$, $U_- \in \mathcal{N}_-$, $U_+ \in \mathcal{N}_+$. Define a sequence of rational maps $\mathcal{F}^{\op}_{k}: \GL_n \dashrightarrow \GL_n$ via\footnote{To avoid confusion, let us emphasize that formula~\eqref{eq:mapfg} defines $\mathcal{F}_k^{\op}$ relative to the BD triple $\bg$ and not $\bg^{\op}$. If $\mathcal{F}_k^{\op}$ is defined relative to the opposite BD triple $\bg^{\op}$, then it satisfies the recurrence relation $\mathcal{F}_k^{\op}(U) = U\tilde{\gamma}^*(\mathcal{F}_{k-1}(U)_+)$. }
\begin{equation}\label{eq:mapfg}
    \mathcal{F}^{\op}_0(U) := U, \ \  \mathcal{F}^{\op}_{k}(U) = U\tilde{\gamma}(\mathcal{F}_{k-1}(U)_+), \ \  U \in \GL_n, \ k \geq 1.
\end{equation}
The rational map $\mathcal{F}^{\op} : \GL_n \dashrightarrow \GL_n$ is defined as the limit 
\begin{equation}\label{eq:fdefg}
\mathcal{F}^{\op}(U) := \lim_{k\rightarrow \infty}\mathcal{F}^{\op}_k(U), \ \ U \in \GL_n.
\end{equation}
The sequence $\mathcal{F}_k^{\op}$ stabilizes at $k = \deg \gamma$. As a corollary, $\mathcal{F}^{\op}(U)$ satisfies the relation
\begin{equation*}
\mathcal{F}^{\op}(U) = U \tilde{\gamma}(\mathcal{F}^{\op}(U)_+),
\end{equation*}
and $\mathcal{F}^{\op}$ is a birational map with the inverse given by 
\begin{equation*}
(\mathcal{F}^{\op})^{-1}(U) = U\tilde{\gamma}(U_+)^{-1}.
\end{equation*}
Let $T:\GL_n\rightarrow \GL_n$ be the transposition map; that is, $T(U) = U^T$, $U \in \GL_n$. 

\begin{proposition}\label{p:ffg_comm}
Let $\mathcal{F}:\GL_n \dashrightarrow \GL_n$ and $\mathcal{F}^{\op}:\GL_n \dashrightarrow \GL_n$ be constructed relative to the BD triples $\bg$ and $\bg^{\op}$, respectively. Then for any $k \geq 0$ the following diagram commutes:
\begin{equation*}
\xymatrix{
\GL_n\ar[d]_{T} \ar@{-->}[r]^{\mathcal{F}_k} &\GL_n \ar[d]^{T} \\ \GL_n \ar@{-->}[r]_{\mathcal{F}_k^{\op}} & \GL_n
}
\end{equation*}
\end{proposition}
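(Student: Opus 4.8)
The plan is to establish the pointwise identity $\mathcal F_k(U)^T=\mathcal F_k^{\op}(U^T)$ for all $k\ge 0$ and generic $U\in\GL_n$; since $T(U)=U^T$, this is precisely commutativity of the displayed square for each $k$. The argument is a short induction on $k$, the case $k=0$ being $\mathcal F_0(U)^T=U^T=\mathcal F_0^{\op}(U^T)$.

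Before the induction I would record three ingredients. First, uniqueness of the Gauss decomposition: transposing $A=A_\oplus A_-$ gives $A^T=A_-^T A_\oplus^T$ with $A_-^T$ unipotent upper triangular and $A_\oplus^T$ lower triangular, so $(A^T)_+=(A_-)^T$ for generic $A$. Second, since $T$ integrates $T_*$ and $T(U)=U^T$, the map $T_*\colon\mathfrak g\to\mathfrak g$ is literally the matrix transpose; in particular $\exp(x)^T=\exp(T_*x)$ for nilpotent $x$, and $T_*$ restricts to a linear isomorphism $\mathfrak n_-\xrightarrow{\sim}\mathfrak n_+$. Third, the key structural fact $T_*\circ\gamma^*=\gamma^*\circ T_*$ as maps $\mathfrak g\to\mathfrak g$; this is checked on the Chevalley generators $e_{\pm\alpha},h_\alpha$ ($\alpha\in\Gamma_2$) using $\gamma^*(e_{\gamma(\alpha)})=e_\alpha$, $\gamma^*(e_{-\gamma(\alpha)})=e_{-\alpha}$, $\gamma^*(h_{\gamma(\alpha)})=h_\alpha$, together with the facts that $\gamma^*$ is a Lie algebra homomorphism vanishing on $\mathfrak g_{\Gamma_2}^\perp$ while $T_*$ is a Lie algebra anti-automorphism preserving both $\mathfrak g_{\Gamma_2}$ and $\mathfrak g_{\Gamma_2}^\perp$ (so an anti-homomorphism, hence determined by its values on generators). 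Combining the last two ingredients, for $N\in\mathcal N_-$ write $N=\exp(n)$ with $n\in\mathfrak n_-$; then $\tilde\gamma^*(N)^T=\exp(\gamma^*n)^T=\exp(T_*\gamma^*n)=\exp(\gamma^*T_*n)=\exp\bigl(\gamma^*(n^T)\bigr)$, i.e. $\tilde\gamma^*(N)^T=\tilde\gamma^*(N^T)$, where on the right $\tilde\gamma^*$ now denotes the integral of $\gamma^*|_{\mathfrak n_+}$ — which is exactly the group homomorphism appearing in the recurrence for $\mathcal F_k^{\op}$ relative to $\bg^{\op}$ (cf.\ the footnote to \eqref{eq:mapfg}).

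With these in hand the inductive step is immediate. Transposing the defining recursion $\mathcal F_k(U)=\tilde\gamma^*\bigl([\mathcal F_{k-1}(U)]_-\bigr)\,U$ gives
\[
\mathcal F_k(U)^T = U^T\,\bigl(\tilde\gamma^*([\mathcal F_{k-1}(U)]_-)\bigr)^T = U^T\,\tilde\gamma^*\bigl([\mathcal F_{k-1}(U)]_-^T\bigr) = U^T\,\tilde\gamma^*\bigl([\mathcal F_{k-1}(U)^T]_+\bigr),
\]
where the last equality uses $(A_-)^T=(A^T)_+$. By the inductive hypothesis $\mathcal F_{k-1}(U)^T=\mathcal F_{k-1}^{\op}(U^T)$, so the right-hand side equals $U^T\,\tilde\gamma^*\bigl([\mathcal F_{k-1}^{\op}(U^T)]_+\bigr)=\mathcal F_k^{\op}(U^T)$, which closes the induction.

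The only genuinely delicate point is the intertwining relation $T_*\gamma^*=\gamma^* T_*$ together with the bookkeeping that $\gamma^*$ acting on $\mathfrak n_-$ and $\gamma^*$ acting on $\mathfrak n_+$ are exchanged by transposition; once this is pinned down, the rest is formal manipulation with the Gauss decomposition and the recurrences.
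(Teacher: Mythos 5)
Your proof is correct and follows the same induction on $k$, transposing the recursion and using $(A_-)^T=(A^T)_+$, that the paper uses. The paper's version is terser — it silently treats the identity $\tilde\gamma^*(N)^T=\tilde\gamma^*(N^T)$ (relative to $\bg$ on the left, $\bg^{\op}$ on the right) as obvious — whereas you usefully isolate and justify it via $T_*\gamma^*=\gamma^*T_*$; this is a welcome addition but does not change the argument.
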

\begin{proof}
    Indeed, the commutativity is evident for $k = 0$. Assuming by induction that the commutativity holds for $k$, we see that
    \begin{equation*}
    [\mathcal{F}_{k+1}(U^T)]^T = \left[\tilde{\gamma}^*(\mathcal{F}_{k}(U^T)_-)U^T\right]^T = U\tilde{\gamma}^*\left[(\mathcal{F}_k(U^T)^T)_+\right] = U \tilde{\gamma}^*(\mathcal{F}^{\op}_{k}(U)_+) = \mathcal{F}_{k+1}^{\op}(U).
    \end{equation*}
    Thus the diagram commutes.
\end{proof}
\begin{corollary}\label{c:fop_invar}
    For $P \in \mathcal{N}_+$ and $T \in \mathcal{H}$, the map $\mathcal{F}^{\op}$ has the following invariance properties:
    \begin{align}
        &\mathcal{F}^{\op}(PU\tilde{\gamma}(P)^{-1}) = P\cdot \mathcal{F}^{\op}(U);\label{eq:fopnpinv}\\
        &\mathcal{F}^{\op}(\togammas(T)UT^{-1}) = \togammas(T)\mathcal{F}^{\op}(U)T^{-1};\notag\\
        &\mathcal{F}^{\op}(T^{-1}U\togamma(T)) = T^{-1}\mathcal{F}^{\op}(U)\togamma(T).\notag
    \end{align}
\end{corollary}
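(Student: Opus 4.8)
The plan is to obtain all three identities at once by transporting the invariance properties of $\mathcal{F}$ recorded in Corollary~\ref{c:f_invar} through the transposition map, with Proposition~\ref{p:ffg_comm} serving as the bridge. The first step is the observation that $(\bg^{\op})^{\op} = \bg$, so applying Proposition~\ref{p:ffg_comm} with $\bg$ replaced by $\bg^{\op}$ produces, for every $k$, a commutative square relating the map $\mathcal{F}_k$ built relative $\bg^{\op}$ to the map $\mathcal{F}^{\op}_k$ built relative $\bg$. Writing $\mathcal{F}^{\bg^{\op}}$ for the map of Section~\ref{s:map_f} attached to $\bg^{\op}$, this reads $\mathcal{F}^{\op}_k(U) = \left(\mathcal{F}^{\bg^{\op}}_k(U^{T})\right)^{T}$ for all $U$ and $k$, and hence $\mathcal{F}^{\op}(U) = \left(\mathcal{F}^{\bg^{\op}}(U^{T})\right)^{T}$ after passing to the limit. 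Since Corollary~\ref{c:f_invar} is proved verbatim for any BD triple, it applies to $\mathcal{F}^{\bg^{\op}}$ with the BD maps of $\bg^{\op}$.

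Next I would record the elementary dictionary between the Belavin--Drinfeld data of $\bg$ and of $\bg^{\op}$ under transposition. Since $\bg^{\op}$ has $\gamma^{*}$ in the role of ``$\gamma$'', its conjugate map $\tilde{\gamma}^{*}$ is the $\tilde{\gamma}$ of $\bg$, and its ringed torus lifts $\togammas$ and $\togamma$ are the $\togamma$ and $\togammas$ of $\bg$, respectively. On the group level one has $\left(\tilde{\gamma}(P)\right)^{T} = \tilde{\gamma}(P^{T})$ for $P \in \mathcal{N}_+$ --- immediate from $\gamma(e_{\pm\alpha}) = e_{\pm\gamma(\alpha)}$, $(e_{\alpha})^{T} = e_{-\alpha}$ and $(\exp x)^{T} = \exp(x^{T})$ --- while $\left(\togamma(T)\right)^{T} = \togamma(T)$, $\left(\togammas(T)\right)^{T} = \togammas(T)$ and $T^{-T} = T^{-1}$ for $T \in \mathcal{H}$, simply because elements of $\mathcal{H}$ are symmetric matrices. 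The ``ringed ambiguity'' of $\ogamma,\ogammas$ on the Cartan subalgebra is irrelevant here, since only the group homomorphisms $\togamma,\togammas$ enter the statement.

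With this dictionary in hand, each of the three identities of Corollary~\ref{c:fop_invar} follows by substituting $U \mapsto U^{T}$ into the matching identity of Corollary~\ref{c:f_invar} for $\mathcal{F}^{\bg^{\op}}$ and transposing both sides. For example, writing identity~\eqref{eq:invar1} for $\mathcal{F}^{\bg^{\op}}$ with $N$ replaced by $N^{-1}$ and transposing, with $P := N^{T} \in \mathcal{N}_+$, turns $\mathcal{F}^{\bg^{\op}}\!\left(\tilde{\gamma}(N^{-1})\,U^{T}\,N\right) = \mathcal{F}^{\bg^{\op}}(U^{T})\,N$ into $\mathcal{F}^{\op}\!\left(P\,U\,\tilde{\gamma}(P)^{-1}\right) = P\cdot\mathcal{F}^{\op}(U)$, which is~\eqref{eq:fopnpinv}; identities~\eqref{eq:invar2} and~\eqref{eq:invar3} transpose, using the symmetry of $T$, $\togamma(T)$ and $\togammas(T)$, into the remaining two identities of the corollary.

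There is no genuine obstacle here --- the argument is bookkeeping --- and the only point to watch is keeping straight which $\gamma$-map is attached to which of $\bg$, $\bg^{\op}$, together with the transpose-compatibilities recorded above. Should a reader prefer an argument not routed through $\bg^{\op}$, one can instead imitate the proof of Proposition~\ref{p:invfk}: form the shifted auxiliary sequences obtained from $\mathcal{F}^{\op}_i(P\,U\,\tilde{\gamma}(P)^{-1})$ (and the torus analogues), verify that they satisfy the defining recursion~\eqref{eq:mapfg} of the $\mathcal{F}^{\op}_k$, and apply a stabilization lemma for $\mathcal{F}^{\op}$ parallel to Lemma~\ref{l:fzerocond}; but routing through Proposition~\ref{p:ffg_comm} is shorter and reuses work already done.
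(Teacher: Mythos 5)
Your proposal is correct and follows the same route as the paper's (one-line) proof: apply Proposition~\ref{p:ffg_comm} with $\bg$ replaced by $\bg^{\op}$ to obtain $\mathcal{F}^{\op}(U) = \left(\mathcal{F}^{\bg^{\op}}(U^T)\right)^T$, then transport the three identities of Corollary~\ref{c:f_invar} (applied to $\bg^{\op}$) through the transposition. The dictionary you record between the Belavin--Drinfeld maps of $\bg$ and $\bg^{\op}$, and the transpose-compatibilities $\left(\tilde{\gamma}(P)\right)^T = \tilde{\gamma}(P^T)$ for $P \in \mathcal{N}_+$ and $\togamma(T)^T = \togamma(T)$, $\togammas(T)^T = \togammas(T)$ for $T \in \mathcal{H}$, are exactly the bookkeeping that the paper leaves implicit.
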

\begin{proof}
    The identities follow from Corollary~\ref{c:f_invar} and Proposition~\ref{p:ffg_comm}.
\end{proof}

\subsection{\texorpdfstring{Description of $\gc_g^{\dagger}(\bg)$}{Initial seed in the g-convention}}\label{s:descr_gg}
In this subsection, we describe the initial extended cluster of $\gc_g^{\dagger}(\bg)$ for an arbitrary BD triple $\bg := (\Gamma_1,\Gamma_2,\gamma)$ of type $A_{n-1}$. It comprises three types of functions: $c$-functions, $\phi$-functions and $g$-functions. The initial quiver is described in Appendix~\ref{s:ainiquivg}.

\paragraph{Description of $\phi$- and $c$-functions.} The $c$-functions are given by~\eqref{eq:c_def} (as in the $h$-convention). The $\phi$-functions are similar to the $\varphi$-functions and are defined as follows. For an element $U \in \GL_n$, let us set
\begin{equation*}
\Phi_{kl}^\prime(U):=\begin{bmatrix}(U^0)^{[1,k]} & U^{[1,l]} & (U^2)^{\{1\}} & \cdots & (U^{n-k-l+1})^{\{1\}}\end{bmatrix}, \ \ k,l \geq 1, \ k+l \leq n;
\end{equation*}
Then the $\phi$-functions are given by
\begin{equation*}
\phi_{kl}(U):=s_{kl} \det \Phi_{kl}^\prime(U)
\end{equation*}
where $s_{kl} \in \{1,-1\}$ is a sign given by~\eqref{eq:s_def} (as in the $h$-convention). For $P \in \mathcal{N}_+$ and $T \in \mathcal{H}$, the $\phi$-functions have the following invariance properties:
\begin{align*}
        \phi_{kl}(PUP^{-1}) &= \phi_{kl}(U),\\
        \phi_{kl}(TUT^{-1}) &= \chi_{\phi_{kl}}(T)\phi_{kl}(U)
\end{align*}
where $\chi_{\phi_{kl}}$ is a character.
\begin{remark}\label{r:phivphi}
The $\varphi$- and $\phi$-functions are related as follows. Set $W_0 := \sum_{i=1}^{n-1}(-1)^{i+1}e_{n-i+1,i}$ and consider the map $w_0:\GL_n\rightarrow\GL_n$, $w_0(U) = W_0^{-1}UW_0$. If $\phi_{kl}$ is a $\phi$-function, then $\phi_{kl}(w_0(U)) = \varphi_{kl}(U)$ is a $\varphi$-function. It follows that the $\phi$-functions are irreducible as elements of $\mathbb{C}[\GL_n]$, and the mutations of $\phi_{kl}$ for $2\leq k+l\leq n-1$ are regular and coprime with $\phi_{kl}$. Moreover, if $\mathbb{C}[\GL_n]^{\mathcal{N}_\pm}$ are the rings of regular functions invariant with respect to the adjoint actions of $\mathcal{N}_+$ and $\mathcal{N}_-$, then $w_0^* : (\mathbb{C}[\GL_n]^{\mathcal{N}_+}, \{\cdot,\cdot\}_{\bar{\bg}}^{\dagger})\xrightarrow{\sim} (\mathbb{C}[\GL_n]^{\mathcal{N}_-}, \{\cdot,\cdot\}_{\bar{\bg}}^{\dagger})$ is a Poisson isomorphism for any $\bar{\bg}$. In particular, the $\phi$-functions are log-canonical.
\end{remark}
\begin{remark}
    The $\phi$-functions appeared before in~\cite{periodic}. They were used to define a generalized cluster structure on the Drinfeld double $D(\GL_n) = \GL_n \times \GL_n$, which is an alternative to the use of the $\varphi$-functions in~\cite{double}. It was shown in~\cite{doublerel} that the resulting constructions with $\phi$- and $\varphi$-functions (for the trivial BD triples) provide equivalent generalized cluster structures in $n=2$ and $n=3$; however, they are not equivalent in $n=4$ (in other words, the initial extended seeds are not mutation equivalent). In the supplementary note~\cite{github}, we show that $\gc_g^{\dagger}(\bg)$ and $\gc_h^{\dagger}(\bg^{\op})$ are equivalent for $n=3$ and any BD triple $\bg$. We conjecture that they are not equivalent for $n \geq 4$.
\end{remark}

\paragraph{Description of the $g$-functions.} Let $\mathcal{F}^{\op}:\GL_n \dashrightarrow \GL_n$ be the birational map defined in~\eqref{eq:fdefg}. For each $\alpha_0 \in \Pi\setminus\Gamma_1$ and the associated $\gamma^*$-string $S^{\gamma^*}(\alpha_0):= \{\alpha_i\}_{i=0}^m$ (see Definition~\ref{d:g_str}), for every $k \in [0,m]$ and $i \in [\alpha_k+1,n]$, set
\begin{equation}\label{eq:g_fun}
    g_{i,\alpha_k+1}(U) := \det [\mathcal{F}^{\op}(U)]_{[i,n]}^{[\alpha_k+1,n-i+\alpha_k+1]} \prod_{t \geq k+1}^{m} \det[\mathcal{F}^{\op}(U)]_{[\alpha_t+1,n]}^{[\alpha_t+1,n]}.
\end{equation}
It is evident from the above formula that the flag minors of $\mathcal{F}^{\op}(U)$ can be expressed as
\begin{equation*}
\det [\mathcal{F}^{\op}(U)]_{[i,n]}^{[j,n-i+j]} = \begin{cases}
    \frac{g_{ij}(U)}{g_{\mu\mu}(U)} \ &\text{if} \ j-1 \in \Gamma_2\\
    g_{ij}(U) \ &\text{otherwise}
\end{cases}
\end{equation*}
where $1 \leq i \leq n$, $1 \leq j \leq i$ and $\mu := \gamma^*(j-1)+1$ if $j-1 \in \Gamma_2$. 
\begin{proposition}\label{p:gh_t}
    Let $g_{ij}$ be a $g$-function from the initial extended cluster of $\gc_g^{\dagger}(\bg)$ and $h_{ji}$ be an $h$-function from the initial extended cluster of $\gc_h^{\dagger}(\bg^{\op})$. Then $g_{ij}(U) = (-1)^{\varepsilon_{ji}}h_{ji}(U^{T})$, where $\varepsilon_{ji} = (i-j)(n-j)$.
\end{proposition}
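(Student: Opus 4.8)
The plan is to reduce the statement to a transpose-duality between the birational maps $\mathcal{F}$ and $\mathcal{F}^{\op}$ that was essentially already established in Proposition~\ref{p:ffg_comm}, and then to compare the defining formulas~\eqref{eq:g_fun} and~\eqref{eq:h_fun} minor-by-minor. First I would invoke Proposition~\ref{p:ffg_comm}: passing to the limit $k\to\infty$ (or equivalently $k=\deg\gamma$, using that both sequences stabilize there), that proposition gives the identity
\begin{equation}
[\mathcal{F}^{\op}(U)]^{T} = \mathcal{F}(U^{T}),
\end{equation}
where $\mathcal{F}^{\op}$ is built from $\bg$ and $\mathcal{F}$ is built from the opposite triple $\bg^{\op}$. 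Transposing turns a row set into a column set and vice versa; concretely, for any index sets $I,J$ of the same size one has $\det[\mathcal{F}^{\op}(U)]^{J}_{I} = \det\big([\mathcal{F}^{\op}(U)]^{T}\big)^{I}_{J} = \det[\mathcal{F}(U^{T})]^{I}_{J}$.

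Next I would match up the combinatorial data. A $\gamma^*$-string $S^{\gamma^*}(\alpha_0) = \{\alpha_i\}_{i=0}^{m}$ for $\bg$ is by definition exactly a $\gamma$-string for the opposite triple $\bg^{\op} = (\Gamma_2,\Gamma_1,\gamma^*)$, so the product structure over $t\ge k+1$ in~\eqref{eq:g_fun} is indexed by the same set $\{\alpha_{k+1},\dots,\alpha_m\}$ as the corresponding product in the definition of $h_{ji}$ for $\gc_h^{\dagger}(\bg^{\op})$. Applying the boxed transpose identity termwise: the leading minor $\det[\mathcal{F}^{\op}(U)]_{[i,n]}^{[\alpha_k+1,\,n-i+\alpha_k+1]}$ becomes $\det[\mathcal{F}(U^T)]^{[i,n]}_{[\alpha_k+1,\,n-i+\alpha_k+1]}$, which is precisely the leading factor of $h_{\alpha_k+1,i}(U^T)$ up to the sign $(-1)^{\varepsilon_{\alpha_k+1,i}}$ coming from~\eqref{eq:h_fun}; each square factor $\det[\mathcal{F}^{\op}(U)]_{[\alpha_t+1,n]}^{[\alpha_t+1,n]}$ is transpose-invariant, hence equals $\det[\mathcal{F}(U^T)]^{[\alpha_t+1,n]}_{[\alpha_t+1,n]}$, matching the square factors of $h_{\alpha_k+1,i}(U^T)$ with no sign. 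Collecting these, $g_{i,\alpha_k+1}(U) = (-1)^{\varepsilon_{\alpha_k+1,i}} h_{\alpha_k+1,i}(U^T)$, which with $j = \alpha_k+1$ is exactly the claimed $g_{ij}(U) = (-1)^{\varepsilon_{ji}}h_{ji}(U^T)$ once one checks $\varepsilon_{ji} = (i-j)(n-j)$ agrees with $\varepsilon_{\alpha_k+1,i} = (i-(\alpha_k+1))(n-(\alpha_k+1))$ as defined in~\eqref{eq:h_sign}. I would also need to dispose of the boundary conventions: the case $h_{11}(U) = \det U$ versus $g_{11}(U) = \det U$ and the out-of-range conventions~\eqref{eq:hconv}--\eqref{eq:hconv2}, which are immediate from $\det U = \det U^T$.

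The main obstacle I anticipate is bookkeeping the signs: one must verify that $\varepsilon_{ji}$ as stated in the proposition, $\varepsilon$ as given by~\eqref{eq:h_sign} (applied with the arguments $\alpha_k+1$ and $i$), and any sign introduced by the transposition of a non-square block all fit together consistently across all the cases $k=m$, $k=m-1$, $k\le m-2$. A clean way to handle this is to use the enhanced formula of Proposition~\ref{p:h_fun_enh} (which writes $h_{\alpha_i+1,j}$ in terms of the $\mathcal{F}_{m-i}$'s) together with the analogous stabilization statement for $\mathcal{F}^{\op}_k$, so that the termwise transpose comparison is done at a fixed finite level rather than "in the limit"; alternatively, one can bypass the explicit sign arithmetic entirely by comparing the fully expanded minor formulas from Proposition~\ref{p:h_expansion} with their $g$-analogues, exactly as in the proof of Proposition~\ref{p:hfrozdual}, where the substitution $U\mapsto U^T$ together with the bijections $\sgamma|_{\mathcal{N}_i}$ already did this kind of matching for the frozen variables. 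I expect roughly that the non-sign part of the argument is two or three lines, and essentially all the work is confirming $\varepsilon_{ji} = (i-j)(n-j)$ is the correct exponent.
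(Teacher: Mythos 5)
Your proposal is correct and takes exactly the same route as the paper's one-line proof: invoke Proposition~\ref{p:ffg_comm} (with the roles of $\bg$ and $\bg^{\op}$ interchanged), transpose the flag minors, and compare termwise with the defining formulas~\eqref{eq:g_fun} and~\eqref{eq:h_fun}. The sign concern you flag at the end does not actually materialize — transposing a submatrix introduces no extra sign since $\det A^T = \det A$, so the only sign is the single factor $(-1)^{\varepsilon_{\alpha_k+1,i}}$ coming from~\eqref{eq:h_fun}, which coincides with $(-1)^{\varepsilon_{ji}} = (-1)^{(i-j)(n-j)}$ upon setting $j=\alpha_k+1$ in~\eqref{eq:h_sign}, with no case split needed.
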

\begin{proof}
    The statement is a consequence of Proposition~\ref{p:ffg_comm} and the defining formulas~\eqref{eq:h_fun} and~\eqref{eq:g_fun} for the $h$- and $g$-functions.
\end{proof}
As a corollary, one can obtain an explicit expansion of the $g$-functions in terms of minors of $U$, as in Section~\ref{s:p_explicit}. One can also obtain invariance properties of the $g$-functions from Corollary~\ref{c:fop_invar}. 

\paragraph{Frozen variables.} The frozen variables in $\gc_g^{\dagger}(\bg,\GL_n)$ are given by the set
\begin{equation*}
    \{c_1,c_2,\ldots,c_{n-1}\} \cup \{g_{i+1,i+1} \ | \ i\in \Pi\setminus\Gamma_1\} \cup \{g_{11}\}.
\end{equation*}
In the case of $\gc_g^{\dagger}(\bg,\SL_n)$, $g_{11}(U) = 1$, so this variable is absent. As the next proposition shows, the frozen variables in $\gc_g^{\dagger}(\bg)$ and $\gc_h^{\dagger}(\bg)$ are identical.

\begin{proposition}\label{p:samefroz}
    Let $\{\alpha_t\}_{t = 0}^m$ be a $\gamma$-string for the BD triple $(\Gamma_1,\Gamma_2,\gamma)$. 
    Then \begin{equation*}
        h_{\alpha_0+1,\alpha_0+1}(U) = g_{\alpha_m+1,\alpha_m+1}(U)
    \end{equation*}
    where $h_{\alpha_0+1,\alpha_0+1}$ is a frozen variable in $\gc_h^{\dagger}(\bg)$ and $g_{\alpha_m+1,\alpha_m+1}$ is a frozen variable in $\gc_g^{\dagger}(\bg)$.
\end{proposition}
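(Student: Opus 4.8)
The plan is to reduce the identity to the duality already established in Proposition~\ref{p:hfrozdual} combined with the transposition relation between $g$- and $h$-functions in Proposition~\ref{p:gh_t}. Start from Proposition~\ref{p:gh_t} applied to the diagonal entry $i=j=\alpha_m+1$: since $\varepsilon_{\alpha_m+1,\alpha_m+1}=(\,(\alpha_m+1)-(\alpha_m+1)\,)(n-(\alpha_m+1))=0$, this gives $g_{\alpha_m+1,\alpha_m+1}(U)=\bar h_{\alpha_m+1,\alpha_m+1}(U^{T})$, where $\bar h_{\alpha_m+1,\alpha_m+1}$ is the frozen $h$-variable in $\gc_h^{\dagger}(\bg^{\op})$ associated with the root $\alpha_m$.

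Next I would verify that $\bar h_{\alpha_m+1,\alpha_m+1}$ is exactly the frozen variable to which Proposition~\ref{p:hfrozdual} applies. Recall that for $\bg^{\op}=(\Gamma_2,\Gamma_1,\gamma^*)$ the frozen $h$-variables are indexed by $\Pi\setminus\Gamma_1$, and $\alpha_m\in\Pi\setminus\Gamma_1$ precisely because $\{\alpha_t\}_{t=0}^m$ is a $\gamma$-string (so $\gamma(\alpha_m)\notin\Gamma_1$, i.e.\ $\alpha_m\notin\Gamma_1$). Moreover, the $\gamma^*$-string in $\bg^{\op}$ starting at $\beta_0:=\alpha_m$ is nothing but the reversal $\{\alpha_{m-i}\}_{i=0}^m$ of the given $\gamma$-string, and its last element is $\beta_m=\alpha_0\in\Pi\setminus\Gamma_2$. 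Thus the hypotheses of Proposition~\ref{p:hfrozdual} are met with the roles of the two BD triples interchanged: taking $\bg$ there to be our $\bg$ and $\bg^{\op}$ there to be our $\bg^{\op}$, with $\alpha_0$ the start of the $\gamma$-string and $\beta_0=\alpha_m$, we obtain
\begin{equation}
\bar h_{\alpha_m+1,\alpha_m+1}(U^{T}) = h_{\alpha_0+1,\alpha_0+1}(U).
\end{equation}
Chaining this with the displayed consequence of Proposition~\ref{p:gh_t} yields $g_{\alpha_m+1,\alpha_m+1}(U)=h_{\alpha_0+1,\alpha_0+1}(U)$, as required.

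The only real bookkeeping is to confirm that the index conventions line up: that $\mathcal{F}^{\op}$ for $\gc_g^{\dagger}(\bg)$ is built from $\mathcal{F}$ for $\gc_h^{\dagger}(\bg^{\op})$ via transposition (which is Proposition~\ref{p:ffg_comm}, the input to Proposition~\ref{p:gh_t}), and that the $\gamma$-string/$\gamma^*$-string reversal used in Proposition~\ref{p:hfrozdual} matches the string $\{\alpha_t\}_{t=0}^m$ in the statement. I expect the main (and essentially only) obstacle to be this matching of string data and of the $\varepsilon$-sign: one must be careful that the sign $\varepsilon_{ji}$ in Proposition~\ref{p:gh_t} vanishes on the diagonal and that no residual sign survives from Proposition~\ref{p:hfrozdual}, which has $\bar h_{\beta_0+1,\beta_0+1}(U^T)=h_{\alpha_0+1,\alpha_0+1}(U)$ with no sign. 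Once these are checked the proof is immediate, so I would keep the write-up to a couple of sentences invoking the two cited propositions.
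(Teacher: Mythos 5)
Your proof is correct and follows essentially the same route as the paper's: invoke Proposition~\ref{p:gh_t} to write $g_{\alpha_m+1,\alpha_m+1}(U)=\bar h_{\alpha_m+1,\alpha_m+1}(U^{T})$ (the sign $\varepsilon$ indeed vanishes on the diagonal), then invoke Proposition~\ref{p:hfrozdual} with $\beta_0=\alpha_m$ to identify $\bar h_{\alpha_m+1,\alpha_m+1}(U^{T})$ with $h_{\alpha_0+1,\alpha_0+1}(U)$. The extra bookkeeping you supply (checking $\alpha_m\notin\Gamma_1$ and that the $\gamma^*$-string in $\bg^{\op}$ starting at $\alpha_m$ is the reversal of the given $\gamma$-string) is exactly what makes the two cited propositions applicable, and matches what the paper leaves implicit.
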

\begin{proof}
    Let us denote by $\bar{h}$ the $h$-variables in $\gc_h^{\dagger}(\bg^{\op})$. Since $\alpha_m \notin \Gamma_1$, the variable $\bar{h}_{\alpha_m+1,\alpha_m+1}$ is frozen in $\gc_h^{\dagger}(\bg^{\op})$. It follows from  Proposition~\ref{p:gh_t} that $g_{\alpha_m+1,\alpha_m+1}(U) = \bar{h}_{\alpha_m+1,\alpha_m+1}(U^T)$. By Proposition~\ref{p:hfrozdual}, $\bar{h}_{\alpha_m+1,\alpha_m+1}(U^T) = h_{\alpha_0+1,\alpha_0+1}(U)$. Thus the statement holds.
\end{proof}

The following proposition is a restatement of Proposition~\ref{p:frozvar} in the case of $\gc_g^{\dagger}(\bg)$.

\begin{proposition*}\label{p:frozvarg}
Let $\psi$ be a frozen variable in $\gc^{\dagger}_g(\bg,G)$, $G \in \{\SL_n,\GL_n\}$. Then the following statements hold:
\begin{enumerate}[1)]
\item The variable $\psi$ is semi-invariant with respect to the action~\eqref{eq:daction};
\item The variable $\psi$ is log-canonical with any $u_{ij}$;
\item The zero locus of $\psi$ foliates into a union of symplectic leaves of $(G,\pi_{\bar{\bg}}^{\dagger})$.
\end{enumerate}
\end{proposition*}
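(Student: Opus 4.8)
The plan is to reduce the $g$-convention statement to the already-proved $h$-convention version (Proposition~\ref{p:frozvar} for $\gc_h^{\dagger}(\bg)$) by means of the transposition map and the opposite BD triple $\bg^{\op}$. The key bridge is Proposition~\ref{p:samefroz}, which identifies every frozen $g$-variable $g_{\alpha_m+1,\alpha_m+1}$ in $\gc_g^{\dagger}(\bg)$ with the frozen $h$-variable $h_{\alpha_0+1,\alpha_0+1}$ in $\gc_h^{\dagger}(\bg)$, where $\{\alpha_t\}_{t=0}^m$ is a $\gamma$-string; together with the fact that the $c$-variables are literally the same functions in both conventions, this shows that \emph{the set of frozen variables of $\gc_g^{\dagger}(\bg)$ coincides with the set of frozen variables of $\gc_h^{\dagger}(\bg)$}. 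Hence all three assertions (semi-invariance under the $\mathcal{D}$-action~\eqref{eq:daction}, log-canonicity with each $u_{ij}$, and the symplectic-leaf foliation of the zero locus) hold for $\gc_g^{\dagger}(\bg)$ simply because they hold for $\gc_h^{\dagger}(\bg)$, the functions being identical elements of $\mathcal{O}(G)$. In particular, no new computation is needed: the content of this proposition is entirely carried by Proposition~\ref{p:samefroz} plus the $h$-convention proof in Section~\ref{s:p_frozen}.

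Concretely, I would proceed as follows. First, dispatch the $c$-variables exactly as in the $h$-convention proof: they are Casimirs, they are irreducible by~\cite[Theorem 3.10]{double}, so parts (1)--(3) follow from Proposition~\ref{p:yakim}. Second, let $\psi = g_{ii}$ be a frozen $g$-variable; by Proposition~\ref{p:samefroz} there is a $\gamma$-string $\{\alpha_t\}_{t=0}^m$ with $i = \alpha_m+1$ and $\psi(U) = h_{\alpha_0+1,\alpha_0+1}(U)$, where the right-hand side is a frozen $h$-variable of $\gc_h^{\dagger}(\bg,G)$. Since the statement of Proposition~\ref{p:frozvar} has already been established for $\gc_h^{\dagger}(\bg,G)$ in Section~\ref{s:p_frozen}, the function $h_{\alpha_0+1,\alpha_0+1}$ is semi-invariant under the action~\eqref{eq:daction}, log-canonical with every $u_{ij}$, and its zero locus is a union of symplectic leaves of $(G,\pi_{\bg}^{\dagger})$; all three properties transfer verbatim to $\psi$. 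That completes the argument for $\GL_n$; the $\SL_n$ case is the restriction, with $g_{11} \equiv 1$ absent, handled identically.

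There is essentially no obstacle here, which is the point of having set up Proposition~\ref{p:samefroz} and the duality Proposition~\ref{p:hfrozdual} beforehand. The only item that deserves a sentence of care is making sure the \emph{same} group $\mathcal{D}$ and the \emph{same} action~\eqref{eq:daction} appear on both sides: since $\mathcal{D}$ depends only on the BD data $\bg$ (not on a choice of convention), and since $g_{ii}$ and $h_{\alpha_0+1,\alpha_0+1}$ are literally equal as regular functions, the semi-invariance character $\chi_\psi$ on $\mathcal{D}$ is inherited without modification. Alternatively, if one prefers a self-contained route, one can invoke Proposition~\ref{p:gh_t} (which writes $g_{ij}(U) = (-1)^{\varepsilon_{ji}}h_{ji}(U^T)$ for $\bg^{\op}$) together with the observation that the transposition map intertwines the $\mathcal{D}$-action for $\bg$ with the corresponding action for $\bg^{\op}$ and sends $\pi_{\bg}^{\dagger}$ to $-\pi_{\bg^{\op}}^{\dagger}$ (Lemma~\ref{l:transpoiss}); this reduces the statement for $\gc_g^{\dagger}(\bg)$ directly to Proposition~\ref{p:frozvar} for $\gc_h^{\dagger}(\bg^{\op})$. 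Either reduction works; I would present the first one, as it is the shorter, and cite Section~\ref{s:p_frozen} and Proposition~\ref{p:samefroz} as the two inputs.

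\begin{proof}
The $c$-variables are Casimirs of $\pi_{\bg}^{\dagger}$, so they are log-canonical with every $u_{ij}$ and semi-invariant (indeed invariant) under~\eqref{eq:daction}; since they are irreducible elements of $\mathcal{O}(\GL_n)$ by~\cite[Theorem 3.10]{double}, Proposition~\ref{p:yakim} shows that each of their zero loci is a union of symplectic leaves. Now let $\psi$ be a frozen $g$-variable other than $g_{11}$, say $\psi = g_{\alpha_m+1,\alpha_m+1}$ for a $\gamma$-string $\{\alpha_t\}_{t=0}^m$. By Proposition~\ref{p:samefroz},
\[
\psi(U) = g_{\alpha_m+1,\alpha_m+1}(U) = h_{\alpha_0+1,\alpha_0+1}(U),
\]
and the right-hand side is precisely a frozen $h$-variable of $\gc_h^{\dagger}(\bg,G)$. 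Proposition~\ref{p:frozvar} has already been established for $\gc_h^{\dagger}(\bg,G)$ in Section~\ref{s:p_frozen}; hence $h_{\alpha_0+1,\alpha_0+1}$ is semi-invariant with respect to the action~\eqref{eq:daction}, it is log-canonical with every $u_{ij}$, and its zero locus foliates into a union of symplectic leaves of $(G,\pi_{\bg}^{\dagger})$. As $\psi$ is the same regular function, all three properties hold for $\psi$. Finally, $g_{11}(U) = \det U = h_{11}(U)$, which is a Casimir, so it too satisfies (1)--(3) by the same reasoning as for the $c$-variables (using~\cite[Theorem 3.10]{double} and Proposition~\ref{p:yakim}). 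This proves the proposition for $\GL_n$; the case $G = \SL_n$ follows by restriction, the variable $g_{11}\equiv 1$ being absent.
\end{proof}
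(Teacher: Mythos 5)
Your argument is precisely the paper's: invoke Proposition~\ref{p:samefroz} to identify each frozen $g$-variable with a frozen $h$-variable as literal elements of $\mathcal{O}(G)$, then quote the already-established Proposition~\ref{p:frozvar} for $\gc_h^{\dagger}(\bg)$ from Section~\ref{s:p_frozen}. The paper's proof is a one-liner citing exactly these two inputs, and your expanded version correctly fills in the routine details.
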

\begin{proof}
    It is a consequence of Proposition~\ref{p:frozvar} for $\gc_h^{\dagger}(\bg)$ (see Section~\ref{s:p_frozen}) and Proposition~\ref{p:samefroz}.
\end{proof}

\paragraph{Initial extended cluster.} The initial extended cluster $\Psi_0$ of $\gc^{\dagger}_g(\bg,\GL_n)$ is given by the set
\begin{equation*}
    \{g_{ij} \ | \ 2 \leq j \leq i \leq n\} \cup \{\phi_{kl} \ | \ k,l\geq 1, k+l\leq n\} \cup \{c_1,\ldots,c_{n-1}\}\cup \{g_{11}\}
\end{equation*}
where $g_{11}(U) := \det(U)$. In the case of $\SL_n$, the initial extended cluster is the same except the variable $g_{11}$ is removed.

\paragraph{Ground rings.} The ground rings are defined as follows.
\begin{align*}
\hat{\mathbb{A}}_{\mathbb{C}}(\bg,\GL_n):=&\mathbb{C}[g_{11}^{\pm 1}, c_1,\ldots,c_{n-1}, g_{i+1,i+1} \ | \ i \in \Pi\setminus \Gamma_1];\\
\hat{\mathbb{A}}_{\mathbb{C}}(\bg,\SL_n):=&\mathbb{C}[c_1,\ldots,c_{n-1}, g_{i+1,i+1} \ | \ i \in \Pi\setminus \Gamma_1].
\end{align*}

\paragraph{A generalized cluster mutation.} Only the variable $\phi_{11}$ is equipped with a nontrivial string, which is given by $(1,c_1,\ldots,c_{n-1},1)$. The generalized mutation relation for $\phi_{11}$ reads
\begin{equation*}
    \phi_{11}\phi_{11}^{\prime} = \sum_{r=0}^n c_r \phi_{21}^r \phi_{12}^{n-r},
\end{equation*}
which is similar to the relation~\eqref{eq:p11mut} in the $h$-convention. The mutations of all the other variables follow the usual pattern~\eqref{eq:ordexchrel} from the theory of cluster algebras of geometric type.

\subsection{Birational quasi-isomorphisms}\label{s:g_birat}
In this subsection, we recast the birational quasi-isomorphisms from Section~\ref{s:birat} in the $g$-convention. Let us recall from Section~\ref{s:q_transp} that the rational map $\mathcal{Q}^{\op}: (\GL_n,\pi_{(\bg_\std,r_0)}^{\dagger})\dashrightarrow(\GL_n,\pi_{(\bg,r_0)}^{\dagger})$ is given by
\begin{equation}\label{eq:qop2}
    \mathcal{Q}^{\op}(U) := \rho^{\op}(U)U \rho^{\op}(U)^{-1}, \ \ \rho^{\op}(U) := \prod_{i=1}^{\leftarrow} (\tilde{\gamma})^i(U_+), \ U \in \GL_n.
\end{equation}
We showed in Proposition~\ref{p:qop_poiss} that $\mathcal{Q}^{\op}$ is a Poisson map. Define the rational map $\mathcal{F}^{\op,c}:(\GL_n,\pi_{(\bg,r_0)}^{\dagger})\dashrightarrow(\GL_n,\pi_{(\bg_\std,r_0)}^{\dagger})$ as
\begin{equation*}
\mathcal{F}^{\op,c}(U) := [\mathcal{F}^{\op}(U)]^{-1} U \mathcal{F}^{\op}(U) = \tilde{\gamma}(\mathcal{F}^{\op}(U)_+)^{-1} \mathcal{F}^{\op}(U).
\end{equation*}
\begin{proposition}
    The rational maps $\mathcal{Q}^{\op}$ and $\mathcal{F}^{\op,c}$ are inverse to each other. 
\end{proposition}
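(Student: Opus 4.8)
The statement to prove is that $\mathcal{Q}^{\op}$ and $\mathcal{F}^{\op,c}$ are mutually inverse. My plan is to mirror the argument of Proposition~\ref{p:fc_q_inv}, transporting it through the transposition map $T$. The cleanest route is to avoid repeating the direct computation and instead invoke the commutative square~\eqref{eq:qqopt}: namely, $T\circ \mathcal{Q}^{\op} = \mathcal{Q}\circ T$, where on the right $\mathcal{Q}$ is built relative to the opposite BD triple $\bg^{\op}$. Similarly, I would establish the analogous identity $T \circ \mathcal{F}^{\op,c} = \mathcal{F}^c \circ T$, where $\mathcal{F}^c$ is the map from Section~\ref{s:qfc} taken relative to $\bg^{\op}$. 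Granting these two conjugation identities, the claim follows formally: $\mathcal{F}^{\op,c}\circ \mathcal{Q}^{\op} = T^{-1}\circ \mathcal{F}^c \circ T \circ T^{-1} \circ \mathcal{Q}\circ T = T^{-1}\circ(\mathcal{F}^c\circ \mathcal{Q})\circ T = T^{-1}\circ \id \circ T = \id$, using Proposition~\ref{p:fc_q_inv}, and symmetrically for the other composition.

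\textbf{Key steps, in order.} First I would verify $T\circ \mathcal{F}^{\op,c} = \mathcal{F}^c\circ T$. This rests on Proposition~\ref{p:ffg_comm}, which says $[\mathcal{F}_k(U^T)]^T = \mathcal{F}_k^{\op}(U)$ when $\mathcal{F}^{\op}$ is built relative $\bg^{\op}$; passing to the limit gives $\mathcal{F}(U^T)^T = \mathcal{F}^{\op}(U)$, i.e. $T\circ \mathcal{F}^{\op} = \mathcal{F}\circ T$. Then from the definitions $\mathcal{F}^{\op,c}(U) = [\mathcal{F}^{\op}(U)]^{-1}U\mathcal{F}^{\op}(U)$ and $\mathcal{F}^c(U) = \mathcal{F}(U)U[\mathcal{F}(U)]^{-1}$ one computes
\[
\mathcal{F}^c(U^T) = \mathcal{F}(U^T)\,U^T\,[\mathcal{F}(U^T)]^{-1} = \left([\mathcal{F}^{\op}(U)]^{T}\right)U^T\left([\mathcal{F}^{\op}(U)]^{T}\right)^{-1} = \left([\mathcal{F}^{\op}(U)]^{-1}U\mathcal{F}^{\op}(U)\right)^T = [\mathcal{F}^{\op,c}(U)]^T,
\]
which is exactly $T\circ \mathcal{F}^{\op,c} = \mathcal{F}^c\circ T$. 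Second, I would recall (or re-derive in one line from~\eqref{eq:qqopt}) that $T\circ\mathcal{Q}^{\op} = \mathcal{Q}\circ T$, again with $\mathcal{Q}$ relative $\bg^{\op}$; this is already essentially contained in the proof of Proposition~\ref{p:qop_poiss}, but purely as maps (ignoring Poisson structures) it follows from the same transposition identities for $\rho^{\op}$ versus $\rho$. Third, I would assemble the two compositions as above, invoking Proposition~\ref{p:fc_q_inv} to close the loop.

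\textbf{Alternative and the main obstacle.} Alternatively, one can give a fully self-contained direct proof by imitating Proposition~\ref{p:fc_q_inv}: use the defining relation $\tilde{\gamma}(\rho^{\op}(U)_+)\cdots = \rho^{\op}(U)$ analogous to~\eqref{eq:rhorel}, together with $(\mathcal{F}^{\op})^{-1}(U) = U\tilde{\gamma}(U_+)^{-1}$ from~\eqref{eq:fginv} and the invariance property~\eqref{eq:fopnpinv}, to show $\mathcal{F}^{\op}(\mathcal{Q}^{\op}(U)) = \rho^{\op}(U)^{-1}\cdot U$ and then substitute into~\eqref{eq:fgcdef}. I expect the only real subtlety to be bookkeeping of the order of products in $\rho^{\op}(U) = \prod_{i=1}^{\leftarrow}(\tilde\gamma)^i(U_+)$ (the reversed arrow, as opposed to the forward arrow in $\rho$) and making sure the left/right placement of factors is consistent when transposing — transposition reverses products, which is exactly what swaps $\prod^{\rightarrow}$ with $\prod^{\leftarrow}$. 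This is routine but is the one place where a sign-or-order slip could occur, so I would state the transposition identities for $\rho^{\op}$ and $\mathcal{F}^{\op}$ explicitly as a lemma before combining them. Everything else is a formal consequence of Proposition~\ref{p:fc_q_inv} and Proposition~\ref{p:ffg_comm}.
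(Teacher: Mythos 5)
Your main argument is correct and is exactly the paper's proof: conjugate by $T$ via the square~\eqref{eq:qqopt} and invoke Proposition~\ref{p:fc_q_inv}; you additionally make explicit the identity $T\circ\mathcal{F}^{\op,c}=\mathcal{F}^c\circ T$ (a consequence of Proposition~\ref{p:ffg_comm}), which the paper leaves implicit. One slip in your alternative direct sketch: the intermediate identity should be $\mathcal{F}^{\op}(\mathcal{Q}^{\op}(U))=\rho^{\op}(U)\cdot U$, not $\rho^{\op}(U)^{-1}\cdot U$ --- transpose $\mathcal{F}(\mathcal{Q}(U^T))=U^T\rho(U^T)$ and use $\rho(U^T)^T=\rho^{\op}(U)$, or simply observe that substituting $\rho^{\op}(U)^{-1}U$ into~\eqref{eq:fgcdef} does not return $U$, whereas $\rho^{\op}(U)U$ does.
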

\begin{proof}
    The statement follows from the commutative diagram~\eqref{eq:qqopt} and Proposition~\ref{p:fc_q_inv}.
\end{proof}
Similarly to the $h$-convention, if $\tilde{\bg} \prec \bg$, then the $g$-variables in $\gc_g^{\dagger}(\tilde{\bg})$ are decorated with a tilde, and the marker for the pair $(\gc_g^{\dagger}(\bg),\gc_g^{\dagger}(\tilde{\bg}))$ is chosen in such a way that each $\phi$- and $c$-function is related to itself, and each $g_{ij}$ is related to $\tilde{g}_{ij}$.

\begin{proposition}\label{p:qop_birat}
Let $\psi$ be any variable from the initial extended cluster of $\gc^\dagger_g(\bg)$. Then the following identities hold:
\begin{enumerate}[1)]
\item If $\psi$ is a $\phi$- or $c$-function, then $(\mathcal{Q}^{\op})^*(\psi(U)) = \psi(U)$;\label{i:bqlvcop}
\item\label{i:bqplgop} If $\psi = g_{ij}$ for some $g$-function $g_{ij}(U)$, let $\{\alpha_t\}_{t=0}^m$ be the $\gamma^*$-string such that $j-1 = \alpha_k$ for some $k$; then,
\begin{equation*}
(\mathcal{Q}^{\op})^*(g_{ij}(U)) = \tilde{g}_{ij}(U)\prod_{t \geq k+1}\tilde{g}_{\alpha_t+1,\alpha_t+1}(U).
\end{equation*}
\end{enumerate}
Moreover, $\mathcal{Q}^{\op}:(G,\gc_g^{\dagger}(\bg_{\std})) \dashrightarrow (G,\gc_g^{\dagger}(\bg))$ is a birational quasi-isomorphism for $G \in \{\SL_n,\GL_n\}$.
\end{proposition}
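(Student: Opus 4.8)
The cleanest route is to transfer everything through the transposition map $T$ and the already-established $h$-convention results, exactly mirroring how Proposition~\ref{p:qop_poiss} was deduced from Proposition~\ref{p:qpoiss}. The plan is to first establish Parts~\ref{i:bqlvcop} and~\ref{i:bqplgop} by direct computation, and then use Proposition~\ref{p:q_y_var} to upgrade to a birational quasi-isomorphism. For Part~\ref{i:bqlvcop}, since $G(U):=\rho^{\op}(U)$ is a unipotent \emph{upper} triangular matrix, the conjugation-invariance of the $\phi$-functions under $\mathcal{N}_+$ (stated in Section~\ref{s:descr_gg}) and the conjugation-invariance of the $c$-functions (immediate from~\eqref{eq:c_def}) give $(\mathcal{Q}^{\op})^*(\phi_{kl}) = \phi_{kl}$ and $(\mathcal{Q}^{\op})^*(c_r) = c_r$ at once. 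For Part~\ref{i:bqplgop}, the key observation is that $(\mathcal{Q}^{\op})^*$ preserves the relevant flag minors of $\mathcal{F}^{\op}$: arguing as in Proposition~\ref{p:q_quasih} but with the opposite triangularity, one shows that for any $j$ and any $I\subseteq[1,n]$ with $|I| = n-j+1$,
\[
(\mathcal{Q}^{\op})^*\!\left(\det [\mathcal{F}^{\op}(U)]_{[j,n-|I|+j+\ldots]}^{?}\right)
\]
— more precisely the minors $\det[\mathcal{F}^{\op}(U)]_{[i,n]}^{[\alpha_k+1,\,n-i+\alpha_k+1]}$ appearing in~\eqref{eq:g_fun} — pull back to the corresponding minors computed with $U$ in place of $\mathcal{F}^{\op}(U)$, because $\mathcal{F}^{\op}\circ\mathcal{Q}^{\op}$ differs from the identity only by multiplication on the appropriate side by $\rho^{\op}(U)^{\pm1}$, which is unipotent upper triangular and hence does not affect the bottom-justified minors in question. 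Plugging this into the defining formula~\eqref{eq:g_fun} and tracking the product over $t\geq k+1$ yields~\eqref{i:bqplgop}; alternatively one may invoke Proposition~\ref{p:gh_t} to rewrite $g_{ij}(U) = (-1)^{\varepsilon_{ji}}h_{ji}(U^T)$, note that $\mathcal{Q}^{\op}$ and the $h$-convention map $\mathcal{Q}$ for $\bg^{\op}$ are intertwined by $T$ (the commutativity of~\eqref{eq:qqopt} used at the level of maps rather than bivectors), and quote Part~\ref{i:bqplh} of Proposition~\ref{p:q_quasih}. The latter is probably the shortest path and I would present it that way.

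For the final clause, that $\mathcal{Q}^{\op}$ is a birational quasi-isomorphism, I would again proceed as in the last paragraph of the proof of Proposition~\ref{p:q_quasih}. First, the hatted strings $\hat{p}_{ir}$ in $\gc_g^{\dagger}(\bg)$ are the $c$-variables (the generalized mutation relation for $\phi_{11}$ has string $(1,c_1,\ldots,c_{n-1},1)$, identical to the $h$-convention), and these are fixed by $(\mathcal{Q}^{\op})^*$ by Part~\ref{i:bqlvcop}; so condition~(2) of Proposition~\ref{p:q_y_var} holds. Next I would verify condition~(3): for each nonmarked $y$-variable $y_i$ in the initial extended cluster $\Psi_0$ of $\gc_g^{\dagger}(\bg_{\std})$, one needs $(\mathcal{Q}^{\op})^*(y_i) = \tilde y_{\kappa(i)}$ up to the equivalence~\eqref{eq:equivrelb}. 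Using the neighborhoods recorded in Appendix~\ref{s:ainiquivg} (which are the transposes/mirrors of those in Appendix~\ref{s:ainiquivh}), the $y$-variables of the $g$-variables are ratios of $g$-variables of the shape $\dfrac{g_{i+1,j}g_{i-1,j-1}g_{i,j+1}}{g_{i-1,j}g_{i,j-1}g_{i+1,j+1}}$, and formula~\eqref{i:bqplgop} shows that $(\mathcal{Q}^{\op})^*(g_{kj}/g_{sj}) = \tilde g_{kj}/\tilde g_{sj}$ whenever the product-over-$t$ correction factors cancel, which they do in each such ratio because the two $g$-variables in a ratio sit in the same $\gamma^*$-string position; the $\phi$- and $c$-variables are fixed, so their $y$-variables are preserved on the nose. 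Thus $\mathcal{Q}^{\op}$ is a quasi-isomorphism by Proposition~\ref{p:q_y_var}. Finally, to promote it to a \emph{birational} quasi-isomorphism (Definition~\ref{d:birat}) one must check that $(\mathcal{Q}^{\op})^*$ induces an isomorphism
\[
(\mathcal{Q}^{\op})^*:\mathcal{O}(\gl_n(\mathbb{C}))[g_{ii}^{\pm1}\mid i\in[2,n]]\xrightarrow{\ \sim\ }\mathcal{O}(\gl_n(\mathbb{C}))[\tilde g_{ii}^{\pm1}\mid i\in[2,n]];
\]
from the defining formula~\eqref{eq:qop2}, the formulas for the entries of $U_+$ in terms of minors, and~\eqref{i:bqplgop}, one gets the inclusion $(\mathcal{Q}^{\op})^*\big(\mathcal{O}(\gl_n)[g_{ii}^{\pm1}]\big)\subseteq \mathcal{O}(\gl_n)[\tilde g_{ii}^{\pm1}]$, and since $(\mathcal{Q}^{\op})^*$ is an isomorphism of the unit groups of both localizations, the inclusion is an equality.

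I expect the main obstacle to be bookkeeping rather than anything conceptual: one must be careful that the opposite triangularity ($\mathcal{N}_+$, $\mathcal{B}_-$, bottom-justified flag minors) is used consistently, and that the $\gamma^*$-strings (rather than $\gamma$-strings) index the $g$-variables correctly, so that the correction factors $\prod_{t\geq k+1}\tilde g_{\alpha_t+1,\alpha_t+1}$ in~\eqref{i:bqplgop} come out right and genuinely cancel in every $y$-variable of $\Psi_0$. The cleanest way to sidestep most of this is to lean on Proposition~\ref{p:gh_t} and Proposition~\ref{p:ffg_comm} to reduce every assertion here to the corresponding already-proven assertion for $\gc_h^{\dagger}(\bg^{\op})$ via the substitution $U\mapsto U^T$; then the proof is essentially ``apply $T$ to Proposition~\ref{p:q_quasih}.'' I would write it in that reductive style, spelling out the intertwining $T\circ\mathcal{Q}^{\op} = \mathcal{Q}\circ T$ (with $\mathcal{Q}$ built from $\bg^{\op}$) as the one nontrivial identity, which follows from $\tilde\gamma(U_+)^T = \tilde{\gamma^*}\big((U^T)_-\big)$ and the definition of $\rho^{\op}$.
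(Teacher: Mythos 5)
Your proposal is correct and takes essentially the same approach as the paper's (very terse) proof: Part~1 from the invariance of $\phi$- and $c$-functions under $\mathcal{N}_+$-conjugation, Part~2 by transporting Proposition~\ref{p:q_quasih} through the transposition intertwining $T\circ\mathcal{Q}^{\op}=\mathcal{Q}\circ T$ (the diagram~\eqref{eq:qqopt}) and Proposition~\ref{p:gh_t}, and the quasi-isomorphism clause via Proposition~\ref{p:q_y_var}. You correctly discard your ``direct flag-minor'' variant for Part~2 in favor of the transposition reduction, which is precisely what the paper does.
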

\begin{proof}
    Part~\ref{i:bqlvcop} follows from the invariance properties of $\phi$- and $c$-functions. Part~\ref{i:bqplgop} follows from the commutative diagram~\eqref{eq:qqopt}, Proposition~\ref{p:q_quasih} and Proposition~\ref{p:gh_t}. To show that $\mathcal{Q}^{\op}$ is a birational quasi-isomorphism, one applies Proposition~\ref{p:q_y_var} and proceeds along the same lines as in the proof of Proposition~\ref{p:q_quasih}.
\end{proof}
Let $\tilde{\mathcal{Q}}^{\op}$ denote the rational map $\tilde{\mathcal{Q}}^{\op}:(\GL_n,\pi_{(\bg_{\std},r_0)}^{\dagger}) \dashrightarrow (\GL_n,\pi_{(\tilde{\bg},\tilde{r}_0)}^{\dagger})$ given by formula~\eqref{eq:qop2} relative to $(\tilde{\bg},\tilde{r}_0)$. The following proposition is a restatement of Proposition~\ref{p:b_gen} for $\gc_g^{\dagger}(\bg)$, which in turn is a refinement of Proposition~\ref{p:birat}.
\begin{proposition}\label{p:b_gen_g}
    In the setup of the subsection, set $\mathcal{G}^{\op}:=\mathcal{Q}^{\op}\circ (\tilde{\mathcal{Q}}^{\op})^{-1}$. Then $\mathcal{G}^{\op}$ is a birational map given by the formula:
    \begin{align}\label{eq:genbirat_g}
        \mathcal{G}^{\op}(U) = G^{\op}(U) \cdot U \cdot G^{\op}(U)^{-1}, \ \ &G^{\op}(U) := \left[\prod_{i \geq 1}^{\leftarrow} (\tilde{\gamma})^i(G_0^{\op}(U))\right]\cdot G_0^{\op}(U),\\
        &G_0^{\op}(U):= \tilde{\gamma}(\tilde{\mathcal{F}}^{\op}(U)_+)\tilde{\theta}(\tilde{\mathcal{F}}^{\op}(U)_+)^{-1}.\notag
    \end{align}
    Moreover, for $\tilde{\bg} \prec \bg$, then  $\mathcal{G}^{\op}:(G,\gc_g^{\dagger}(\tilde{\bg}))\dashrightarrow(G,\gc_g^{\dagger}(\bg))$ is a birational quasi-isomorphism, and if in addition $\tilde{r}_0 = r_0$, then $\mathcal{G}^{\op}:(G,\pi_{\tilde{\bar{\bg}}}^{\dagger}) \dashrightarrow (G,\pi_{\bar{\bg}}^{\dagger})$ is a Poisson map.
\end{proposition}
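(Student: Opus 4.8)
The plan is to reduce everything to the $h$-convention results of Sections~\ref{s:genbirat}--\ref{s:bsimpl} by transposition, in the same way Proposition~\ref{p:qop_poiss} was deduced from Proposition~\ref{p:qpoiss} and Proposition~\ref{p:gh_t} relates the $g$- and $h$-variables.

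\textbf{The explicit formula.} Applying Proposition~\ref{p:ffg_comm} with $\bg$ replaced by $\bg^{\op}$ gives $\mathcal{F}^{\op}$ built from $\bg$ equals $T\circ\mathcal{F}\circ T$ with $\mathcal{F}$ built from $\bg^{\op}$, and the commutative square~\eqref{eq:qqopt} gives $\mathcal{Q}^{\op}$ built from $\bg$ equals $T\circ\mathcal{Q}\circ T$ with $\mathcal{Q}$ built from $\bg^{\op}$. Since $\tilde\bg\prec\bg$ implies $\tilde\bg^{\op}\prec\bg^{\op}$, these identities combine to
\[
\mathcal{G}^{\op} = \mathcal{Q}^{\op}\circ(\tilde{\mathcal{Q}}^{\op})^{-1} = T\circ\bigl(\mathcal{Q}\circ\tilde{\mathcal{Q}}^{-1}\bigr)\circ T = T\circ\mathcal{G}_{(\bg^{\op},\tilde\bg^{\op})}\circ T,
\]
so $\mathcal{G}^{\op}(U) = [\mathcal{G}_{(\bg^{\op},\tilde\bg^{\op})}(U^{T})]^{T}$. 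First I would transpose formula~\eqref{eq:genbirat}: transposition carries $\mathcal{N}_\pm$ to $\mathcal{N}_\mp$, intertwines the group lift $\tilde{\gamma}^{*}$ attached to $\bg^{\op}$ with the lift $\tilde\gamma$ attached to $\bg$ (and $\tilde{\theta}^{*}$ attached to $\tilde\bg^{\op}$ with $\tilde\theta$ attached to $\tilde\bg$), and sends $\tilde{\mathcal{F}}(U^{T})_{-}$ to $[\tilde{\mathcal{F}}^{\op}(U)_{+}]^{T}$ by Proposition~\ref{p:ffg_comm}; this turns the matrix $G_0$ of~\eqref{eq:geng0} into $G_0^{\op}$ of~\eqref{eq:genbirat_g}, and transposing the product defining $G$ reverses the order (so $\prod^{\rightarrow}$ becomes $\prod^{\leftarrow}$), giving $G(U^{T})^{T} = G^{\op}(U)$ and hence~\eqref{eq:genbirat_g}. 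Alternatively the same formula follows by a direct computation mirroring the proof of Proposition~\ref{p:b_gen}: start from $\mathcal{G}^{\op}=\mathcal{Q}^{\op}\circ(\tilde{\mathcal{Q}}^{\op})^{-1}$, substitute $(\tilde{\mathcal{Q}}^{\op})^{-1}=\tilde{\mathcal{F}}^{\op,c}$ via~\eqref{eq:fgcdef}, set $G^{\op}(U):=\rho^{\op}(\tilde{\mathcal{F}}^{\op,c}(U))\,\tilde\theta(\tilde{\mathcal{F}}^{\op}(U)_+)^{-1}$, and use the recursion $\tilde\gamma(\rho^{\op}(U))=\rho^{\op}(U)\tilde\gamma(U_+)^{-1}$ together with the nilpotency of $\gamma$ to collapse $G^{\op}$ to $\bigl[\prod_{i\ge1}^{\leftarrow}\tilde\gamma^{i}(G_0^{\op}(U))\bigr]G_0^{\op}(U)$.

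\textbf{The Poisson statement.} Since $\tilde\bg\prec\bg$, one may fix a single $r_0$ valid for $\pi_{\std}^{\dagger}$, $\pi_{\tilde\bg}^{\dagger}$ and $\pi_{\bg}^{\dagger}$ simultaneously (the constraint~\eqref{eq:ralg} for $\tilde\bg$ only involves $\alpha\in\tilde\Gamma_1\subset\Gamma_1$). Then Proposition~\ref{p:qop_poiss} makes both $\mathcal{Q}^{\op}:(G,\pi_{\std}^{\dagger})\dashrightarrow(G,\pi_{\bg}^{\dagger})$ and $\tilde{\mathcal{Q}}^{\op}:(G,\pi_{\std}^{\dagger})\dashrightarrow(G,\pi_{\tilde\bg}^{\dagger})$ Poisson isomorphisms, hence so is $\mathcal{G}^{\op}=\mathcal{Q}^{\op}\circ(\tilde{\mathcal{Q}}^{\op})^{-1}$.

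\textbf{The birational quasi-isomorphism statement.} Here I would follow the scheme of the proof of Proposition~\ref{p:b_gen}. Choose a chain $\bg_1:=\tilde\bg\prec\bg_2\prec\cdots\prec\bg_m:=\bg$ with $|\Gamma_{1,i+1}\setminus\Gamma_{1,i}|=1$, and let $\mathcal{G}^{\op}_i:(G,\gc_g^{\dagger}(\bg_i))\dashrightarrow(G,\gc_g^{\dagger}(\bg_{i+1}))$ be the corresponding map. The single-root case is the $g$-convention analogue of Proposition~\ref{p:quasi_iso_simple}: one shows $\mathcal{G}^{\op,*}_i$ fixes the $\phi$- and $c$-functions (using the $\mathcal{N}_+$-conjugation invariance of the $\phi$-functions and $G^{\op}(U)\in\mathcal{N}_+$) and sends a $g$-variable $g_{i,\alpha_k+1}$ to $\tilde g_{i,\alpha_k+1}$ or to $\tilde g_{i,\alpha_k+1}\,\tilde g_{q+1,q+1}$ (from the action on flag minors of $\mathcal{F}^{\op}$ and the defining formula~\eqref{eq:g_fun}); the quasi-isomorphism property then follows from Proposition~\ref{p:q_y_var}, or, as for $\mathcal{G}$ in Section~\ref{s:bsimpl}, from Proposition~\ref{p:quasi_toric} using the $g$-analogue of the degree formula~\eqref{eq:degh2}. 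Equivalently this step can be deduced from Proposition~\ref{p:quasi_iso_simple} itself through $\mathcal{G}^{\op}_i=T\circ\mathcal{G}_i\circ T$, once one checks that $T$, normalized by the signs of Proposition~\ref{p:gh_t} and the $\phi\leftrightarrow\varphi$ dictionary of Remark~\ref{r:phivphi}, is an isomorphism of generalized cluster structures $\gc_g^{\dagger}(\bg)\xrightarrow{\sim}\gc_h^{\dagger}(\bg^{\op})$ carrying one marker to the other. Finally, each $\mathcal{G}^{\op,*}_i$ sends a nonmarked $y$-variable of the initial extended cluster to a $y$-variable and fixes the hatted strings (which are the $c$-variables), hence so does $\mathcal{G}^{\op,*}$, and Proposition~\ref{p:q_y_var} gives that $\mathcal{G}^{\op}$ is a quasi-isomorphism; that it is \emph{birational} follows as in the proof of Proposition~\ref{p:qop_birat} from the explicit formula~\eqref{eq:genbirat_g}, since $\mathcal{G}^{\op,*}$ is an isomorphism on the groups of units of the two localizations and sends each marked $g$-variable into the target localization.

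\textbf{Main obstacle.} The only non-routine point is this single-root base case: one must either re-run the computation of Proposition~\ref{p:quasi_iso_simple} in the $g$-convention (re-deriving the action of $\mathcal{G}^{\op,*}$ on all initial variables together with the $g$-variable degree formula and the toric bookkeeping), or establish once and for all that transposition intertwines $\gc_g^{\dagger}(\bg)$ with $\gc_h^{\dagger}(\bg^{\op})$ as generalized cluster structures (matching quivers, strings, markers and ground rings, with the sign normalizations of Proposition~\ref{p:gh_t}, Proposition~\ref{p:samefroz} and Remark~\ref{r:phivphi}). Everything else is a transcription of the $h$-convention arguments.
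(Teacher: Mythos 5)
Your proposal is correct and follows essentially the same route as the paper: derive the explicit formula~\eqref{eq:genbirat_g} by transposing Proposition~\ref{p:b_gen} via the commutative square~\eqref{eq:qqopt}, get the Poisson claim from Proposition~\ref{p:qop_poiss}, and establish the quasi-isomorphism property by the same chain decomposition used for $\mathcal{G}$ with the single-root base case handled by the $g$-analogue of Proposition~\ref{p:quasi_iso_simple} (or equivalently by transposition). The paper's proof is a two-line appeal to exactly these ingredients; you have simply spelled out the transposition bookkeeping explicitly.
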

\begin{proof}
    Formula~\eqref{eq:genbirat_g} follows from Proposition~\ref{p:b_gen} and the commutative diagram~\eqref{eq:qqopt}. The proof of the fact that $\mathcal{G}^{\op}$ is a birational quasi-isomorphism follows along the same lines as the proof of Proposition~\ref{p:b_gen}.
\end{proof}
Formula~\eqref{eq:genbirat_g} also determines the inverse of $\mathcal{G}^{\op}$, as in Remark~\ref{r:birinv}. In the case  $\tilde{\bg}\prec \bg$ and $|\Gamma_1\setminus\tilde{\Gamma}_1| = 1$, one can also derive a more explicit expression for $G_0^{\op}(U)$ via Proposition~\ref{p:g0expl}.

\subsection{\texorpdfstring{Proofs of Theorems~\ref{thm:maingln} and~\ref{thm:mainsln} for $\gc_g^{\dagger}(\bg)$}{Proofs of Theorems~\ref{thm:maingln} and~\ref{thm:mainsln} for in the g-convention}}\label{s:thm_pf_g}
The proofs of Parts~\ref{i:gln_ni}, \ref{i:gln_tor} and \ref{i:gln_comp} of the theorems are subdivided into the three propositions below (Part~\ref{i:gln_rk} is then a consequence of Part~\ref{i:gln_comp}). We will transfer the results from the case of $\gc_h^{\dagger}(\bg^{\op})$ to $\gc_g^{\dagger}(\bg)$. The key is the following correspondence between the variables:

\begin{enumerate}[1)]
    \item For $\phi$- and $\varphi$-functions, $\phi_{kl}(W_0^{-1}UW_0) = \varphi_{kl}(U)$ (see Remark~\ref{r:phivphi});
    \item For $g_{ij}$ and $h_{ji}$ from the initial extended clusters of $\gc_g^{\dagger}(\bg)$ and $\gc_h^{\dagger}(\bg^{\op})$, $g_{ij}(U) = (-1)^{\varepsilon_{ji}}h_{ji}(U^T)$ (see Proposition~\ref{p:gh_t}).
\end{enumerate}
Both correspondences preserve the Poisson brackets (see Remark~\ref{r:phivphi} and Lemma~\ref{l:transpoiss}).

\begin{proposition}\label{p:compl_g}
    Let $\bg := (\Gamma_1,\Gamma_2,\gamma)$ be a BD triple of type $A_{n-1}$ and $G \in \{\SL_n,\GL_n\}$. Then the following statements hold:
    \begin{enumerate}[1)]
    \item The generalized cluster structure $\gc_g^{\dagger}(\bg,G)$ is regular;
    \item All the variables in the initial extended cluster of $\gc_g^{\dagger}(\bg,G)$ are irreducible (as elements of $\mathbb{C}[G]$) and (for cluster variables) coprime with their mutations;
    \item The generalized cluster structure $\gc_g^{\dagger}(\bg,G)$ is complete.
    \end{enumerate}
\end{proposition}
\begin{proof}
    Let $\Psi_0$ be the initial extended cluster of $\gc_g^{\dagger}(\bg,G)$. In the case of $\bg = \bg_{\std}$, the proof can be executed in the same way as in~\cite{double} for $\gc_h^{\dagger}(\bg_{\std},G)$. There, the proof was based on various normal forms of a generic element $U \in \GL_n$. For instance, in the case of the initial extended cluster of $\gc_h^{\dagger}(\bg_{\std},G)$, one decomposes $U$ as $U=U_\oplus U_-$ and as $U = N_- B_+ C \sinv{N_-}$ for some $N_-,U_- \in \mathcal{N}_-$, $U_\oplus,B_+ \in \mathcal{B}_+$ and $C:=e_{1n} + \sum_{i=1}^{n-1}e_{i+1,i}$. Then $\varphi(U) = \varphi(B_+)$ and $h(U) = h(U_\oplus)$ for $\varphi$- and $h$-variables. In the case of $\gc_g^{\dagger}(\bg_{\std},G)$, one decomposes $U$ as $U = U_+ U_\ominus$ and $U = N_+B_-C^T\sinv{N_+}$, where $N_+,U_+ \in \mathcal{N}_+$ and $U_{\ominus},B_- \in \mathcal{B}_-$. Then, to express $u_{ij}$ in terms of $\mathcal{L}_{\mathbb{C}}(\Psi_0)$, it is enough to express the entries of $B_-$ and $U_{\ominus}$ in terms of $\mathcal{L}_{\mathbb{C}}(\Psi_0)$. This process is described in~\cite{double}. For $|\Gamma_1| = 1$, one checks the conditions of Propositions~\ref{p:birat_fsingle}-\ref{p:upper_cont1} for the marked variable, which in turn can be transferred from the case of $\gc_h^{\dagger}(\bg,G)$ via the transposition map. For $|\Gamma_1| > 1$, the proof proceeds along the same lines as in the case of $\gc_h^{\dagger}(\bg)$, and it is based on the inductive argument on the size of $|\Gamma_1|$ and Propositions~\ref{p:birat_double_ufd}-\ref{p:upper_cont2}, which in turn rely on the existence of birational quasi-isomorphisms $\mathcal{G}^{\op}$ with distinct sets of marked variables.
\end{proof}

Let us recall that the number $k_{\bg}$ is given by $k_{\bg}=|\Pi\setminus \Gamma_1|$.
\begin{proposition}\label{p:toric_g}
Let $\bg$ be a BD triple of type $A_{n-1}$. The following statements hold:
\begin{enumerate}[1)]
    \item The action of $\mathcal{H}_{\bg}$ upon $\SL_n$ induces a global toric action upon $\gc_g^{\dagger}(\bg,\SL_n)$ of rank $k_{\bg}$;
    \item The action of $\mathcal{H}_{\bg} \times \mathbb{C}^*$ upon $\GL_n$ induces a global toric action upon $\gc_g^{\dagger}(\bg,\GL_n)$ of rank $k_{\bg}+1$.
\end{enumerate}
\end{proposition}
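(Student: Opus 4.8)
The plan is to follow, essentially verbatim, the structure of the proof of Proposition~\ref{p:toric_h}, replacing $\mathcal{Q}$ and $\mathcal{G}$ by their opposite counterparts $\mathcal{Q}^{\op}$ and $\mathcal{G}^{\op}$ and importing the base cases through the transposition dictionary recorded at the beginning of Section~\ref{s:thm_pf_g}. The first step is to establish the $g$-convention analogue of Proposition~\ref{p:q_equiv}: for every $H\in\mathcal{H}_{\bg}$ one has $\mathcal{Q}^{\op}(HUH^{-1}) = H\,\mathcal{Q}^{\op}(U)\,H^{-1}$, and likewise $\mathcal{G}^{\op}(HUH^{-1}) = H\,\mathcal{G}^{\op}(U)\,H^{-1}$ for all the birational quasi-isomorphisms $\mathcal{G}^{\op}$ of Section~\ref{s:g_birat}. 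This follows from the commutative diagram~\eqref{eq:qqopt}: since $\mathcal{H}_{\bg}$ consists of diagonal matrices, $(HUH^{-1})^{T} = H^{-1}U^{T}H$, and since the equivalence relation defining $\mathfrak{h}_{\bg}$ depends only on the $\gamma$-strings, which are the reverses of the $\gamma^{*}$-strings, we have $\mathcal{H}_{\bg^{\op}} = \mathcal{H}_{\bg}$; combining this with the $\mathcal{H}_{\bg^{\op}}$-equivariance of $\mathcal{Q}$ relative $\bg^{\op}$ (Proposition~\ref{p:q_equiv}) yields the claim for $\mathcal{Q}^{\op}$, and the case of $\mathcal{G}^{\op}$ is identical.

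Next I would dispose of the scalar $\mathbb{C}^{*}$-action on $\GL_n$ exactly as in Lemma~\ref{l:toric_glob_scalar}: all variables of the initial extended cluster $\Psi_0$ of $\gc_g^{\dagger}(\bg)$ are homogeneous polynomials, so their polynomial degrees give a local toric action of rank one (with the entries corresponding to the $c$-variables set to $0$, so that the $\hat{p}_{ir}$ are invariant), and globality reduces to checking that the $y$-variables in $\Psi_0$ have degree $0$. Using $\deg\phi_{kl} = \deg\varphi_{kl}$ (Remark~\ref{r:phivphi}) and $\deg g_{ij} = \deg h_{ji}$ (Proposition~\ref{p:gh_t}, transposition preserving degree), together with the degree formula~\eqref{eq:degh}, the required cancellations are precisely the transposes of those verified for $\gc_h^{\dagger}(\bg)$ in Lemma~\ref{l:toric_glob_scalar} and Proposition~\ref{p:toric_h}.

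For the $\mathcal{H}_{\bg}$-action I would appeal to Proposition~\ref{p:toric} and induct on $|\Gamma_1|$, as in the proof of Proposition~\ref{p:toric_h}. Equivariance of $\mathcal{Q}^{\op}$ and of all $\mathcal{G}^{\op}$ makes the induced action on $\Psi_0$ a \emph{local} toric action; maximality of its rank follows from the standard argument (a subtorus of $\mathcal{H}_{\bg}$ fixing every variable in $\Psi_0$ would, since $\mathcal{O}(\GL_n)\subseteq\mathcal{L}_{\mathbb{C}}(\Psi_0)$, fix all regular functions, hence be trivial); and the $\hat{p}_{ir}$ are the $c$-variables, invariant by \cite[Theorem 6.1]{double}. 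It remains to prove the $y$-variables of $\Psi_0$ are $\mathcal{H}_{\bg}$-invariant. For $|\Gamma_1| = 0$ this is the trivial-triple case, which transfers from $\gc_h^{\dagger}(\bg_{\std})$: here $\mathcal{H}_{\bg_{\std}}$ is the full maximal torus, and both $H\mapsto H^{-1}$ coming from transposition and $H\mapsto W_0^{-1}HW_0$ coming from the $\varphi$–$\phi$ dictionary land back in it. For $|\Gamma_1| = 1$ with $\Gamma_1 = \{p\}$, the marked variable is $\psi_{\square} = g_{p+1,p+1}$, whose $y$-variable invariance is the transpose of Lemma~\ref{l:toric} for $\gc_h^{\dagger}(\bg^{\op})$ (whose marked variable is exactly $h_{p+1,p+1}$, related to $g_{p+1,p+1}$ via Proposition~\ref{p:gh_t}); the non-marked $y$-variables are invariant because $\mathcal{G}^{\op}$ is equivariant, so $(\mathcal{G}^{\op})^{*}(y(\psi)) = y(\tilde{\psi})$, which is invariant by the induction hypothesis. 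Finally, $|\Gamma_1| > 1$ is handled, as before, using a pair of complementary birational quasi-isomorphisms $\mathcal{G}^{\op}$ with disjoint sets of marked indices. The main point requiring care is the $|\Gamma_1| = 1$ marked-variable step, and specifically the fact that the $\varphi$–$\phi$ dictionary is conjugation by the non-diagonal $W_0$, which does not preserve $\mathcal{H}_{\bg}$; this is circumvented by invoking the semi-invariance $\phi_{kl}(HUH^{-1}) = \chi_{\phi_{kl}}(H)\phi_{kl}(U)$ of the $\phi$-functions under the \emph{full} diagonal torus (Section~\ref{s:descr_gg}), so that only the $g$-variables, whose torus behavior transposes cleanly, are fed through the transposition transfer.
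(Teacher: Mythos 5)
Your overall strategy---establishing $\mathcal{H}_{\bg}$-equivariance of $\mathcal{Q}^{\op}$ and $\mathcal{G}^{\op}$ via the diagram~\eqref{eq:qqopt} and the observation $\mathcal{H}_{\bg^{\op}} = \mathcal{H}_{\bg}$, handling $\mathbb{C}^{*}$ by the degree argument, and then running an $|\Gamma_1|$-induction as in Proposition~\ref{p:toric_h}---is in the spirit of the paper. The paper itself forgoes a second induction: it invokes Proposition~\ref{p:toric_h} (already proven for every $\bg$) through the dictionary $\phi_{kl}(U) = \varphi_{kl}(W_0^{-1}UW_0)$ and $g_{ij}(U)=(-1)^{\varepsilon_{ji}}h_{ji}(U^{T})$, under which the invariance of most $y$-variables in $\Psi_0$ of $\gc_g^{\dagger}(\bg)$ transfers automatically. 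Either organizing principle would be acceptable if the transfer were exact.

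The gap is at the boundary vertices $\phi_{kl}$ with $k+l=n$, and it is exactly the point your argument glosses over. The quiver $Q_g(\bg)$ is \emph{not} obtained from $Q_h(\bg^{\op})$ by a clean transposition: as recorded at the start of Appendix~\ref{s:ainiquivg}, one must additionally add arrows $\phi_{kl}\to\phi_{k-1,l+1}$ for $k\ge 2$, $k+l=n$, and remove the arrow $\phi_{1,n-1}\to g_{11}$. Consequently $y(\phi_{kl})$ for $k+l=n$ is a \emph{different} Laurent monomial from any $y$-variable in $Q_h(\bg^{\op})$, so neither your $|\Gamma_1|=0$ base case nor the assertion that ``the required cancellations are precisely the transposes'' is justified; the semi-invariance of each individual $\phi$-function under the full torus only gives the local toric action and says nothing about whether the (modified) character $\chi_{y(\phi_{kl})}$ is trivial. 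The paper closes precisely this gap by computing $\pi_0([\nabla_U\log\phi_{kl},U])$ and $\pi_0([\nabla_U\log g_{ij},U])$ explicitly (formulas~\eqref{eq:aduphikl} and~\eqref{eq:adugij}), reading off the boundary neighborhoods from Figures~\ref{f:gnbd_phi1n1}, \ref{f:gnbd_phin11}, \ref{f:gnbd_phikl_b}, and verifying $\pi_0([\nabla_U\log y(\phi_{kl}),U])=0$ for $\bg=\bg_{\std}$ directly. You need an equivalent verification for these boundary $y$-variables before the proof is complete.
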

\begin{proof}
The fact that the action by scalars $\mathbb{C}^*$ induces a global toric action upon $\gc_g^{\dagger}(\bg,\GL_n)$ can be proved in the same way as in the case of $\gc_h^{\dagger}(\bg,\GL_n)$ (see Lemma~\ref{l:toric_glob_scalar}). Due to Proposition~\ref{p:q_equiv} and the commutative diagram~\eqref{eq:qqopt}, $\mathcal{Q}^{\op}:(G,\pi_{(\bg_\std,r_0)}^{\dagger}) \dashrightarrow (G,\pi_{(\bg,r_0)}^{\dagger})$ is equivariant with respect to the action of~$\mathcal{H}_{\bg}$; as a consequence, the $g$-functions are semi-invariant with respect to the action of~$\mathcal{H}_{\bg}$. Furthermore, due to the semi-invariance of the $\varphi$-functions and Remark~\ref{r:phivphi}, the $\phi$-functions are also semi-invariant with respect to the action; hence, $\mathcal{H}_{\bg}$ induces a local toric action upon the initial extended cluster $\Psi_0$ of $\gc_g^{\dagger}(\bg)$. Due to Proposition~\ref{p:gh_t}, Remark~\ref{r:phivphi} and Proposition~\ref{p:toric_h}, all the $y$-variables in $\Psi_0$ are invariant with respect to $\mathcal{H}_{\bg}$ except $y(\phi_{kl})$, $k+l = n$, which require a separate check. In turn, this reduces to showing that $\pi_0([\nabla_U \log y(\phi_{kl}),U]) = 0$ for $\bg = \bg_{\std}$. Set $\Delta(i,j):=\sum_{k=i}^j e_{kk}$. It is not difficult to see that
\begin{align}
&\pi_0([\nabla_U \log \phi_{kl},U]) =  \Delta(1,l) - \Delta(k+1,n) + (n-k-l)\Delta(1,1);\label{eq:aduphikl}\\
&\pi_0([\nabla_U \log g_{ij},U]) = \Delta(j,n-i+j)-\Delta(i,n).\label{eq:adugij}
\end{align}
Now, the neighborhoods of $\phi_{kl}$ for $k+l=n$ are given by Figure~\ref{f:gnbd_phi1n1}, Figure~\ref{f:gnbd_phin11} and Figure~\ref{f:gnbd_phikl_b}, and it is a matter of a direct computation that $\pi_0([\nabla_U \log y(\phi_{kl}),U]) = 0$, $k+l=n$. The Casimirs $\hat{p}_{ir}$ from the statement of Proposition~\ref{p:toric} are the same as in $\gc_h^{\dagger}(\bg)$, hence they are invariant with respect to $\mathcal{H}_{\bg}$. It follows from Proposition~\ref{p:toric} that the local toric action of $\mathcal{H}_{\bg}$ upon $\gc_g^{\dagger}(\bg)$ is global. To verify that the action is of rank $k_{\bg}$, one applies the same argument as in Proposition~\ref{p:toric_h} based on Proposition~\ref{p:compl_g}.
\end{proof}

\begin{lemma}\label{l:log_g_phi}
    Let $g_{ij}$ be a $g$-function and $\phi_{kl}$ be a $\phi$-function from the initial extended cluster of $\gc_g^{\dagger}(\bg)$. Then
    \begin{equation}\label{eq:gijphikl}
        \{g_{ij},\phi_{kl}\}_{(\bg,r_0)}^{\dagger} = \langle R_0\pi_0[\nabla_Ug_{ij},U],[\nabla_U\phi_{kl},U]\rangle - \langle \pi_0(U\nabla_U g_{ij}),[\nabla_U\phi_{kl},U]\rangle.
    \end{equation}
    Moreover, the $g$-functions are log-canonical with the $\phi$-functions.
\end{lemma}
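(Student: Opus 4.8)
The plan is to establish formula~\eqref{eq:gijphikl} by a direct simplification of the general Poisson bracket~\eqref{eq:brackgln} for $\{\cdot,\cdot\}_{\bg}^{\dagger}$, using the invariance properties of the $g$- and $\phi$-functions, and then to deduce log-canonicity from the structure of the right-hand side. First I would recall that, by the invariance property of the $\phi$-functions, $\phi_{kl}(PUP^{-1}) = \phi_{kl}(U)$ for $P\in\mathcal{N}_+$; differentiating this gives $\pi_0\bigl([\nabla_U\phi_{kl},U]\bigr) = \const$ (as already noted in the excerpt in the $h$-convention, cf.~\eqref{eq:h_infit_t}) and, more precisely, $[\nabla_U\phi_{kl},U]\in\mathfrak{b}_-$ up to this constant diagonal part — in fact~\eqref{eq:aduphikl} gives the exact diagonal. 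Likewise, since $g_{ij}$ is pulled back (via transposition and Proposition~\ref{p:gh_t}) from an $h$-variable, the invariance properties of the $h$-variables transported through Proposition~\ref{p:ffg_comm} yield $\nabla_U g_{ij}\cdot U - \ogammas(\text{stuff})$-type membership relations dual to Corollary~\ref{c:h_inf}; the relevant one here is that $g_{ij}$ is invariant under $U\mapsto P U \tilde\gamma(P)^{-1}$ for $P\in\mathcal N_+$ (the $g$-analogue of~\eqref{eq:hninv}), whose differential controls the off-diagonal part of $[\nabla_U g_{ij},U]$.

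Next I would plug these facts into~\eqref{eq:brackgln}. Writing $\{g_{ij},\phi_{kl}\}_{\bg}^{\dagger} = \langle R([\nabla_U g_{ij},U]),[\nabla_U\phi_{kl},U]\rangle - \langle[\nabla_U g_{ij},U],\nabla_U\phi_{kl}\cdot U\rangle$ and using that $[\nabla_U\phi_{kl},U]$ lies in $\mathfrak b_-$ modulo a central diagonal term, the $\pi_{>}$-component of $R = R_+$ acting on $[\nabla_U g_{ij},U]$ pairs trivially against the $\mathfrak{n}_-$-part of $[\nabla_U\phi_{kl},U]$ except for contributions one can rewrite via the invariance of $g_{ij}$ under the twisted $\mathcal N_+$-action; these, I expect, cancel against the second term $\langle[\nabla_U g_{ij},U],\nabla_U\phi_{kl}\cdot U\rangle$ exactly as in the frozen-variable computation in Section~\ref{s:p_frozen} (the cancellation of "the first two terms" there). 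What survives is the $R_0\pi_0$ contribution, $\langle R_0\pi_0[\nabla_U g_{ij},U],[\nabla_U\phi_{kl},U]\rangle$, together with a leftover $\pi_0$-pairing $-\langle\pi_0(U\nabla_U g_{ij}),[\nabla_U\phi_{kl},U]\rangle$ coming from the $\langle[\nabla_U g_{ij},U],\nabla_U\phi_{kl}\cdot U\rangle$ term after moving $U$ across; this is precisely~\eqref{eq:gijphikl}.

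For log-canonicity I would then observe that every surviving quantity in~\eqref{eq:gijphikl} is a constant multiple of $g_{ij}\,\phi_{kl}$: indeed $\pi_0[\nabla_U g_{ij},U]$ and $\pi_0(U\nabla_U g_{ij})$ applied to $\log g_{ij}$ are constant diagonal matrices by the invariance of $g_{ij}$ (the $g$-analogues of~\eqref{eq:h_infit_tt}, obtainable from Corollary~\ref{c:fop_invar} via Proposition~\ref{p:gh_t}), and likewise $\pi_0[\nabla_U\log\phi_{kl},U]$ is the constant matrix computed in~\eqref{eq:aduphikl}; hence each inner product on the right of~\eqref{eq:gijphikl}, after passing to logarithmic gradients, is a fixed scalar, so $\{g_{ij},\phi_{kl}\}_{\bg}^{\dagger}$ is proportional to $g_{ij}\phi_{kl}$. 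The main obstacle I anticipate is the bookkeeping in the second paragraph: verifying that the non-diagonal ("$\pi_{>}$" and "$\pi_{<}$") contributions of $R_+$ applied to $[\nabla_U g_{ij},U]$ cancel cleanly against $-\langle[\nabla_U g_{ij},U],\nabla_U\phi_{kl}\cdot U\rangle$ requires carefully tracking which block of $\nabla_U\phi_{kl}$ the triangular pieces of $[\nabla_U g_{ij},U]$ see, and making sure that the twisted $\mathcal N_+$-invariance of $g_{ij}$ (rather than plain $\mathcal N_+$-invariance, which it does not satisfy) is what kills the residual $\frac{1}{1-\ogamma}\pi_{>}$ and $\frac{\ogammas}{1-\ogammas}\pi_{<}$ terms — exactly the step that makes the analogous frozen-variable argument in Section~\ref{s:p_frozen} work. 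Once that cancellation is in hand, the rest is routine.
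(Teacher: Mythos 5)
Your outline has the right skeleton — use the triangularity of $[\nabla_U\phi_{kl},U]$ to kill one triangular block of $R_+$, use the twisted $\mathcal N_+$-invariance of $g_{ij}$ coming from~\eqref{eq:fopnpinv} to rewrite the other, and let trace cyclicity produce the cancellation against $-\langle[\nabla_U g_{ij},U],\nabla_U\phi_{kl}\cdot U\rangle$ — but there is a concrete triangularity error that derails the bookkeeping. Differentiating $\phi_{kl}(PUP^{-1})=\phi_{kl}(U)$ for $P\in\mathcal N_+$ gives $\tr\bigl(X[U,\nabla_U\phi_{kl}]\bigr)=0$ for all $X\in\mathfrak n_+$, hence $\pi_<\bigl([\nabla_U\phi_{kl},U]\bigr)=0$; that is, $[\nabla_U\phi_{kl},U]\in\mathfrak b_+$, not $\mathfrak b_-$. (You appear to be recalling the $h$-convention, where $\varphi_{kl}$ is $\mathcal N_-$-adjoint invariant and $[\nabla_U\varphi_{kl},U]\in\mathfrak b_-$.) This matters because $\langle\mathfrak n_+,\mathfrak b_+\rangle=0$ while $\langle\mathfrak n_+,\mathfrak n_-\rangle$ is nondegenerate, so your claim that the $\pi_>$-component of $R_+[\nabla_U g_{ij},U]$ ``pairs trivially against the $\mathfrak n_-$-part'' is exactly backwards: with $[\nabla_U\phi_{kl},U]\in\mathfrak b_+$, it is the $\frac{1}{1-\gamma}\pi_>$ piece that dies outright, and the surviving $-\frac{\gamma^*}{1-\gamma^*}\pi_<$ piece is the one rewritten via $\mathring\nu^*(g_{ij})=U\nabla_U g_{ij}-\ogammas(\nabla_U g_{ij}\cdot U)\in\mathfrak b_+$, as the paper does.

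For the ``moreover'' part you propose a direct argument, asserting that $\pi_0[\nabla_U\log g_{ij},U]$ and $\pi_0(U\nabla_U\log g_{ij})$ are constant for arbitrary $\bg$ by the torus invariances of $g_{ij}$. This does not follow from what you cite. Corollary~\ref{c:fop_invar} only yields constancy of the twisted combinations $\pi_0\bigl(U\nabla_U\log g_{ij}-\ogammas(\nabla_U\log g_{ij}\cdot U)\bigr)$ and $\pi_0\bigl(\nabla_U\log g_{ij}\cdot U-\ogamma(U\nabla_U\log g_{ij})\bigr)$; solving these as a linear system for $a:=\pi_0(U\nabla_U\log g_{ij})$ and $b:=\pi_0(\nabla_U\log g_{ij}\cdot U)$ leads to $(\ogammas\ogamma-1)a=\const$, and $\ogammas\ogamma-1$ annihilates $\mathfrak h(\Gamma_1)$, so the system does not determine $a$ when $\Gamma_1\neq\emptyset$. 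The paper sidesteps this by invoking $\mathcal Q^{\op}$, which is simultaneously a Poisson isomorphism (Proposition~\ref{p:qop_poiss}) and a birational quasi-isomorphism (Proposition~\ref{p:qop_birat}); this reduces log-canonicity to $\bg=\bg_{\std}$, where $\ogamma=0$, the twisted combinations become the untwisted $\pi_0$'s, and formulas~\eqref{eq:aduphikl}, \eqref{eq:adugij} together with $\pi_0(U\nabla_U\log g_{ij})=\Delta(i,n)$ finish the argument. Your proof needs either to supply this decoupling for general $\bg$ (which is not automatic) or to follow the paper's $\mathcal Q^{\op}$ reduction.
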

\begin{proof}
    Set $\mathring{\nu}^*(g_{ij}) :=U\nabla_U g_{ij} - \ogammas(\nabla_U g_{ij}\cdot U)$. Due to the invariance property~\eqref{eq:fopnpinv}, $\mathring{\nu}^*(g_{ij}) \in \mathfrak{b}_+$; therefore,
    \begin{equation*}
    -\frac{\ogammas}{1-\ogammas}\pi_{<}[\nabla_U g_{ij},U] = \pi_{<}(U\nabla_U g_{ij}).
    \end{equation*}
    Together with $[\nabla_U \phi_{kl},U] \in \mathfrak{b}_+$, expanding the bracket $\{g_{ij},\phi_{kl}\}_{(\bg,r_0)}^{\dagger}$ yields formula~\eqref{eq:gijphikl}. Since the map $\mathcal{Q}^{\op}:(\GL_n,\pi_{(\bg_\std,r_0)}^{\dagger}) \dashrightarrow (\GL_n,\pi_{(\bg,r_0)}^{\dagger})$ is a Poisson isomorphism (see Proposition~\ref{p:qop_poiss}) and a birational quasi-isomorphism (see Proposition~\ref{p:qop_birat}), it is enough to verify the log-canonicity for $\bg = \bg_{\std}$, and that readily follows from  combining the formulas~\eqref{eq:aduphikl}, \eqref{eq:adugij} and~\eqref{eq:gijphikl} along with $\pi_0(U\nabla_U \log g_{ij}) = \Delta(i,n)$.
\end{proof}

\begin{proposition}\label{p:compatibl_g}
For any BD quadruple $(\bg,r_0)$, the generalized cluster structure $\gc_g^{\dagger}(\bg)$ is compatible with $\pi_{(\bg,r_0)}^{\dagger}$.
\end{proposition}
\begin{proof}
    In the initial extended cluster $\Psi_0$ of $\gc_g^{\dagger}(\bg)$, the log-canonicity between the $\phi$-functions is a consequence of Remark~\ref{r:phivphi}, the log-canonicity between the $g$-functions is a consequence of Proposition~\ref{p:gh_t} and Lemma~\ref{l:transpoiss}, and the log-canonicity between the $g$- and the $\phi$-functions is a consequence of Lemma~\ref{l:log_g_phi}. Based on Proposition~\ref{p:compb}, to verify the compatibility of $\gc_g^{\dagger}(\bg)$ with $\pi_{(\bg,r_0)}^{\dagger}$, for any cluster variable $\psi_1 \in \Psi_0$ and any variable $\psi_2\in\Psi_0$, one verifies that
    \begin{equation}\label{eq:comp_gg}
        \{\log y(\psi_1),\psi_2\}_{(\bg,r_0)}^{\dagger} = \begin{cases}
            1 \ &\text{if} \ \psi_1 = \psi_2\\
            0 \ &\text{otherwise.}
        \end{cases}
    \end{equation}
    Due to Remark~\ref{r:phivphi}, if $\psi_1,\psi_2 \in \{\phi_{kl} \ | \ k+l < n\}$, then~\eqref{eq:comp_gg} holds, and due to Proposition~\ref{p:gh_t} and Lemma~\ref{l:transpoiss}, if $\psi_1,\psi_2\in \{g_{ij} \ | \ 2 \leq j \leq i \leq n\}$, \eqref{eq:comp_gg} holds as well (note that this covers the case of the marked variables of $\mathcal{Q}^{\op}$ as well). By Proposition~\ref{p:qop_poiss}, it remains to verify~\eqref{eq:comp_gg} in the case $\bg = \bg_{\std}$ and 1) when $\psi_1$ is a $\phi$-function and $\psi_2$ is a $g$-function; 2) when $\psi_1$ is a $g$-function and $\psi_2$ is a $\phi$-function; 3) when $\psi_1 = \phi_{kl}$ for $k+l=n$ and $\psi_2\in\Psi_0$ is any variable. This is a direct computation based on formulas~\eqref{eq:aduphikl}, \eqref{eq:adugij} and \eqref{eq:gijphikl} together with an analysis of the neighborhood of $\psi_1$ provided in Appendix~\ref{s:ainiquivg}.
\end{proof}
\appendix

\section{\texorpdfstring{Initial quiver for $\gc_h^{\dagger}(\bg)$}{Initial quiver for the h-convention}}\label{s:ainiquivh}
In this appendix, we describe the initial quiver $Q_h(\bg)$ for $\gc^\dagger_h(\bg)$. We start with describing the quiver $Q_h(\bg_{\std})$ in Section~\ref{s:quiv_h_triv}, which is exemplified in Figure~\ref{f:ex_n=5_std} for $n=5$. In Section~\ref{s:quiv_h_alg}, we explain how to obtain the quiver $Q_h(\bg)$ from $Q_h(\bg_{\std})$ for a nontrivial BD triple $\bg$. The procedure of obtaining $Q_h(\bg)$ from $Q_{h}(\bg_{\std})$ consists in adding extra arrows to $Q_{h}(\bg_{\std})$ based on $\bg$, as well as in unfreezing some of the vertices. Alternatively, in Section~\ref{s:h_quiv_expl} we provide explicit neighborhoods of the vertices of $Q_h(\bg)$ that are different from the corresponding neighborhoods in $Q_h(\bg_{\std})$.  For explicit examples of $Q_h(\bg)$ for nontrivial BD triples, see Appendix~\ref{s:exs} and the supplementary note~\cite{github}. Throughout the section, we assume that $n \geq 3$.
\subsection{The quiver for the trivial BD triple}\label{s:quiv_h_triv}
Below one can find pictures of the neighborhoods of all vertices of the initial quiver of $\gc_h^{\dagger}(\bg_{\std},\GL_n)$. A few remarks beforehand:
\begin{itemize}
\item The circled vertices are mutable (in the sense of ordinary exchange relations~\eqref{eq:ordexchrel}), the square vertices are frozen, the rounded square vertices may or may not be mutable depending on the indices, and the hexagon vertex is a mutable vertex with a generalized mutation relation (see formula~\eqref{eq:p11mut});
\item Since $c_1,\ldots,c_{n-1}$ are isolated variables, they are not shown on the resulting quiver;
\item For $k=2$ and $n > 3$, the vertices $\varphi_{1k}$ and $\varphi_{k-1,2}$ coincide; hence, the pictures provided below suggest that there are two edges pointing from $\varphi_{21}$ to $\varphi_{12}$ (however, there is only one arrow in $n = 3$);
\item When the indices of the $h$-variables are out of range, we use convention~\eqref{eq:hconv}.;
\item The initial quiver for $\gc_h^{\dagger}(\bg_{\std},\SL_n)$ is the same as for $\gc_h^{\dagger}(\bg_{\std},\GL_n)$ except the vertex with $h_{11}$ has to be removed.
\end{itemize}

 \begin{figure}[htb]
 \begin{center}
 \includegraphics[scale=0.2]{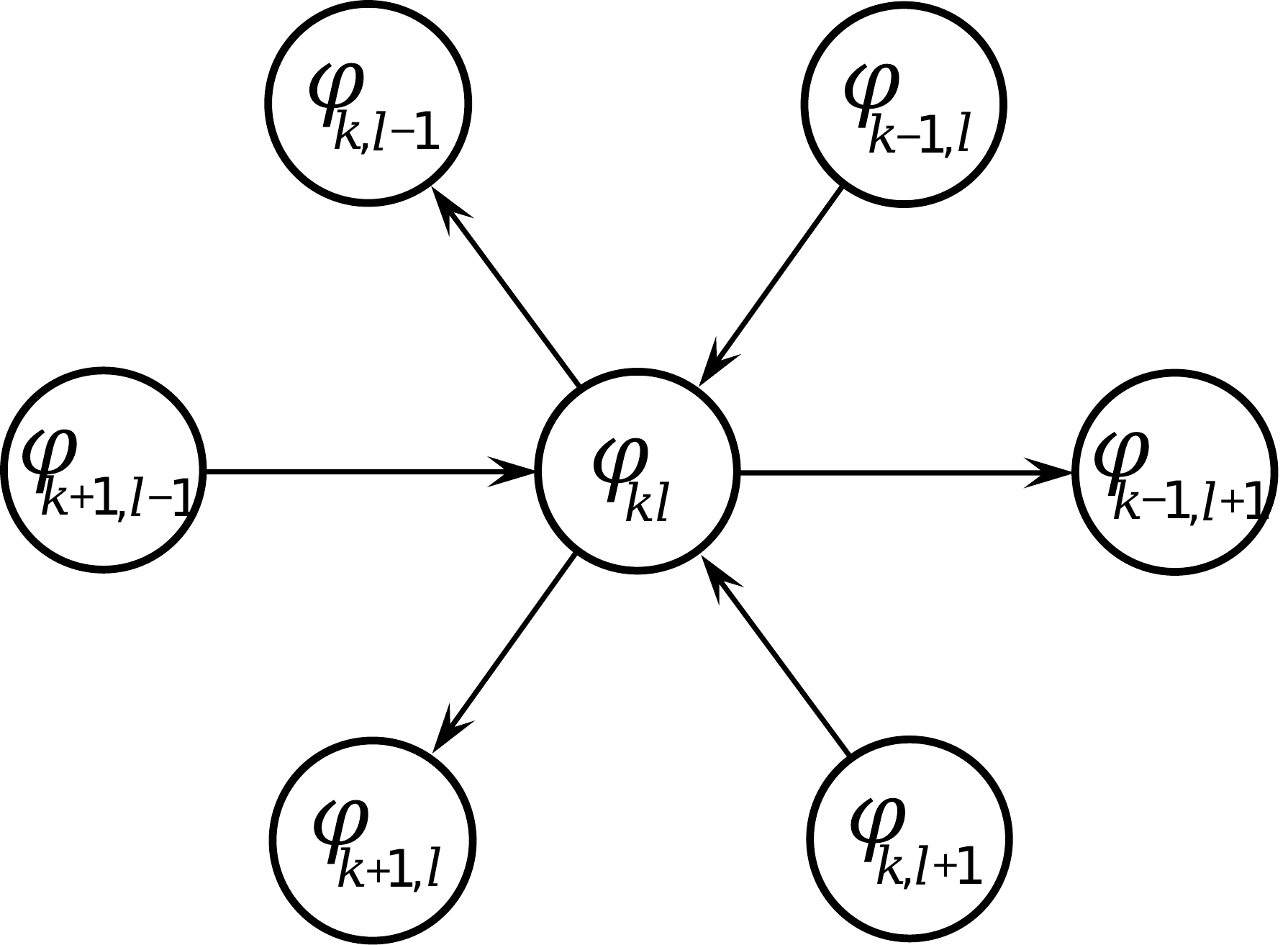}
 \end{center}
 \caption{The neighborhood of $\varphi_{kl}$ for $k,l \neq 1$, $k+l < n$.}
 \label{f:nbd_phikl}
 \end{figure}

 \vspace{5mm}
 \begin{figure}[htb]
 \begin{subfigure}[t]{3in}
 \begin{center}
 \includegraphics[scale=0.2]{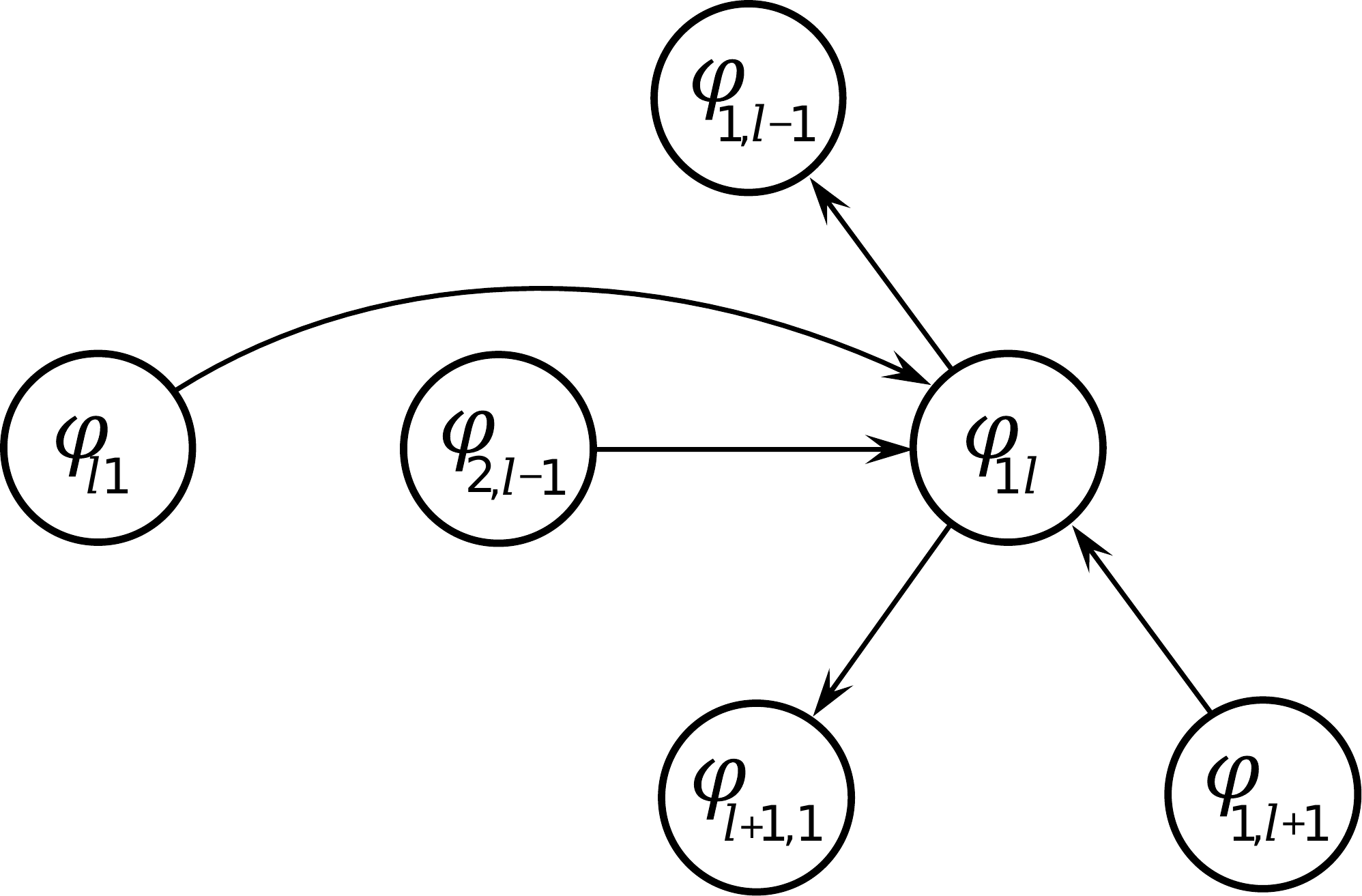}
 \end{center}
 \subcaption{Case $2 \leq l \leq n-2$.}
 \label{f:nbd_phi1l}
 \end{subfigure}
 \begin{subfigure}[t]{2.4in}
 \begin{center}
 \includegraphics[scale=0.2]{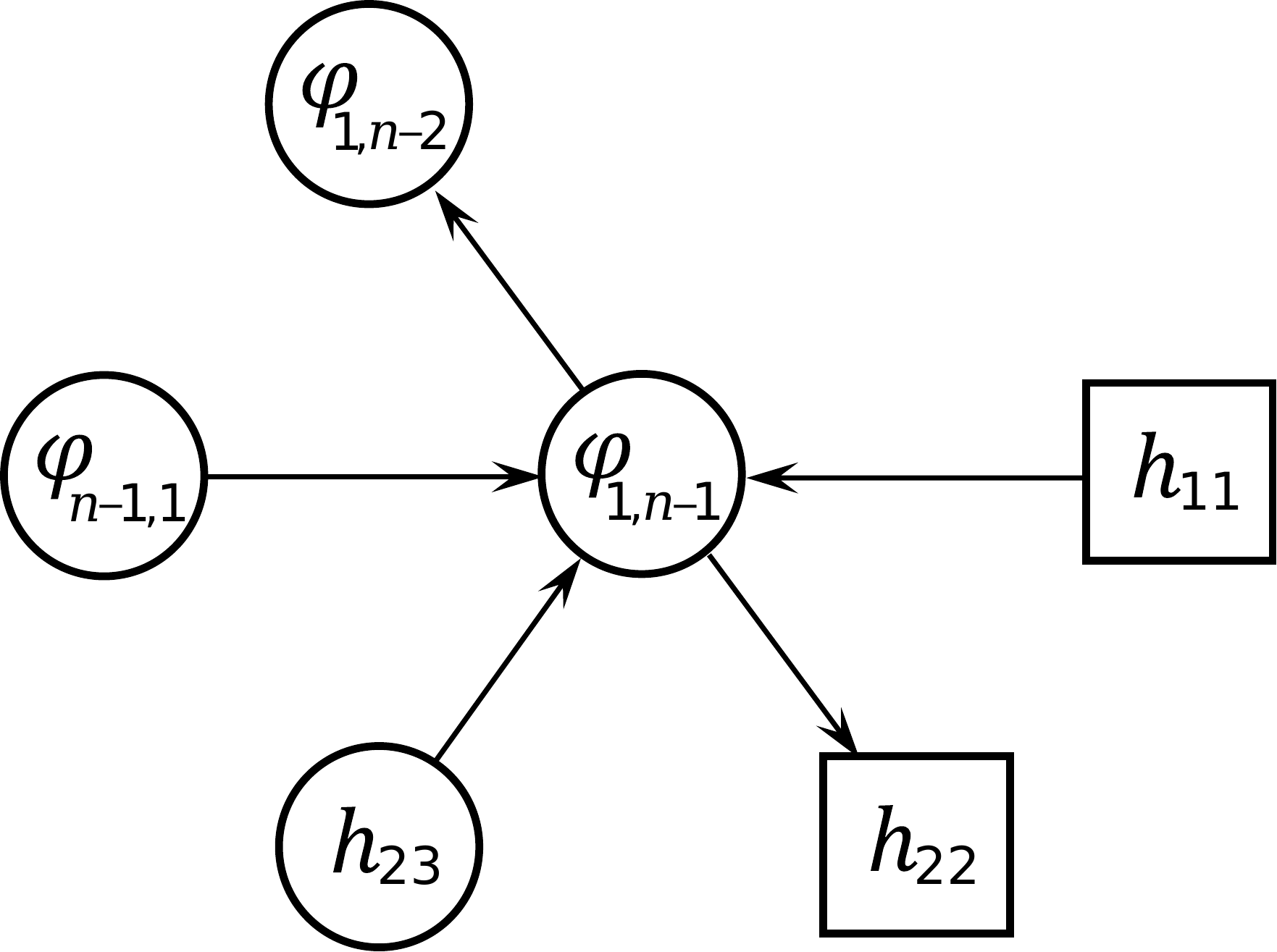}
 \end{center}
 \subcaption{Case $l = n-1$.}
 \label{f:nbd_phi1n1}
 \end{subfigure}
 \caption{The neighborhood of $\varphi_{1l}$ for $2 \leq l \leq n-1$.}
 \label{f:nbd_phi1}
 \end{figure}

 \vspace{5mm}
 \begin{figure}[htb]
 \begin{subfigure}[t]{3.5in}
 \begin{center}
 \includegraphics[scale=0.2]{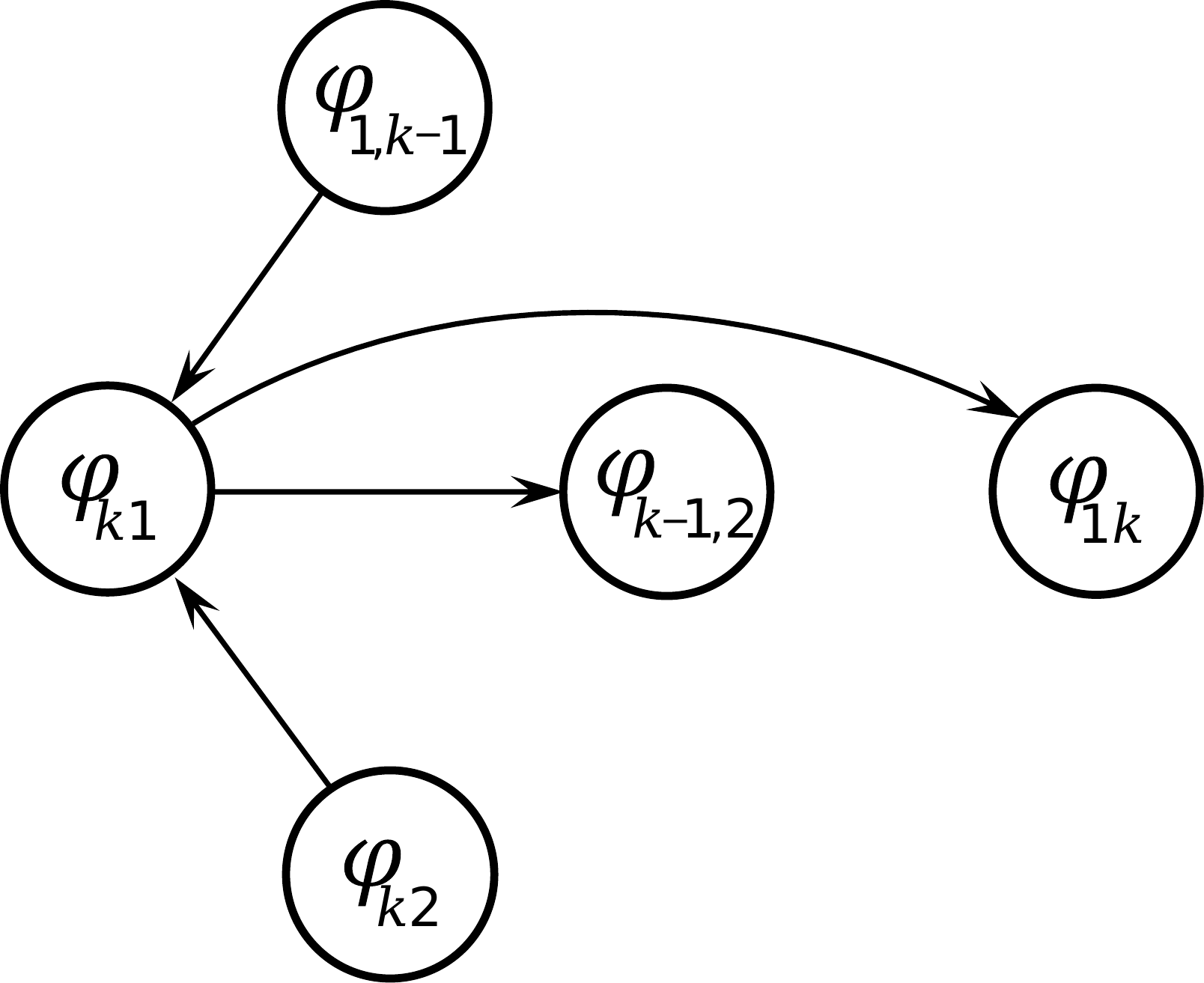}
 \end{center}
 \subcaption{Case $2 \leq k \leq n-2$.}
 \label{f:nbd_phik1}
 \end{subfigure}
 \begin{subfigure}[t]{2.5in}
 \begin{center}
 \includegraphics[scale=0.2]{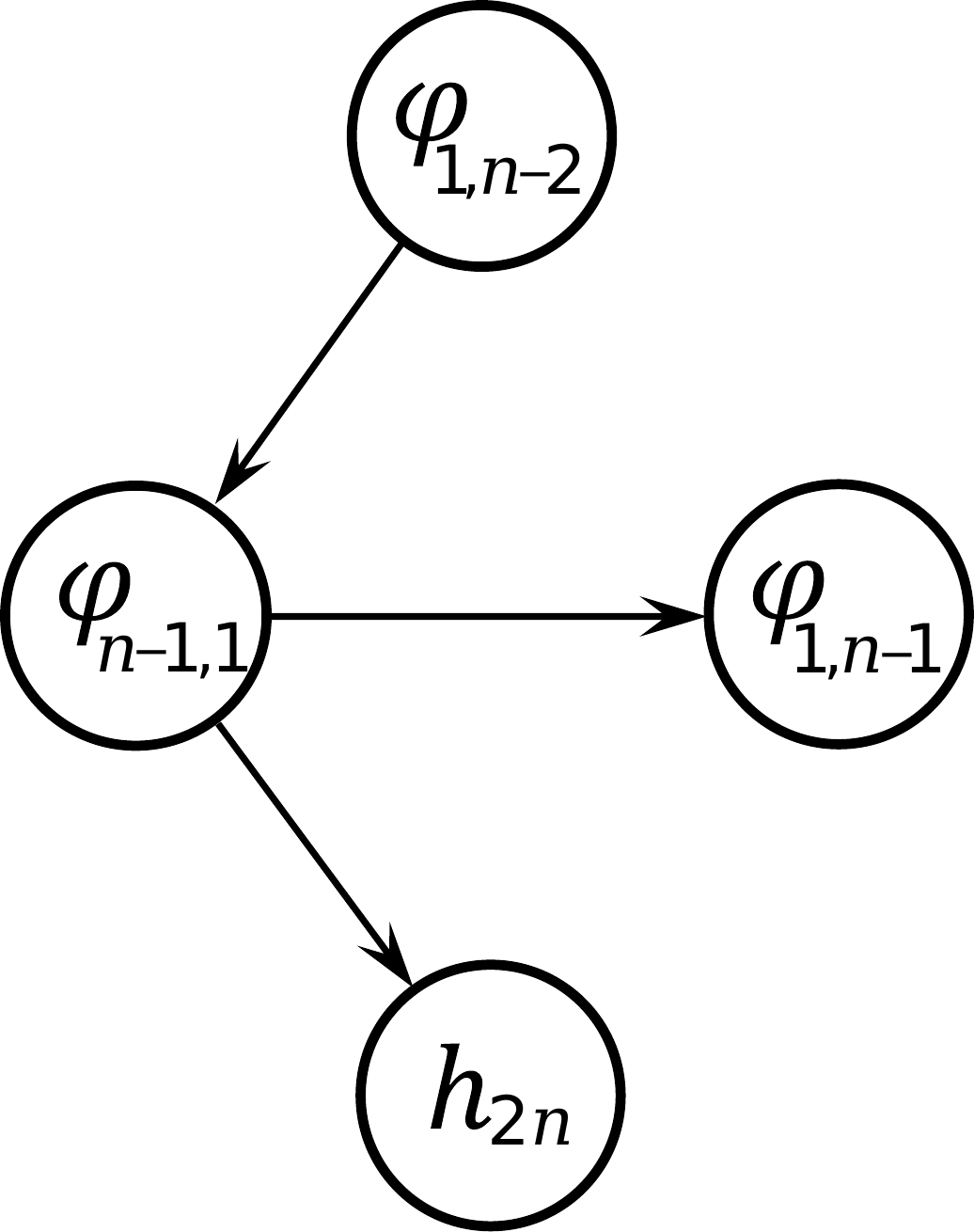}
 \end{center}
 \subcaption{Case $k = n-1$.}
 \label{f:nbd_phin11}
 \end{subfigure}
 \caption{The neighborhood of $\varphi_{k1}$ for $2 \leq k \leq n-1$.}
 \label{f:nbd_phik}
 \end{figure}

 \vspace{5mm}
 \begin{figure}[htb]
 \begin{subfigure}[t]{3.5in}
 \begin{center}
 \includegraphics[scale=0.2]{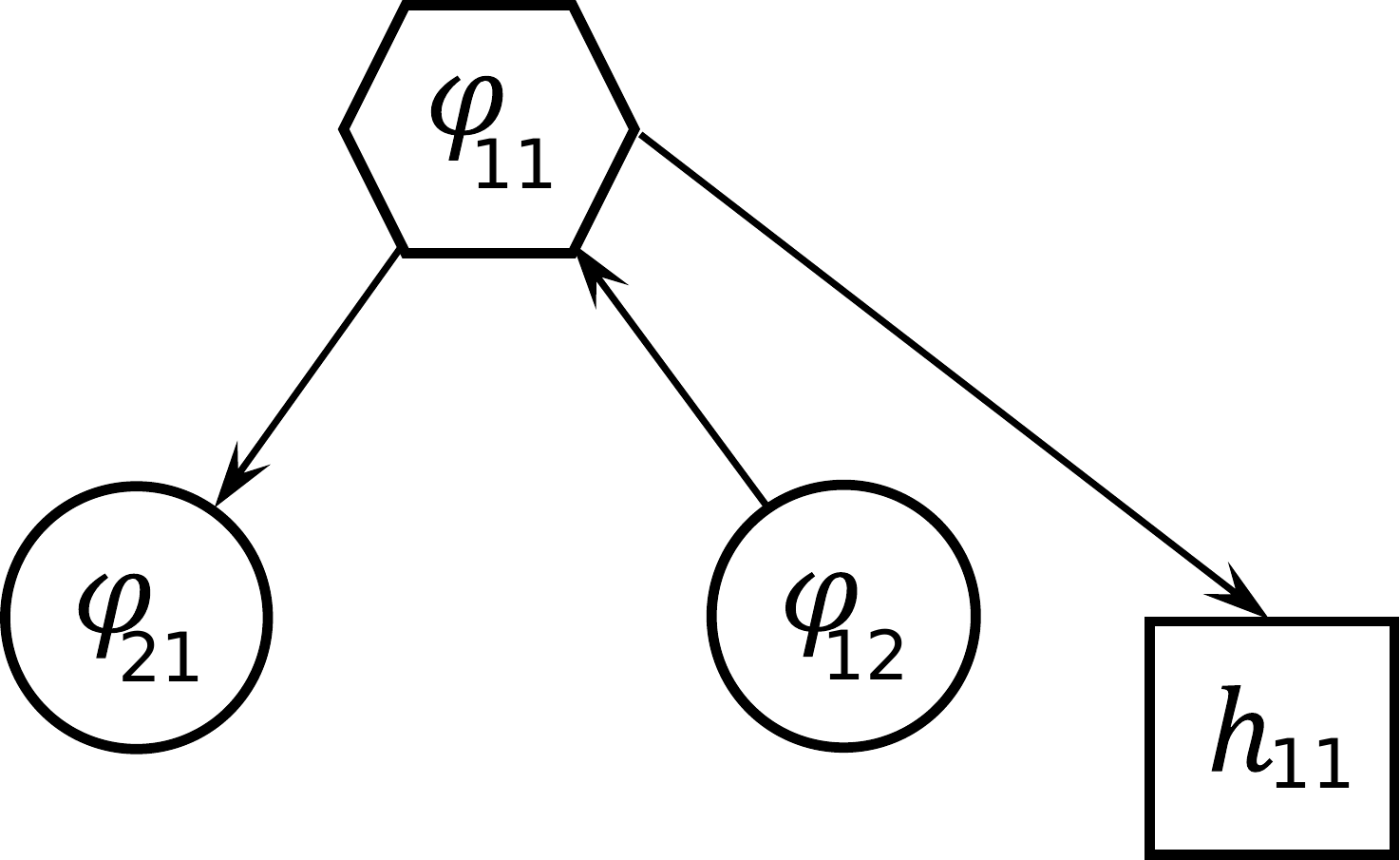}
 \end{center}
 \subcaption{Case $k=l=1$.}
 \label{f:nbd_phi11}
 \end{subfigure}
 \begin{subfigure}[t]{2.5in}
 \begin{center}
 \includegraphics[scale=0.2]{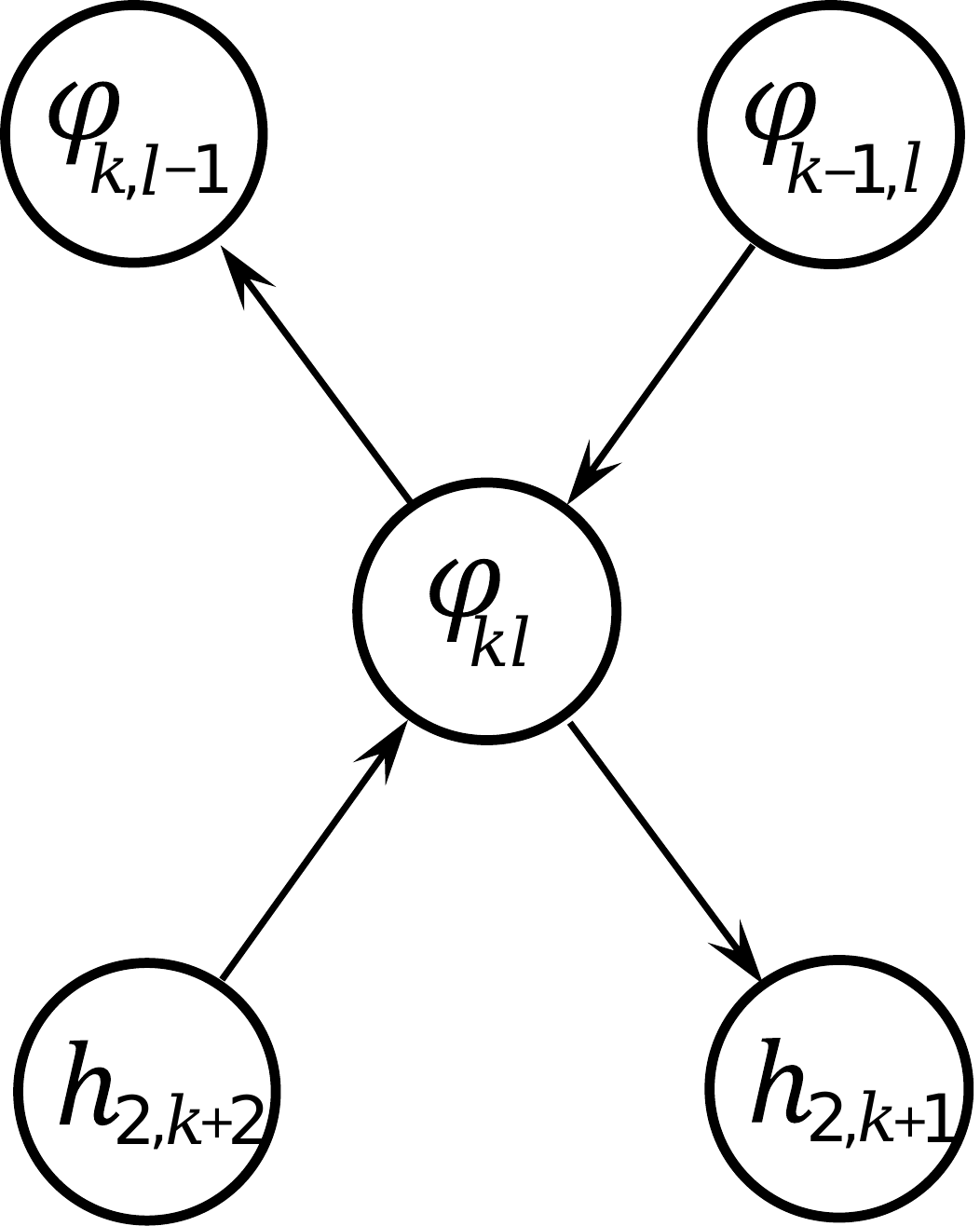}
 \end{center}
 \subcaption{Case $k + l = n$.}
 \label{f:nbd_phikl_b}
 \end{subfigure}
 \caption{The neighborhood of $\varphi_{kl}$ for (a) $k=l=1$ and (b) $k+l=n$.}
 \label{f:nbd_boundary}
 \end{figure}

 \vspace{5mm}
 \begin{figure}[htb]
 \begin{subfigure}[t]{3in}
 \begin{center}
 \includegraphics[scale=0.2]{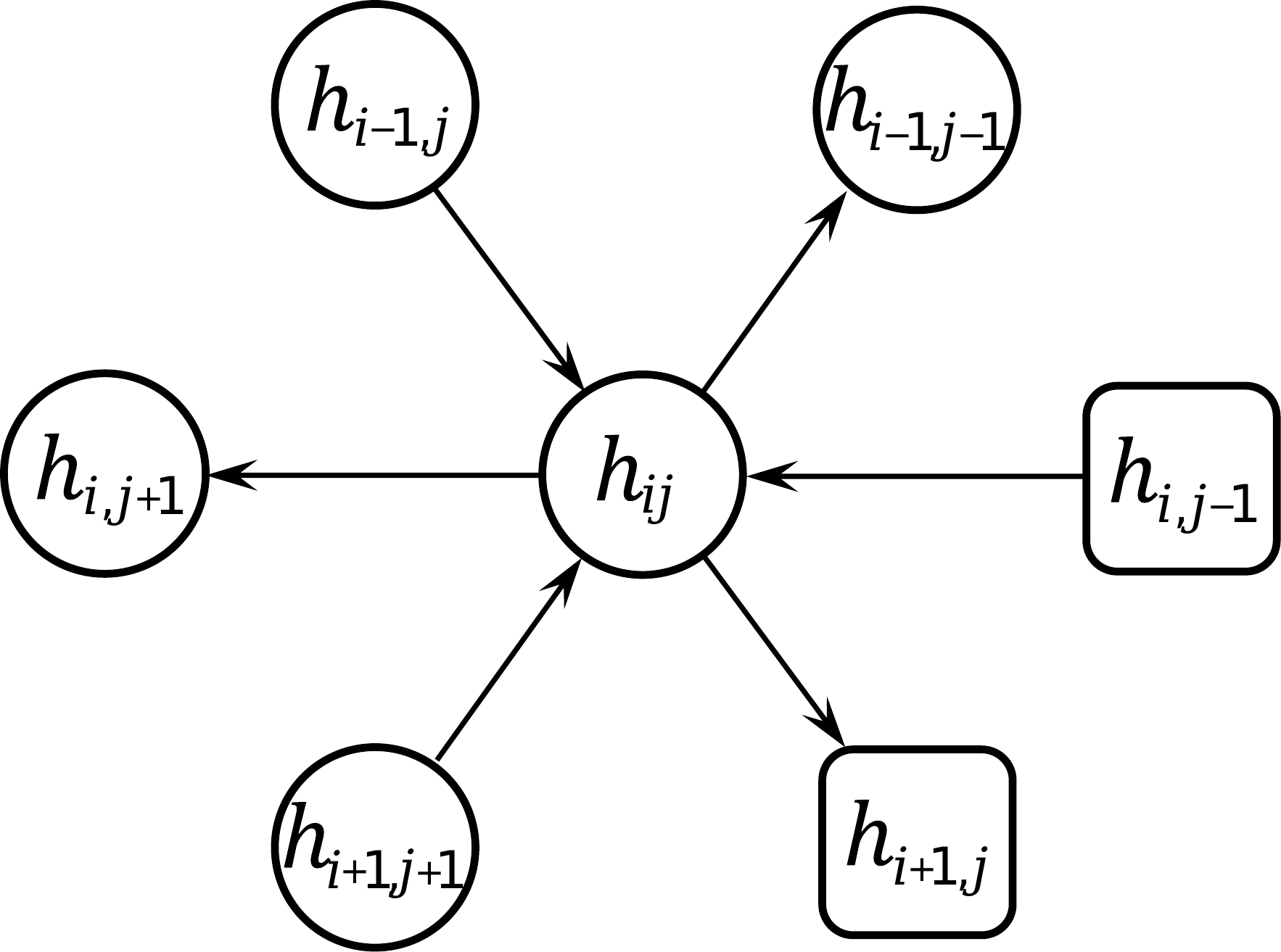}
 \end{center}
 \subcaption{Case $j < n$.}
 \label{f:nbd_hij}
 \end{subfigure}
 \begin{subfigure}[t]{3in}
 \begin{center}
 \includegraphics[scale=0.2]{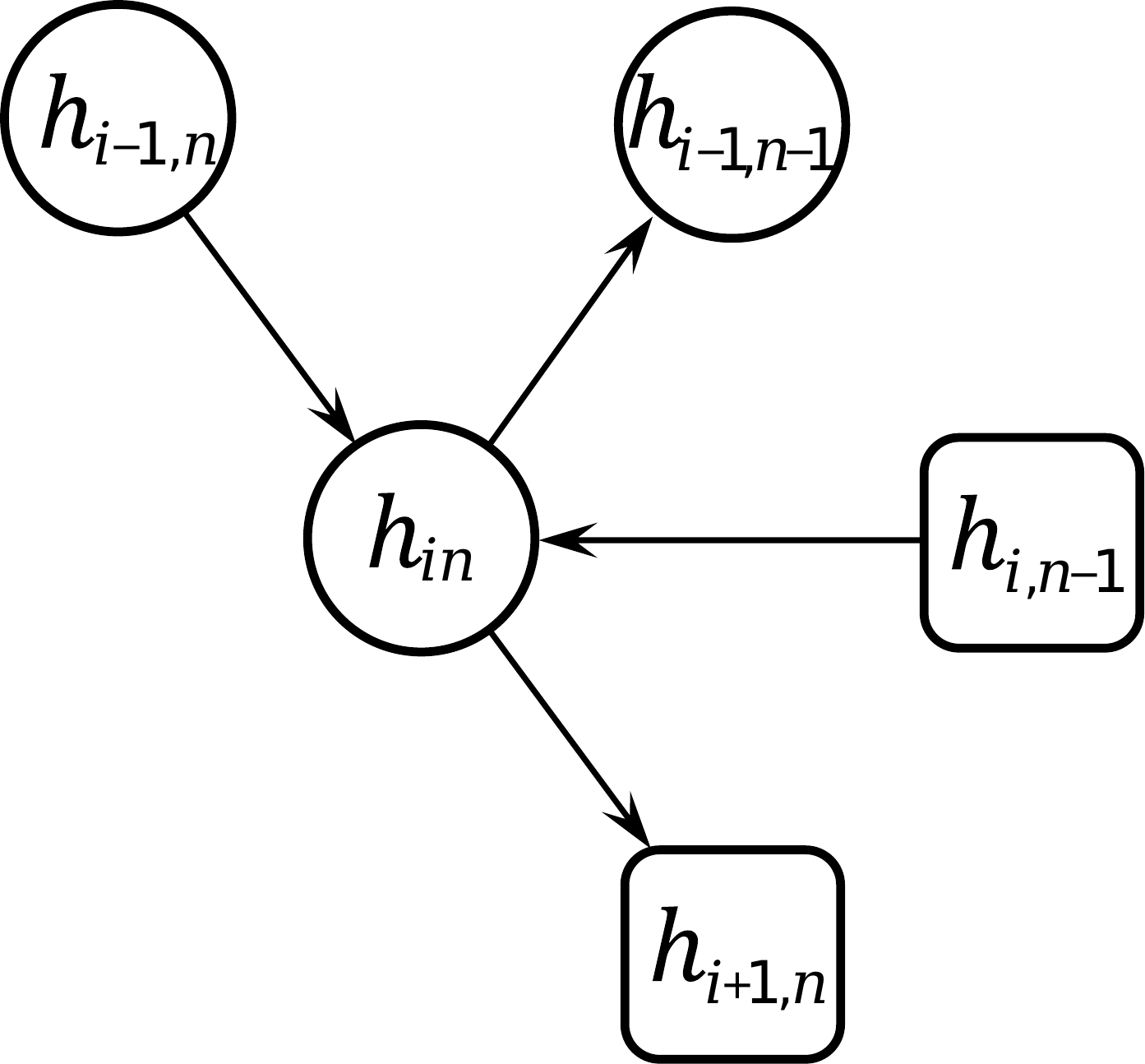}
 \end{center}
 \subcaption{Case $j = n$.}
 \label{f:nbd_hin}
 \end{subfigure}
 \caption{The neighborhood of $h_{ij}$ for $2 \leq i \leq n-1$, $2 \leq j \leq n$, $i \leq j$.}
 \label{f:nbd_h}
 \end{figure}


 \vspace{5mm}
 \begin{figure}[htb]
 \begin{subfigure}[t]{2in}
 \begin{center}
 \includegraphics[scale=0.2]{nbds_h/inbd_h11}
 \end{center}
 \subcaption{Case $i=1$.}
 \label{f:nbd_h11}
 \end{subfigure}
 \begin{subfigure}[t]{2in}
 \begin{center}
 \includegraphics[scale=0.2]{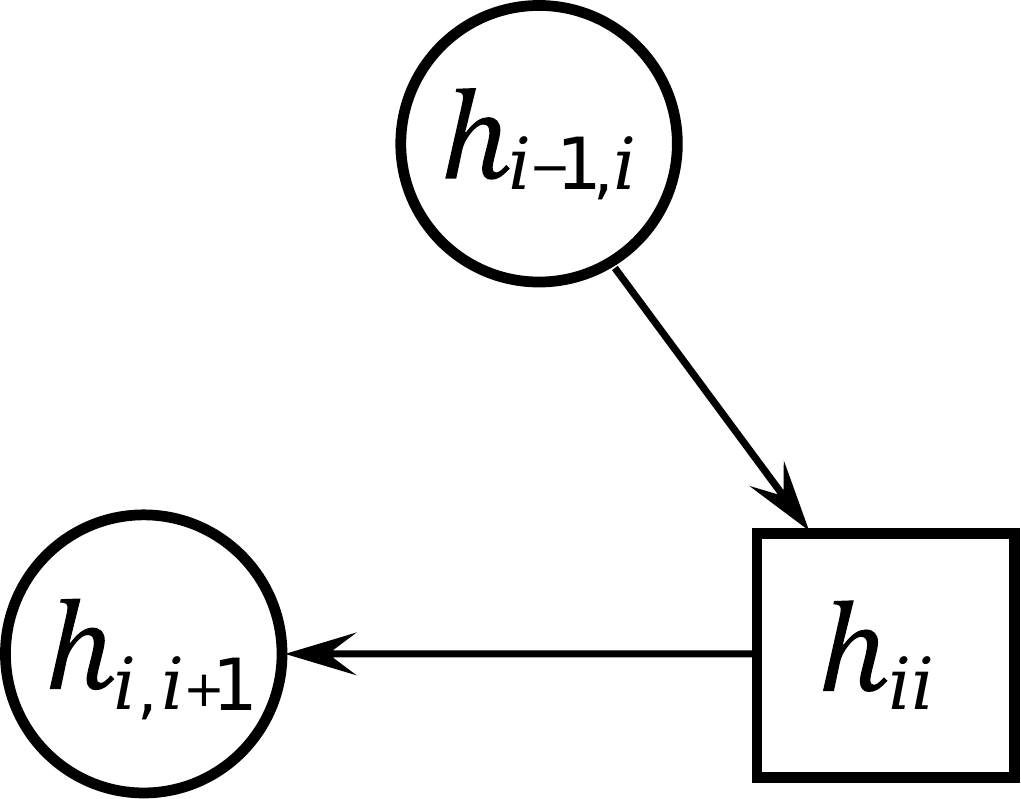}
 \end{center}
 \subcaption{Case $2 \leq i \leq n-1$.}
 \label{f:nbd_hii}
 \end{subfigure}
 \begin{subfigure}[t]{2in}
 \begin{center}
 \includegraphics[scale=0.2]{nbds_h/inbd_hnn}
 \end{center}
 \subcaption{Case $i=n$.}
 \label{f:nbd_hnn}
 \end{subfigure}
 \caption{The neighborhood of $h_{ii}$ for $1 \leq i \leq n$.}
 \label{f:nbd_hh}
 \end{figure}

 \clearpage

\begin{figure}[htb]
\begin{center}
\includegraphics[scale=0.2]{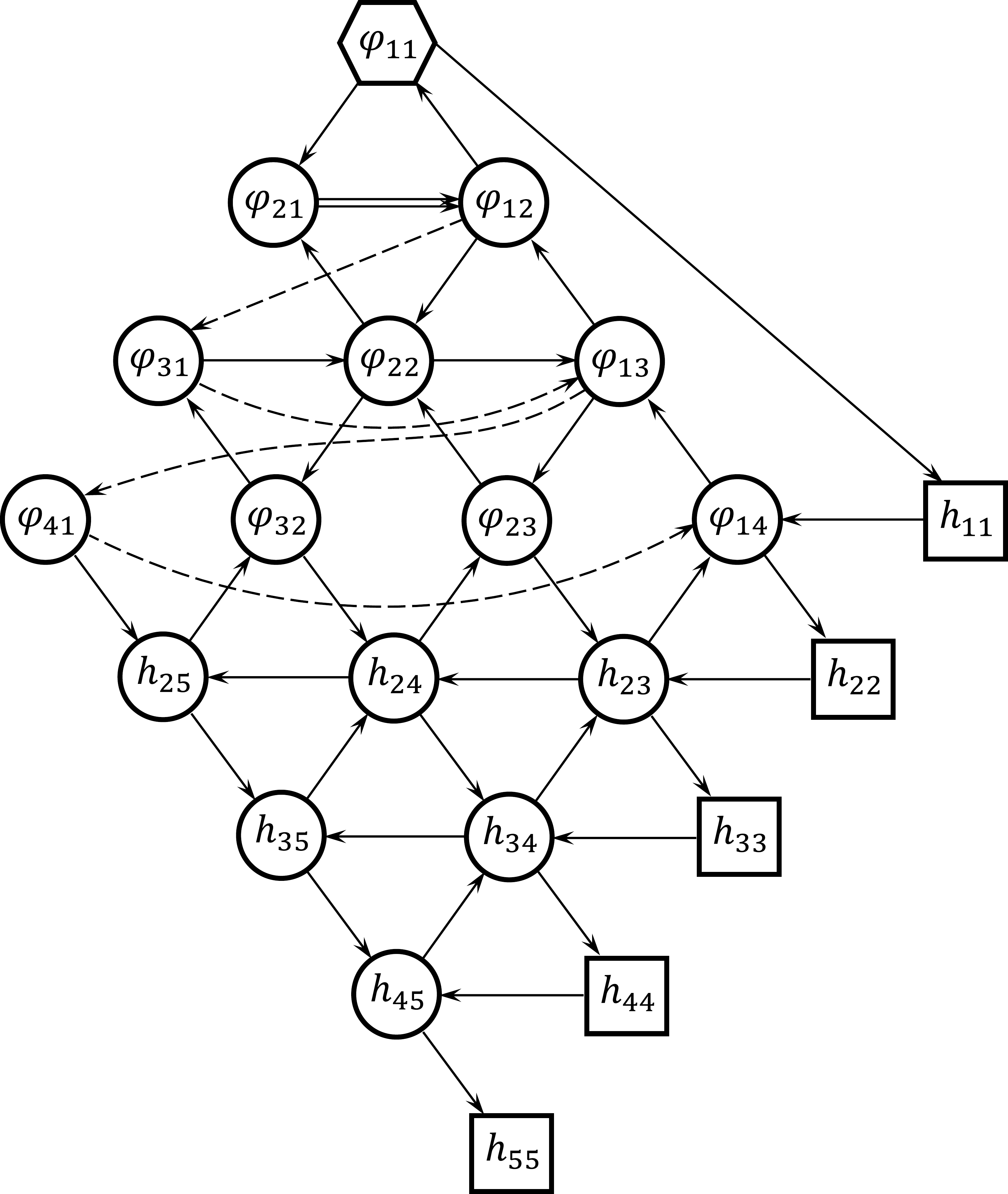}
\end{center}
\caption{The initial quiver for $\gc_h^{\dagger}(\bg_{\std},\GL_5)$. }
\label{f:ex_n=5_std}
\end{figure}

\clearpage

\subsection{The quiver for a nontrivial BD triple (algorithm)}\label{s:quiv_h_alg}
Let $\bg := (\Gamma_1,\Gamma_2,\gamma)$ be a nontrivial BD triple. In order to obtain the quiver $Q_h(\bg)$, one proceeds as follows. First, draw the quiver $Q_h(\bg_{\std})$, as described in Appendix~\ref{s:quiv_h_triv}. Second, add new arrows as prescribed by the following algorithm:
\begin{enumerate}[1)]
\item If $j \in \Gamma_2$ is such that $j-1 \notin \Gamma_2$ or $\gamma|_{\Delta(i)\cap \Gamma_1} < 0$, add new arrows as indicated in Figure~\ref{f:nbd_hj1j1};
\item If $j \in \Gamma_2$ is such that $j-1 \in \Gamma_2$ and $\gamma|_{\Delta(i)\cap \Gamma_1} > 0$, add new arrows as indicated in Figure~\ref{f:nbd_hj1j1_2};
\item Repeat for all roots in $\Gamma_1$.
\end{enumerate}
In the algorithm, $\Delta(i)$ denotes the $X$-run that contains $i:=\gamma^*(j)$ for $j \in \Gamma_2$.

 \vspace{5mm}
 \begin{figure}[htb]
 \begin{subfigure}[t]{3in}
 \begin{center}
 \includegraphics[scale=0.2]{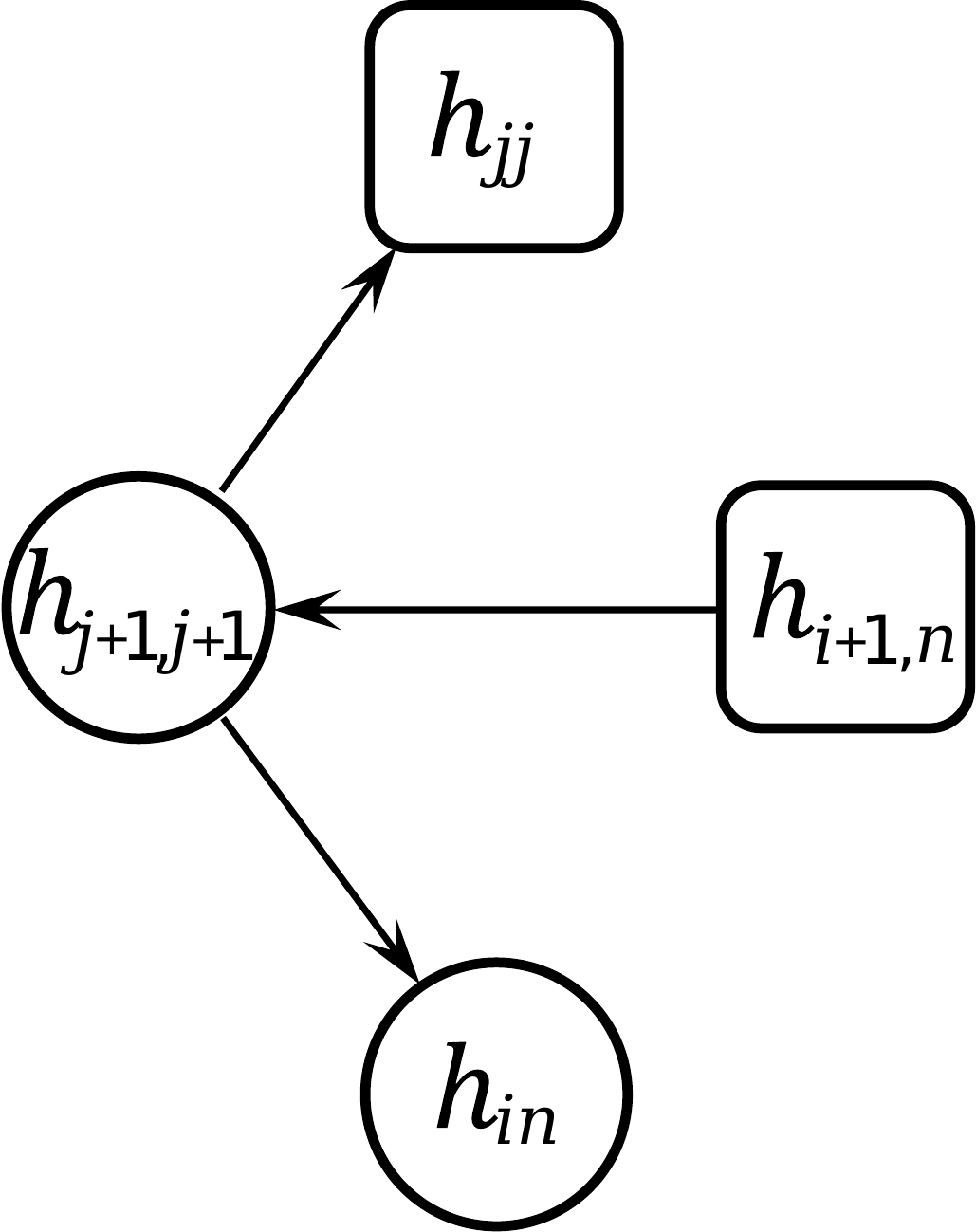} 
 \end{center}
 \subcaption{Case $j-1 \notin \Gamma_2$ or $\gamma|_{\Delta(i)\cap \Gamma_1} < 0$}
 \label{f:nbd_hj1j1}
 \end{subfigure}
 \begin{subfigure}[t]{3in}
 \begin{center}
 \includegraphics[scale=0.2]{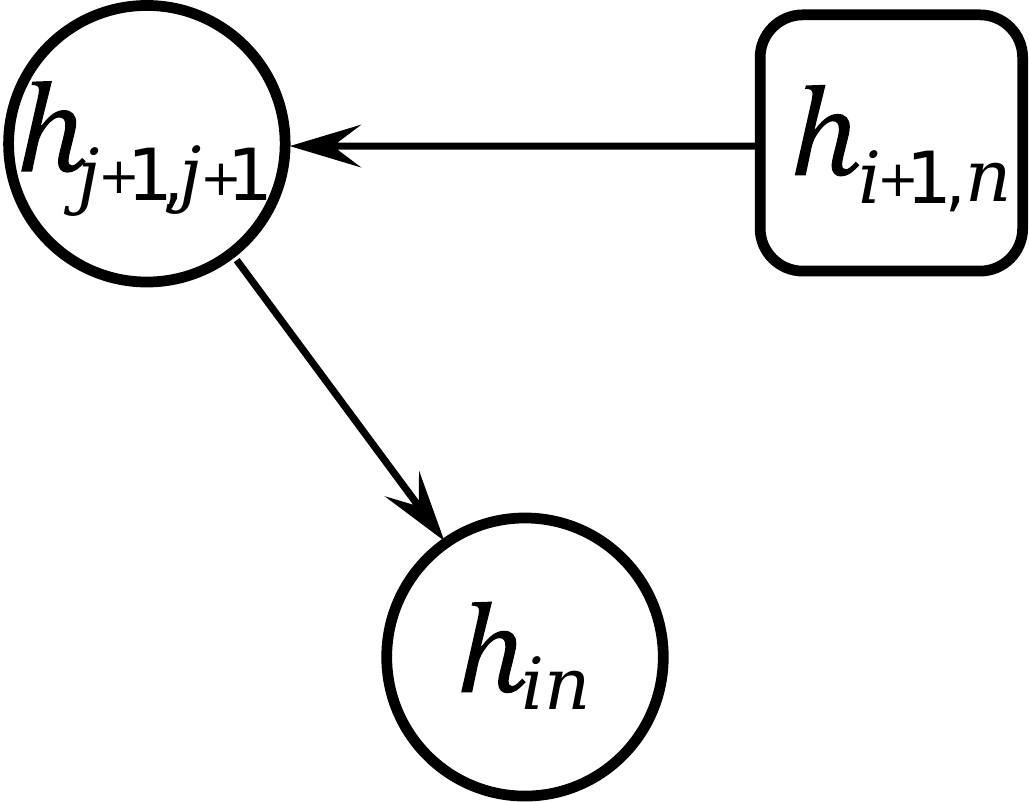}
 \end{center}
 \subcaption{Case $j-1\in \Gamma_2$ and $\gamma|_{\Delta(i)\cap\Gamma_1} > 0$}
 \label{f:nbd_hj1j1_2}
 \end{subfigure}
 \caption{Additional arrows for $j \in \Gamma_2$ and $i:= \gamma^*(j)$.}
 \label{f:nbd_extra_h}
 \end{figure}


\subsection{The quiver for a nontrivial BD triple (explicit)}\label{s:h_quiv_expl}
As an alternative to the algorithm described in the previous subsection, we provide explicit neigh-
borhoods of the variables $h_{jj}$ ($2 \leq j \leq n$), $h_{in}$ ($2\leq i\leq n$), $\varphi_{n-1,1}$ in the case of a nontrivial BD triple. All the other neighborhoods of $Q_h(\bg)$ are the same as in $Q_h(\bg_{\std})$. Let us mention that if $\Delta$ is a nontrivial $X$-run such that $|\Delta| = 2$, then both conditions $\gamma|_{\Delta \cap \Gamma_1} > 0$ and $\gamma|_{\Delta \cap \Gamma_2} < 0$ are considered to be satisfied; however, if $|\Delta|> 2$, then the conditions are mutually exclusive.

 \vspace{5mm}
 \noindent
\begin{figure}[htb]
 \begin{subfigure}[t]{2.2in}
 \begin{center}
 \includegraphics[scale=0.2]{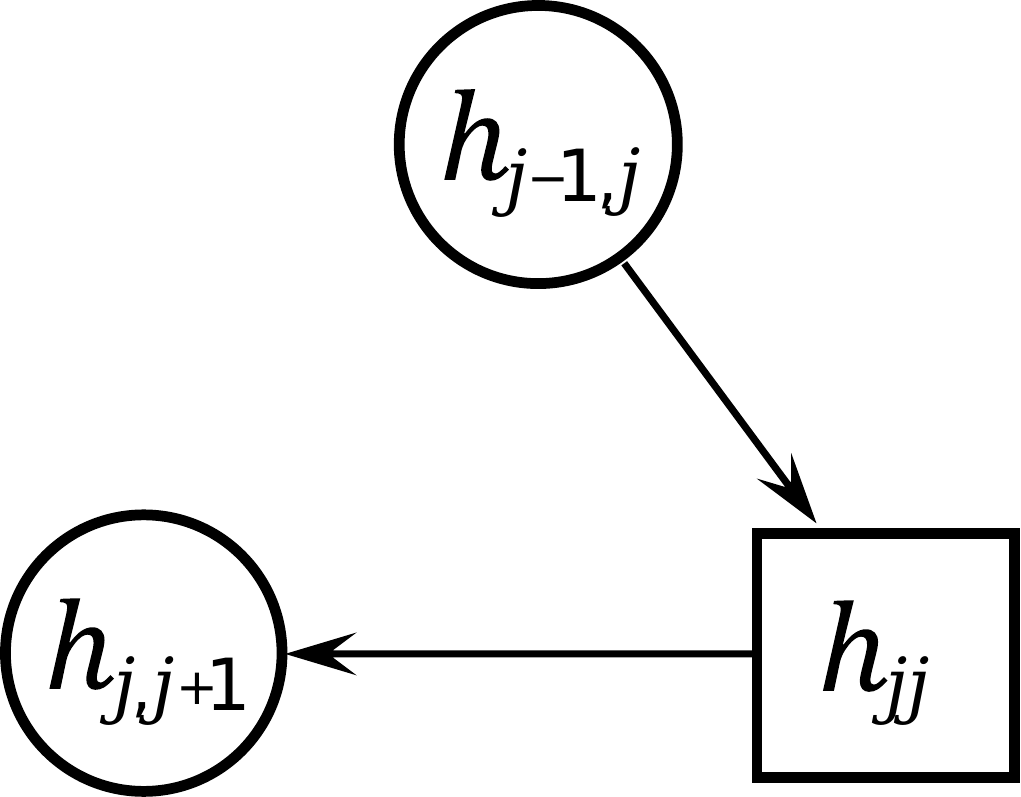}
 \end{center}
 \subcaption{Case $j-1 \notin \Gamma_2$.}
 \label{f:inbd_hjj_0}
 \end{subfigure}
 \begin{subfigure}[t]{3.6in}
 \begin{center}
 \includegraphics[scale=0.2]{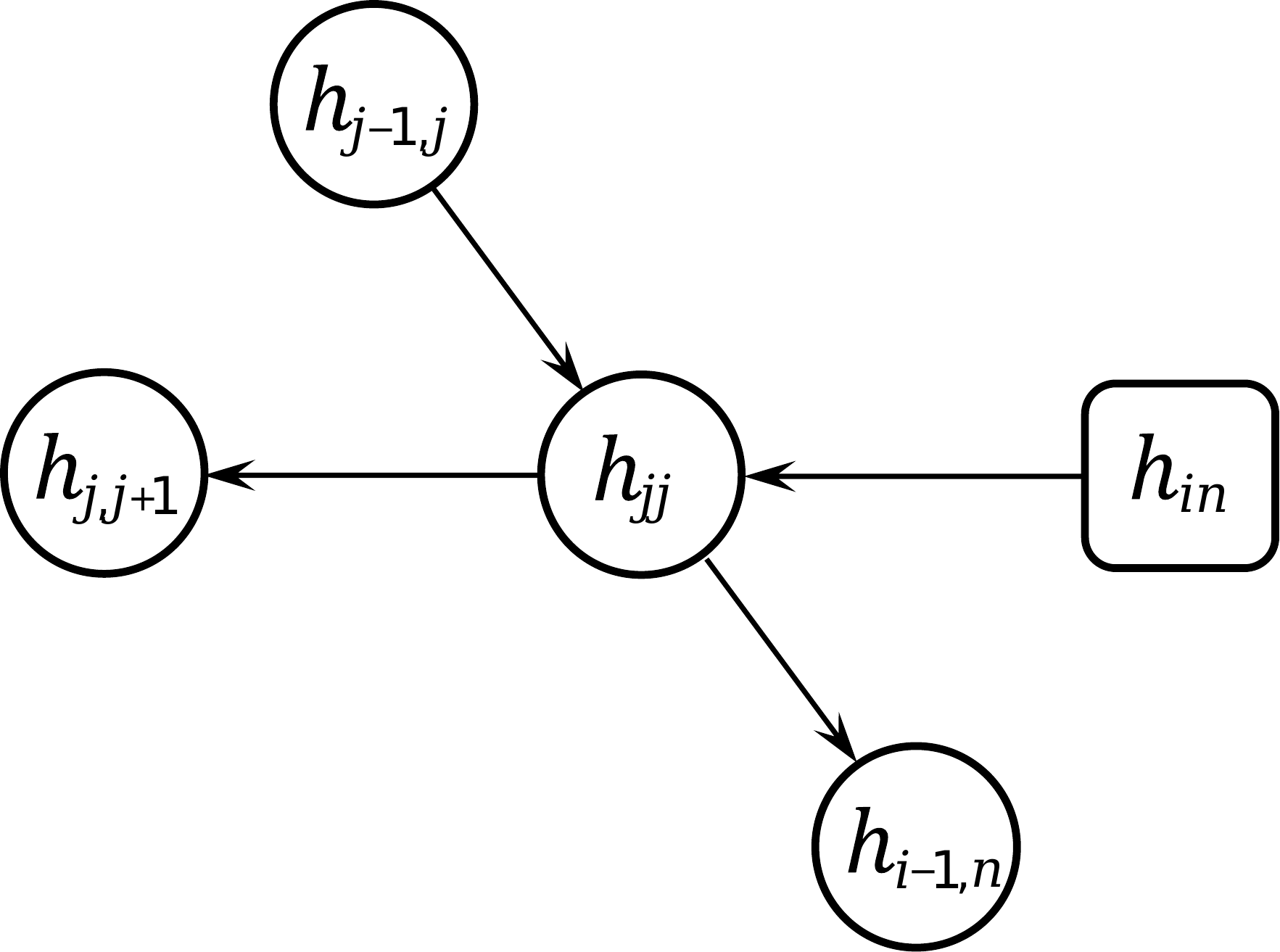}
 \end{center}
 \subcaption{Case $j-2,j-1\in\Gamma_2$ and $\gamma|_{\Delta(i)\cap \Gamma_1}>0$.}
 \label{f:inbd_hjj_1}
 \end{subfigure}
 \begin{subfigure}[t]{3in}
\vspace{4mm}
 \begin{center}
 \includegraphics[scale=0.2]{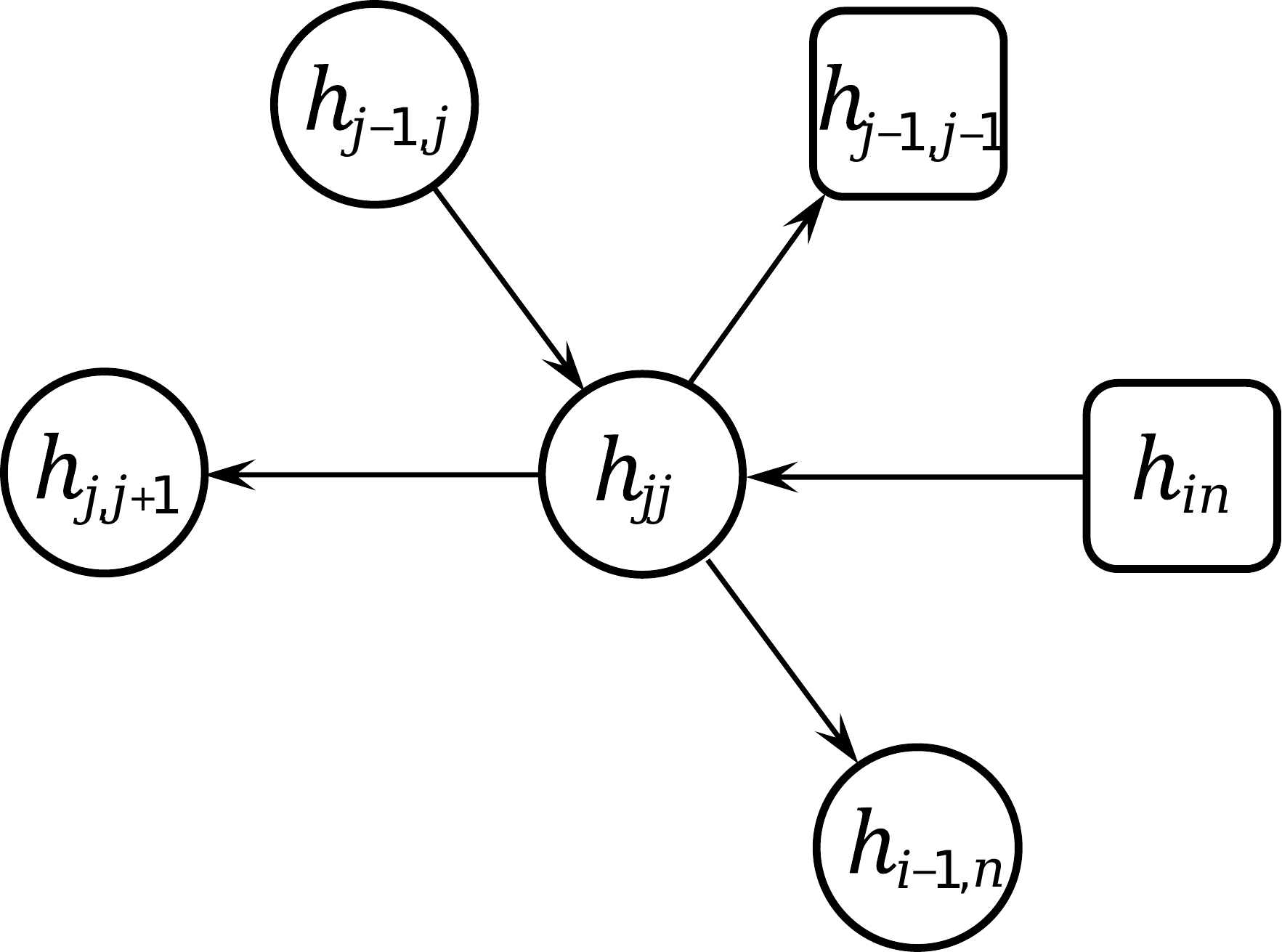}
 \end{center}
 \subcaption{Case $j-1 \in \Gamma_2$, $j \notin \Gamma_2$ and ($j-2\notin\Gamma_2$ or $\gamma|_{\Delta(i)\cap\Gamma_1}<0$).}
 \label{f:inbd_hjj_2}
 \end{subfigure}
 \begin{subfigure}[t]{3in}
\vspace{4mm}
 \begin{center}
 \includegraphics[scale=0.2]{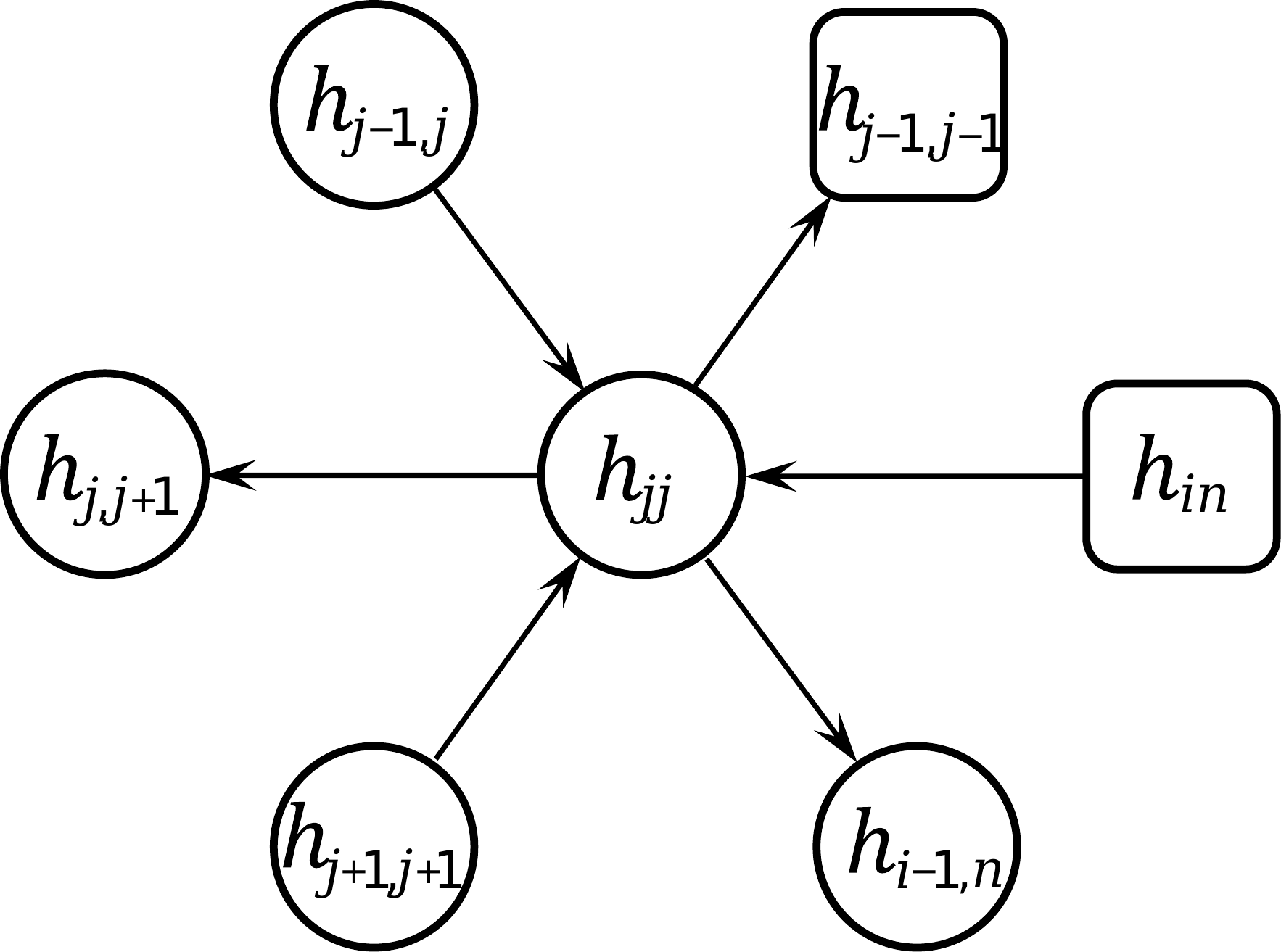}
 \end{center}
 \subcaption{Case $j-1,j \in \Gamma_2$ and $\gamma|_{\Delta(i)\cap \Gamma_1}<0$.}
 \label{f:inbd_hjj_3}
 \end{subfigure}
 \caption{The neighborhood of $h_{jj}$ for $2 \leq j \leq n-1$; here, $i:=\gamma^*(j-1)+1$ when $j-1 \in \Gamma_2$.}
 \label{f:inbd_hjj}
 \end{figure}

\vspace{5mm}
\noindent
\begin{figure}[htb]
\begin{center}
\begin{subfigure}[t]{2.6in}
\begin{center}
\includegraphics[scale=0.2]{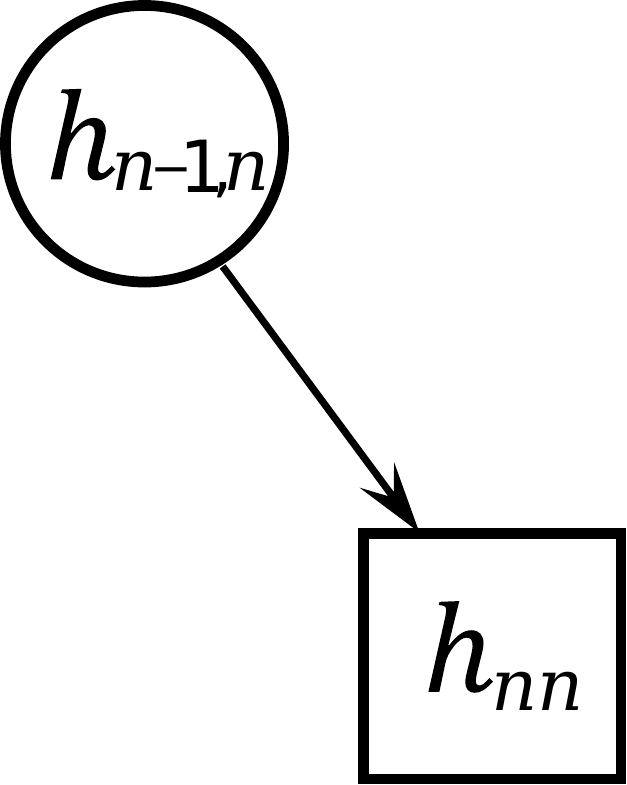} 
\end{center}
\subcaption{Case $n-1 \notin \Gamma_1$, $n-1 \notin \Gamma_2$}
\label{f:inbd_hnn_0}
\end{subfigure}
\begin{subfigure}[t]{2.6in}
\begin{center}
\includegraphics[scale=0.2]{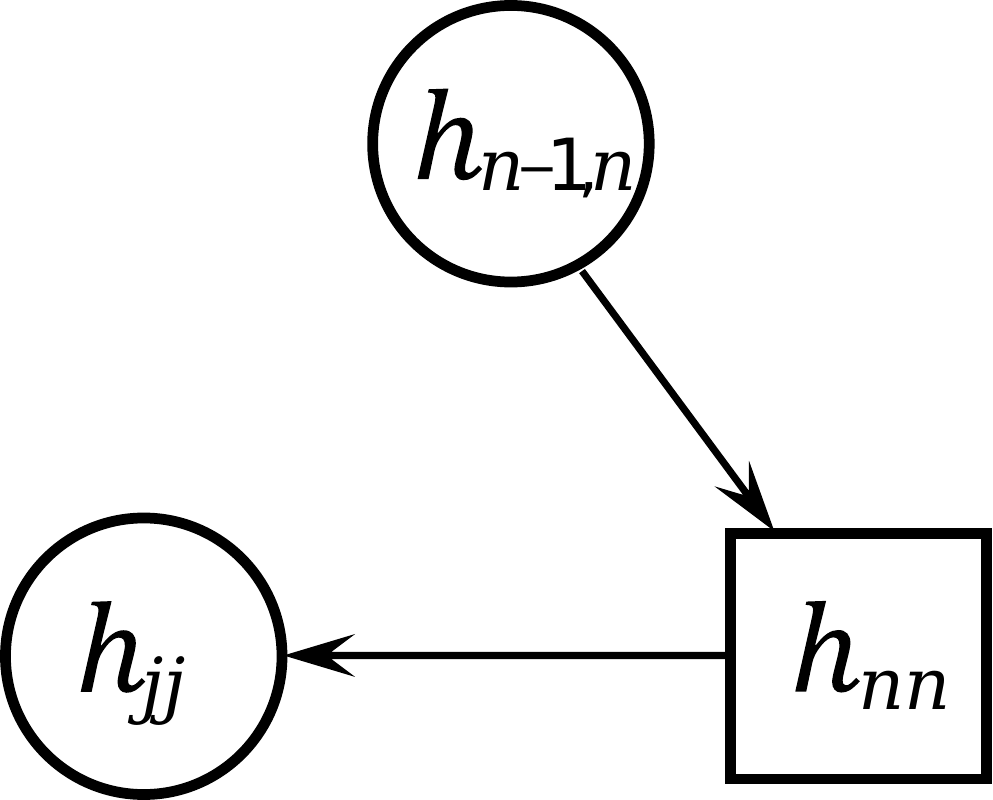}
\end{center}
\subcaption{Case $n-1 \in \Gamma_1$, $n-1 \notin \Gamma_2$.}
\label{f:inbd_hnn_1}
\end{subfigure}
\begin{subfigure}[t]{2.6in}
\vspace{4mm}
\begin{center}
\includegraphics[scale=0.2]{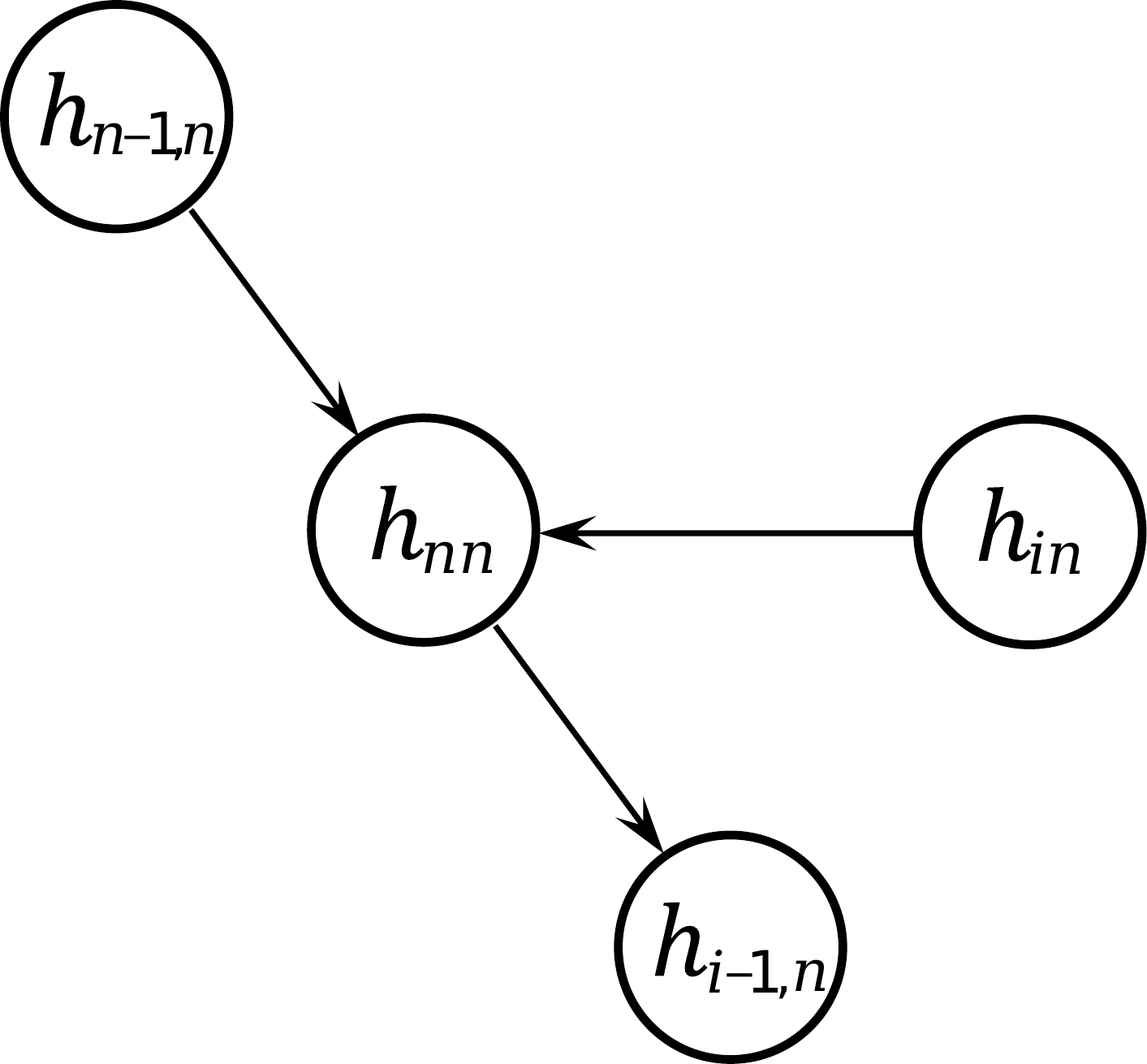}
\end{center}
\subcaption{Case $n-1 \notin \Gamma_1$, $n-1,n-2 \in \Gamma_2$ and $\gamma|_{\Delta(i) \cap \Gamma_1>0}$.}
\label{f:inbd_hnn_2}
\end{subfigure}
\hspace{7mm}
\begin{subfigure}[t]{2.6in}
\vspace{4mm}
\begin{center}
\includegraphics[scale=0.2]{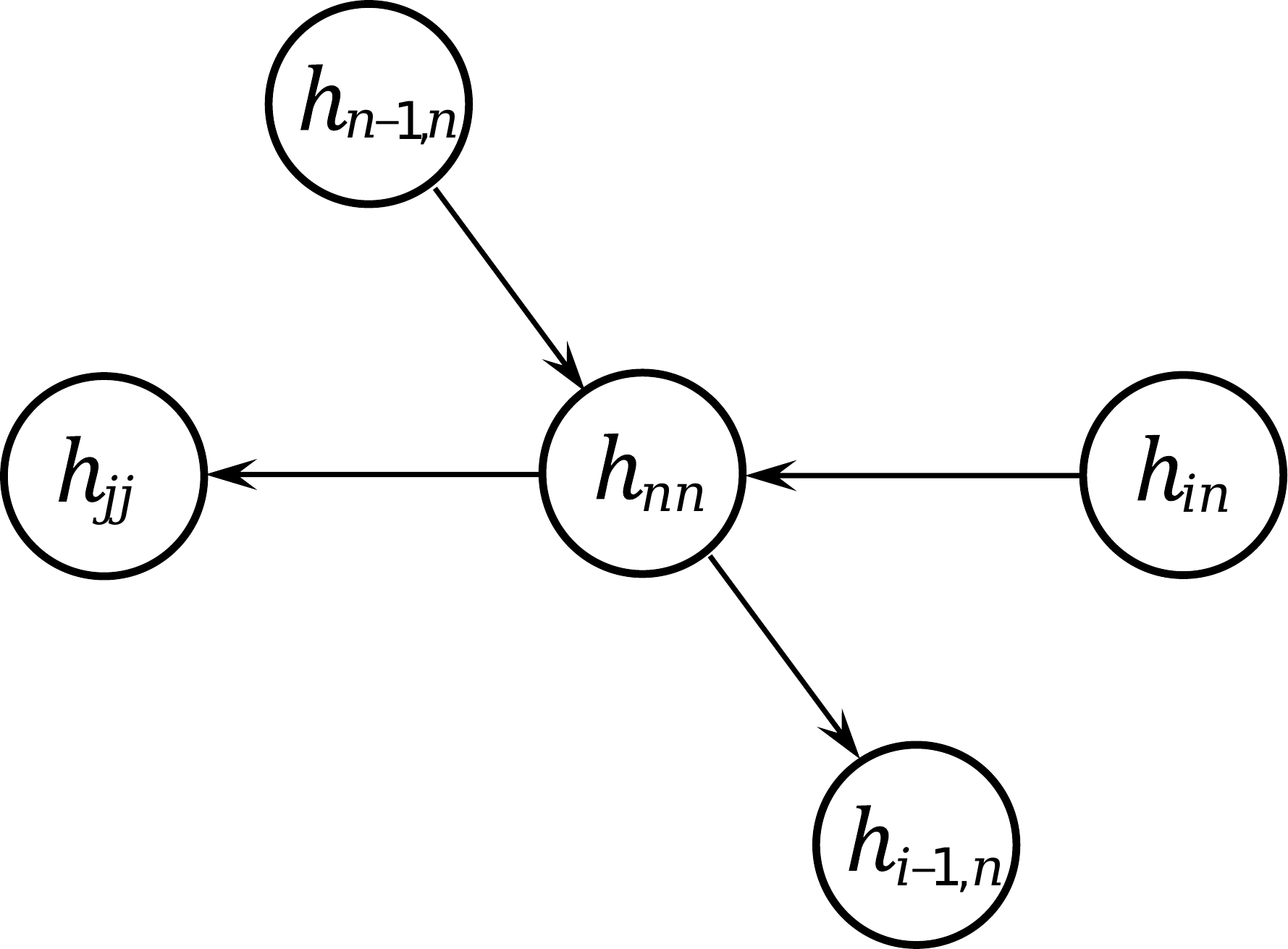}
\end{center}
\subcaption{Case $n-1 \in \Gamma_1$, $n-1,n-2 \in \Gamma_2$ and $\gamma|_{\Delta(i) \cap \Gamma_1>0}$.}
\label{f:inbd_hnn_3}
\end{subfigure}
\begin{subfigure}[t]{2.6in}
\vspace{4mm}
\begin{center}
\includegraphics[scale=0.2]{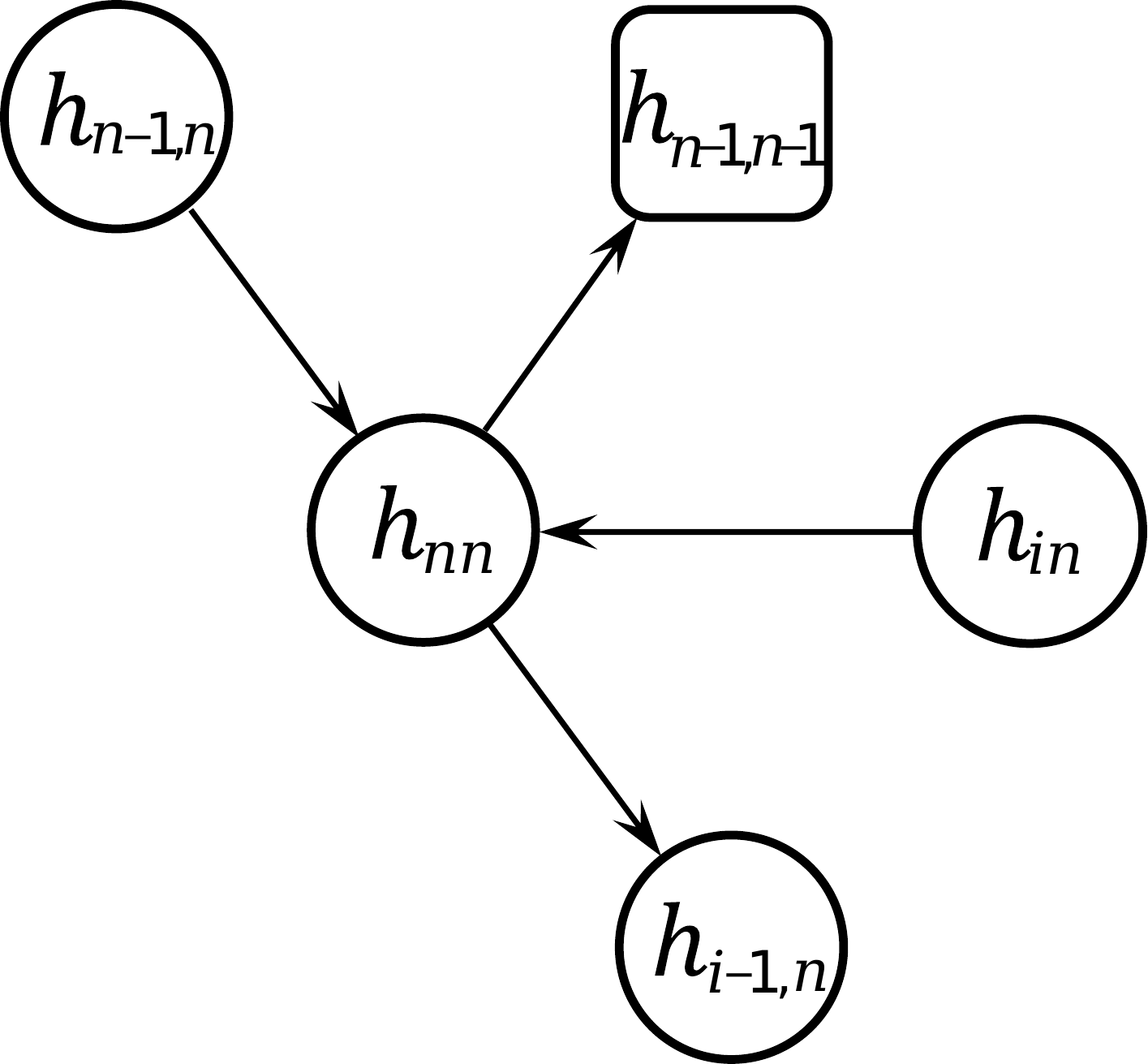}
\end{center}
\subcaption{Case $n-1 \notin \Gamma_1$, $n-1 \in \Gamma_2$ and ($n-2 \notin \Gamma_2$ or $\gamma|_{\Delta(i)\cap \Gamma_1}<0$).}
\label{f:inbd_hnn_4}
\end{subfigure}
\hspace{7mm}
\begin{subfigure}[t]{2.6in}
\vspace{4mm}
\begin{center}
\includegraphics[scale=0.2]{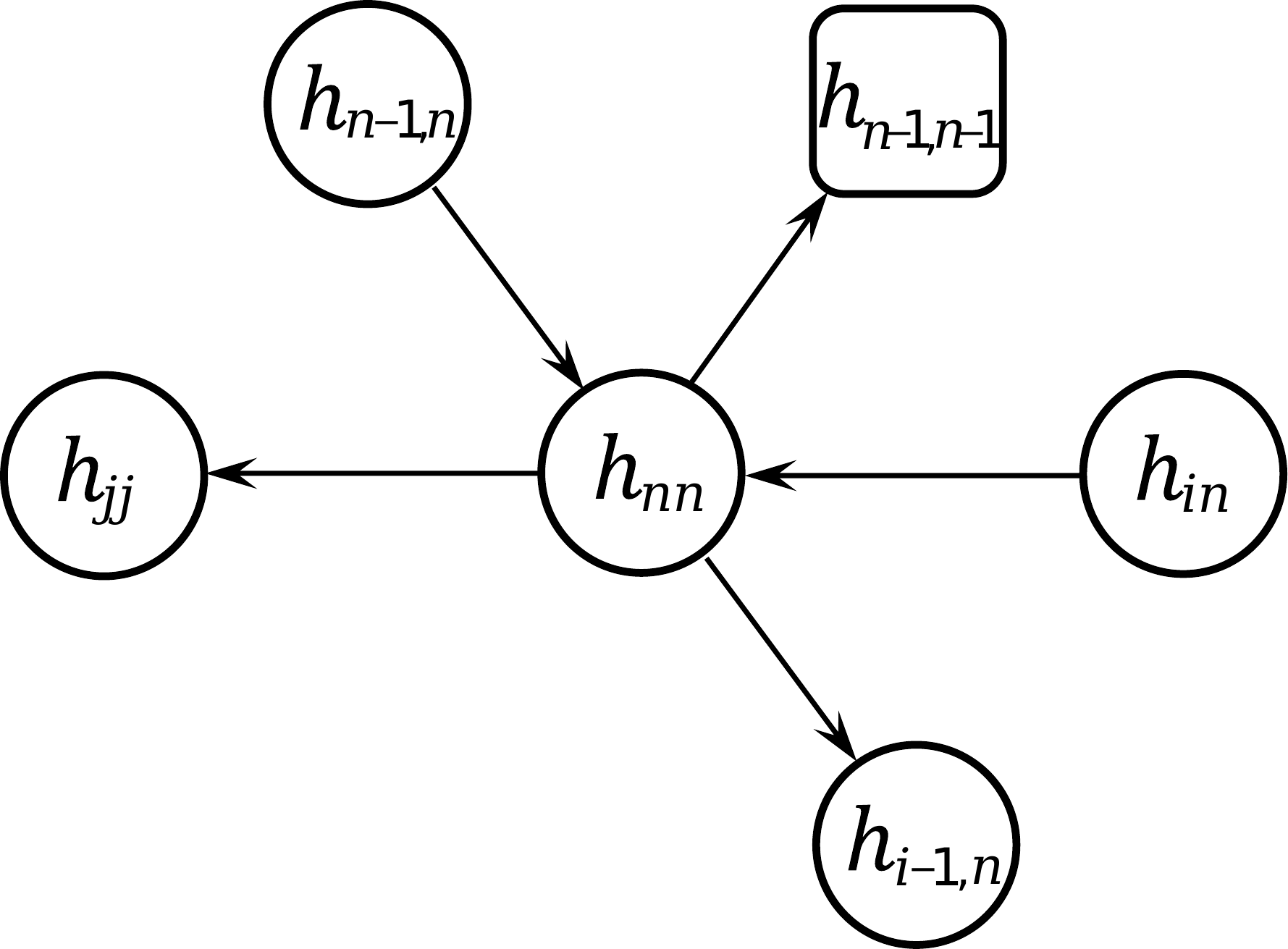}
\end{center}
\subcaption{Case $n-1 \in \Gamma_1$, $n-1 \in \Gamma_2$ and ($n-2 \notin \Gamma_2$ or $\gamma|_{\Delta(i)\cap \Gamma_1}<0$).}
\label{f:inbd_hnn_5}
\end{subfigure}
\caption{The neighborhood of $h_{nn}$. In the figures, we set $j:=\gamma(n-1)+1$ if $n-1 \in \Gamma_1$ and $i:=\gamma^*(n-1)+1$ if $n-1 \in \Gamma_2$.}
\label{f:inbd_hnn}
\end{center}
\end{figure}

 \vspace{5mm}
 \noindent
\begin{figure}[htb]
 \begin{subfigure}[t]{2.2in}
 \begin{center}
 \includegraphics[scale=0.2]{nbds_h/inbd_hin}
 \end{center}
 \subcaption{Case $i-1,i \notin \Gamma_1$.}
 \label{f:inbd_hin_0}
 \end{subfigure}
 \begin{subfigure}[t]{3.6in}
 \begin{center}
 \includegraphics[scale=0.2]{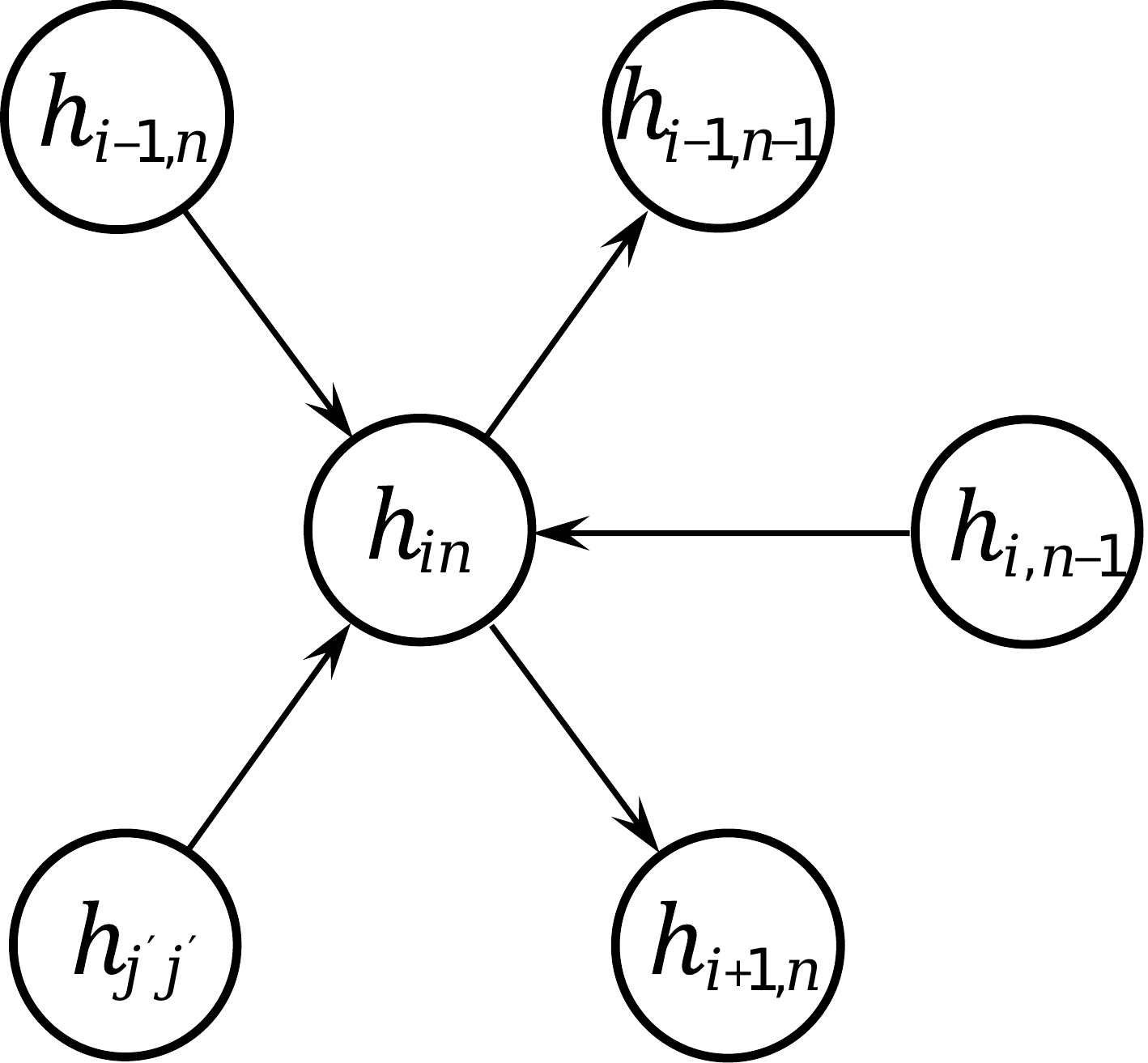}
 \end{center}
 \subcaption{Case $i-1 \notin \in \Gamma_1$, $i \in \Gamma_1$.}
 \label{f:inbd_hin_1}
 \end{subfigure}
 \begin{subfigure}[t]{3in}
\vspace{4mm}
 \begin{center}
 \includegraphics[scale=0.2]{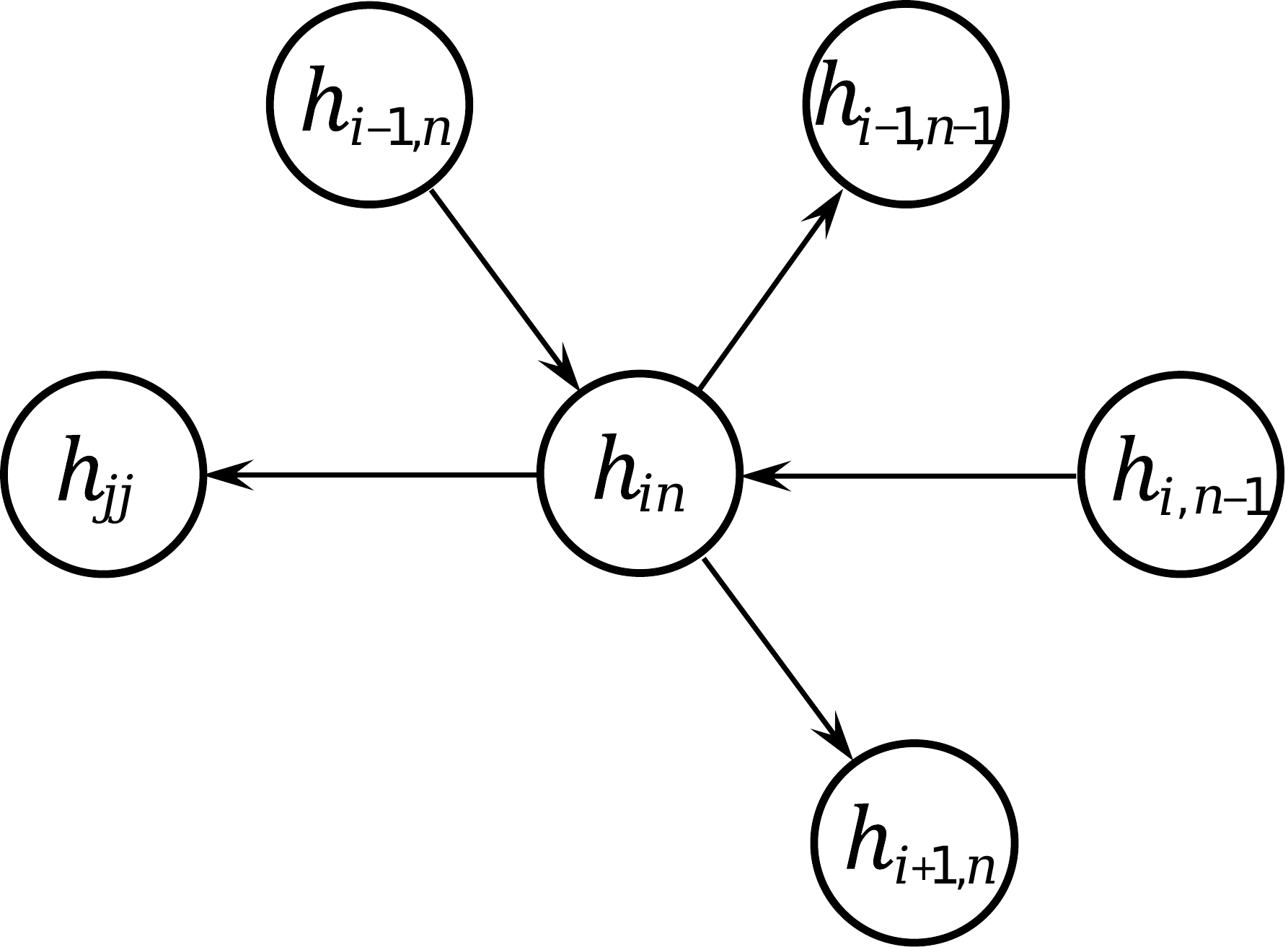}
 \end{center}
 \subcaption{Case $i-1\in\Gamma_1$, $i \notin \Gamma_1$.}
 \label{f:inbd_hin_2}
 \end{subfigure}
 \begin{subfigure}[t]{3in}
\vspace{4mm}
 \begin{center}
 \includegraphics[scale=0.2]{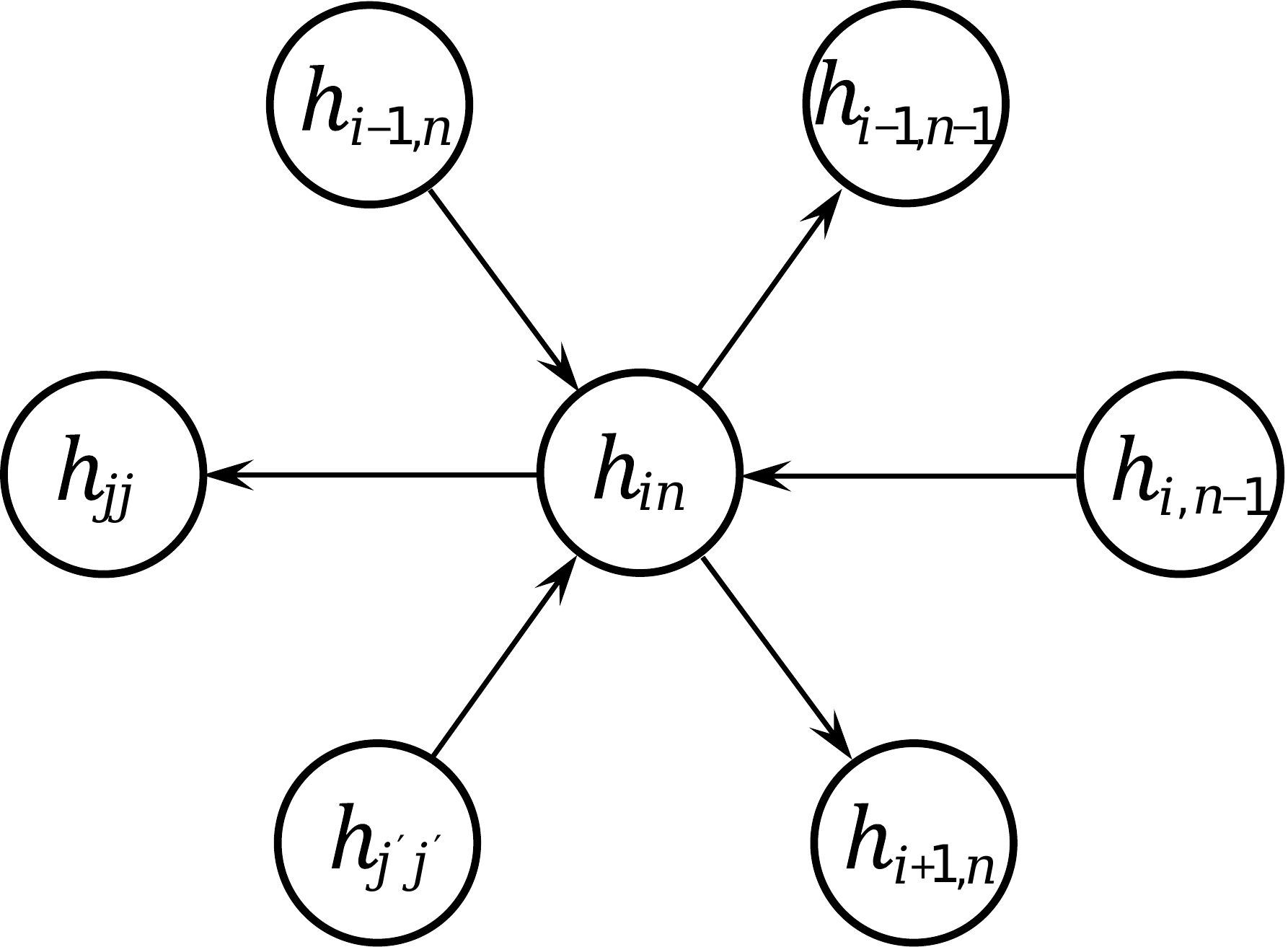}
 \end{center}
 \subcaption{Case $i-1,i\in\Gamma_1$.}
 \label{f:inbd_hin_3}
 \end{subfigure}
 \caption{The neighborhood of $h_{in}$ for $2 \leq i \leq n-1$; here, $j:=\gamma(i-1)+1$ if $i-1 \in \Gamma_1$ and $j^\prime:=\gamma(i)+1$ if $i \in \Gamma_1$.}
 \label{f:inbd_hin}
 \end{figure}

 \begin{figure}[htb]
 \begin{subfigure}[t]{3in}
 \begin{center}
 \includegraphics[scale=0.2]{nbds_h/inbd_phin11}
 \end{center}
 \subcaption{Case $1 \notin \Gamma_1$.}
 \label{f:inbd_phin11_0}
\end{subfigure}
 \begin{subfigure}[t]{3in}
 \begin{center}
 \includegraphics[scale=0.2]{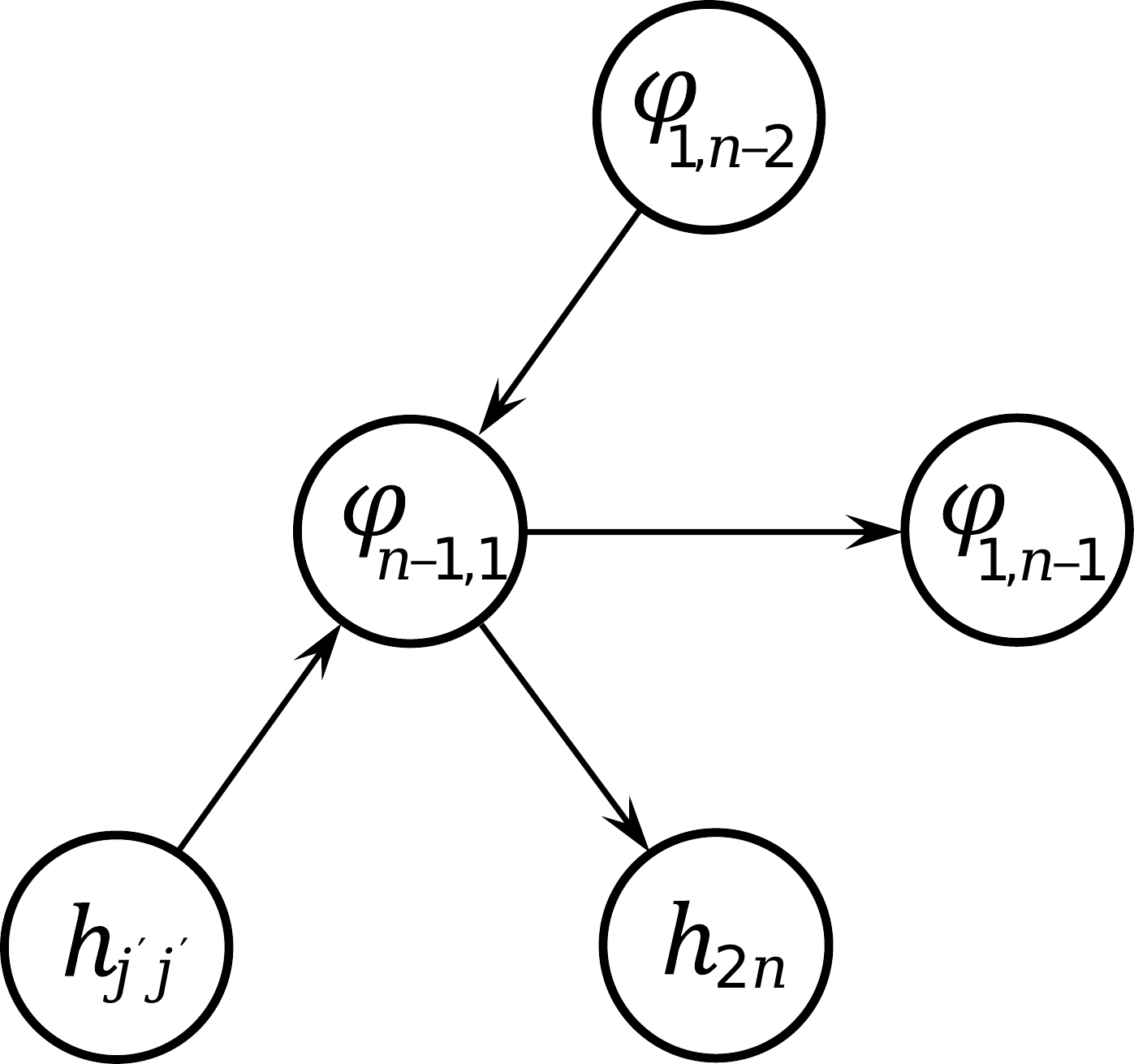}
 \end{center}
 \subcaption{Case $1 \in \Gamma_1$.}
 \label{f:inbd_phin11_1}
 \end{subfigure}
 \caption{The neighborhood of $\varphi_{n-1,1}$. Here, $j^\prime :=\gamma(1)+1$ if $1 \in \Gamma_1$.}
 \label{f:inbd_phin11}
 \end{figure}

\clearpage

 \noindent
 \begin{figure}[htb]
 \begin{subfigure}[t]{3in}
 \begin{center}
 \includegraphics[scale=0.2]{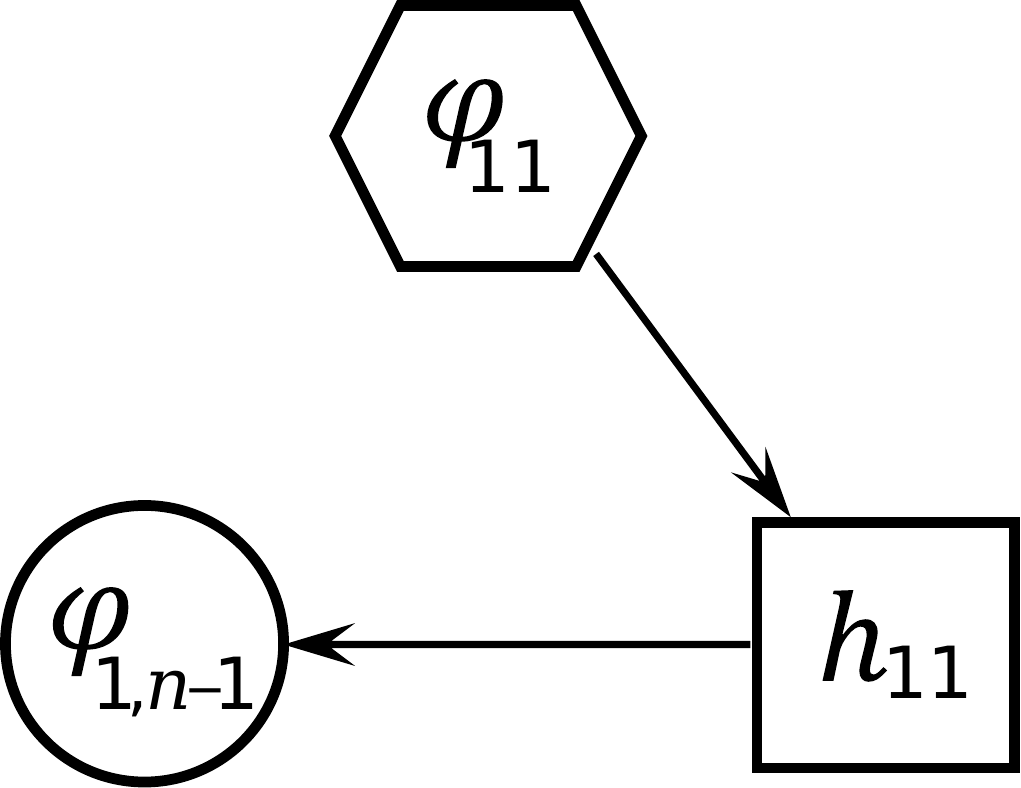} 
 \end{center}
 \subcaption{Case $1 \notin \Gamma_2$}
 \label{f:inbd_h11_0}
 \end{subfigure}
 \begin{subfigure}[t]{3in}
 \begin{center}
 \includegraphics[scale=0.2]{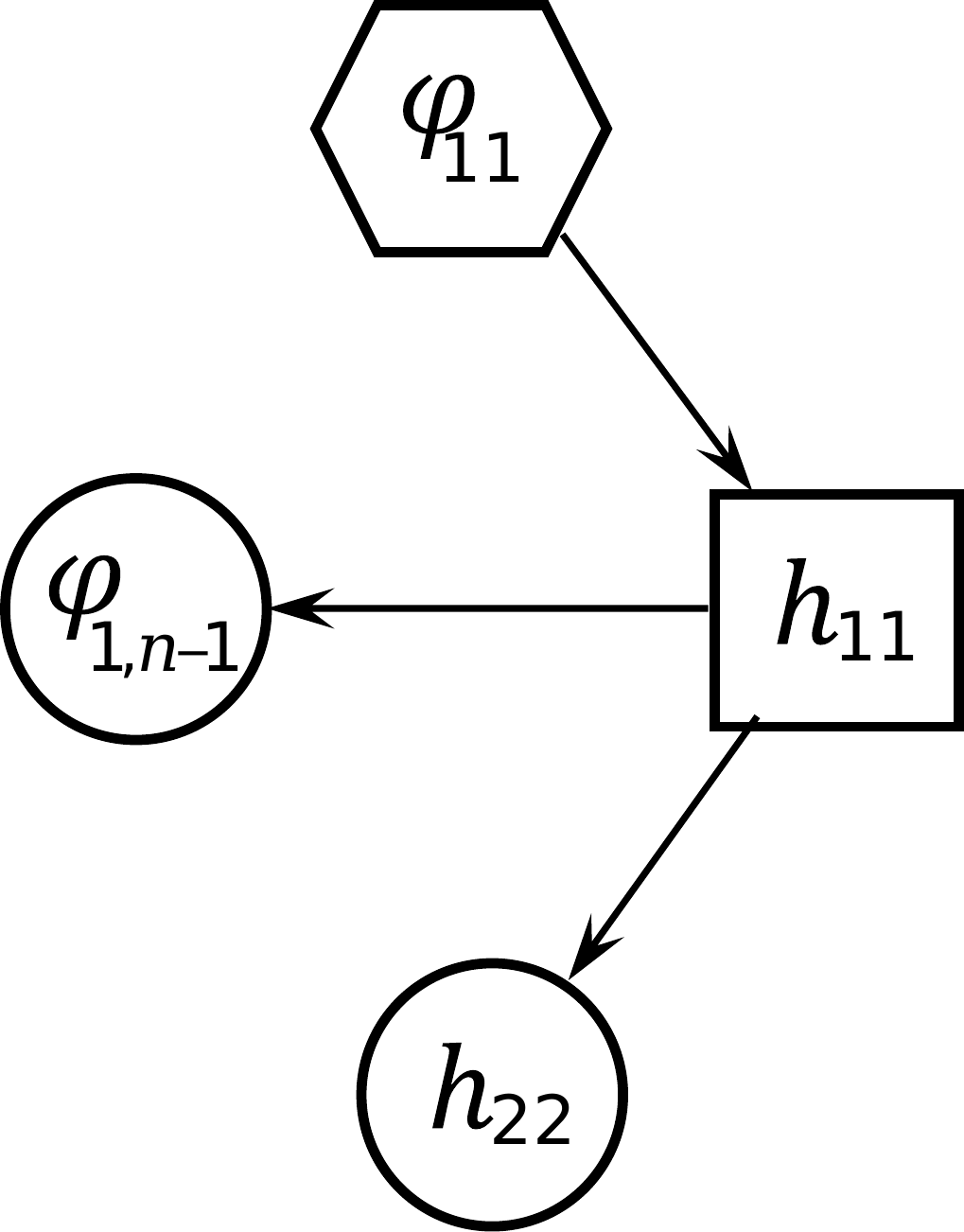}
 \end{center}
 \subcaption{Case $1 \in \Gamma_2$}
 \label{f:inbd_h11_1}
 \end{subfigure}
 \caption{The neighborhood of $h_{11}$.}
 \label{f:inbd_h11}
 \end{figure}

\section{\texorpdfstring{Initial quiver for $\gc_g^{\dagger}(\bg)$}{Initial quiver for the g-convention}}\label{s:ainiquivg}
In this appendix, we describe the initial quiver $Q_g(\bg)$ for $\gc^\dagger_g(\bg)$. As in the $h$-convention, we start with describing the quiver $Q_g(\bg_{\std})$ in Section~\ref{s:quiv_g_triv}, which is exemplified in Figure~\ref{f:ex_n=5_gstd} for $n=5$. In Section~\ref{s:quiv_g_alg}, we explain how to obtain the quiver $Q_g(\bg)$ from $Q_g(\bg_{\std})$ for a nontrivial BD triple $\bg$. The procedure of obtaining $Q_g(\bg)$ from $Q_{g}(\bg_{\std})$ consists in adding extra arrows to $Q_{g}(\bg_{\std})$ based on $\bg$, as well as in unfreezing some of the vertices. Alternatively, in Section~\ref{s:g_quiv_expl} we provide explicit neighborhoods of the vertices of $Q_g(\bg)$ that are different from the corresponding neighborhoods in $Q_g(\bg_{\std})$. Some explicit examples of $Q_g(\bg)$ for nontrivial BD triples are given in the supplementary note~\cite{github}. Throughout the section, we assume that $n \geq 3$.

Let us mention that the quiver $Q_g(\bg)$ can be obtained from $Q_h(\bg^{\op})$ via the following steps:
\begin{itemize}
\item Replace each vertex $\varphi_{kl}$ with $\phi_{kl}$, $2\leq k+l \leq n$, $k,l \geq 1$ and each $h_{ji}$ with $g_{ij}$, $2\leq j \leq i \leq n$;
\item For each $g_{ij}$, $2 \leq j \leq i \leq n$, reverse the orientation of the arrows in its neighborhood;
\item For the vertices $\phi_{kl}$ with $k+l = n$ and $k \geq 2$, add an arrow $\phi_{kl}\rightarrow \phi_{k-1,l+1}$;
\item Remove the arrow $\phi_{1,n-1}\rightarrow g_{11}$.
\end{itemize}

\subsection{The quiver for the trivial BD triple}\label{s:quiv_g_triv}
Below one can find pictures of the neighborhoods of all vertices of the initial quiver $Q_g(\bg_{\std})$ of $\gc_g^{\dagger}(\bg_{\std})$. A few remarks beforehand:
\begin{itemize}
\item The circled vertices are mutable (in the sense of ordinary exchange relations~\eqref{eq:ordexchrel}), the square vertices are frozen, the rounded square vertices may or may not be mutable depending on the indices, and the hexagon vertex is a mutable vertex with a generalized mutation relation (see Formula~\eqref{eq:p11mut});
\item Since $c_1,\ldots,c_{n-1}$ are isolated variables, they are not shown on the resulting quiver;
\item Unlike in the $h$-convention, there are always two arrows $\phi_{21}\rightrightarrows\phi_{12}$ in the resulting quiver;
\item When the indices of the $g$-variables are out of range, we use the following conventions:
\begin{align*}
g_{i1}(U) &:= \phi_{i-1,n-i+1}(U),& &i \in [2,n];\\
g_{n+1,j}(U) &:= 1,& &j \in [1,n].
\end{align*}
\end{itemize}

\begin{figure}[htb]
 \begin{center}
 \includegraphics[scale=0.2]{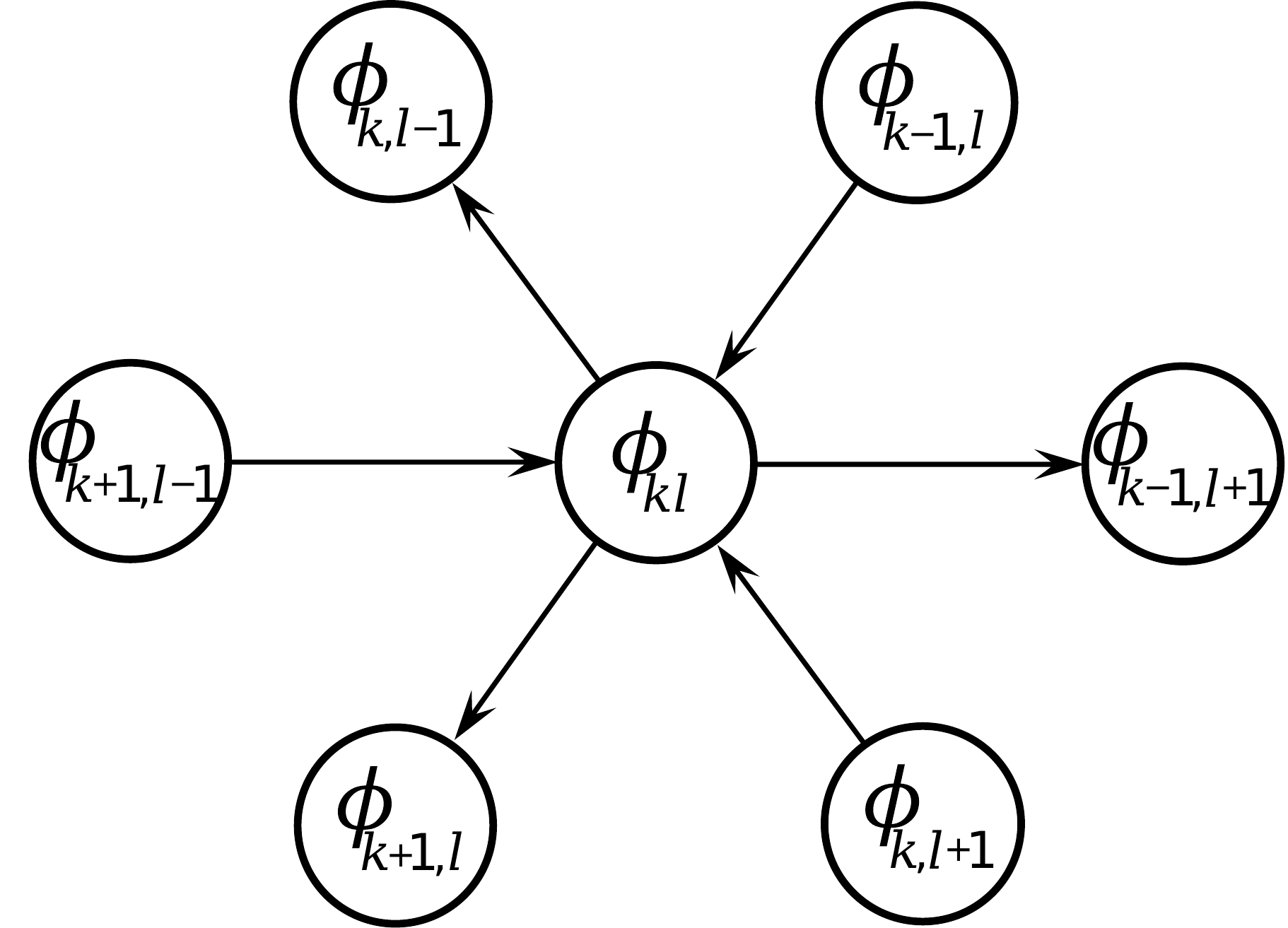}
 \end{center}
 \caption{The neighborhood of $\varphi_{kl}$ for $k,l \neq 1$, $k+l < n$.}
 \label{f:gnbd_phikl}
 \end{figure}

 \vspace{5mm}
 \begin{figure}[htb]
 \begin{subfigure}[t]{3in}
 \begin{center}
 \includegraphics[scale=0.2]{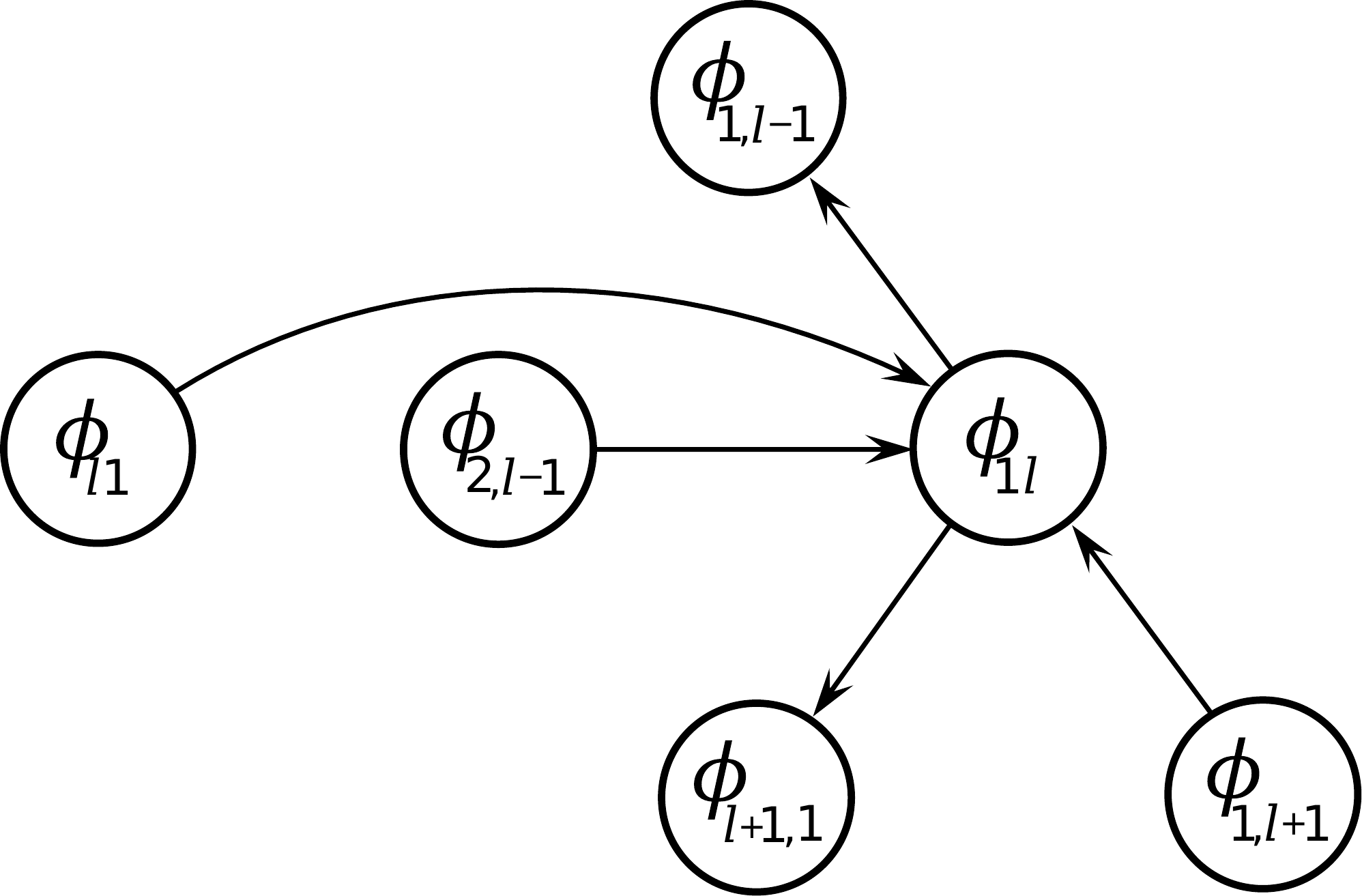}
 \end{center}
 \subcaption{Case $2 \leq l \leq n-2$.}
 \label{f:gnbd_phi1l}
 \end{subfigure}
 \begin{subfigure}[t]{3in}
 \begin{center}
 \includegraphics[scale=0.2]{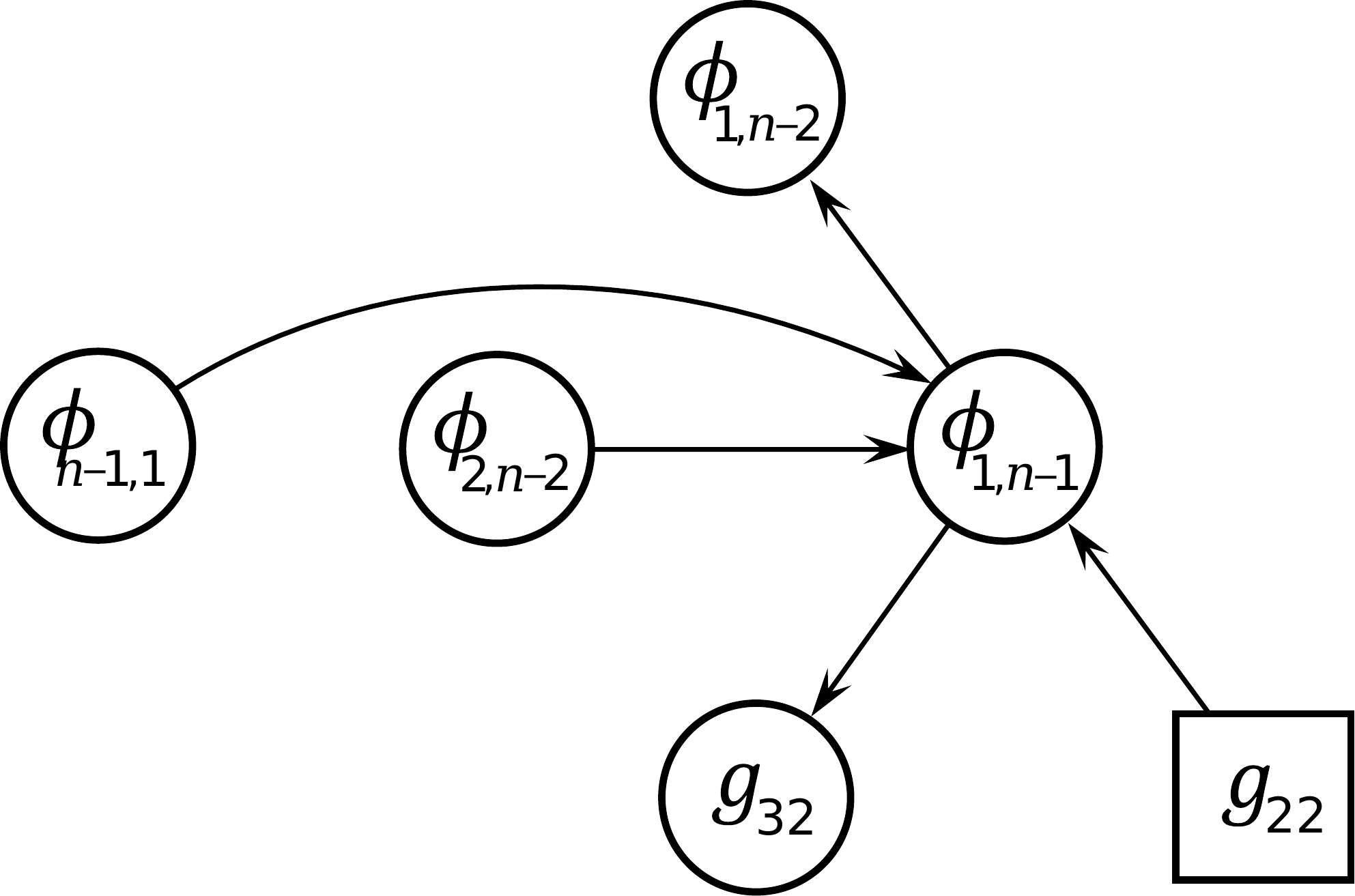}
 \end{center}
 \subcaption{Case $l = n-1$.}
 \label{f:gnbd_phi1n1}
 \end{subfigure}
 \caption{The neighborhood of $\varphi_{1l}$ for $2 \leq l \leq n-1$.}
 \label{f:gnbd_phi1}
 \end{figure}

 \vspace{5mm}
 \begin{figure}[htb]
 \begin{subfigure}[t]{3in}
 \begin{center}
 \includegraphics[scale=0.2]{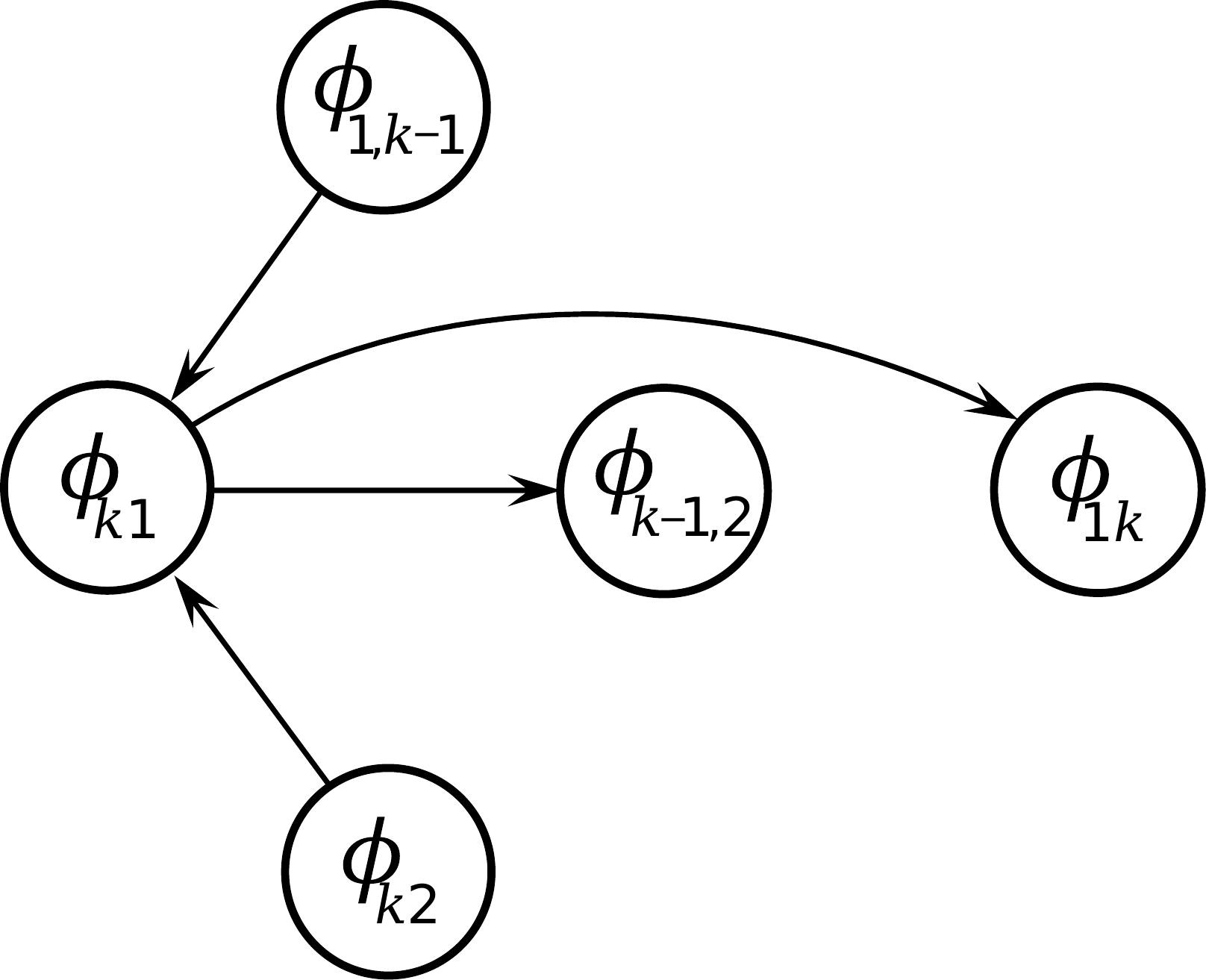}
 \end{center}
 \subcaption{Case $2 \leq k \leq n-2$.}
 \label{f:gnbd_phik1}
 \end{subfigure}
 \begin{subfigure}[t]{3in}
 \begin{center}
 \includegraphics[scale=0.2]{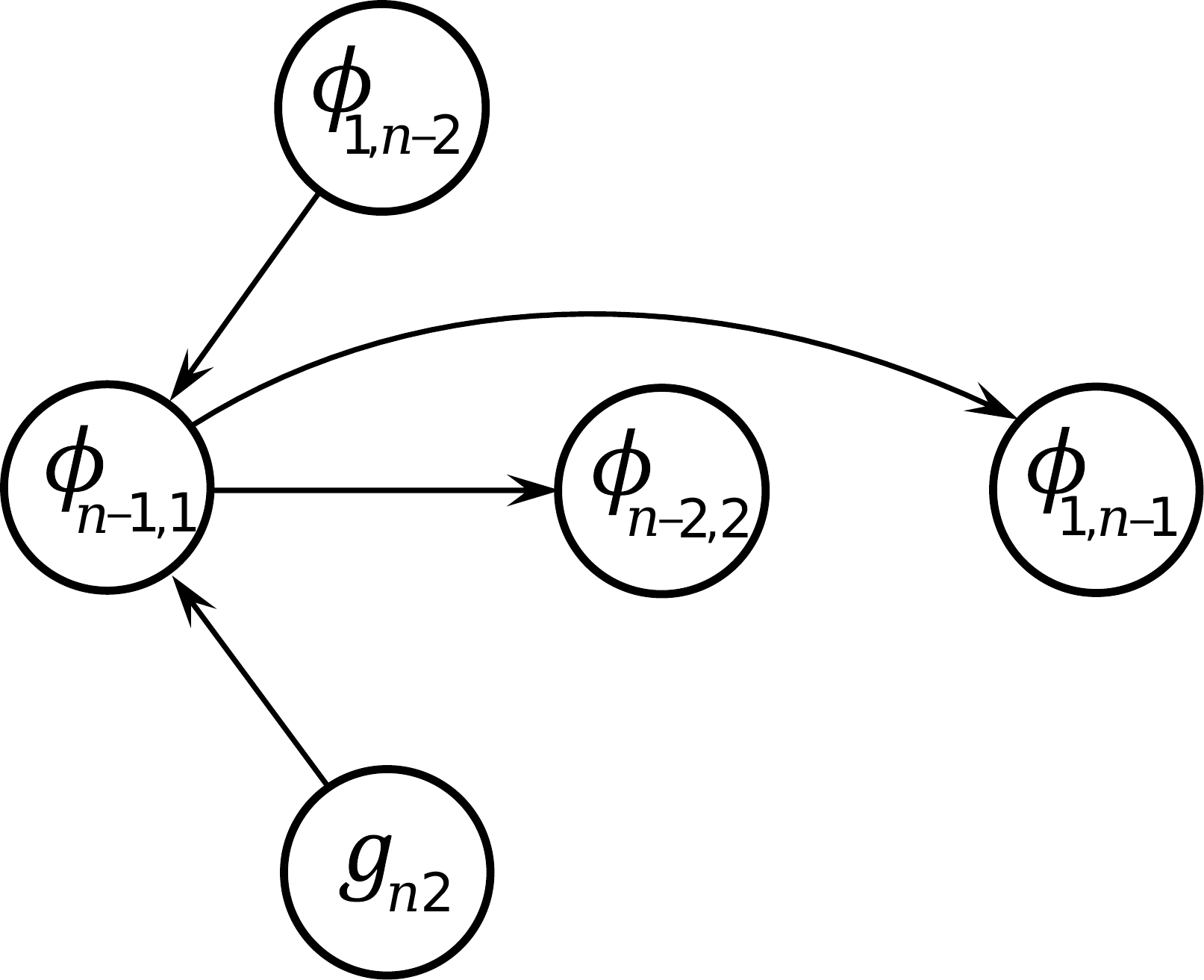}
 \end{center}
 \subcaption{Case $k = n-1$.}
 \label{f:gnbd_phin11}
 \end{subfigure}
 \caption{The neighborhood of $\varphi_{k1}$ for $2 \leq k \leq n-1$.}
 \label{f:gnbd_phik}
 \end{figure}

 \vspace{5mm}
 \begin{figure}[htb]
 \begin{subfigure}[t]{3in}
 \begin{center}
 \includegraphics[scale=0.2]{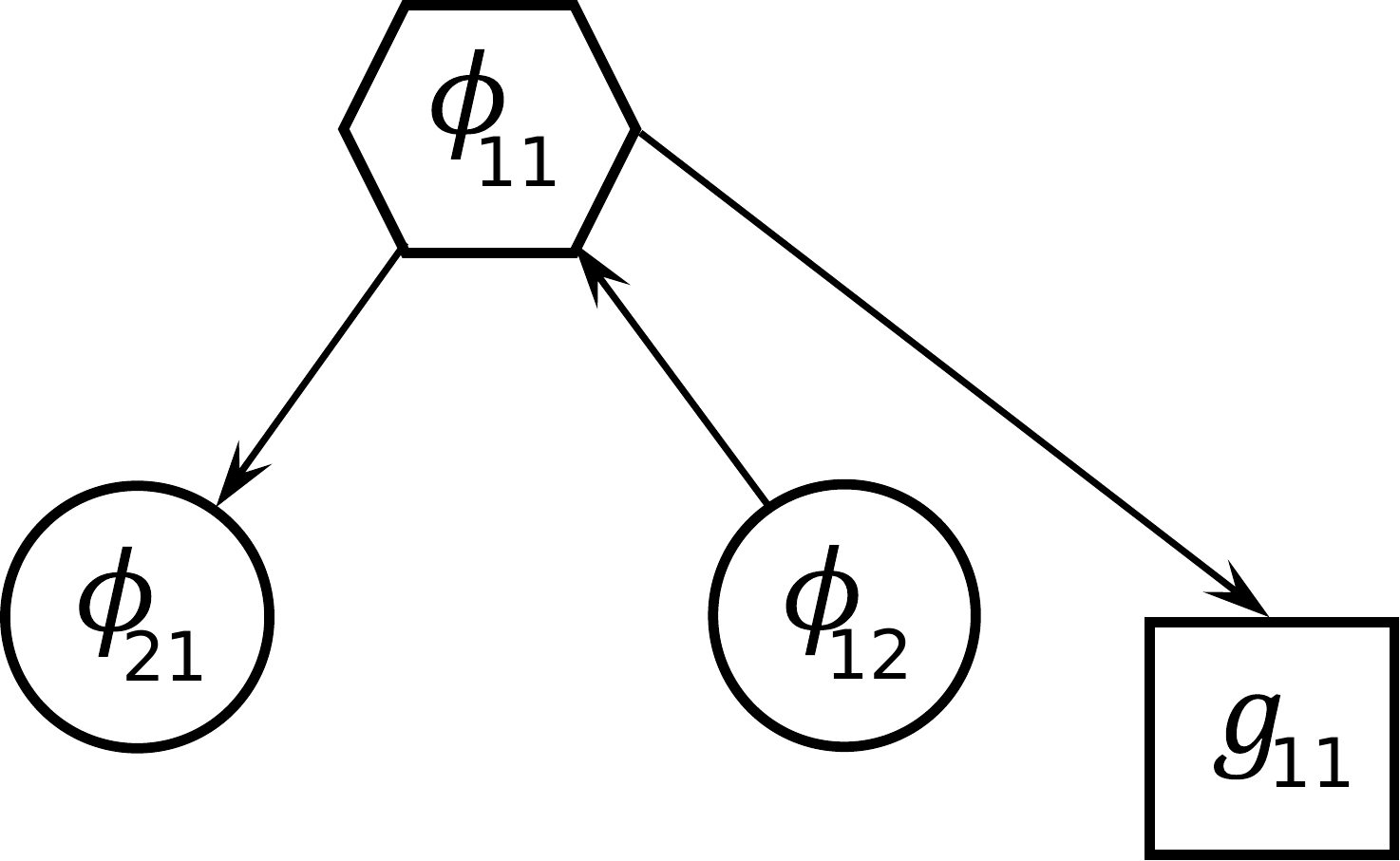}
 \end{center}
 \subcaption{Case $k=l=1$.}
 \label{f:gnbd_phi11}
 \end{subfigure}
 \begin{subfigure}[t]{3in}
 \begin{center}
 \includegraphics[scale=0.2]{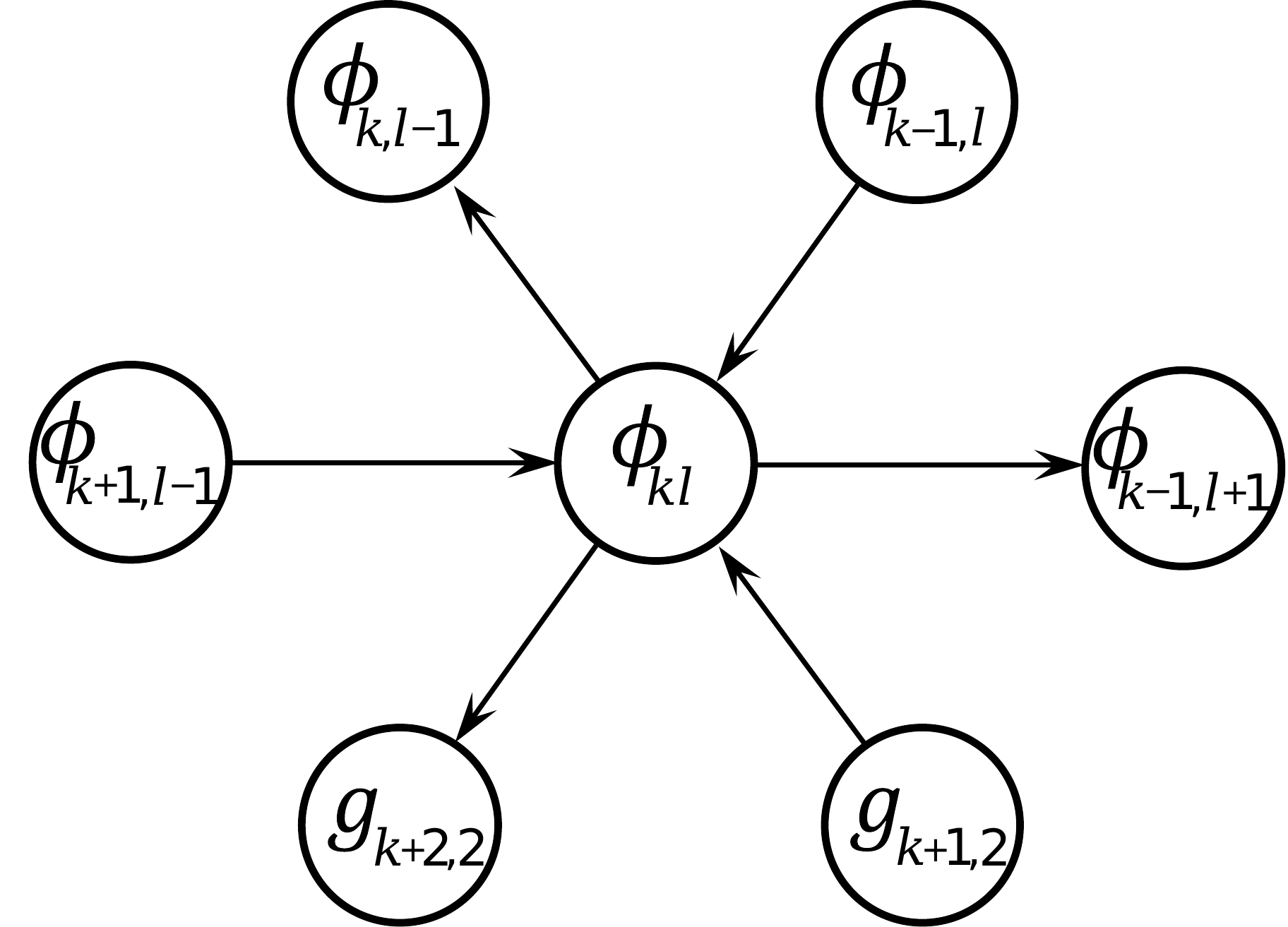}
 \end{center}
 \subcaption{Case $k + l = n$.}
 \label{f:gnbd_phikl_b}
 \end{subfigure}
 \caption{The neighborhood of $\varphi_{kl}$ for (a) $k=l=1$ and (b) $k+l=n$.}
 \label{f:gnbd_boundary}
 \end{figure}

 \vspace{5mm}
 \begin{figure}[htb]
 \begin{subfigure}[t]{3in}
 \begin{center}
 \includegraphics[scale=0.2]{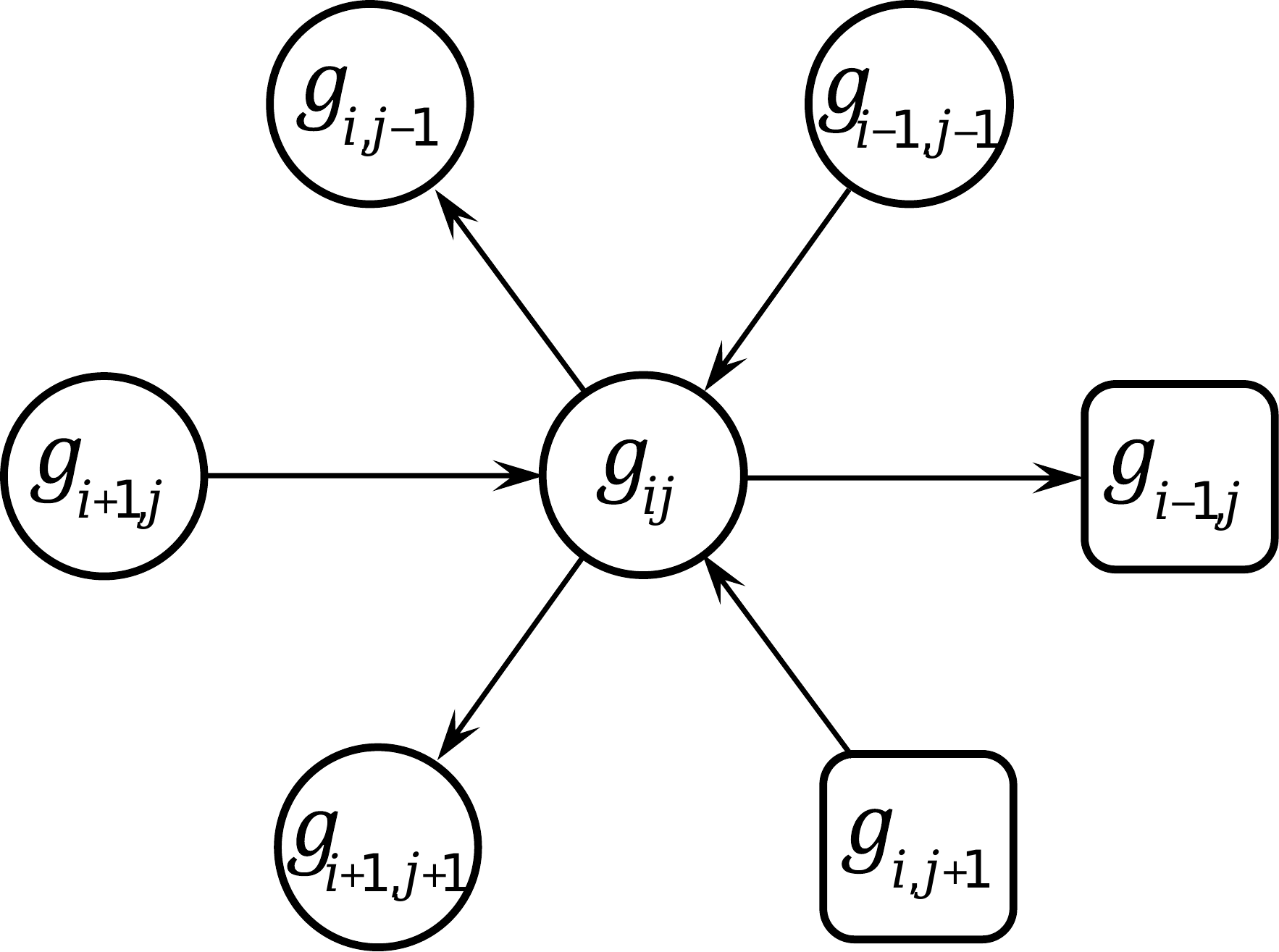}
 \end{center}
 \subcaption{Case $j < n$.}
 \label{f:nbd_gij}
 \end{subfigure}
 \begin{subfigure}[t]{3in}
 \begin{center}
 \includegraphics[scale=0.2]{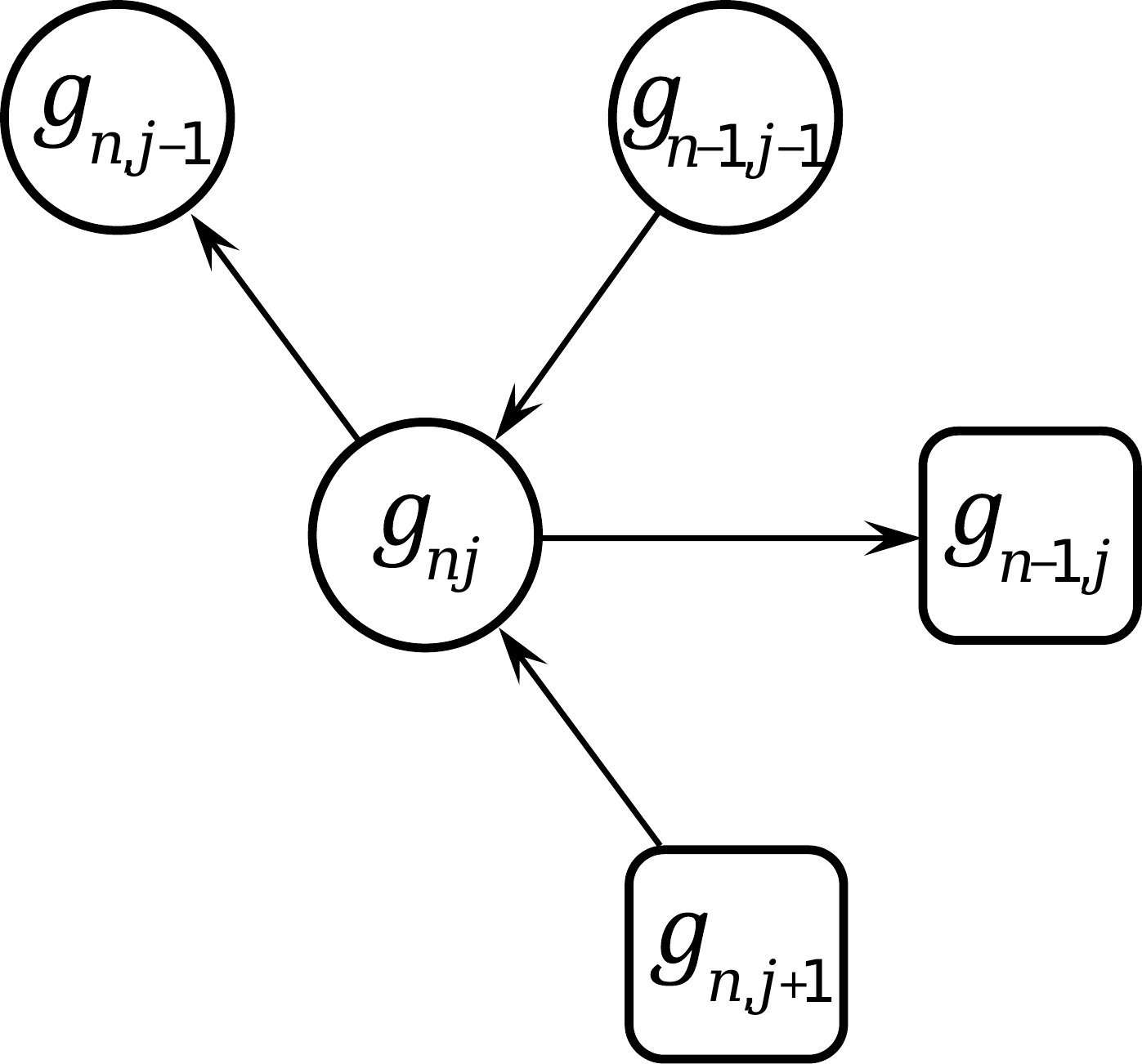}
 \end{center}
 \subcaption{Case $j = n$.}
 \label{f:nbd_gnj}
 \end{subfigure}
 \caption{The neighborhood of $g_{ij}$ for $2 \leq i \leq n-1$, $2 \leq j \leq n$, $i \geq j$.}
 \label{f:nbd_g}
 \end{figure}


 \vspace{5mm}
 \begin{figure}[htb]
 \begin{subfigure}[t]{2in}
 \begin{center}
 \includegraphics[scale=0.2]{nbds_g/gnbd_g11}
 \end{center}
 \subcaption{Case $i=1$.}
 \label{f:nbd_g11}
 \end{subfigure}
 \begin{subfigure}[t]{2in}
 \begin{center}
 \includegraphics[scale=0.2]{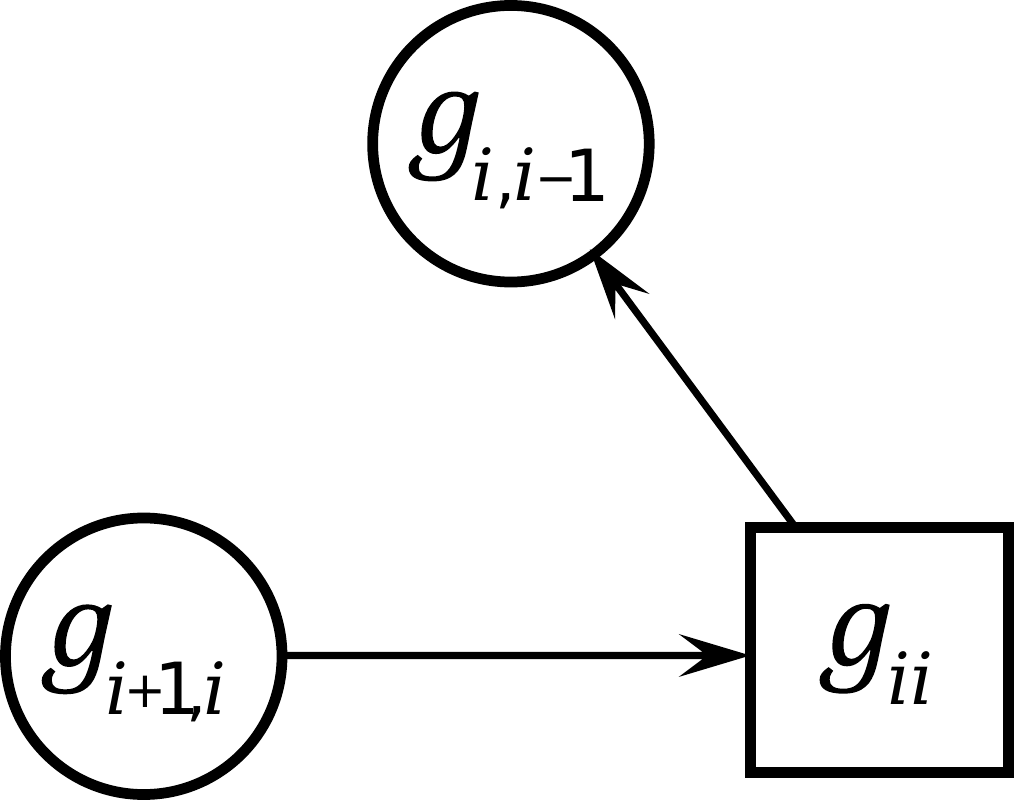}
 \end{center}
 \subcaption{Case $2 \leq i \leq n-1$.}
 \label{f:nbd_gii}
 \end{subfigure}
 \begin{subfigure}[t]{2in}
 \begin{center}
 \includegraphics[scale=0.2]{nbds_g/gnbd_gnn}
 \end{center}
 \subcaption{Case $i=n$.}
 \label{f:nbd_gnn}
 \end{subfigure}
 \caption{The neighborhood of $g_{ii}$ for $1 \leq i \leq n$.}
 \label{f:nbd_gg}
 \end{figure}

 \begin{figure}[htb]
\begin{center}
\includegraphics[scale=0.20]{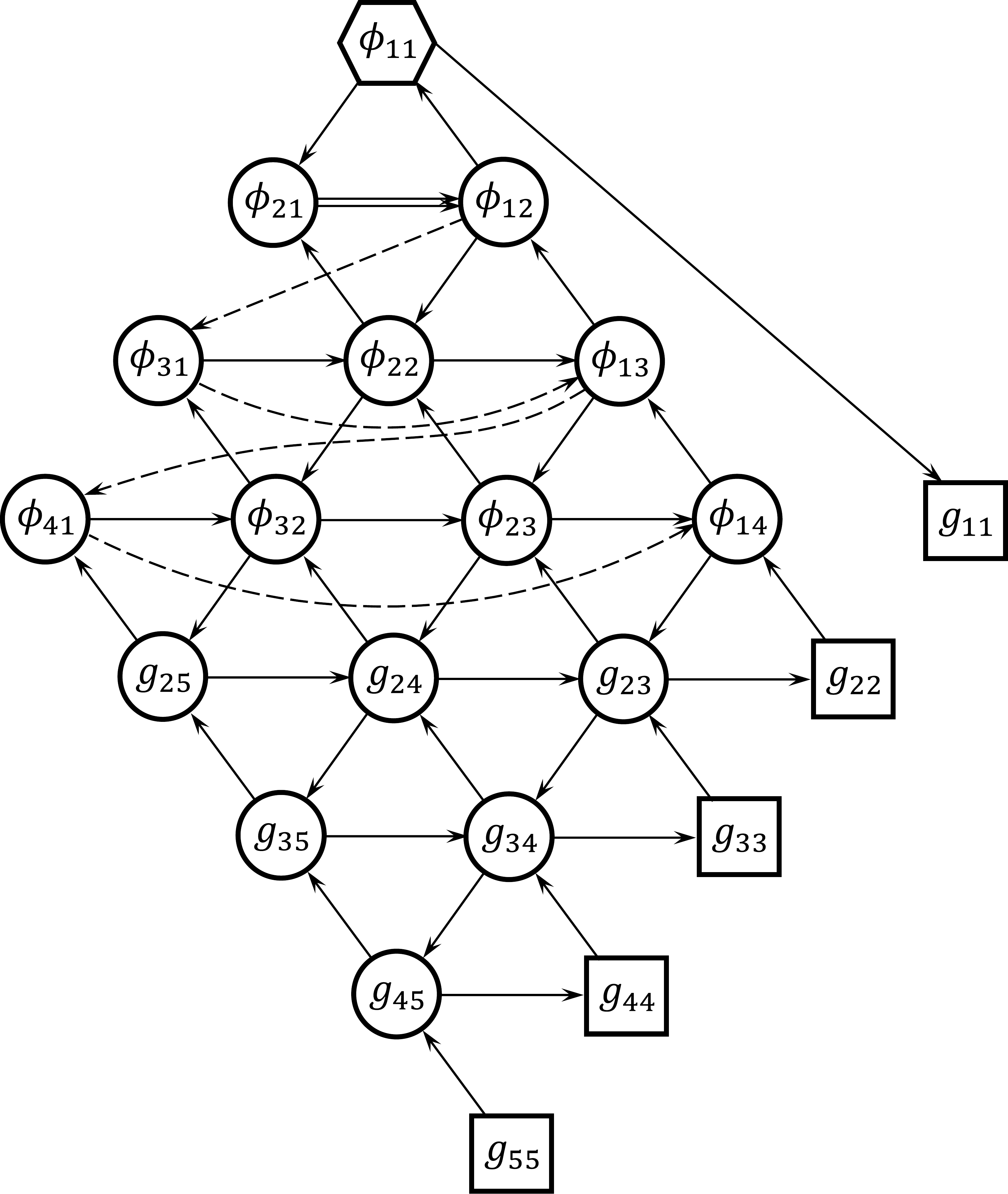}
\end{center}
\caption{The initial quiver for $\gc_g^{\dagger}(\bg_{\std},\GL_5)$. }
\label{f:ex_n=5_gstd}
\end{figure}

\clearpage

\subsection{The quiver for a nontrivial BD triple (algorithm)}\label{s:quiv_g_alg}
Let $\bg := (\Gamma_1,\Gamma_2,\gamma)$ be a nontrivial BD triple. In order to obtain the quiver $Q_g(\bg)$, one proceeds as follows. First, draw the quiver $Q_g(\bg_{\std})$, as described in Appendix~\ref{s:quiv_g_triv}. Second, add new arrows as prescribed by the following algorithm:
\begin{enumerate}[1)]
\item If $i \in \Gamma_1$ is such that $i-1\notin\Gamma_1$ or $\gamma|_{\Delta(i)\cap \Gamma_1} < 0$, add new arrows as indicated in Figure~\ref{f:nbd_gi1i1_0};
\item If $i \in \Gamma_1$ is such that $i-1 \in \Gamma_1$ and $\gamma|_{\Delta(i)\cap \Gamma_1} > 0$, add new arrows as indicated in Figure~\ref{f:nbd_gi1i1_1};
\item Repeat for all roots in $\Gamma_1$.
\end{enumerate}
In the algorithm, ${\Delta}(i)$ denotes the $X$-run that contains $i$. 

\vspace{5mm}
 \begin{figure}[htb]
 \begin{subfigure}[t]{3in}
 \begin{center}
 \includegraphics[scale=0.2]{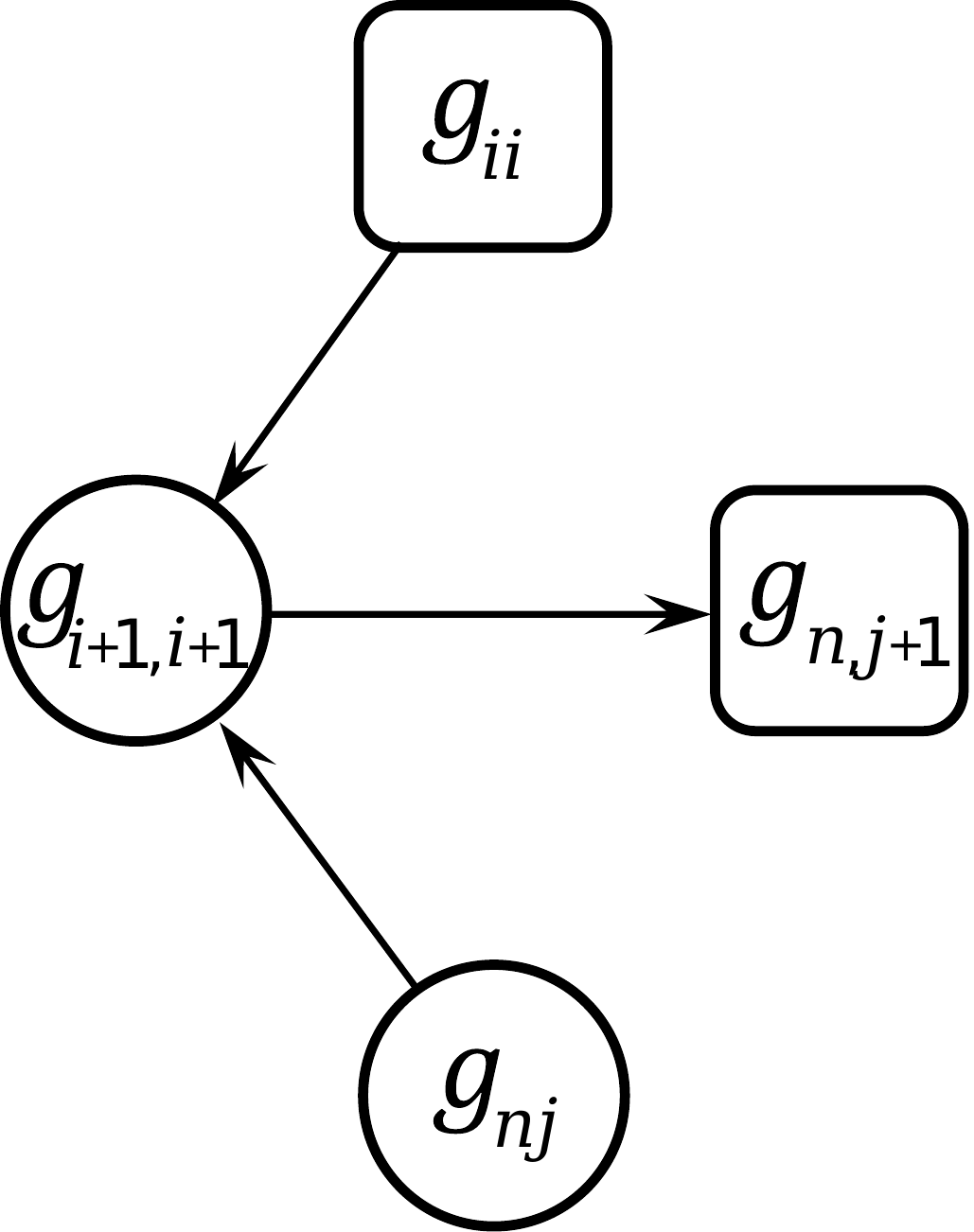} 
 \end{center}
 \subcaption{Case $i-1 \notin \Gamma_1$ or $\gamma|_{\Delta(i)\cap \Gamma_1} < 0$.}
 \label{f:nbd_gi1i1_0}
 \end{subfigure}
 \begin{subfigure}[t]{3in}
 \begin{center}
 \includegraphics[scale=0.2]{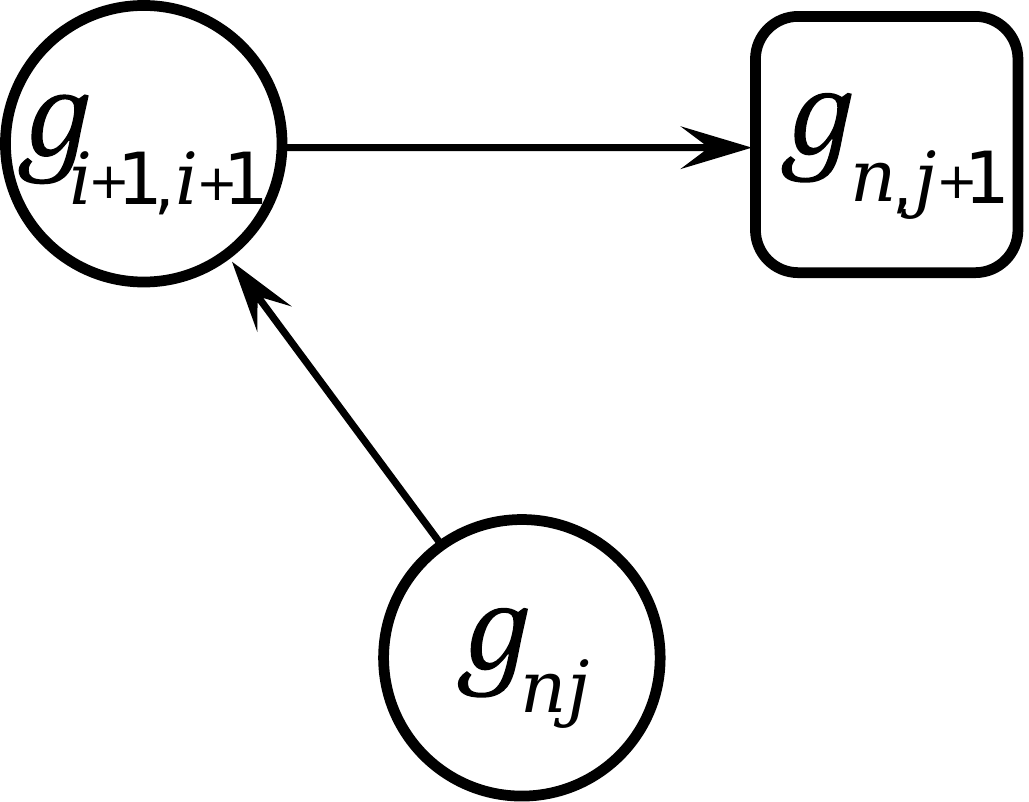}
 \end{center}
 \subcaption{Case $i-1\in \Gamma_1$ and $\gamma|_{\Delta(i)\cap\Gamma_1} > 0$.}
 \label{f:nbd_gi1i1_1}
 \end{subfigure}
 \caption{Additional arrows for $i \in \Gamma_1$ and $j:= \gamma(i)$.}
 \label{f:nbd_extra_g}
 \end{figure}

\subsection{The quiver for a nontrivial BD triple (explicit)}\label{s:g_quiv_expl}
As an alternative to the algorithm described in the previous subsection, we provide explicit neigh-
borhoods of the variables $g_{jj}$ ($2 \leq j \leq n$), $g_{in}$ ($2\leq i\leq n$), $\phi_{n-1,1}$ in the case of a nontrivial BD triple. All the other neighborhoods of $Q_g(\bg)$ are the same as in $Q_g(\bg_{\std})$. Let us mention that if $\Delta$ is a nontrivial $X$-run such that $|\Delta| = 2$, then both conditions $\gamma|_{\Delta \cap \Gamma_1} > 0$ and $\gamma|_{\Delta \cap \Gamma_2} < 0$ are considered to be satisfied; however, if $|\Delta|> 2$, then the conditions are mutually exclusive.

  \vspace{5mm}
 \noindent
\begin{figure}[htb]
 \begin{subfigure}[t]{2.2in}
 \begin{center}
 \includegraphics[scale=0.2]{nbds_g/gnbd_gii}
 \end{center}
 \subcaption{Case $i-1 \notin \Gamma_1$.}
 \label{f:inbd_gii_0}
 \end{subfigure}
 \begin{subfigure}[t]{3.6in}
 \begin{center}
 \includegraphics[scale=0.2]{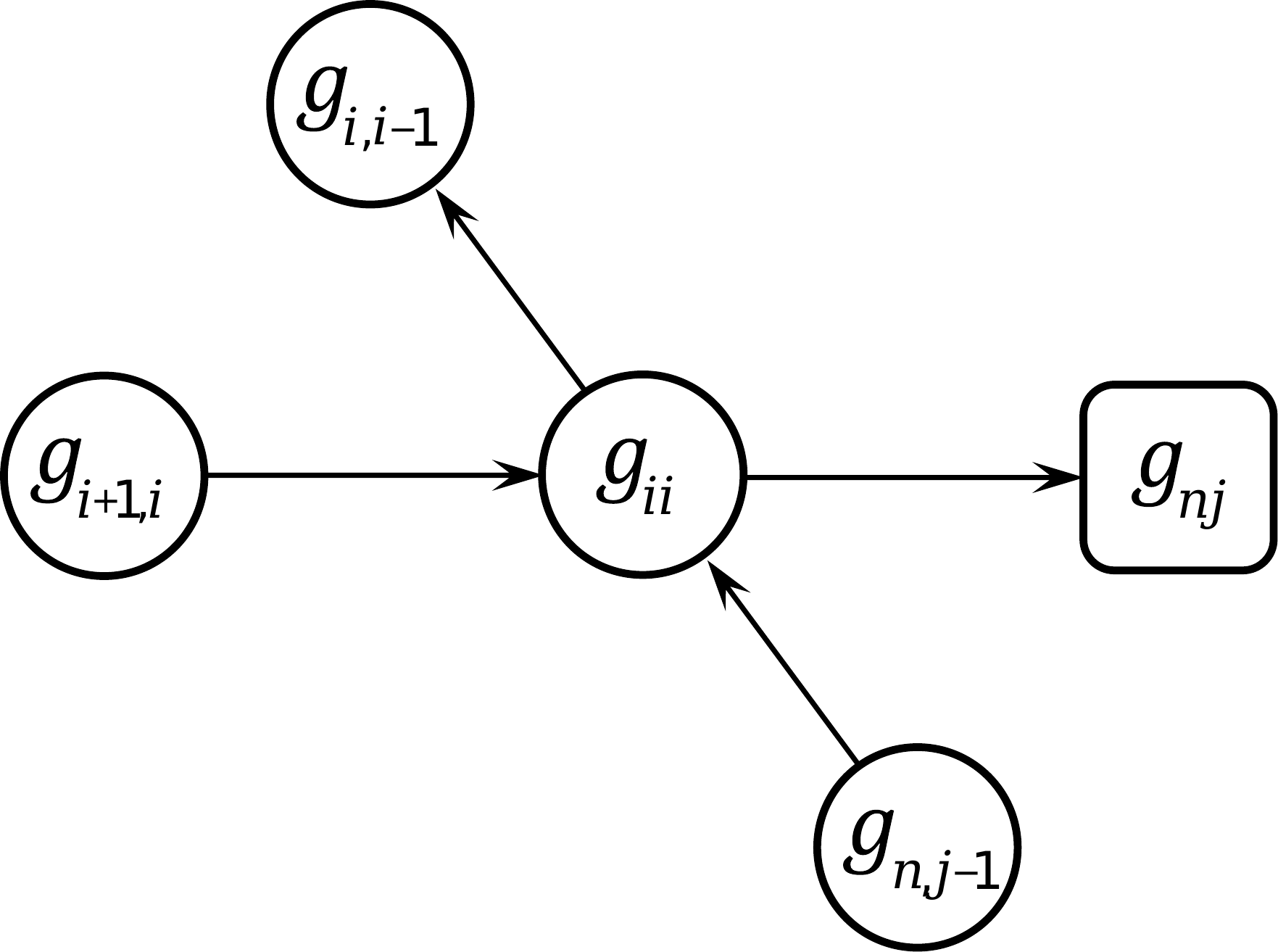}
 \end{center}
 \subcaption{Case $i-2,i-1\in\Gamma_1$ and $\gamma|_{\Delta(i)\cap \Gamma_1}>0$.}
 \label{f:inbd_gii_1}
 \end{subfigure}
 \begin{subfigure}[t]{3in}
\vspace{4mm}
 \begin{center}
 \includegraphics[scale=0.2]{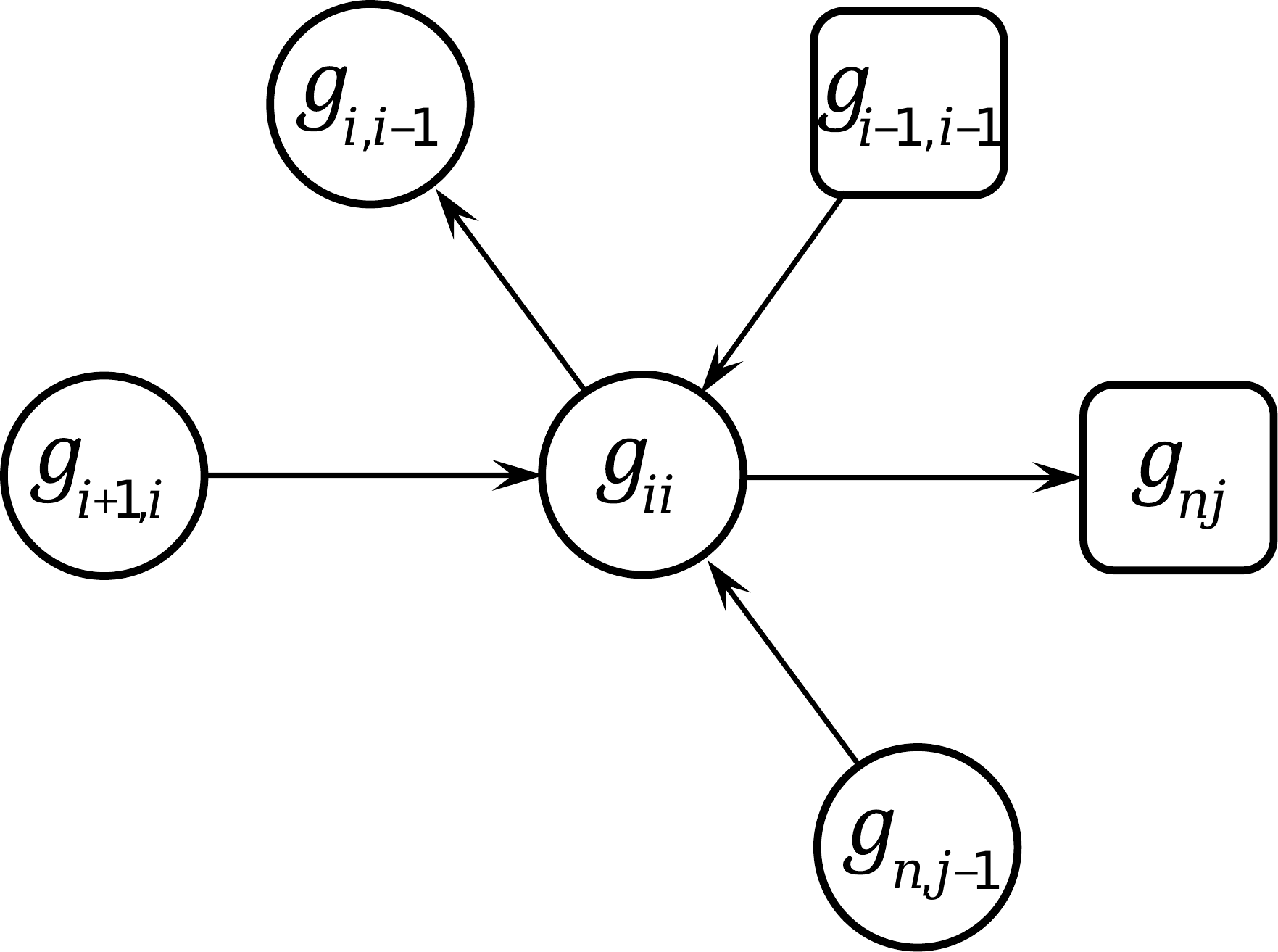}
 \end{center}
 \subcaption{Case $i-1 \in \Gamma_1$, $i \notin \Gamma_1$ and ($i-2\notin\Gamma_1$ or $\gamma|_{\Delta(i)\cap\Gamma_1}<0$).}
 \label{f:inbd_gii_2}
 \end{subfigure}
 \begin{subfigure}[t]{3in}
\vspace{4mm}
 \begin{center}
 \includegraphics[scale=0.2]{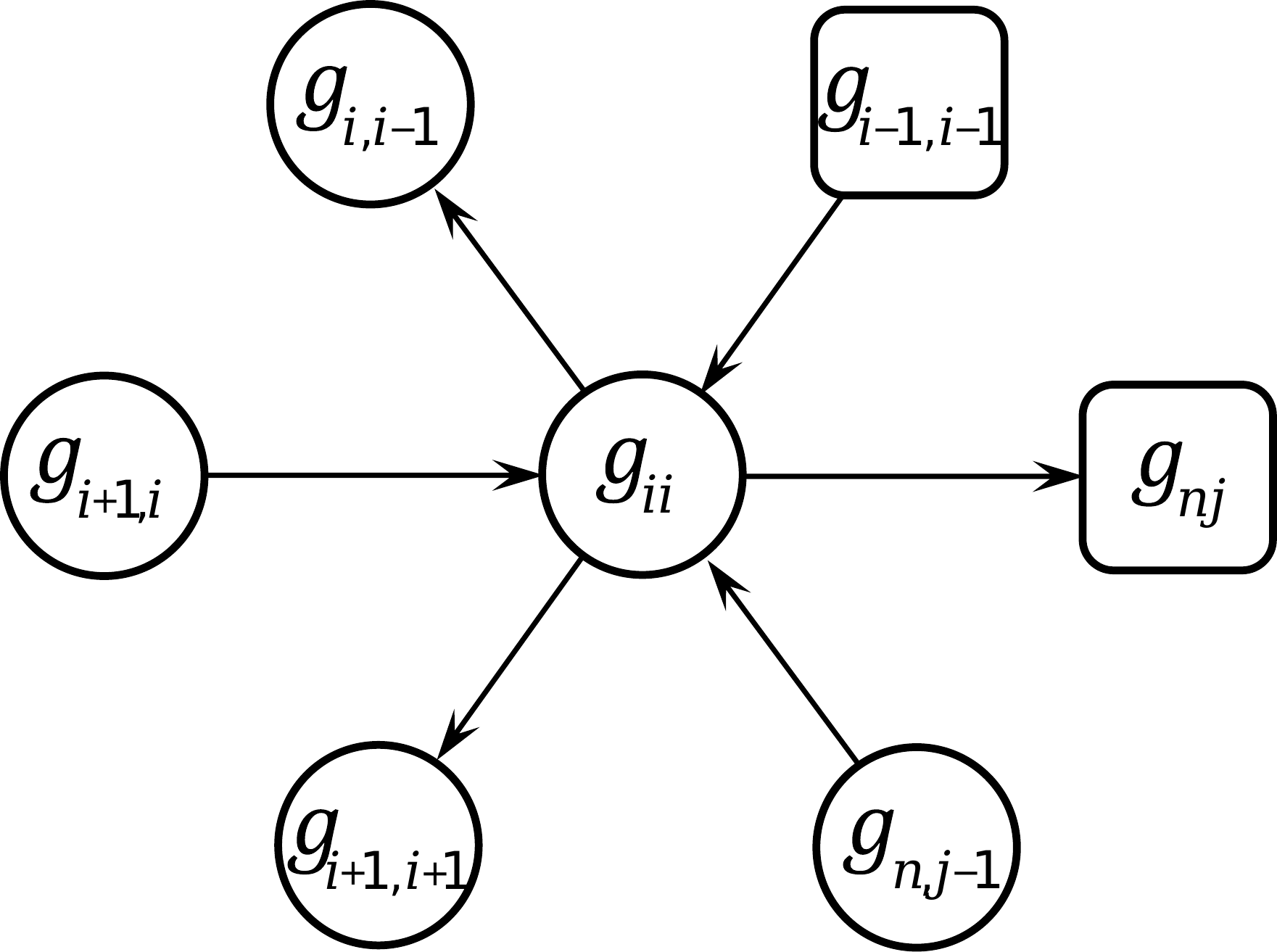}
 \end{center}
 \subcaption{Case $i-1,i \in \Gamma_1$ and $\gamma|_{\Delta(i)\cap \Gamma_1}<0$.}
 \label{f:inbd_gii_3}
 \end{subfigure}
 \caption{The neighborhood of $g_{ii}$ for $2 \leq i \leq n-1$; here, $j:=\gamma(j-1)+1$ when $i-1 \in \Gamma_1$.}
 \label{f:inbd_gii}
 \end{figure}

 \vspace{5mm}
\noindent
\begin{figure}[htb]
\begin{center}
\begin{subfigure}[t]{2.6in}
\begin{center}
\includegraphics[scale=0.2]{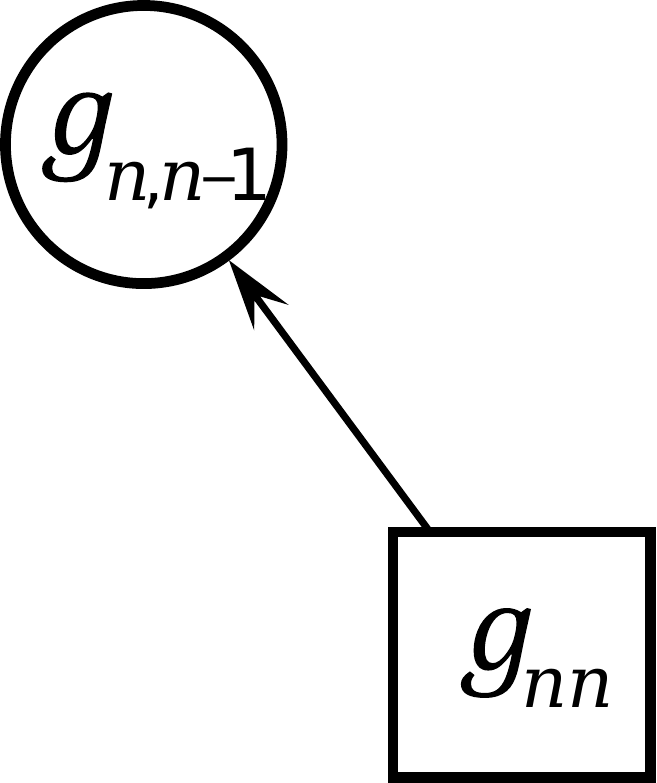} 
\end{center}
\subcaption{Case $n-1 \notin \Gamma_2$, $n-1 \notin \Gamma_1$.}
\label{f:gnbd_gnn_0}
\end{subfigure}
\begin{subfigure}[t]{2.6in}
\begin{center}
\includegraphics[scale=0.2]{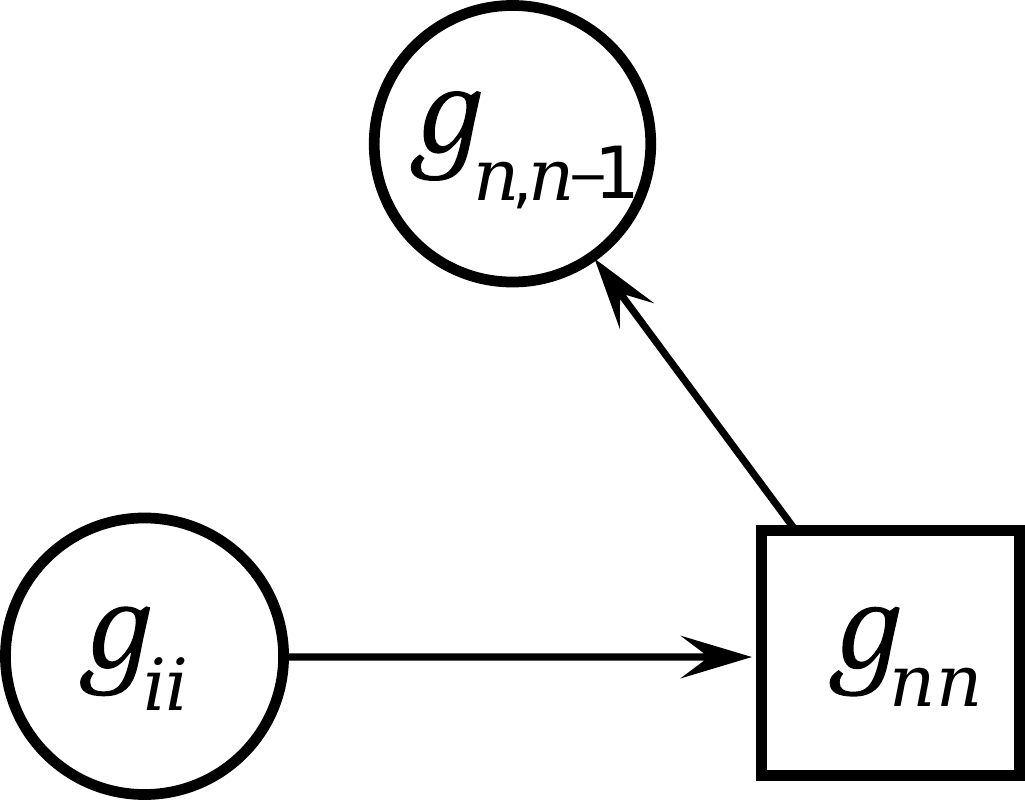}
\end{center}
\subcaption{Case $n-1 \in \Gamma_2$, $n-1 \notin \Gamma_1$.}
\label{f:gnbd_gnn_1}
\end{subfigure}
\begin{subfigure}[t]{2.6in}
\vspace{4mm}
\begin{center}
\includegraphics[scale=0.2]{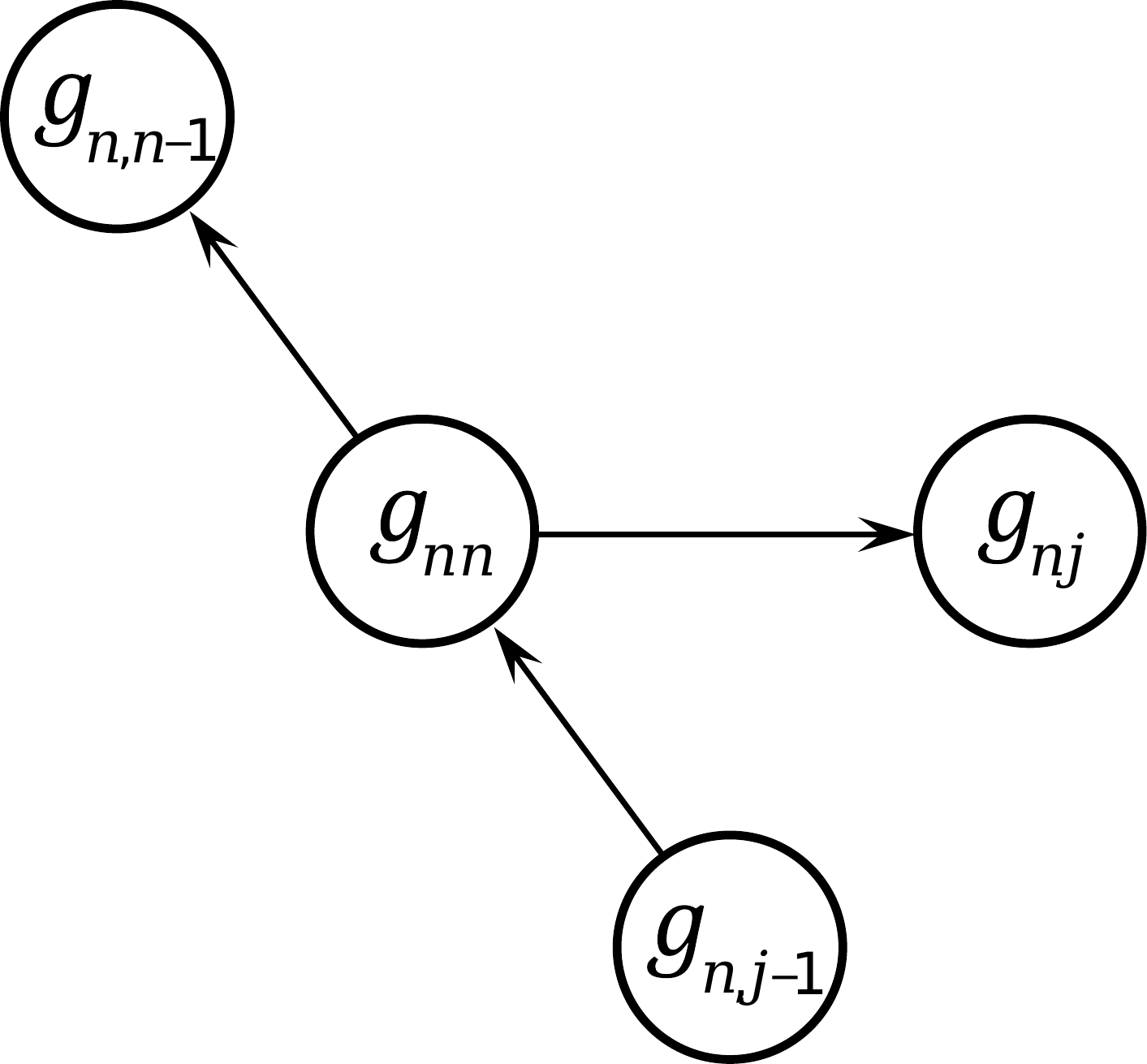}
\end{center}
\subcaption{Case $n-1 \notin \Gamma_2$, $n-1,n-2 \in \Gamma_1$ and $\gamma|_{\Delta(n-1) \cap \Gamma_2>0}$.}
\label{f:gnbd_gnn_2}
\end{subfigure}
\hspace{7mm}
\begin{subfigure}[t]{2.6in}
\vspace{4mm}
\begin{center}
\includegraphics[scale=0.2]{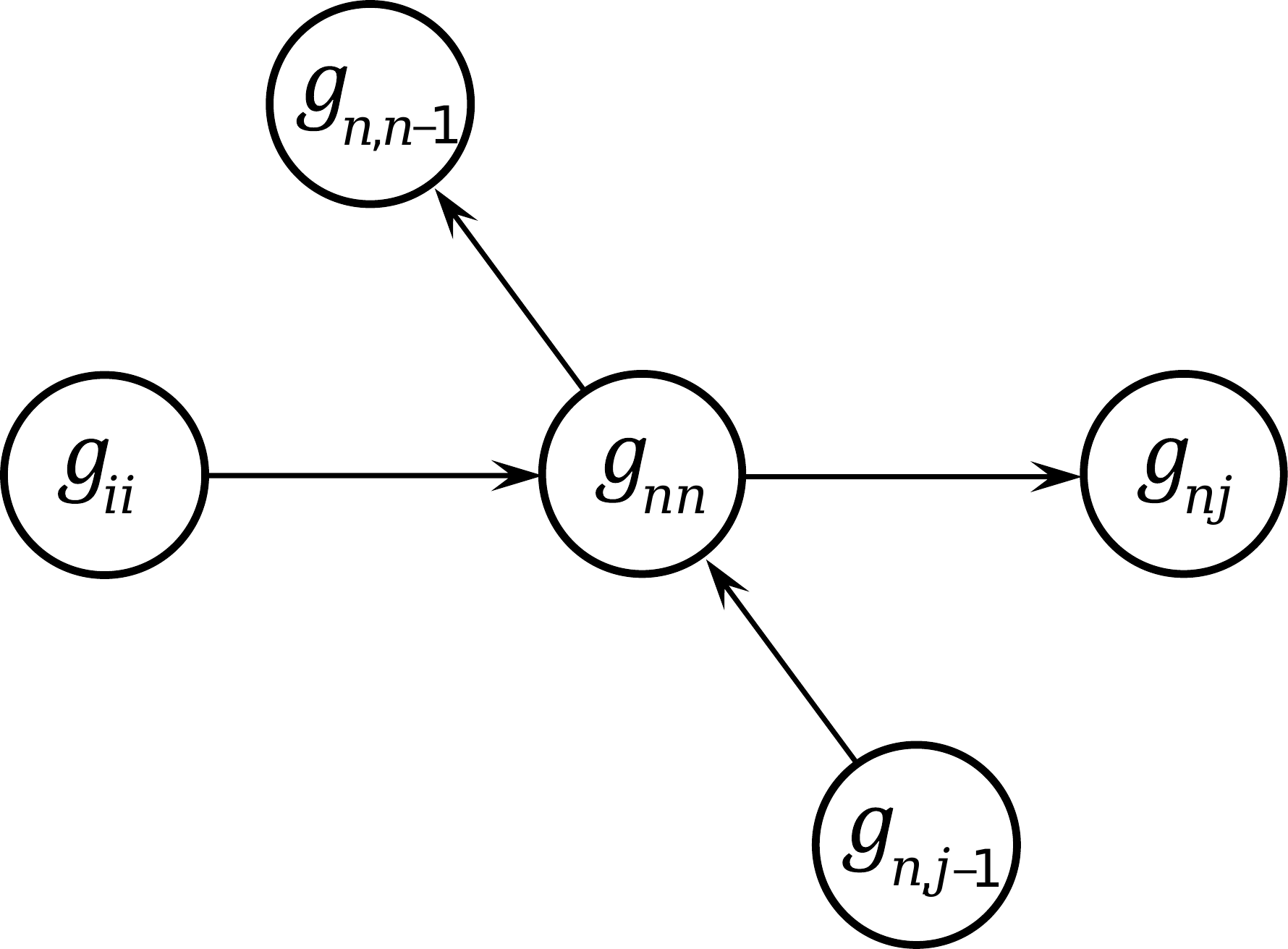}
\end{center}
\subcaption{Case $n-1 \in \Gamma_2$, $n-1,n-2 \in \Gamma_1$ and $\gamma|_{\Delta(n-1) \cap \Gamma_2>0}$.}
\label{f:gnbd_gnn_3}
\end{subfigure}
\begin{subfigure}[t]{2.6in}
\vspace{4mm}
\begin{center}
\includegraphics[scale=0.2]{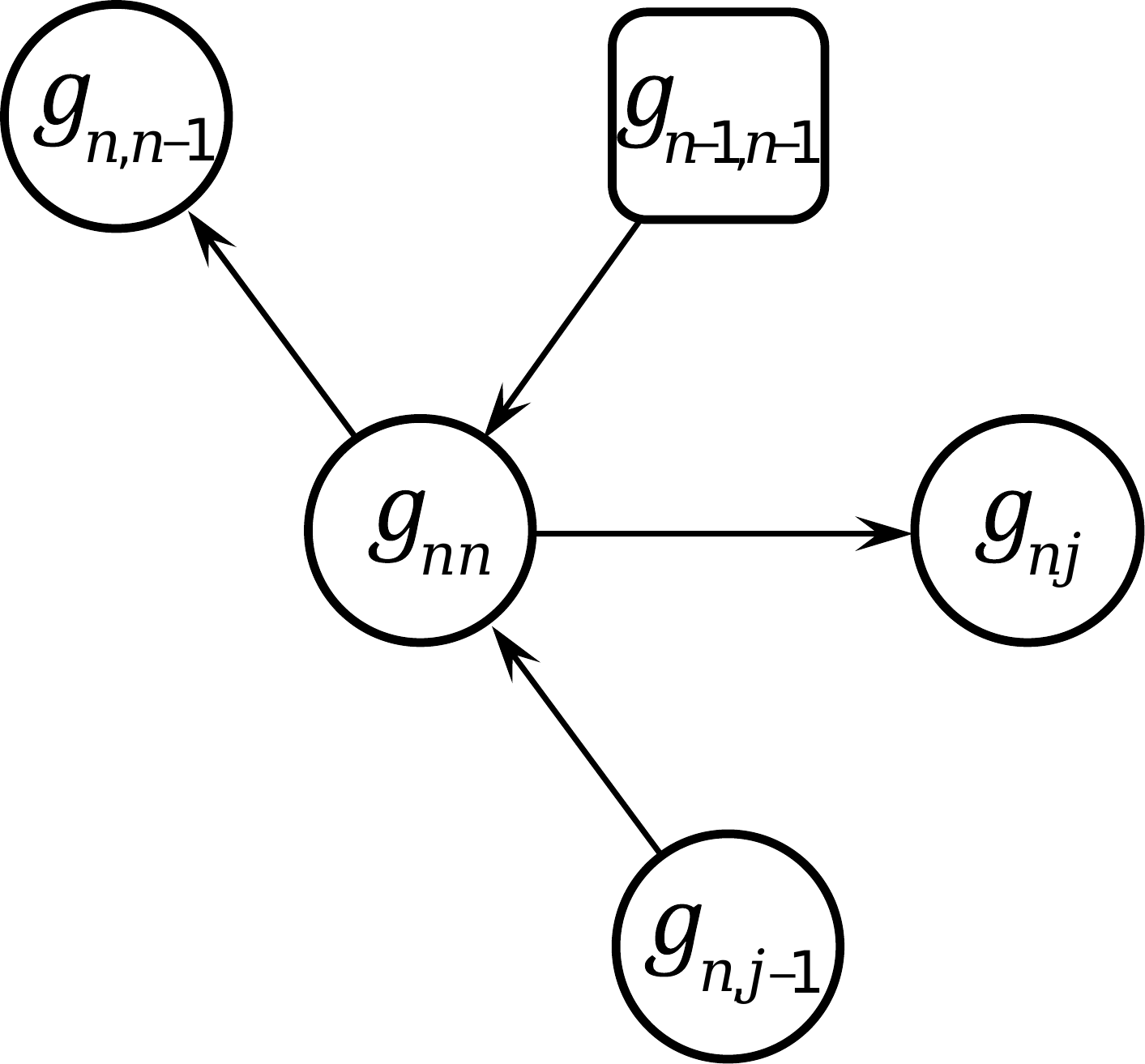}
\end{center}
\subcaption{Case $n-1 \notin \Gamma_2$, $n-1 \in \Gamma_1$ and ($n-2 \notin \Gamma_1$ or $\gamma|_{\Delta(n-1)\cap \Gamma_2}<0$).}
\label{f:gnbd_gnn_4}
\end{subfigure}
\hspace{7mm}
\begin{subfigure}[t]{2.6in}
\vspace{4mm}
\begin{center}
\includegraphics[scale=0.2]{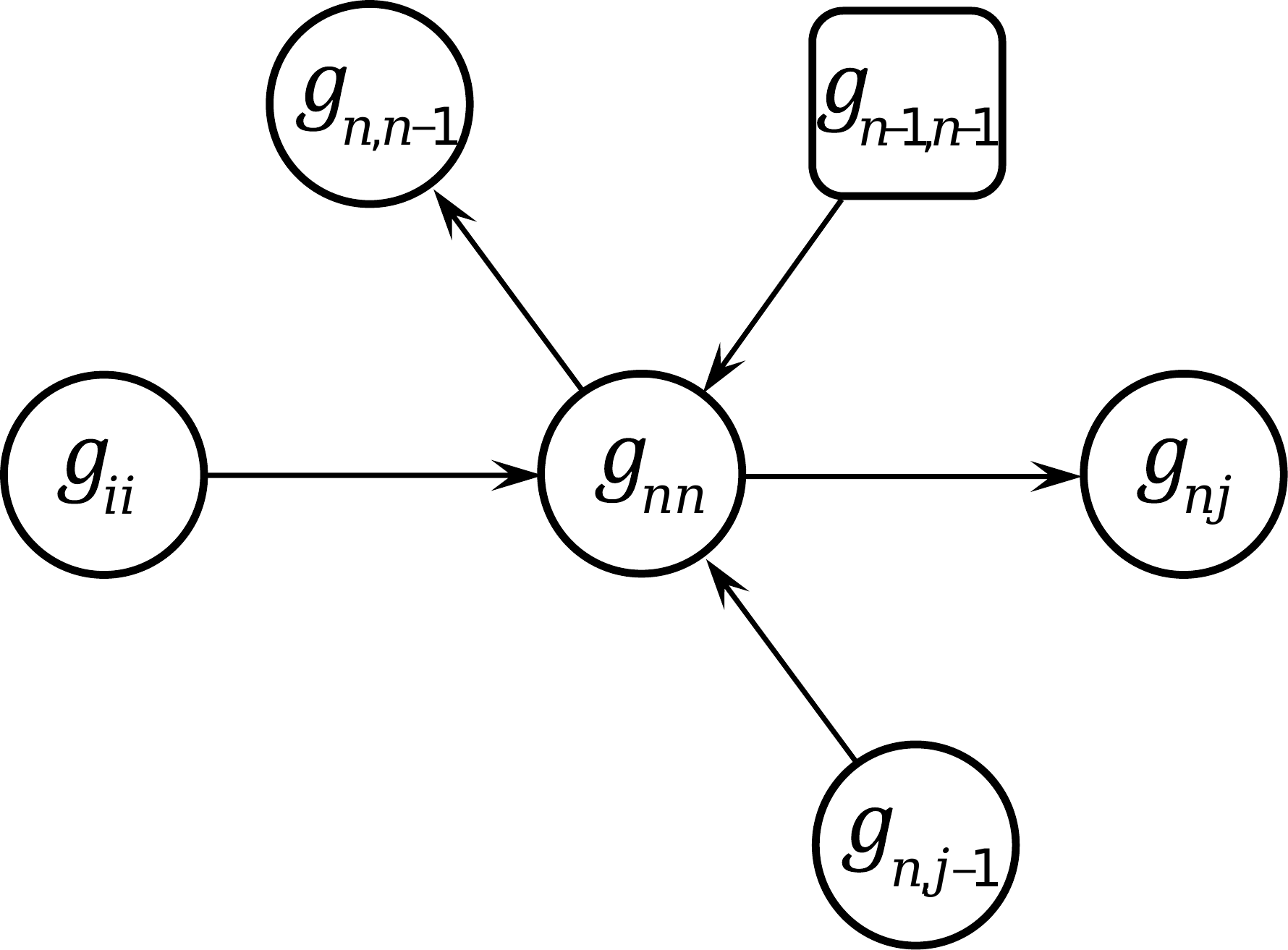}
\end{center}
\subcaption{Case $n-1 \in \Gamma_2$, $n-1 \in \Gamma_1$ and ($n-2 \notin \Gamma_1$ or $\gamma|_{\Delta(n-1)\cap \Gamma_2}<0$).}
\label{f:gnbd_gnn_5}
\end{subfigure}
\caption{The neighborhood of $g_{nn}$. In the figures, we set $j:=\gamma(n-1)+1$ if $n-1 \in \Gamma_1$ and $i:=\gamma^*(n-1)+1$ if $n-1 \in \Gamma_2$.}
\label{f:gnbd_gnn}
\end{center}
\end{figure}

 \vspace{5mm}
 \noindent
\begin{figure}[htb]
 \begin{subfigure}[t]{2.2in}
 \begin{center}
 \includegraphics[scale=0.2]{nbds_g/gnbd_gnj}
 \end{center}
 \subcaption{Case $j-1,j \notin \Gamma_2$.}
 \label{f:gnbd_gnj_0}
 \end{subfigure}
 \begin{subfigure}[t]{3.6in}
 \begin{center}
 \includegraphics[scale=0.2]{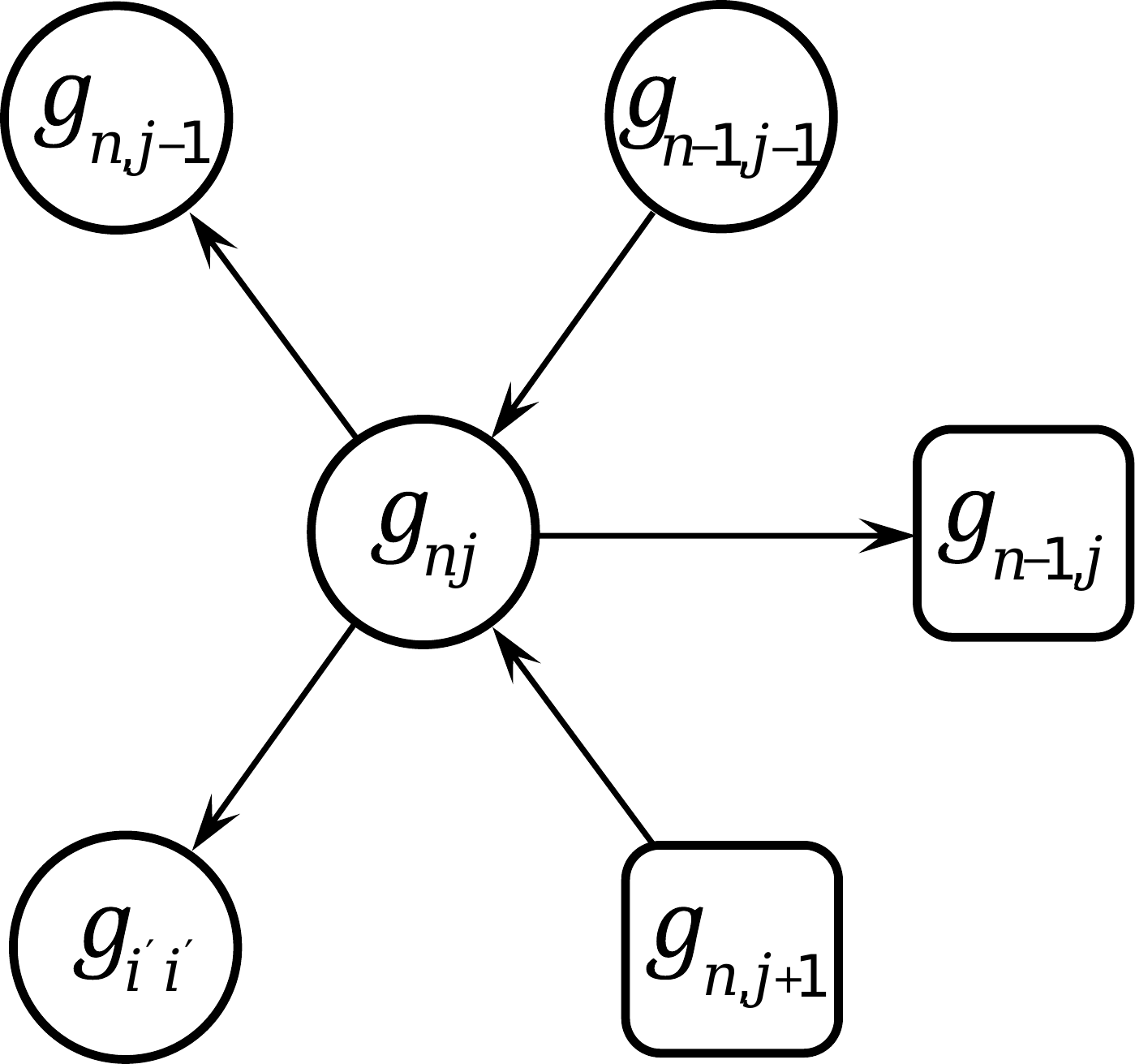}
 \end{center}
 \subcaption{Case $j-1 \notin \in \Gamma_2$, $j \in \Gamma_2$.}
 \label{f:gnbd_gnj_1}
 \end{subfigure}
 \begin{subfigure}[t]{3in}
\vspace{4mm}
 \begin{center}
 \includegraphics[scale=0.2]{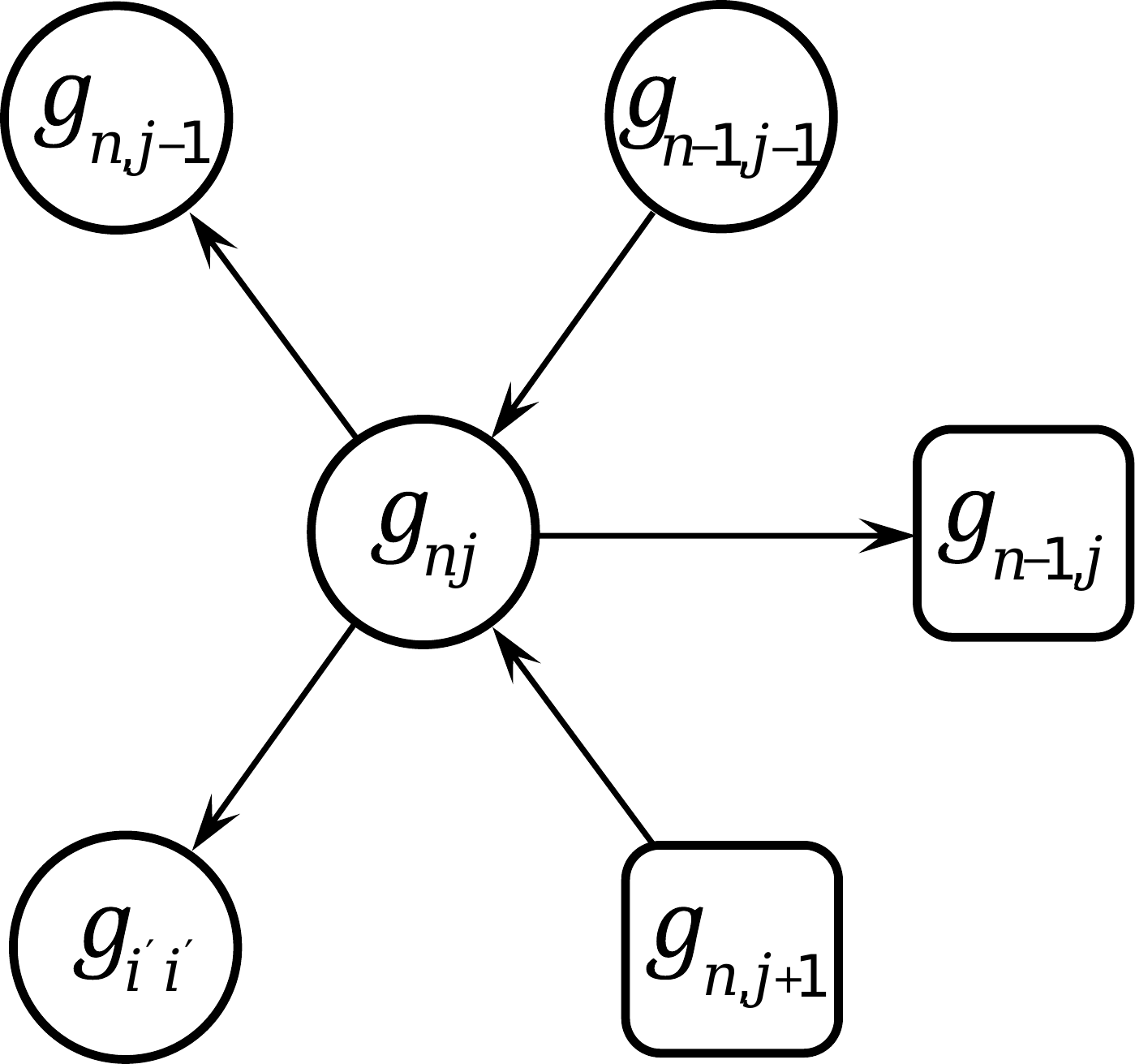}
 \end{center}
 \subcaption{Case $j-1\in\Gamma_2$, $j \notin \Gamma_2$.}
 \label{f:gnbd_gnj_2}
 \end{subfigure}
 \begin{subfigure}[t]{3in}
\vspace{4mm}
 \begin{center}
 \includegraphics[scale=0.2]{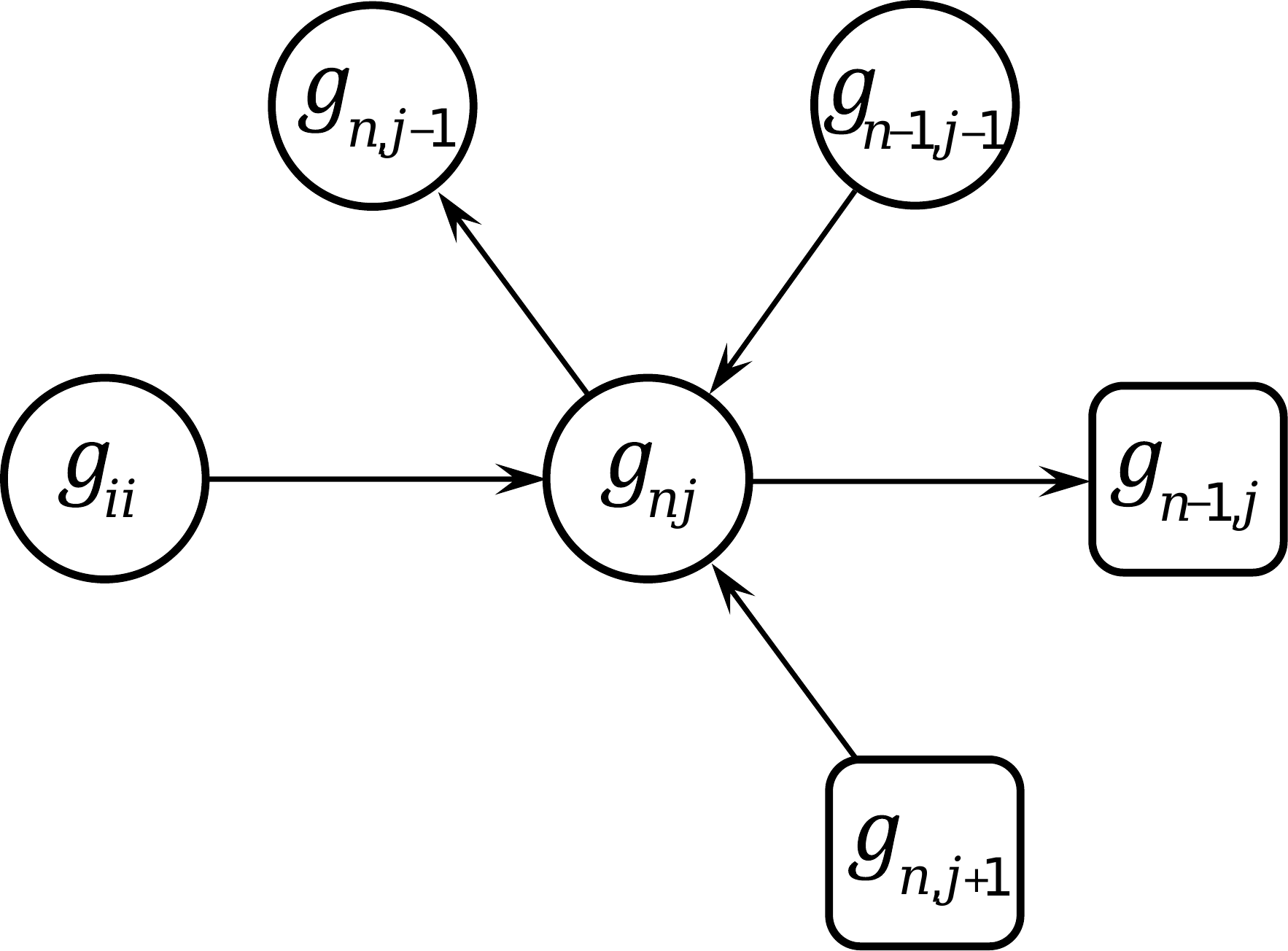}
 \end{center}
 \subcaption{Case $j-1,j\in\Gamma_2$.}
 \label{f:gnbd_gnj_3}
 \end{subfigure}
 \caption{The neighborhood of $g_{nj}$ for $2 \leq j \leq n-1$; here, $i:=\gamma^*(j-1)+1$ if $j-1 \in \Gamma_2$ and $i^\prime:=\gamma^*(j)+1$ if $j \in \Gamma_2$.}
 \label{f:gnbd_gnj}
 \end{figure}

 \begin{figure}[htb]
 \begin{subfigure}[t]{2.6in}
 \begin{center}
 \includegraphics[scale=0.2]{nbds_g/gnbd_phin11}
 \end{center}
 \subcaption{Case $1 \notin \Gamma_2$.}
 \label{f:gnbd_phin11_0}
\end{subfigure}
 \begin{subfigure}[t]{3.5in}
 \begin{center}
 \includegraphics[scale=0.2]{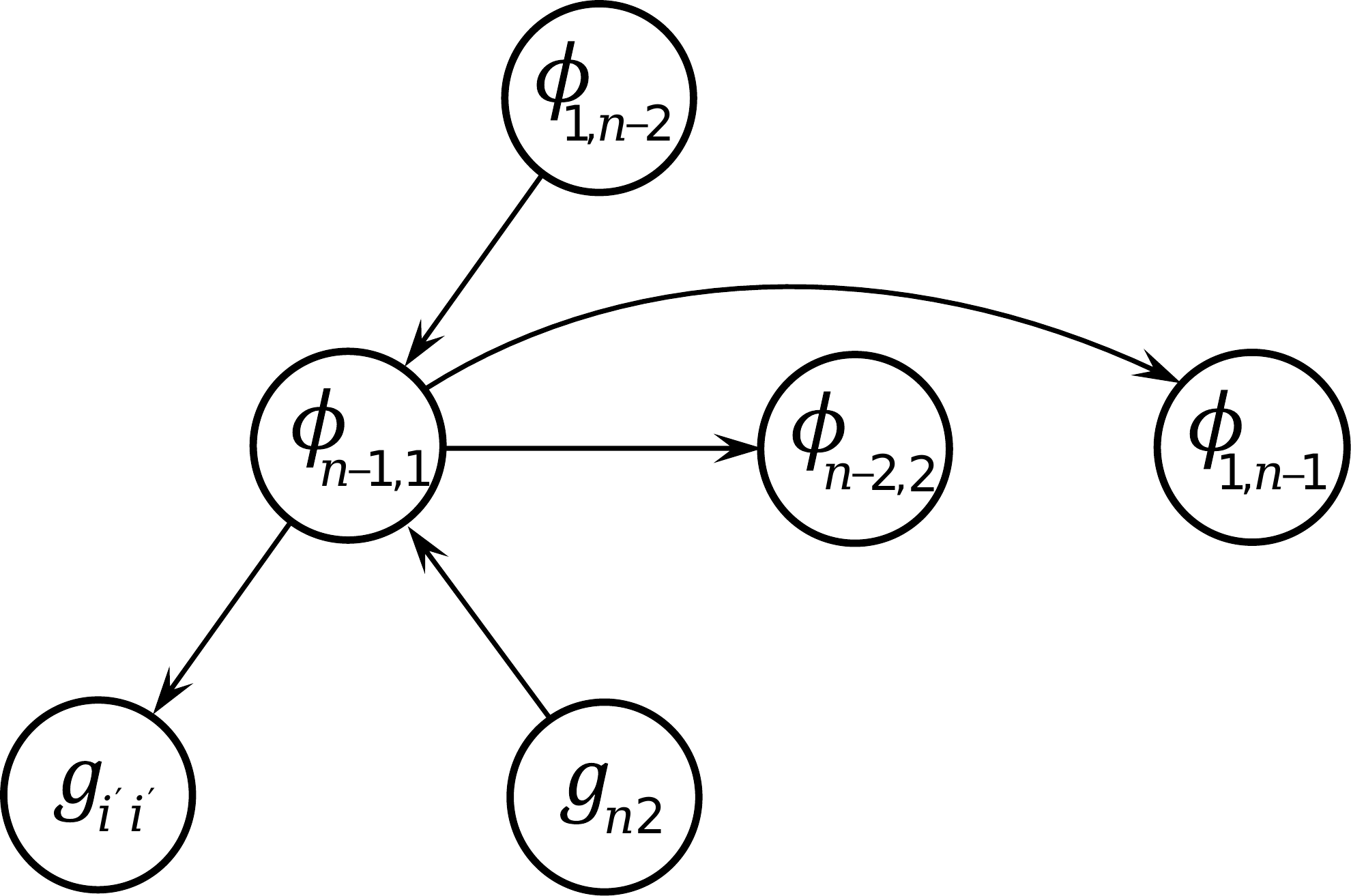}
 \end{center}
 \subcaption{Case $1 \in \Gamma_2$.}
 \label{f:gnbd_phin11_1}
 \end{subfigure}
 \caption{The neighborhood of $\varphi_{n-1,1}$. Here, $i^\prime:=\gamma^*(1)+1$ if $1 \in \Gamma_2$.}
 \label{f:ginbd_phin11}
 \end{figure}

 \clearpage

 \noindent
 \begin{figure}[htb]
 \begin{subfigure}[t]{3in}
 \begin{center}
 \includegraphics[scale=0.2]{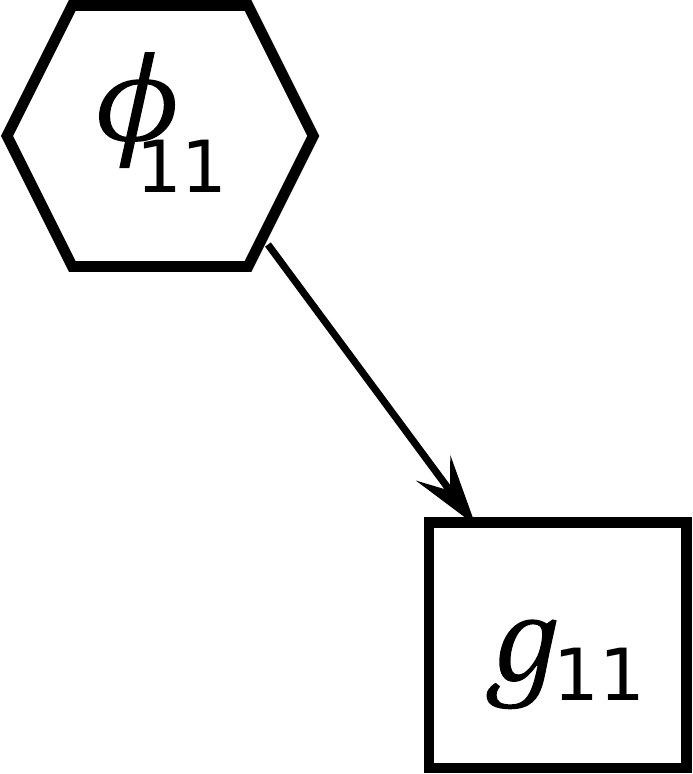} 
 \end{center}
 \subcaption{Case $1 \notin \Gamma_1$.}
 \label{f:gnbd_g11_0}
 \end{subfigure}
 \begin{subfigure}[t]{3in}
 \begin{center}
 \includegraphics[scale=0.2]{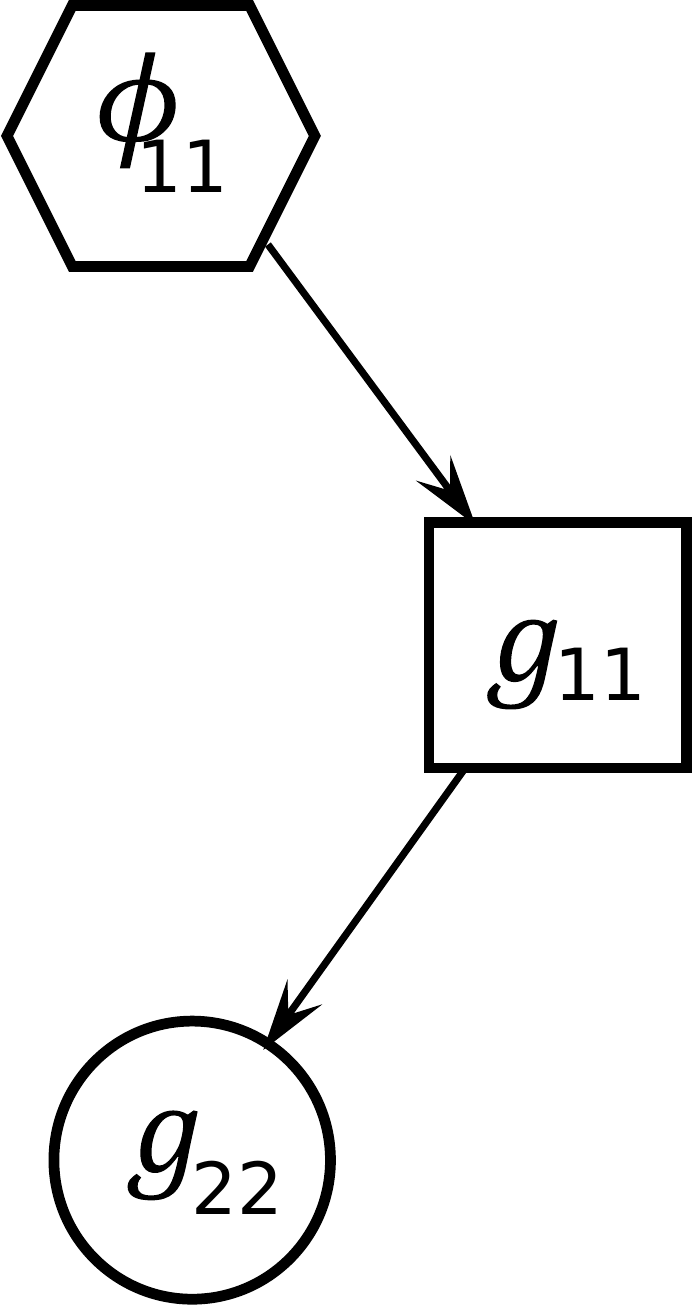}
 \end{center}
 \subcaption{Case $1 \in \Gamma_1$.}
 \label{f:gnbd_g11_1}
 \end{subfigure}
 \caption{The neighborhood of $g_{11}$.}
 \label{f:gnbd_g11}
 \end{figure}

\section{Determinantal identities}\label{s:aidentminors}
In this appendix, we list various determinantal identities that are used throughout the text. 

\paragraph{Gauss decomposition.} Let $U \in \GL_n$ be a generic element. Decompose $U$ as $U = U_{\oplus} U_-$ where $U_\oplus$ is an upper-triangular and $U_-$ is a unipotent lower-triangular matrix. For $1 \leq j \leq i \leq n$, the entries of $U_{\oplus}$, $U_{-}$ and their inverses are given by the well-known formulas
(see, e.g. \cite{gantmacher}).
\begin{align*}
&[U_{\oplus}]_{ji} = \frac{\det U^{[i,n]}_{\{j\}\cup[i+1,n]}}{\det U^{[i+1,n]}_{[i+1,n]}}, & &[U_-]_{ij} = \frac{ \det U^{\{j\}\cup [i+1,n]}_{[i,n]}}{\det U^{[i,n]}_{[i,n]}}, \\
&[\sinv{U_{\oplus}}]_{ji} = (-1)^{j+i} \frac{\det U^{[j+1,n]}_{[j,n]\setminus \{i\}}}{\det U^{[j,n]}_{[j,n]}}, & &[\sinv{U_-}]_{ij} = (-1)^{i+j} \frac{\det U^{[j,n]\setminus \{i\}}_{[j+1,n]}}{\det U^{[j+1,n]}_{[j+1,n]}}.
\end{align*}

The next two identities are needed in the derivation of formulas for minors of $\tilde{\gamma}^*(U_-)$. They are used in Section~\ref{s:bsimpl} for deriving a more explicit formula for birational quasi-isomorphisms, as well as in Sections~\ref{s:p_prelim}-\ref{s:p_explicit} for deriving explicit formulas for the $h$-variables.

\begin{lemma}\label{l:min_ump}
Let $U \in \GL_n$ be a generic element, $[a,b] \subseteq [1,n]$, $J\subseteq [1,n]$ be nontrivial subintervals such that $\max J \leq b$ and $|J|=|[a,b]|$. Then
\begin{equation}\label{eq:min_ump}
     \det [U_-]_{[a,b]}^{J} = \frac{\det U^{J\cup [b+1,n]}_{[a,n]} }{\det U^{[a,n]}_{[a,n]}}.
\end{equation}
\end{lemma}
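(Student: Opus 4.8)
\textbf{Proof plan for Lemma~\ref{l:min_ump}.} The statement concerns the minor $\det[U_-]_{[a,b]}^J$ where $U = U_\oplus U_-$ is the Gauss decomposition and $J$ is a subinterval with $\max J \le b$ and $|J| = |[a,b]|$. The plan is to reduce the claim to the already-listed entrywise formula for $U_-$ in~\eqref{eq:min_ident}, namely $[U_-]_{ij} = \det U^{\{j\}\cup[i+1,n]}_{[i,n]}/\det U^{[i,n]}_{[i,n]}$, together with a block/triangularity argument. First I would observe that since $U_-$ is unipotent lower triangular, the rows indexed by $[a,b]$ of $U_-$ are supported on columns $[1,b]$, and more precisely $[U_-]_{ij}=0$ for $j>i$ while $[U_-]_{ii}=1$; hence the minor $\det[U_-]_{[a,b]}^J$ with $\max J\le b$ is unchanged if we replace $U_-$ by the truncation that keeps only columns $\le b$ and rows $\ge a$. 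The natural route is to write $U^{J\cup[b+1,n]}_{[a,n]}$ in block form with respect to the column partition $J\sqcup[b+1,n]$ and the row partition $[a,b]\sqcup[b+1,n]$, and to apply the Gauss decomposition $U=U_\oplus U_-$ restricted to the relevant rows/columns.

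The key computational step is a Cauchy--Binet / Schur-complement manipulation. Writing $U = U_\oplus U_-$ and extracting the submatrix $U^{J\cup[b+1,n]}_{[a,n]}$, I would use that $U_\oplus$ is upper triangular to see that the rows $[b+1,n]$ of $U$ only involve rows $[b+1,n]$ of $U_-$ (up to the diagonal block of $U_\oplus$), so that $\det U^{[b+1,n]}_{[b+1,n]} = \det [U_\oplus]^{[b+1,n]}_{[b+1,n]}\cdot 1$ and similarly the full minor $\det U^{[a,n]}_{[a,n]}$ factors as $\det[U_\oplus]^{[a,n]}_{[a,n]}$. Then, applying the Cauchy--Binet formula to $U^{J\cup[b+1,n]}_{[a,n]} = \big(U_\oplus\big)^{[a,n]}_{[a,n]}\cdot \big(U_-\big)^{J\cup[b+1,n]}_{[a,n]}$ and using triangularity to kill all but one term in the sum, the right-hand side of~\eqref{eq:min_ump} collapses to $\det[U_-]^{J\cup[b+1,n]}_{[a,n]}$, and a final triangularity observation ($[U_-]^{[b+1,n]}_{[b+1,n]}$ is unipotent, and the block $[U_-]^{J}_{[b+1,n]}=0$ since $\max J\le b < b+1\le$ the row indices) reduces this to $\det[U_-]^J_{[a,b]}$, as desired. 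This is the identity listed just below as the displayed equation~\eqref{eq:min_ump}.

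The main obstacle I anticipate is bookkeeping the index ranges and the sign: one must check carefully that the column set $J\cup[b+1,n]$ has the right cardinality $n-a+1$ (this uses $|J|=|[a,b]|=b-a+1$), that $J\subseteq[1,b]$ so the two column blocks are disjoint, and that in the Cauchy--Binet expansion the only surviving index choice is the ``diagonal'' one forced by the triangular shapes of $U_\oplus$ and $U_-$ — no spurious terms and no sign from reordering columns, precisely because $J$ occupies the leftmost positions and $[b+1,n]$ the rightmost. Once the triangular vanishing pattern is pinned down, the rest is a one-line Schur-complement computation. An alternative, perhaps cleaner, route that I would keep in reserve is a direct induction on $b-a$: expand $\det[U_-]^J_{[a,b]}$ along its first row using the entrywise formula~\eqref{eq:min_ident} for $[U_-]_{aj}$ and recognize the cofactors via the inductive hypothesis, then resum using a Desnanot--Jacobi-type identity (Proposition~\ref{p:dj22}) to land on the claimed ratio of minors of $U$.
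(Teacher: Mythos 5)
Your proof is correct and takes essentially the same route as the paper, which simply states that the identity follows from applying the Cauchy--Binet formula to $U_\oplus U_-$; your write-up is a fleshed-out version of exactly that, with the triangular vanishing pattern and the unipotent normalization $\det[U_-]^{[b+1,n]}_{[b+1,n]}=1$ made explicit. One minor redundancy worth tightening: once you note that rows $[a,n]$ of the upper triangular $U_\oplus$ are supported on columns $[a,n]$, you already have the genuine matrix factorization $U^{J\cup[b+1,n]}_{[a,n]} = [U_\oplus]^{[a,n]}_{[a,n]}\,[U_-]^{J\cup[b+1,n]}_{[a,n]}$ of square matrices, so the determinant factors directly without invoking a Cauchy--Binet sum and killing terms — either phrasing is fine, but not both at once.
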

\begin{proof}
The identity follows from an application of the Cauchy--Binet formula to $U_\oplus U_-$.
\end{proof}

\begin{lemma}\label{l:min_umn}
Let us set $\Delta := [p+1,p+k] \subseteq [1,n]$, $I:=[a,b] \subseteq \Delta$ with $b := p+k$, $J \subseteq \Delta$ a subset such that $|J| = |[a,b]|$. Let $\hat{I}:=[\hat{a},\hat{b}]$ and $\hat{J}$ be the subintervals of $[1,k]$ obtained as images of $I$ and $J$ under the unique increasing bijection $\Delta \rightarrow [1,k]$. Set $w_0 : \Delta \rightarrow \Delta$ and $\hat{w}_0 :[1,k]\rightarrow[1,k]$ to be the unique decreasing bijections, and let $W_0 := \sum_{i=1}^{k}(-1)^{i-1}e_{k-i+1,i}$. For a generic $U \in \GL_n$, the following identity holds:
\begin{equation}\label{eq:min_umn}
\det \left[W_0 [(U_-)^{-T}]_{\Delta}^{\Delta} W_0^{-1}\right]_{\hat{I}}^{\hat{J}} = \frac{\det U^{w_0(\Delta \setminus J) \cup [b+1,n]}_{[w_0(a) +1,n]}}{\det U^{[w_0(a) +1,n]}_{[w_0(a) +1,n]}}.
\end{equation}
\end{lemma}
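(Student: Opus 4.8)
\textbf{Proof plan for Lemma~\ref{l:min_umn}.}

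The plan is to reduce the identity to an application of the Jacobi complementary‑minor formula together with Lemma~\ref{l:min_ump}. First I would unwind the left‑hand side. Writing $A := [(U_-)^{-T}]_{\Delta}^{\Delta}$, the conjugation by $W_0$ is an anti‑transpose: $(W_0 A W_0^{-1})_{\hat i\hat j} = (-1)^{\hat i+\hat j}A_{\hat w_0(\hat j),\hat w_0(\hat i)}$, so taking the minor on rows $\hat I$ and columns $\hat J$ turns into the minor of $A$ on rows $\hat w_0(\hat J)$ and columns $\hat w_0(\hat I)$ up to an explicit sign. Since $\hat I=[\hat a,\hat b]$ is an interval ending at $k$ (because $b=p+k$), the set $\hat w_0(\hat I)=[1,\hat b-\hat a+1]$ is an interval starting at $1$; correspondingly $w_0(I)$ is the interval $[p+1,\,w_0(a)]$ inside $\Delta$. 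Thus the left‑hand side is, up to sign, a minor of $(U_-)^{-T}$, i.e. a minor of $(U_-)^{-1}$ with rows and columns swapped; transposing back, it becomes $\pm\det[(U_-)^{-1}]^{w_0(I)}_{w_0(J)}$, a minor of the inverse unipotent lower‑triangular matrix whose column‑set is the initial interval $[p+1,w_0(a)]$.

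Next I would invoke the standard Jacobi identity relating minors of $U_-^{-1}$ to complementary minors of $U_-$: for a generic matrix $M$ with $\det M=1$, $\det [M^{-1}]^{S}_{T}=\pm\det M^{[1,n]\setminus T}_{[1,n]\setminus S}$, with the sign determined by $\sum_{S}+\sum_{T}$. Applying this with $M=U_-$ (recall $\det U_-=1$) converts our minor into $\pm\det [U_-]^{[1,n]\setminus w_0(J)}_{[1,n]\setminus w_0(I)}$. Because $U_-$ is unipotent lower triangular and $[1,n]\setminus w_0(I)=[w_0(a)+1,n]$ (an interval, since $w_0(I)$ started at $p+1$ and all indices below $p+1$ lie in the complement but are "above" the block and force triviality — more precisely the complementary row‑interval is exactly $[w_0(a)+1,n]$ after discarding the rows that would make the minor vanish for triangularity reasons), the surviving minor has row‑set $[w_0(a)+1,n]$ and column‑set $([1,n]\setminus w_0(J))$, which I would rewrite as $w_0(\Delta\setminus J)\cup[b+1,n]$ using $\Delta=[p+1,p+k]$, $b=p+k$ and the fact that $w_0(\Delta\setminus J)$ exhausts $[p+1,n]\setminus w_0(J)$ below $n$ in the relevant range. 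At this stage the minor of the unipotent lower‑triangular matrix $U_-$ matches exactly the shape in Lemma~\ref{l:min_ump} (an interval row‑set, a column‑set whose maximal element is $\le n$), so that lemma rewrites it as the quotient $\det U^{w_0(\Delta\setminus J)\cup[b+1,n]}_{[w_0(a)+1,n]}/\det U^{[w_0(a)+1,n]}_{[w_0(a)+1,n]}$ appearing on the right.

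The remaining task, and the one I expect to be the genuine obstacle, is bookkeeping: pinning down every sign produced by the three successive operations (anti‑transpose conjugation by $W_0$, transposition, and the Jacobi complementary‑minor formula) and checking that they cancel so that no sign survives in~\eqref{eq:min_umn}. Each step contributes a factor of the form $(-1)^{\binom{\ell}{2}}$ or $(-1)^{\sum(\text{row indices})+\sum(\text{column indices})}$, and the cleading $(-1)^{i-1}$'s baked into $W_0$ and $\hat w_0$ interact with the parity of $w_0(a)$ and $|\Delta|=k$. I would handle this by first verifying the identity for $k=1$ and for one small genuinely nontrivial case (say $k=2$, $I=J=\Delta$), extracting the sign pattern, and then carrying the general parity computation symbolically, tracking $\hat a,\hat b,w_0(a)$ and $k$. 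The only subtlety beyond signs is making sure the "vanishing rows" of the complementary minor of a triangular matrix are correctly identified so that the row‑set really collapses to the clean interval $[w_0(a)+1,n]$; this is a consequence of the block shape of $U_-$ and requires only that the indices in $[1,p]$ contribute zero rows, which follows from lower‑triangularity. Once the sign is confirmed to be $+1$, the identity follows.
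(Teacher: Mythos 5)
Your plan is correct and takes a genuinely different route from the paper. The paper begins by substituting $(U_-)^{-T}=U_\oplus^TU^{-T}$ (from $U=U_\oplus U_-$), strips the $U_\oplus^T$ factor using the triangular block structure, and only then applies the Jacobi complementary-minor identity to $U^{-T}$ with denominator $\det U$; the minors of $U$ appear at the very end. You instead stay inside the unipotent world: after the $W_0$-conjugation and the transpose, you apply Jacobi directly to $U_-^{-1}$ (exploiting $\det U_-=1$ so the denominator disappears), trim the resulting complementary minor by block-lower-triangularity, and then reuse Lemma~\ref{l:min_ump} to convert the surviving minor of $U_-$ into the advertised ratio of minors of $U$. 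This buys you modularity — Lemma~\ref{l:min_ump} does the heavy lifting and the remaining work is only index/sign bookkeeping, which the paper itself also postpones to a parity check — at the cost of two consecutive Jacobi-type reductions rather than one.

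Two imprecisions in the write-up are worth fixing. First, $W_0AW_0^{-1}$ is a genuine conjugation by a signed permutation, so $(W_0AW_0^{-1})_{\hat i\hat j}=(-1)^{\hat i+\hat j}A_{\hat w_0(\hat i),\hat w_0(\hat j)}$ — no index swap; the swap (an anti-transpose) would arise from $W_0A^{T}W_0^{-1}$. You wrote the swapped version, and then the subsequent transpose "swaps again", so the composite you end up with is correct, but the intermediate identity as stated is not. Second, the collapse from row/column sets $[1,p]\cup[w_0(a)+1,n]$ and $[1,p]\cup w_0(\Delta\setminus J)\cup[b+1,n]$ to $[w_0(a)+1,n]$ and $w_0(\Delta\setminus J)\cup[b+1,n]$ is not because any rows "vanish"; rather the submatrix is block lower triangular with top-left block $[U_-]^{[1,p]}_{[1,p]}$, which is unipotent, so those rows and columns together contribute a determinant factor of exactly $1$. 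With those two points repaired and the signs traced explicitly (as in the paper's $\varepsilon+\varepsilon'\equiv 0\bmod 2$ computation), the argument is complete.
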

\begin{proof}
It follows from the Cauchy--Binet formula that
\[\begin{split}
\det [W_0&[(U_-)^{-T}]_{\Delta}^{\Delta}W_0^{-1}]_{\hat{I}}^{\hat{J}} = (-1)^{\varepsilon} \det\left[ [U^T_\oplus U^{-T}]_{\Delta}^{\Delta} \right]_{[1,\hat{w_0}(\hat{a})]}^{\hat{w_0}(\hat{J})} = (-1)^{\varepsilon} \det [U^T_\oplus U^{-T}]_{[p+1,w_0(a)]}^{w_0(J)} = \\ = & (-1)^\varepsilon \det [U^T_\oplus U^{-T}]_{[1,w_0(a)]}^{[1,p]\cup w_0(J)} = (-1)^{\varepsilon} \frac{\det U}{\det U^{[w_0(a) +1,n]}_{[w_0(a) + 1,n]}} \det [U^{-T}]_{[1,w_0(a)]}^{[1,p]\cup w_0(J)} = \\ = &(-1)^{\varepsilon + \varepsilon^\prime} \frac{\det U^{w_0(\Delta \setminus J) \cup [b+1,n]}_{[w_0(a) +1,n]}}{\det U^{[w_0(a) +1,n]}_{[w_0(a) +1,n]}}.
\end{split}
\]
where we introduced
\[\begin{array}{l}
\varepsilon := \sum_{j \in \hat{w}_0(\hat{J})}(j-1) + \sum_{j \in \hat{w}_0(\hat{I})}(j-1),\\ \varepsilon^\prime := \sum_{i= w_0(a)+1}^{n} i + \sum_{i \in w_0(\Delta\setminus J)} i + \sum_{i=p+k+1}^n i.
\end{array}
\]
A simple computation shows that $\varepsilon + \varepsilon^\prime \equiv 0 \mod 2$. Thus the formula holds.
\end{proof}

\paragraph{Desnanot--Jacobi identity.} The following identity is used in Section~\ref{s:completeness}.

\begin{proposition}\label{p:dj22}
Let $A$ be an $m\times m$ matrix with entries in an arbitrary field. If $1 \leq i < j\leq m$ and $1\leq k < l\leq m$, then the following identity holds:
\[
\det A \det A^{\hat{i}\hat{j}}_{\hat{k}\hat{l}} + \det A_{\hat{l}}^{\hat{i}} \det A_{\hat{k}}^{\hat{j}} = \det A^{\hat{i}}_{\hat{k}} \det A_{\hat{l}}^{\hat{j}},
\]
where the hatted upper (lower) indices indicate that the corresponding column (row) is removed.
\end{proposition}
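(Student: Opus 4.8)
The final statement is the classical \emph{Desnanot--Jacobi} (Dodgson condensation) identity, and the plan is to derive it from Jacobi's theorem on $2\times 2$ minors of the adjugate, which keeps the sign bookkeeping under control; alternatively one could simply cite a standard reference such as \cite{gantmacher}. First I would reduce to a universal statement: both sides of the asserted equality are polynomials with integer coefficients in the entries of $A$, so it suffices to prove it for the generic matrix $A=(a_{st})_{s,t=1}^m$ over $\mathbb{Z}[a_{st}]$, and the case of an arbitrary field follows by specialization. Over the fraction field $\Frac\,\mathbb{Z}[a_{st}]$ the matrix $A$ is invertible and all of its minors are nonzero, which is the setting in which I would carry out the computation.

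The key intermediate fact is an instance of Jacobi's theorem. Let $C:=\mathrm{adj}(A)$, so that $AC=(\det A)I$ and $C_{st}=(-1)^{s+t}$ times the minor of $A$ obtained by deleting row $t$ and column $s$, i.e.\ $C_{st}=(-1)^{s+t}\det A^{\hat{s}}_{\hat{t}}$. Then for $i<j$ and $k<l$,
\begin{equation}\label{eq:adj_jacobi}
\det\begin{pmatrix} C_{ik} & C_{il} \\ C_{jk} & C_{jl}\end{pmatrix}=(-1)^{i+j+k+l}\,\det A\cdot\det A^{\hat{i}\hat{j}}_{\hat{k}\hat{l}}.
\end{equation}
To prove \eqref{eq:adj_jacobi}, I would use $C=(\det A)A^{-1}$ over the fraction field, so that the left-hand side equals $(\det A)^2$ times the $2\times 2$ minor of $A^{-1}$ on rows $\{i,j\}$ and columns $\{k,l\}$; by the standard formula for minors of the inverse (obtained from $A^{-1}=\mathrm{adj}(A)/\det A$, or by moving the chosen rows and columns to the leading position and evaluating a Schur complement), that minor equals $(-1)^{i+j+k+l}(\det A)^{-1}\det A^{\hat{i}\hat{j}}_{\hat{k}\hat{l}}$, the exponent of $-1$ being exactly the permutation sign. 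Multiplying by $(\det A)^2$ gives \eqref{eq:adj_jacobi}. (This is the $r=2$ case of Jacobi's identity $\det\bigl(\mathrm{adj}(A)[\,R,S\,]\bigr)=(\det A)^{|R|-1}(-1)^{\sum R+\sum S}\det\bigl(A[\,[m]\setminus S,\,[m]\setminus R\,]\bigr)$.)

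Finally I would expand the left-hand side of \eqref{eq:adj_jacobi} as $C_{ik}C_{jl}-C_{il}C_{jk}$ and substitute $C_{ik}=(-1)^{i+k}\det A^{\hat{i}}_{\hat{k}}$ and the three analogous expressions; both products pick up the common sign $(-1)^{i+j+k+l}$, so the left-hand side equals
\begin{equation*}
(-1)^{i+j+k+l}\Bigl(\det A^{\hat{i}}_{\hat{k}}\det A^{\hat{j}}_{\hat{l}}-\det A^{\hat{i}}_{\hat{l}}\det A^{\hat{j}}_{\hat{k}}\Bigr).
\end{equation*}
Comparing with \eqref{eq:adj_jacobi} and cancelling $(-1)^{i+j+k+l}$ yields $\det A^{\hat{i}}_{\hat{k}}\det A^{\hat{j}}_{\hat{l}}-\det A^{\hat{i}}_{\hat{l}}\det A^{\hat{j}}_{\hat{k}}=\det A\cdot\det A^{\hat{i}\hat{j}}_{\hat{k}\hat{l}}$, which rearranges to the claimed identity $\det A\,\det A^{\hat{i}\hat{j}}_{\hat{k}\hat{l}}+\det A^{\hat{i}}_{\hat{l}}\det A^{\hat{j}}_{\hat{k}}=\det A^{\hat{i}}_{\hat{k}}\det A^{\hat{j}}_{\hat{l}}$. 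This holds over the fraction field, hence over $\mathbb{Z}[a_{st}]$, hence over any field. There is no substantive obstacle here — the whole argument is classical determinant bookkeeping — and the only point requiring care is the sign in the minor-of-the-inverse formula of the middle step; since the same permutations of rows and columns act on both sides of \eqref{eq:adj_jacobi}, those signs are forced to agree.
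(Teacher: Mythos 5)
Your proof is correct. The paper itself offers no argument for Proposition~\ref{p:dj22}: it is stated in Appendix~\ref{s:aidentminors} as a classical fact (the Desnanot--Jacobi identity) and simply used in Section~\ref{s:completeness}, so there is nothing to compare step by step. Your route — reduce to the generic matrix over $\mathbb{Z}[a_{st}]$, apply the $2\times 2$ case of Jacobi's theorem on minors of the adjugate via $C=(\det A)A^{-1}$, and expand $C_{ik}C_{jl}-C_{il}C_{jk}$ with the cofactor signs — is a standard and complete derivation, and your sign bookkeeping (the common factor $(-1)^{i+j+k+l}$ on both sides) is consistent with the paper's convention that upper hatted indices delete columns and lower ones delete rows. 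Citing \cite{gantmacher} or any classical reference, as the paper implicitly does, would also have sufficed.
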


\paragraph{Derivatives of flag minors.} For $1 \leq i \leq j \leq n$, let us set
\begin{equation*}
    \hat{h}_{ij}(U):= \det U_{[i,n-j+i]}^{[j,n]}, \ \ U \in \GL_n.
\end{equation*}
In our conventions, $\nabla_U^R f = U\cdot \nabla_U f$ and $\nabla_U^L f = \nabla_U f \cdot U$, $U \in \GL_n$, where $\nabla_U f$ is given by formula~\eqref{eq:glngrad}. The following identities are used in Sections~\ref{s:toric}-\ref{s:compatibility}:

\begin{lemma}\label{l:grrhijks}
    The following formula holds:
    \begin{equation}\label{eq:grrhijks}
    \langle \pi_{>} \nabla_U^R \log\hat{h}_{ij}, \nabla_U^R \log \hat{h}_{ks}\rangle = \max \{ \min\{i-k,n-s+1\}-\max\{j-s,0\},0\}.
    \end{equation}
\end{lemma}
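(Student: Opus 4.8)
\textbf{Proof plan for Lemma~\ref{l:grrhijks}.}
The plan is to compute the left gradient (i.e. the gradient in the left trivialization) of $\log \hat{h}_{ij}$ explicitly, project onto $\mathfrak{n}_+$, and then pair. First I would record that for the flag minor $\hat{h}_{ij}(U) = \det U_{[i,n-j+i]}^{[j,n]}$ the partial derivatives are cofactors of $U$, so by Cramer's rule and the formula $\nabla_U^R f = U \cdot \nabla_U f$ one gets a clean description: $\nabla_U^R \log \hat{h}_{ij}$ is (up to conjugation by a Borel element coming from the Gauss decomposition) the diagonal idempotent $\sum_{k=i}^{n-j+i} e_{kk}$ plus strictly lower-triangular terms, because $\hat{h}_{ij}$ is invariant under $U \mapsto U N_-$ for $N_- \in \mathcal{N}_-$. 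More precisely, I expect $\nabla_U^R \log \hat{h}_{ij}$ to lie in $\mathfrak{b}_-$ modulo the piece supported on the index set $[i, n-j+i]$; the upper-triangular projection $\pi_{>}(\nabla_U^R \log \hat{h}_{ij})$ is therefore supported only on root spaces $e_{ab}$ with $a < b$, $a \in [i,n-j+i]$, $b \notin [i, n-j+i]$, and—crucially—$b > n-j+i$, i.e. the nonzero positive-root part involves columns that have been ``pushed out'' of the window $[i,n-j+i]$ by the shift. This is the content one extracts from the cofactor expansion combined with the $\mathcal{N}_-$-invariance.

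Next I would pair two such projections. Writing the window for $\hat{h}_{ij}$ as $R_1 := [i, n-j+i]$ and for $\hat{h}_{ks}$ as $R_2 := [k, n-s+k]$, the trace pairing $\langle \pi_{>} \nabla^R_U \log \hat{h}_{ij}, \nabla^R_U \log\hat{h}_{ks}\rangle$ picks out the overlap of the ``active'' off-diagonal cells. Because $\pi_{>}\nabla^R_U \log \hat h_{ij}$ is supported on pairs $(a,b)$ with $a\in R_1$, $b > n-j+i$ and $b \in$ (columns active for the transpose side), and the second factor $\nabla^R_U \log\hat h_{ks}$ contributes its lower-triangular / diagonal entries, the trace collapses to a count of lattice points. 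I would carry out this bookkeeping carefully: the number of common active cells equals the number of row indices $a$ with $a \in [k, i-1]$ (those rows where window $R_1$ starts later than $R_2$, giving a genuine strictly-upper contribution) intersected with the column constraint $b \le n$ and $b \ge s$ that survives from the second minor, which produces exactly $\min\{i-k, n-s+1\} - \max\{j-s, 0\}$, truncated below at $0$ by the $\max\{\cdot, 0\}$. The three quantities $i-k$, $n-s+1$, and $j-s$ correspond respectively to: how far the start of $R_1$ lies below the start of $R_2$; the number of columns $\ge s$ up to $n$; and the overlap deficit coming from the ``out-of-window'' column index of $\hat h_{ij}$ needing to exceed the terminal column $n-s+k$ of $\hat h_{ks}$.

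The cleanest way to make the off-diagonal computation rigorous is to reduce to the unipotent case: since both $\log \hat h_{ij}$ and $\log \hat h_{ks}$ are right-$\mathcal{N}_-$-invariant, the pairing $\langle \pi_{>}\nabla^R_U(\cdot), \nabla^R_U(\cdot)\rangle$ is itself right-$\mathcal N_-$-invariant as a function of $U$, and one checks it is also invariant under $U \mapsto B_+ U$ for $B_+ \in \mathcal B_+$ up to terms that die under $\pi_{>}$ applied to the first slot; hence it is constant on $\mathcal B_+ \backslash \GL_n / \mathcal N_-$, which is a point generically. So it suffices to evaluate at $U = w_0$ (the antidiagonal permutation) or at a single generic representative, where $\mathcal F(U)=U$ trivially, the flag minors become monomials in the coordinates of a standard representative, and the gradients are computed by hand. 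I would then just verify the closed formula~\eqref{eq:grrhijks} on that representative.

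\textbf{Main obstacle.} The genuinely delicate step is the sign/support analysis of $\pi_{>}(\nabla^R_U \log \hat h_{ij})$: one must pin down \emph{exactly} which positive root spaces appear and with what multiplicity-one coefficients, rather than merely bounding the support, since the final answer is an exact integer with three competing piecewise-linear terms. I expect the bookkeeping of the two $\max/\min$ truncations—ensuring both the $\max\{j-s,0\}$ correction and the outer $\max\{\cdot,0\}$ arise naturally from ``the number of valid cells cannot be negative'' and ``columns below $s$ contribute nothing''—to require the most care; once the support description is nailed down, the pairing is a finite lattice-point count. An alternative, if the direct gradient computation proves too messy, is to use the known log-canonicity structure of flag minors under $\{\cdot,\cdot\}_{\std}^{\dagger}$ already established in~\cite{double} together with the decomposition~\eqref{eq:bdag} to isolate the $\langle \pi_{>}\nabla^R, \nabla^R\rangle$ summand, but I would prefer the self-contained cofactor computation.
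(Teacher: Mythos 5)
Your overall strategy—compute $\nabla_U^R\hat{h}_{ij}$ by cofactor expansion, identify the strictly upper-triangular support, then evaluate the trace pairing—is exactly the paper's approach. However, there are three substantive errors or gaps in the details.

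First, the support description is wrong. You claim that $\pi_{>}(\nabla_U^R\log\hat{h}_{ij})$ is supported on root vectors $e_{ab}$ with $a\in[i,n-j+i]$ and $b>n-j+i$. In fact, with the paper's convention $(\nabla_U f)_{qr}=\partial f/\partial u_{rq}$ and $\nabla_U^R f = U\nabla_U f$, one finds
\[
(\nabla_U^R \hat{h}_{ij})_{qr}= \pm\det U^{[j,n]}_{\{q\}\cup([i,n-j+i]\setminus\{r\})}\quad\text{if}\quad r\in[i,n-j+i],
\]
and zero for $r\notin[i,n-j+i]$. The strictly upper entries are therefore those with $q<i$ and $r\in[i,n-j+i]$ — rows \emph{below} the window, columns \emph{inside} it — precisely the opposite of your description. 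Relatedly, the right $\mathcal{N}_-$-invariance $\hat{h}_{ij}(UN_-)=\hat{h}_{ij}(U)$ constrains $\nabla_U^L\hat{h}_{ij}$ (forcing it into $\mathfrak{b}_-$, so that $\pi_{>}\nabla_U^L\hat{h}_{ij}=0$), not $\nabla_U^R\hat{h}_{ij}=\operatorname{Ad}_U\nabla_U^L\hat{h}_{ij}$, which has a genuine upper part. Your own bookkeeping then inconsistently switches to row indices $a\in[k,i-1]$, so the internal logic doesn't cohere.

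Second, the pairing is not a lattice-point count. The entries of $\pi_{>}\nabla_U^R\hat{h}_{ij}$ are flag minors with signs, not $\pm 1$'s, and the trace sum
\[
\sum_{q,r} (-1)^{\cdots}\det U^{[j,n]}_{\{q\}\cup([i,n-j+i]\setminus\{r\})}\det U^{[s,n]}_{([k,n-s+k]\setminus\{q\})\cup\{r\}}
\]
must be collapsed by a long Plücker relation to show it equals a \emph{constant} times $\hat{h}_{ij}\hat{h}_{ks}$; only then does the constant match the piecewise-linear expression $\max\{\min\{i-k,n-s+1\}-\max\{j-s,0\},0\}$. Naively counting index pairs $(q,r)$ in the admissible range does not produce this constant (the number of pairs is a product of interval lengths, not a single $\min/\max$ expression). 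This Plücker step is the missing core of the argument.

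Third, your fallback reduction to a single representative has a gap. You use that the pairing is invariant under $U\mapsto B_+U$ "up to terms that die under $\pi_{>}$," but $\pi_{>}$ does \emph{not} commute with $\operatorname{Ad}_{B_+}$ for general $B_+\in\mathcal{B}_+$; the cross terms in $\pi_{>}\operatorname{Ad}_{B_+}x-\operatorname{Ad}_{B_+}\pi_{>}x$ do not die in the pairing. The quantity is genuinely invariant only under right $\mathcal{N}_-$ and under the torus, whose double coset space in $\GL_n$ is not a point, so evaluation at one representative is not justified without establishing the proportionality to $\hat{h}_{ij}\hat{h}_{ks}$ first — which brings you back to the Plücker computation.
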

\begin{proof}
    It is a matter of a direct computation that
    \[
        (\nabla^R_U \hat{h}_{ij})_{qr} = \begin{cases}
        (-1)^{r-i}\det U^{[j,n]}_{\{q\}\cup([i,n-j+i]\setminus\{r\})}  &\text{if} \ q < i \ \text{and}\ r \in[i,n-j+i];\\
        (-1)^{n-j+i-r} \det U^{[j,n]}_{([i,n-j+i]\setminus\{r\})\cup\{q\}}  &\text{if} \ q > n-j+i\ \text{and} \ r \in[i,n-j+i]; \\
        \hat{h}_{ij} \ &\text{if} \ q=r\  \text{and} \ r \in[i,n-j+i];\\
        0 \ &\text{if} \ r \notin [i,n-j+i] \ \text{or} \ q \in [i,n-j+i]\setminus\{r\}.
        \end{cases}
    \]
    We see that
    \[
        \begin{split}
            \langle \pi_{>}\nabla_U^R \hat{h}_{ij},&\nabla^R_U \hat{h}_{ks}\rangle =\\ = &\sum_{\substack{(q,r):\, q\in [k,\min\{i-1,n-s+k\}] \\ r \in [\max\{i,n-s+k+1\},n-j+i]}} (-1)^{n-s+k-q+r-i} \det U^{[j,n]}_{\{q\}\cup([i,n-j+i]\setminus\{r\})} \det U^{[s,n]}_{([k,n-s+k]\setminus\{q\})\cup\{r\}}.
        \end{split}
    \]
    Now, the above sum simplifies via a long Pl\"ucker relation and is proportional to the product $\hat{h}_{ij}\hat{h}_{ks}$. A case-by-case analysis shows that the proportionality constant is given by~\eqref{eq:grrhijks}.
\end{proof}

\begin{lemma}\label{l:hijlrd}
The following identities hold:
\begin{equation*}
\pi_0\left( \nabla_U^R\log \hat{h}_{ij} \right)  = \sum_{k=i}^{n-j+i} e_{kk}, \ \ \pi_0\left(\nabla_U^L \log \hat{h}_{ij}\right) = \sum_{k=j}^{n} e_{kk}.
\end{equation*}
\end{lemma}
\begin{proof}
The identities follow from the fact that, given $T = \diag(t_1,\ldots,t_n)$,
\[
\hat{h}_{ij}(TU) = \hat{h}_{ij}(U)\prod_{k=i}^{n-j+i}t_k, \ \ \hat{h}_{ij}(UT) = \hat{h}_{ij}(U)\prod_{k=j}^{n}t_k.\qedhere
\]
\end{proof}


\section{Selected examples}\label{s:exs}
In this appendix, we provide explicit examples of the initial extended seeds of $\gc_h^{\dagger}(\bg,\GL_n)$. More examples (including the $g$-convention) can be found in the supplementary note~\cite{github}.

\subsection{\texorpdfstring{Cremmer--Gervais in $n=3$, $\gamma:1 \mapsto 2$}{Cremmer--Gervais in n=3}}
The initial quiver is illustrated in Figure~\ref{f:ex_n=3}. 
\begin{figure}[htb]
\begin{center}
\includegraphics[scale=0.2]{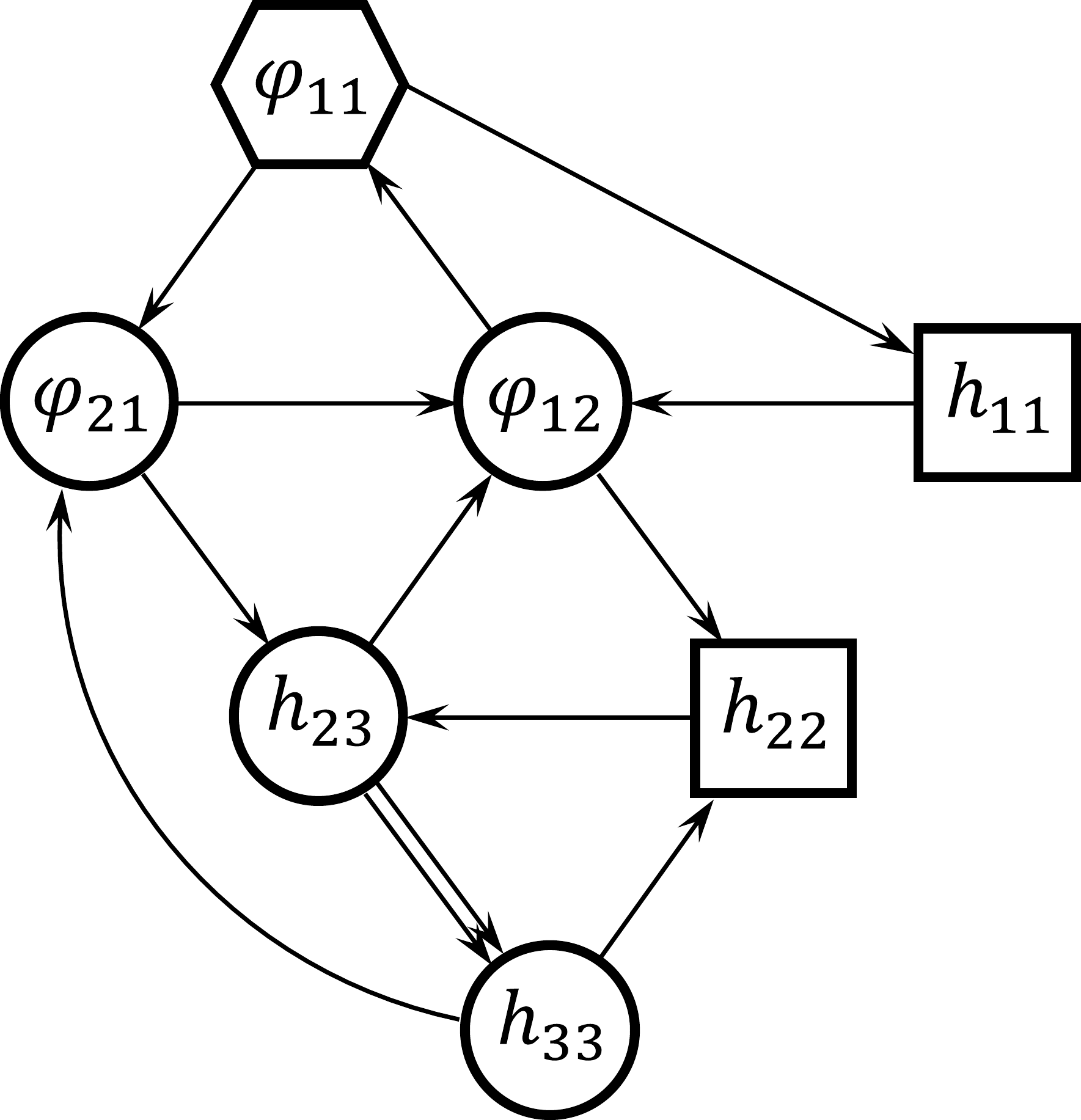}
\end{center}
\caption{The initial quiver for $\gc_h^{\dagger}(\bg,\GL_3)$ with $\gamma:1\mapsto 2$. }
\label{f:ex_n=3}
\end{figure}

The variables in the initial extended cluster are given as follows:
\begin{equation*}
    c_1(U) = \tr(U), \ \ c_2(U) = \frac{1}{2!} (\tr(U)^2 - \tr(U^2));
\end{equation*}
\begin{equation*}
    \varphi_{21}(U) = u_{13}, \ \ \varphi_{12}(U) = \det U^{[2,3]}_{[1,2]}, \ \  \varphi_{11}(U) = u_{23}\det U^{[2,3]}_{[1,2]} + u_{13}\det U^{\{1,3\}}_{[1,2]};
\end{equation*}
\begin{equation*}
    h_{23}(U) = -u_{23}u_{33}-u_{13}u_{32}, \ \ h_{22}(U) = u_{33}\det U^{[2,3]}_{[2,3]} + u_{32}\det U^{[2,3]}_{\{1,3\}};
\end{equation*}
\begin{equation*}
    h_{11}(U) = \det U, \ \ h_{33}(U) = u_{33}.
\end{equation*}

\subsection{\texorpdfstring{Cremmer--Gervais in $n=4$, $\gamma:i \mapsto i-1$}{Cremmer--Gervais in n=4}}

The initial quiver is illustrated in Figure~\ref{f:ex_n=4}. In the initial extended cluster, all cluster and frozen variables are given as in $\gc_h^{\dagger}(\bg_{\std},\GL_4)$ except for the variables $h_{34}$, $h_{33}$, $h_{44}$. These are given by:
\begin{align*}
    &h_{34}(U) = -u_{34}\det U^{[2,4]}_{[2,4]} - u_{24} \det U^{\{1\}\cup[3,4]}_{[2,4]};\\
    &h_{33}(U) = \det U^{[3,4]}_{[3,4]} \det U^{[2,4]}_{[2,4]} + \det U^{[3,4]}_{\{2,4\}} \det U^{\{1\}\cup[3,4]}_{[2,4]} + \det U^{[3,4]}_{[2,3]}\det U^{[1,2]\cup\{4\}}_{[2,4]};
\end{align*}
\begin{equation*}
    \begin{split}
     h_{44}(U) = &u_{44}\left(\det U^{[3,4]}_{[3,4]} \det U^{[2,4]}_{[2,4]} + \det U^{[3,4]}_{\{2,4\}} \det U^{\{1\}\cup[3,4]}_{[2,4]} + \det U^{[3,4]}_{[2,3]}\det U^{[1,2]\cup\{4\}}_{[2,4]}\right) + \\ + &u_{34}\left(\det U^{\{2,4\}}_{[3,4]} \det U^{[2,4]}_{[2,4]} + \det U^{\{2,4\}}_{\{2,4\}} \det U^{\{1\}\cup[3,4]}_{[2,4]} + \det U^{\{2,4\}}_{[2,3]}\det U^{[1,2]\cup\{4\}}_{[2,4]}\right) + \\ + &u_{24}\left(\det U^{\{1,4\}}_{[3,4]} \det U^{[2,4]}_{[2,4]} + \det U^{\{1,4\}}_{\{2,4\}} \det U^{\{1\}\cup[3,4]}_{[2,4]} + \det U^{\{1,4\}}_{[2,3]}\det U^{[1,2]\cup\{4\}}_{[2,4]}\right).
    \end{split}
\end{equation*}

\begin{figure}[htb]
\begin{center}
\includegraphics[scale=0.2]{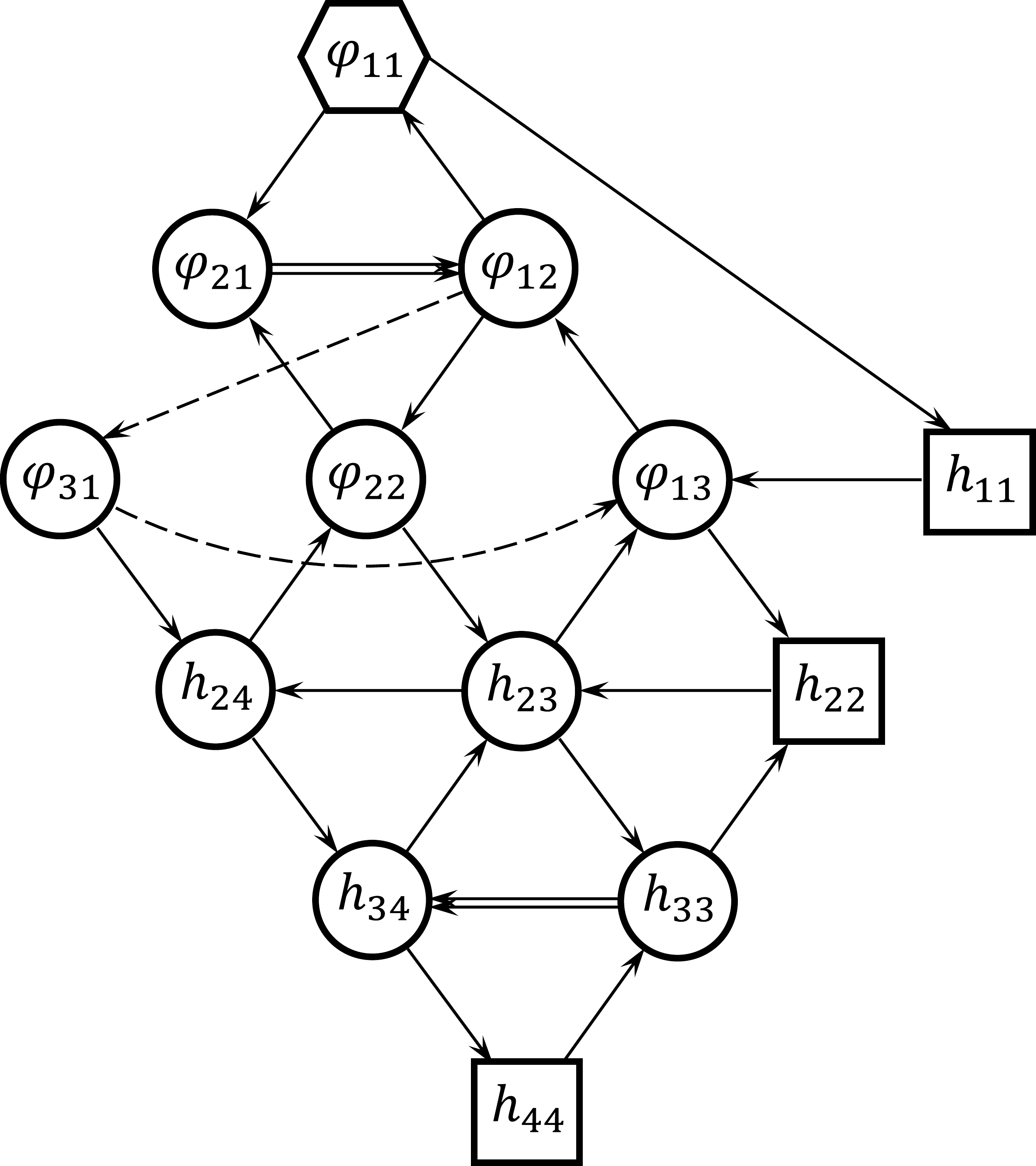}
\end{center}
\caption{The initial quiver for $\gc_h^{\dagger}(\bg,\GL_4)$, $\gamma:i \mapsto i-1$, $i\in\{2,3\}$. }
\label{f:ex_n=4}
\end{figure}

\subsection{\texorpdfstring{Negative orientation in $n=5$, $\gamma:1 \mapsto 4$, $\gamma:2 \mapsto 3$}{Negative orientation in n=5}}

The initial quiver is illustrated in Figure~\ref{f:ex_n=5}. In the initial extended cluster, all cluster and frozen variables are given as in $\gc_h^{\dagger}(\bg_{\std},\GL_5)$ except for the variables $h_{25}$, $h_{24}$, $h_{23}$, $h_{35}$, $h_{34}$, $h_{33}$. These variables are given by the following formulas:
\begin{align*}
&h_{25}(U) = -u_{25} u_{55} - u_{15} u_{54};\\
&h_{24}(U) = \det U^{[4,5]}_{[2,3]}u_{55} + \det U^{[4,5]}_{\{1,3\}}u_{54} + \det U^{[4,5]}_{[1,2]}u_{53};\\
&h_{23}(U) = -\det U^{[3,5]}_{[2,4]}u_{55} - \det U^{[3,5]}_{\{1\}\cup[3,4]}u_{54} - \det U^{[3,5]}_{[1,2]\cup\{4\}}u_{53};\\
&h_{22}(U) = \det U^{[2,5]}_{[2,5]}u_{55} + \det U^{[2,5]}_{\{1\}\cup[3,5]}u_{54} + \det U^{[2,5]}_{[1,2]\cup[4,5]}u_{53};\\
&h_{35}(U) = u_{35} \det U^{[4,5]}_{[4,5]} + u_{25} \det U^{\{3,5\}}_{[4,5]} + u_{15} \det U^{[3,4]}_{[4,5]};\\
&h_{34}(U) = \det U^{[4,5]}_{[3,4]} \det U^{[4,5]}_{[4,5]} +  \det U^{[4,5]}_{\{2,4\}}\det U^{\{3,5\}}_{[4,5]} + \det U^{[4,5]}_{\{1,4\}} \det U^{[3,4]}_{[4,5]};\\
&h_{33}(U) = \det U^{[3,5]}_{[3,5]} \det U^{[4,5]}_{[4,5]} +  \det U^{[3,5]}_{\{2\}\cup [4,5]}\det U^{\{3,5\}}_{[4,5]} + \det U^{[3,5]}_{\{1\}\cup[4,5]} \det U^{[3,4]}_{[4,5]}.
\end{align*}
\begin{figure}[htb]
\begin{center}
\includegraphics[scale=0.2]{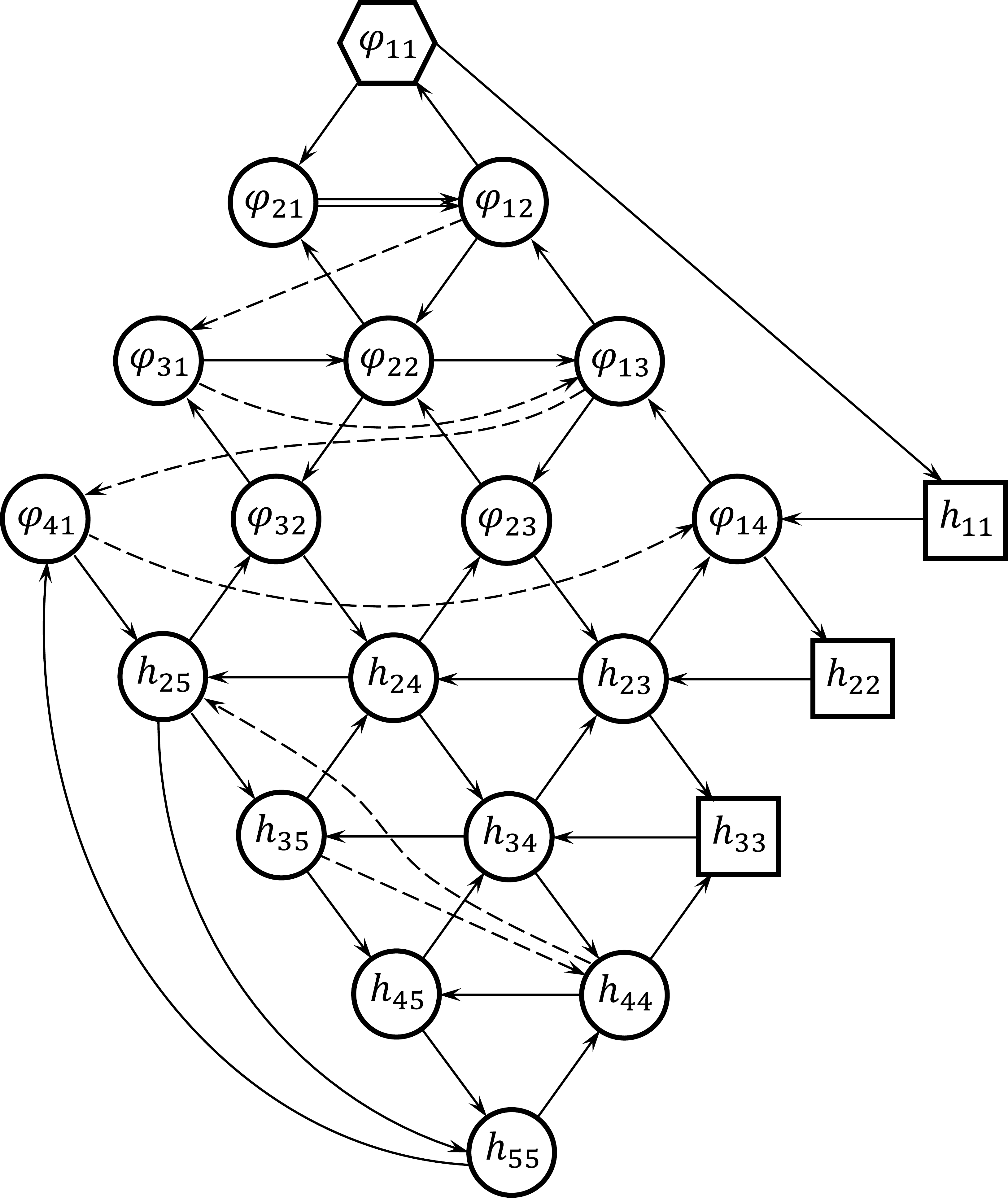}
\end{center}
\caption{The initial quiver for $\gc_h^{\dagger}(\bg,\GL_5)$ with $\gamma:1\mapsto 4$, $\gamma:2\mapsto 3$. }
\label{f:ex_n=5}
\end{figure}

\end{document}